\documentclass[11pt, reqno]{amsart}
\usepackage{amsmath}
\usepackage{amssymb}
\usepackage{amsrefs}
\usepackage{amsthm}
\usepackage[english]{babel}
\usepackage[autostyle]{csquotes}
\usepackage{bbm}
\usepackage{caption}
\usepackage[autostyle]{csquotes}
\usepackage{enumerate}
\usepackage[shortlabels]{enumitem}
\newlist{steps}{enumerate}{1}
\setlist[steps, 1]{label = Step \arabic*:}
\usepackage{float}
\usepackage[margin=1.25in]{geometry}
\usepackage[colorlinks=true, urlcolor=blue, linkcolor=black]{hyperref}
\usepackage{tikz}
\usepackage{xcolor}
\usetikzlibrary{backgrounds}
\usepackage{quiver}
\usepackage{ stmaryrd }
  
\theoremstyle{plain}
\newtheorem{thm}{Theorem}[section]
\newtheorem{cor}[thm]{Corollary}
\newtheorem{conj}[thm]{Conjecture}
\newtheorem{lem}[thm]{Lemma}
\newtheorem{obs}[thm]{Observation}
\newtheorem{prop}[thm]{Proposition}

\theoremstyle{definition}
\newtheorem{defn}[thm]{Definition}
\newtheorem{rmk}[thm]{Remark}
\newtheorem{eg}[thm]{Example}

\newtheorem{moral}[thm]{Moral}
\newtheorem{notn}[thm]{Notation}

\usetikzlibrary{cd}

\usepackage{graphicx}

\newcommand{\Alg}{\operatorname{Alg}}

\newcommand{\Br}{\operatorname{Br}}

\newcommand{\Coh}{\operatorname{Coh}}
\newcommand{\Crit}{\operatorname{Crit}}

\newcommand{\IndCoh}{\operatorname{IndCoh}}

\newcommand{\Cone}{\operatorname{Cone}}
\newcommand{\Conf}{\operatorname{Conf}}
\newcommand{\coCone}{\operatorname{coCone}}

\newcommand{\Fuk}{\operatorname{Fuk}}
\newcommand{\Fun}{\operatorname{Fun}}

\newcommand{\Hom}{\operatorname{Hom}}

\newcommand{\Id}{\operatorname{Id}}
\newcommand{\Image}{\operatorname{Im}}

\newcommand{\Loc}{\operatorname{Loc}}

\newcommand{\Map}{\operatorname{Map}}

\newcommand{\Mod}{\operatorname{Mod}}

\newcommand{\Perf}{\operatorname{Perf}}
\newcommand{\Perv}{\operatorname{Perv}}

\newcommand{\QCoh}{\operatorname{QCoh}}

\newcommand{\Sym}{\operatorname{Sym}}

\newcommand{\Tot}{\operatorname{Tot}}

\newcommand{\Vect}{\operatorname{Vect}}

\newcommand{\Sp}{\operatorname{Sp}}

\newcommand{\opname}[1]{\operatorname{#1}}

\newcommand{\colim}{\operatorname{colim}}

\newcommand{\diag}{\operatorname{diag}}

\newcommand{\id}{\operatorname{id}}

\newcommand{\op}{\operatorname{op}}

\newcommand{\pt}{\operatorname{pt}}

\newcommand{\sh}{\operatorname{sh}}

\newcommand{\mush}{\mu\kern -2pt \sh}
\newcommand{\muPerv}{\mu\kern -2pt \Perv}

\newcommand{\bbA}{\mathbb{A}}
\newcommand{\bbC}{\mathbb{C}}

\newcommand{\bbG}{\mathbb{G}}

\newcommand{\bbP}{\mathbb{P}}

\newcommand{\bbR}{\mathbb{R}}

\newcommand{\bbZ}{\mathbb{Z}}

\newcommand{\calA}{\mathcal{A}}
\newcommand{\calB}{\mathcal{B}}
\newcommand{\calC}{\mathcal{C}}
\newcommand{\calD}{\mathcal{D}}
\newcommand{\calE}{\mathcal{E}}
\newcommand{\calEnd}{\mathcal{E}\kern -.5pt nd}
\newcommand{\calF}{\mathcal{F}}
\newcommand{\calFuk}{\mathcal{F}\kern -.5pt uk}
\newcommand{\calG}{\mathcal{G}}

\newcommand{\calHom}{\mathcal{H}\kern -.5pt om}

\newcommand{\calPerv}{\mathcal{P}\kern -.5pt erv}

\newcommand{\calRadon}{\mathcal{R}\kern -.5pt adon}
\newcommand{\calS}{\mathcal{S}}
\newcommand{\calSus}{\mathcal{S}\kern -.5pt us}

\newcommand{\calX}{\mathcal{X}}

\newcommand{\frakS}{\mathfrak{S}}

\newcommand{\actson}{\curvearrowright}

\newcommand{\isomfrom}{\xleftarrow{\sim}}
\newcommand{\isomto}{\xrightarrow{\sim}}

\newcommand{\resto}[1]{\big|_{#1}}
\newcommand{\ts}{\textsuperscript}
\newcommand{\ol}[1]{\overline{#1}}
\newcommand{\ul}[1]{\underline{#1}}

\newcommand{\sfP}{\mathsf{Perv}}
\newcommand{\olsftwoP}{\mathsf{2}\ol{\mathsf{Perv}}}
\newcommand{\sftwoP}{\mathsf{2Perv}}
\newcommand{\sftwoL}{\mathsf{2Loc}}

\newcommand{\sfLoc}{\mathsf{Loc}}

\newcommand{\comment}[1]{}

\newcommand{\xto}[1]{\xrightarrow{#1}}

\newcommand{\inclto}{\hookrightarrow}
\newcommand{\adjto}{\dashv}

\newcommand{\from}{\leftarrow}

\newcommand{\tofrom}{\rightleftarrows}

\newcommand{\pullback}{\arrow[dr, phantom, "\scalebox{1.5}{$\lrcorner$}" , very near start, color=black]}

\newcommand{\FS}{\opname{FS}}

\newcommand{\sfSph}{\mathsf{Sph}}
\newcommand{\sfSphMnd}{\mathsf{SphMnd}}
\newcommand{\sfAdj}{\mathsf{Adj}}
\newcommand{\sfBdj}{\mathsf{Bdj}}
\newcommand{\sfMnd}{\mathsf{Mnd}}
\newcommand{\sfAut}{\mathsf{Aut}}
\newcommand{\sfEnd}{\mathsf{End}}
\newcommand{\sfFun}{\mathsf{Fun}}
\newcommand{\sfSOD}{\mathsf{SOD}}
\newcommand{\sfCat}{\mathsf{Cat}}
\newcommand{\const}{\mathsf{const}}
\newcommand{\sfSt}{\mathsf{St}}

\newcommand{\sfA}{\mathsf{A}}
\newcommand{\sfB}{\mathsf{B}}
\newcommand{\sfC}{\mathsf{C}}
\newcommand{\sfD}{\mathsf{D}}

\newcommand{\sfE}{\mathsf{E}}
\newcommand{\sfH}{\mathsf{H}}

\newcommand{\sfN}{\mathsf{N}}
\newcommand{\sfQ}{\mathsf{Q}}

\newcommand{\sfS}{\mathsf{S}}
\newcommand{\sfT}{\mathsf{T}}

\newcommand{\Cuts}{\opname{Cuts}}
\newcommand{\Cycle}{\opname{Cycle}}
\newcommand{\Complex}{\opname{Complex}}
\newcommand{\pre}{\opname{pre}}

\newcommand{\ver}{\opname{vert}}
\newcommand{\hor}{\opname{hor}}
\newcommand{\dia}{\opname{diag}}
\newcommand{\zero}{(\infty,0)}
\newcommand{\one}{(\infty,1)}
\newcommand{\two}{(\infty,2)}
\newcommand{\wCat}{\widehat{\opname{Cat}}}

\newcommand{\oplax}{\opname{oplax}}

\newcommand{\sm}{\opname{sm}}

\newcommand{\frakd}{\mathfrak{d}}

\newcommand{\frakh}{\mathfrak{h}}

\newcommand{\frakn}{\mathfrak{n}}

\newcommand{\frakt}{\mathfrak{t}}

\newcommand{\frakadj}{\mathfrak{adj}}
\newcommand{\frakmnd}{\mathfrak{mnd}}
\newcommand{\frakend}{\mathfrak{end}}

\newcommand{\act}{\opname{active}}
\newcommand{\inert}{\opname{inert}}

\title{Microlocal Perverse Schobers and Radon Transform}
\date{}
\author{Yuji Okitani}

\begin{document}

\begin{abstract}
Perverse schobers are a categorification of perverse sheaves, originally proposed by Kapranov and Schechtman. The purpose of this paper is to initiate a microlocal study of perverse schobers. We first categorify $\Perv(\bbC,R)/\Loc(\bbC)$, the category of perverse sheaves on a complex line with singular points at $R$, modulo local systems. We use this to propose a general definition for microlocal perverse schobers supported on the open conormal to a germ of a hypersurface, and we conjecture that this is invariant under Radon transform. Here we make the key observation that while the analogous quotient of perverse schobers $\mathsf{2Perv}(\bbC,R)/\mathsf{2Loc}(\bbC)$ is a reasonable categorification, the resulting theory fails to be invariant under Radon transform. In fact, our proposed categorification can be recovered by correcting an instance of this failure in a universal manner. We prove Radon invariance when our hypersurface is the curve $y^m=x^n$ in $\bbC^2_{x,y}$. Along the way, we explain how our theory relates to Fourier transforms of perverse schobers, periodic SODs, and spherical monads.
\end{abstract}

\maketitle

\tableofcontents

\section{Introduction}
Perverse schobers are a categorification of perverse sheaves, initially proposed by Kapranov-Schechtman \cite{kapranovPerverseSchobers2015}. Just as perverse sheaves organize vector space invariants such as those arising in symplectic geometry and representation theory, perverse schobers are expected to organize categorical invariants. Much work has since gone into realizing this proposal in various settings \cites{bondalPerverseSchobersBirational2018,christGinzburgAlgebrasTriangulated2022,kapranovPerverseSchobers2015,dyckerhoffPerverseSchobersCoxeter2025}, although a general definition is still not available.

These categorifications, while ad hoc, follow a common recipe. First, one finds a description of the corresponding category of perverse \textit{sheaves} in terms of diagrams of abelian groups with relations. Then, one can consider diagrams of stable $\one$-categories of the same shape, and categorify relations following certain rules of categorification, established through these examples. Such rules can be found tabulated in \cite{christLaxAdditivity2025}*{\S 1.1}.

Studying the first step more carefully, one observes that these diagrammatic descriptions of perverse sheaf categories are very often of a microlocal origin. That is, the abelian groups are microstalks, and the arrows are communications of microstalks. Gelfand--MacPherson--Vilonen \cite{gelfandMicrolocalPerverseSheaves2005} take advantage of this perspective and present a possible approach to describe any perverse sheaf category as a diagram category, with no reference to derived categories or $t$-structures.

Therefore it is natural to expect that a general and uniform definition of perverse schobers can be achieved by categorifying this approach. The aim of this paper is to begin to develop a microlocal theory of perverse schobers, motivated by this expectation.

\subsection{Microlocal sheaves}
Microlocal sheaf theory \cite{kashiwaraSheavesManifolds1990} is an enhancement of sheaf theory which views sheaves on a smooth real manifold $X$ as living over the symplectic manifold $T^*X$. More precisely, the theory upgrades the support of a sheaf $\calF$ to a microsupport, which is a closed, conic subset inside $T^*X$. Thus given a closed, conic subset $\Lambda$, one can define a subcategory of the stable $\infty$-category $\sh(X)$ of sheaves valued in $\bbZ$,
\[\sh_{\Lambda}(X)\subseteq \sh(X),\]
consisting of those sheaves whose microsupport is contained in $\Lambda$. For example, given a reasonable stratification $\calS$ of $X$, $\sh_{T^*_\calS X}(X)$ is the subcategory of sheaves constructible with respect to $\calS$.

We can localize $\sh_{\Lambda}(X)$ into a presheaf of categories over $T^*X$, denoted
\[\Omega\mapsto \sh_{\Lambda}(X;\Omega) := \sh_{\Omega^c\cup \Lambda}(X)/\sh_{\Omega^c}(X).\]
By sheafification, we obtain a sheaf of categories that we denote $\mush_{\Lambda}(-)$, whose sections we refer to as microlocal sheaves. There is a precise sense in which microlocal sheaves are independent of the particular polarization. This is exhibited by quantized contact transformations as explained in \cite{kashiwaraSheavesManifolds1990}*{Chapter VII}.

For points $p\in \Lambda$ that are in generic position, microlocal cut-offs let us present the stalk $\mush_{\Lambda}(p)$ as a quotient category,
\begin{equation}\label{equation:cutoff}
    \mush_{\Lambda}(p)\simeq \sh_{0_U\cup \Lambda}(U)/\Loc(U),
\end{equation}
where $U$ is a small open neighbourhood of the projection $\pi(p)\in X$, and $0_U$ is the zero section.

\subsection{Microlocal perverse sheaves}\label{subsection:muperv}
If $X$ is a smooth complex manifold and $\Omega$ is $\bbC^\times$-conic then there is a subcategory of microlocal perverse sheaves
\[\muPerv_{\Lambda}(\Omega)\subset \mush_{\Lambda}(\Omega),\]
which organize into a sheaf of categories $\muPerv_{\Lambda}(-)$, as studied in \cites{andronikofMicrolocalVersionRiemannHilbert1994,gelfandMicrolocalPerverseSheaves2005,waschkiesMicrolocalPerverseSheaves2002,waschkiesStackMicrolocalPerverse2004,cotePerverseMicrosheaves2025}.

Microlocal perverse sheaves are particularly tractable near a point $p\in\Lambda$ in generic position -- that is, where
\[\pi^{-1}(\pi(p))\cap \Lambda = \bbC^\times p.\]
Near such $p$, the entire geometry is visible on the base; there is a hypersurface germ $H$ defined on a neighbourhood $U$ of $\pi(p)$, such that $\Lambda=\Lambda_H$ near $\bbC^\times p$, where $\Lambda_H$ denotes the closure of the conormal of the smooth locus of $H$. Waschkies \cite{waschkiesStackMicrolocalPerverse2004} uses microlocal cut-offs to prove an analog of (\ref{equation:cutoff}),
\[\muPerv_{\Lambda}(\bbC^{\times}p)\simeq \Perv_{\Lambda_H}(U)/\Loc(U).\]
We may choose local coordinates $x_1,\dots,x_{N-1},y$ on $U$ such that $\pi(p) = (0,\dots,0)$ and such that $H$ is cut out by an equation of the form
\begin{equation}\label{equation:equation}
    y^m + \text{ higher order terms}.
\end{equation}

Gelfand--MacPherson--Vilonen \cite{gelfandMicrolocalPerverseSheaves2005} show that $\Perv_{\Lambda_H}(U)/\Loc(U)$ has an intrinsically abelian description. Geometrically, the pair $(U,H)$ can be viewed as a family of discs $(\bbC,R)$ parameterized by $(x_1,\dots,x_{N-1})$, where $R\subset \bbC$ is a finite set of points, see Figure~\ref{figure:y2x4}. They explain that $\Perv_{\Lambda_H}(U)/\Loc(U)$ can be constructed out of the categories $\Perv(\bbC,R)/\Loc(\bbC)$. In turn, a quiver description of $\Perv(\bbC,R)/\Loc(\bbC)$ is given in \cite{gelfandPerverseSheavesQuivers1996}, which is intrinsically abelian. Let us refer to this description as the local GMV construction.

\begin{rmk}[Microlocal perverse sheaves at non-generic points]\label{rmk:gmvfull}
    By applying a suitable contact transformation, any point $p\in\Lambda$ can be made generic and thus understood via a pair $(U,H)$ as above. Moreover, if $p\in\Lambda$ is a codimension $\leq i$ singularity, then we can reduce the dimension so that $U$ has dimension $i+1$. The main result of \cite{gelfandMicrolocalPerverseSheaves2005} uses this to give an intrinsically abelian construction of the sheaf of categories $\muPerv_{\Lambda}(-)$ on the open locus $\Lambda^{\leq 1}\subseteq \Lambda$ of singularities of codimension $\leq1$ in terms of pairs of the form $(U\subset \bbC^2,C)$.
\end{rmk}

\begin{figure}
    \caption{Pair $(\bbC^2_{x,y},C)$ together with discs $(\bbC_y,R)$ parametrized by $x$}
    \label{figure:y2x4}
    \begin{tikzpicture}[scale=3]
    \clip (-1, -0.5) rectangle (1.2, 0.7);

    \draw[->, gray, line width=0.6pt] (-1, -0.4) -- (1.2, -0.4) node[above left, gray] {$x$};
    \draw[->, gray, line width=0.6pt] (-0.9, -0.5) -- (-0.9, 0.7) node[below right, gray] {$y$};


    \draw[black, thick, domain=-1:1, samples=100] plot (\x, {0.5*\x*\x}) node[below, black] {$C$};
    \draw[black, thick, domain=-1:1, samples=100] plot (\x, {-0.5*\x*\x});

    \draw[red, thick] (-0.5, -0.5) -- (-0.5, 0.5);
    \draw[red, thick] (-0.3, -0.5) -- (-0.3, 0.5);
    \draw[red, thick] (-0.1, -0.5) -- (-0.1, 0.5);
    \draw[red, thick] (0.1, -0.5) -- (0.1, 0.5);
    \draw[red, thick] (0.3, -0.5) -- (0.3, 0.5);
    \draw[red, thick] (0.5, -0.5) -- (0.5, 0.5);
    \end{tikzpicture}
\end{figure}

\subsection{The aim of this paper}
The main objective of this paper is to categorify the quotient category $\Perv_{\Lambda_H}(U)/\Loc(U)$ to a $\two$-category of microlocal perverse schobers, which we denote
\[\olsftwoP_{\Lambda_H}(U),\]
where as before, $H$ is a hypersurface germ defined on a small open subset $0\in U\subset \bbC^N_{x_1\dots,x_{N-1},y}$ by an equation of the form (\ref{equation:equation}).

From the discussion in \S\ref{subsection:muperv}, this should be thought of as giving a definition of microlocal perverse schobers defined near $\bbC^\times p$ for any $p\in\Lambda$ generic. For the rest of the introduction, we will assume that the base has dimension 2 so that $U=U_{x,y}$ and $H=C$ is a curve. We broadly partition our objective into three steps.

\begin{itemize}
    \item Step 1: Categorify $\Perv(\bbC,R)/\Loc(\bbC)$ to $\olsftwoP(\bbC,R)$, together with a projection functor $\sftwoP(\bbC,R)\to \olsftwoP(\bbC,R)$.
    \item Step 2: Categorify $\Perv_{\Lambda_C}(U)/\Loc(U)$ to $\olsftwoP_{\Lambda_C}(U)$.
    \item Step 3: Exhibit invariance under contact transformations.
\end{itemize}

Steps 1 and 2 constitute the construction of $\olsftwoP_{\Lambda_C}(U)$ via a categorification of the local GMV construction.

Step 3 is where our categorification is scrutinized. Recall that microlocal perverse sheaves are defined ``top down'' with contact invariance also inherited in this way, while the ``bottom up'' approach of GMV can be thought of as an alternative construction. In contrast, we take the ``bottom up'' approach as the \textit{definition} of microlocal perverse schobers, and to argue that this definition is a good one, we should exhibit contact invariance directly.

In this paper, we study a particularly interesting contact transform. We recall that there is a canonical contact transform
\[T^*\bbP^2 - 0_{\bbP^2}\simeq T^*\check{\bbP}^2 - 0_{\check{\bbP}^2},\]
where $\check{\bbP}^2$ is the dual projective space, parametrizing lines in $\bbP^2$. Moreover, if $C\subset \bbP^2$ is a projective curve, then we may consider its projective dual curve $\check{C}\subset\check{\bbP}^2$, parametrizing (limits of) tangent lines to $C$. Under the contact transform, $\Lambda_{C}$ identifies with $\Lambda_{\check{C}}$. In the perverse sheaf setting, this quantizes to the Radon transform \cite{kiehlLefschetzTheoryBrylinskiRadon2001},
\[\Perv_{0_{\bbP^2}\cup \Lambda_{C}}(\bbP^2)/\Loc(\bbP^2)\simeq \Perv_{0_{\check{\bbP}^2}\cup \Lambda_{\check{C}}}(\check{\bbP}^2)/\Loc(\check{\bbP}^2).\]

More locally, we may associate to our pair $(U_{x,y},C)$, a dual pair $(V_{a,b},\check{C})$, where $\check{C}\subset V_{a,b}$ is the germ of the curve near $(0,0)$ which parametrizes (limits of) tangent lines $y=ax-b$ to $C$. Figure~\ref{figure:radon} illustrates a pair of dual curves. We record below the resulting special case of Step 3.
\begin{itemize}
    \item Step 3': Construct a Radon transform equivalence, $\olsftwoP_{\Lambda_C}(U)\simeq \olsftwoP_{\Lambda_{\check{C}}}(V)$.
\end{itemize}
We note that the Radon transform is in some sense typical, and highlights difficulties that we may encounter for general contact transformations. In a related context, the Radon transform plays a key role in \cites{beilinson2016constructible,saitoSingularSupportsMixed2025} to develop a theory of singular support for \'etale constructible sheaves.

\begin{figure}[ht]
    \caption{A curve $C$ and its dual $\check{C}$}
    \label{figure:radon}

    \begin{tikzpicture}[scale=3]
    \clip (-1, -0.5) rectangle (1.2, 0.7);

    \draw[->, gray, line width=0.6pt] (-1, -0.4) -- (1.2, -0.4) node[above left, gray] {$x$};
    \draw[->, gray, line width=0.6pt] (-0.9, -0.5) -- (-0.9, 0.7) node[below right, gray] {$y$};


    \draw[black, thick, domain=-1:1, samples=100] plot (\x, {0.5*\x*\x*\x}) node[below, black] {$C$};

    \draw[red, thick, domain=0:1, samples=100] plot (\x, {1.5*0.5*0.5*\x-0.5*0.5*0.5});
    \draw[blue, thick, domain=-1:0, samples=100] plot (\x, {1.5*0.5*0.5*\x+0.5*0.5*0.5});
    \end{tikzpicture}
    \begin{tikzpicture}[scale=3]
    \clip (-1, -0.5) rectangle (1.2, 0.7);

    \draw[->, gray, line width=0.6pt] (-1, -0.4) -- (1.2, -0.4) node[above left, gray] {$a$};
    \draw[->, gray, line width=0.6pt] (-0.9, -0.5) -- (-0.9, 0.7) node[below right, gray] {$b$};


    \draw[black, thick, domain=-1:1, samples=100] plot (\x*\x, {0.5*\x*\x*\x}) node[below, black] {$\check{C}$};

    \fill[red] (0.5,0.5*0.5*0.707) circle (0.5pt);
    \fill[blue] (0.5,-0.5*0.5*0.707) circle (0.5pt);
    \end{tikzpicture}
\end{figure}

\subsection{Radon transform and symplectic suspension}
We illustrate the main new phenomenon which arises when we attempt this categorification procedure in a key example. We begin by considering two possible categorifications of $\Perv(\bbC,0)/\Loc(\bbC)$ at Step 1. We note that both of these are equipped with projection functors from $\sftwoP(\bbC,0)$.
\begin{enumerate}
    \item The first is the quotient $\two$-category $\olsftwoP^{\opname{monad}}(\bbC,0) := \sftwoP(\bbC,0)/\sftwoL(\bbC)$. It turns out that this is abstractly equivalent to the $\two$-category whose objects are pairs $(\Phi,M)$ where $\Phi$ is a stable $\one$-category, and $M$ is a monad on $\Phi$ such that $\coCone(\id\to M)$ is an autoequivalence.
    \item Recall that $\Perv(\bbC,0)/\Loc(\bbC)$ is abstractly equivalent to the category of pairs $(W,t)$ where $W$ is a $k$-vector space and $t$ is an automorphism of $W$. Our second categorification $\olsftwoP^{\opname{aut}}(\bbC,0)$ is the $\two$-category of pairs $(\Phi,T)$ where $\Phi$ is a stable $\one$-category and $T$ is an autoequivalence of $\Phi$.
\end{enumerate}

Let $C=\{y=x^2\}$. For this curve, Step 2 is easy. Given our categorification $\olsftwoP(\bbC,0)$ from Step 1, the categorified local GMV construction simply defines
\[\olsftwoP_{\Lambda_C}(U) := \olsftwoP(\bbC,0),\]
because $C$ is smooth. The dual curve $\check{C} = \{b=a^2/4\}$ is also smooth so,
\[\olsftwoP_{\Lambda_{\check{C}}}(V) := \olsftwoP(\bbC,0).\]
Thus the Radon transform of Step 3' is identified with an equivalence of $\two$-categories, which we denote
\[\calRadon:\olsftwoP(\bbC,0)\simeq \olsftwoP(\bbC,0).\]
We emphasize that the Radon transform happens on the 2-dimensional geometry of $(U,C)$ and not the 1-dimensional geometry of $(\bbC,0)$.

While these two $\two$-categories are obviously equivalent, we claim that the Radon transform ought to be a different operation which we call symplectic suspension. Assuming that $k=\bbC$, let us denote by $\ol{\calF}(M,f) \in \olsftwoP(\bbC,0)$ the image of the Lefschetz schober associated to a Lefschetz fibration $(M,f)$, under the projection functor constructed in Step 1. We posit that the Radon transform should agree with the symplectic suspension operation,
\[\calRadon:\ol{\calF}(M,f)\mapsto \ol{\calF}(M\times\bbC_z,f+z^2).\]
In \S\ref{subsection:suspension}, we define an algebraic version of the symplectic suspension (general to any $k$), as a functor $\calSus: \sftwoP(\bbC,0)\to \sftwoP(\bbC,0)$, which conjecturally satisfies
\[\calSus:\calF(M,f)\mapsto \calF(M\times\bbC_z,f+z^2).\]
So more precisely, we posit that the Radon transform is compatible with $\calSus$ via projection functors as illustrated in the following commutative diagram.

\[\begin{tikzcd}[ampersand replacement=\&]
	{\sftwoP(\bbC,0)} \& {\sftwoP(\bbC,0)} \\
	{\olsftwoP(\bbC,0)} \& {\olsftwoP(\bbC,0)}
	\arrow["\calSus", from=1-1, to=1-2]
	\arrow[from=1-1, to=2-1]
	\arrow[from=1-2, to=2-2]
	\arrow["{\calRadon}", from=2-1, to=2-2]
\end{tikzcd}\]

If we choose the first categorification, then $\calRadon$ is determined to be the functor
\begin{align*}
    \calSus_{\opname{monad}}:\olsftwoP^{\opname{monad}}(\bbC,0)&\to \olsftwoP^{\opname{monad}}(\bbC,0)\\
    (\Phi,M)&\mapsto (\Phi,\id_\Phi\times_M\id_\Phi)
\end{align*}
However, this is not an equivalence as some data of the monad is lost. If instead we choose the second categorification, then $\calRadon$ is determined to be the functor
\begin{align*}
    \calSus_{\opname{aut}}:\olsftwoP^{\opname{aut}}(\bbC,0)&\to \olsftwoP^{\opname{aut}}(\bbC,0)\\
    (\Phi,T)&\mapsto (\Phi,T[-1])
\end{align*}
which is clearly an equivalence. So we learn that $\olsftwoP^{\opname{aut}}(\bbC,0)$ is the correct categorification at Step 1. We will prove in Theorem~\ref{thm:limit0} that in fact, we may recover the correct categorification from the incorrect one by inverting $\calSus_{\opname{monad}}$ as follows.

\begin{thm}\label{thm:introcolimit}
    There is a natural diagram of $\two$-categories,
    \[\olsftwoP^{\opname{aut}}(\bbC,0)\to \cdots\to \xto{\calSus_{\opname{monad}}}\olsftwoP^{\opname{monad}}(\bbC,0)\xto{\calSus_{\opname{monad}}}\olsftwoP^{\opname{monad}}(\bbC,0)\]
    which exhibits $\olsftwoP^{\opname{aut}}(\bbC,0)$ as a limit.
\end{thm}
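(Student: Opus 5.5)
We prove the statement by reducing it to a statement about spherical monads and the adjunctions they come from. Throughout we use the description of $\olsftwoP^{\opname{monad}}(\bbC,0)$ recalled above: its objects are pairs $(\Phi,M)$ with $M$ a monad on $\Phi$ such that $T_M:=\coCone(\id\to M)$ is an autoequivalence.

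\textbf{Step 1: the transition maps.} First I identify $\calSus_{\opname{monad}}$ concretely. Since $T_M$ is invertible, $\id_\Phi\times_M\id_\Phi$ is again a monad of the same type. Its underlying endofunctor is an extension of $\id$ by $T_M[-1]$, so its associated twist is $T_M[-1]$. This gives a functor
\[F:\olsftwoP^{\opname{aut}}(\bbC,0)\to\olsftwoP^{\opname{monad}}(\bbC,0),\qquad (\Phi,T)\mapsto (\Phi,\id\times_{T[1]}\ldots),\]
namely the monad whose twist is $T$. I construct it universally as the square-zero (trivial) extension monad $\id\oplus T[1]$, or equivalently via the split adjunction $\Phi\rightleftarrows\Phi\oplus\Phi$.

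\textbf{Step 2: the cone over the tower.} Next I build the compatible maps $F_n:(\Phi,T)\mapsto F(\Phi,T[n])$. I then check that $\calSus_{\opname{monad}}\circ F_{n+1}\simeq F_n$. The key computation is that for any monad $M$ the suspended monad $\id\times_M\id$ depends only on $T_M$ and not on the higher monad structure. Indeed, the pullback forgets the multiplication data up to a canonical equivalence with the trivial extension monad determined by $T_M[-1]$. Concretely, $\id\times_M\id$ is computed from the fibre of $\id\to M$, which is $T_M[-1]$. Its multiplication factors through $T_M[-1]\circ T_M[-1]\to T_M[-1]$, and this map is null because it factors through the composite $\id\to M\to\id$. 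This observation also explains why $\calSus_{\opname{monad}}$ fails to be an equivalence.

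\textbf{Step 3: the limit.} An object of the limit is a sequence $(\Phi_n,M_n)$ with equivalences $\calSus(\Phi_{n+1},M_{n+1})\simeq(\Phi_n,M_n)$. By Step 2, each $M_n$ with $n$ finite is the trivial extension monad on its twist. The data therefore reduces to pairs $(\Phi,T_n)$ with $T_{n+1}[-1]\simeq T_n$, which is just a single pair $(\Phi,T_0)$. I would prove fully faithfulness on mapping $\one$-categories in the same way. A morphism of monad-pairs into the image of $\calSus$ is determined by the induced functor $\Phi\to\Psi$ together with the induced comparison of twists, because the trivial-extension structure is functorial. Limits in $\two$-categories of this form are computed levelwise on objects and mapping categories, so it suffices to check essential surjectivity and equivalence on mapping categories, which the previous two sentences do.

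\textbf{Main obstacle.} The main obstacle is Step 2 made fully coherent. I need to show at the $\infty$-categorical level, and naturally in 1- and 2-morphisms, that $\id\times_M\id$ is canonically the trivial extension monad on $T_M[-1]$. The reduction to twists above must also be functorial in lax morphisms of monads. I would attack this through the adjunction model: realise $M$ as $GF$ for a spherical adjunction $F\dashv G$. Then $\id\times_M\id$ corresponds to the adjunction into the gluing category, and one shows this gluing splits canonically using the invertibility of $T_M$.
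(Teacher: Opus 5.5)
Your Step 3 has a genuine gap, and it sits exactly where the paper uses its key input.

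Grant Step 2, i.e.\ that everything in the image of $\calSus_{\opname{monad}}$ is a trivial square-zero monad $\id\oplus V$. Even so, a morphism of monads between trivial square-zero extensions is \emph{not} determined by the underlying functor and the induced map of augmentation ideals (your ``comparison of twists''). Already for $\Phi=\Mod_k$ with $k$ a field, monads are $E_1$-algebras. Augmented algebra maps $k\oplus V\to k\oplus W$ correspond, via the bar construction, to coalgebra maps between tensor coalgebras, i.e.\ to families in $\prod_{m\geq1}\Map(V[1]^{\otimes m},W[1])$. Only the $m=1$ component is linear.

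Concretely, $\calSus_{\opname{monad}}(k\times k)=k\times_{k\times k}k\simeq C^*(S^1;k)\simeq k\oplus k[-1]$ lies in the image of $\calSus_{\opname{monad}}$. Yet its augmented $E_1$-automorphisms alone correspond, via the Koszul dual $k[[u]]$, to all power series $a_1u+a_2u^2+\cdots$ with $a_1\neq0$. These are distinct morphisms with the same underlying functor and the same comparison of twists.

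Two things fail as a result:
\begin{itemize}
\item Your description of mapping categories is false at every finite stage.
\item The transition equivalences $\calSus(\Phi_{n+1},M_{n+1})\simeq(\Phi_n,M_n)$ of a point of the limit need not be linear. So you cannot conclude that the data reduce to $(\Phi,T_n)$ with $T_{n+1}[-1]\simeq T_n$. Levelwise reasoning also does not control $\pi_0$ of a sequential limit.
\end{itemize}
The nonlinear components disappear only after passing to the limit. Proving this is precisely the stabilization theorem $\Sp(\Alg(\calC)_{/1})\simeq\calC$ for stable monoidal $\calC$ (Lurie, Higher Algebra 7.3.4.7), which your argument never supplies.

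Separately, your justification in Step 2 does not parse. A general monad has no map $M\to\id$, so nothing ``factors through $\id\to M\to\id$''.

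The paper avoids Step 2 altogether:
\begin{itemize}
\item By Yoneda it tests against an arbitrary $\two$-category $\frakt$ and works fiberwise over $X\in\Map(\frakt,\sfSt_k)$.
\item Lemma~\ref{lem:haugsengvariant} identifies the fibers of $\Map(\frakt,\sfMnd)$ and $\Map(\frakt,\sfEnd)$ with $\Alg(\sfT(X,X))^{\simeq}$ and $\sfT(X,X)^{\simeq}$. Under this identification $\calSus$ becomes the loop functor $c\mapsto\id\times_c\id$ of $\Alg(\sfT(X,X))$, with $\id$ initial.
\item Lemma~\ref{lem:lurievariant}, a coinitiality argument passing to the slice over $\id$, identifies the limit of the tower with $\Sp(\Alg(\sfT(X,X))_{/\id})$. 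Lurie's theorem identifies this with $\sfT(X,X)$, giving $\sfMnd[\calSus^{-1}]\simeq\sfEnd$ (Proposition~\ref{prop:limit2}).
\item Pulling back along $\sfAut\subset\sfEnd$, and using Theorem~\ref{thm:christstrengthening1} to identify the top row with $\sfSphMnd$, yields the statement.
\end{itemize}
To repair your argument, replace Steps 2--3 with this stabilization input, or prove directly that the higher components of monad maps die in the limit along $\calSus$.
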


Concretely, this says that the part of the monad $M$ stable under $\calSus_{\opname{monad}}$ is precisely the autoequivalence $T:=\coCone(\id\to M)$. We summarize as follows.

\begin{moral}\label{moral:intromain}
    The correct categorification of the quotient ${\Perv}(\bbC,0)/Loc(\bbC)$ is not the quotient of the categorifications, but we can correct this by inverting the symplectic suspension in the manner above.
\end{moral}

This is reflected in our choice to denote the categorification by $\olsftwoP(\bbC,0)$ instead of $\ol{\sftwoP}(\bbC,0)$.

\subsection{Radon transform for \texorpdfstring{$y^m=x^n$}{y\^{}m=x\^{}n}}
The main example of Radon transform that we study in this paper is for the curve $C=\{y^m = x^n\}$ where $0<m<n$. The dual curve is given by
\[\check{C} = \{\left(\frac{m}{n-m}b\right)^{n-m} = \left(\frac{m}{n}a\right)^n\}.\]
Ignoring constants, this is $\{b^{n-m} = a^n\}$, a curve of the same form with $m$ replaced by $n-m$. This curve has been studied in various related contexts, for example, \cites{oblomkovHilbertSchemePlane2012,macphersonPerverseSheavesSingularities1988}.

Let us view $(\bbC^2_{x,y},C)$ as a family of discs. Over $x=1$, we see the disc $(\bbC_y,\{y^m=1\})$. As $x$ makes one revolution around the unit circle, the points cyclically rotate through an angle of $2\pi n/m$, forming the $(n,m)$-torus link. This geometry is reflected in the following description of $\olsftwoP(\bbC^2_{x,y},C)$; its objects are given by
\begin{enumerate}
    \item a stable $\infty$-category $\Phi$,
    \item an $n$-periodic $m$-term $\infty$-admissible SOD $\Phi = \langle\Phi_1,\dots,\Phi_m\rangle$, and
    \item\label{item:ignore} an autoequivalence $T:\Phi\to \Phi$ which is ``conjugate-compatible with respect to this SOD''.
\end{enumerate}
We will ignore (\ref{item:ignore}) for the rest of this discussion. Consider the mutations,
\[\langle\Phi_1,\dots,\Phi_m\rangle = \langle\Phi_2,\dots,\Phi_{m+1}\rangle = \langle\Phi_3,\dots,\Phi_{m+2}\rangle\cdots.\]
The SOD is said to be $n$-periodic if $\Phi_{i} = \Phi_{i+n}$ for all $i\geq1$. When $m=2$, this specializes to the notion introduced by Dyckerhoff--Kapranov--Schechtman \cite{dyckerhoffNsphericalFunctorsCategorification2023}.

Repeating this analysis for the dual curve $\check{C}$ and continuing to ignore autoequivalences, we formulate the following variant of the Radon transform for $C$. The proof can be extracted from that of the original Radon transform for $C$, which appears in the main text as Theorem~\ref{thm:ykxnschober}.
\begin{thm}\label{thm:radonsod}
    Let $\sfSOD(m)^{n\mathrm{-periodic}}$ denote the collection of pairs $(\Phi,\frakS)$ consisting of a stable $\one$-category $\Phi$ and an $m$-term SOD $\frakS$ which is $\infty$-admissible and $n$-periodic. Then, there is a natural bijection,
    \[\sfSOD(m)^{n\mathrm{-periodic}}\simeq \sfSOD(n-m)^{n\mathrm{-periodic}}.\]
\end{thm}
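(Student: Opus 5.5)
An $m$-term SOD that is $\infty$-admissible and $n$-periodic is determined by its full sequence of mutation pieces $(\Phi_i)_{i\in\bbZ}$. So I will reduce both sides to a common combinatorial datum and read that datum in two ways, giving a windows of length $m$ and windows of length $n-m$.

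First I extend the mutation sequence to all $i\in\bbZ$. By $\infty$-admissibility the left and right mutations can be iterated indefinitely in both directions. This gives subcategories $\Phi_i\subseteq\Phi$ for every $i\in\bbZ$ such that for every $j$
\[\Phi=\langle\Phi_{j+1},\dots,\Phi_{j+m}\rangle .\]
Here $\Phi_{j+m+1}$ is defined as the right orthogonal to $\langle\Phi_{j+2},\dots,\Phi_{j+m}\rangle$ inside the left orthogonal data, i.e. the Serre-type rotation of $\Phi_{j+1}$. Periodicity says $\Phi_{i+n}=\Phi_i$, so the sequence is really indexed by $\bbZ/n$.

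Next I produce the complementary SOD. For $j\in\bbZ$ I define
\[\Psi_j := \langle \Phi_{j+1},\dots,\Phi_{j+m}\rangle^{\perp\text{-complement inside the }n\text{-window}},\]
more concretely as the $(n-m)$ pieces $\Phi_{j+m+1},\dots,\Phi_{j+n}$ glued in order. Using $n$-periodicity, the consecutive $n$ pieces $\Phi_{j+1},\dots,\Phi_{j+n}$ form a periodic cyclic structure. The dual object is then the category
\[\check\Phi := \langle \check\Phi_1,\dots,\check\Phi_{n-m}\rangle,\qquad \check\Phi_k := \langle \Phi_{k+1},\dots,\Phi_{k+m}\rangle^{\text{gluing piece}} .\]
Rather than guess formulas, I will take the cleanest route: encode $(\Phi,\frakS)$ as a functor out of a cyclic combinatorial shape, namely the $n$-gon with all consecutive $m$-element intervals. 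An $n$-periodic $m$-term SOD is equivalent to a compatible system of semiorthogonal projections indexed by cyclic intervals of length $m$. In the spirit of the Dyckerhoff--Kapranov--Schechtman description for $m=2$, I expect this to be equivalent to an $n$-periodic chain $\cdots\to\Phi\xrightarrow{}\cdots$ of $m$ adjoint functors, equivalently to an $(m,n)$ ``Waldhausen $S$-type'' diagram. The classical combinatorial duality $\binom{[n]}{m}\leftrightarrow\binom{[n]}{n-m}$ via complements, categorified by Gale/Koszul duality of higher Waldhausen $S$-constructions (as in higher Segal theory), then gives the bijection. The concrete steps are as follows.

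\begin{enumerate}[(i)]
\item Show that $(\Phi,\frakS)$ is the same as a stable category $\Phi$ with a $\bbZ/n$-indexed collection of filtrations, namely the $m$-step filtrations $\langle\Phi_{j+1}\rangle\subset\langle\Phi_{j+1},\Phi_{j+2}\rangle\subset\cdots$. Their compatibility should be expressible as a cyclic diagram.
\item Pass to the semiorthogonal complements of the $(m-1)$-step pieces. In the cyclic picture this turns intervals of length $m$ into intervals of length $n-m$. This produces an $(n-m)$-term SOD of the same $\Phi$, or more naturally of the same total category, and it is again $\infty$-admissible because all required adjoints are among those already present.
\item Check $n$-periodicity of the new SOD. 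This comes down to the statement that the mutation of the dual SOD corresponds to a shift of the index by one in $\bbZ/n$.
\item Check that applying the construction twice returns the original datum, using that complementation is an involution.
\end{enumerate}

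The main obstacle is step (iii), together with the precise definition in (ii). The problem is that the mutation of an $m$-term SOD moves a single piece past $m-1$ others, while the dual mutation moves past $n-m-1$ others. Identifying the two requires knowing that the iterated rotation over a full period $n$ acts as the identity on pieces, with no extra twist, which is exactly where periodicity enters. I would first verify the case $m=1$ (trivially $\Phi=\Phi_1$ against an $(n-1)$-term periodic SOD whose gluing is degenerate) and the case $m=2$ against the $n$-spherical functor picture of DKS. This would fix conventions before handling general $m$ by induction on $m$ using the cyclic diagram.
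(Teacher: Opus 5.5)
Your sketch never actually constructs the dual object, and the one concrete thing it says about it is false: the $(n-m)$-term SOD cannot in general live on the same category $\Phi$. For $n=3$, $m=1$ that would require every stable category to carry a $3$-periodic $2$-term SOD. But $\Mod_k$ for a field $k$ carries none, since its only localizing subcategories are $0$ and $\Mod_k$.

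Your formula $\check\Phi_k:=\langle\Phi_{k+1},\dots,\Phi_{k+m}\rangle$ is all of $\Phi$ for every $k$. The phrase ``the pieces $\Phi_{j+m+1},\dots,\Phi_{j+n}$ glued in order'' only makes sense as an abstract (lax) gluing along gluing functors computed in $\Phi$. It cannot mean the subcategory these pieces generate: for $n-m>m$ they are in general not even semiorthogonal in $\Phi$. Even with the abstract reading, you would have to identify the gluings for different $j$ with one another, compatibly with their pieces. That is the entire content of the theorem, and deferring it to an unspecified ``Gale/Koszul duality of higher Waldhausen $S$-constructions'' leaves the statement unproved.

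The missing idea, which is how the paper argues inside the proof of Theorem~\ref{thm:ykxn}, is to define the dual category cyclically. Let $\calA\subset\Cycle_n(\Phi)$ consist of the acyclic $n$-cycles $(\phi^i)_{i\in\bbZ/n}$ with $\phi^i\in\Phi_i$. For $m=1$ this is all of $\Cycle_n(\Phi)$, e.g.\ the arrow category of $\Phi$ when $n=3$. For $n=3$, $m=2$ it is equivalent to the single piece $\Phi_1$, not to $\Phi$.

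Restricting to any $n-m$ consecutive terms gives an equivalence $\calA\simeq\Complex(\Phi_i,\dots,\Phi_{i+n-m-1})$. The reason is that the complementary $m$ consecutive pieces form an SOD of $\Phi$, so the rest of the cycle is determined uniquely. This is the precise form of your ``complement of intervals''.

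Lemma~\ref{lem:complexSOD} equips each such complex category with the opposite SOD, via the right adjoints $P_j^R$ of the term projections. The projection $P_j\colon\calA\to\Phi_j$ does not depend on the window. Hence the pieces $\calA_j:=P_j^R(\Phi_j)$ satisfy $\calA=\langle\calA_{i+n-m-1},\dots,\calA_i\rangle$ for every $i$, so the periodicity you single out as the main obstacle in (iii) is automatic.

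The substantive step is instead the double dual. The category of acyclic bi-cycles restricts to the bicomplexes of Lemma~\ref{lem:bicomplex}, i.e.\ to $\FS_{\sfAdj}$ of the inclusions $\Phi_1,\dots,\Phi_m\inclto\Phi$. This is $\Phi$ itself because these pieces form an SOD. The SOD comes back as $\delta^{-2}\cdot\frakS$, not $\frakS$, so the construction is an involution only up to the braid action; that still suffices for a bijection. Your proposal supplies none of these steps.
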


\begin{eg}\label{eg:introx3y2}
    Let $n=3$ and $m=2$. By \cite{dyckerhoffNsphericalFunctorsCategorification2023}, $3$-periodic 2-term SODs are those for which the gluing functor is an equivalence, thus are described by a single category. On the other hand, a 3-periodic 1-term SOD is also just a single category.
\end{eg}

It is proposed in \cite{dyckerhoffNsphericalFunctorsCategorification2023} that forming iterated mutations of a 2-term SOD is a categorical counterpart of the procedure of forming continued fractions, which in turn are related to iterated products of matrices of the form
\[\begin{bmatrix}
0 & -a_0\\
1 & -a_1
\end{bmatrix}.\]
Extending this slogan, Kapranov (in personal communication) proposed that forming iterated \textit{cyclic} mutations of an $m$-term SOD is a categorical counterpart of forming $m$-ary generalized continued fractions as studied by Jacobi \cites{bernstein2006jacobi,lehmerJacobisExtensionContinued1918}, which in turn are related to iterated products of $m\times m$ companion matrices,
\[\begin{bmatrix}
0 & \cdots & 0 & -a_0\\
1 & \cdots & 0 & -a_1\\
\vdots & \ddots & \vdots & \vdots\\
0 & \cdots & 1 & -a_{m-1}
\end{bmatrix}.\]
With this in mind, we have the following decategorification of Theorem~\ref{thm:radonsod}.

\begin{prop}
    Let $k$ be a field. Let $P(m,n)$ denote the set of $n$-tuples $(A_1,\dots,A_n)$ of $m\times m$ companion matrices that satisfy $A_n\cdots A_2A_1 = I$. Then, there is a natural bijection,
    \[P(m,n)\simeq P(n-m,n).\]
\end{prop}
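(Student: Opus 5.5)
The plan is to translate both sides into the same linear-algebra object: a point of a Grassmannian with certain Plücker coordinates nonzero. Complementation of Grassmannians, $\opname{Gr}(m,n)\simeq\opname{Gr}(n-m,n)$, will then give the bijection. This is a decategorified shadow of the mutation picture: an $m$-term SOD mutated cyclically corresponds to a sliding window of $m$ consecutive vectors.

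\textbf{Step 1 (from matrices to periodic frames).} Write the companion matrix $A_i$ with last column $-(a^{(i)}_0,\dots,a^{(i)}_{m-1})^T$. Then $A_i$ sends the standard basis $(e_1,\dots,e_m)$ to $(e_2,\dots,e_m,w_i)$. Since $A_n\cdots A_1=I$, every $A_i$ is invertible, so $a^{(i)}_0\neq 0$.

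Given $(A_1,\dots,A_n)\in P(m,n)$, build a sequence $(u_j)_{j\geq 1}$ in $k^m$ as follows:
\begin{itemize}
\item start with $u_1,\dots,u_m=e_1,\dots,e_m$;
\item define $u_{i+m}$ by the recurrence $u_{i+m}=-\sum_{j=0}^{m-1}a^{(i)}_j u_{i+j}$.
\end{itemize}
By induction, the $i$-th window $F_i=(u_{i+1},\dots,u_{i+m})$ is a basis. Moreover, the matrix expressing $F_i$ in terms of $F_{i-1}$ is exactly $A_i$. Hence $F_n$ is obtained from $F_0$ by $A_n\cdots A_1=I$, so $F_n=F_0$, and by the recurrence the sequence is $n$-periodic.

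Conversely, take any $n$-periodic sequence in $k^m$ whose cyclic windows of $m$ consecutive vectors are all bases. Expanding $u_{i+m}$ in the basis $F_{i-1}$ recovers a unique companion matrix $A_i$. Periodicity gives $A_n\cdots A_1=I$, because the product is the change-of-basis matrix from $F_0$ to $F_n=F_0$. Forgetting the normalization $F_0=$ standard basis, we obtain
\[P(m,n)\simeq \{\,m\times n \text{ matrices } U \text{ whose cyclically consecutive } m\text{-minors are nonzero}\,\}/GL_m .\]
This is the open subset $\opname{Gr}(m,n)^{\circ}\subset\opname{Gr}(m,n)$ of row spaces $W\subset k^n$ whose $n$ cyclically consecutive Plücker coordinates $\Delta_{\{i+1,\dots,i+m\}}$ do not vanish. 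Rank $m$ is automatic from a nonzero minor.

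\textbf{Step 2 (duality).} Send $W\subset k^n$ to $W^{\perp}\subset k^n$, using the standard pairing. The classical identity $\Delta_I(W^\perp)=\pm\,\Delta_{I^c}(W)$, up to a common scalar, shows that $\Delta_I(W^\perp)$ vanishes exactly when $\Delta_{I^c}(W)$ does. The complement of a cyclic interval of length $n-m$ is a cyclic interval of length $m$, so $W\mapsto W^\perp$ restricts to a bijection $\opname{Gr}(m,n)^{\circ}\simeq\opname{Gr}(n-m,n)^{\circ}$. Composing with Step 1 for both $m$ and $n-m$ gives $P(m,n)\simeq P(n-m,n)$. It is natural in $k$ and compatible with the cyclic rotation $A_i\mapsto A_{i+1}$, which rotates columns.

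\textbf{Main obstacle.} The delicate point is Step 1: one must check that the correspondence is an honest bijection rather than a bijection up to ambiguity. Concretely:
\begin{itemize}
\item the last coefficient of each companion matrix (the coefficient of $u_{i+m}$) is forced to be $1$, so no rescaling freedom appears;
\item the quotient by $GL_m$ exactly absorbs the choice of initial frame, which holds because $GL_m$ acts freely and transitively on bases $F_0$;
\item invertibility of every $A_i$, which makes all windows bases, follows from the product being $I$.
\end{itemize}
I would verify these by writing out the change-of-basis matrices explicitly. The sign bookkeeping in the Plücker duality is harmless, since only the nonvanishing of minors matters.
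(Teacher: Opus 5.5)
Your outline matches the paper's. You identify $P(m,n)$ with the open positroid locus $\Pi(m,n)\subset\opname{Gr}(m,n)$ through $n$-periodic vector sequences whose cyclic $m$-windows are bases, then apply $W\mapsto W^\perp$, and your Step 2 is fine. However, Step 1 has the product in the wrong order. Let $F_{i-1}$ be the matrix with columns $u_i,\dots,u_{i+m-1}$. Your recurrence says exactly $F_i=F_{i-1}A_i$ (right multiplication), hence $F_n=F_0A_1A_2\cdots A_n$. So your sequence is $n$-periodic if and only if $A_1\cdots A_n=I$. In the converse direction, expanding a periodic sequence produces tuples with $A_1\cdots A_n=I$, not $A_n\cdots A_1=I$. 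These conditions genuinely differ: over $\mathbb{Q}$, the matrices
\[A_1=\begin{bmatrix}0&-1\\1&1\end{bmatrix},\qquad A_2=\begin{bmatrix}0&1\\1&1\end{bmatrix},\qquad A_3=\begin{bmatrix}0&1\\1&-1\end{bmatrix}\]
satisfy $A_3A_2A_1=I$, while $A_1A_2A_3=\begin{bmatrix}-1&0\\2&-1\end{bmatrix}$. Your recurrence gives $u_1=(1,0)$, $u_2=(0,1)$, $u_3=(-1,1)$, $u_4=(-1,2)\neq u_1$. So, as written, Step 1 does not define a map $P(m,n)\to\Pi(m,n)$.

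The repair is cheap. Drive the recurrence with the matrices in reverse order, using $A_{n+1-i}$ to produce $u_{i+m}$. Equivalently, precompose with the bijection $(A_1,\dots,A_n)\mapsto(A_n,\dots,A_1)$ from $P(m,n)$ onto $\{A_1\cdots A_n=I\}$, on both the $m$ side and the $n-m$ side. With that change the rest of your argument goes through: the free and transitive $GL_m$-action on initial frames, uniqueness of the companion matrix, $\Delta_I(W^\perp)=\pm c\,\Delta_{I^c}(W)$, and complements of cyclic intervals being cyclic intervals.

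Do not instead switch to the recursion written in the paper, $e_{i+1}=A_ie_i$. It gets periodicity directly from $A_n\cdots A_1=I$, but then the window condition is not automatic. For $m=2$, $n=4$, the matrices $A_1=A_3=\begin{bmatrix}0&1\\1&0\end{bmatrix}$, $A_2=\begin{bmatrix}0&1\\1&-1\end{bmatrix}$, $A_4=\begin{bmatrix}0&1\\1&1\end{bmatrix}$ satisfy $A_4A_3A_2A_1=I$. Yet that recursion gives $e_3=(1,-1)$ and $e_4=(-1,1)$, which are dependent. Your sliding-window recurrence, which makes each window a basis by construction, is the right mechanism once the order is fixed.
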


\begin{proof}
    We show that there is a bijection $P(m,n)\simeq \Pi(m,n)$ where $\Pi(m,n)\subset \opname{Gr}(m,n)$ is the open positroid locus. These are those objects for which when represented by an $n\times m$ matrix, any $m$ cyclically consecutive rows have a non-zero minor.
    
    Denote by $e_1\in k^m$ the first standard basis vector and define recursively $e_{i+1} = A_ie_i$ for $i\geq 1$. See that $e_1,\dots,e_m$ is the standard basis, that $e_{i+n}=e_i$ for all $i$, and that any $m$ consecutive vectors in this sequence form a basis. Thus the assignment
    \[(A_1,\dots,A_n)\mapsto [e_1|e_2|\cdots |e_n]^{\intercal}\]
    defines a map $P(m,n)\to \Pi(m,n)$, which is easily seen to be a bijection. We are done by recalling the natural bijection
    \[\Pi(m,n)\simeq \Pi(n-m,n)\]
    by taking the orthogonal subspace under the standard bilinear form on $k^n$.
\end{proof}

We note that $\Pi(m,n)$ also appears as a moduli of microlocal sheaves supported on a Legendrian torus link in $\bbR^2$ as explained in \cite{shendeClusterVarietiesLegendrian2019}. Work of Kuwagaki \cite{kuwagakiCategorificationLegendrianKnots2019} studies perverse schobers in the real microlocal setting, where $n$-periodic SODs naturally arise from the same Legendrian torus links. This picture is related to $(\bbC^2,C)$ via the Fourier transform of Kapranov--Soibelman--Soukhanov \cite{kapranovPerverseSchobersAlgebra2020}. In fact, this is the geometry which implicitly underlies the proof of Theorem~\ref{thm:radonsod}, presented in \S\ref{subsection:ykxn}. Ongoing work explores this relation further.

\subsection{Main constructions and results}
We first outline our main constructions. We recall from \cite{kapranovPerverseSchobers2015} that to define perverse schobers on $(\bbC,R)$, we make a choice of cuts and write down a certain diagram of categories, which we shall call an abstract perverse schober. An action of the braid group on the set of abstract schobers encodes how diagrams corresponding to different choices of cuts are related, leading to a definition independent of the choice of cuts.

Upgrading this, we will construct $\two$-categories together with functors and compatible braid group actions,
\[\sfSph(n)\to\sfSphMnd(n)\to\sfAut(n).\]
such that a $\two$-categorical version of the construction above yields $\two$-categories,
\[\sftwoP(\bbC,R)\to \olsftwoP^{\opname{monad}}(\bbC,R)\to\olsftwoP^{\opname{aut}}(\bbC,R),\]
where $\olsftwoP^{\opname{monad}}(\bbC,R)\simeq \sftwoP(\bbC,R)/\sftwoL(\bbC)$. We refer to objects of these $\two$-categories as (abstract) perverse schobers, prelocalized perverse schobers and localized perverse schobers respectively.

During this process, we will make heavy use of factorization structures on these $\two$-categories as a means to make inductive arguments. To encode these structures, we will in fact construct simplicial $\two$-categories
\[\sfSph(-)\to\sfSphMnd(-)\to\sfAut(-)\to \sfSOD(-)\]
satisfying the 2-Segal property \cites{dyckerhoffHigherSegalSpaces2019,galvez-carrilloDecompositionSpacesIncidence2018}, so that $\sfSph([n]) = \sfSph(n)$ and so on. Here, $\sfSOD(n)$ is a $\two$-category whose objects are given by categories equipped with an $n$-term SOD. 

Our first main result is the following generalization of Theorem~\ref{thm:introcolimit}, which appears as Theorem~\ref{thm:limit0} in the main text.
\begin{thm}\label{thm:introcolimit2}
    There is a natural diagram of $\two$-categories,
    \[\olsftwoP^{\opname{aut}}(\bbC,R)\to \cdots\xto{\calSus_{\opname{monad}}}\olsftwoP^{\opname{monad}}(\bbC,R)\xto{\calSus_{\opname{monad}}}\olsftwoP^{\opname{monad}}(\bbC,R)\]
    which exhibits $\olsftwoP^{\opname{aut}}(\bbC,R)$ as a limit. Here, $\calSus_{\opname{monad}}$ is a certain generalization of the symplectic suspension functor from earlier.
\end{thm}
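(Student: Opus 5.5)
We reduce the statement to the case of a single singular point, Theorem~\ref{thm:introcolimit}, and then pass from one point to several using the braid group actions and the 2-Segal (factorization) structure. In the abstract models, a choice of cuts identifies $\olsftwoP^{\opname{monad}}(\bbC,R)$ with $\sfSphMnd(n)$ and $\olsftwoP^{\opname{aut}}(\bbC,R)$ with $\sfAut(n)$, where $n=|R|$. We therefore first prove the claim for these simplicial $\two$-categories levelwise. We then check that everything is compatible with the braid group actions. Since $\olsftwoP(\bbC,R)$ is obtained as a (homotopy) quotient or limit over choices of cuts, the limit statement descends, because limits commute with limits.

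\textbf{Step 1: the case $n=1$.} Objects of $\sfSphMnd(1)$ are pairs $(\Phi,M)$ with $T=\coCone(\id\to M)$ an autoequivalence, and $\calSus_{\opname{monad}}(\Phi,M)=(\Phi,\id\times_M\id)$. There is a functor $\sfAut(1)\to\sfSphMnd(1)$ sending $(\Phi,T)$ to $(\Phi,\Cone(T\to\id))$, equipped with a suitable monad structure; for this we use the canonical splitting of the twist. Up to shifts, one checks that $\calSus_{\opname{monad}}$ followed by this functor agrees with the functor applied to $(\Phi,T[-1])$. These compatibilities give a cone over the tower. To show it is a limit, we compute the limit of the tower. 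A compatible sequence $(\Phi_k,M_k)$ with $M_k\simeq \id\times_{M_{k+1}}\id$ determines $T_{k+1}=T_k[1]$ and is uniquely recovered from $(\Phi_0,T_0)$. The content here is that the extra monad data in $M_{k+1}$, beyond its twist, is annihilated by $\calSus_{\opname{monad}}$: passing to $\id\times_M\id$ kills the ``multiplication'' part of the data and retains only the twist. We verify this on mapping $\infty$-categories of the $\two$-categories, showing that the maps on $\Hom$'s also stabilize.

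\textbf{Step 2: general $n$.} Define $\calSus_{\opname{monad}}$ on $\sfSphMnd(n)$ pointwise through the 2-Segal decomposition. By the Segal condition, $\sfSphMnd(n)$ is an iterated fibre product of copies of $\sfSphMnd(1)$ over $\sfSOD$-type data, and the same holds for $\sfAut(n)$. The underlying SOD data is left unchanged by the suspension. Hence the tower for $n$ is a fibre product of towers for $n=1$ over a constant tower. Because limits commute with fibre products and the limit of a constant tower is its value, the result follows from Step 1.

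\textbf{Step 3: independence of cuts.} Finally, we check that the functors $\sfAut(n)\to\sfSphMnd(n)$ and $\calSus_{\opname{monad}}$ are braid-equivariant. It suffices to do this on elementary braid generators, which act by mutation on two adjacent factors, so the check reduces to $n=2$.

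The main obstacle is Step 1, specifically the claim that the tower's limit loses no data and gains none, and showing this at the level of $\two$-morphisms rather than objects. My first attempt would be to present $\sfSphMnd(1)$ as algebras over $\id\oplus T[1]$-type data and prove directly that $\id\times_M\id$ is a square-zero-like truncation. Under that description, iterating the functor and taking the limit retains exactly the invertible part.
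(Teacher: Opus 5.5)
Your overall architecture agrees with the paper's: settle one singularity, obtain general $n$ by base change, then descend along the braid-group actions. However, Step~1 carries all of the content, and it is asserted rather than proved. The route you sketch for it cannot work as stated.

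Even granting that one application of $\calSus_{\opname{monad}}$ already yields a split square-zero monad $\id\oplus T[1]$, this only identifies objects up to equivalence. The assignment $V\mapsto \id\oplus V$ from endofunctors to monads is not fully faithful. Maps and automorphisms of square-zero algebras have nonlinear components: maps out of $\id\oplus V$ into a square-zero algebra are corepresented by its cotangent fibre, which contains $V[1]^{\otimes n}[-1]$ for every $n\geq 1$, not just $V$. Consequently ``uniquely recovered from $(\Phi_0,T_0)$'' does not identify the limit as an $\two$-category. It does not even identify its space of objects, which is a limit of groupoids of monads and therefore sees automorphisms of monads rather than of $T$. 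These extra components die only in the limit, and proving that is exactly the theorem. Your sentence about checking that the maps on $\Hom$'s stabilize names this problem but supplies no mechanism for it.

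The missing idea is to recognise the tower as a stabilisation. The paper tests against an arbitrary $\frakt\in\wCat_{\two}$ and works fibrewise over $\Map(\frakt,\sfSt_k)$. It then uses Haugseng's theorem to identify the fibres of $\Map(\frakt,\sfMnd)$ and $\Map(\frakt,\sfEnd)$ over $\opname{X}$ with $\Alg(\sfT(\opname{X},\opname{X}))^{\simeq}$ and $\sfT(\opname{X},\opname{X})^{\simeq}$. Under this identification, $\calSus$ becomes the loop functor $A\mapsto \id\times_A\id$ relative to the initial algebra. The limit is therefore $\Sp(\Alg(\calC)_{/\id})\simeq\calC$ by \cite{lurieHigherAlgebra2017}*{7.3.4.7}, which handles objects, 1-morphisms and 2-morphisms simultaneously.

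This is also why the paper first proves $\lim(\cdots\xto{\calSus}\sfMnd)\simeq\sfEnd$ with no invertibility condition, where Lurie's theorem applies. Only then does it pull back along $\sfAut\subset\sfEnd$. This works because $\coCone(\id\to -)$ intertwines $\calSus$ with $[-1]$, and Theorem~\ref{thm:christstrengthening1} identifies the pulled-back tower with the $\sfSphMnd$ tower.

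In Step~2, the 2-Segal condition glues $\sfSphMnd(n)$ from copies of $\sfSphMnd(2)$ over $\sfSphMnd(1)$, so it does not reduce you to $n=1$. What you need instead is the active-Cartesian decomposition $\sfSphMnd(n)\simeq\sfAut(n)\times_{\sfAut}\sfSphMnd$. On it, $\calSus$ acts factorwise and is invertible on the first two factors, which gives $\sfAut(n)\times_{\sfAut}\sfAut\simeq\sfAut(n)$. Step~3 is fine.
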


We use ${\olsftwoP}^{\opname{aut}}(\bbC,R)$ to define $\two$-categories $\olsftwoP_{\Lambda_C}(U)$ by proposing a categorification of the local GMV construction. Our main conjecture is that the resulting $\two$-categories admit a Radon transform.

\begin{conj}\label{conj:introradon}
    Let $C\subset \bbC^2_{x,y}$ be the germ of a curve near $(0,0)$ with defining equation $y^m + \text{ higher order terms}$, and $\check{C}\subset \bbC^2_{a,b}$ its dual. Then, there is an equivalence of $\two$-categories
    \[\olsftwoP_{\Lambda_C}(U_{x,y})\simeq \olsftwoP_{\Lambda_{\check{C}}}(V_{a,b}),\]
    where $U_{x,y}$ and $V_{a,b}$ are small open subsets near $(0,0)$.
\end{conj}

Our second main result is a partial proof of this conjecture for the example $\{y^m=x^n\}$, which appears as Theorem~\ref{thm:ykxnschober} in the main text, and which we discussed a variant of in Theorem~\ref{thm:radonsod}.
\begin{thm}\label{thm:introykxn}
    Let $0<m<n$ and let $C\subset\bbC^2_{x,y}$ be the curve $\{y^m=x^n\}$. For this curve, Conjecture~\ref{conj:introradon} holds at the level of objects.
\end{thm}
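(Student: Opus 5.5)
The strategy is to describe both sides explicitly as diagrams of categories on the torus link, and then to compare the resulting data through an orthogonality construction that lifts the positroid duality $\Pi(m,n)\simeq\Pi(n-m,n)$ from the proof of the proposition above.

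\textbf{Step 1: describe each side via the categorified local GMV construction.} Since $C=\{y^m=x^n\}$ is quasi-homogeneous, the family of discs $(\bbC_y,R_x)$ over the punctured $x$-disc is locally trivial. The monodromy around $|x|=1$ is the braid that cyclically rotates the $m$ points by $2\pi n/m$. Over $x=0$ all points collide. Using the 2-Segal structure on $\sfAut(-)$ and $\sfSOD(-)$ together with Theorem~\ref{thm:introcolimit2}, I would identify $\olsftwoP_{\Lambda_C}(U)$ (on objects) with triples $(\Phi,\frakS,T)$. Here $\frakS$ is an $m$-term $\infty$-admissible SOD, and its braid-monodromy condition amounts to $n$-periodicity of the cyclic mutations $\Phi_i=\Phi_{i+n}$. $T$ is an autoequivalence that is conjugate-compatible with $\frakS$. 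The dual curve is $\{b^{n-m}=a^n\}$ up to constants, so the same analysis gives triples $(\Psi,\frakS',T')$ with $\frakS'$ an $(n-m)$-term, $n$-periodic SOD. The key point is that the periodic sequence $\Phi_1,\Phi_2,\dots$ of mutated components, with $n$ entries up to periodicity, is an invariant shared by both sides.

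\textbf{Step 2: the bijection on SODs (Theorem~\ref{thm:radonsod}).} Given an $n$-periodic sequence of $m$-term SODs $\langle\Phi_i,\dots,\Phi_{i+m-1}\rangle$, I would build the dual $(n-m)$-term data by taking orthogonals. Concretely, I would form the gluing of $\Phi_{i+m},\dots,\Phi_{i+n-1}$ along the gluing functors read off from the mutations. Equivalently, I would view the whole periodic sequence as a representation of the cyclic $(n,m)$ Legendrian torus link, in Kuwagaki's real-microlocal picture, via the Kapranov--Soibelman--Soukhanov Fourier transform. In that picture the roles of $m$ and $n-m$ are symmetric: the $n$ strands can be read as $m$ or $n-m$ consecutive components. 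To turn this into a proof I would check two things. First, the construction produces an $\infty$-admissible $n$-periodic SOD, which reduces to showing that cyclic mutation on one side corresponds to cyclic mutation on the other. Second, applying the construction twice recovers the original data up to canonical equivalence. At the level of decategorification this is exactly the $e_i$-vector argument of the proposition.

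\textbf{Step 3: the autoequivalence.} Finally I would match $T$ with $T'$. Guided by the smooth case, where $\calRadon$ acts by $T\mapsto T[-1]$ (symplectic suspension), I expect $T'$ to be $T$ up to a shift and a twist by the total mutation (Serre-type) functor of the SOD. I would fix this by requiring compatibility with restriction to the generic smooth points of $C$ and $\check{C}$, where the answer is already dictated by the $\calSus$ analysis, and then check that conjugate-compatibility is preserved. I expect this step, and especially making the orthogonal construction in Step 2 coherent under all cyclic mutations simultaneously, to be the main obstacle. I would handle it by working inductively through the 2-Segal decompositions, rather than verifying the infinitely many mutation compatibilities directly.
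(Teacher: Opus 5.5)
Your outline has the same shape as the paper's proof. It reduces both sides to $\sfAut(m)^{n\opname{-periodic}}$ and $\sfAut(n-m)^{n\opname{-periodic}}$ (via Proposition~\ref{prop:restrict}, as in Example~\ref{eg:ykxn}; Theorem~\ref{thm:introcolimit2} plays no role), and then builds a dual out of $n-m$ consecutive members of the periodic sequence $(\Phi_i)_{i\in\bbZ/n}$. However, the constructions that actually make up the proof are left open.

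In Step~2, gluing one window along the morphisms of $\Phi$ is indeed what the dual category looks like for that window. What is missing is an identification of the gluings for different windows, which you rightly flag as the crux. The paper does not do this by 2-Segal induction; it uses a window-free definition instead. Let $\calA\subset\Cycle_n(\Phi)$ be the category of acyclic $n$-cycles $(\phi^i)_{i\in\bbZ/n}$ with $\phi^i\in\Phi_i$. Restriction to any $n-m$ consecutive terms is an equivalence onto $\Complex(\Phi_i,\dots,\Phi_{i+n-m-1})$, because the complementary $m$ consecutive components form an SOD of $\Phi$. The dual components are the intrinsic subcategories $\calA_j=P_j^R(\Phi_j)$, where $P_j$ is the $j$th-term projection. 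Lemma~\ref{lem:complexSOD} then gives $\calA=\langle\calA_{i+n-m-1},\dots,\calA_i\rangle$ for every $i$. Since the same $\calA_j$ occur for all windows, $\infty$-admissibility, $n$-periodicity and the matching of cyclic mutations come for free. Two remarks on this. First, this is the reversed, projection-induced SOD, which differs from the tautological SOD of a single gluing by $\delta$. Second, the fact that both sides share the sequence of components ($\calA_j\simeq\Phi_j$) is an output of this construction, not an input as your Step~1 suggests.

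The more serious gap is Step~3. $T$ is an autoequivalence of $\Phi$, and nothing in your plan produces an endofunctor of the dual category. Matching against the parabola picture at smooth points of $C$ and $\check C$ only sees the diagonal blocks $I_j^RT'I_j$, and it rests on the heuristic Observation~\ref{obs:radonissuspension}. So it can neither define $T'$ nor check the vanishing of off-diagonal blocks that conjugate-compatibility requires. The missing observation is that conjugate-compatibility forces $T(\Phi_{i+m})=\Phi_i$ (the braid picture in the proof of Theorem~\ref{thm:alternate}), so $T$ permutes the periodic sequence. Hence $U:(\phi^i)_i\mapsto(T\phi^{i+m})_i$ is a well-defined autoequivalence of $\calA$ with $U(\calA_j)=\calA_{j-m}$. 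Since $-m\equiv n-m$, this is exactly how a full twist moves the components of an $(n-m)$-term window, and via Theorem~\ref{thm:alternate} this yields the conjugate-compatible dual autoequivalence. Your heuristic is consistent with this: for $m=1$, $n=2$ one has $\calA\simeq\Phi$, and $U$ is $T$ up to a shift.

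Likewise, involutivity needs more than the decategorified $e_i$-argument. The paper restricts acyclic bi-cycles to a box, where they have precisely the staircase shape of Lemma~\ref{lem:bicomplex}. This recovers $\Phi$ with SOD $\delta^{-2}\cdot\frakS=T(\frakS)$ and autoequivalence $T$, which is equivalent to $(\frakS;\Phi,T)$ via $T$ itself.
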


\subsection{Relation to other work}
We describe relations to other works on perverse schobers and related areas.

This paper has been greatly inspired by the work of Kapranov--Soibelman--Soukhanov \cite{kapranovPerverseSchobersAlgebra2020}, where localized perverse schobers on $(\bbC,R)$ are studied implicitly. The factorization structure on $\sfSph(-)$ builds on their construction of the Fukaya--Seidel category of a schober on a disc. In addition, we will interpret their Fourier transform in our context by observing that a localized perverse schober on $(\bbC,R)$ is precisely a local system of categories on $\bbC^{\vee}-\{0\}$ together with $R$-Stokes data.

The quotient category $\sftwoP(\bbC,0)/\sftwoL(\bbC)$ is described in terms of spherical monads, which were studied by Christ \cite{christSphericalMonadicAdjunctions2023}. The implicit characterization of spherical monads \cite{christSphericalMonadicAdjunctions2023}*{Theorem 4.1} is sharpened in Theorem~\ref{thm:christstrengthening1}, and generalized to allow for multiple singularities in Appendix~\ref{appendix:sphmonad}. These provide models for prelocalized perverse schobers.

The work of Gammage--Hilburn \cite{gammageHypertoric2categoriesSymplectic2025} also studies microlocal aspects of perverse schobers in the context of 3d mirror symmetry. While we adopt the perspective that the quotient category $\sftwoP(\bbC,R)/\sftwoL(\bbC)$ is incorrect for our purposes, it appears to be the correct category in their context. Indeed, they point out in \cite{gammageHypertoric2categoriesSymplectic2025}*{\S 0.5} that a key feature of their $\two$-category of microlocal perverse schobers is its dependence not just on $\Omega$, but also on the embedding $\Omega\inclto T^*X$. Moreover, it follows from Theorem~\ref{thm:christstrengthening1} that there is an equivalence of $\two$-categories,
\[\sftwoP(\bbC,0)/\sftwoL(\bbC)\simeq \opname{2QCoh}(\bbA^1/\bbG_m).\]
We compare this to the 3d mirror symmetry from their work with Mazel-Gee \cite{gammagePerverseSchobers3d2023}*{Theorem B}, formulated as below in \cite{gammageBettiTatesThesis2025}*{Theorem 1.1}.
\[\sftwoP(\bbC,0)\simeq \opname{2IndCoh}_{\mathbb{L}}(\bbA^1/\bbG_m).\]
We expect that these two statements are compatible in a manner analogous to the compatibility between geometric Langlands and its tempered version, see for example \cite{faergemanNonvanishingGeometricWhittaker2022}*{1.6.2}.

\subsection{Conventions}
Our perverse sheaves are assumed to have $k$-coefficients where $k$ is a commutative ring spectrum. Our perverse schobers will be sheaves of stable presentable $\one$-categories, linear over $k$. In particular, our stable $\one$-categories will be large. This difference is important during one example which uses Proposition~\ref{prop:KLisEM}. However we expect that most of the theory admits a small analog. We outline $\two$-categorical conventions in \S\ref{subsection:catthy}.

\subsection{Organization of paper}
In each of \S\ref{section:sod}--\S\ref{section:radon}, we place the main ideas, definitions and results at the beginning of the section, with technical details placed later. We will also include a brief summary below.

In \S\ref{section:prelim}, we collect preliminaries used throughout the paper. We will recall some higher category theory, and state a theorem of Abell\'an--Haugseng--Martini \cite{abellanFreeFibrationsLax2026} which will provide a source of adjoint functors between $\two$-categories. We also study 2-faithful functors, which will provide our other source of functors. We then discuss simplicial objects, and recall the 2-Segal property which encodes factorization structures. Finally, we discuss the theory of adjunctions and monads focusing on the stable, presentable setting. We prove that there is an analog of the Kleisli construction, which surprisingly agrees with the Eilenberg--Moore construction.

In \S\ref{section:sod}, we set up the conventions for semiorthogonal decompositions, and then construct the $\two$-category $\sfSOD(n)$ whose objects are categories equipped with an $n$-term SOD. We then upgrade this to a 2-Segal simplicial $\two$-category, encoding standard factorization structures on SODs.

In \S\ref{section:localized}, we introduce the theory of localized perverse schobers. We begin by defining \textit{abstract} localized perverse schobers (in the same way that spherical adjunctions can be thought of as abstract perverse schobers) and then upgrade this to a $\two$-category with factorization structures and a braid group action. We use the braid group action to define localized perverse schobers.

In \S\ref{section:adjaut}, we relate localized schobers to ordinary schobers on a disc. In doing so, we will construct factorization structures and a braid group action on ordinary schobers in the $\two$-categorical setting. We use the braid group action to define $\two$-categories of ordinary perverse schobers. We then give an interpretation of the Fourier transform for perverse schobers in our setting.

In \S\ref{section:prelocalized}, we study the quotient category $\sftwoP(\bbC,R)/\sftwoL(\bbC)$, whose objects we will refer to as prelocalized schobers. We then relate this to spherical monads.

In \S\ref{section:pretolocalized}, we define the symplectic suspension operation $\calSus$ on prelocalized schobers, and prove that by stabilizing this, we recover localized perverse schobers, thereby making precise the sense in which localized schobers arise from schobers.

In \S\ref{section:families}, we introduce the local GMV construction, categorifying \cite{gelfandMicrolocalPerverseSheaves2005}*{Proposition 3.3}, in order to define ordinary/localized/prelocalized schobers on $(\bbC^2_{x,y},C)$. We relate these to periodic SODs and study in detail perverse schobers on $(\bbC^2,\{y^2=x^3\})$, comparing them to \cites{macphersonPerverseSheavesSingularities1988,dyckerhoffPerverseSchobersCoxeter2025}, as well as to the $A_2$-configurations of Seidel--Thomas \cite{seidelBraidGroupActions2000}.

In \S\ref{section:radon}, we state the Radon transform conjecture. We study in detail the Radon transform on $(\bbC^2,\{y=x^2\})$, and relate this to the symplectic suspension. We then prove the conjecture for $\{y^m=x^n\}$ in an explicit manner, at the level of objects.

In Appendix~\ref{appendix:sphmonad}, we study spherical monads abstractly. We strengthen a theorem of Merlin Christ \cite{christSphericalMonadicAdjunctions2023}*{Theorem 4.1}, and generalize this theorem to allow multiple singularities instead of just one.

\subsection{Acknowledgments}
This paper is part of an ongoing project in collaboration with Peng Zhou. I thank him for many explanations and interpretations from the perspective of symplectic geometry, which benefitted my understanding greatly.

I thank my PhD advisor David Nadler for many helpful discussions, for proposing the problem of studying perverse schobers microlocally, and for his suggestions which improved the readability of the Introduction.

I thank Merlin Christ for suggesting a simplification in the proof of Theorem~\ref{thm:christstrengthening1}, and for helpful discussions on $\two$-categorical braid group actions. We note that a $\two$-categorical braid group action on $\sfSph(n)$ was constructed first, and in greater generality in his work in preparation with Fernando Abell\'an and Gustavo Jasso \cite{acj}, which was announced during the summer school ``Perverse Sheaves and their Categorification in Geometry and Representation Theory'', Venice, July 2026. This announcement provided the inspiration to make the present work more rigorously $\two$-categorical.

I also thank Benjamin Gammage, Swapnil Garg, Justin Hilburn, Josephine Hlavinka, Mikhail Kapranov, Tatsuki Kuwagaki, and Paul Wedrich for helpful conversations during the preparation of this paper. This work was supported by the Simons Dissertation Fellowship and the James Simons Fellowship in Mathematics.

Google Gemini 3.6 Thinking was used to help identify the 2-Segal property as an appropriate way to encode factorization structures, to generate tikz diagrams, and to check for typographical errors in the paper.

\section{Preliminaries}\label{section:prelim}
We recap some preliminaries on $\two$-categories, simplicial objects, Segal properties, and monads and adjunctions. 

\subsection{Higher category theory}\label{subsection:catthy}
We collect together some higher category theory we use throughout the paper. The properties we need are largely independent of the choice of a model of $\one$ or $\two$-category. We assume to have fixed nested Grothendieck universes which we refer to as small, large and very large.

All $\one$-categories appearing as the coefficients of our perverse schobers will be stable presentable large categories. All functors between $\one$-categories are assumed to be colimit-preserving, or equivalently admit (a possibly non-colimit-preserving) right adjoint. We denote by $\sfSt_k$ the $\two$-category of stable large presentable categories with colimit-preserving functors and natural transformations.

We denote by $\wCat_{\two}$ the $\one$-category of large $\two$-categories. As pointed out in \cite{stefanichPresentable$inftyN$categories2020}, $\sfSt_k$ is only large, roughly because presentable categories are determined by a small amount of data. Thus $\sfSt_k\in \wCat_{\two}$ defines an object.

Given a $\two$-category $\sfA\in\wCat_{\two}$, and objects $\calA_1,\calA_2\in\sfA$, we denote by $\sfA(\calA_1,\calA_2)$ the mapping category. Given also $\sfB\in\wCat_{\two}$, we say that a pair of functors $\opname{L}:\sfA\tofrom \sfB:\opname{R}$ are adjoint if there exist functorial equivalences of mapping categories,
\[\sfA(\calA,\opname{R}(\calA))\simeq \sfB(\opname{L}(\calA),\calB).\]

Given a functor $\opname{I}:\sfA\to \sfB$, we say that
\begin{itemize}
    \item $\opname{I}$ is 2-fully faithful if $\opname{I}$ induces an equivalence on all mapping categories,
    \item $\opname{I}$ is 2-faithful if $\opname{I}$ induces a fully faithful functor on all mapping categories,
    \item $\opname{I}$ is a locally full inclusion if $\opname{I}$ is 2-faithful and induces a monomorphism on equivalence classes of objects.
\end{itemize}

Whenever $\frakd$ is a small $\two$-category, there is a $\two$-category $\sfFun(\frakd,\sfSt_k)\in \wCat_{\two}$ whose objects are functors $\frakd\to \sfSt_k$. Whenever $f:\frakd_1\to \frakd_2$ is a functor between $\two$-categories, there is a restriction functor,
\[f^*:\sfFun(\frakd_2,\sfSt_k)\to \sfFun(\frakd_1,\sfSt_k).\]

All $\two$-categories of schobers will be constructed out of these diagram categories $\sfFun(\frakd,\sfSt_k)\in \wCat_{\two}$ by taking locally full sub-$\two$-categories. All functors will be constructed using either Theorem~\ref{thm:laxfibrations} or Proposition~\ref{prop:functorconstruction}, which we discuss below.

\begin{rmk}
    We expect that all $\two$-categories of schobers are ``$k$-linear stable presentable $\two$-categories'' and that all functors respect this structure. A potential candidate for such theory is laid out in \cite{stefanichPresentable$inftyN$categories2020}. In this paper, we will content ourselves with working inside $\wCat_{\two}$.
\end{rmk}

\begin{thm}[Abell\'an--Haugseng--Martini, \cite{abellanFreeFibrationsLax2026}*{Theorem 5.6.5}]\label{thm:laxfibrations}
    The functor $f^*$ admits both adjoints $f_!\adjto f^*\adjto f_*$ which each have explicit descriptions on objects.
\end{thm}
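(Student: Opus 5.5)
This result is quoted from \cite{abellanFreeFibrationsLax2026}, so my plan is to reconstruct the standard argument rather than find a new one. Work in a fixed model of $\two$-categories, say categories enriched in $\one$-categories, or complete 2-fold Segal spaces. Since $\sfSt_k$ is presentable in a suitable $\two$-categorical sense (it is large and admits all small weighted limits and colimits), the natural route is to construct the adjoints as $\two$-categorical Kan extensions along $f:\frakd_1\to\frakd_2$.

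For the right adjoint, I would define
\[(f_*\calF)(d) := \lim_{(d\to f(e))\in d/f} \calF(e),\]
where the limit is computed in $\sfSt_k$. Here $d/f$ is the lax (or oplax, depending on variance) comma $\two$-category whose objects are pairs $(e, d\to f(e))$. Its morphisms are pairs consisting of a $1$-morphism $e\to e'$ together with a $2$-cell filling the resulting triangle. Dually, I would define
\[(f_!\calF)(d) := \colim_{(f(e)\to d)\in f/d}\calF(e).\]
The word ``explicit'' in the statement refers to exactly these pointwise formulas.

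The key steps, in order, are as follows.

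\begin{enumerate}
\item Construct the comma $\two$-categories $d/f$ and $f/d$ functorially in $d$. The cleanest way is via the free (co)cartesian lax fibration on $f$: one presents $d/f$ as the fibre over $d$ of a lax fibration over $\frakd_2$. This is precisely the input ``free fibrations'' provides.
\item Show that the needed limits and colimits exist in $\sfSt_k$. Limits are computed underlying in $\wCat$. Colimits in $\Pr^L$ are computed as limits along right adjoints, so $f_!$ reduces to limits as well.
\item Prove the mapping category equivalences
\[\sfFun(\frakd_1,\sfSt_k)(f^*\calG,\calF)\simeq \sfFun(\frakd_2,\sfSt_k)(\calG,f_*\calF).\]
I would establish these by expressing natural transformations as ends (weighted limits over twisted arrow $\two$-categories) and then applying a $\two$-categorical Fubini and co-Yoneda lemma.
\end{enumerate}

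The main obstacle is that $\sfFun$ consists of strict natural transformations, while the comma constructions naturally see lax transformations. One therefore has to choose the correct lax or oplax comma variant so that the pointwise formula is genuinely adjoint to $f^*$ on strict transformations. Doing this coherently in the $\two$-setting, rather than up to homotopy at the level of objects, is the technical core.

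My first attempt would be to straighten: identify $\sfFun(\frakd,\sfSt_k)$ with a suitable $\two$-category of (co)cartesian fibrations over $\frakd$. Under this identification, $f^*$ becomes pullback, and $f_!$ becomes the free-fibration replacement of the composite with $f$. The universal property of free fibrations then supplies the adjunction directly, with $f_*$ obtained by the dual argument.
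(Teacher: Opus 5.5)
The paper gives no argument here; it simply imports the result from Abell\'an--Haugseng--Martini, so your reconstruction has to stand on its own. Its overall shape is the right one: pointwise Kan extensions over a lax comma, organized via free fibrations. But the formula at its centre is wrong, and the ``main obstacle'' is misdiagnosed. Once $\frakd_2$ has non-invertible $2$-cells, a conical limit over the lax comma $d/f$ does not compute the right adjoint of $f^*$ on strict transformations, and no choice of comma variant fixes this.

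Test your formula on $m:\mathfrak{pt}\to\frakmnd$, which the paper uses. Here $\frakmnd$ has one object with endomorphism category $\Delta_+$ under ordinal sum $\oplus$.
\begin{itemize}
\item The lax comma $\ast/m$ is $\Delta_+$, and the diagram is constant at $\calC$. Since $\Delta_+$ has an initial object $\emptyset$, your formula returns $\calC$.
\item The oplax comma is $\Delta_+^{\op}$ and gives the same answer. The pseudo comma is the discrete set of objects of $\Delta_+$, which gives a product of copies of $\calC$.
\item The actual right adjoint of the forgetful functor $\sfMnd\to\sfSt_k$ is the cofree object $(\Fun(\Delta_+,\calC),N)$, where $(NG)(n)=G(n\oplus[0])$. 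Monad morphisms $(\Phi,M)\to(\Fun(\Delta_+,\calC),N)$ correspond, by evaluation at $\emptyset$, to arbitrary functors $\Phi\to\calC$.
\end{itemize}
So the underlying category of $m_*\calC$ is a lax limit of the constant diagram, not the conical one.

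The missing ingredient is partial laxness. The correct pointwise formula takes the limit over the lax comma that is lax in general but strict on the marked arrows. These are the arrows whose $2$-cell is invertible, i.e.\ the cocartesian arrows of $d/f\to\frakd_1$; dually for $f_!$ over $f/d$. Only with this marking does the comma formula agree with the limit of $\calF$ weighted by $\frakd_2(d,f(-))$. That weighted limit is what your end computation in step 3 actually produces, and establishing the comparison is the ``lax colimits'' part of the cited paper.

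The same defect appears in your straightening route. Cocartesian-preserving maps out of the free fibration on $E\to\frakd_1\to\frakd_2$ into some $E'$ correspond to all functors $E\to f^*E'$ over $\frakd_1$, not just those preserving cocartesian arrows. Such functors model lax transformations. You must therefore first localize at the cocartesian arrows of $E$, i.e.\ take the free fibration on the marked composite.

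Finally, straightening classifies functors valued in large $\one$-categories, where colimits differ from those in $\sfSt_k$. For $f_!$ you should run the argument on the diagram of right adjoints, as your step 2 already suggests, rather than on $\calF$ itself.
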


Our other method of constructing functors is as follows.

\begin{prop}\label{prop:functorconstruction}
    Let $\sfC,\sfD,\sfS\in\wCat_{\two}$, and let $\opname{F}:\sfC\to\sfS$, and $\opname{G}:\sfD\to\sfS$ be functors, so that we have lifts $\sfC,\sfD\in (\wCat_{\two})_{/\sfS}$. Assume that $\opname{G}$ is 2-faithful. Then, the mapping space
    \[\Map_{(\wCat_{\two})_{/\sfS}}(\sfC,\sfD)\]
    is discrete. Moreover, any functor $\sfC\to\sfD$ over $\sfS$ is determined by the map it induces on the sets of equivalence classes of objects.
\end{prop}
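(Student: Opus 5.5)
The plan is to reduce the statement to a fact about mapping spaces in an overcategory, together with the observation that 2-faithful functors behave like $(-1)$-truncated maps one categorical level up. For $\opname{H},\opname{H}'\colon\sfC\to\sfD$ with identifications $\alpha\colon \opname{G}\opname{H}\simeq\opname{F}$ and $\alpha'\colon\opname{G}\opname{H}'\simeq \opname{F}$, the mapping space in $(\wCat_{\two})_{/\sfS}$ is the fiber
\[\Map_{(\wCat_{\two})_{/\sfS}}(\sfC,\sfD)=\opname{fib}_{\opname{F}}\bigl(\Map(\sfC,\sfD)\xto{\opname{G}\circ-}\Map(\sfC,\sfS)\bigr).\]
Discreteness therefore amounts to showing that $\opname{G}\circ-$ is $0$-truncated. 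That is, each fiber has contractible components, or equivalently, the induced map on loop spaces $\Omega_{\opname{H}}\Map(\sfC,\sfD)\to\Omega_{\opname{GH}}\Map(\sfC,\sfS)$ is a monomorphism of spaces at every basepoint.

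For the truncation step, recall that $\Omega_{\opname{H}}\Map(\sfC,\sfD)$ is the space of invertible natural transformations $\opname{H}\Rightarrow\opname{H}$ in the functor $\two$-category $\Fun(\sfC,\sfD)$. Such a transformation consists of invertible 1-morphisms $\eta_c\colon \opname{H}c\to\opname{H}c$ in $\sfD$ together with invertible naturality 2-cells, subject to coherences. Because $\opname{G}$ is 2-faithful, I will show that the postcomposition $\Fun(\sfC,\sfD)\to\Fun(\sfC,\sfS)$ is again 2-faithful. This can be deduced by writing $\Fun(\sfC,-)$ as a limit: present $\sfC$ as a colimit of representables, for instance via a complete 2-fold Segal space model, so that the mapping categories of $\Fun(\sfC,\sfD)$ become limits of mapping categories of $\sfD$. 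Fully faithful functors of $\one$-categories are stable under limits, so 2-faithfulness passes to the functor categories.

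A 2-faithful functor induces a monomorphism on the core of each mapping category, hence on the space of invertible 1-morphisms. Applying this to $\Fun(\sfC,\sfD)\to\Fun(\sfC,\sfS)$ at the objects $\opname{H}$ and $\opname{GH}$ gives exactly the required monomorphism on loop spaces. It follows that the fiber is a disjoint union of contractible components, which is discreteness.

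For the second claim, I must show that $\pi_0$ of the fiber injects into the set of maps on equivalence classes of objects. Suppose $\opname{H}$ and $\opname{H}'$ agree on equivalence classes of objects. Then I want an equivalence $\opname{H}\simeq\opname{H}'$ lying over $\alpha'^{-1}\alpha$. By 2-faithfulness, $\opname{G}$ is fully faithful on each mapping category, so a 1-morphism $\opname{G}\opname{H}c\to\opname{G}\opname{H}'c$ in $\sfS$ lifts, if at all, uniquely up to contractible choice. The natural transformation $\opname{G}\opname{H}\simeq\opname{G}\opname{H}'$ thus lifts uniquely once its components lift, and its naturality 2-cells then lift automatically by full faithfulness.

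This is the main obstacle, since 2-faithfulness alone does not make every 1-morphism in $\sfS$ between objects of the image lift. I would try to use the hypothesis in its intended form, which is that $\opname{G}$ is a locally full inclusion, as for all locally full sub-$\two$-categories of $\sfFun(\frakd,\sfSt_k)$ used in the paper. If $\opname{G}$ is only 2-faithful as stated, the claim can fail, so I would make this extra hypothesis explicit. The needed property is that whether a 1-morphism lies in $\sfD$ is detected by a condition on the morphism itself. Under a locally full inclusion, the component $\alpha'^{-1}_c\alpha_c$ is an equivalence in $\sfS$ between objects of $\sfD$, and equivalences between objects in the image do lift, since the inverse and the unit and counit 2-cells all lift by fullness on 2-cells. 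Surjectivity onto the image is not needed, only this injectivity, and this concludes the proof.
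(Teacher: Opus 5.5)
Your discreteness argument is correct and differs from the paper's. You show that postcomposition $\Fun(\sfC,\sfD)\to\Fun(\sfC,\sfS)$ is again 2-faithful, so $\opname{G}\circ-$ is a monomorphism on loop spaces. The paper never touches functor categories: it notes (Lemma~\ref{lem:ftoff}) that the diagonal $\sfD\to\sfD\times_{\sfS}\sfD$ is 2-fully faithful, so the diagonal of $\opname{G}\circ-$ is $(-1)$-truncated. That trick also gives, for free, the uniqueness statement (Lemma~\ref{lem:discrete2}) used in the second half. Your route instead needs $\Fun(\sfC,-)$ to preserve 2-faithfulness, which you only sketch, though it is true.

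The genuine gap is in the second half. You are right that the literal claim fails for a merely 2-faithful $\opname{G}$. But your repair rests on the assertion that, for a locally full inclusion, an equivalence of $\sfS$ between objects of $\sfD$ lifts because ``the inverse and the unit and counit 2-cells all lift by fullness on 2-cells''. That is false. Local full faithfulness only lifts 2-cells between 1-morphisms already in $\sfD$; it says nothing about which invertible 1-morphisms of $\sfS$ lie in $\sfD$.

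For a counterexample, take a nontrivial discrete group $G$ and let $BG$ have one object with endomorphism category the discrete set $G$. Then $\mathfrak{pt}\to BG$ is 2-faithful and injective on equivalence classes, so it is a locally full inclusion in the paper's sense. Yet for $\sfC=\mathfrak{pt}$ the slice mapping space is the fiber of $\mathfrak{pt}\to BG$, which is the set $G$, and all these functors induce the same map on equivalence classes. A hypothesis strong enough to make equivalences lift ($\sfD^{\simeq}\to\sfS^{\simeq}$ a monomorphism) makes $\opname{G}$, together with 2-faithfulness, a monomorphism, so the slice mapping space is empty or a point. Either hypothesis also excludes the paper's actual applications, such as $\opname{U}:\sfSOD'(n)\to\sfSt_k$, which is far from injective on objects.

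What the paper's proof actually establishes, and what the second claim should mean, is that a functor over $\sfS$ is determined by its lift at $\frakt=\mathfrak{pt}$. That lift is the induced map of cores $\sfC^{\simeq}\to\sfD^{\simeq}$ over $\sfS^{\simeq}$. Hence the functor is determined by the points $(\opname{H}c,\alpha_c)$ of the discrete fibers of $\sfD^{\simeq}\to\sfS^{\simeq}$ over $\opname{F}c$, as $c$ ranges over objects of $\sfC$; in the $BG$ example this point is exactly the element of $G$. The paper gets this from Lemma~\ref{lem:discrete2}: since the diagonal is 2-fully faithful, a functor $\frakt\to\sfD\times_{\sfS}\sfD$ factors through $\sfD$ as soon as its restriction along an essentially surjective $\frakt'\to\frakt$ with $\frakt'$ discrete does.

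With this reading your last paragraph goes through with no extra hypothesis on $\opname{G}$. Two functors $\opname{H}$ and $\opname{H}'$ define the same points of these fibers exactly when each component $((\alpha')^{-1}\alpha)_c$ is the image of an equivalence $\opname{H}c\simeq\opname{H}'c$ in $\sfD$. The naturality data then lift uniquely by 2-faithfulness of $\opname{G}$.
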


We note that the proof of the proposition will also give us a criterion under which a given map on equivalence classes of objects comes from a functor. We will need the following standard facts.

\begin{lem}\label{lem:ftoff}
    If a functor $\opname{G}:\sfD\to\sfS$ is 2-faithful, then the diagonal functor $\opname{Diag}:\sfD\to\sfD\times_{\sfS}\sfD$ is 2-fully faithful.
\end{lem}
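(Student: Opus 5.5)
We need to show that for every pair of objects $(d,d')$ of $\sfD$, the functor induced on mapping categories
\[\sfD(d,d')\to (\sfD\times_{\sfS}\sfD)\big((d,d),(d',d')\big)\]
is an equivalence. The plan is to compute the target explicitly and then use that $\opname{G}$ is 2-faithful. Since the fiber product of $\two$-categories is computed on mapping categories, the target mapping category is the pullback of $\one$-categories
\[\sfD(d,d')\times_{\sfS(\opname{G}d,\opname{G}d')}\sfD(d,d').\]
Both legs of this pullback are the functor $g:=\opname{G}_{d,d'}$. Under this identification, the diagonal functor $\opname{Diag}$ induces the diagonal map
\[\delta:\sfD(d,d')\to \sfD(d,d')\times_{\sfS(\opname{G}d,\opname{G}d')}\sfD(d,d').\]
To justify this, I would use the universal property of $\sfD\times_\sfS\sfD$ in $\wCat_{\two}$ and the fact that mapping categories are compatible with limits, for instance via the enriched or complete Segal model.

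So the statement reduces to a $\one$-categorical fact: if $g:A\to B$ is fully faithful, then $\delta:A\to A\times_B A$ is an equivalence. This holds because a fully faithful functor of $\one$-categories is a monomorphism in $\opname{Cat}_{\one}$. To see that, note that fully faithful functors are exactly the right class of the (essentially surjective, fully faithful) factorization system, which is closed under pullback. The pullback $A\times_B A\to A$ is therefore fully faithful. It also has a section $\delta$, so it is essentially surjective, hence an equivalence, and then $\delta$ is an equivalence by two-out-of-three.

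Alternatively, one can check this directly. Objects of $A\times_B A$ are triples $(a,a',\phi:g a\simeq g a')$. Since $g$ is fully faithful, $\phi$ lifts uniquely to $\psi:a\simeq a'$, which shows essential surjectivity of $\delta$. Mapping spaces in the pullback are fiber products
\[\Map(a,b)\times_{\Map(ga,gb)}\Map(a,b),\]
and the diagonal into this is an equivalence because $\Map(a,b)\to\Map(ga,gb)$ is an equivalence.

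The main obstacle is the first step, namely rigorously identifying mapping categories of a fiber product of $\two$-categories with the fiber product of the mapping categories, in the model-independent setting of the paper. I would argue this by representability: the mapping category $\sfA(x,y)$ is the fiber of $\Fun([1],\sfA)\to\sfA\times\sfA$ over $(x,y)$, where $[1]$ is taken as the walking $2$-cell suspension. This construction preserves limits in $\sfA$, since it is built out of mapping objects and fibers, so it commutes with the pullback $\sfD\times_\sfS\sfD$.
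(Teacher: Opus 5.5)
Your proposal is correct and follows essentially the same route as the paper. Both identify the mapping categories of $\sfD\times_{\sfS}\sfD$ with $\sfD(\calD_1,\calD_2)\times_{\sfS(\opname{G}\calD_1,\opname{G}\calD_2)}\sfD(\calD_1,\calD_2)$, note that $\opname{Diag}$ induces the diagonal functor there, and conclude because the diagonal of a fully faithful functor of $\one$-categories is an equivalence. The paper only asserts the mapping-category identification and this $\one$-categorical fact, so your arguments for them are additional. Your representability remark is cleaner if phrased via $\Map(\Sigma C,-)$ for the suspension $\Sigma C$ of a test category $C$.
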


\begin{proof}
    Let $\calD_1,\calD_2\in\sfD$. We have an identification
    \[(\sfD\times_{\sfS}\sfD)(\opname{Diag}\calD_1,\opname{Diag}\calD_2)\simeq \sfD(\calD_1,\calD_2)\times_{\sfS(\opname{G}\calD_1,\opname{G}\calD_2)} \sfD(\calD_1,\calD_2),\]
    and the functor $\sfD(\calD_1,\calD_2)\to (\sfD\times_{\sfS}\sfD)(\opname{Diag}\calD_1,\opname{Diag}\calD_2)$ is the diagonal functor. The diagonal of a fully faithful functor of $\one$-categories is an equivalence, so we are done.
\end{proof}

Lemma~\ref{lem:ftoff} allows us to reduce statements about 2-faithful functors to those about 2-fully faithful ones which are much easier to think about. In the following, we recall that for $k\geq -1$ a map $f$ between spaces is said to be $k$-truncated if the diagonal of $f$ is $(k-1)$-truncated, and that a map $f$ is said to be $(-2)$-truncated if it is an equivalence. So a $(-1)$-truncated map is an inclusion of connected components, and a $0$-truncated map is one which is relatively discrete.

\begin{lem}\label{lem:discrete1}
    Let $\frakt\in\wCat_{\two}$ be any $\two$-category. Then, the post-composition map on mapping categories
    \[\opname{G}\circ-:\Map(\frakt,\sfD)\to \Map(\frakt,\sfS)\]
    is $0$-truncated.
\end{lem}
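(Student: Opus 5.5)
We prove that the map $\opname{G}\circ-:\Map(\frakt,\sfD)\to\Map(\frakt,\sfS)$ of spaces is $0$-truncated by checking that its diagonal is $(-1)$-truncated. Here $\Map$ denotes the mapping space in $\wCat_{\two}$.

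\emph{Reduction to the diagonal.} Mapping spaces into a pullback are pullbacks of mapping spaces. So the diagonal of $\opname{G}\circ-$ is identified with the map
\[\Map(\frakt,\sfD)\to \Map(\frakt,\sfD)\times_{\Map(\frakt,\sfS)}\Map(\frakt,\sfD)\simeq \Map(\frakt,\sfD\times_{\sfS}\sfD),\]
given by post-composition with $\opname{Diag}:\sfD\to\sfD\times_{\sfS}\sfD$. By Lemma~\ref{lem:ftoff}, $\opname{Diag}$ is 2-fully faithful. It therefore suffices to prove:

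\emph{Claim.} If $\opname{I}:\sfA\to\sfB$ is 2-fully faithful, then $\opname{I}\circ-:\Map(\frakt,\sfA)\to\Map(\frakt,\sfB)$ is $(-1)$-truncated, i.e.\ an inclusion of connected components.

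\emph{Proof of the claim.} Let $\sfA'\subseteq\sfB$ be the full sub-$\two$-category on the objects in the essential image of $\opname{I}$. Then $\opname{I}$ factors as $\sfA\to\sfA'\inclto\sfB$.

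\begin{enumerate}
\item The functor $\sfA\to\sfA'$ is essentially surjective and 2-fully faithful. Such a functor is an equivalence in $\wCat_{\two}$, since equivalences of $\two$-categories are detected by these two conditions in any model (for instance complete Segal objects in $\wCat_{\one}$).
\item For the full inclusion $\sfA'\inclto\sfB$: a functor $\frakt\to\sfB$ factors through $\sfA'$ if and only if its map on objects lands in the essential image. When it does, the factorization is unique up to contractible choice, because the inclusion is an equivalence on all mapping categories.
\end{enumerate}

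To make step (2) precise, model $\sfB$ as a complete Segal object $\sfB_\bullet$ in $\wCat_{\one}$. Then $\sfA'_\bullet$ is the sub-object given at each level by a union of connected components: namely $\sfA'_n=\sfB_n\times_{\sfB_0^{\times(n+1)}}(\sfA'_0)^{\times(n+1)}$, where $\sfA'_0\subseteq\sfB_0$ is a union of components. A map from $\frakt_\bullet$ factoring through such a sub-object is a property, not extra structure. Hence $\Map(\frakt,\sfA')\to\Map(\frakt,\sfB)$ is an inclusion of the components of functors whose objects land in the image.

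\emph{Expected obstacle.} The main point needing care is step (2): realizing the claim model-independently, in particular that a full sub-$\two$-category is a monomorphism in $\wCat_{\two}$. Handling this via the Segal-space presentation just described should suffice.

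Combining the claim with the reduction, the diagonal of $\opname{G}\circ-$ is $(-1)$-truncated, so $\opname{G}\circ-$ is $0$-truncated. $\qed$
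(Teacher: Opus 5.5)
Your proof is correct and follows the paper's route. You identify the diagonal of $\opname{G}\circ-$ with post-composition by $\opname{Diag}$, invoke Lemma~\ref{lem:ftoff}, and use that post-composition with a 2-fully faithful functor is $(-1)$-truncated; the only difference is that you justify this last step explicitly (via the essential image and a complete Segal model), whereas the paper simply asserts that a functor into $\sfD\times_{\sfS}\sfD$ factors through the full sub-$\two$-category $\sfD$ uniquely if at all.
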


\begin{proof}
    This is equivalent to showing that the diagonal map of $\opname{G}\circ-$ is $(-1)$-truncated. This diagonal map is identified with $\opname{Diag}\circ-$, and by Lemma~\ref{lem:ftoff}, $\opname{Diag}$ is 2-fully faithful. We are done by observing that $\opname{Diag}\circ-$ is $(-1)$-truncated. In words, a functor $\frakt\to \sfD\times_{\sfS}\sfD$, if it factors through the full sub-$\two$-category $\sfD$, does so uniquely.
\end{proof}

\begin{lem}\label{lem:discrete2}
    Let $\frakt'\to\frakt$ be a functor of $\two$-categories which is essentially surjective. Then, the natural map
    \[\Map(\frakt,\sfD)\to \Map(\frakt',\sfD)\times_{\Map(\frakt',\sfS)}\Map(\frakt,\sfS)\]
    is $(-1)$-truncated.
\end{lem}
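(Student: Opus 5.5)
To show the map is $(-1)$-truncated, I will show that its diagonal is an equivalence. Equivalently, I will check that for two functors $\opname{H}_1,\opname{H}_2:\frakt\to\sfD$, the map on paths
\[\{\opname{H}_1\simeq \opname{H}_2\}\to \{\opname{H}_1|_{\frakt'}\simeq \opname{H}_2|_{\frakt'}\}\times_{\{\opname{G}\opname{H}_1|_{\frakt'}\simeq \opname{G}\opname{H}_2|_{\frakt'}\}}\{\opname{G}\opname{H}_1\simeq\opname{G}\opname{H}_2\}\]
is an equivalence. Rather than handling paths directly, I will use the diagonal trick of Lemma~\ref{lem:ftoff}. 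Set $\sfE:=\sfD\times_\sfS\sfD$. Under this substitution, the diagonal of the map in question becomes the analogous map for $\opname{Diag}:\sfD\to\sfE$ in place of $\opname{G}:\sfD\to\sfS$. Concretely, a pair $(\opname{H}_1,\opname{H}_2)$ together with an identification of $\opname{G}\opname{H}_1\simeq\opname{G}\opname{H}_2$ is the same as a functor $\frakt\to\sfE$. An identification $\opname{H}_1\simeq\opname{H}_2$ compatible with it is then a factorization through $\opname{Diag}$.

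The problem thus reduces to the following claim. Let $\opname{J}:\sfD\to\sfE$ be 2-fully faithful (which $\opname{Diag}$ is, by Lemma~\ref{lem:ftoff}). Then the square
\[\begin{tikzcd}[ampersand replacement=\&]
	{\Map(\frakt,\sfD)} \& {\Map(\frakt',\sfD)} \\
	{\Map(\frakt,\sfE)} \& {\Map(\frakt',\sfE)}
	\arrow[from=1-1, to=1-2]
	\arrow[from=1-1, to=2-1]
	\arrow[from=1-2, to=2-2]
	\arrow[from=2-1, to=2-2]
\end{tikzcd}\]
is a pullback. By the proof of Lemma~\ref{lem:discrete1}, both vertical maps are $(-1)$-truncated, that is, inclusions of components. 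The comparison map to the pullback is therefore automatically $(-1)$-truncated, and it remains to prove surjectivity on components. So take $\opname{K}:\frakt\to\sfE$ such that $\opname{K}|_{\frakt'}$ factors through $\sfD$. I need $\opname{K}$ itself to factor through $\sfD$.

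The key point is that a 2-fully faithful $\opname{J}$ is an equivalence onto the full sub-$\two$-category of $\sfE$ spanned by objects in its essential image. A functor $\frakt\to\sfE$ therefore factors through $\sfD$ exactly when every object of $\frakt$ is sent into that essential image. Since $\frakt'\to\frakt$ is essentially surjective, every object $t\in\frakt$ is equivalent to the image of some $t'\in\frakt'$. Hence $\opname{K}(t)\simeq\opname{K}|_{\frakt'}(t')$ lies in the essential image of $\opname{J}$, and $\opname{K}$ factors as required.

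I expect the main obstacle to be the first step: identifying the diagonal of the original comparison map with the comparison map for $\opname{Diag}$. This requires commuting $\Map(\frakt,-)$ and $\Map(\frakt',-)$ with the fiber product $\sfD\times_\sfS\sfD$, using that $\Map(\frakt,-)$ preserves limits. It also requires bookkeeping to match the iterated fiber products on both sides. If that becomes messy, I would instead argue objectwise, exactly as in the proof of Lemma~\ref{lem:discrete1}.
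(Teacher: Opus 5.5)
Your proposal is correct and follows essentially the same route as the paper: pass to the diagonal, identify it (via Lemma~\ref{lem:ftoff}) with the comparison map for the 2-fully faithful functor $\opname{Diag}:\sfD\to\sfD\times_{\sfS}\sfD$, and conclude because a functor $\frakt\to\sfD\times_{\sfS}\sfD$ factors through $\sfD$ exactly when its restriction to $\frakt'$ does. What you add is an explicit version of the paper's one-line justification of that last step: the comparison map is a monomorphism by the argument of Lemma~\ref{lem:discrete1}, and it is surjective on components because $\frakt'\to\frakt$ is essentially surjective.
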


\begin{proof}
    This is equivalent to showing that the diagonal is $(-2)$-truncated, i.e. an equivalence. This is identified with the natural map
    \[\Map(\frakt,\sfD)\to \Map(\frakt',\sfD)\times_{\Map(\frakt',\sfD\times_{\sfS}\sfD)}\Map(\frakt,\sfD\times_{\sfS}\sfD).\]
    By Lemma~\ref{lem:ftoff}, $\opname{Diag}$ is 2-fully faithful, so that this is indeed an equivalence. In words, a functor $\frakt\to \sfD\times_{\sfS}\sfD$ factors through $\sfD$ after restricting to $\frakt'$, if and only if it factors through $\sfD$.
\end{proof}

\begin{proof}[Proof of Proposition~\ref{prop:functorconstruction}]
    By Yoneda lemma, to give a functor $\sfC\to \sfD$ over $\sfS$ is equivalent to giving for each $\frakt\in\wCat_{\two}$ a commuting diagram of spaces,
    \[\begin{tikzcd}[ampersand replacement=\&]
        {\Map(\frakt,\sfC)} \& {\Map(\frakt,\sfD)} \\
        \& {\Map(\frakt,\sfS)}
        \arrow[dashed, from=1-1, to=1-2]
        \arrow["\opname{F}\circ-"', from=1-1, to=2-2]
        \arrow["\opname{G}\circ-", from=1-2, to=2-2]
    \end{tikzcd}\]
    which is functorial in $\frakt$.
    
    By Lemma~\ref{lem:discrete1}, the arrow $\opname{G}\circ-$ is relatively discrete, and hence the space of lifts of $\opname{F}\circ-$ is discrete.

    Suppose we exhibit a lift for the trivial $\two$-category $\frakt=\mathfrak{pt}$. This determines the lift for $\frakt$ any discrete space, as well as the functoriality for any map between discrete spaces.
    
    We show that in fact, the lift is uniquely determined for all $\frakt\in \wCat_{\two}$, if it exists. To see this, let $\frakt'\to\frakt$ be any essentially surjective functor from any discrete space $\frakt'$. Functoriality gives us a commutative diagram,
    \[\begin{tikzcd}[ampersand replacement=\&]
        \& {\Map(\frakt',\sfC)} \& {\Map(\frakt',\sfD)} \\
        {\Map(\frakt,\sfC)} \& {\Map(\frakt,\sfD)} \& {\Map(\frakt',\sfS)} \\
        \& {\Map(\frakt,\sfS)}
        \arrow[from=1-2, to=1-3]
        \arrow[from=1-2, to=2-3]
        \arrow[from=1-3, to=2-3]
        \arrow[from=2-1, to=1-2]
        \arrow[dashed, from=2-1, to=2-2]
        \arrow[from=2-1, to=3-2]
        \arrow[from=2-2, to=1-3]
        \arrow[from=2-2, to=3-2]
        \arrow[from=3-2, to=2-3]
    \end{tikzcd}\]
    The desired lift denoted by the dotted arrow is uniquely determined if it exists, by Lemma~\ref{lem:discrete2} applied to the square on the right of this diagram.
    
    Suppose now that lifts exist for all $\frakt\in \wCat_{\two}$. We claim that the functoriality of these lifts is automatic. Let $\frakt_1\to \frakt_2$ be any map of $\two$-categories. Choose any map $\frakt_1'\to \frakt_2'$ of discrete spaces together with a commutative diagram as follows, such that the vertical maps are essentially surjective.
    \[\begin{tikzcd}[ampersand replacement=\&]
        {\frakt_1'} \& {\frakt_2'} \\
        {\frakt_1} \& {\frakt_2}
        \arrow[from=1-1, to=1-2]
        \arrow[from=1-1, to=2-1]
        \arrow[from=1-2, to=2-2]
        \arrow[from=2-1, to=2-2]
    \end{tikzcd}\]
    A similar argument to before produces the desired commuting diagram
    \[\begin{tikzcd}[ampersand replacement=\&]
        {\Map(\frakt_1,\sfC)} \& {\Map(\frakt_1,\sfD)} \\
        {\Map(\frakt_2,\sfC)} \& {\Map(\frakt_2,\sfD).}
        \arrow[from=1-1, to=1-2]
        \arrow[from=2-1, to=1-1]
        \arrow[from=2-1, to=2-2]
        \arrow[from=2-2, to=1-2]
    \end{tikzcd}\]
    Higher functoriality is similarly automatic. We deduce that $\Map_{(\wCat_{\two})_{/\sfS}}(\sfC,\sfD)$ embeds as a subset of the set of lifts for $\frakt=\mathfrak{pt}$, proving the proposition.
\end{proof}

\subsection{Simplicial objects}\label{subsection:simplicialintro}
We denote by $\Delta$ the simplex category, whose objects are $[n] := \{0,\dots,n\}$ for $n\geq0$ and whose morphisms are non-decreasing functions $f:[m]\to[n]$. We consider the following standard set of generating morphisms.
\begin{itemize}
    \item For each $n\geq1$ let $\delta_{n,j}:[n-1]\to[n]$ be the injective map skipping $j$.
    \item For each $n\geq0$ let $\sigma_{n,j}:[n+1]\to[n]$ be the surjective map hitting $j$ twice.
\end{itemize}

We say that a morphism $f:[m]\to [n]$ is active if $f(0)=0$ and $f(m)=n$. We say that a morphism $f:[m]\to [n]$ is inert if $f$ is injective and its image is convex under the partial order. These two classes of morphisms are each closed under composition and so define subcategories $\Delta_{\act}$ and $\Delta_{\inert}$ respectively. Note that in $\Delta_{\act}$, $[1]$ is an initial object. These two classes of morphisms in fact form a factorization system of $\Delta$, that is, every morphism $f:[m]\to [n]$ uniquely factors as $i\circ a$ where $a$ is active and $i$ is inert.

Let $\calC$ be a $\one$-category with fibered products. A simplicial object $C(-)$ in $\calC$ is a functor
\[C:\Delta^{\op}\to \calC.\]
We often write $C(n) := C([n])$, as well as
\begin{align*}
    d_{n,j} &:= C(\delta_{n,j}): C(n)\to C(n-1)\\
    s_{n,j} &:= C(\sigma_{n,j}): C(n)\to C(n+1).
\end{align*}

A simplicial object $C(-)$ is said to be 1-Segal if for every pushout square of inert maps
\[\begin{tikzcd}[ampersand replacement=\&]
	{[n]}\pullback \& {[n-m]} \\
	{[m]} \& {[0]}
	\arrow[from=1-2, to=1-1]
	\arrow[from=2-1, to=1-1]
	\arrow[from=2-2, to=1-2]
	\arrow[from=2-2, to=2-1]
\end{tikzcd}\]
the resulting square of objects in $\calC$ is a pullback square. By definition, this upgrades $C(1)$ to a monoidal object in the slice category $\calC_{/C(0)}$.

Many of our simplicial objects will be 2-Segal, a notion introduced by Dyckerhoff--Kapranov \cite{dyckerhoffHigherSegalSpaces2019}, and independently under the name decomposition spaces by G\'alvez-Carrillo--Kock--Tonks \cite{galvez-carrilloDecompositionSpacesIncidence2018}.

A simplicial object $C(-)$ is said to be 2-Segal if for every pushout square of maps with $a$ active and $i$ inert (which implies $a'$ active and $i'$ inert)
\[\begin{tikzcd}[ampersand replacement=\&]
	{[n]} \& {[n+1-m]} \\
	{[m]} \& {[1]}
	\arrow["{i'}"', from=1-2, to=1-1]
	\arrow["{a'}", from=2-1, to=1-1]
	\arrow["a", from=2-2, to=1-2]
	\arrow["i"', from=2-2, to=2-1]
\end{tikzcd}\]
the resulting square of objects in $\calC$ is a pullback square.

\begin{eg}\label{eg:waldhausen}
    The prototypical example of such an object arises from the Waldhausen S-construction, of which the following is a simple instance. Let $\calC$ be the category of sets, and let $C(n)$ denote the set of filtered vector spaces $0=V_0\subset V_1\subset\cdots \subset V_n$. We first define the face maps
    \[d_{n,i}:C(n)\to C(n-1).\]
    For $i=0$, this outputs $V_1/V_1\subset V_2/V_1\subset\cdots\subset V_n/V_1$, for $i=n$, this outputs $0=V_0\subset V_1\cdots \subset V_{n-1}$, and for $0<i<n$, this outputs the filtered vector space where we remove $V_i$ from the filtration. We define the degeneracy maps
    \[s_{n,i}:C(n)\to C(n+1).\]
    For each $0\leq i\leq n$, this outputs the result of repeating $V_i$ twice. The 2-Segal condition reduces to some elementary linear algebra.
\end{eg}

We describe a mechanism we will frequently use in order to produce simplicial $\two$-categories and maps between simplicial $\two$-categories from an existing one.

\begin{lem}\label{lem:descendingsimplicialobjects}
    Let $\sfC'(-):\Delta^{\op}\to \wCat_{\two}$ be a simplicial $\two$-category, and suppose for each $n$, we have a sub-$\two$-category $\opname{I}_n:\sfC(n)\inclto \sfC'(n)$ admitting a right adjoint $\opname{I}_n^R$.

    For each arrow $a:[m]\to[n]$ in $\Delta$, define
    \[\sfC(a) := \opname{I}_m^R\circ\sfC'(a)\circ \opname{I}_n,\]
    so that we have a lax-commuting square as follows.
    \[\begin{tikzcd}[ampersand replacement=\&]
        {\sfC'(n)} \& {\sfC'(m)} \\
        {\sfC(n)} \& {\sfC(m)}
        \arrow["{\sfC'(a)}", from=1-1, to=1-2]
        \arrow["{\opname{I}_n^R}"', from=1-1, to=2-1]
        \arrow["{\opname{I}_n^R}", from=1-2, to=2-2]
        \arrow[between={0}{0.8}, Rightarrow, from=2-1, to=1-2]
        \arrow["{\sfC(a)}"', from=2-1, to=2-2]
    \end{tikzcd}\]
    If these actually commute for all arrows, or equivalently a generating set of arrows, then $\sfC(-)$ defines a simplicial $\two$-category,
    \[\sfC(-):\Delta^{\op}\to\wCat_{\two},\]
    equipped with a functor, $\sfC'(-)\to \sfC(-)$.
\end{lem}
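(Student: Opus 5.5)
The goal is to realize the assignment $[n]\mapsto \sfC(n)$, $a\mapsto \sfC(a)$ as a genuine functor $\Delta^{\op}\to\wCat_{\two}$. The plan is not to check coherences by hand. Instead we show that the family $\opname{I}^R_\bullet$ assembles into a natural transformation from $\sfC'(-)$ to a simplicial object whose values are the $\sfC(n)$.

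\textbf{Step 1: counit and identities.} Since $\opname{I}_n$ is the inclusion of a sub-$\two$-category with right adjoint $\opname{I}_n^R$, the counit $\opname{I}_n\opname{I}_n^R\Rightarrow\id$ is the data we use. We take ``sub-$\two$-category'' to mean that $\opname{I}_n$ is 2-fully faithful, so that $\opname{I}_n^R\opname{I}_n\simeq\id_{\sfC(n)}$ via the unit. This identification gives $\sfC(\id_{[n]})\simeq\id$.

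\textbf{Step 2: composition via the mate.} The lax square in the statement is the mate of the identity $\opname{I}_m\sfC(a)\to\ldots$, namely
\[\opname{I}_m^R\sfC'(a)\opname{I}_n\opname{I}_n^R\Rightarrow\opname{I}_m^R\sfC'(a),\]
built from the counit. Assume this $2$-cell is invertible for all arrows $a$. Then for a composite $b\circ a$ we compute
\[\sfC(a)\sfC(b)=\opname{I}^R\sfC'(a)\opname{I}\opname{I}^R\sfC'(b)\opname{I}\simeq \opname{I}^R\sfC'(a)\sfC'(b)\opname{I}\simeq\sfC(b\circ a).\]
We also need the reduction to generators. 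The mates compose: the mate of a pasted square is the pasting of the mates. It follows that invertibility for a generating set of arrows implies invertibility for all composites.

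\textbf{Step 3: homotopy-coherent assembly.} To get all higher coherences at once, we consider the subcategory of the lax arrow $\two$-category, or equivalently of the oplax slice, whose objects are right adjoints out of $\two$-categories and whose 1-morphisms are squares with invertible mate. We then use that a pointwise-given, pointwise-commuting family of right adjoints can be straightened. Concretely, the plan proceeds as follows:
\begin{itemize}
\item Encode $\sfC'(-)$ as a cocartesian fibration $p':\mathcal{E}'\to\Delta^{\op}$ of $\two$-categories.
\item Let $\mathcal{E}\subset\mathcal{E}'$ be the full sub-$\two$-category on fibers $\sfC(n)$.
\item Show that $p'|_{\mathcal{E}}$ is again cocartesian, with cocartesian lifts obtained by composing a cocartesian lift in $\mathcal{E}'$ with the coreflection $\opname{I}^R$.
\item Verify the cocartesian property by exactly the invertibility of the mates from Step 2, since coreflections compose correctly precisely when the squares commute.
\end{itemize}
Unstraightening then yields $\sfC(-)$. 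The coreflection $\mathcal{E}'\to\mathcal{E}$ (fiberwise $\opname{I}_n^R$) preserves cocartesian morphisms, again by the commuting squares, and hence gives the natural transformation $\sfC'(-)\to\sfC(-)$.

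The main obstacle is Step 3. We need a fiberwise-right-adjoint relative localization result for $\two$-categorical cocartesian fibrations, namely the $\two$-analogue of Lurie's relative adjunction results (HA 7.3.2). The plan is to reduce to the $\one$-categorical statement by passing to mapping categories and applying the straightening of Abell\'an--Haugseng--Martini underlying Theorem~\ref{thm:laxfibrations}, where the needed fibration technology is available.
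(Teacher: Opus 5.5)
Your Steps 1 and 2 are essentially the paper's argument. The paper's proof is the computation $\sfC(a)\sfC(b)\simeq\sfC(a)\opname{I}_n^R\sfC'(b)\opname{I}_p\simeq\opname{I}_m^R\sfC'(a)\sfC'(b)\opname{I}_p\simeq\sfC(b\circ a)$. It then views $\sfC$ as a lax functor $\Delta^{\op}\to\widehat{\sfCat}_{\two}$ whose laxity cells are these whiskered counits. Invertibility makes it a functor, and it lands in $\wCat_{\two}$ because $\Delta^{\op}$ is a $1$-category.

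Your Step 3 replaces this with a straightening argument, but the variance is backwards and the step fails as written. In the cocartesian unstraightening $p'\colon\mathcal{E}'\to\Delta^{\op}$, a morphism over (the arrow of $\Delta^{\op}$ given by) $a\colon[m]\to[n]$ from $x$ to $w$ is a morphism $\sfC'(a)x\to w$ in $\sfC'(m)$. So in your $\mathcal{E}$ the morphisms over $a$ from $\opname{I}_nx$ to $\opname{I}_mw$ form $\sfC'(m)(\sfC'(a)\opname{I}_nx,\opname{I}_mw)$. For $p'|_{\mathcal{E}}$ to be cocartesian with transition functor $\opname{I}_m^R\sfC'(a)\opname{I}_n$, this would have to agree with $\sfC(m)(\opname{I}_m^R\sfC'(a)\opname{I}_nx,w)$. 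That means $\opname{I}_m^R$ would have to behave as a \emph{left} adjoint of $\opname{I}_m$, which is a reflection condition. It is not among the hypotheses, and the commuting squares do not imply it.

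Correspondingly, the lift you describe does not exist. The cocartesian edge $\opname{I}_nx\to\sfC'(a)\opname{I}_nx$ and the counit $\opname{I}_m\opname{I}_m^R\sfC'(a)\opname{I}_nx\to\sfC'(a)\opname{I}_nx$ both point \emph{into} $\sfC'(a)\opname{I}_nx$, so they cannot be composed. A single arrow already shows the failure. Take $F\colon\ast\to\{0\to1\}$ picking out $1$, with $\{0\}\subset\{0\to 1\}$ coreflective. The relevant square commutes trivially, yet the full subcategory of the unstraightening on $\ast$ and $0$ has no morphism from $\ast$ to $0$, so it is not cocartesian.

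The repair is to use the cartesian unstraightening $q'\colon\mathcal{X}'\to\Delta$ of $\sfC'(-)$. There a morphism over $a$ from $z\in\sfC'(m)$ to $y\in\sfC'(n)$ is a morphism $z\to\sfC'(a)y$.
\begin{itemize}
\item In the full subcategory $\mathcal{X}$ on the $\sfC(n)$, the morphisms over $a$ from $\opname{I}_mw$ to $\opname{I}_nx$ are $\sfC'(m)(\opname{I}_mw,\sfC'(a)\opname{I}_nx)\simeq\sfC(m)(w,\sfC(a)x)$.
\item The composite of counit and cartesian edge, $\opname{I}_m\opname{I}_m^R\sfC'(a)\opname{I}_nx\to\sfC'(a)\opname{I}_nx\to\opname{I}_nx$, now makes sense.
\item This composite is $q'|_{\mathcal{X}}$-cartesian because $\opname{I}_p^R\sfC'(b)\opname{I}_m\opname{I}_m^R\to\opname{I}_p^R\sfC'(b)$ is invertible for every $b\colon[p]\to[m]$. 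That invertibility is precisely your commuting-square hypothesis.
\item The same hypothesis makes the fiberwise $\opname{I}^R$ a right adjoint of $\mathcal{X}\inclto\mathcal{X}'$ relative to $\Delta$, and makes it preserve cartesian edges.
\end{itemize}
Straightening (not unstraightening) then gives both $\sfC(-)$ and $\sfC'(-)\to\sfC(-)$.

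With this correction, and granting the $\two$-categorical fibration theory you flag as the main obstacle, your route is a valid alternative. It produces the transformation and all coherences from a universal property. The paper's proof is shorter, but it takes for granted the lax-functor structure on $a\mapsto\opname{I}_m^R\sfC'(a)\opname{I}_n$ and the transformation $\sfC'(-)\to\sfC(-)$.
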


\begin{proof}
    We allow ourselves to enhance the target and consider the $\two$-category of $\two$-categories, $\widehat{\sfCat}_{\two}$. As defined, we have a lax-functor $\sfC:\Delta^{\op}\to \widehat{\sfCat}_{\two}$. The conditions ensure that this lax-functor factors through $\wCat_{\two}$. Indeed, let $[m]\xto{a}[n]\xto{b}[p]$ be morphisms with composition $c$. Then the 2-morphism $\sfC(a)\circ\sfC(b) \Rightarrow \sfC(c)$ can be written as
    \begin{align*}
        \sfC(a)\circ\sfC(b) &\simeq \sfC(a)\circ\sfC(b)\circ\opname{I}_p^R\circ \opname{I}_p\\
        &\simeq \sfC(a)\circ\opname{I}_n^R\circ\sfC'(b)\circ\opname{I}_p\\
        &\simeq \opname{I}_m^R\circ\sfC'(a)\circ\sfC'(b)\circ\opname{I}_p\\
        &\simeq \sfC(c).
    \end{align*}
    Since $\Delta^{\op}$ is a 1-category, $\sfC$ factors through the maximal $\one$-category,
    \[\wCat_{\two}\inclto \widehat{\sfCat}_{\two}.\]
\end{proof}

A similar mechanism will allow us to produce simplicial maps from existing ones.

\begin{lem}\label{lem:descendingsimplicialmaps}
    Let $\opname{F}'(-):\sfC'(-)\to \sfD'(-)$ be a map between two simplicial $\two$-categories, and suppose that for each $n$, we have sub-$\two$-categories
    \[\opname{I}_n:\sfC(n)\inclto \sfC'(n), \opname{J}_n:\sfD(n)\inclto \sfD'(n)\]
    admitting right adjoints $\opname{I}_n^R,\opname{J}_n^R$.

    Suppose that the conditions of Lemma~\ref{lem:descendingsimplicialobjects} are satisfied for both $\sfC'(-)$ and $\sfD'(-)$ so that we have simplicial $\two$-categories $\sfC(-),\sfD(-)$ as well as maps $\sfC'(-)\to \sfC(-)$ and $\sfD'(-)\to \sfD(-)$.
    
    For each arrow $n$, define
    \[\opname{F}(n):= \opname{J}_n^R\circ\opname{F}'(n)\circ \opname{I}_n: \sfC(n)\to\sfD(n)\]
    so that we have a lax-commuting square as follows.
    \[\begin{tikzcd}[ampersand replacement=\&]
        {\sfC'(n)} \& {\sfD'(n)} \\
        {\sfC(n)} \& {\sfD(n)}
        \arrow["{\opname{F}'(n)}", from=1-1, to=1-2]
        \arrow[from=1-1, to=2-1]
        \arrow[from=1-2, to=2-2]
        \arrow[between={0}{0.8}, Rightarrow, from=2-1, to=1-2]
        \arrow["{\opname{F}(n)}"', from=2-1, to=2-2]
    \end{tikzcd}\]
    If these actually commute for all $n$, then $\opname{F}(-)$ defines a map of simplicial $\two$-categories,
    \[\opname{F}(-):\sfC(-)\to\sfD(-),\]
    and there is a commuting diagram of simplicial $\two$-categories, as follows.
    \[\begin{tikzcd}[ampersand replacement=\&]
        {\sfC'(-)} \& {\sfD'(-)} \\
        {\sfC(-)} \& {\sfD(-)}
        \arrow["{\opname{F}'(-)}", from=1-1, to=1-2]
        \arrow[from=1-1, to=2-1]
        \arrow[from=1-2, to=2-2]
        \arrow["{\opname{F}(-)}"', from=2-1, to=2-2]
    \end{tikzcd}\]
\end{lem}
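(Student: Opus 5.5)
The plan is to follow the proof of Lemma~\ref{lem:descendingsimplicialobjects}, since the two statements have the same structure. Work in the $\two$-category $\widehat{\sfCat}_{\two}$ of $\two$-categories and regard a map of simplicial $\two$-categories as a functor $\Delta^{\op}\times[1]\to\wCat_{\two}$. The data $\opname{F}(n)$, the structure maps $\sfC(a)$, $\sfD(a)$ and the comparison maps $\sfC'\to\sfC$, $\sfD'\to\sfD$ (given by $\opname{I}^R_n$, $\opname{J}^R_n$) assemble into a lax functor $\Delta^{\op}\times[1]\to\widehat{\sfCat}_{\two}$. We then show that every lax structure $2$-cell is invertible. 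Because $\Delta^{\op}\times[1]$ is a $1$-category, the lax functor will then factor through the maximal sub-$\one$-category $\wCat_{\two}\inclto\widehat{\sfCat}_{\two}$, exactly as before.

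The first step is naturality of $\opname{F}(-)$ with respect to $a:[m]\to[n]$, that is, an equivalence $\sfD(a)\circ\opname{F}(n)\simeq\opname{F}(m)\circ\sfC(a)$. We argue by the same kind of chain of equivalences as in the earlier proof, inserting $\opname{I}^R_n\circ\opname{I}_n\simeq\id$, which holds because $\opname{I}_n$ is a fully faithful inclusion with right adjoint:
\begin{align*}
\sfD(a)\circ\opname{F}(n) &= \opname{J}_m^R\circ\sfD'(a)\circ\opname{J}_n\circ\opname{J}_n^R\circ\opname{F}'(n)\circ\opname{I}_n\\
&\simeq \opname{J}_m^R\circ\sfD'(a)\circ\opname{F}'(n)\circ\opname{I}_n\\
&\simeq \opname{J}_m^R\circ\opname{F}'(m)\circ\sfC'(a)\circ\opname{I}_n\\
&\simeq \opname{F}(m)\circ\opname{I}_m^R\circ\sfC'(a)\circ\opname{I}_n = \opname{F}(m)\circ\sfC(a).
\end{align*}
The justifications are as follows:
\begin{enumerate}
\item The second line uses commutativity of the square for $\sfD$ in Lemma~\ref{lem:descendingsimplicialobjects}, namely $\sfD(a)\circ\opname{J}^R_n\simeq\opname{J}^R_m\circ\sfD'(a)$. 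Rewriting $\sfD(a)\circ\opname{F}(n)$ as $\sfD(a)\circ\opname{J}_n^R\circ\opname{F}'(n)\circ\opname{I}_n$ and applying this gives the line directly; so we use that square rather than dropping $\opname{J}_n\opname{J}_n^R$.
\item The third line uses that $\opname{F}'$ is simplicial.
\item The fourth line uses the hypothesis $\opname{J}^R_m\circ\opname{F}'(m)\simeq\opname{F}(m)\circ\opname{I}^R_m$.
\end{enumerate}
The same hypotheses, together with the already commuting squares for $\sfC$ and $\sfD$, also show that the outer square $\sfC'(-)\to\sfD'(-)\to\sfD(-)$ versus $\sfC'(-)\to\sfC(-)\to\sfD(-)$ commutes levelwise.

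The main obstacle is coherence. The argument must produce a genuine functor out of $\Delta^{\op}\times[1]$, not just levelwise equivalences. Every $2$-cell involved is built from the units and counits of the adjunctions $\opname{I}_n\adjto\opname{I}_n^R$ and $\opname{J}_n\adjto\opname{J}_n^R$, together with the coherent simplicial structure of $\opname{F}'$. So the lax functor is canonically defined, as the ``right adjoint descent'' of the strict functor $\Delta^{\op}\times[1]\to\wCat_{\two}$ given by $\opname{F}'$. Its higher coherences are then the mates of strict data. The only question is invertibility of the comparison $2$-cells, and for generating arrows of $\Delta^{\op}\times[1]$ this is exactly the set of hypotheses. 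It suffices to check generators because a composite of lax structure cells is invertible when each constituent is.
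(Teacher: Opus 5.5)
Your proposal is correct and is essentially the paper's argument; the paper only says the proof is analogous to that of Lemma~\ref{lem:descendingsimplicialobjects}, i.e.\ descend to a lax functor into $\widehat{\sfCat}_{\two}$, show the structure $2$-cells are invertible by a chain of equivalences like the one you write down, and factor through $\wCat_{\two}$. One cosmetic point: the full commuting square involving $\sfC'(-)\to\sfC(-)$ and $\sfD'(-)\to\sfD(-)$ is naturally indexed by $\Delta^{\op}\times[1]\times[1]$ rather than $\Delta^{\op}\times[1]$, but the same generator-by-generator invertibility check applies there.
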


\begin{proof}
    The proof is similar to that of Lemma~\ref{lem:descendingsimplicialobjects}.
\end{proof}

\subsection{Monads and adjunctions}\label{subsection:mndadj}
We review the theory of monads and adjunctions following \cites{riehlHomotopyCoherentAdjunctions2015,haugsengLaxTransformationsAdjunctions2021}.

We will define the $\two$-category $\sfAdj$ of adjunctions in $\sfSt_k$ as a locally full subcategory of $\sfFun(\frakn(1),\sfSt_k)$, where $\frakn(1)$ is the small $\two$-category $1\from \infty$. This locally full subcategory consists of objects
\[\calA_{1}\xto{F}\calA_{\infty}\]
where $F$ is a left adjoint functor, and morphisms
\[\begin{tikzcd}[ampersand replacement=\&]
	{\calA_1} \& {\calA_\infty} \\
	{\calA_1'} \& {\calA_\infty'}
	\arrow["F", from=1-1, to=1-2]
	\arrow["{\opname{G}_1}"', from=1-1, to=2-1]
	\arrow["{\opname{G}_\infty}", from=1-2, to=2-2]
	\arrow["{F'}", from=2-1, to=2-2]
\end{tikzcd}\]
where the commuting square is adjointable, that is, the natural transformation $\opname{G}_1F^R\to (F')^R\opname{G}_\infty$ is an equivalence.

It is shown in \cite{riehlHomotopyCoherentAdjunctions2015} that there is a small $\two$-category $\frakadj$ called the walking adjunction, such that $\sfFun(\frakadj,\sfSt_k)$ also parametrizes adjunctions in $\sfSt_k$. It follows from \cite{haugsengLaxTransformationsAdjunctions2021} that these two $\two$-categories in fact agree i.e. the functor obtained by restriction to the left adjoint $f:\frakn(1)\to \frakadj$ defines an equivalence,
\[f^*:\sfFun(\frakadj,\sfSt_k)\isomto \sfAdj.\]

This second description of $\sfAdj$ is useful for studying monads. Let $i:\frakmnd\inclto \frakadj$ be the full sub-$\two$-category of $\frakadj$ generated by the source of the left adjoint. Then,
\[\sfMnd:=\sfFun(\frakmnd,\sfSt_k)\]
parametrizes pairs $(\Phi,M)$ where $\Phi$ is an object of $\sfSt_k$ and $M$ is a monad.

\begin{rmk}\label{rmk:monadstrictness}
    We note that morphisms $(\Phi_1,M_1)\to (\Phi_2,M_2)$ are given by a functor $F_{21}:\Phi_1\to \Phi_2$ together with an identification $F_{21}M_1\simeq M_2 F_{21}$. In particular the fiber over $\Phi\in\sfSt_k$ of $\sfMnd$ has no non-invertible morphisms. To see such morphisms, we must allow morphisms in $\sfFun$ to be given by lax natural transformations, as studied in \cite{haugsengLaxTransformationsAdjunctions2021}. This distinction becomes important in \S\ref{subsection:suspension}.
\end{rmk}

There is a pullback functor $i^*:\sfAdj\to \sfMnd$ which on objects sends the adjunction $S:\Phi\tofrom\Psi:R$ to $(\Phi,RS)$. By Theorem~\ref{thm:laxfibrations}, this admits a left and right adjoint, 
\[i_!,i_*:\sfMnd\to \sfAdj.\]
These are the stable, presentable analogs of the Kleisli and Eilenberg--Moore adjunctions respectively. Note that
\[i^*i_*\isomto \Id\]
so that these each embed $\sfMnd$ as a full sub-$\two$-category of $\sfAdj$.

We recall that classically, the Kleisli adjunction is constructed by restricting to the essential image of the left adjoint in the Eilenberg--Moore adjunction. A key observation in \cite{christSphericalMonadicAdjunctions2023} is that the essential image of a functor of stable categories need not be stable. By passing to a stable closure, we obtain a good notion of Kleisli adjunction. It is shown in \cite{gammagePerverseSchobers3d2023}*{Proposition B.4} that this notion agrees with the left adjoint $i_!$.

In our presentable setting, we must also account for the fact that the essential image of a continuous functor of presentable categories need not be presentable. We formulate an analogous recognition principle in our setting.

\begin{lem}\label{lem:kleislirecognition}
    An adjunction $S:\Phi\tofrom\Psi:R$ is in the sub-$\two$-category $i_!(\sfMnd)$ if and only if the smallest stable, presentable subcategory which contains $\opname{Im}(S)$ is equal to $\Psi$.
\end{lem}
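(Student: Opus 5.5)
The plan is to use the counit of $i_!\adjto i^*$ and characterize when it is an equivalence. Given an adjunction $S:\Phi\tofrom\Psi:R$ with monad $M=RS$, the counit $i_!i^*(S)\to (S)$ is a morphism in $\sfAdj$. Since $i^*i_!\simeq \Id$ (the unit is an equivalence, by the same argument the paper records for $i_*$, or directly from the explicit description in Theorem~\ref{thm:laxfibrations}), the essential image $i_!(\sfMnd)$ consists exactly of those adjunctions for which this counit is an equivalence. So the statement reduces to: the counit is an equivalence if and only if $\Psi$ is the smallest stable presentable subcategory containing $\Image(S)$.

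Next I would make the counit explicit. Write $i_!(\Phi,M)$ as $S_M:\Phi\tofrom\Psi_M:R_M$. On the $\Phi$-component the counit is the identity, and on the second component it is a functor $K:\Psi_M\to\Psi$ with $KS_M\simeq S$, compatible with right adjoints, $R K\simeq R_M$. The key step is to show $K$ is fully faithful. I would get this from the identification of the Kleisli with the Eilenberg--Moore construction mentioned in the paper's summary: using that $i_*(\Phi,M)=(\Phi\tofrom \Mod_M(\Phi))$ and the comparison $\Psi\to\Mod_M(\Phi)$ induced by $R$, the composite $\Psi_M\to\Psi\to\Mod_M(\Phi)$ is the canonical map $i_!\to i_*$. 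If that map is an equivalence, then $K$ has a left inverse up to equivalence, so it is fully faithful. Alternatively, I would argue directly: $K$ is colimit-preserving, and on the generators $S_M(\phi)$ we have
\[\Psi(KS_M\phi,KS_M\phi')\simeq\Phi(\phi,RS\phi')=\Phi(\phi,M\phi')\simeq\Psi_M(S_M\phi,S_M\phi'),\]
so $K$ is fully faithful on the image of $S_M$. This extends to the whole of $\Psi_M$ because $\Psi_M$ is generated under colimits and shifts by $\Image(S_M)$ and $R$ preserves colimits. That last point uses that our functors are colimit-preserving, so that in $\sfSt_k$ the right adjoint $R$ is continuous.

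Granting full faithfulness, the essential image of $K$ is a stable presentable subcategory containing $\Image(S)$. It is contained in the smallest such subcategory $\langle\Image(S)\rangle$, since it is the colimit closure of $\Image(KS_M)=\Image(S)$. Hence $K$ is an equivalence iff $\langle\Image(S)\rangle=\Psi$, which proves both directions.

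The step I expect to be the main obstacle is the generation claim: that $\Psi_M=i_!(\Phi,M)$ is generated under colimits by $\Image(S_M)$, and that the fully faithful extension is valid. This requires unpacking the explicit formula for $i_!$ from Theorem~\ref{thm:laxfibrations}, a left Kan extension along $\frakmnd\inclto\frakadj$, which should present $\Psi_M$ as a colimit (a bar construction) built from $\Phi$ through $M$. I would try that first. The continuity of $R$ also has to be handled carefully.
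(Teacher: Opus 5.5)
Your outline is the right one, and it has the same shape as the argument the paper delegates to Gammage--Hilburn--Mazel-Gee (Proposition B.4); the paper itself gives nothing beyond that citation. The outline runs: characterize $i_!(\sfMnd)$ by the counit of $i_!\adjto i^*$, show that $K:\Psi_M\to\Psi$ is fully faithful by computing on free objects using adjointability $RK\simeq R_M$, then identify its essential image with the smallest stable presentable subcategory $\langle\Image(S)\rangle$. Two points need repair.

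\textbf{The route through $i_!\to i_*$ is circular.} In the paper, Proposition~\ref{prop:KLisEM} is deduced \emph{from} this lemma; its proof opens with ``By Lemma~\ref{lem:kleislirecognition}, $I$ is fully faithful''. Independently of that, a functor with a left inverse need not be fully faithful (the diagonal $\Mod_k\to\Mod_k\times\Mod_k$ has one). Keep your direct argument instead, which is simpler than you make it. Adjointability gives $\Psi_M(S_M\phi,x)\simeq\Phi(\phi,R_Mx)\simeq\Phi(\phi,RKx)\simeq\Psi(S\phi,Kx)$ for \emph{every} $x$, compatibly with the map induced by $K$. So only the first variable needs extending. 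The class of $y$ for which $\Psi_M(y,-)\to\Psi(Ky,K-)$ is an equivalence is closed under colimits and shifts, because $K$ preserves colimits.

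\textbf{The generation step is where the content lies, and it is used in both directions.} The claim that $\Psi_M=i_!(\Phi,M)$ is the smallest stable presentable subcategory containing $\Image(S_M)$ cannot be left as an expectation. For ``only if'' it is literally the statement for objects of the form $i_!(\Phi,M)$, and for ``if'' it is what makes your extension argument reach all of $\Psi_M$. You do not need to unpack the formula behind Theorem~\ref{thm:laxfibrations}; a retract argument with the universal property is enough.

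Let $\Psi'\subseteq\Psi_M$ be the localizing subcategory generated by $\Image(S_M)$. It is presentable, since it is generated by $S_M$ of a small generating set of $\Phi$. The corestriction $S':\Phi\to\Psi'$ has the colimit-preserving right adjoint $R_M|_{\Psi'}$. Hence $B:=(S':\Phi\tofrom\Psi')$ lies in $\sfAdj$, and the inclusion is a morphism $j:B\to A:=i_!(\Phi,M)$ in $\sfAdj$.

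The morphism $i^*j$ has underlying functor $\id_\Phi$, so it is an equivalence in $\sfMnd$ by Remark~\ref{rmk:monadstrictness}. Let $r:A\to B$ be adjoint to $(i^*j)^{-1}\circ\eta$, where $\eta$ is the unit at $(\Phi,M)$. Then $j\circ r$ is adjoint to $i^*j\circ(i^*j)^{-1}\circ\eta\simeq\eta$, and so is $\id_A$. Hence $j\circ r\simeq\id_A$, and every object of $\Psi_M$ lies in $\Psi'$. With this in place, the rest of your argument goes through as written.
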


\begin{proof}
    The proof is identical to the proof of \cite{gammagePerverseSchobers3d2023}*{Proposition B.4}.
\end{proof}

From $i^*i_*\simeq \Id$, we obtain a natural transformation
\[i_!\to i_*,\]
which is an analog of the classical comparison between the Kleisli and Eilenberg--Moore adjunctions. In our presentable setting, this natural transformation is an equivalence.

\begin{prop}\label{prop:KLisEM}
    The natural transformation $i_!\to i_*$ is an equivalence.
\end{prop}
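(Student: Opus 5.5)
To prove Proposition~\ref{prop:KLisEM}, I use Lemma~\ref{lem:kleislirecognition} to reduce the statement to a generation property of Eilenberg--Moore categories. Fix $(\Phi,M)\in\sfMnd$ and let $i_*(\Phi,M)$ be the adjunction $S:\Phi\tofrom\Mod_M(\Phi):R$, where $S$ is the free module functor and $R$ forgets the module structure. The component of $i_!\to i_*$ at $(\Phi,M)$ is the canonical comparison map of adjunctions over $\Phi$. Since $i^*i_!\simeq\Id\simeq i^*i_*$ on monads, it is enough to show that $i_*(\Phi,M)$ already lies in the essential image $i_!(\sfMnd)$.

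Suppose this holds, say $i_*(\Phi,M)\simeq i_!(\Phi',M')$. Applying $i^*$ gives $(\Phi',M')\simeq(\Phi,M)$. Then, by the universal property of $i_!$ and the fact that $i_*$ is fully faithful, the comparison map is identified with an endomorphism of $i_*(\Phi,M)$ lying over the identity of $(\Phi,M)$. Such an endomorphism is invertible. By Lemma~\ref{lem:kleislirecognition}, the required membership $i_*(\Phi,M)\in i_!(\sfMnd)$ amounts to one claim: the smallest stable presentable subcategory $\calE\subseteq\Mod_M(\Phi)$ containing $\opname{Im}(S)$ is all of $\Mod_M(\Phi)$.

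I prove this claim with the bar construction. For any module $N\in\Mod_M(\Phi)$ there is the simplicial bar resolution $\cdots \rightrightarrows SRSR(N)\rightrightarrows SR(N)$. This is the standard $R$-split augmented simplicial object, so its geometric realization is $N$ after applying $R$. Because $M=RS$ is colimit-preserving (all functors in $\sfSt_k$ are), $R$ preserves all colimits and is conservative. It follows that $|\opname{Bar}_\bullet(N)|\simeq N$ in $\Mod_M(\Phi)$ itself. Each term $S(RS)^k R(N)$ lies in $\opname{Im}(S)$. The subcategory $\calE$ is closed under colimits, because a presentable subcategory of a presentable category which is stable and contains the generating image should be taken to be colimit-closed, matching the convention in Lemma~\ref{lem:kleislirecognition}. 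Hence $N\in\calE$, so $\calE=\Mod_M(\Phi)$.

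The main obstacle is making this closure argument rigorous. One must check that "smallest stable presentable subcategory" in Lemma~\ref{lem:kleislirecognition} really means closure under small colimits, i.e.\ that the Kleisli category constructed by $i_!$ is the localizing subcategory generated by $\opname{Im}(S)$. One must also check that the comparison functor $i_!(\Phi,M)\to\Mod_M(\Phi)$ is fully faithful on that image: mapping spaces between free objects are computed in both categories as $\Map_\Phi(x,My)$. These two facts show the comparison is fully faithful and colimit-preserving, and the bar resolution then gives essential surjectivity. This, rather than the formal reduction, is where I expect the real work to be.
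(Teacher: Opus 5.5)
Your argument is correct, but it applies the recognition lemma to the other adjunction than the paper does.

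\textbf{The paper's route.} It applies Lemma~\ref{lem:kleislirecognition} to $i_!(\Phi,M)$ itself. Since $\opname{KL}(M)$ is generated by free objects, the comparison $I:\opname{KL}(M)\to\Mod(M)$ is fully faithful, so $R_{\opname{KL}}\simeq R_{\opname{EM}}\circ I$ is conservative. It is colimit-preserving because everything lives in $\sfSt_k$. Barr--Beck--Lurie then makes $S_{\opname{KL}}\adjto R_{\opname{KL}}$ monadic, so it agrees with the Eilenberg--Moore adjunction.

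\textbf{Your route.} You apply the lemma to $i_*(\Phi,M)$ instead. You verify the generation criterion for $\Mod_M(\Phi)$ directly using the bar resolution. You then finish with the formal argument using $i^*i_!\simeq\Id\simeq i^*i_*$ and the full faithfulness of $i_*$.

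\textbf{Comparison.} The paper's proof is shorter because the bar resolution is hidden inside the monadicity theorem. Yours makes visible exactly where presentability enters: every module is a geometric realization of free modules. That is an infinite colimit, which is unavailable in the small setting of the $\Coh/\Perf$ example. Your route also never needs full faithfulness of $I$.

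\textbf{Your final paragraph overstates the remaining work.}
\begin{enumerate}
\item The colimit-closure question is settled by the paper's standing convention that functors in $\sfSt_k$ preserve colimits. A subcategory whose inclusion is such a functor is automatically colimit-closed, and the bar resolution then does the rest.
\item The full-faithfulness check you list is not needed on your route, because your second paragraph already converts membership in $i_!(\sfMnd)$ into invertibility of the comparison map. That check is the ingredient of the paper's route. There, agreement of mapping spaces only between free objects is not quite enough. One uses $\Map(S_{\opname{KL}}x,b)\simeq\Map_\Phi(x,R_{\opname{KL}}b)\simeq\Map(S_{\opname{EM}}x,Ib)$ for arbitrary $b$, and then closes up under colimits in the first variable.
\end{enumerate}
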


\begin{proof}
    Let $(\Phi,M)\in\sfMnd$ and denote $i_!(\Phi,M)\to i_*(\Phi,M)$ as follows.
    \[\begin{tikzcd}
        \Phi & {\opname{KL}(M)} \\
        \Phi & {\Mod(M)}
        \arrow["{S_{\opname{KL}}}", shift left, from=1-1, to=1-2]
        \arrow["{\id_{\Phi}}"', from=1-1, to=2-1]
        \arrow["{R_{\opname{KL}}}", shift left, from=1-2, to=1-1]
        \arrow["I", from=1-2, to=2-2]
        \arrow["{S_{\opname{EM}}}", shift left, from=2-1, to=2-2]
        \arrow["{R_{\opname{EM}}}", shift left, from=2-2, to=2-1]
    \end{tikzcd}\]
    By Lemma~\ref{lem:kleislirecognition}, $I$ is fully faithful, and thus $R_{\opname{KL}}$ is conservative. Since we are working inside $\sfSt_k$, $R_{\opname{KL}}$ is automatically colimit-preserving. Thus it is monadic, and agrees with the Eilenberg--Moore adjunction as desired.
\end{proof}

\begin{eg}
    It is useful to consider geometric examples of this. Let $Y$ be a smooth variety equipped with a map $f:Y\to \bbA^1/\bbG_m$. Write $D=Y\times_{\bbA^1/\bbG_m}(B\bbG_m)$, the associated generalized effective Cartier divisor.
    
    On the level of small categories, this yields an adjunction
    \[i^*:\Coh(Y)\tofrom \Coh(D):i_*\]
    which coincides with the Eilenberg--Moore construction as $i_*$ is seen to be monadic. However, the stable essential image of $i^*$ is $\Perf(D)$, noting that since $Y$ is smooth, $\Perf(Y)\isomto \Coh(Y)$. So the Kleisli adjunction associated to the monad $i^*i_*$ is
    \[i^*:\Perf(Y)\tofrom \Perf(D):i_*.\]
    So if $D$ is not smooth (for example take $Y=\pt$ and $f=0$) then the two adjunctions differ.

    On the other hand, in the large setting, both constructions output the adjunction,
    \[i^*:\QCoh(Y)\tofrom \QCoh(D):i_*.\]
    So this exhibits Proposition~\ref{prop:KLisEM} for the object $(\QCoh(Y),M_D := i_*i^*)$. An interesting consequence is that any other adjunction lifting this monad admits a universal map both to and from this one. Take for example the adjunction,
    \[i^*:\QCoh(Y)\tofrom \IndCoh(D):i_*,\]
    which yields the same monad $M_D$ on $\QCoh(Y)$. Then there are maps,
    \[\QCoh(D)\to \IndCoh(D)\to \QCoh(D).\]
    These are the adjoint functors $\Xi_D\adjto \Psi_D$ of \cite{gaitsgoryIndcoherentSheaves2012}.
\end{eg}

\subsection{Families of \texorpdfstring{$\two$}{2}-categories}\label{subsection:twocatfamily}
In this subsection, we prove some category theory results used in Section~\ref{section:families} and in Section~\ref{section:radon}.

\begin{defn}
    A local system of $\two$-categories on a space $X$ is a functor $X\to \wCat_{\two}$, where $X$ is viewed as a $\one$-category. A map between two local systems is a natural transformation of functors. We write $\Fun(X,\wCat_{\two})$ for the category of functors with morphisms given by natural transformations.
\end{defn}

If $X$ is connected and we fix a basepoint $x\in X$, then there is an identification,
\[X\simeq B\Omega_xX,\]
where $\Omega_xX$ be the loop group based at $x\in X$. In this way, a local system of $\two$-categories on $X$ is the same thing as a $\two$-category $\sfC_x$ with an action of $\Omega_xX$.

Consider the map of spaces, $\pi:X\to \pt$. Then, there is a pair of adjoint functors,
\[\pi^*:\wCat_{\two}\tofrom \Fun(X,\wCat_{\two}):\pi_*.\]
We will write $\pi_* := \Gamma(X,-)$. Under the above identification, this is the functor of $\Omega_xX$-fixed points.

\begin{prop}\label{prop:faithful}
    Let $X$ be a connected space and suppose we are given local systems of 2-categories, $\ul{\sfC},\ul{\sfD}$, together with a map $\opname{F}:\ul{\sfC}\to\ul{\sfD}$ between them. Fix $x\in X$, and suppose that $\opname{F}_x:\sfC_x\to\sfD_x$ is 2-faithful. Then, the natural functor
    \[\opname{I}:\Gamma(X,\ul{\sfC})\to \sfC_x\times_{\sfD_x}\Gamma(X,\ul{\sfD})\]
    is 2-fully faithful.

    Consider an object $\calE$ of the target given by $\calD\in \Gamma(X,\ul{\sfD})$ together with a lift of $\calD_x\in\sfD_x$ to some $\calC_x\in\sfC_x$. Then, $\calE$ lies in the essential image of $\opname{I}$ if and only if $\calC_x$ is a fixed point in the set of all lifts of $\calD_x$ along $\opname{F}_x$, under its natural $\pi_1(X,x)$-action.
\end{prop}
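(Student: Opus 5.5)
Identify $X\simeq B\Omega_xX$, so that local systems become $\two$-categories with an action of $G:=\Omega_xX$, and $\Gamma(X,-)$ becomes taking homotopy fixed points, i.e.\ a limit over $BG$ in $\wCat_{\two}$. Since limits commute with limits, I first rewrite the target $\sfC_x\times_{\sfD_x}\Gamma(X,\ul{\sfD})$. The map $\opname{I}$ factors as
\[\Gamma(X,\ul{\sfC})\to \Gamma(X,\ul{\sfC}\times_{\ul{\sfD}}\ul{\sfD})\simeq \Gamma(X,\ul{\sfC})\]
followed by evaluation at $x$. It is cleaner to consider the comparison square
\[\begin{tikzcd}[ampersand replacement=\&]
	{\Gamma(X,\ul{\sfC})} \& {\Gamma(X,\ul{\sfD})} \\
	{\sfC_x} \& {\sfD_x}
	\arrow[from=1-1, to=1-2]
	\arrow[from=1-1, to=2-1]
	\arrow[from=1-2, to=2-2]
	\arrow[from=2-1, to=2-2]
\end{tikzcd}\]
and to analyse its failure to be a pullback.

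\textbf{Step 1: 2-full faithfulness.} Mapping categories in a limit of $\two$-categories are limits of mapping categories. For $\calC,\calC'\in\Gamma(X,\ul{\sfC})$, the category $\Gamma(X,\ul{\sfC})(\calC,\calC')$ is the $G$-homotopy fixed points of $\sfC_x(\calC_x,\calC'_x)$, and similarly for $\sfD$. Since $\opname{F}_x$ is 2-faithful, $\sfC_x(\calC_x,\calC_x')\to\sfD_x(\opname{F}\calC_x,\opname{F}\calC_x')$ is a $G$-equivariant fully faithful functor. A fully faithful equivariant inclusion $A\inclto B$ gives $A^{hG}\simeq A\times_B B^{hG}$, because fully faithful functors are $(-1)$-truncated on mapping spaces, hence on objects up to equivalence and on morphisms, and being in $A$ is a property that is automatically compatible with the homotopy coherent data. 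This shows that the mapping categories of the square are pullbacks, which is exactly 2-full faithfulness of $\opname{I}$. An alternative is to phrase this via Lemma~\ref{lem:ftoff}: $\ul{\sfC}\to\ul{\sfC}\times_{\ul{\sfD}}\ul{\sfC}$ is pointwise 2-fully faithful, and $\Gamma$ preserves this.

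\textbf{Step 2: essential image.} Fix $\calD\in\Gamma(X,\ul{\sfD})$, so that $\calD_x$ carries $G$-equivariance data. Let $L$ be the space of lifts of $\calD_x$, namely the fiber of $(\sfC_x)^{\simeq}\to(\sfD_x)^{\simeq}$ over $\calD_x$. By Lemma~\ref{lem:discrete1} with $\frakt=\mathfrak{pt}$, this map is $0$-truncated, so $L$ is a set. The equivariance of $\calD$ makes $L$ into a $G$-set, i.e.\ a $\pi_1(X,x)$-set, and lifts of $\calD$ to $\Gamma(X,\ul{\sfC})$ are exactly the homotopy fixed points $L^{hG}$. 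For a discrete set, $L^{hG}=L^{\pi_1}$ is the set of fixed points, so the fiber of objects is as claimed. Concretely, a lift $\calC$ extends to a section if and only if each loop $\gamma$ sends $\calC_x$ to itself inside $L$. Higher coherences are automatic by discreteness, exactly as in the proof of Proposition~\ref{prop:functorconstruction}.

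\textbf{Main obstacle.} The hard step is making the fixed-point and limit manipulation rigorous. This requires checking that objects and mapping categories of $\Gamma$ are computed fiberwise as homotopy fixed points, and identifying the action on $L$. I would handle this by realizing $\Gamma(X,-)$ as a limit in $\wCat_{\two}$ and using that the object-space functor $(-)^{\simeq}$ and mapping categories preserve limits.
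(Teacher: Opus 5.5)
Your proposal is correct and follows essentially the same route as the paper: you identify sections with $\Omega_xX$-homotopy fixed points, use that an equivariant fully faithful functor of mapping categories induces a pullback square on homotopy fixed points, and use that 2-faithfulness makes the lifts of $\calD_x$ a discrete set on which the action factors through $\pi_1(X,x)$, so that lifts of $\calD$ are exactly the fixed points (your identification $L^{h\Omega_xX}=L^{\pi_1(X,x)}$ is a cleaner packaging of the paper's path-transport argument). The one thing to drop is the Lemma~\ref{lem:ftoff} ``alternative'' in Step 1: it does not by itself give 2-full faithfulness of $\opname{I}$, so your main argument is the one to rely on there.
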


\begin{proof}
    As discussed above, we have an identification,
    \[\Gamma(X,\ul{\sfC})\simeq \sfC_x^{\Omega_xX}.\]
    We denote its objects by pairs $(\calC,\Xi)$ consisting of an object $\calC$ equipped with fixed point data $\Xi$. Given objects $(\calC_1,\Xi_1),(\calC_2,\Xi_2)$, the mapping category ${\sfC}(\calC_1,\calC_2)$ obtains a $\Omega_xX$-action, and
    \[{\sfC^{\Omega_xX}}((\calC_1,\Xi_1),(\calC_2,\Xi_2))\simeq {\sfC}(\calC_1,\calC_2)^{\Omega_xX}.\]
    We have similarly for $\Gamma(X,\ul{\sfD})\simeq \sfD^{\Omega_xX}$. We may organize our categories in the following diagram.
    \[\begin{tikzcd}
        {\sfC_x^{\Omega_xX}} & \sfE\pullback & {\sfD_x^{\Omega_xX}} \\
        & {\sfC_x} & {\sfD_x}
        \arrow["{\opname{I}}", from=1-1, to=1-2]
        \arrow[curve={height=12pt}, from=1-1, to=2-2]
        \arrow[from=1-2, to=1-3]
        \arrow[from=1-2, to=2-2]
        \arrow[from=1-3, to=2-3]
        \arrow["{\opname{F}_x}", from=2-2, to=2-3]
    \end{tikzcd}\]

    Now let $\calX_1=(\calC_1,\Xi_1),\calX_2=(\calC_2,\Xi_2)\in \sfC^{\Omega_xX}$. Then, we get the following diagram of hom categories.
    \[\begin{tikzcd}
        {{\sfC_x^{\Omega_xX}}(\calX_1,\calX_2)\simeq{\sfC_x}(\calC_1,\calC_2)^{\Omega_xX}} & {{\sfE}(\opname{I}(\calX_1),\opname{I}(\calX_2))}\pullback & {{\sfD_x}(\opname{F}(\calC_1),\opname{F}(\calC_2))^{\Omega_xX}} \\
        & {{\sfC_x}(\calC_1,\calC_2)} & {{\sfD_x}(\opname{F}(\calC_1),\opname{F}(\calC_2))}
        \arrow["{I}", from=1-1, to=1-2]
        \arrow[curve={height=12pt}, from=1-1, to=2-2]
        \arrow[from=1-2, to=1-3]
        \arrow[from=1-2, to=2-2]
        \arrow[from=1-3, to=2-3]
        \arrow["", from=2-2, to=2-3]
    \end{tikzcd}\]
    By assumption, the bottom horizontal arrow is fully faithful. By an analogous $\one$-categorical statement, $I$ is an equivalence of categories, so we deduce the 2-fully faithfulness.

    We now calculate the objects in the essential image of the functor $\opname{I}$. Let $\calD\in \Gamma(X,\ul{\sfD})$. Since $\opname{F}_x$ is faithful, the $(\infty,2)$-category of lifts of $\calD_x$ along $\opname{F}_x$ is in fact a partially ordered set (viewed as an $(\infty,2)$-category). Thus the action of the group $\Omega_xX$ factors through $\Omega_xX\to \pi_1(X,x)$. A lift $\calC_x$ of $\calD_x$ defines an object $\calE$ of $\sfC_x\times_{\sfD_x}\Gamma(X,\ul{\sfD})$.
    
    Suppose that $\calC_x$ is a fixed point for the $\pi_1(X,x)$-action. Then, for each $y\in X$, we can choose a path $\gamma$ from $x$ to $y$ in $X$, and obtain a lift $\calC_y^\gamma$ of $\calD_y$. The fixed point condition ensures that this is independent of $\gamma$, and so we obtain a preimage $\calC\in\sfC$. The converse is clear.
\end{proof}

\begin{cor}\label{cor:surjectivepi1}
    In the setting of Proposition~\ref{prop:faithful}, suppose that $U\to X$ is a map of connected spaces such that $x\in U$ and the induced map $\pi_1(U,x)\to \pi_1(X,x)$ is surjective. Then, the natural functor
    \[\opname{P}:\Gamma(X,\ul{\sfC})\to \Gamma(U,\ul{\sfC})\times_{\Gamma(U,\ul{\sfD})}\Gamma(X,\ul{\sfD})\]
    is an equivalence.
\end{cor}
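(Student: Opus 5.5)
Since $\pi_1(U,x)\to\pi_1(X,x)$ is surjective, a lift is fixed by $\pi_1(U,x)$ if and only if it is fixed by $\pi_1(X,x)$, so Proposition~\ref{prop:faithful} applied over $U$ and over $X$ should give the corollary. The plan is to factor $\opname{P}$ through $\sfC_x$, prove it is 2-fully faithful, and then compare essential images. Write $\opname{I}_X:\Gamma(X,\ul{\sfC})\to \sfC_x\times_{\sfD_x}\Gamma(X,\ul{\sfD})$ and $\opname{I}_U:\Gamma(U,\ul{\sfC})\to \sfC_x\times_{\sfD_x}\Gamma(U,\ul{\sfD})$ for the functors of Proposition~\ref{prop:faithful}, where the restricted local systems on $U$ are used and $\opname{F}_x$ is still 2-faithful. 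Both are 2-fully faithful by that proposition.

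Using the pasting law for pullbacks,
\[\Gamma(U,\ul{\sfC})\times_{\Gamma(U,\ul{\sfD})}\Gamma(X,\ul{\sfD})\;\to\; \bigl(\sfC_x\times_{\sfD_x}\Gamma(U,\ul{\sfD})\bigr)\times_{\Gamma(U,\ul{\sfD})}\Gamma(X,\ul{\sfD})\simeq \sfC_x\times_{\sfD_x}\Gamma(X,\ul{\sfD}).\]
The first map is the base change of $\opname{I}_U$ along $\Gamma(X,\ul{\sfD})\to\Gamma(U,\ul{\sfD})$, and it is 2-fully faithful because 2-fully faithful functors are stable under pullback: mapping categories in a pullback are pullbacks of mapping categories. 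The composite of $\opname{P}$ with this map is $\opname{I}_X$. We now have $\opname{I}_X=\opname{J}\circ \opname{P}$ with $\opname{J}$ and $\opname{I}_X$ 2-fully faithful, so $\opname{P}$ is 2-fully faithful. It remains to show that $\opname{P}$ is essentially surjective, which is equivalent to showing that $\opname{J}$ and $\opname{I}_X$ have the same essential image.

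Proposition~\ref{prop:faithful} computes both essential images. An object of $\sfC_x\times_{\sfD_x}\Gamma(X,\ul{\sfD})$ is a pair $(\calD,\calC_x)$ with $\calC_x$ a lift of $\calD_x$. It lies in the image of $\opname{I}_X$ if and only if $\calC_x$ is fixed by $\pi_1(X,x)$ in the poset of lifts. It lies in the image of $\opname{J}$ if and only if $(\calD|_U,\calC_x)$ is in the image of $\opname{I}_U$, that is, if and only if $\calC_x$ is fixed by $\pi_1(U,x)$ acting on lifts of $\calD_x$.

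The main point to check is that the $\pi_1(U,x)$-action on the set of lifts is the restriction of the $\pi_1(X,x)$-action along $\pi_1(U,x)\to\pi_1(X,x)$. This holds because the action comes from the monodromy of the local systems $\ul{\sfC}$ and $\ul{\sfD}$, and on $U$ these are pulled back from $X$. Once that is in place, surjectivity of $\pi_1(U,x)\to\pi_1(X,x)$ makes the two fixed-point conditions equivalent. So the essential images agree, $\opname{P}$ is essentially surjective, and it is an equivalence.
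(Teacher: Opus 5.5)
Your proof is correct and follows the same approach as the paper. Both apply Proposition~\ref{prop:faithful} and use surjectivity of $\pi_1(U,x)\to\pi_1(X,x)$ to show that the $\pi_1(X,x)$- and $\pi_1(U,x)$-fixed-point conditions on lifts agree. Your factorization $\opname{I}_X=\opname{J}\circ\opname{P}$, through the base change of $\opname{I}_U$, makes explicit the 2-full faithfulness of $\opname{P}$ and the comparison of essential images, which the paper's one-line proof leaves implicit.
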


\begin{proof}
    The essential image of the functor $\opname{I}$ of Proposition~\ref{prop:faithful} is given by fixed points for a $\pi_1(X,x)$-action, which is the same as being a fixed point under an associated $\pi_1(U,x)$-action. So the result follows.
\end{proof}

\begin{lem}\label{lem:faithfulpullback}
    Suppose we are given a pullback square of 2-categories as follows.
    \[\begin{tikzcd}
        {\sfC}\pullback & {\sfD} \\
        \sfC' & \sfD'
        \arrow["{\opname{F}}", from=1-1, to=1-2]
        \arrow["\opname{G}", from=1-1, to=2-1]
        \arrow["\opname{H}", from=1-2, to=2-2]
        \arrow["{\opname{F}'}", from=2-1, to=2-2]
    \end{tikzcd}\]
    If $\opname{F}'$ is 2-faithful, then so is $\opname{F}$.
\end{lem}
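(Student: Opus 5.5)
The plan is to reduce the claim to a statement about mapping $\one$-categories and then use that fully faithful functors of $\one$-categories are stable under pullback. Recall that 2-faithfulness of $\opname{F}$ means that for all $\calC_1,\calC_2\in\sfC$ the induced functor $\sfC(\calC_1,\calC_2)\to\sfD(\opname{F}\calC_1,\opname{F}\calC_2)$ is fully faithful. So the first step is to identify mapping categories in a pullback of $\two$-categories. Write an object of $\sfC\simeq \sfC'\times_{\sfD'}\sfD$ as a triple $(\calC',\calD,\alpha)$ with $\alpha:\opname{F}'\calC'\simeq\opname{H}\calD$. Mapping categories in a limit of $\two$-categories are computed as the corresponding limit of mapping categories, exactly as used in the proof of Lemma~\ref{lem:ftoff}. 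This gives a pullback square of $\one$-categories
\[\sfC(\calC_1,\calC_2)\simeq \sfC'(\calC_1',\calC_2')\times_{\sfD'(\opname{F}'\calC_1',\opname{F}'\calC_2')}\sfD(\calD_1,\calD_2),\]
where $\sfD(\calD_1,\calD_2)\to\sfD'(\opname{F}'\calC_1',\opname{F}'\calC_2')$ is $\opname{H}$ followed by conjugation with the $\alpha_i$. Under this identification, the functor induced by $\opname{F}$ is the projection onto the second factor.

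The second step is the $\one$-categorical input: in a pullback $A\times_B D\to D$ of $\one$-categories, if $A\to B$ is fully faithful, then so is the projection $A\times_B D\to D$. To see this, note that mapping spaces in $A\times_B D$ are pullbacks $\Map_A(a_1,a_2)\times_{\Map_B}\Map_D(d_1,d_2)$. The map $\Map_A\to\Map_B$ is an equivalence, so its base change to $\Map_D(d_1,d_2)$ is one too. Applying this with $A\to B$ the functor $\sfC'(\calC_1',\calC_2')\to\sfD'(\opname{F}'\calC_1',\opname{F}'\calC_2')$, which is fully faithful by the hypothesis on $\opname{F}'$, finishes the argument.

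The only point needing care is the first step: that mapping categories of a pullback in $\wCat_{\two}$ are the pullbacks of mapping categories, compatibly with the functor induced by $\opname{F}$. I would cite this as the same standard fact already used in Lemma~\ref{lem:ftoff}. The functor $\opname{G}$ and its properties play no role.
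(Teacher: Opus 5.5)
Your proposal is correct and follows the paper's proof: both identify $\sfC(\calC_1,\calC_2)$ as the pullback of the mapping categories of $\sfC'$, $\sfD$ and $\sfD'$, and conclude by base change along the functor on mapping categories induced by $\opname{F}'$. Your version states the key step more accurately than the paper's text. The paper calls the bottom functor an ``equivalence'', whereas the hypothesis only gives full faithfulness, and that is exactly what you show is preserved under pullback.
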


\begin{proof}
    Let $\calC_1,\calC_2\in\sfC$ be objects. Then, we have a pullback square of hom categories,
    \[\begin{tikzcd}
        {\sfC(\calC_1,\calC_2)}\pullback & {\sfD}(\opname{F}(\calC_1),\opname{F}(\calC_2)) \\
        {{\sfC'}(\opname{G}(\calC_1),\opname{G}(\calC_2))} & {{\sfD'}(\opname{HF}(\calC_1),\opname{HF}(\calC_2)).}
        \arrow[from=1-1, to=1-2]
        \arrow[from=1-1, to=2-1]
        \arrow[from=1-2, to=2-2]
        \arrow[from=2-1, to=2-2]
    \end{tikzcd}\]
    Since the bottom horizontal functor is an equivalence of categories, we deduce that the top horizontal functor is an equivalence, thus $\opname{F}$ is 2-faithful.
\end{proof}

\section{Semiorthogonal decompositions}\label{section:sod}
In this section, we review the theory of semiorthogonal decompositions, or SODs.

In \S\ref{subsection:radmissible}, we will define right-admissible SODs and show that we may ``merge'' adjacent components together. We note that it is possible to define SODs without the admissibility assumption, for example as in \cite{christGinzburgAlgebrasTriangulated2022}*{\S 2.6}, but we will not need to work in this level of generality. We will then recall the notion of $\infty$-admissible SODs, and the braid group action by mutation of such SODs.

In \S\ref{subsection:simplicialsod}, we construct $\two$-categories $\sfSOD(n)$ whose objects are given by categories equipped with an SOD. We will then define functors of $\two$-categories $d_{n,i}:\sfSOD(n)\to \sfSOD(n-1)$ for $0<i<n$, which merges the $i$\ts{th} and $(i+1)\ts{th}$ components of the SOD. We further upgrade the assignment
\[[n]\mapsto\sfSOD(n)\]
to a simplicial $\two$-category $\sfSOD(-)$, where the inner face maps are given by the functors $d_{n,i}$. The remaining face maps $d_{n,0},d_{n,n}$ are given by forgetting the first and last components respectively, whilst the degeneracy maps $s_{n,i}$ insert a 0 subcategory between the $i$\ts{th} and $(i+1)\ts{th}$ components.

We prove that $\sfSOD(-)$ satisfies the 2-Segal property as reviewed in \S\ref{subsection:simplicialintro}. Viewing semiorthogonal decompositions as filtrations of a category, this is similar in spirit to the Waldhausen S-construction, which is the prototypical example of a 2-Segal object, as discussed in Example~\ref{eg:waldhausen}.

\subsection{Right-admissible semiorthogonal decompositions}\label{subsection:radmissible}
We fix notation and conventions regarding semi-orthogonal decompositions or SODs. 

\begin{defn}
    A full subcategory $\Phi'$ of $\Phi$ is said to be right-admissible if the inclusion functor admits a right adjoint, which we recall is assumed colimit-preserving.
\end{defn}

\begin{defn}\label{defn:sod}
    Let $\Phi$ be a category. Let $\frakS=(\Phi_1,\dots,\Phi_n)$ be a list of right-admissible subcategories and denote by $I_i:\Phi_i\to\Phi$ the inclusion functors. We say that $\frakS$ forms an $n$-term right-admissible semi-orthogonal decomposition of $\Phi$ if the following two properties hold.
    \begin{enumerate}
        \item\label{item:semi} Whenever $i<j$, and $\phi_i\in\Phi_i,\phi_j\in\Phi_j$ are objects, $\Hom_{\Phi}(\phi_j,\phi_i)=0$.
        \item\label{item:cons} The map $\oplus_i I_i^R:\Phi\to \oplus_i\Phi_i$ is conservative.
    \end{enumerate}
    In this case, we write
    \[\Phi=\langle\Phi_1,\dots,\Phi_n\rangle.\]
\end{defn}

\begin{lem}\label{lem:SODcharacterizations}
    The second condition is equivalent to the following condition.
    \begin{enumerate}[resume]
        \item\label{item:product} The composition $\Cone(I_1I_1^R\to 1)\circ\Cone(I_2I_2^R\to 1)\circ\cdots\circ\Cone(I_nI_n^R\to 1)$ is zero.
    \end{enumerate}
\end{lem}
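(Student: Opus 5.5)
We must show that, given condition (1), condition (2) is equivalent to the vanishing of
\[Q := Q_1\circ Q_2\circ\cdots\circ Q_n, \qquad Q_i := \Cone(I_iI_i^R\to 1).\]
Here $Q_i$ is the projection onto the right orthogonal $\Phi_i^{\perp}=\{\phi : I_i^R\phi=0\}$. Indeed, $I_i^R I_i\simeq \id$, so $I_i^R Q_i \simeq 0$. Moreover, $Q_i$ restricts to the identity on $\Phi_i^\perp$, since the counit $I_iI_i^R\phi\to\phi$ has source zero there.

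The key observation is that semi-orthogonality (1) makes each $\Phi_j^\perp$ stable under $Q_i$ whenever $i<j$. To see this, take $\phi\in \Phi_j^\perp$. Then $I_iI_i^R\phi$ lies in $\Phi_i$, and by (1) we have $\Hom(\Phi_j,\Phi_i)=0$, so $I_iI_i^R\phi\in\Phi_j^\perp$. The cofiber sequence $I_iI_i^R\phi\to\phi\to Q_i\phi$ then places $Q_i\phi$ in $\Phi_j^\perp$, because $\Phi_j^\perp$ is a stable subcategory.

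\textbf{(2)$\Rightarrow$(3).} Let $\phi\in\Phi$ and set $\psi=Q\phi$. By conservativity it suffices to show $I_j^R\psi=0$ for every $j$.

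First, $Q_n\phi$ lies in $\Phi_n^\perp$. Applying $Q_{n-1},\dots,Q_1$ keeps it in $\Phi_n^\perp$ by the stability observation, since all of these indices are smaller than $n$. The same argument works for each $j$. The element $Q_j Q_{j+1}\cdots Q_n\phi$ lies in $\Phi_j^\perp$, and the remaining operators $Q_{j-1},\dots,Q_1$ all have index smaller than $j$, so they preserve $\Phi_j^\perp$. Hence $\psi\in\bigcap_j\Phi_j^\perp$, so $\oplus_j I_j^R\psi=0$, and conservativity gives $\psi=0$.

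\textbf{(3)$\Rightarrow$(2).} A map $f$ inverted by $\oplus_i I_i^R$ has a cofiber killed by every $I_i^R$. So it suffices to show that an object $\phi$ with $I_i^R\phi=0$ for all $i$ is zero. For such $\phi$, each $Q_i$ acts as the identity on $\phi$, so
\[\phi\simeq Q_1\cdots Q_n\phi = 0.\]

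The only nontrivial step is the stability of $\Phi_j^\perp$ under $Q_i$ for $i<j$, which uses condition (1) in the correct direction. We must check that the ordering matches: the operator $Q_n$ is applied first, and the operators applied afterwards have smaller indices. With the composition written as in (3), this is exactly the case, so the argument goes through as stated.
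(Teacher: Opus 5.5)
Your proof is correct and takes essentially the same route as the paper. Your stability of $\Phi_j^\perp$ under $\Cone(I_iI_i^R\to 1)$ for $i<j$ is the object-level form of the paper's identity $I_j^R\Cone(I_iI_i^R\to 1)\simeq I_j^R$ from semi-orthogonality, combined with $I_j^R\Cone(I_jI_j^R\to 1)=0$, and your converse is the paper's observation that every factor acts as the identity on an object killed by all $I_i^R$.
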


\begin{proof}
    We show (\ref{item:cons})$\Rightarrow$(\ref{item:product}). Let $\phi\in\Phi$ and consider the element $\phi' := \Cone(I_1I_1^R\to 1)\circ\Cone(I_2I_2^R\to 1)\circ\cdots\circ\Cone(I_nI_n^R\to 1)\phi$. It is easy to see that $I_1^R\phi'=0$ because $I_1^R\Cone(I_1I_1^R\to 1)=0$. Next, $I_2^R\phi'=0$ because $I_2^R\Cone(I_1I_1^R\to 1) \simeq I_2^R$ by semi-orthogonality. Continuing as such, we deduce that $I_i^R\phi'=0$ for all $i$ so $\phi'=0$.
    
    For the converse, we need to show that given $\phi\in\Phi$, if $I_i^R\phi = 0$ for all $i$, then $\phi=0$. By (\ref{item:product}),
    \[0 = \Cone(I_1I_1^R\to 1)\circ\Cone(I_2I_2^R\to 1)\circ\cdots\circ\Cone(I_nI_n^R\to 1)\phi.\]
    In the meantime, if we expand the product, every term vanishes except $\phi$. So $\phi=0$ as desired.
\end{proof}

\begin{defn}
    Let $\Phi$ be a category, $\frakS$ an $n$-term right-admissible SOD of $\Phi$, and $1\leq j< n$ an index. Suppose that $\frakS' = (\Phi_1',\dots,\Phi_n')$ is another right-admissible SOD of $\Phi$.
    \begin{enumerate}
        \item If $\Phi'_i=\Phi_i$ for all $i\neq j, j+1$ and $\Phi'_j = \Phi_{j+1}$ then we say that $\frakS'$ is the mutation of $\frakS$ by the element $\sigma_j\in\Br_n$, and write $\frakS' = \sigma_j\cdot\frakS$
        \item If $\Phi'_i=\Phi_i$ for all $i\neq j, j+1$ and $\Phi'_{j+1} = \Phi_j$ then we say that $\frakS'$ is the mutation of $\frakS$ by the element $\sigma_j^{-1}\in\Br_n$, and write $\frakS' = \sigma_j^{-1}\cdot\frakS$
    \end{enumerate}
\end{defn}

The following lemmas are standard.

\begin{lem}
    Let $\Phi$ be a category, $\frakS$ an $n$-term right-admissible SOD of $\Phi$, and $1\leq j<n$ an index. If $\frakS$ admits a mutation by $\sigma_j\in\Br_n$, then it does so uniquely. We have likewise for $\sigma_j^{-1}$.
\end{lem}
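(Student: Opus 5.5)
The plan is to show that the only component that can change under a mutation by $\sigma_j$ is determined by all the others. So suppose $\frakS'=(\Phi_1',\dots,\Phi_n')$ and $\frakS''=(\Phi_1'',\dots,\Phi_n'')$ are both mutations of $\frakS$ by $\sigma_j$. By definition they agree with $\frakS$ away from positions $j,j+1$, and both have $\Phi_j'=\Phi_j''=\Phi_{j+1}$. Hence only the $(j+1)$\ts{st} components can differ, and the task is to prove $\Phi'_{j+1}=\Phi''_{j+1}$.

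The key claim is that in any right-admissible SOD $\Phi=\langle\Psi_1,\dots,\Psi_n\rangle$, each component is recovered from the others by
\[\Psi_k = \{\phi\in\Phi : \Hom(\phi,\psi_i)=0 \text{ for all } \psi_i\in\Psi_i,\ i<k,\ \text{and } I_i^R\phi = 0 \text{ for all } i>k\}.\]
The inclusion $\subseteq$ follows from condition (\ref{item:semi}) of Definition~\ref{defn:sod}: if $i>k$ and $\phi\in\Psi_k$, then $\Hom(\psi_i,\phi)=0$, so $I_i^R\phi=0$ by adjunction.

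For the inclusion $\supseteq$, take $\phi$ in the right-hand side. I will use the filtration supplied by Lemma~\ref{lem:SODcharacterizations}. Write $C_i:=\Cone(I_iI_i^R\to 1)$.
\begin{itemize}
  \item The vanishing $I_i^R\phi=0$ for $i>k$ gives $C_i\phi\simeq\phi$ for $i>k$. Hence condition (\ref{item:product}) yields $C_1\cdots C_k\phi=0$.
  \item Unwinding the cones, $\phi$ is built by iterated extensions from objects $I_iI_i^R(\cdots)$ with $i\le k$.
  \item More concretely, set $\phi'=C_k\phi$. The cofiber sequence $I_kI_k^R\phi\to\phi\to\phi'$, together with $C_1\cdots C_{k-1}\phi'=0$, shows that $\phi'$ lies in the stable subcategory generated by $\Psi_1,\dots,\Psi_{k-1}$.
  \item The map $\phi\to\phi'$ vanishes, because $\phi$ is left orthogonal to $\Psi_i$ for $i<k$, and hence to anything built from them by extensions and shifts (this uses the colimit-closure of the orthogonality condition).
  \item Therefore $\phi$ is a retract of $I_kI_k^R\phi$. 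Since $\Psi_k$ is closed under retracts, being the essential image of a right adjoint's section, we get $\phi\in\Psi_k$.
\end{itemize}

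With the claim in hand, apply it to $\Psi=\frakS'$ and $\Psi=\frakS''$ with $k=j+1$. The components in positions $i<j+1$ and $i>j+1$ are the same in both SODs, and the right adjoints $I_i^R$ depend only on the subcategory. Hence the characterizing conditions agree, and $\Phi'_{j+1}=\Phi''_{j+1}$. The case $\sigma_j^{-1}$ is symmetric: there $\Phi'_{j+1}=\Phi_j$ is fixed, and the same characterization with $k=j$ pins down $\Phi'_j$.

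I expect the main obstacle to be the middle step: showing that $\phi'$ lies in the subcategory generated by the earlier components, and that orthogonality passes to that generated subcategory. I would handle this by induction on $k$, using the cofiber sequences defining each $C_i$ and semi-orthogonality, in the same way as the proof of Lemma~\ref{lem:SODcharacterizations}.
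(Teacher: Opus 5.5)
Your argument is correct. The paper gives no proof of this lemma; it is listed among facts described as ``standard''. What you give is the usual argument. In a right-admissible SOD each component is forced to be ${}^{\perp}\langle\Psi_{<k}\rangle\cap\langle\Psi_{>k}\rangle^{\perp}$. A mutation by $\sigma_j^{\pm1}$ fixes every component except one, so that remaining component is determined by the others.

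Your derivation of the reverse inclusion from Lemma~\ref{lem:SODcharacterizations} is sound:
\begin{itemize}
\item $C_{k+1}\cdots C_n\phi\simeq\phi$.
\item Hence $\phi'=C_k\phi$ is a $(k-1)$-step iterated extension of objects of $\Psi_1,\dots,\Psi_{k-1}$.
\item So $\Hom(\phi,\phi')=0$, and the counit $I_kI_k^R\phi\to\phi$ splits.
\end{itemize}

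Two small wording fixes.
\begin{itemize}
\item You only need closure of $\{x:\Hom(\phi,x)=0\}$ under fibres, cofibres and shifts. Closure under arbitrary colimits can fail when $\phi$ is not compact, but your argument never uses it.
\item $\Psi_k$ is closed under retracts because it is the essential image of the fully faithful left adjoint $I_k$. Equivalently, it consists of the objects whose counit $I_kI_k^R\phi\to\phi$ is an equivalence, and that condition passes to retracts.
\end{itemize}

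You can also avoid retracts altogether. Since $\phi\to\phi'$ is null, $I_kI_k^R\phi\simeq\phi\oplus\phi'[-1]$. The summand $\phi'[-1]$ lies in the extension closure of $\Psi_{<k}$, and $I_kI_k^R\phi$ lies in $\Psi_k$. So the split projection $I_kI_k^R\phi\to\phi'[-1]$ vanishes, which forces $\phi'=0$.
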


\begin{lem}\label{lem:relations}
    Let $\Phi$ be a category, $\frakS$ an $n$-term right-admissible SOD of $\Phi$. The following hold whenever the relevant mutations exist.
    \begin{enumerate}
        \item $\sigma_i\cdot(\sigma_i^{-1}\cdot\frakS) = \frakS$, $\sigma_i^{-1}\cdot(\sigma_i\cdot\frakS) = \frakS$, for any $i$.
        \item $\sigma_i\cdot(\sigma_j\cdot\frakS) = \sigma_j\cdot(\sigma_i\cdot\frakS)$, for $|i-j|>1$.
        \item $\sigma_i\cdot(\sigma_j\cdot(\sigma_i\cdot\frakS)) = \sigma_j\cdot(\sigma_i\cdot(\sigma_j\cdot\frakS))$, for $|i-j|=1$.
    \end{enumerate}
\end{lem}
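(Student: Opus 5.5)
The plan is to reduce all three relations to a single principle: a right-admissible SOD is determined by very little of its data, so it is enough to check that both sides agree on a few components.

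\emph{Rigidity.} First I would isolate the following statement. If two $n$-term right-admissible SODs $\frakS=(\Phi_1,\dots,\Phi_n)$ and $\frakS'=(\Phi_1',\dots,\Phi_n')$ of $\Phi$ satisfy $\Phi_i=\Phi_i'$ for all $i\neq j$, then $\frakS=\frakS'$.

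To prove it, I would merge components. The subcategory $\langle\Phi_1,\dots,\Phi_{j-1}\rangle$ should be recoverable as the right orthogonal of $\langle\Phi_j,\dots,\Phi_n\rangle$ inside $\Phi$. Dually, in the quotient picture, $\Phi_j$ is characterized as those objects $\phi$ with:
\begin{itemize}
\item $\Hom(\phi,\Phi_i)=0$ for all $i<j$;
\item $\Hom(\Phi_i,\phi)=0$ for all $i>j$.
\end{itemize}

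This characterization comes from Lemma~\ref{lem:SODcharacterizations}. Given such a $\phi$, semi-orthogonality gives $I_i^R\phi=0$ for $i>j$. The terms $\Cone(I_iI_i^R\to 1)$ for $i>j$ therefore act as the identity on $\phi$, and the composite from Lemma~\ref{lem:SODcharacterizations} reduces to show $\phi$ lies in the subcategory generated by $\Phi_1,\dots,\Phi_j$ with the left-orthogonality constraints. That in turn forces $\phi\in\Phi_j$.

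The uniqueness lemma for mutations stated just above is a special case of this principle. I expect this characterization of the middle term to be the only real content of the proof.

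\emph{Relation (1).} Take $\frakS''=\sigma_i\cdot(\sigma_i^{-1}\cdot\frakS)$ and compare it with $\frakS$. Put $\frakS'=\sigma_i^{-1}\cdot\frakS$, so $\frakS'_k=\Phi_k$ for $k\neq i,i+1$ and $\frakS'_{i+1}=\Phi_i$. Applying $\sigma_i$ gives $\frakS''_k=\Phi_k$ for $k\neq i,i+1$ and $\frakS''_i=\frakS'_{i+1}=\Phi_i$. So $\frakS''$ agrees with $\frakS$ everywhere except possibly in slot $i+1$, and rigidity gives $\frakS''=\frakS$. The other order is symmetric.

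\emph{Relation (2).} For $|i-j|>1$, the mutations $\sigma_i$ and $\sigma_j$ touch the disjoint slot pairs $\{i,i+1\}$ and $\{j,j+1\}$. Tracking components, both sides have the same entries outside slots $i+1$ and $j+1$. I would then apply rigidity twice. One way is to first merge the block $\{j,j+1\}$ using the merging construction mentioned at the start of \S\ref{subsection:radmissible}, which reduces to one free slot at a time. Alternatively, one can strengthen rigidity to allow free slots that are not adjacent, arguing by induction on merges.

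\emph{Relation (3).} Take $j=i+1$; the case $j=i-1$ is symmetric. Chasing components, both sides equal $\Phi_k$ for $k\notin\{i,i+1,i+2\}$. Both sides also have $\Phi_{i+2}$ in slot $i$: on the left via $\sigma_i\sigma_{i+1}\sigma_i$, on the right via $\sigma_{i+1}\sigma_i\sigma_{i+1}$. The components in slots $i+1$ and $i+2$ are harder to track directly.

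To handle this, I would merge the block $\langle\text{slots } i+1,i+2\rangle$, which on both sides equals the orthogonal of the known components, and apply rigidity to the merged $(n-1)$-term SOD. This shows the merged block is the same on both sides. I would then compare the two 2-term decompositions inside that block by explicit bookkeeping:
\begin{itemize}
\item on the left side, slot $i+2$ holds $\Phi_i$;
\item on the right side, slot $i+2$ is, after tracking, also $\Phi_i$.
\end{itemize}
Rigidity within the block then finishes. The main obstacle is the careful index bookkeeping here, in particular making sure the intermediate mutations exist whenever the outer ones do, which the statement assumes.
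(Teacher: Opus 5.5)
Your rigidity principle (an SOD is determined by all but one of its components) is correct, and it settles relation (1) as you say. However, the bookkeeping in relation (3) is wrong. In the paper's convention, $\sigma_k$ moves the component of slot $k+1$ unchanged into slot $k$ and creates a new component in slot $k+1$. So components only travel unchanged to the left. Write $(A,B,C)=(\Phi_i,\Phi_{i+1},\Phi_{i+2})$ for slots $i,i+1,i+2$. The left side passes through $(B,X_1,C)$, $(B,C,X_2)$, $(C,X_3,X_2)$, and the right side through $(A,C,Y_1)$, $(C,Y_2,Y_1)$, $(C,Y_1,Y_3)$. Thus slot $i+2$ holds $X_2$, resp.\ $Y_3$, which is a twice-mutated copy of $\Phi_i$, not $\Phi_i$ itself. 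Indeed, $(B,C,X_2)$ is semi-orthogonal, so $\Hom(X_2,\Phi_{i+1})=0$, while $\Hom(\Phi_i,\Phi_{i+1})$ is generally nonzero. For example, for $\QCoh(\bbP^2)=\langle\mathcal{O},\mathcal{O}(1),\mathcal{O}(2)\rangle$ and $i=1$, slot $3$ becomes $\langle\mathcal{O}(3)\rangle$. Your claim is what happens for $\sigma_i^{-1}\sigma_{i+1}^{-1}\sigma_i^{-1}$. It cannot coexist with your correct observation that slot $i$ holds $\Phi_{i+2}$ unless $\Phi_i$ and $\Phi_{i+2}$ are mutually orthogonal. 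So your final ``rigidity within the block'' step has no input.

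The missing idea is a locality statement. The component created by $\sigma_k$ from a pair $(B,C)$ in slots $k,k+1$ is the full subcategory of $\langle B,C\rangle$ of objects with no nonzero maps to $C$, so it depends only on the pair. This follows from your own tools. Merging slots $k,k+1$ before and after the mutation gives two $(n-1)$-term SODs that agree outside the merged slot, so rigidity shows $\langle B,C\rangle$ is unchanged. Two-term rigidity inside this block then pins down the new component. Now compare slot $i+1$ instead of slot $i+2$. $X_3$ is created by the last $\sigma_i$ from the pair $(B,C)$. $Y_1$ is created by the first $\sigma_{i+1}$ from the same pair and then carried unchanged into slot $i+1$. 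So $X_3=Y_1$, and rigidity gives $X_2=Y_3$.

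Your primary route for relation (2) silently needs the same fact. After merging slots $j,j+1$ you still have two free slots unless you know both merged blocks equal $\langle\Phi_j,\Phi_{j+1}\rangle$. Your alternative of strengthening rigidity to non-adjacent free slots is false. If $\Phi=U\oplus V\oplus W$ with mutually orthogonal summands, then $(U,V,W)$ and $(W,V,U)$ are both SODs that agree in slot $2$ and differ in slots $1$ and $3$.
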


For example, when $n=5$ the mutation by $\sigma_2$ can be represented by the following diagram, where we have drawn the original SOD on the bottom and the mutated one on the top.
\[\begin{tikzpicture}[scale=1.2]
    \tikzset{
        strand/.style={black, line width=1.5pt},
        mask/.style={white, line width=4.5pt}
    }

    \draw[strand] (1, 2) -- (1, 1) node[below, black] {$\langle\Phi_1,$} node[at start, above] {$\langle\Phi_1,$};
    \draw[strand] (3, 2) .. controls (3, 1.5) and (2,1.5) .. (2, 1) node[below, black] {$\Phi_2,$} node[at start, above] {$\Phi_3',$};
    \draw[strand] (4, 2) -- (4, 1) node[below, black] {$\Phi_4,$} node[at start, above] {$\Phi_4,$};
    \draw[strand] (5, 2) -- (5, 1) node[below, black] {$\Phi_5\rangle$} node[at start, above] {$\Phi_5\rangle$};

    \draw[mask] (2, 2) .. controls (2, 1.5) and (3,1.5) .. (3, 1);
    \draw[strand] (2, 2) .. controls (2, 1.5) and (3,1.5) .. (3, 1) node[below, black] {$\Phi_3,$} node[at start, above] {$\Phi_3,$};
\end{tikzpicture}\]

\begin{rmk}\label{rmk:braidconvention}
    Given braids $\alpha,\beta\in \Br_n$, $\alpha\beta$ is the braid obtained by stacking $\alpha$ on top of $\beta$. Under this convention, the action of $\Br_n$ in Lemma~\ref{lem:actionSOD} is a left action.
\end{rmk}

\begin{defn}
    A semi-orthogonal decomposition $\frakS$ of $\Phi$ is said to be $\infty$-admissible if it admits all repeated mutations.
\end{defn}

\begin{lem}\label{lem:actionSOD}
    If $\frakS$ is an $n$-term $\infty$-admissible SOD of $\Phi$, then the braid group acts on the set of all repeated mutations.
\end{lem}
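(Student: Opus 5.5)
The plan is to define the action on the set $\mathcal{M}(\frakS)$ of all repeated mutations of $\frakS$, and then check that it is well defined and respects the braid relations. The first point to settle is that $\mathcal{M}(\frakS)$ is closed under all single mutations. If $\frakS'\in\mathcal{M}(\frakS)$ is obtained from $\frakS$ by a word $w$ in the generators $\sigma_j^{\pm1}$, then any further sequence of mutations applied to $\frakS'$ is a longer word applied to $\frakS$. Since $\frakS$ is $\infty$-admissible, that longer word is defined. Hence every element of $\mathcal{M}(\frakS)$ is again $\infty$-admissible, and every $\sigma_j^{\pm1}\cdot\frakS'$ exists.

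Next, let $F$ be the free group on $\sigma_1,\dots,\sigma_{n-1}$. Each generator $\sigma_j^{\pm1}$ gives a map $\mathcal{M}(\frakS)\to\mathcal{M}(\frakS)$, and by the uniqueness lemma for mutations these maps are well defined. By Lemma~\ref{lem:relations}(1), $\sigma_j$ and $\sigma_j^{-1}$ act by mutually inverse bijections. So we obtain a left action of $F$, using the convention of Remark~\ref{rmk:braidconvention} that a word acts letter by letter, rightmost letter first.

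Finally, the action descends to $\Br_n=F/\langle\langle \text{braid relations}\rangle\rangle$. It suffices to show that each defining relator acts trivially on every element of $\mathcal{M}(\frakS)$. This is exactly Lemma~\ref{lem:relations}(2),(3), applied to an arbitrary $\frakS'\in\mathcal{M}(\frakS)$. The hypothesis ``whenever the relevant mutations exist'' in that lemma is automatic here, because $\frakS'$ is $\infty$-admissible. A relator acting trivially on every point of the set implies that the whole normal subgroup it generates acts trivially. Therefore $\Br_n$ acts, and this action is transitive on $\mathcal{M}(\frakS)$ by construction.

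I expect no step to be a genuine obstacle. The only real work is the bookkeeping already contained in Lemma~\ref{lem:relations}, which itself rests on the uniqueness of mutations. The one subtle point is the closure of $\mathcal{M}(\frakS)$ under mutations, which guarantees that the existence hypotheses always hold. If Lemma~\ref{lem:relations} were not available, I would prove (3) directly: compare the components in positions $i,i+1,i+2$ for both sides, and check that both sides give SODs with the same first and last components. Uniqueness of the middle component, which is the orthogonal of the other two, then forces the two SODs to be equal.
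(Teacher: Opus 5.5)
Your proposal is correct and matches the paper's approach: the paper states this lemma without proof, as a standard consequence of the uniqueness of mutations and Lemma~\ref{lem:relations}. Your argument spells out exactly that routine deduction: closure of the set of repeated mutations under single mutations, a free-group action via Lemma~\ref{lem:relations}(1), and descent to $\Br_n$ via Lemma~\ref{lem:relations}(2),(3).
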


\begin{defn}\label{defn:periodic}
    Let $\frakS$ be an $n$-term $\infty$-admissible SOD. For an element $\sigma\in\Br_n$, we say that $\frakS$ is $\sigma$-periodic if $\sigma\cdot\frakS=\frakS$. For a subgroup $H\subseteq \Br_n$, say that $\frakS$ is periodic for $H$ if for all $\sigma\in H$, $\frakS$ is $\sigma$-periodic.
\end{defn}

The following says that in a right-admissible SOD, adjacent terms may be merged together.

\begin{lem}\label{lem:twosubcats}
    Suppose that $\Phi_i\xto{I_i}\Phi$ for $i=1,2$ are two right-admissible subcategories satisfying semiorthogonality, $I_2^RI_1=0$. Then, the subcategory $\Phi_{12}$ they generate is also right-admissible and has an SOD $\langle\Phi_1,\Phi_2\rangle$.
\end{lem}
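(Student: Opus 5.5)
The plan is to construct the right adjoint to the inclusion $I_{12}:\Phi_{12}\to\Phi$ explicitly, using the usual gluing triangle. For $\phi\in\Phi$, set
\[P(\phi) := \Cone\bigl(I_1I_1^R\,\Cone(I_2I_2^R\phi\to\phi)[-1]\to I_2I_2^R\phi\bigr),\]
the object built from the counit of $I_2$ and the counit of $I_1$ applied to its cofiber. More symmetrically, let $C_i := \Cone(I_iI_i^R\to 1)$, as in Lemma~\ref{lem:SODcharacterizations}. Define the candidate colocalization by the fiber sequence
\[P\to 1\to C_1C_2.\]
This $P$ is a composite of colimit-preserving functors and cofibers of such, so it is colimit-preserving.

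The steps, in order, are as follows.

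\textbf{Step 1: $P\phi$ lies in $\Phi_{12}$.} Take $\Phi_{12}$ to be the smallest stable, presentable (colimit-closed) subcategory containing $\Phi_1$ and $\Phi_2$. The cofiber of $P\to 1$ is $C_1C_2$. Composing the triangles $I_2I_2^R\to 1\to C_2$ and $I_1I_1^R C_2\to C_2\to C_1C_2$ via the octahedral axiom gives a triangle
\[I_2I_2^R\phi\to P\phi\to I_1I_1^RC_2\phi.\]
So $P\phi$ is an extension of an object of $\Phi_1$ by one of $\Phi_2$, and hence lies in $\Phi_{12}$.

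\textbf{Step 2: $C_1C_2\phi$ is right orthogonal to $\Phi_{12}$.} It suffices to check that $\Hom(\psi, C_1C_2\phi)=0$ for $\psi\in\Phi_1\cup\Phi_2$, since the class of such $\psi$ is closed under colimits and shifts. For $\psi\in\Phi_1$ this is immediate, since $I_1^RC_1=0$. For $\psi\in\Phi_2$, use semiorthogonality $I_2^RI_1=0$: it gives $I_2^RC_1\simeq I_2^R$, so $I_2^RC_1C_2\simeq I_2^RC_2=0$.

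\textbf{Step 3: conclude the adjunction and the SOD.} With Steps 1 and 2, the standard argument shows $\Hom(\psi,P\phi)\isomto\Hom(\psi,\phi)$ for all $\psi\in\Phi_{12}$. Hence $P$ factors as $I_{12}I_{12}^R$ with $I_{12}^R$ colimit-preserving, and $\Phi_{12}$ is right-admissible. In $\Phi_{12}$, the subcategories $\Phi_1,\Phi_2$ remain right-admissible, with adjoints the restrictions of $I_i^R$, and semiorthogonality is inherited. For the SOD we use criterion (\ref{item:product}) of Lemma~\ref{lem:SODcharacterizations} applied inside $\Phi_{12}$. The composite $C_1C_2$ restricted to $\Phi_{12}$ is the cofiber of $P|_{\Phi_{12}}\simeq 1$, hence zero.

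The main obstacle is Step 1 and the corresponding closure issue in Step 3. It has to be checked that $\Phi_{12}$ really equals the essential image of $P$, i.e. that this image is already closed under colimits. I would handle this by noting that $P$ is colimit-preserving and idempotent: by Step 2, $P$ applied to an object of $\Phi_{12}$ returns that object. So its image is closed under colimits, and it is therefore exactly the subcategory generated by $\Phi_1$ and $\Phi_2$.
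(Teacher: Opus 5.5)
Your proof is correct, but it takes a different route from the paper's.

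\textbf{The paper's route.} The paper builds $\Phi_{12}$ externally, as the category of diagrams $I_2\phi_2\leftarrow I_1I_1^RI_2\phi_2\to I_1\phi_1$. This is a lax colimit of the gluing functor $I_1^RI_2$. It defines $I_{12}$ as the colimit of such a diagram and $I_{12}^R$ by the explicit span of counits. The adjunction comes from the mapping-space pullback computation in Lemma~\ref{lem:twocats}, and full faithfulness and the SOD are left as easy checks.

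\textbf{Your route.} You stay inside $\Phi$ and produce the colocalization directly as $P=\mathrm{fib}(1\to C_1C_2)$, then verify it by an orthogonality argument. The two functors in fact coincide. Taking the total cofiber of the commuting square of counits shows that $P\phi$ is the pushout of $I_2I_2^R\phi\leftarrow I_1I_1^RI_2I_2^R\phi\to I_1I_1^R\phi$, which is exactly $I_{12}I_{12}^R\phi$ in the paper.

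\textbf{What your approach buys.} It is a self-contained and actually complete argument. Colimit preservation of $I_{12}^R$ is automatic, the image is visibly the subcategory generated by $\Phi_1$ and $\Phi_2$, and condition~(\ref{item:product}) of Lemma~\ref{lem:SODcharacterizations} becomes tautological.

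The ``main obstacle'' you flag does not really arise. For $\psi\in\Phi_{12}$, the object $C_1C_2\psi$ lies in $\Phi_{12}$ (Step 1 plus closure under cofibers) and is right orthogonal to it (Step 2), so it vanishes. Hence $\Phi_{12}=\operatorname{Im}P=\ker(C_1C_2)$. This also settles presentability of $\Phi_{12}$, since the kernel of a colimit-preserving functor between presentable categories is presentable.

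\textbf{What the paper's approach buys.} It gives an intrinsic gluing model of $\Phi_{12}$ that depends only on $\Phi_1$, $\Phi_2$ and $I_1^RI_2$. The paper reuses exactly this model in Lemmas~\ref{lem:twocats}--\ref{lem:twocatsequivalence}. There the functors $S_i$ are no longer fully faithful and there is no ambient subcategory to take, so your orthogonality argument would not apply in that setting.
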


\begin{proof}

    Let $\Phi_{12}$ denote the category of diagrams of the form
        \[\begin{tikzcd}
        {I_2\phi_2} & \\
        {I_1I_1^RI_2\phi_2} & {I_1\phi_1}
        \arrow[from=2-1, to=1-1]
        \arrow[from=2-1, to=2-2]
    \end{tikzcd}\]
    where the vertical arrow is a counit for $I_1\adjto I_1^R$.

    We can construct an adjoint pair of functors $I_{12}:\Phi_{12}\tofrom \Phi:I_{12}^R$, by constructing a functorial identification $\Hom(I_{12}\phi_{12},\phi)\simeq \Hom(\phi_{12},I_{12}^R\phi)$. The functor $I_{12}$ is given by the colimit of the diagram in $\Phi$, whilst $I_{12}^R$ maps $\phi$ to the diagram
    \[\begin{tikzcd}
        {I_2I_2^R\phi} & \\
        {I_1I_1^RI_2I_2^R\phi} & {I_1I_1^R\phi}
        \arrow[from=2-1, to=1-1]
        \arrow[from=2-1, to=2-2]
    \end{tikzcd}\]
    where both arrows are counits.

    We exhibit this adjunction in the proof of Lemma~\ref{lem:twocats}. It is easy to verify that $I_{12}$ is fully faithful and that it admits an SOD $\langle\Phi_1,\Phi_2\rangle$.
\end{proof}

\subsection{\texorpdfstring{$\sfSOD(n)$}{SOD(n)} as a simplicial \texorpdfstring{$\two$}{2}-category}\label{subsection:simplicialsod}
We construct the $\two$-category $\sfSOD(n)$ as follows. We will do this by considering the simpler $\two$-category of partial SODs, and by taking advantage of 2-faithfulness to exhibit higher coherences for free.

Let $\frakn(n)$ be the ordinary 1-category with objects labelled $1,\dots,n,\infty$ and an arrow $i\to \infty$ for each $i$. We let
\[\sfAdj(n)\subseteq \sfFun(\frakn(n),\sfSt_k)\]
be the locally full sub-$\two$-category of those diagrams $i\mapsto \Phi_i$ where each morphism $\Phi_i\to\Phi_{\infty}$ admits a right adjoint, and of those morphisms which are adjointable with respect to these right adjoints. See \S\ref{subsection:mndadj} for details when $n=1$.

Let $\sfSOD(n)$ be the full sub-$\two$-category cut out by the conditions in Definition~\ref{defn:sod}. In order to upgrade this to a simplicial category, we will eventually construct some functors. For example, the middle face map $[3]\to [4]$ will correspond to some functor $\sfSOD(4)\to \sfSOD(3)$ which merges the middle two terms in the SOD. 

We define partial SODs.

\begin{defn}\label{defn:partialsod}
    Let $\Phi$ be a category. Let $\frakS=(\Phi_1,\dots,\Phi_n)$ a list of right-admissible subcategories and denote by $I_i:\Phi_i\to\Phi$ the inclusion functors. We say that $\frakS$ forms an $n$-term right-admissible partial semi-orthogonal decomposition of $\Phi$ if whenever $i<j$, and $\phi_i\in\Phi_i,\phi_j\in\Phi_j$ are objects, we have $\Hom_{\Phi}(\phi_j,\phi_i)=0$.
\end{defn}

Let $\sfSOD'(n)$ be the full sub-$\two$-category of $\sfAdj(n)$ which parametrizes $n$-term partial SODs. We will denote objects of $\sfSOD'(n)$ by $(\frakS;\Phi)$. There are functors $\opname{U}:\sfSOD'(n)\to\sfSt_k$ given by
\[(\frakS;\Phi)\mapsto\Phi.\]
These categories have the advantage that all functors in the simplicial structure lie over $\sfSt_k$ via $\opname{U}$. We show now that moreover, $\opname{U}$ is 2-faithful. Thus roughly speaking, relative to $\sfSt_k$, we are working with partially ordered \textit{sets} as opposed to $\two$-categories. This is made precise in Proposition~\ref{prop:functorconstruction}.

\begin{lem}\label{lem:Uisfaithful}
    The functor $\opname{U}:\sfSOD'(n)\to\sfSt_k$ is 2-faithful.
\end{lem}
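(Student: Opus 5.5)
We have to show that for objects $(\frakS;\Phi)$ and $(\frakS';\Phi')$ of $\sfSOD'(n)$, the functor on mapping categories
\[\sfSOD'(n)\big((\frakS;\Phi),(\frakS';\Phi')\big)\to \sfSt_k(\Phi,\Phi')\]
is fully faithful. Since $\sfSOD'(n)$ is a full sub-$\two$-category of $\sfAdj(n)$, which is locally full in $\sfFun(\frakn(n),\sfSt_k)$, we can compute this mapping category inside $\sfFun(\frakn(n),\sfSt_k)$. There it is the category of natural transformations of $\frakn(n)$-diagrams, restricted to adjointable ones; adjointability is a property, so it is a full subcategory.

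\textbf{Step 1: describe the mapping category.} Because $\frakn(n)$ is an ordinary $1$-category whose only non-identity arrows are $i\to\infty$, a morphism is determined by the following data:
\begin{itemize}
    \item functors $G_\infty:\Phi\to\Phi'$ and $G_i:\Phi_i\to\Phi'_i$;
    \item equivalences $\alpha_i:G_\infty I_i\simeq I'_iG_i$.
\end{itemize}
A $2$-morphism is a family of natural transformations $\theta_\infty,\theta_i$ compatible with the $\alpha_i$. The mapping category is therefore an iterated fiber product,
\[\sfSt_k(\Phi,\Phi')\times_{\prod_i\sfSt_k(\Phi_i,\Phi')}\prod_i\sfSt_k(\Phi_i,\Phi'_i),\]
where each map $\sfSt_k(\Phi_i,\Phi'_i)\to\sfSt_k(\Phi_i,\Phi')$ is postcomposition with $I'_i$, and the map from $\sfSt_k(\Phi,\Phi')$ is precomposition with $I_i$.

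\textbf{Step 2: postcomposition with $I'_i$ is fully faithful.} Since $I'_i$ is the inclusion of a full subcategory, it is fully faithful. Postcomposition with a fully faithful functor is fully faithful on functor categories: natural transformations $I'_iG\Rightarrow I'_iG'$ correspond to natural transformations $G\Rightarrow G'$. Fully faithful functors are stable under base change in $\one$-categories, and a finite product of fully faithful functors is fully faithful. Hence the projection of the fiber product in Step 1 onto $\sfSt_k(\Phi,\Phi')$ is fully faithful, which is exactly $2$-faithfulness of $\opname{U}$.

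\textbf{Main obstacle.} The main care needed is in Step 1: identifying the mapping category of $\sfFun(\frakn(n),\sfSt_k)$ with this strict fiber product. Here I would use that $\frakn(n)$ is a $1$-category with no composable non-identity arrows, so that $\frakn(n)$ is a pushout of $n$ copies of $\frakn(1)$ along the object $\infty$. The statement then reduces to the case $n=1$, namely that the mapping categories of $\sfFun(\frakn(1),\sfSt_k)$ are lax-square categories with invertible $2$-cells. These are given by the standard fiber product formula for the arrow $\two$-category.

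Note that the partial SOD conditions are not used at all. Only the fact that each $I'_i$ is fully faithful matters, which is why the statement holds for $\sfSOD'(n)$ as well as for its full sub-$\two$-category $\sfSOD(n)$.
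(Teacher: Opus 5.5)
Your proof is correct and matches the paper's own argument: the paper also writes the mapping category as the full subcategory, cut out by adjointability, of the limit of $\sfSt_k(\Phi,\Phi') \to \sfSt_k(\Phi_i,\Phi') \leftarrow \sfSt_k(\Phi_i,\Phi'_i)$, and concludes from the full faithfulness of postcomposition with the inclusions. Your label $I'_i\circ-$ for those maps is the right one (the paper's diagram writes $I_i^R\circ-$, a typo), and you are also right that the partial SOD condition is never used.
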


\begin{proof}
    Let $\calF := (\frakS;\Phi),\calF' := (\frakS';\Phi')\in\sfSOD'(n)=:\sfS$ be two stable categories equipped with $n$-term partial SODs $\frakS=(\Phi_i)$ and $\frakS'=(\Phi_i')$. The mapping category $\sfS(\calF,\calF')$ is a full subcategory of the limit of the following diagram (cut out by the condition that squares be adjointable).
    \[\begin{tikzcd}[ampersand replacement=\&]
        \& {\sfSt_k(\Phi,\Phi')} \& \\
        {\sfSt_k(\Phi_1,\Phi')} \& \cdots \& {\sfSt_k(\Phi_n,\Phi')} \\
        {\sfSt_k(\Phi_1,\Phi'_1)} \& \cdots \& {\sfSt_k(\Phi_n,\Phi'_n)}
        \arrow["{-\circ I_1}"', from=1-2, to=2-1]
        \arrow[from=1-2, to=2-2]
        \arrow["{-\circ I_n}"', from=1-2, to=2-3]
        \arrow["{I_1^R\circ-}", from=3-1, to=2-1]
        \arrow[from=3-2, to=2-2]
        \arrow["{I_n^R\circ-}", from=3-3, to=2-3]
    \end{tikzcd}\]
    For each $i$, the functor $I_i^R\circ-$ is fully faithful, so the limit is a full subcategory inside $\sfSt_k(\Phi,\Phi')$. The resulting fully faithful composition
    \[\sfS(\calF,\calF')\to \sfSt_k(\Phi,\Phi')\]
    is exactly that which comes from $\opname{U}$ so we are done.
\end{proof}

\begin{prop}\label{prop:simplicialsodprime}
    For each $n\geq0$, there are functors of $\two$-categories over $\sfSt_k$,
    \[s_{n,0},\dots,s_{n,n}:\sfSOD'(n)\to\sfSOD'(n+1)\]
    where $s_{n,i}$ inserts a 0 subcategory between $\Phi_i$ and $\Phi_{i+1}$. There are also functors,
    \[d_{n,0},\dots,d_{n,n}:\sfSOD'(n)\to \sfSOD'(n-1)\]
    where $d_{n,0}$ removes $\Phi_1$, $d_{n,n}$ removes $\Phi_n$, and the remaining $d_{n,i}$ merges $\Phi_i$ and $\Phi_{i+1}$.

    These functors define a simplicial $\two$-category
    \[\sfSOD'(-):\Delta^{\op}\to\wCat_{\two}.\]
\end{prop}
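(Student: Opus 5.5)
The plan is to work entirely over $\sfSt_k$ and use Proposition~\ref{prop:functorconstruction} together with Lemma~\ref{lem:Uisfaithful}. Since $\opname{U}:\sfSOD'(n)\to\sfSt_k$ is 2-faithful for every $n$, a functor $\sfSOD'(n)\to\sfSOD'(m)$ over $\sfSt_k$ is unique if it exists, and is determined by its effect on equivalence classes of objects. So I only need two things: each map $s_{n,i}$ and $d_{n,i}$ actually comes from a functor, and the simplicial identities hold on objects.

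\textbf{Step 1: construct each functor as a composite over $\sfSt_k$.} For the outer face maps $d_{n,0},d_{n,n}$, I take restriction $f^*$ along the inclusion $\frakn(n-1)\inclto\frakn(n)$ that omits $1$ (respectively $n$). This preserves adjointability and the partial SOD condition, so it restricts to $\sfSOD'(n)\to\sfSOD'(n-1)$ and visibly commutes with $\opname{U}$. For the degeneracies $s_{n,i}$, I restrict along the functor $\frakn(n+1)\to\frakn(n)$ that sends the new index to $\infty$, and then postcompose with the (functorial) replacement of $\Phi_\infty$ by the zero category. A cleaner version is to use $f_!$ from Theorem~\ref{thm:laxfibrations} along $\frakn(n)\inclto\frakn(n+1)$: its left Kan extension places the initial object $0$ at the new vertex.

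The inner face maps $d_{n,i}$ are the main obstacle. Here I intend to avoid building the functor directly. Instead I will verify the criterion implicit in the proof of Proposition~\ref{prop:functorconstruction}: for every $\frakt$, the map $\Map(\frakt,\sfSOD'(n))\to\Map(\frakt,\sfSt_k)$ should factor through the subspace $\Map(\frakt,\sfSOD'(n-1))$. By Lemma~\ref{lem:discrete1} and Lemma~\ref{lem:discrete2}, it suffices to give a natural object-level assignment and check that it is compatible with 1-morphisms and 2-morphisms.

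On objects, Lemma~\ref{lem:twosubcats} supplies the merged right-admissible subcategory $\Phi_{i,i+1}\subseteq\Phi$ generated by $\Phi_i$ and $\Phi_{i+1}$. Semiorthogonality with the remaining terms is immediate, since $\Phi_{i,i+1}$ is generated by objects orthogonal to them.

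For 1-morphisms, take a 1-morphism $G:\Phi\to\Phi'$ of $\sfSOD'(n)$. Then $G$ sends $\Phi_i$ into $\Phi'_i$ and $\Phi_{i+1}$ into $\Phi'_{i+1}$, hence sends $\Phi_{i,i+1}$ into $\Phi'_{i,i+1}$. I still need the square to be adjointable, meaning $G I_{i,i+1}^R\simeq I_{i,i+1}'^R G$. I plan to check this using the explicit formula for $I_{12}^R$ in Lemma~\ref{lem:twosubcats}: it is the span $I_{i+1}I_{i+1}^R\phi\leftarrow I_iI_i^RI_{i+1}I_{i+1}^R\phi\to I_iI_i^R\phi$. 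Each constituent commutes with $G$ by the adjointability of the original squares, and colimit-preservation then finishes the argument. For 2-morphisms there is nothing to check, since the mapping categories are full subcategories of $\sfSt_k(\Phi,\Phi')$.

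\textbf{Step 2: simplicial identities.} Because the mapping spaces over $\sfSt_k$ are discrete, each simplicial identity between composites of functors over $\sfSt_k$ holds as soon as it holds on equivalence classes of objects, by the uniqueness clause of Proposition~\ref{prop:functorconstruction}. On objects the identities are bookkeeping with lists of subcategories. Merging is associative: the subcategory generated by $\Phi_i,\Phi_{i+1},\Phi_{i+2}$ does not depend on the order of merging. Merging with an inserted $0$ does nothing, and deletion commutes with merging of disjoint indices. Finally, discreteness of the relevant mapping spaces ensures that the assignment $[n]\mapsto\sfSOD'(n)$ upgrades to a functor $\Delta^{\op}\to\wCat_{\two}$ with no further coherence data required.
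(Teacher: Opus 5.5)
Your proposal is correct and follows the paper's route. Proposition~\ref{prop:functorconstruction} together with Lemma~\ref{lem:Uisfaithful} reduces every functor to object-level data, and the simplicial identities are checked as for simplicial sets because the relevant mapping spaces over $\sfSt_k$ are discrete. You also supply verifications the paper omits, notably adjointability for the merged term $\Phi_{i,i+1}$, and your $f^*$/$f_!$ constructions of the outer faces and degeneracies are a harmless concrete variant of the paper's object-level definitions (your first description of the degeneracies is garbled, but the $f_!$ version works).
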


\begin{proof}
    Since we are defining our functors between $\two$-categories faithful over $\sfSt_k$, we can apply Proposition~\ref{prop:functorconstruction}. This implies that to specify a functor $F:\sfSOD'(n)\to\sfSOD'(m)$ over $\sfSt_k$, it is enough to write down a map on equivalence classes of objects, which is given in the statement.

    The proof of Proposition~\ref{prop:functorconstruction} provides a criterion under which this extends to a functor of $\two$-categories, we omit the verification here.

    Similarly, given two functors $a,b: \sfSOD'(n)\to \sfSOD'(m)$ over $\sfSt_k$, by Proposition~\ref{prop:functorconstruction}, it is a property to ask if $a=b$. We need only ask they agree over each $\Phi\in\sfSt_k$. So to check that these functors extend to a simplicial object over $\sfSt_k$, it suffices to verify the usual simplicial identities as if we were defining a simplicial \textit{set}. This is easy to verify.

    In this way, we obtain a simplicial object,
    \[\sfSOD'(-):\Delta^{\op}\to(\wCat_{\two})_{/\sfSt_k}.\]
    Composing with the forgetful functor $(\wCat_{\two})_{/\sfSt_k}\to \wCat_{\two}$, we are done.
\end{proof}

The following will let us pass from $\sfSOD'(n)$ to $\sfSOD(n)$.

\begin{lem}\label{lem:product}
    There is an equivalence of $\two$-categories, $\sfSOD'(n)\isomto \sfSOD(n)\times_{\sfSt_k}\sfSOD'(1)$, over $\sfSt_k$ via the forgetful functor
    \[\sfSOD(n)\times_{\sfSt_k}\sfSOD'(1)\to \sfSOD'(1)\xto{\opname{U}}\sfSt_k.\]
    On objects, this is given by
    \[(\frakS;\Phi)\mapsto ((\frakS;\Phi_{1,\dots,n}),(\Phi_{1,\dots,n};\Phi)),\]
    where $\Phi_{1,\dots,n} := \langle\Phi_1,\dots,\Phi_n\rangle$ and $\frakS$ also denotes its $n$-term SOD.
\end{lem}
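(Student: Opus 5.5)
The plan is to reduce everything to a statement about objects via Proposition~\ref{prop:functorconstruction}, since all three $\two$-categories involved are 2-faithful over $\sfSt_k$. By Lemma~\ref{lem:Uisfaithful}, $\opname{U}:\sfSOD'(n)\to\sfSt_k$ is 2-faithful. The fibre product $\sfSOD(n)\times_{\sfSt_k}\sfSOD'(1)$ maps to $\sfSOD'(1)$. This map is 2-faithful by Lemma~\ref{lem:faithfulpullback}, being the base change of $\sfSOD(n)\to\sfSt_k$, which is 2-faithful as a full sub-$\two$-category of $\sfSOD'(n)$. Composing with the 2-faithful $\opname{U}$ shows that the target is 2-faithful over $\sfSt_k$.

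The first step is to construct the functor on objects. Given a partial SOD $(\Phi_1,\dots,\Phi_n)$ of $\Phi$, apply Lemma~\ref{lem:twosubcats} iteratively. Merge $\Phi_1,\Phi_2$ into a right-admissible $\Phi_{12}=\langle\Phi_1,\Phi_2\rangle$, then merge this with $\Phi_3$, and so on. This produces a right-admissible subcategory $\Phi_{1,\dots,n}\subseteq\Phi$ carrying an honest SOD $\langle\Phi_1,\dots,\Phi_n\rangle$. Equivalently, this is the composite of inner face maps $d$ from Proposition~\ref{prop:simplicialsodprime} down to $\sfSOD'(1)$, paired with the restriction to $\Phi_{1,\dots,n}$. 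Semiorthogonality inside $\Phi_{1,\dots,n}$ is inherited, and conservativity holds by construction of the merged category. Proposition~\ref{prop:functorconstruction} then says the assignment extends to a functor over $\sfSt_k$, provided the existence criterion from its proof holds. That criterion asks that a 1-morphism $G:\Phi\to\Phi'$ which is adjointable for each $\Phi_i\to\Phi'_i$ is also adjointable for the merged subcategories. I would check this directly from the explicit formula for $I_{12}^R$ as a diagram of counits, which commutes with $G$ whenever each $I_i^R$ does.

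The second step is to construct the inverse. An object of the target is a triple: $\Phi$, a right-admissible $\Psi\subseteq\Phi$, and an SOD $\Psi=\langle\Psi_1,\dots,\Psi_n\rangle$. Send it to $(\Psi_1,\dots,\Psi_n)$ viewed inside $\Phi$. Each $\Psi_i$ is right-admissible in $\Phi$, because right adjoints compose. Semiorthogonality holds because $\Psi$ is full. Again Proposition~\ref{prop:functorconstruction} gives a functor over $\sfSt_k$, with adjointability of the composite inclusions following from pasting adjointable squares.

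The third step is to show the two composites are identities. Because both sides are 2-faithful over $\sfSt_k$, Proposition~\ref{prop:functorconstruction} reduces this to checking it on equivalence classes of objects. One direction amounts to the statement that the subcategory generated by $\Psi_1,\dots,\Psi_n$ is $\Psi$ itself; this is the conservativity condition~(\ref{item:cons}), via Lemma~\ref{lem:SODcharacterizations}. The other direction is tautological.

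I expect the main obstacle to be the adjointability check in the first step. The naive concern is morphisms that are only adjointable for the individual pieces, so I will verify that the formula in Lemma~\ref{lem:twosubcats} is functorial in adjointable squares. Everything else is bookkeeping made free by 2-faithfulness.
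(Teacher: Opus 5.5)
Your proposal is correct and takes the same route as the paper. The paper invokes 2-faithfulness over $\sfSt_k$ and Proposition~\ref{prop:functorconstruction} to reduce the lemma to a check on objects, then omits the verification. You carry out that verification: the adjointability of the merged inclusion $\Phi_{1,\dots,n}\inclto\Phi$ via the counit formula of Lemma~\ref{lem:twosubcats}, and the identification $\Psi_{1,\dots,n}=\Psi$ via Lemma~\ref{lem:SODcharacterizations}, which are exactly the points the paper leaves implicit.
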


\begin{proof}
    As in the proof of Proposition~\ref{prop:simplicialsodprime}, the given datum extends to a functor uniquely if at all. We once again omit the verification.
\end{proof}

\begin{lem}\label{lem:keyrightadjoint}
    The inclusion $\opname{I}_n:\sfSOD(n)\inclto \sfSOD'(n)$ of $\two$-categories admits a right adjoint, which on objects, replaces $\Phi$ by the subcategory generated by $\Phi_1,\dots,\Phi_n$.
\end{lem}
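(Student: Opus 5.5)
The plan is to construct the right adjoint explicitly, using Lemma~\ref{lem:twosubcats} iteratively. Then I verify the adjunction on mapping categories by reducing everything to the 2-faithful forgetful functor $\opname{U}$ of Lemma~\ref{lem:Uisfaithful}.

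\textbf{Step 1: define the candidate on objects.} Given a partial SOD $(\frakS;\Phi)\in\sfSOD'(n)$, apply Lemma~\ref{lem:twosubcats} repeatedly: first merge $\Phi_1,\Phi_2$, then merge the result with $\Phi_3$, and so on. This produces a right-admissible subcategory $\Phi_{1,\dots,n}\xto{J}\Phi$ with right adjoint $J^R$ and an honest SOD $\langle\Phi_1,\dots,\Phi_n\rangle$. The semiorthogonality needed at each stage, $I_{k+1}^R I_{1\dots k}=0$, follows from the partial SOD condition. Set $\opname{I}_n^R(\frakS;\Phi):=(\frakS;\Phi_{1,\dots,n})$.

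\textbf{Step 2: use the product decomposition.} By Lemma~\ref{lem:product}, $\sfSOD'(n)\simeq \sfSOD(n)\times_{\sfSt_k}\sfSOD'(1)$, and under this equivalence $\opname{I}_n$ sends $(\frakS;\Psi)$ to $((\frakS;\Psi),(\Psi;\Psi))$. So it suffices to check that $\Psi\mapsto(\Psi;\Psi)$, from $\sfSt_k$ to $\sfSOD'(1)$, is left adjoint to $(\Phi';\Phi)\mapsto\Phi'$, compatibly over $\sfSt_k$.

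\textbf{Step 3: check the one-term adjunction.} A morphism $(\Psi;\Psi)\to(\Phi';\Phi)$ consists of a functor $G_\infty:\Psi\to\Phi$ and a functor $G_1:\Psi\to\Phi'$ with $JG_1\simeq G_\infty$ and the square adjointable, i.e.\ $G_1\simeq J^R G_\infty$. Since $J$ is fully faithful, the space of such data is equivalent to $\sfSt_k(\Psi,\Phi')$, via $G_1\mapsto (G_1,JG_1)$. The adjointability holds automatically because $J^RJ\simeq\id$.

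This equivalence should be natural. By Lemma~\ref{lem:Uisfaithful} every mapping category involved is a full subcategory of $\sfSt_k(\Psi,\Phi)$, cut out by a property (factoring through $\Phi'$). Consequently naturality is a property rather than extra data, in the spirit of Proposition~\ref{prop:functorconstruction}. The counit is the inclusion $(\frakS;\Phi_{1,\dots,n})\to(\frakS;\Phi)$, given by $J$ together with identities on the $\Phi_i$; its adjointability follows from $J^R I_i\simeq I_i$, since $I_i$ factors through $J$.

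\textbf{Main obstacle.} The hard part is making the adjunction genuinely $\two$-functorial rather than merely a pointwise bijection of mapping categories. I expect to handle this by producing the counit as a morphism in $\sfSOD'(n)$ and checking the universal property on mapping categories. The universal property is a fully faithful comparison of full subcategories of $\sfSt_k(-,-)$, so it reduces to the essential surjectivity argument in Step 3.
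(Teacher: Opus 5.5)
Your proposal is correct and follows the paper's skeleton. You reduce to $n=1$ via Lemma~\ref{lem:product}, then use that a commuting square with the identity of $\Psi$ on top and the fully faithful inclusion $\Phi'\inclto\Phi$ on the bottom is automatically adjointable, so $\sfSOD'(1)((\Psi;\Psi),(\Phi';\Phi))\simeq\sfSt_k(\Psi,\Phi')$.

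The one real difference is how the $\two$-functoriality of the right adjoint is obtained.

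\textbf{The paper's route.} It identifies $\opname{I}_1$ with $p^*$ for $p:\frakadj\to\mathfrak{pt}$ and takes the honest right adjoint $p_*\circ\opname{J}$ supplied by Theorem~\ref{thm:laxfibrations}. Your mapping-category computation is used only, via Yoneda, to show that the induced comparison $p_*\circ\opname{J}\to i^*\circ\opname{J}$ to evaluation at the source is an equivalence. All coherence is thus imported from Abell\'an--Haugseng--Martini.

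\textbf{Your route.} You posit evaluation at the source, $i^*\circ\opname{J}$, as the right adjoint and argue that coherence is a property. This is more elementary and it works. However, the justification should be stated precisely rather than ``in the spirit of'' Proposition~\ref{prop:functorconstruction}. Consider the two functors
\[\sfSOD'(1)(p^*\Psi,-) \quad\text{and}\quad \sfSt_k(\Psi,i^*\opname{J}(-)).\]
The first maps into $\sfSt_k(\Psi,\opname{U}(-))$ via $\opname{U}$. The second maps there by post-composition with the natural transformation $i^*\opname{J}\Rightarrow\opname{U}$ coming from the arrow $1\to\infty$ of $\frakn(1)$. Both maps are pointwise fully faithful and have the same pointwise essential image, namely functors factoring through $\Phi'$. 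Hence the two functors coincide as subobjects of $\sfSt_k(\Psi,\opname{U}(-))$, naturally in both variables. This is exactly the sense in which naturality is a property.

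Your Step~1 is harmless but unnecessary, since Lemma~\ref{lem:product} already presupposes $\Phi_{1,\dots,n}$ as a right-admissible subcategory.
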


\begin{proof}
    The inclusion identifies under the identification of Lemma~\ref{lem:product} with the result of applying $\sfSOD(n)\times_{\sfSt_k}-$ to $\opname{I}_1:\sfSOD(1)\inclto \sfSOD'(1)$. So we reduce to the case $n=1$.
    
    Via the inclusion $\sfSOD'(1)\inclto\sfAdj$, $\opname{I}_1$ is identified with $p^*$, where $p:\frakadj\to\mathfrak{pt}$ is the trivial map. We also consider the functor $i^*:\sfAdj\to\sfSt_k$ by restricting to the left category, via $i:\mathfrak{pt}\to\frakadj$. It is clear that $i^*p^*$ is an equivalence. We can organize these into the commuting diagram on the left.
    \[\begin{tikzcd}[ampersand replacement=\&]
        {\sfSOD(1)} \& {\sfSOD'(1)} \& {\sfSOD(1)} \& {\sfSOD'(1)} \\
        {\sfSt_k} \& \sfAdj \& {\sfSt_k} \& \sfAdj
        \arrow["{\opname{I}_1}", hook, from=1-1, to=1-2]
        \arrow["\simeq"', from=1-1, to=2-1]
        \arrow["{p^*}", hook, from=1-1, to=2-2]
        \arrow["{\opname{J}}", hook, from=1-2, to=2-2]
        \arrow["\simeq"', from=1-3, to=2-3]
        \arrow["{\opname{I}_1^R}"', from=1-4, to=1-3]
        \arrow["{\opname{J}}", hook, from=1-4, to=2-4]
        \arrow["{i^*}", from=2-2, to=2-1]
        \arrow["{p_*}"', from=2-4, to=1-3]
        \arrow[""{name=0, anchor=center, inner sep=0}, "{i^*}", from=2-4, to=2-3]
        \arrow[between={0}{0.8}, Rightarrow, from=1-3, to=0]
    \end{tikzcd}\]
    
    By Theorem~\ref{thm:laxfibrations}, $p^*$ admits a right adjoint $p_*$, thus $\opname{I}_1$ admits a right adjoint $p_*\circ \opname{J}$. The resulting lax-commuting diagram of $\two$-categories is depicted on the right above.

    What results is a natural transformation,
    \[\opname{I}_1^R\simeq p_*\circ \opname{J}\to i^*\circ\opname{J}.\]
    We claim that this is an equivalence. It suffices to show that for any $\calF\in\sfSOD'(1)$ and $\calA\in\sfSt_k$, the result of applying $\sfSt_k(\calA,-(\calF))$ to this is an equivalence of $\one$-categories. This functor is identified with
    \[\sfAdj(p^*(\calA),\opname{J}(\calF))\to \sfSt_k(\calA,i^*\circ\opname{J}(\calF)).\]
    This is an equivalence because any commuting square of the form
    \[\begin{tikzcd}[ampersand replacement=\&]
        \calA \& \calA \\
        {\Phi_1} \& \Phi
        \arrow["{=}", from=1-1, to=1-2]
        \arrow[from=1-1, to=2-1]
        \arrow[from=1-2, to=2-2]
        \arrow["{I_1}", hook, from=2-1, to=2-2]
    \end{tikzcd}\]
    is automatically adjointable, i.e. it remains a commuting square after passing to right adjoints of the horizontal arrows (see \S\ref{subsection:mndadj}).
\end{proof}

\begin{prop}\label{prop:simplicialsod}
    The assignment $[n]\mapsto \sfSOD(n)$ upgrades to a simplicial $\two$-category. For $0\leq i\leq n$, the degeneracy map $s_{n,i}$ inserts a $0$ between $\Phi_i$ and $\Phi_{i+1}$;
    \begin{align*}
        s_{n,i}:\sfSOD(n)&\to\sfSOD(n+1)\\
        (\frakS;\Phi)&\mapsto ((\Phi_1,\dots,\Phi_i,0,\Phi_{i+1},\dots,\Phi_n);\Phi).
    \end{align*}
    The inert face map $d_{n,0}$ removes $\Phi_1$ from the SOD to obtain $\Phi_{2,\dots,n} := \langle\Phi_2,\dots,\Phi_n\rangle$;
    \begin{align*}
        d_{n,0}:\sfSOD(n)&\to\sfSOD(n-1)\\
        (\frakS;\Phi)&\mapsto ((\Phi_2,\dots,\Phi_n);\Phi_{2,\dots,n}),
    \end{align*}
    and the other inert face map $d_{n,n}$ is similar, removing $\Phi_n$ from the SOD. For $0<i<n$, the active face map is given by merging $\Phi_{i}$ and $\Phi_{i+1}$ in the SOD into $\Phi_{i,i+1} := \langle\Phi_i,\Phi_{i+1}\rangle$;
    \begin{align*}
        d_{n,i}:\sfSOD(n)&\to\sfSOD(n-1)\\
        (\frakS;\Phi)&\mapsto ((\Phi_1,\dots,\Phi_{i-1},\Phi_{i,i+1},\Phi_{i+2},\dots\Phi_n);\Phi).
    \end{align*}
\end{prop}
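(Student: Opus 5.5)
The plan is to deduce this from Proposition~\ref{prop:simplicialsodprime} via Lemma~\ref{lem:descendingsimplicialobjects}. Take $\sfC'(-)=\sfSOD'(-)$ and $\sfC(n)=\sfSOD(n)$ with the inclusions $\opname{I}_n$. By Lemma~\ref{lem:keyrightadjoint}, each $\opname{I}_n$ has a right adjoint $\opname{I}_n^R$, which replaces $\Phi$ by $\Phi_{1,\dots,n}=\langle\Phi_1,\dots,\Phi_n\rangle$. Lemma~\ref{lem:descendingsimplicialobjects} then defines $\sfSOD(a):=\opname{I}_m^R\circ\sfSOD'(a)\circ\opname{I}_n$. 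Once the lax squares are shown to commute, it yields a simplicial $\two$-category $\sfSOD(-)$ together with a map $\sfSOD'(-)\to\sfSOD(-)$.

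First I will identify the induced maps on objects with the formulas in the statement. Start with a full SOD $(\frakS;\Phi)$ and apply the generators of Proposition~\ref{prop:simplicialsodprime}, then $\opname{I}^R$:
\begin{itemize}
\item For the degeneracy $s_{n,i}$, inserting $0$ does not change the subcategory generated, so $\opname{I}^R_{n+1}$ returns $\Phi$.
\item For an inner face $d_{n,i}$, merging via Lemma~\ref{lem:twosubcats} gives $\langle\Phi_i,\Phi_{i+1}\rangle$, and the generated subcategory is again $\Phi$.
\item For the outer faces $d_{n,0}$ and $d_{n,n}$, removing a term and then applying $\opname{I}^R_{n-1}$ restricts to $\Phi_{2,\dots,n}$ (resp.\ $\Phi_{1,\dots,n-1}$).
\end{itemize}
These are exactly the maps listed in the proposition.

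The remaining task is the hypothesis of Lemma~\ref{lem:descendingsimplicialobjects}: for each generator $a$, the 2-cell
\[\opname{I}_m^R\circ\sfSOD'(a)\Rightarrow \sfSOD(a)\circ \opname{I}_n^R=\opname{I}_m^R\circ\sfSOD'(a)\circ\opname{I}_n\circ\opname{I}_n^R\]
must be invertible. All functors involved lie over $\sfSt_k$, and everything is 2-faithful over $\sfSt_k$ by Lemma~\ref{lem:Uisfaithful} and Lemma~\ref{lem:ftoff}. By the reasoning in Proposition~\ref{prop:functorconstruction}, I therefore expect it to suffice to check invertibility objectwise, i.e.\ that the underlying functor of categories is an equivalence.

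For a partial SOD $(\frakS;\Phi)$, the left side has underlying category the subcategory generated by the terms of $\sfSOD'(a)(\frakS;\Phi)$. The right side first shrinks $\Phi$ to $\Phi_{1,\dots,n}$, then applies $\sfSOD'(a)$, then shrinks again. For degeneracies and inner faces both sides give $\Phi_{1,\dots,n}$, since merging and inserting $0$ preserve the generated subcategory. For $d_{n,0}$ both sides give $\Phi_{2,\dots,n}$, and similarly for $d_{n,n}$. The comparison is induced by the inclusion $\Phi_{1,\dots,n}\subseteq\Phi$ and restricts to the identity on these subcategories, so it is an equivalence.

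The main obstacle is making this objectwise check legitimate: showing that the 2-cell produced by the adjunction really is this inclusion-induced comparison, rather than merely some abstract equivalence. I would handle this as in the proof of Lemma~\ref{lem:keyrightadjoint}, using that $\opname{I}^R$ is computed by $p_*$, which agrees with $i^*$ on the underlying category, and that squares involving inclusions are automatically adjointable. Finally, the 2-Segal property is not part of this statement, so I stop once the simplicial structure is in place.
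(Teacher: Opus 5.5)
Your proposal is correct and is essentially the paper's proof: apply Lemma~\ref{lem:descendingsimplicialobjects} with the right adjoint from Lemma~\ref{lem:keyrightadjoint}, then check on objects that the lax squares for the face and degeneracy generators commute (the paper writes out only the case $d_{4,4}$). One small correction: $\opname{I}_n^R$ changes the ambient category, so it does not literally lie over $\sfSt_k$, and Proposition~\ref{prop:functorconstruction} is not what licenses the objectwise check; it is legitimate because a natural transformation is invertible iff its components are, and, as you observe, each component here is the identity of the generated subcategory.
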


\begin{proof}
    We apply Lemma~\ref{lem:descendingsimplicialobjects} to the inclusion $\sfSOD(n)\inclto\sfSOD'(n)$, which has a right adjoint as computed in Lemma~\ref{lem:keyrightadjoint}. It suffices to check commutativity (of the a priori only lax-commuting squares) for the face and degeneracy maps. For example, consider the face map $d_{4,4}:\sfSOD(4)\to\sfSOD(3)$. This functor removes the last term in the SOD. In this case, the following lax-commuting square actually commutes; the displayed 2-arrow is an equivalence.
    \[\begin{tikzcd}[ampersand replacement=\&]
        {\sfSOD'(4)} \& {\sfSOD'(3)} \\
        {\sfSOD(4)} \& {\sfSOD(3)}
        \arrow[from=1-1, to=1-2]
        \arrow[from=1-1, to=2-1]
        \arrow[from=1-2, to=2-2]
        \arrow[between={0.2}{1}, Rightarrow, nfold, from=2-1, to=1-2]
        \arrow[from=2-1, to=2-2]
    \end{tikzcd}\]
    This is because both directions are given by
    \[(\Phi_1,\Phi_2,\Phi_3,\Phi_4;\Phi)\mapsto(\Phi_1,\Phi_2,\Phi_3;\langle\Phi_1,\Phi_2,\Phi_3\rangle).\]
\end{proof}

\begin{lem}\label{lem:activecartesian}
    The map of simplicial $\two$-categories $\sfSOD'(-)\to\sfSOD(-)$ is Cartesian over $\Delta^{\op}_{\act}$.
\end{lem}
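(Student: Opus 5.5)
The plan is to reduce everything to Lemma~\ref{lem:product}, which splits a partial SOD into a genuine SOD on the subcategory it generates together with a one-term partial SOD. Concretely, we must show that for every active map $a:[m]\to[n]$ the commuting square
\[\begin{tikzcd}[ampersand replacement=\&]
    {\sfSOD'(n)} \& {\sfSOD'(m)} \\
    {\sfSOD(n)} \& {\sfSOD(m)}
    \arrow["{\sfSOD'(a)}", from=1-1, to=1-2]
    \arrow["{\opname{I}_n^R}"', from=1-1, to=2-1]
    \arrow["{\opname{I}_m^R}", from=1-2, to=2-2]
    \arrow["{\sfSOD(a)}"', from=2-1, to=2-2]
\end{tikzcd}\]
supplied by Proposition~\ref{prop:simplicialsod} is a pullback. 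Pullback squares paste, and every active map is a composite of inner face maps $d_{n,i}$ ($0<i<n$) and degeneracies $s_{n,i}$. We may therefore reduce to these generators, although the argument below in fact works uniformly for all active $a$.

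The key observation is that an active map does not change the subcategory generated by the terms of the SOD. Merging $\Phi_i,\Phi_{i+1}$ into $\langle\Phi_i,\Phi_{i+1}\rangle$ (Lemma~\ref{lem:twosubcats}) or inserting a $0$ leaves $\Phi_{1,\dots,n}$ unchanged. Consequently $\sfSOD(a)$ lies over $\sfSt_k$ via the functor $\opname{V}:(\frakS;\Phi_{1,\dots,n})\mapsto\Phi_{1,\dots,n}$. Also, $\sfSOD'(a)$ does not alter the second factor $(\Phi_{1,\dots,n};\Phi)\in\sfSOD'(1)$ appearing in Lemma~\ref{lem:product}.

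I would use this to show that, under the equivalences $\sfSOD'(k)\simeq\sfSOD(k)\times_{\sfSt_k}\sfSOD'(1)$ of Lemma~\ref{lem:product}, the functor $\sfSOD'(a)$ is identified with $\sfSOD(a)\times_{\sfSt_k}\id_{\sfSOD'(1)}$. Both functors are functors over $\sfSt_k$ (via the ambient category $\Phi$) into a $\two$-category that is 2-faithful over $\sfSt_k$. This holds because $\sfSOD(m)\times_{\sfSt_k}\sfSOD'(1)\simeq\sfSOD'(m)$ and Lemma~\ref{lem:Uisfaithful} applies. Proposition~\ref{prop:functorconstruction} then says it suffices to compare them on equivalence classes of objects, which is immediate from the descriptions above. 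In the same way, the vertical maps $\opname{I}^R_k$ are identified with the projections to the first factor, by Lemma~\ref{lem:keyrightadjoint}.

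With these identifications the square becomes
\[\sfSOD(n)\times_{\sfSt_k}\sfSOD'(1)\to\sfSOD(m)\times_{\sfSt_k}\sfSOD'(1)\]
over $\sfSOD(n)\to\sfSOD(m)$. Since $\opname{V}\circ\sfSOD(a)\simeq\opname{V}$, we have
\[\sfSOD(n)\times_{\sfSt_k}\sfSOD'(1)\simeq\sfSOD(n)\times_{\sfSOD(m)}\bigl(\sfSOD(m)\times_{\sfSt_k}\sfSOD'(1)\bigr),\]
which is exactly the pullback statement.

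The main obstacle is the coherence in the step identifying $\sfSOD'(a)$ with $\sfSOD(a)\times\id$. Here one needs the equivalence of Lemma~\ref{lem:product} to be natural in active maps, including the $2$-cell $\opname{V}\circ\sfSOD(a)\simeq\opname{V}$, and not merely an objectwise match. My plan is to handle this entirely through 2-faithfulness over $\sfSt_k$ via Proposition~\ref{prop:functorconstruction}, so that all such coherences become properties checked on objects. The one point to verify carefully is that $\opname{V}$ is itself 2-faithful, which Lemma~\ref{lem:Uisfaithful} gives for $\sfSOD(n)\subseteq\sfSOD'(n)$.
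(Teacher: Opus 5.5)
Your argument is correct and is essentially the paper's. Both rest on Lemma~\ref{lem:product} together with pasting of pullbacks, and your final step, with $\opname{V}$ playing the role of $\sfSOD(n)\to\sfSOD(1)\simeq\sfSt_k$, is exactly the pasting the paper performs.

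The paper streamlines this by noting that $[1]$ is initial in $\Delta_{\act}$. By the pasting law (outer rectangle and right square Cartesian imply left square Cartesian) applied to $[1]\to[m]\xto{a}[n]$, it therefore suffices to treat the active maps $[1]\to[n]$. For those, the square is literally Lemma~\ref{lem:product}. This makes your identification of $\sfSOD'(a)$ with $\sfSOD(a)\times_{\sfSt_k}\id$ unnecessary, and that identification was the coherence step you flagged as the main obstacle.
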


\begin{proof}
    Since $[1]$ is intial in $\Delta_{\act}$, it suffices to verify that the following square is Cartesian for each $n$, where $a:[1]\to[n]$ is the unique active morphism.
    \[\begin{tikzcd}[ampersand replacement=\&]
        {\sfSOD'(n)} \& {\sfSOD'(1)} \\
        {\sfSOD(n)} \& {\sfSOD(1)}
        \arrow[from=1-1, to=1-2]
        \arrow[from=1-1, to=2-1]
        \arrow[from=1-2, to=2-2]
        \arrow[from=2-1, to=2-2]
    \end{tikzcd}\]
    This is precisely the statement of Lemma~\ref{lem:product}.
\end{proof}

\begin{prop}\label{prop:segalsod}
    The simplicial $\two$-categories $\sfSOD'(-)$ and $\sfSOD(-)$ are 2-Segal.
\end{prop}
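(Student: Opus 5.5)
We first treat $\sfSOD'(-)$ and then deduce the statement for $\sfSOD(-)$ from Lemma~\ref{lem:activecartesian}. Every 2-Segal square has the form
\[\sfSOD'(n)\to \sfSOD'(m)\times_{\sfSOD'(1)}\sfSOD'(n+1-m),\]
with $a'$ active and $i'$ inert. All the functors involved lie over $\sfSt_k$, and by Lemma~\ref{lem:Uisfaithful} the structure functors to $\sfSt_k$ are 2-faithful. Lemma~\ref{lem:faithfulpullback} shows that the fiber product is 2-faithful over $\sfSt_k$ as well. The comparison functor is therefore a map between $\two$-categories that are 2-faithful over $\sfSt_k$. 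By the argument of Proposition~\ref{prop:functorconstruction} (the diagonal trick of Lemma~\ref{lem:ftoff}), it is 2-fully faithful as soon as it is 2-faithful and the mapping categories agree as full subcategories of $\sfSt_k(\Phi,\Phi')$. It is an equivalence once it is also a bijection on equivalence classes of objects over each fixed $\Phi$.

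We then check this on objects, which is a statement about posets of subcategories. Write the square so that $a:[1]\to[n+1-m]$ merges everything into one term and $i:[1]\to[m]$ picks out a block of consecutive terms $j,\dots,j+1$ (with $i'$ the corresponding inert inclusion). An object of the fiber product over a fixed $\Phi$ consists of
\begin{itemize}
\item a partial SOD $(\Phi_1,\dots,\Phi_m)$ of $\Phi$,
\item together with a partial SOD $(\Psi_1,\dots,\Psi_{n+1-m})$,
\end{itemize}
subject to the condition that, after merging, the $\Psi$'s generate exactly $\Phi_j$, as an object of $\sfSOD'(1)$. Because of the identification of the $\sfSOD'(1)$-component, $(\Psi_\bullet)$ is then a genuine SOD of $\Phi_j$.

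The inverse map sends such data to the $n$-term list obtained by replacing $\Phi_j$ by $\Psi_1,\dots,\Psi_{n+1-m}$. Semiorthogonality of the new list follows because every $\Psi_k$ lies in $\Phi_j$. Right-admissibility of each $\Psi_k$ in $\Phi$ follows by composing right adjoints $\Psi_k\inclto\Phi_j\inclto\Phi$. The two composites are the identity on objects because merging is inverse to refinement, using Lemma~\ref{lem:twosubcats} to identify the merged subcategory with the one generated. The remaining verification is that mapping categories agree, i.e.\ that adjointability is compatible with merging. Concretely, a square adjointable for each $\Psi_k$ must also be adjointable for the generated subcategory $\langle\Psi_\bullet\rangle$, and conversely adjointability for the $\Psi_k$ must follow from adjointability for $\Phi_j$ together with the partial SOD.

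We expect this adjointability step to be the main obstacle. Our approach is to express $I_{12}^R$ via the explicit formula of Lemma~\ref{lem:twosubcats} in terms of the $I_k^R$ and counits. This reduces adjointability for a merge to adjointability for each piece, and the converse direction uses $\Psi_k^R = \Psi_k^R\circ\Phi_j^R$.

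Finally we pass to $\sfSOD(-)$. Lemma~\ref{lem:activecartesian} shows that $\sfSOD'(-)\to\sfSOD(-)$ is Cartesian over $\Delta_{\act}^{\op}$, so each $\sfSOD'(k)\simeq\sfSOD(k)\times_{\sfSOD(1)}\sfSOD'(1)$ (Lemma~\ref{lem:product}). The 2-Segal square for $\sfSOD'$ is then the base change of the corresponding square for $\sfSOD$ along the functor $\sfSOD'(1)\to\sfSOD(1)$. This functor is essentially surjective, since the inclusion $\opname{I}_1$ has $\opname{I}_1^R$ as a retraction. Pullback along an essentially surjective functor reflects equivalences of maps over the base, provided the fibers are compared pointwise. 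Concretely, we apply the same fiberwise argument over $\sfSt_k$, where every object of $\sfSOD(1)$ lifts via $\opname{I}_1$. This yields the 2-Segal property for $\sfSOD(-)$. Alternatively, one could repeat the object-level argument directly for $\sfSOD$, where the conservativity condition in Definition~\ref{defn:sod} is checked using Lemma~\ref{lem:SODcharacterizations}, since the product of projectors for the refined SOD factors through that of $\Phi_j$.
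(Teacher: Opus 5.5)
Your treatment of $\sfSOD'(-)$ is correct and is the paper's approach, with more detail than the paper gives. Everything is 2-faithful over $\sfSt_k$, so you check objects (refinement inverts merging) and compare mapping categories as full subcategories of $\sfSt_k(\Phi,\Phi')$. There, only ``adjointable for every $\Psi_k$ $\Rightarrow$ adjointable for $\langle\Psi_\bullet\rangle$'' is needed, e.g.\ because the right orthogonal of $\langle\Psi_\bullet\rangle$ is the intersection of those of the $\Psi_k$. The converse is automatic, since the $\sfSOD'(n+1-m)$-component of a morphism already imposes adjointability for each $\Psi_k$ inside $\Phi$.

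The gap is the passage to $\sfSOD(-)$. The 2-Segal square of $\sfSOD'$ is \emph{not} the base change of the square of $\sfSOD$ along $\opname{I}_1^R:\sfSOD'(1)\to\sfSOD(1)$. Lemma~\ref{lem:activecartesian} identifies $\sfSOD'(k)\simeq\sfSOD(k)\times_{\sfSOD(1)}\sfSOD'(1)$ only compatibly with maps induced by \emph{active} arrows. Two edges of the square are inert, and inert maps do not live over $\sfSOD(1)$. For instance, $d_{n,0}$ on $\sfSOD'$ sends the $\sfSOD'(1)$-component $(\Phi_{1,\dots,n};\Phi)$ to $(\Phi_{2,\dots,n};\Phi)$, which depends on the $\sfSOD(n)$-component. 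Likewise, the total-merge projections of $\sfSOD(n)$ and $\sfSOD(n-1)$ to $\sfSOD(1)$ return $\Phi_{1,\dots,n}$ and $\Phi_{2,\dots,n}$, so they do not commute with $d_{n,0}$. If you instead base change at the $\sfSOD(1)$ corner, the corner across the inert edge becomes $\sfSOD(m)\times_{\sfSOD(1)}\sfSOD'(1)$ with $\sfSOD(m)\to\sfSOD(1)$ the inert map, and this is not $\sfSOD'(m)$. So there is no base-changed square to which conservativity applies. Separately, essential surjectivity alone would not make base change conservative; the section $\opname{I}_1$ would.

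The fix, which is the paper's argument, is to paste squares and use Cartesianness only along active edges. Put the active edges vertical and factor the $\sfSOD$-square horizontally into three squares:
\begin{itemize}
\item the square with horizontal edges $\opname{I}_n,\opname{I}_m$;
\item the $\sfSOD'$-square;
\item the square with horizontal edges $\opname{I}^R_{n+1-m},\opname{I}^R_1$.
\end{itemize}
The horizontal composites are the inert maps of $\sfSOD(-)$ by Lemma~\ref{lem:descendingsimplicialobjects}. The third square is Cartesian by Lemma~\ref{lem:activecartesian}. The first is Cartesian because pasting it with the Lemma~\ref{lem:activecartesian} square with edges $\opname{I}^R_n,\opname{I}^R_m$ gives a square whose horizontal composites are $\opname{I}^R\opname{I}\simeq\Id$.

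Your fallback of redoing the object-level argument for $\sfSOD(-)$ can be made to work, but not by simply repeating the first part. The inert maps of $\sfSOD(-)$ change the underlying category, so nothing lies over $\sfSt_k$. Moreover, $\sfSOD(m)\to\sfSOD(1)$ along the inert edge is $G\mapsto G|_{\Phi_j}$, which is not 2-faithful. You would have to identify $\sfSOD(m)(x,x')\times_{\sfSt_k(\Phi_j,\Phi'_j)}\sfSOD(n+1-m)(y,y')$ with the $\sfSOD(n)$ mapping categories by hand. That is where your converse adjointability direction, via $I^R_{\Psi_k\subseteq\Phi}\simeq I^R_{\Psi_k\subseteq\Phi_j}I^R_{\Phi_j\subseteq\Phi}$, is genuinely needed.
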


\begin{proof}
    As recalled in \S\ref{subsection:simplicialintro}, 2-Segal means that for each pushout square in $\Delta$,
    \begin{equation}\label{equation:twosegal}
        \begin{tikzcd}[ampersand replacement=\&]
            {[n]}\pullback \& {[n+1-m]} \\
            {[m]} \& {[1]}
            \arrow["a"', from=1-2, to=1-1]
            \arrow["i", from=2-1, to=1-1]
            \arrow["{i'}", from=2-2, to=1-2]
            \arrow["{a'}"', from=2-2, to=2-1]
        \end{tikzcd}
\end{equation}
    where $a,a'$ are active and where $i,i'$ are inert, the resulting square of $\two$-categories is a pullback.

    To verify that $\sfSOD'(-)$ is 2-Segal we work over $\sfSt_k$ via the 2-faithful forgetful functors $\opname{U}:\sfSOD'(n)\to\sfSt_k$. Applying Proposition~\ref{prop:functorconstruction}, we may directly construct an inverse functor $\sfSOD'(m)\times_{\sfSOD'(1)}\sfSOD'(n+1-m)\to\sfSOD'(n)$ by working at the level of objects. We omit the details.

    We now verify that $\sfSOD(-)$ is 2-Segal. We need the outer square in the following diagram to commute.
    \[\begin{tikzcd}[ampersand replacement=\&]
        {\sfSOD(n)} \& {\sfSOD'(n)} \& {\sfSOD'(n+1-m)} \& {\sfSOD(n+1-m)} \\
        {\sfSOD(m)} \& {\sfSOD'(m)} \& {\sfSOD'(1)} \& {\sfSOD(1)}
        \arrow["{\opname{I}_n}", hook, from=1-1, to=1-2]
        \arrow[from=1-1, to=2-1]
        \arrow[from=1-2, to=1-3]
        \arrow[from=1-2, to=2-2]
        \arrow["{\opname{I}_{n+1-m}^R}", from=1-3, to=1-4]
        \arrow[from=1-3, to=2-3]
        \arrow[from=1-4, to=2-4]
        \arrow["{\opname{I}_m}"', hook, from=2-1, to=2-2]
        \arrow[from=2-2, to=2-3]
        \arrow["{\opname{I}_{1}^R}"', from=2-3, to=2-4]
    \end{tikzcd}\]
    The middle square is a pullback by the 2-Segal property for $\sfSOD'(-)$. The right square is a pullback by Lemma~\ref{lem:activecartesian} using that $a'$ is active. T see that the left square is also a pullback square, consider the following diagram.
    \[\begin{tikzcd}[ampersand replacement=\&]
        {\sfSOD(n)} \& {\sfSOD'(n)} \& {\sfSOD(n)} \\
        {\sfSOD(m)} \& {\sfSOD'(m)} \& {\sfSOD(m)}
        \arrow["{\opname{I}_n}", hook, from=1-1, to=1-2]
        \arrow[from=1-1, to=2-1]
        \arrow["{\opname{I}_{n}^R}", from=1-2, to=1-3]
        \arrow[from=1-2, to=2-2]
        \arrow[from=1-3, to=2-3]
        \arrow["{\opname{I}_m}", hook, from=2-1, to=2-2]
        \arrow["{\opname{I}_{m}^R}", from=2-2, to=2-3]
    \end{tikzcd}\]
    The left square is the one in question, the right square is a pullback by Lemma~\ref{lem:activecartesian} applied to the active morphism $a$, and the outer square is a pullback because the horizontal compositions are equivalences.
\end{proof}

\section{Localized schobers on a disc}\label{section:localized}
In this section we will define localized perverse schobers on a disc. We note that we will make no reference to non-localized schobers until \S\ref{section:adjaut}.

\begin{defn}\label{defn:compatible}
    Let $\Phi=\langle\Phi_1,\dots,\Phi_n\rangle$ be a category equipped with a right-admissible SOD $\frakS$. An autoequivalence $T$ of $\Phi$ is said to be conjugate-compatible with $\frakS$ if the composition
    \[\Phi_j\xto{I_j}\langle\Phi_1,\dots,\Phi_n\rangle\xto{T}\langle\Phi_1,\dots,\Phi_n\rangle\xto{I_i^R}\Phi_i\]
    is zero whenever $i<j$, and invertible whenever $i=j$.
\end{defn}

We point out a symmetry in this definition with the observation that the composition
\[\Phi_j\xto{I_j}\langle\Phi_1,\dots,\Phi_n\rangle\xto{I_i^R}\Phi_i\]
is zero whenever $i>j$ and identity when $i=j$.

\begin{defn}\label{defn:localized2}
    An abstract localized schober on a disc with $n$ singularities is a triple $(\frakS;\Phi,T)$ where $\Phi$ is a stable $\infty$-category, $\frakS$ is an SOD of $\Phi$, and $T$ is an autoequivalence of $\Phi$ such that $T$ is conjugate-compatible with $\frakS$.
\end{defn}

We temporarily denote the collection of abstract localized schobers on a disc with $n$ singularities by $\sfAut(n)$ and also write $\sfAut = \sfAut(1)$. We will write $\FS_{\sfAut}:\sfAut(n)\to\sfAut$ for the forgetful map
\[(\frakS;\Phi,T)\mapsto (\Phi,T).\]
This will be compatible with the ``enhanced Fukaya--Seidel functor'' which we define in \S\ref{section:adjaut}.

\begin{rmk}
    Definition~\ref{defn:localized2} needs no functorial cones, and in fact makes sense for non-stable settings. This suggests that our theory of localized schobers admits a classical counterpart in terms of triangulated categories, as well as a non-linear counterpart. Similar phenomena have been observed in \cite{kapranovPerverseSchobers2015}*{Remark 3.6}, with the notion of spherical pair, and more generally perverse schobers on a real hyperplane arrangement.
\end{rmk}

\begin{eg}\label{eg:waldhausen1}
    Let $F:\calA\tofrom \calB:G$ be a spherical adjunction and $n\geq 1$. It is shown in \cite{christGinzburgAlgebrasTriangulated2022}*{Lemma 3.8} that one can construct from this a new spherical adjunction,
    \[F':S_{n-1}(F)\tofrom \calB^{\oplus n}:G',\]
    where $S_{n-1}(F)$ is a term in the relative Waldhausen construction of \cite{dyckerhoffSphericalAdjunctionsStable2021}, and where $F',G'$ are described by explicit formulae. The category $S_{n-1}(F)$ admits an $n$-term SOD $\frakS$ of the form
    \[S_{n-1}(F) \simeq \langle\calA,\calB,\dots,\calB\rangle.\]

    In this situation, $T' := \coCone(1\to F'G')$ is conjugate-compatible with $\frakS$, and so $(\frakS;S_{n-1}(F),T')$ defines an object of $\sfAut(n)$. In Example~\ref{eg:waldhausen2}, we will explain that this actually comes from a schober with $n$ singularities.
\end{eg}

In \S\ref{subsection:comparison}, we show that this definition decategorifies to localized perverse sheaves as studied in \cite{gelfandPerverseSheavesQuivers1996}.

In \S\ref{subsection:simplicialend} and \S\ref{subsection:simplicialaut}, we will upgrade $\sfAut(n)$ to a $\two$-category, $\FS_{\sfAut}$ to a functor of $\two$-categories, and the assignment
\[\sfAut(-):[n]\mapsto\sfAut(n)\]
to a simplicial $\two$-category in such a way that the unique active map $[1]\to [n]$ induces $\FS_{\sfAut}$. More generally, the degeneracy maps $s_{n,i}:\sfAut(n)\to\sfAut(n+1)$ insert a zero into the SOD between $\Phi_i$ and $\Phi_{i+1}$, the inert face maps $d_{n,0},d_{n,n}:\sfAut(n)\to\sfAut(n-1)$ are given by forgetting $\Phi_1,\Phi_n$ respectively, and the active face maps $d_{n,i}$ merge $\Phi_i,\Phi_{i+1}$ in the SOD. We will prove that $\sfAut(n)$ is 2-Segal, which we loosely interpret as saying that localized schobers have good factorization properties.

In \S\ref{subsection:admissible}, we will prove the following recharacterization of abstract localized schobers.

\begin{thm}\label{thm:alternate}
    Let $\Phi$ be a stable category equipped with a right-admissible SOD $\frakS$ and autoequivalence $T$. Then, $(\Phi,\frakS,T)$ is an abstract localized schober if and only if $\frakS$ is $\infty$-admissible, and
    \[T(\frakS) = \delta^{-2}\cdot \frakS,\]
    where $T(\frakS)$ is the result of applying $T$ to each subcategory, and $\delta^2 \in\Br_n$ is the positive full-twist element.
\end{thm}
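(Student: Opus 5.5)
The plan is to make both conditions explicit in terms of the ``filtration'' of $\Phi$ by the subcategories generated by initial and final segments of $\frakS$, and then to compare them. The key input is a standard description of the full twist. For a right-admissible SOD $\langle\Phi_1,\dots,\Phi_n\rangle$, the mutation $\delta^{-2}\cdot\frakS$ is characterised by generation data. Its $i$-th term is the subcategory $\Phi_i'$ determined by two conditions: $\langle\Phi_1',\dots,\Phi_i'\rangle=\langle\Phi_1,\dots,\Phi_i\rangle$ for every $i$, and each $\Phi_i'$ is left-orthogonal to $\Phi_1,\dots,\Phi_{i-1}$ rather than right-orthogonal. I will establish this by induction on $n$. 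I will write $\delta^2=\delta_{n-1}^2\cdot(\sigma_{n-1}\cdots\sigma_1\sigma_1\cdots\sigma_{n-1})$, with the second factor moving the last strand fully around the others. I will then use Lemma~\ref{lem:twosubcats} to merge adjacent components, together with Lemma~\ref{lem:relations}. The case $n=2$ is the basic computation: $\sigma_1^{-2}\cdot\langle\Phi_1,\Phi_2\rangle=\langle\Phi_1,{}^{\perp}\Phi_1\rangle$, where the second term is the left orthogonal of $\Phi_1$ in $\Phi$, whenever these double mutations exist.

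For ($\Rightarrow$), let $T$ be conjugate-compatible with $\frakS$. The vanishing of $I_i^RTI_j$ for $i<j$ says that $T\Phi_j$ is right-orthogonal to $\Phi_1,\dots,\Phi_{j-1}$. I want to read this as $T\Phi_j$ lying in the subcategory generated by $\Phi_j,\dots,\Phi_n$. Applying $T^{-1}$, equivalently passing to adjoints, gives the dual statement that $T$ preserves the filtration by final segments. Invertibility of $I_j^RTI_j$ then shows that $T$ induces equivalences on the associated graded pieces. From these facts I will construct the mutations one generator at a time. $T$ gives an equivalence $\langle\Phi_j,\dots,\Phi_n\rangle\simeq\langle T\Phi_j,\dots,T\Phi_n\rangle$, and the components $T\Phi_j$ supply the missing orthogonals. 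This shows that all the required adjoints exist, and so the $\sigma_j^{\pm1}$-mutations exist. In particular $\delta^{\pm2}$-mutations of $\frakS$ and of all its mutations exist. Since $\Br_n$ is generated by $\delta^2$-conjugates of the $\sigma_j$'s together with braids already accounted for by these mutations, I expect this to give $\infty$-admissibility. It also gives $T(\frakS)=\delta^{-2}\cdot\frakS$ by the characterisation in the first paragraph.

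For ($\Leftarrow$), assume $\frakS$ is $\infty$-admissible and $T(\frakS)=\delta^{-2}\cdot\frakS=(\Phi_1',\dots,\Phi_n')$. Then $T\Phi_j=\Phi_j'$ lies in $\langle\Phi_1,\dots,\Phi_j\rangle$ and is left-orthogonal to $\Phi_1,\dots,\Phi_{j-1}$. I will use the second property in the form $I_i^RTI_j=0$ for $i<j$. For $i=j$, the composite $\Phi_j'\to\langle\Phi_1,\dots,\Phi_j\rangle\to\Phi_j$, given by projection to the last component, is an equivalence. Its inverse is the projection of $\langle\Phi_1,\dots,\Phi_j\rangle$ onto $\Phi_j'$ along $\langle\Phi_1,\dots,\Phi_{j-1}\rangle$. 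Composing with the equivalence $T:\Phi_j\simeq\Phi_j'$ then gives invertibility of $I_j^RTI_j$.

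I expect the main obstacle to be the inductive identification of $\delta^{-2}\cdot\frakS$ with the ``same filtration, opposite orthogonality'' SOD. The same bookkeeping of adjoints is also what shows that conjugate-compatibility forces $\infty$-admissibility, and not just the existence of the single $\delta^{-2}$-mutation. I would first prove the case $n=2$ by hand using Lemma~\ref{lem:twosubcats}, and then handle general $n$ via the 2-Segal merging maps of Proposition~\ref{prop:simplicialsod}. Merging $\Phi_1,\dots,\Phi_{n-1}$ reduces to the case $n=2$, and restricting to the first $n-1$ terms reduces to $n-1$.
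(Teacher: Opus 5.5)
Your central lemma is false, and both directions of the proof rest on it. Write $\calA^{\perp}=\{x:\Hom(\calA,x)=0\}$ and ${}^{\perp}\calA=\{x:\Hom(x,\calA)=0\}$. The SOD you describe has the same initial segments $\langle\Phi_1,\dots,\Phi_i\rangle$ as $\frakS$, with $\Phi_i'=\langle\Phi_1,\dots,\Phi_i\rangle\cap\langle\Phi_1,\dots,\Phi_{i-1}\rangle^{\perp}$. That is the \emph{half} twist $\delta^{-1}\cdot\frakS$ listed in reverse order. It is exactly the description the paper uses in its proof, namely $\delta^{-1}\cdot\frakS=(\Phi_n',\dots,\Phi_1')$ with $\Phi_i'$ the right orthogonal of $\bigoplus_{j\neq i}\Phi_j$. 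The full twist is this operation applied twice, and it moves the filtration.

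A concrete case is $\QCoh(\mathbb{P}^1)=\langle\langle\mathcal{O}\rangle,\langle\mathcal{O}(1)\rangle\rangle$. With the paper's convention $\sigma_1^{-1}\cdot(\Phi_1,\Phi_2)=(\Phi_1^{\perp},\Phi_1)$, one computes $\sigma_1^{-2}\cdot\frakS=(\langle\mathcal{O}(-2)\rangle,\langle\mathcal{O}(-1)\rangle)$, whose first term is not $\Phi_1$. Your $n=2$ formula $\langle\Phi_1,{}^{\perp}\Phi_1\rangle$ is literally $\frakS$ itself, since ${}^{\perp}\Phi_1=\Phi_2$. What is true is that the \emph{final} segments of $\delta^{-2}\cdot\frakS$, starting at position $j$, generate $\langle\Phi_1,\dots,\Phi_{j-1}\rangle^{\perp}$.

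The same orthogonality confusion breaks each step. In ($\Rightarrow$), the condition $I_i^RTI_j=0$ for $i<j$ says $T\Phi_j\subseteq\langle\Phi_1,\dots,\Phi_{j-1}\rangle^{\perp}$. This is not $\langle\Phi_j,\dots,\Phi_n\rangle$, which is the \emph{left} orthogonal ${}^{\perp}\langle\Phi_1,\dots,\Phi_{j-1}\rangle$. In the example, $T=-\otimes\mathcal{O}(-2)$ is conjugate-compatible but sends $\Phi_1,\Phi_2$ to $\langle\mathcal{O}(-2)\rangle,\langle\mathcal{O}(-1)\rangle$. So $T$ preserves neither filtration, and there are no associated-graded equivalences to build mutations from. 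Your two claims together would force $T\Phi_j\subseteq\Phi_j$ for all $j$, hence $T(\frakS)=\frakS$ for every localized schober. In ($\Leftarrow$), the inclusion $T\Phi_j\subseteq\langle\Phi_1,\dots,\Phi_j\rangle$ fails for the same reason, so your invertibility argument has nothing to apply to. Your idea of locating $T\Phi_j$ inside $\delta^{-2}\cdot\frakS$ is the right one, but it has to use $T\Phi_j\subseteq\langle\Phi_1,\dots,\Phi_{j-1}\rangle^{\perp}$ and work downward from $T\Phi_n=\langle\Phi_1,\dots,\Phi_{n-1}\rangle^{\perp}$, as the paper does.

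Finally, your remark about generators of $\Br_n$ does not give $\infty$-admissibility. What is needed is an invariant preserved by every single mutation. The paper shows $\sigma_1^{-1}\cdot\frakS=(T\Phi_2,\Phi_1)$ and $\sigma_1\cdot\frakS=(\Phi_2,T^{-1}\Phi_1)$, verified via Lemma~\ref{lem:technical}. It also shows that $T$ stays conjugate-compatible with these mutations, so induction on word length together with the 2-Segal square gives Theorem~\ref{thm:inftyadmissible}. The identity $T(\frakS)=\delta^{-2}\cdot\frakS$ then follows from $\tau^{-1}\cdot\frakS=(T\Phi_n,\Phi_1,\dots,\Phi_{n-1})$ and $\tau^n=\delta^2$.
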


The infinite admissibility lets us construct an action of the braid group $\Br_n$ on the $\two$-category $\sfAut(n)$. We will use this to define $\two$-categories of non-abstract localized perverse schobers, denoted $\olsftwoP(\bbC,R)$, where $R\subset\bbC$ is a finite subset.

In \S\ref{subsection:interpretation}, we conclude the section by giving two related interpretations of localized schobers, the first in terms of Serre functors and the second in terms of the global monodromy of the Fukaya--Seidel category of a Lefschetz fibration.

\subsection{Comparison to localized perverse sheaves}\label{subsection:comparison}
We compare the definition of abstract localized schobers with that of localized perverse sheaves. We recall the following.

\begin{prop}[Gelfand--MacPherson--Vilonen, \cite{gelfandPerverseSheavesQuivers1996}*{Proposition 2.3}]\label{prop:gmvquiver}
    Let $R\subset \bbC$ a finite subset of size $n$. Let $K_n$ be the quiver with $n$ vertices and arrows $j\to i$ for all $1\leq i,j\leq n$. Let $\opname{Q}(n)$ be the category of quiver representations $(M_i,m_{ij})$ of $K_n$ such that for all $i$, $1-m_{ii}$ is invertible.

    After making a choice of cuts on $(\bbC,R)$, there is an equivalence of categories,
    \[\Perv(\bbC,R)/\Loc(\bbC)\simeq \opname{Q}(n).\]
\end{prop}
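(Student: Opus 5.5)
The plan is to present $\Perv(\bbC,R)$ concretely via hyperbolic-type data attached to the cuts, and then identify the quotient by $\Loc(\bbC)$ with the category of ``vanishing cycle'' data. Fix a base point $b$ far from $R$, say with large imaginary part. Choose cuts, namely disjoint simple paths $\gamma_1,\dots,\gamma_n$ running from the points $r_i\in R$ to $b$, ordered by their arrival direction at $b$.

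The first step is to recall the Gelfand--MacPherson--Vilonen description of $\Perv(\bbC,R)$ for this choice of cuts. A perverse sheaf is equivalently given by the following data:
\[\text{a vector space } E=\Phi_b \text{ (stalk/sections near } b\text{)},\qquad M_i=\Phi_{\gamma_i}(\calF) \text{ (vanishing cycles along } \gamma_i\text{)},\]
together with maps $u_i:E\to M_i$ and $v_i:M_i\to E$ such that each $1-v_iu_i$ is invertible (the local monodromy condition near $r_i$). This is the standard ``nearby/vanishing'' quiver for a disc.

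The second step is to observe that the functor $\calF\mapsto(M_i)_i$, together with the composites $m_{ij}:=u_iv_j:M_j\to M_i$, kills local systems, because those have all $M_i=0$. It therefore induces a functor $\Perv(\bbC,R)/\Loc(\bbC)\to \opname{Q}(n)$. To see that the target lands in $\opname{Q}(n)$, note the identity $1-u_iv_i=$ monodromy on $M_i$, which is invertible because $1-v_iu_i$ is.

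The third step is to show that this functor is an equivalence, and I would argue via a canonical representative. Given $(M_i,m_{ij})$, set $E=\bigoplus_i M_i$, let $v_j$ be the inclusion of the $j$-th summand, and let $u_i$ have components $m_{ij}$. This produces a perverse sheaf whose invertibility condition reduces to the $1-m_{ii}$ condition. This gives essential surjectivity.

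For full faithfulness, I would show that every $\calF$ is related, by morphisms whose cones are local systems, to this minimal representative. The relevant maps are $\bigoplus M_i\to E$ via $\sum v_i$, and their kernel and cokernel are quiver data with all $M_i=0$, which are exactly the local systems. I would then compute Hom in the Serre quotient as the colimit over such modifications and match it with quiver morphisms.

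I expect this last step to be the main obstacle: one has to verify that the Serre quotient Homs reduce exactly to maps of the $M_i$ compatible with the $m_{ij}$. The independence of the choice of cuts enters only through the chosen equivalence, which is why the statement is phrased ``after making a choice of cuts.''
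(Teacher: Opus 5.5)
The paper gives no proof of this statement. It is quoted from Gelfand--MacPherson--Vilonen and used only as the decategorified target of Proposition~\ref{prop:decategorification1} and Corollary~\ref{prop:decategorification2}. Your argument is correct and gives a self-contained proof, granting the GMV description of $\Perv(\bbC,R)$ by cut-dependent data $(E,M_i,u_i,v_i)$.

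Two points should be made explicit.

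\emph{Factoring through the quotient.} This uses that $\calF\mapsto(M_i)_i$ is exact (kernels and cokernels of quiver data are computed componentwise). It also uses that $\Loc(\bbC)$ is exactly the Serre subcategory where all $M_i=0$.

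\emph{Full faithfulness.} The step you flag as the main obstacle needs no colimit computation of $\Hom$ in the quotient. Your normal form is a functor $G\colon\opname{Q}(n)\to\Perv(\bbC,R)$. It sends $(M_i,m_{ij})$ to $E=\bigoplus_iM_i$, $v_j=\iota_j$, $u_i=(m_{ij})_j$, and sends $(g_i)$ to $(\bigoplus_ig_i,(g_i))$; it satisfies $FG=\Id$ on the nose. The map $(\sum_iv_i,(\id_{M_i}))\colon GF(\calF)\to\calF$ has the following properties:
\begin{itemize}
\item it is a morphism of quiver data, since compatibility with the $u$'s is exactly $u_i\circ\sum_jv_j=(m_{ij})_j$;
\item it is natural in $\calF$;
\item its kernel and cokernel lie in $\Loc(\bbC)$.
\end{itemize}
Hence it becomes a natural isomorphism after applying the quotient functor $Q$. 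Since natural transformations descend uniquely along a localization, $Q\circ G$ is a quasi-inverse of the induced functor.

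Equivalently, $GF(\calF)$ has no nonzero $\Hom$ or $\operatorname{Ext}^1$ into local systems. So $\Hom$ out of it in the quotient equals $\Hom$ in $\Perv(\bbC,R)$. A morphism $GF(\calF)\to\calG$ is forced to have $E$-component $\sum_iv'_if_i$, and its $u$-compatibility reads $f_im_{ij}=m'_{ij}f_j$. That is exactly a morphism in $\opname{Q}(n)$.

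\emph{Comparison with the paper's treatment of such quotients.} Your normal form, in which $\bigoplus_iv_i\colon\bigoplus_iM_i\to E$ is an isomorphism, identifies the quotient with the \emph{left} perpendicular category of $\Loc(\bbC)$. The paper models its categorified quotient by the \emph{right} orthogonal, characterized by conservativity of $\oplus_iR_i$ (Proposition~\ref{prop:conservative}). The decategorified counterpart, cf.\ the Remark following that proposition for $n=1$, is that $\bigoplus_iu_i\colon E\to\bigoplus_iM_i$ be an isomorphism.

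The dual normal form would prove the statement equally well: take $E=\bigoplus_iM_i$, $u_i$ the projections, $v_j=(m_{ij})_i$, with comparison morphism given by $\bigoplus_iu_i$ on $E$ and the identity on the $M_i$. In the abelian setting these are two genuinely different subcategories of $\Perv(\bbC,R)$, each equivalent to the quotient. For stable presentable categories the analogous conditions coincide: the images of the $S_i$ generating $\Psi$, versus $\oplus_iR_i$ being conservative. This is one way to read Kleisli $=$ Eilenberg--Moore (Proposition~\ref{prop:KLisEM}).
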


We unravel the definition of abstract localized schober to resemble this quiver description. Recall that by the theory of gluing functors (which arises here as the monadicity of $\oplus_iI_i^R$), the data of the SOD $\Phi=\langle\Phi_1,\dots,\Phi_n\rangle$ is equivalent to giving the components $\Phi_i$, together with gluing functors $F_{ij} := I_i^RI_j$ and its monad structure. By orthogonality, $F_{ij}=0$ when $i>j$, so this monad is upper triangular. Moreover $F_{ii}\simeq 1$. This can be seen as the ``upper triangular half'' of the categorical quiver representation.

The ``lower triangular half'' comes from the autoequivalence $T$. Define $T_{ij} := I_i^RTI_j:\Phi_j\to \Phi_i$. Conjugate-compatibility says that $T_{ij}=0$ when $i<j$ and that $T_{ii}$ is invertible. This collection of functors admits a ``bimodule structure'' for the lower triangular monad $F := (F_{ij})_{i\leq j}$. We see that the data of the functor $T$ is equivalent to the functors $(T_{ij})_{i\leq j}$ together with its bimodule structure. So we have the following.

\begin{prop}\label{prop:decategorification1}
    Given an abstract localized perverse schober $(\Phi,\Phi_i,T)\in\sfAut(n)$, we may form an object of $\opname{Q}(n)$ by taking $M_i := K_0(\Phi_i)$ and $m_{ij} = K_0(F_{ij}\oplus T_{ij})$.
\end{prop}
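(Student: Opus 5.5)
We have to check that $(M_i,m_{ij})$ with $M_i=K_0(\Phi_i)$ and $m_{ij}=K_0(F_{ij}\oplus T_{ij})$ is an object of $\opname{Q}(n)$. The only condition is that $1-m_{ii}$ is invertible on $M_i$ for every $i$, so the plan is to compute $m_{ii}$ and use the structure of the diagonal components. Since $K_0$ is additive on direct sums and turns exact functors into group homomorphisms, we have $m_{ij}=K_0(F_{ij})+K_0(T_{ij})$ in general.

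First I would identify the diagonal terms. By definition $F_{ii}=I_i^RI_i\simeq \id_{\Phi_i}$, because $I_i$ is fully faithful. So $K_0(F_{ii})=\id_{M_i}$, and hence
\[1-m_{ii}=1-\id_{M_i}-K_0(T_{ii})=-K_0(T_{ii}).\]
By conjugate-compatibility (Definition~\ref{defn:compatible}), $T_{ii}=I_i^RTI_i$ is an equivalence of $\Phi_i$. Its inverse $T_{ii}^{-1}$ therefore induces an inverse to $K_0(T_{ii})$, which shows that $1-m_{ii}$ is invertible.

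It is also worth checking that the off-diagonal data is consistent with the quiver $K_n$, which allows arrows $j\to i$ for all $i,j$. For $i<j$, semiorthogonality and conjugate-compatibility give $T_{ij}=0$, so $m_{ij}=K_0(F_{ij})$. For $i>j$ we have $F_{ij}=0$, so $m_{ij}=K_0(T_{ij})$. Thus every arrow of $K_n$ gets a well-defined homomorphism $M_j\to M_i$, and no further relations need to be verified.

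The only delicate point is size. Our $\Phi_i$ are large presentable categories, where an Eilenberg swindle can make $K_0$ trivial. I would interpret $K_0$ as that of compact objects, which requires the functors to preserve compact objects. Alternatively, one can note that the argument uses only additivity and functoriality of $K_0$, so it applies verbatim to any additive invariant. With either convention, the statement about invertibility of $1-m_{ii}$ holds, and I expect no real obstacle here.
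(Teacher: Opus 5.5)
Your argument is correct and is essentially the paper's: the paper justifies the proposition through the discussion preceding it, namely that $F_{ii}\simeq\id$ (so $1-m_{ii}=-K_0(T_{ii})$), that $T_{ii}$ is invertible by conjugate-compatibility, and that $F_{ij}=0$ for $i>j$ while $T_{ij}=0$ for $i<j$. Your size remark is a point the paper leaves implicit and does not affect the argument: for large presentable categories the Eilenberg swindle kills $K_0$, so one should pass to compact objects or another additive invariant.
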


\begin{rmk}
    Additional structures such as the monad structure and the bimodule structure disappear when we decategorify, as these are encoded by 2-morphisms in $\sfSt_k$. In plain terms, if $f,g:V\to W$ are linear maps of vector spaces, we can't make sense of a (non-invertible) morphism $f\to g$.
\end{rmk}

\subsection{\texorpdfstring{$\sfEnd(n)$}{End(n)} as a simplicial \texorpdfstring{$\two$}{2}-category}\label{subsection:simplicialend}

We will consider a variant of $\sfAut(n)$ where we don't impose invertibility conditions, and construct out of this a simplicial $\two$-category.

\begin{defn}
    Let $\Phi$ be a category equipped with a (possibly partial) SOD $\frakS=(\Phi_1,\dots,\Phi_n)$. Denote by $I_i$ the inclusion functors. An \textit{endofunctor} $T$ of $\Phi$ is said to be weakly conjugate-compatible with $\frakS$ if the composition
    \[\Phi_j\xto{I_j}\langle\Phi_1,\dots,\Phi_n\rangle\xto{T}\langle\Phi_1,\dots,\Phi_n\rangle\xto{I_i^R}\Phi_i\]
    is zero whenever $i<j$.
\end{defn}

\begin{notn}
    We denote by $\sfEnd$ the $\two$-category of pairs $(\Phi,T)$ where $T:\Phi\to\Phi$ is an endofunctor. We denote by $\sfEnd(n)$ the $\two$-category of data $(\Phi,\frakS,T)$ where $\Phi$ is a category, $\frakS$ is an SOD of $\Phi$, and $T:\Phi\to\Phi$ is an \textit{endofunctor}, weakly conjugate-compatible with $\frakS$. These $\two$-categories are constructed similarly to $\sfAut(n)$.
\end{notn}

We upgrade the assignment $[n]\mapsto \sfEnd(n)$ to a simplicial $\two$-category which is 2-Segal. Consider the simplicial $\two$-category
\[[n]\mapsto \sfSOD'(n)\times_{\sfSt_k}\sfEnd,\]
where the functors to $\sfSt_k$ are given by $\opname{U}:\sfSOD'(n)\to\sfSt_k$, and $\sfEnd\to\sfSt_k$ is the forgetful functor. We denote objects by $(\Phi_1,\dots,\Phi_n;\Phi,T)$ and write $I_i:\Phi_i\inclto \Phi$ for inclusions of subcategories.

Define the sub-$\two$-category
\[\sfEnd'(n)\subseteq \sfSOD'(n)\times_{\sfSt_k}\sfEnd\]
consisting of those objects $(\frakS;\Phi,T)$ for which the endofunctor $T$ is weakly conjugate-compatible with the partial SOD $\frakS$ of $\Phi$.

\begin{prop}\label{prop:segalend}
    The assignment $\sfEnd'(-):[n]\mapsto \sfEnd'(n)$ is a simplicial sub-$\two$-category of $\sfSOD'(-)\times_{\sfSt_k}\sfEnd$. Moreover, $\sfEnd'(-)$ is 2-Segal.
\end{prop}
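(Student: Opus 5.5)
The plan is to treat $\sfEnd'(-)$ as a property-defined full sub-object of an ambient simplicial $\two$-category that is already known to be 2-Segal. For each $n$, $\sfEnd'(n)$ is the full sub-$\two$-category of $\sfSOD'(n)\times_{\sfSt_k}\sfEnd$ cut out by a condition on objects: $I_i^RTI_j\simeq 0$ for $i<j$. So both claims become statements about which objects satisfy this condition, and I will argue at the level of objects throughout, as in the proofs of Proposition~\ref{prop:simplicialsodprime} and Proposition~\ref{prop:segalsod}.

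\textbf{Step 1: the ambient simplicial $\two$-category is 2-Segal.} The ambient object is $\sfSOD'(-)\times_{\sfSt_k}\sfEnd$, where $\sfEnd$ is regarded as a constant simplicial $\two$-category over the constant object $\sfSt_k$. The structure maps of $\sfSOD'(-)$ lie over $\sfSt_k$ via $\opname{U}$, so this fibre product is a well-defined simplicial $\two$-category. It is 2-Segal because:
\begin{itemize}
\item $\sfSOD'(-)$ is 2-Segal by Proposition~\ref{prop:segalsod};
\item constant simplicial objects are 2-Segal;
\item pullbacks commute with pullbacks.
\end{itemize}

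\textbf{Step 2: the structure maps preserve weak conjugate-compatibility.} Since the subcategories are full, it suffices to check this on generators.
\begin{itemize}
\item Degeneracies insert a zero subcategory, and every composite through $0$ vanishes.
\item Inert faces $d_{n,0},d_{n,n}$ only forget terms while keeping $\Phi$ and $T$, so the remaining vanishing conditions are a subset of the original ones.
\item An inner face $d_{n,i}$ merges $\Phi_i,\Phi_{i+1}$ into $\Phi_{i,i+1}$, with inclusion $I_{i,i+1}$. Take an index $k<i$. Then $I_k^RTI_{i,i+1}$ vanishes because $\Phi_{i,i+1}$ is generated under colimits by the images of $I_i$ and $I_{i+1}$, and all functors are colimit-preserving. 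Take an index $k>i+1$. Then $I_{i,i+1}^RTI_k$ vanishes because $I_{i,i+1}^R$ composed with the inclusion of the SOD $\langle\Phi_i,\Phi_{i+1}\rangle$ is detected conservatively by $I_i^R$ and $I_{i+1}^R$ (Definition~\ref{defn:sod}(\ref{item:cons})).
\end{itemize}

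\textbf{Step 3: 2-Segal for the sub-object.} Fix a 2-Segal square as in (\ref{equation:twosegal}). Because each $\sfEnd'(k)$ is a full sub-$\two$-category cut out by an object-wise property, the square for $\sfEnd'(-)$ is a pullback if the following holds: an object of the ambient $(\sfSOD'(n)\times_{\sfSt_k}\sfEnd)$ satisfies the condition whenever its two images in level $m$ and level $n+1-m$ do. Fully faithfulness on mapping categories is then inherited from the ambient pullback of Step 1.

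Unwinding the maps:
\begin{itemize}
\item the image in $\sfEnd'(n+1-m)$ retains exactly a consecutive block $B$ of the original $n$ terms;
\item the image in $\sfEnd'(m)$ keeps the terms outside $B$ and merges $B$ into a single term $\Phi_B$.
\end{itemize}
Now take a pair $i<j$.
\begin{itemize}
\item If both indices lie in $B$, the vanishing is the level-$(n+1-m)$ condition.
\item If neither lies in $B$, it is a condition at level $m$ directly.
\item If exactly one lies in $B$, deduce it from the merged condition. When $j\in B$, the inclusion $I_j$ factors through $I_B$, so $I_i^RTI_j$ vanishes. When $i\in B$, the adjoint $I_i^R$ factors through $I_B^R$, so $I_i^RTI_j$ vanishes.
\end{itemize}

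I expect the main obstacle to be the bookkeeping in Step 3: making precise that a pullback of full sub-$\two$-categories along the ambient equivalence is the full sub-$\two$-category on jointly-qualifying objects, including the identification of $I_B$ and $I_B^R$ from Lemma~\ref{lem:twosubcats}. I would handle this via Proposition~\ref{prop:functorconstruction}, since everything is 2-faithful over $\sfSt_k$.
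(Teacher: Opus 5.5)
Your proposal is correct and follows the paper's proof. The paper also checks the inner face maps using conservativity of $\langle\Phi_i,\Phi_{i+1}\rangle\to\Phi_i\oplus\Phi_{i+1}$ and generation by $\Phi_i,\Phi_{i+1}$, then uses the 2-Segal property of $\sfSOD'(-)$ to reduce to an object-level check split by whether the indices lie in the merged block; you only make explicit a few points the paper leaves implicit, namely why the ambient fibre product is 2-Segal and why a pullback of full sub-$\two$-categories is the full sub-$\two$-category on jointly qualifying objects.
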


\begin{proof}
    We show that $\sfEnd'(n)$ maps to $\sfEnd'(n-1)$ under the face maps $d_{n,i}$ for $0<i<n$. Stability under the remaining face maps and degeneracy maps is easy to verify.

    Let $(\Phi_1,\dots,\Phi_n;\Phi,T)\in \sfEnd'(n)$. The operation $d_{n,i}$ collapses $\Phi_i,\Phi_{i+1}$ into $\langle\Phi_i,\Phi_{i+1}\rangle$, whose inclusion we denote by $I_{i,i+1}$. So we must verify that for all $j>i+1$,
    \[I_{i,i+1}^R T I_{j} = 0\]
    and that for all $j<i$,
    \[I_j^R T I_{i,i+1} = 0.\]
    The first of these follows by post-composing with $\langle\Phi_i,\Phi_{i+1}\rangle\to \Phi_i\oplus\Phi_{i+1}$ which is conservative. The second of these follows because $\langle\Phi_i,\Phi_{i+1}\rangle$ is generated by $\Phi_i$ and $\Phi_{i+1}$.

    We now verify the 2-Segal condition. By Proposition~\ref{prop:segalsod}, $\sfSOD'(n)$ is 2-Segal. It follows that for each square in $\Delta$ as in (\ref{equation:twosegal}), the natural map
    \[\sfEnd'(n)\to \sfEnd'(m)\times_{\sfEnd'(1)}\sfEnd'(n+1-m)\]
    is 2-fully faithful. So we can work on the level of objects as follows.
    
    Let $(\Phi_1,\dots,\Phi_n;\Phi,T)\in \sfSOD'(-)\times_{\sfSt_k}\sfEnd$. Given $0<i<j<n$, suppose that
    \[(\Phi_i,\dots,\Phi_j;\Phi,T), (\Phi_1,\dots,\Phi_{i-1},\Phi_{i,\dots,j},\Phi_{j+1},\dots,\Phi_n;\Phi,T)\]
    individually lie in $\sfEnd'(-)$. Here, $\Phi_{i,\dots,j} := \langle\Phi_i,\dots,\Phi_j\rangle$, and we write $I_{i,\dots,j}$ for its inclusion as a subcategory inside $\Phi$. It will suffice to prove that $(\Phi_1,\dots,\Phi_n;\Phi,T)$ also lies in $\sfEnd'(-)$.

    We must verify that for all $1\leq i' < j'\leq n$, $I_{i'}^RTI_{j'}=0$. If both $i',j'$ lie inside $\{i,\dots,j\}$ or if they both lie outside $\{i,\dots,j\}$, then this is clear. If just $j'$ lies inside then $I_{j'}$ factors as $\Phi_{j'}\xto{I_{j'}^{i,\dots,j}} \Phi_{i,\dots,j}\xto{I_{i,\dots,j}} \Phi$, so
    \[I_{i'}^RTI_{j'} = I_{i'}^RTI_{i,\dots,j}I_{j'}^{i,\dots,j}  = 0.\]
    The argument is similar if just $i'$ lies in $\{i,\dots,j\}$ so we are done.
\end{proof}

For each $n$, there is an inclusion functor, $\opname{I}_n:\sfEnd(n) \inclto \sfEnd'(n)$ identifying $\sfEnd(n)$ with the full sub-$\two$-category of those objects where the partial SOD is an SOD.

\begin{lem}\label{lem:rightadjointend}
    The inclusion functor $\opname{I}_n:\sfEnd(n) \inclto \sfEnd'(n)$ admits a right adjoint $\opname{I}_n^R$, which is given on objects by
    \[(\frakS;\Phi,T)\mapsto (\frakS;\langle\Phi_1,\dots,\Phi_n\rangle,I^RTI),\]
    where $I:\langle\Phi_1,\dots,\Phi_n\rangle\inclto \Phi$.
\end{lem}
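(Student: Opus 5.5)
The plan is to mirror the proof of Lemma~\ref{lem:keyrightadjoint}, reducing to the adjoint $\opname{I}_n^R$ for $\sfSOD(n)\inclto\sfSOD'(n)$, which is already known.

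\textbf{Step 1: check the target.} Let $(\frakS;\Phi,T)\in\sfEnd'(n)$, write $\Phi_{1,\dots,n}:=\langle\Phi_1,\dots,\Phi_n\rangle$, and let $I:\Phi_{1,\dots,n}\inclto\Phi$ be the inclusion. This subcategory is right-admissible by iterating Lemma~\ref{lem:twosubcats}. The candidate $(\frakS;\Phi_{1,\dots,n},I^RTI)$ lies in $\sfEnd(n)$. Indeed, for $i<j$, the inclusion $I_i$ of $\Phi_i$ into $\Phi$ factors through $I$, so
\[(I_i')^R I^R T I I_j' \simeq I_i^R T I_j = 0,\]
where $I_i'$ denotes the inclusion of $\Phi_i$ into $\Phi_{1,\dots,n}$. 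There is also an evident counit morphism $\opname{I}_n(\frakS;\Phi_{1,\dots,n},I^RTI)\to(\frakS;\Phi,T)$. Its underlying functor is $I$, and the required 2-cell $I\circ I^RTI\Rightarrow T\circ I$ is given by the counit $II^R\Rightarrow\id$.

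\textbf{Step 2: identify the mapping categories.} Fix $\calE=(\frakS';\Psi,T')\in\sfEnd(n)$, so $\Psi=\langle\Psi_1,\dots,\Psi_n\rangle$. We must show that postcomposition with the counit gives an equivalence
\[\sfEnd(n)\big(\calE,(\frakS;\Phi_{1,\dots,n},I^RTI)\big)\isomto\sfEnd'(n)\big(\opname{I}_n\calE,(\frakS;\Phi,T)\big).\]
Each side is a fiber product of an $\sfSOD$-mapping category with an $\sfEnd$-mapping category over $\sfSt_k$.

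On the $\sfSOD$ factor, any morphism $G:\Psi\to\Phi$ in $\sfSOD'(n)$ sends each $\Psi_i$ into $\Phi_i$. Since $\Psi$ is generated by the $\Psi_i$, $G$ factors uniquely through $I$. By Lemma~\ref{lem:keyrightadjoint}, this is already an equivalence on that factor.

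On the $\sfEnd$ factor, compare the two kinds of 2-cells:
\begin{itemize}
\item a 2-cell $I G'T'\Rightarrow TIG'$ on the right-hand side, and
\item a 2-cell $G'T'\Rightarrow I^RTIG'$ on the left-hand side.
\end{itemize}
These correspond under the adjunction $I\adjto I^R$. As $I$ is fully faithful, this correspondence is an equivalence of mapping spaces, functorial in the 2-cell data. Taking the fiber product of the two equivalences gives the claim.

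\textbf{Step 3: the expected obstacle.} The hardest part is coherence: making the map in Step 2 functorial in $\calE$ as a genuine $\two$-categorical adjunction, rather than a pointwise bijection. I would handle this as in Lemma~\ref{lem:keyrightadjoint}, by identifying $\sfEnd'(n)$ with a fiber product. One option is to realize $\sfEnd'(n)$ via diagram categories $\sfFun(\frakd,\sfSt_k)$ with an endomorphism loop adjoined. The right adjoint then comes from Theorem~\ref{thm:laxfibrations}, as $p_*$ for the appropriate collapsing map. After that, one checks on objects that the resulting natural transformation to the explicit formula is an equivalence, by testing against $\sfSt_k(\calA,-)$.

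A cleaner fallback is available. The forgetful functor to $\sfSOD'(n)$ has fibers controlled by mapping spaces of 2-cells, so the pointwise equivalences from Step 2 assemble into the adjunction by the standard pointwise criterion for the existence of right adjoints. The formula on objects is then exactly the one produced in Step 1.
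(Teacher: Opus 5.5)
\textbf{The gap is the invertibility of 2-cells in Steps 1 and 2.} In the paper, a 1-morphism of $\sfEnd$ (and hence of $\sfEnd(n)$ and $\sfEnd'(n)$) is a functor $G$ together with an \emph{equivalence} $GT'\simeq TG$. See Remark~\ref{rmk:monadstrictness}; $\sfEnd$ is the locally full subcategory of $\sfEnd_{\oplax}$.

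Your counit carries the 2-cell $I\circ I^RTI\Rightarrow T\circ I$ induced by $II^R\Rightarrow\id$. This is invertible only if $T(\Phi_{1,\dots,n})\subseteq\Phi_{1,\dots,n}$. Similarly, in Step 2 the transpose of an equivalence $G'T'\simeq I^RTIG'$ is the composite $IG'T'\simeq II^RTIG'\to TIG'$, which is generally not an equivalence. So Step 2 does not identify the mapping categories of the 2-categories the paper actually defines.

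This cannot be patched, because under these conventions the formula is false. Take $n=1$, $\Phi=\Mod_k\oplus\Mod_k$, $\Phi_1$ the first summand and $T$ the swap, so that $I_1^RTI_1=0$. Let $\calE=(\Mod_k;\Mod_k,0)$.
\begin{itemize}
\item A morphism $\opname{I}_1\calE\to(\Phi_1;\Phi,T)$ requires $TI_1G_1\simeq 0$, which forces $G_1\simeq 0$. So this mapping category is contractible.
\item By contrast, $\sfEnd(1)(\calE,(\Phi_1;\Phi_1,0))\simeq\sfSt_k(\Mod_k,\Mod_k)$, which is not contractible.
\end{itemize}

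The paper's proof passes over the same point. Its reduction from general $n$ to $n=1$, via the 2-fully faithful square into $\sfSOD(n)\times_{\sfSt_k}\sfEnd'(1)$, is fine. But it dismisses the case $n=1$ as similar to Lemma~\ref{lem:keyrightadjoint}. The check made there (every such square is automatically adjointable) has no counterpart for the identification $I_1G_1T'\simeq TI_1G_1$.

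\textbf{What your argument does prove} is the lemma with oplax 1-morphisms, i.e.\ with $\sfEnd$ replaced by $\sfEnd_{\oplax}$. There the 2-cells run $FT_1\Rightarrow T_2F$, the direction of $(\Phi,\id)\to(\Phi,M)$ in \S\ref{subsection:suspension}. In that setting your counit is a genuine morphism, and transposition along $I\adjto I^R$ is an honest equivalence.

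One small repair is needed before you form the fiber product. Restrict the right-hand $\sfEnd$-factor to underlying functors of the form $IG'$, since the two $\sfEnd$-factors lie over $\sfSt_k(\Psi,\Phi_{1,\dots,n})$ and $\sfSt_k(\Psi,\Phi)$ respectively. This is harmless because $I\circ-$ is fully faithful.

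In the oplax setting your route genuinely differs from the paper's. You treat all $n$ at once, importing the $\sfSOD$ part from Lemma~\ref{lem:keyrightadjoint} and handling the endofunctor by adjunction. The paper instead proves $n=1$ and obtains general $n$ formally from the analogue of Lemma~\ref{lem:product}. Your approach writes the counit down explicitly, which is exactly what shows that laxness is needed; the paper's makes the reduction to a single subcategory automatic.
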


\begin{proof}
    Consider first the case $n=1$. Then, the right adjoint exists and is given on objects by
    \[(\Phi_1;\Phi,T)\mapsto (\Phi_1,I_1^RTI_1),\]
    by a proof similar to that of Lemma~\ref{lem:keyrightadjoint}.

    For general $n$, consider the following commuting diagram of $\two$-categories, where each arrow is 2-fully faithful.
    \[\begin{tikzcd}[ampersand replacement=\&]
        {\sfEnd(n)} \& {\sfEnd'(n)} \\
        {\sfSOD(n)\times_{\sfSt_k}\sfEnd(1)} \& {\sfSOD(n)\times_{\sfSt_k}\sfEnd'(1)}
        \arrow["{\opname{I}_n}", hook, from=1-1, to=1-2]
        \arrow[hook, from=1-1, to=2-1]
        \arrow[hook, from=1-2, to=2-2]
        \arrow["{\id\times\opname{I_1}}"', hook, from=2-1, to=2-2]
    \end{tikzcd}\]
    The bottom horizontal arrow has a right adjoint by the case $n=1$. This right adjoint preserves the full sub-$\two$-categories, and hence induces a right adjoint of $\opname{I}_n$, given by the formula in the statement.
\end{proof}

\begin{cor}\label{prop:simplicialend}
    The assignment $[n]\mapsto\sfEnd(n)$ upgrades to a simplicial $\two$-category. For $0\leq i\leq n$, the degeneracy map
    \[s_{n,i}:\sfEnd(n)\to \sfEnd(n+1)\]
    inserts a $0$ between $\Phi_i$ and $\Phi_{i+1}$. For $0< i< n$, the active face map
    \[d_{n,i}:\sfEnd(n)\to \sfEnd(n-1)\]
    merges $\Phi_i,\Phi_{i+1}$ into $\langle\Phi_i,\Phi_{i+1}\rangle$. The inert face maps $d_{n,0}$ and $d_{n,n}$ remove $\Phi_1$ and $\Phi_n$ respectively. 

    The assignment $[n]\mapsto \sfEnd(n)$ upgrades to a simplicial $\two$-category. For $0\leq i\leq n$, the degeneracy map $s_{n,i}$ inserts a $0$ between $\Phi_i$ and $\Phi_{i+1}$;
    \begin{align*}
        s_{n,i}:\sfEnd(n)&\to\sfEnd(n+1)\\
        (\frakS;\Phi,T)&\mapsto ((\Phi_1,\dots,\Phi_i,0,\Phi_{i+1},\dots\Phi_n);\Phi,T).
    \end{align*}
    The inert face map $d_{n,0}$ removes $\Phi_1$ from the SOD to get $\Phi_{2,\dots,n} := \langle\Phi_2,\dots,\Phi_n\rangle\xto{I_{2,\dots,n}}\Phi$;
    \begin{align*}
        d_{n,0}:\sfEnd(n)&\to\sfEnd(n-1)\\
        (\frakS;\Phi,T)&\mapsto ((\Phi_2,\dots,\Phi_n);\Phi_{2,\dots,n},I_{2,\dots,n}^RTI_{2,\dots,n}),
    \end{align*}
    and the other inert face map $d_{n,n}$ similarly removes $\Phi_n$ from the SOD. For $0<i<n$, the active face map is given by merging $\Phi_{i}$ and $\Phi_{i+1}$ in the SOD into $\Phi_{i,i+1} := \langle\Phi_i,\Phi_{i+1}\rangle$;
    \begin{align*}
        d_{n,i}:\sfEnd(n)&\to\sfEnd(n-1)\\
        (\frakS;\Phi,T)&\mapsto ((\Phi_1,\dots,\Phi_{i-1},\Phi_{i,i+1},\Phi_{i+2},\dots\Phi_n);\Phi,T).
    \end{align*}
\end{cor}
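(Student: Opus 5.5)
The plan is to apply Lemma~\ref{lem:descendingsimplicialobjects} to the inclusions $\opname{I}_n:\sfEnd(n)\inclto\sfEnd'(n)$. By Proposition~\ref{prop:segalend}, $\sfEnd'(-)$ is already a simplicial $\two$-category, and by Lemma~\ref{lem:rightadjointend} each $\opname{I}_n$ has a right adjoint $\opname{I}_n^R$. It sends $(\frakS;\Phi,T)$ to $(\frakS;\langle\Phi_1,\dots,\Phi_n\rangle,I^RTI)$. Setting $\sfEnd(a):=\opname{I}_m^R\circ\sfEnd'(a)\circ\opname{I}_n$, the lemma gives a simplicial $\two$-category once the a priori lax squares commute for the generating face and degeneracy maps. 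The formulas in the statement are then obtained by evaluating $\opname{I}_m^R\circ\sfEnd'(a)$ on an object with an honest SOD.

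First, the degeneracies $s_{n,i}$ and the active faces $d_{n,i}$ with $0<i<n$. These do not change the subcategory generated by the SOD components: inserting $0$, or replacing $\Phi_i,\Phi_{i+1}$ by $\langle\Phi_i,\Phi_{i+1}\rangle$ via Lemma~\ref{lem:twosubcats}, leaves $\langle\Phi_1,\dots,\Phi_n\rangle$ unchanged. So both paths around the square send $(\frakS;\Phi,T)\in\sfEnd'(n)$ to the same data, namely the modified SOD on $\Phi_{1,\dots,n}$ with endofunctor $I^RTI$. The comparison 2-cell is then the identity on the underlying category and endofunctor, hence an equivalence.

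Second, the inert faces $d_{n,0}$ and $d_{n,n}$. Going along $\sfEnd'$ first and then applying $\opname{I}^R_{n-1}$ gives $(\Phi_2,\dots,\Phi_n)$ on $\Phi_{2,\dots,n}$ with endofunctor $I_{2,\dots,n}^RTI_{2,\dots,n}$. Going the other way gives the same SOD with endofunctor $(I')^R(I^RTI)I'$, where $I':\Phi_{2,\dots,n}\inclto\Phi_{1,\dots,n}$. Since $I\circ I'=I_{2,\dots,n}$ and right adjoints compose, these agree, and the comparison 2-cell is the evident equivalence of right adjoints of a composite. The case $d_{n,n}$ is identical. This also recovers the stated formulas for $d_{n,0}$ and $d_{n,n}$.

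The main subtlety is that a "commuting square" here means the specified lax 2-cell is invertible, not merely that the two paths agree on objects. I would handle this as in the proof of Proposition~\ref{prop:simplicialsod}. The forgetful functor to $\sfSOD'(n)\times_{\sfSt_k}\sfEnd$ reduces the SOD part to that proposition, which already verified these squares. The remaining $\sfEnd$ component is a unit or counit for the fully faithful inclusion $I$, and such maps are equivalences by $I^RI\simeq\id$.
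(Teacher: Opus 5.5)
Your proposal is correct and follows the paper's own argument: the paper also applies Lemma~\ref{lem:descendingsimplicialobjects} to $\opname{I}_n:\sfEnd(n)\inclto\sfEnd'(n)$ with the right adjoint from Lemma~\ref{lem:rightadjointend}, and it omits the check that the lax squares commute, which your verification on the generating face and degeneracy maps supplies. One point should be stated precisely. The counit $II^R\to\id$ of the fully faithful inclusion is not itself invertible; what is invertible is $I^R$ applied to it, by the triangle identity, because the unit $\id\to I^RI$ is an equivalence.
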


\begin{proof}
    We apply Lemma~\ref{lem:descendingsimplicialobjects}. We omit the verification that the resulting lax-commuting squares are commuting squares.
\end{proof}

\begin{prop}
    The simplicial $\two$-category $\sfEnd(-)$ is 2-Segal.
\end{prop}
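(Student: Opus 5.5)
We mirror the proof of Proposition~\ref{prop:segalsod}, which deduced the 2-Segal property of $\sfSOD(-)$ from that of $\sfSOD'(-)$. Here $\sfEnd'(-)$ is already known to be 2-Segal by Proposition~\ref{prop:segalend}, and $\sfEnd(-)$ is obtained from it through the inclusions $\opname{I}_n$ and their right adjoints $\opname{I}_n^R$ (Lemma~\ref{lem:rightadjointend}, Corollary~\ref{prop:simplicialend}). So the plan is to transport the 2-Segal property along these adjunctions.

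\textbf{Step 1: Cartesianness over $\Delta^{\op}_{\act}$.} First prove the analog of Lemma~\ref{lem:activecartesian}: the map $\sfEnd'(-)\to\sfEnd(-)$ is Cartesian over $\Delta^{\op}_{\act}$. Since $[1]$ is initial in $\Delta_{\act}$, it suffices to show that for each $n$ the square
\[\begin{tikzcd}[ampersand replacement=\&]
    {\sfEnd'(n)} \& {\sfEnd'(1)} \\
    {\sfEnd(n)} \& {\sfEnd(1)}
    \arrow[from=1-1, to=1-2]
    \arrow[from=1-1, to=2-1]
    \arrow[from=1-2, to=2-2]
    \arrow[from=2-1, to=2-2]
\end{tikzcd}\]
is a pullback. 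I would do this by proving an analog of Lemma~\ref{lem:product},
\[\sfEnd'(n)\simeq \sfEnd(n)\times_{\sfEnd(1)}\sfEnd'(1),\]
via $(\frakS;\Phi,T)\mapsto\big((\frakS;\Phi_{1,\dots,n},I^RTI),(\Phi_{1,\dots,n};\Phi,T)\big)$.

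All these $\two$-categories sit 2-faithfully over $\sfSt_k$: they are locally full inside $\sfSOD'(-)\times_{\sfSt_k}\sfEnd$, which is 2-faithful over $\sfEnd$ by Lemma~\ref{lem:Uisfaithful} and Lemma~\ref{lem:faithfulpullback}. This lets us use Proposition~\ref{prop:functorconstruction} and Lemma~\ref{lem:ftoff}. The comparison functor is then 2-fully faithful, and it remains to check essential surjectivity on objects.

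That check amounts to the following claim. Suppose $T$ is weakly conjugate-compatible with $\frakS$ on $\Phi_{1,\dots,n}$ after compressing by $I^RTI$. Then $T$ is weakly conjugate-compatible with the partial SOD $\frakS$ of $\Phi$. This holds because $I_i^RTI_j = (I_i^{1..n})^R I^RTI\, I_j^{1..n}$.

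\textbf{Step 2: Pasting.} Take a 2-Segal square (\ref{equation:twosegal}) and form the same three-square diagram as in the proof of Proposition~\ref{prop:segalsod}, with $\sfSOD$ replaced by $\sfEnd$.
\begin{itemize}
\item The middle square is a pullback by Proposition~\ref{prop:segalend}.
\item The right square is a pullback by Step 1, since $a'$ is active.
\item For the left square, paste it with the square formed by $\opname{I}^R_n,\opname{I}^R_m$. That square is Cartesian by Step 1 applied to $a$. The composite horizontal maps $\opname{I}_n^R\opname{I}_n\simeq\Id$ are equivalences, since $\opname{I}_n$ is fully faithful. Hence the left square is a pullback.
\end{itemize}
One point needs care here: the squares involving $\opname{I}^R$ must genuinely commute, not merely lax-commute. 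This is exactly what Corollary~\ref{prop:simplicialend} provides via Lemma~\ref{lem:descendingsimplicialobjects}.

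\textbf{Main obstacle.} I expect the main obstacle to be Step 1, namely confirming that the fiber-product description is an honest equivalence of $\two$-categories and not just a bijection on objects. In particular, morphisms must match: adjointability conditions and the compatibility of the 2-cells on $T$ under compression $T\mapsto I^RTI$. I would handle this with Proposition~\ref{prop:functorconstruction}, reducing all coherence to the objectwise statement above. The 2-faithfulness over $\sfSt_k$ makes morphism data a property, checked through the fully faithful maps of mapping categories.
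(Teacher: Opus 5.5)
Your proposal is correct and follows the paper's argument. The paper's one-line proof reruns the proof of Proposition~\ref{prop:segalsod} once one knows that $\sfEnd'(-)\to\sfEnd(-)$ is Cartesian over $\Delta_{\act}$, which is exactly your Step~1 (an $\sfEnd$-analogue of Lemma~\ref{lem:product}) followed by your Step~2. One minor slip: these $\two$-categories are 2-faithful over $\sfEnd$, as your own justification shows, not over $\sfSt_k$, since $\sfEnd\to\sfSt_k$ forgets the 2-cell $FT\simeq T'F$; likewise, 2-fully faithfulness of your comparison functor does not follow from 2-faithfulness alone but from the morphism conditions on both sides matching, as in Lemma~\ref{lem:product}.
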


\begin{proof}
    The strategy used in the proof of Proposition~\ref{prop:segalsod} works by observing that the map $\sfEnd'(-)\to\sfEnd(-)$ is Cartesian over $\Delta_{\act}$.
\end{proof}

\subsection{\texorpdfstring{$\sfAut(n)$}{Aut(n)} as a simplicial \texorpdfstring{$\two$}{2}-category}\label{subsection:simplicialaut}
We now reinstate the invertibility conditions and construct the $\two$-category of abstract localized schobers, and upgrade this to a simplicial $\two$-category which is 2-Segal.

\begin{notn}
    We denote by $\sfAut$ the $\two$-category of data $(\Phi,T)$ consisting of a stable category equipped with an autoequivalence. This admits a forgetful functor, $\sfAut\to\sfSt_k$. We denote by $\sfAut(n)$ the full sub-$\two$-category of $\sfAut\times_{\sfSt_k}\sfSOD(n)$ of abstract localized schobers.
\end{notn}

We now exhibit $[n]\mapsto \sfAut(n)$ as a 2-Segal simplicial $\two$-category.

\begin{prop}\label{prop:simplicialaut}
    The full sub-$\two$-categories $\sfAut(n)\inclto \sfEnd(n)$ form a sub-simplicial $\two$-category which is also 2-Segal.
\end{prop}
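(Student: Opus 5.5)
Since $\sfAut(n)\inclto\sfEnd(n)$ is a full sub-$\two$-category (an object of $\sfEnd(n)$ lies in $\sfAut(n)$ exactly when $T$ is invertible and each diagonal component $T_{ii}=I_i^RTI_i$ is invertible), both claims are conditions on objects only. So the plan is: (1) check that every face and degeneracy map of Corollary~\ref{prop:simplicialend} sends $\sfAut(n)$ into $\sfAut(n\pm1)$; (2) for each 2-Segal square (\ref{equation:twosegal}), check that an object of $\sfEnd(n)$ whose images in $\sfEnd(m)$ and $\sfEnd(n+1-m)$ lie in $\sfAut$ already lies in $\sfAut(n)$. Given (2), the pullback square for $\sfEnd(-)$ restricts to one for $\sfAut(-)$, because full subcategories cut out by object conditions commute with the 2-fully faithful comparison maps.

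For (1), degeneracies and inner faces keep $\Phi$ and $T$ unchanged. For a degeneracy, the new zero component contributes a trivially invertible diagonal entry. For the merge $d_{n,i}$, the new diagonal component is $T_{(i,i+1)}:=I_{i,i+1}^RTI_{i,i+1}$ on $\langle\Phi_i,\Phi_{i+1}\rangle$. By weak conjugate-compatibility it is lower triangular with respect to $\langle\Phi_i,\Phi_{i+1}\rangle$, with diagonal entries $T_{ii},T_{i+1,i+1}$. An endofunctor of a two-term SOD that is triangular with invertible diagonal parts is invertible: one can filter by the SOD and use the cone/semiorthogonal triangles for $\Phi_{i+1}\to\langle\Phi_i,\Phi_{i+1}\rangle\to\Phi_i$ to build the inverse, or equivalently note that it induces equivalences on the graded pieces of a finite filtration and apply the five lemma objectwise.

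The outer faces $d_{n,0},d_{n,n}$ replace $T$ by $I_{2..n}^RTI_{2..n}$ (respectively $I_{1..n-1}^RTI_{1..n-1}$). These are again triangular with respect to the induced SOD with invertible diagonal, so they are invertible by the same lemma. I would state this as a single lemma: for $\Psi=\langle\Psi_1,\dots,\Psi_r\rangle$, an endofunctor weakly conjugate-compatible with invertible diagonal entries is an autoequivalence. The proof is by induction on $r$ using Lemma~\ref{lem:twosubcats} to reduce to two terms. This lemma is the main obstacle, since one has to produce the inverse functor coherently rather than merely on objects. I would first try showing that the functor is fully faithful and essentially surjective. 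Essential surjectivity follows because the image contains each $\Phi_i$ up to the invertible diagonal twists and is closed under extensions. Full faithfulness I would check by computing Hom out of the triangular decomposition and using the five lemma.

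For (2), the diagonal entries of $T$ on $\Phi$ are the diagonal entries of the two pieces: indices inside the merged block are seen by the restricted object in $\sfEnd(n+1-m)$, and indices outside are seen by the merged object in $\sfEnd(m)$. So all $T_{ii}$ are invertible. Invertibility of $T$ itself is given by the $\sfEnd(m)$ component, since the active map $a'$ preserves $(\Phi,T)$. Hence the object lies in $\sfAut(n)$, which completes the argument.
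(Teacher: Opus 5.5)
Your treatment of the degeneracies and your part (2) are fine, and they match the paper's argument. The gap is in part (1). You rely on a lemma: an endofunctor of $\langle\Psi_1,\dots,\Psi_r\rangle$ that is weakly conjugate-compatible and has invertible diagonal entries is an autoequivalence. This lemma is false already for $r=2$.

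Take $\Phi=\Fun(\Delta^1,\Mod_k)$ with objects $(a\xto{f}b)$, and set $\Phi_1=\{(a\to 0)\}$, $\Phi_2=\{(0\to b)\}$. This is a right-admissible SOD with $I_1^R(a\xto{f}b)=\coCone(f)$ and $I_2^R(a\xto{f}b)=b$. Put
\[T(a\xto{f}b):=\bigl(\Cone(f)\oplus a\xto{\opname{pr}_1}\Cone(f)\bigr).\]
Then $T(0\to b)=(b\xto{\id}b)$, so $I_1^RTI_2=0$. Also $I_1^RTI_1\simeq\id$ and $I_2^RTI_2\simeq\id$. However, $T$ kills the nonzero morphism $(0\to k)\to(k\xto{\id}k)$. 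Indeed, $T(k\xto{\id}k)=(k\to 0)$ has zero target, so a morphism into it is determined by its source component. Here that component is the sum of the induced maps $\Cone(0\to k)=k\to\Cone(\id_k)=0$ and $0\to k$, hence zero. So $T$ is not faithful.

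Conceptually, weak conjugate-compatibility is not triangularity with respect to the filtration. It says $T(\Phi_2)\subseteq\ker I_1^R$, the mutated position, not $T(\Phi_2)\subseteq\Phi_2$. So neither your claim that ``the image contains each $\Phi_i$ up to twist'' nor a filtered five-lemma is available. The conditions $I_1^RTI_2=0$, $T_{11}$ and $T_{22}$ only see the objects $T(0\to k)$ and $T(k\xto{\id}k)$. They do not see what $T$ does to the morphism between these objects, and that is what decides invertibility. Your argument only works for orthogonal decompositions.

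The missing ingredient is that the ambient $T$ is an autoequivalence of $\Phi$, which you never use in part (1). The paper's Lemma~\ref{lem:invertibletwo} uses exactly this. For $\Phi=\langle\Phi_1,\Phi_2\rangle$ and an \emph{autoequivalence} $T$ with $I_1^RTI_2=0$, $T_1$ is invertible if and only if $T_2$ is. An inverse of $T_2$ is $I_2^RT^{-1}\Cone(I_1I_1^R\to\id)I_2$, which is built from $T^{-1}$ and checked via Lemma~\ref{lem:technical}. Write $T_{j,\dots,n}:=I_{j,\dots,n}^RTI_{j,\dots,n}$.
\begin{itemize}
\item Applying the lemma to $\langle\Phi_1,\Phi_{2,\dots,n}\rangle$ handles $d_{n,0}$, and the symmetric argument handles $d_{n,n}$.
\item Iterating inside $\Phi_{2,\dots,n}$ shows $T_{j,\dots,n}$ is invertible for all $j$.
\item One more application, to $\Phi_{i,\dots,n}=\langle\Phi_{i,i+1},\Phi_{i+2,\dots,n}\rangle$, gives invertibility of $I_{i,i+1}^RTI_{i,i+1}$, which is what $d_{n,i}$ requires.
\end{itemize}
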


We begin with two technical lemmas.
\begin{lem}\label{lem:technical}
    Let $\Phi$ be a category, and $\frakS=(\Phi_1,\Phi_2)$ be a right-admissible SOD. Let $T:\Phi\to \Phi$ be an autoequivalence such that $I_1^RTI_2=0$. Write $T_i := I_i^RTI_i$. If $T_1$ is invertible, then there is a natural identification,
    \[\Cone(I_2I_2^{R}\to \id)\simeq I_1T_1^{-1}I_1^RT.\]
    If $T_2$ is invertible, then there is a natural identification,
    \[\Cone(I_1I_1^{R}\to \id)\simeq TI_2T_2^{-1}I_2^R.\]
\end{lem}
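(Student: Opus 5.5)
The plan is to identify each cone as a projection onto an orthogonal complement, and then recognize that projection through $T$, using the hypothesis $I_1^RTI_2=0$ together with invertibility of $T_1$ or $T_2$. Throughout, for a subcategory $\Psi\subseteq\Phi$ write $\Psi^\perp$ for its right orthogonal, the objects $y$ with $\Hom(\Psi,y)=0$. Two facts about the right-admissible SOD $\Phi=\langle\Phi_1,\Phi_2\rangle$ are used repeatedly.

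\textbf{(a)} $\Phi_2^\perp=\Phi_1$. Semiorthogonality gives $\Phi_1\subseteq\Phi_2^\perp$. Conversely, if $y\in\Phi_2^\perp$ then $I_2^Ry=0$, and $I_1^R$ applied to $\Cone(I_1I_1^Ry\to y)$ also vanishes, so by conservativity (condition (\ref{item:cons}) of Definition~\ref{defn:sod}) we get $y\simeq I_1I_1^Ry$.

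\textbf{(b)} $I_2^R$ restricted to $\Phi_1^\perp$ is fully faithful. For $y,y'\in\Phi_1^\perp$, the cone of the counit $I_2I_2^Ry\to y$ lies in $\Phi_2^\perp=\Phi_1$, which is orthogonal to $y'$. Hence $\Hom(y,y')\simeq\Hom(I_2I_2^Ry,y')\simeq\Hom(I_2^Ry,I_2^Ry')$.

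\emph{First identification.} Set $Q:=\Cone(I_2I_2^R\to\id)$. For each $x$, the object $Qx$ lies in $\Phi_2^\perp=\Phi_1$ by (a), so $Q\simeq I_1L$ for the functor $L:=I_1^RQ$. Apply $I_1^RT$ to the triangle $I_2I_2^Rx\to x\to Qx$. The first term is killed because $I_1^RTI_2=0$. This gives natural equivalences
\[I_1^RT\simeq I_1^RTQ\simeq I_1^RTI_1L=T_1L.\]
When $T_1$ is invertible we conclude $L\simeq T_1^{-1}I_1^RT$, and therefore $Q\simeq I_1T_1^{-1}I_1^RT$. Every step is a natural transformation built from counits, so the identification is natural.

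\emph{Second identification.} Set $P:=\Cone(I_1I_1^R\to\id)$.
\begin{enumerate}
\item $P$ takes values in $\Phi_1^\perp$, because $I_1$ is fully faithful.
\item $I_2^RP\simeq I_2^R$, since $I_2^RI_1=0$.
\item The hypothesis $I_1^RTI_2=0$ says exactly that $TI_2$ lands in $\Phi_1^\perp$.
\item Now $I_2^R|_{\Phi_1^\perp}\circ TI_2=T_2$ is an equivalence and $I_2^R|_{\Phi_1^\perp}$ is fully faithful by (b). Hence $I_2^R|_{\Phi_1^\perp}$ is essentially surjective, so both it and $TI_2\colon\Phi_2\to\Phi_1^\perp$ are equivalences.
\item Write $P\simeq TI_2M$ with $M:=(TI_2)^{-1}P$. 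Applying $I_2^R$ gives $I_2^R\simeq I_2^RP\simeq T_2M$, so $M\simeq T_2^{-1}I_2^R$.
\end{enumerate}
This yields $P\simeq TI_2T_2^{-1}I_2^R$, as stated.

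The main point requiring care is step 4: the factorization of $P$ through $TI_2$ needs $TI_2$ to be an equivalence onto $\Phi_1^\perp$, and that is exactly where fact (b) and the invertibility of $T_2$ enter. Everything else is formal manipulation of exact triangles and orthogonality.
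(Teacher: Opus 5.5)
Your proof is correct. The first identification is essentially the paper's argument: the paper derives it from the mirrored diagram, and checking the relevant square after applying $I_1^R$ amounts to your computation $I_1^RT\simeq I_1^RTQ\simeq T_1I_1^RQ$.

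For the second identification you take a different route.

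\emph{The paper's route.} It writes down an explicit comparison map $I_2I_2^R\to TI_2T_2^{-1}I_2^R$, coming from $\id\simeq I_2^RTI_2T_2^{-1}$. It then postcomposes the span $\id\leftarrow I_2I_2^R\to TI_2T_2^{-1}I_2^R$ with $I_1I_1^R\to\id$. Both resulting squares are Cartesian: the left one by Lemma~\ref{lem:SODcharacterizations}, the right one by joint conservativity of $I_1^R,I_2^R$. The identification is read off from the cones of the vertical maps.

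\emph{Your route.} You first prove the structural statement that $TI_2\colon\Phi_2\to\Phi_1^\perp$ is an equivalence, using full faithfulness of $I_2^R$ on $\Phi_1^\perp$. You then factor $P$ through this equivalence.

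\emph{What each buys.} The paper's version treats both statements uniformly and produces the identification as an explicit map of functors, so naturality is immediate. Yours goes through an abstract inverse $(TI_2)^{-1}$, so the comparison map is less explicit. In exchange it yields the extra fact $T(\Phi_2)=\Phi_1^\perp$. Using that $T$ is an autoequivalence, this fact alone shows that $(T(\Phi_2),\Phi_1)$ is a right-admissible SOD. That is exactly the mutation the paper later obtains in Lemma~\ref{lem:nistwo} by invoking this lemma.

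One small point: in (a), conservativity needs $I_2^R$ of $\Cone(I_1I_1^Ry\to y)$ to vanish as well. This follows from $I_2^RI_1=0$ together with $I_2^Ry=0$, and your ``also'' glosses over it.
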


\begin{proof}
    Let us prove the second statement. Consider the diagram of functors
    \[\id\from I_2I_2^R\to TI_2T_2^{-1}I_2^R,\]
    where the first arrow is the counit and the second comes from $\id \simeq I_2^RTI_2(T_2^{-1})$. We postcompose the diagram by $I_1I_1^R\to \id$ to obtain the following commuting diagram.
    \[\begin{tikzcd}[ampersand replacement=\&]
	{I_1I_1^R} \& {I_1I_1^RI_2I_2^R} \& 0 \\
        \id \& {I_2I_2^R} \& {TI_2T_2^{-1}I_2^R}
        \arrow[from=1-1, to=2-1]
        \arrow[from=1-2, to=1-1]
        \arrow[from=1-2, to=1-3]
        \arrow[from=1-2, to=2-2]
        \arrow[from=1-3, to=2-3]
        \arrow[from=2-2, to=2-1]
        \arrow[from=2-2, to=2-3]
    \end{tikzcd}\]
    The top right is zero by $I_1^RTI_2=0$. The left square is Cartesian by Lemma~\ref{lem:SODcharacterizations}. The right square is also Cartesian because if we apply either $I_1^R$ or $I_2^R$, it becomes Cartesian. It follows that the cones of the vertical maps all agree, giving the desired identification.

    The first statement is similar beginning with
    \[\id\from I_1I_1^R\to I_1T_1^{-1}I_1^RT,\]
    and precomposing with $I_2I_2^R\to \id$.
\end{proof}

\comment{
    \begin{proof}
        Let us prove the second statement. Consider the functor
        \[H := \coCone(I_2T_2\to TI_2),\]
        where the map shown comes is the counit map for $I_2\adjto I_2^R$. It follows that $I_2^RH = 0$, and therefore $H$ factors through $\Phi_1$. So we have an identification,
        \[H \isomfrom I_1I_1^R\coCone(I_2T_2\to TI_2) \simeq I_1I_1^RI_2T_2,\]
        where the second identification uses that $I_1^RTI_2=0$. We thus have an exact triangle,
        \[I_1I_1^RI_2T_2\to I_2T_2 \to TI_2\xto{+1}.\]
        Moving things around, we obtain the exact triangle,
        \begin{equation}\label{equation:1}
            I_1I_1^RI_2I_2^R\to I_2I_2^R \to TI_2T_2^{-1}I_2^R\xto{+1}.
        \end{equation}
        Now, by Lemma~\ref{lem:SODcharacterizations} applied to $\frakS$, we have that
        \[\Cone(I_1I_1^{R}\to 1)\Cone(I_2I_2^{R}\to 1)=0,\]
        so that
        \begin{equation}\label{equation:2}
            \Cone(I_1I_1^{R}\to 1)\simeq \Cone(I_1I_1^{R}\to 1)I_2I_2^{R}.
        \end{equation}
        Putting (\ref{equation:1}) and (\ref{equation:2}) together, we have
        \[\Cone(I_1I_1^{R}\to 1)\simeq TI_2T_2^{-1}I_2^R.\]

        The first statement is similar. We begin by considering the functor
        \[K = \coCone(T_1I_1^R\to I_1^RT).\]
        Now, $KI_1=0$ so that $K$ vanishes on $\Phi_1$. Moreover, the image of $\Cone(I_2I_2^R\to 1)$ lies in $\Phi_1$, so we have an identification
        \[K\simeq KI_2I_2^R,\]
        and subsequently an exact triangle,
        \[T_1I_1^RI_2I_2^R\to T_1I_1^R\to I_1^RT\xto{+1}.\]
        By moving things around, we deduce that
        \[I_1I_1^R\Cone(I_2I_2^{R}\to 1)\simeq I_1T_1^{-1}I_1^RT,\]
        hence by Lemma~\ref{lem:SODcharacterizations} applied to $\frakS$,
        \[\Cone(I_2I_2^{R}\to 1)\simeq I_1T_1^{-1}I_1^RT.\]
    \end{proof}
}

\begin{lem}\label{lem:invertibletwo}
    Let $\Phi$ be a category, and $\frakS=(\Phi_1,\Phi_2)$ be a right-admissible SOD. Let $T:\Phi\to \Phi$ be an autoequivalence such that $I_1^RTI_2=0$. Write $T_i := I_i^RTI_i$. Then, $T_1$ is an autoequivalence if and only if $T_2$ is an autoequivalence.
\end{lem}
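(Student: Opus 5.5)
The plan is to prove the implication ``$T_2$ invertible $\Rightarrow$ $T_1$ invertible'' directly from the second identification of Lemma~\ref{lem:technical}, and then obtain the converse by the mirror-image argument using the first identification. In each direction I will show two things: the functor has a one-sided inverse, which gives essential surjectivity, and it is fully faithful. Together these make it an equivalence.

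\emph{Step 1 (right inverse of $T_1$).} Assume $T_2$ is invertible. Lemma~\ref{lem:technical} then gives a cofiber sequence of functors
\[I_1I_1^R\to \id\to TI_2T_2^{-1}I_2^R.\]
Precompose with $T^{-1}I_1$ and postcompose with $I_1^RT$. The right-hand term becomes $I_1^RTI_2(\cdots)$, which vanishes by the hypothesis $I_1^RTI_2=0$. This leaves
\[T_1\circ (I_1^RT^{-1}I_1)\;\simeq\; I_1^RTI_1I_1^RT^{-1}I_1\;\isomto\; I_1^RI_1\simeq \id_{\Phi_1}.\]
So $I_1^RT^{-1}I_1$ is a right inverse of $T_1$, and in particular $T_1$ is essentially surjective.

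\emph{Step 2 (full faithfulness of $T_1$).} Evaluate the same cofiber sequence on $TI_1a$ for $a\in\Phi_1$. This gives a cofiber sequence
\[I_1T_1a\to TI_1a\to TI_2c, \qquad c:=T_2^{-1}I_2^RTI_1a.\]
Apply $\Hom(-,TI_1b)$ for $b\in\Phi_1$. The term $\Hom(TI_2c,TI_1b)\simeq\Hom(I_2c,I_1b)$ vanishes by semiorthogonality, and so does its shift. Hence
\[\Hom(a,b)\simeq\Hom(TI_1a,TI_1b)\isomto \Hom(I_1T_1a,TI_1b)\simeq \Hom(T_1a,T_1b).\]
The last step uses the adjunction $I_1\adjto I_1^R$. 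I also need to check that this composite is the map induced by $T_1$, which should follow from naturality of the counit. So $T_1$ is fully faithful, and with Step 1 it is an autoequivalence.

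\emph{Step 3 (converse).} Assume $T_1$ is invertible and use the first identification $\Cone(I_2I_2^R\to\id)\simeq I_1T_1^{-1}I_1^RT$. This yields a fiber sequence
\[I_2I_2^R\to \id\to I_1T_1^{-1}I_1^RT.\]
For full faithfulness of $T_2$, evaluate this sequence on $I_2a$, written in the form $T^{-1}(TI_2a)$, and use $\Hom(I_2(-),I_1(-))=0$ to compare $\Hom(T_2a,T_2b)$ with $\Hom(TI_2a,TI_2b)$. Essential surjectivity should come from the candidate inverse $I_2^RT^{-1}I_2$.

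The main obstacle is this converse direction. The roles of left and right adjoints are not symmetric: semiorthogonality kills $I_2^RI_1$ but not $I_1^RI_2$. So the naive mirror of Step 1 leaves a residual term of the form $I_2^RTI_1T_1^{-1}I_1^RI_2$ that does not obviously vanish. If that happens, I would first try to run the argument for $T^{-1}$ in place of $T$. The hypothesis $I_1^RTI_2=0$ says $T(\Phi_2)\subseteq \Phi_1^{\perp}$, so $T^{-1}$ should satisfy the analogous vanishing with respect to a mutated SOD $\langle \Phi_1^{\perp}\!,\Phi_1\rangle$-type decomposition. I would then apply the direction already proved in Steps~1--2 to that decomposition.
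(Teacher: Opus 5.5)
Step~1 fails as written. Precomposing the cofiber term $TI_2T_2^{-1}I_2^R$ with $T^{-1}I_1$ and postcomposing with $I_1^RT$ gives $I_1^RT^2I_2T_2^{-1}I_2^RT^{-1}I_1$, not $I_1^RTI_2(\cdots)$. The hypothesis $I_1^RTI_2=0$ controls only one factor of $T$, so this term need not vanish.

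The claimed right inverse is in fact wrong. Take $\Phi=\QCoh(\bbP^1)$, let $\Phi_1,\Phi_2$ be the subcategories generated by $\mathcal{O}(-1)$ and $\mathcal{O}$, and let $T=-\otimes\mathcal{O}(-2)[1]$.
\begin{itemize}
\item Semiorthogonality and $I_1^RTI_2=0$ both hold because $R\Gamma(\mathcal{O}(-1))=0$.
\item $T_1\simeq\id$ and $T_2\simeq\id$ because $R\Gamma(\mathcal{O}(-2))\simeq k[-1]$.
\item Under $\Phi_i\simeq\Mod_k$, both $I_1^RT^{-1}I_1$ and $I_2^RT^{-1}I_2$ are $-\otimes R\Gamma(\mathcal{O}(2))[-1]\simeq -\otimes k^3[-1]$.
\end{itemize}
So neither your Step~1 right inverse nor your Step~3 candidate is an inverse. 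The fallback via $T^{-1}$ and a mutated SOD is only a sketch, so essential surjectivity is unproved in both directions. Step~2 is correct. The full-faithfulness half of Step~3 also works: evaluate the sequence on $TI_2a$ and use $\Hom(\Phi_1,TI_2b)=0$.

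The missing idea is that each one-sided inverse needs a projection cone inserted.
\begin{itemize}
\item For $T_1$: apply $I_2I_2^R\to\id\to\Cone(I_2I_2^R\to\id)$ to $T^{-1}I_1a$, then apply $I_1^RT$. The first term vanishes by hypothesis, and the cone lies in $\Phi_1$ by Lemma~\ref{lem:SODcharacterizations}. Hence $a\simeq T_1V_1a$ with $V_1:=I_1^R\Cone(I_2I_2^R\to\id)T^{-1}I_1$. So $T_1$ is essentially surjective with no assumption, and your Step~2 finishes that direction.
\item For $T_2$: your Hom argument shows $T_2$ is always fully faithful. For essential surjectivity, given $b\in\Phi_2$ put $y:=\Cone(I_1I_1^RI_2b\to I_2b)$, so $I_1^Ry=0$ and $I_2^Ry\simeq b$. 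If $T_1$ is invertible, the first identification of Lemma~\ref{lem:technical} gives $\Cone(I_2I_2^R\to\id)T^{-1}y\simeq I_1T_1^{-1}I_1^Ry=0$. Thus $T^{-1}y\simeq I_2c$ and $T_2c\simeq I_2^Ry\simeq b$.
\end{itemize}
These are exactly the paper's inverses $V_1$ and $V_2=I_2^RT^{-1}\Cone(I_1I_1^R\to\id)I_2$. The paper verifies $V_iT_i\simeq\id\simeq T_iV_i$ directly. Your repaired argument instead splits each direction into a half that holds unconditionally and a half that uses invertibility of the other $T_j$.
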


\begin{proof}
    Suppose that $T_1$ is an autoequivalence. We claim that
    \[V_2 := I_2^RT^{-1}\Cone(I_1I_1^R\to \id)I_2\]
    is an inverse to $T_2$. This is a formal but tedious verification. We have
    \begin{align*}
        V_2T_2 &= I_2^RT^{-1}\Cone(I_1I_1^R\to \id)I_2\circ I_2^RTI_2\\
        &\simeq I_2^RT^{-1}\Cone(I_1I_1^R\to \id)TI_2\\
        &\simeq I_2^RT^{-1}TI_2\\
        &\simeq \id_{\Phi_2}.
    \end{align*}
    
    To evaluate $T_2V_2$, see that
    \begin{align*}
        T_2V_2 = I_2^RTI_2\circ I_2^RT^{-1}\Cone(I_1I_1^R\to \id)I_2
    \end{align*}
    is the cone of a map between the following functors,
    \begin{align*}
        &I_2^RT \circ T^{-1}\Cone(I_1I_1^R\to \id)I_2,\\
        &I_2^RT \circ \Cone(I_2I_2^R\to \id)\circ T^{-1}\Cone(I_1I_1^R\to \id)I_2.
    \end{align*}
    The first of these is identified with $\id_{\Phi_2}$. The second is evaluated using Lemma~\ref{lem:technical} as follows.
    \begin{align*}
        &I_2^RT \circ\Cone(I_2I_2^R\to \id)\circ T^{-1}\Cone(I_1I_1^R\to \id)I_2\\
        \simeq&  I_2^RT \circ I_1T_1^{-1}I_1^RT\circ T^{-1}\Cone(I_1I_1^R\to \id)I_2\\
        \simeq& I_2^RT \circ I_1T_1^{-1}I_1^R \Cone(I_1I_1^R\to \id)I_2\\
        \simeq &0.
    \end{align*}
    So $T_2V_2\simeq \id_{\Phi_2}$.

    The proof in the other direction is similar, with inverse given by
    \[V_1 := I_1^R\Cone(I_2I_2^R\to \id)T^{-1}I_1.\]
\end{proof}

\begin{proof}[Proof of Proposition~\ref{prop:simplicialaut}]
    It is clear that the degeneracy maps and the inert face maps $d_{n,0},d_{n,n}$ preserve $\sfAut(-)$. We verify that the active face maps $d_{n,i}:\sfEnd(n)\to\sfEnd(n-1)$ for $0<i<n$ preserve $\sfAut(-)$. Recall these functors are given by
    \[(\Phi_1,\dots,\Phi_n;\Phi,T)\mapsto (\Phi_1,\dots\Phi_{i,i+1}\dots,\Phi_n;\Phi,T),\]
    where $I_{i,i+1}:\Phi_{i,i+1} \inclto \Phi$ is subcategory generated by $\Phi_i,\Phi_{i+1}$. It suffices to verify that $I_{i,i+1}^RTI_{i,i+1}$ is invertible. This follows by repeatedly applying Lemma~\ref{lem:invertibletwo}. We thus deduce that $\sfAut(-)$ is a sub-simplicial $\two$-category.

    We now verify the 2-Segal condition. Let $(\Phi_1,\dots,\Phi_n;\Phi,T)\in \sfEnd(n)$. Given $0<i<j<n$, suppose that
    \[(\Phi_i,\dots,\Phi_j;\Phi,T_{i,\dots,j}), (\Phi_1,\dots,\Phi_{i-1},\Phi_{i,\dots,j},\Phi_{j+1},\dots,\Phi_n;\Phi,T)\]
    individually lie in $\sfAut(-)$. Here, $\Phi_{i,\dots,j} := \langle\Phi_i,\dots,\Phi_j\rangle$, $I_{i,\dots,j}$ is its inclusion as a subcategory inside $\Phi$ and $T_{i,\dots,j} = I_{i,\dots,j}^RTI_{i,\dots,j}$. It will suffice to prove that $(\Phi_1,\dots,\Phi_n;\Phi,T)$ lies in $\sfAut(n)$.

    We must verify that for all $1\leq i' \leq n$, $I_{i'}^RTI_{i'}$ is invertible. If $i'$ lies outside $\{i,\dots,j\}$ then this is clear. Otherwise, $I_{i'}$ factors as $\Phi_{i'}\xto{I_{i'}^{i,\dots,j}} \Phi_{i,\dots,j}\xto{I_{i,\dots,j}} \Phi$, so
    \[I_{i'}^RTI_{i'} = (I_{i'}^{i,\dots,j})^RT_{i,\dots,j}I_{i'}^{i,\dots,j}\]
    is invertible. This completes the proof.
\end{proof}

\subsection{Braid group action on abstract localized schobers}\label{subsection:admissible}
We establish that abstract localized perverse schobers enjoy a braid group action by mutation of its SOD. We then prove Theorem~\ref{thm:alternate} which gives an alternate characterization of abstract localized schobers.

\begin{thm}\label{thm:inftyadmissible}
    Let $(\Phi,\frakS,T)\in\sfAut(n)$ be an abstract localized schober. Then, $\frakS$ is an $\infty$-admissible SOD. Moreover, for any $\beta\in\Br_n$, $(\Phi,\beta\cdot\frakS,T)$ is an abstract localized schober.
\end{thm}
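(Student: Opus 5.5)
We reduce to the case of a single generator. Since the braid group is generated by $\sigma_j^{\pm1}$, it suffices to show the following: for $(\Phi,\frakS,T)\in\sfAut(n)$ and $1\leq j<n$, the mutations $\sigma_j^{\pm1}\cdot\frakS$ exist, and $T$ is conjugate-compatible with them. Induction on word length then gives both $\infty$-admissibility and stability of $\sfAut(n)$ under $\Br_n$. The 2-Segal structure of Proposition~\ref{prop:simplicialaut} localizes the question. Merging via the active face maps, the triple $(\Phi_1,\dots,\Phi_{j-1},\Phi_{j,j+1},\Phi_{j+2},\dots,\Phi_n;\Phi,T)$ lies in $\sfAut(n-1)$. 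Composing inert face maps with the inclusion of $\Phi_{j,j+1}$, the triple $(\Phi_j,\Phi_{j+1};\Phi_{j,j+1},T_{j,j+1})$ lies in $\sfAut(2)$, where $T_{j,j+1}=I_{j,j+1}^RTI_{j,j+1}$; this is an autoequivalence by the proof of Proposition~\ref{prop:simplicialaut}. A mutation at $j$ only replaces one of the two terms inside $\Phi_{j,j+1}$ and leaves $\Phi_{j,j+1}$ unchanged. Hence, by the 2-Segal pullback, it suffices to treat the case $n=2$.

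So let $\Phi=\langle\Phi_1,\Phi_2\rangle$, let $T$ be an autoequivalence with $I_1^RTI_2=0$, and let $T_1,T_2$ be invertible. For $\sigma_1$ we need a right-admissible $\Phi_3$ with $\Phi=\langle\Phi_2,\Phi_3\rangle$. We take the candidate $\Phi_3:=T(\Phi_1)$, motivated by Theorem~\ref{thm:alternate}.
\begin{itemize}
\item $\Phi_3$ is right-admissible with inclusion $TI_1$ and right adjoint $I_1^RT^{-1}$.
\item Semi-orthogonality $\Hom(\phi_3,\phi_2)=0$ amounts to $I_2^R T I_1$ being the zero functor after swapping roles. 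It should follow from Lemma~\ref{lem:technical}: the identification $\Cone(I_1I_1^R\to\id)\simeq TI_2T_2^{-1}I_2^R$ shows that $T(\Phi_2)$ is the right orthogonal complement ${}^\perp$-type companion of $\Phi_1$. Applying $T^{-1}$, and using that $\Phi_2$ is the right orthogonal of $\Phi_1$, we should get that $\Phi_2$ and $T\Phi_1$ stand in the required orthogonality.
\item Generation: the same cone formula exhibits every object as an extension of something in $T\Phi_2$ by something in $\Phi_1$. Applying $T$ and using the dual formula $\Cone(I_2I_2^R\to\id)\simeq I_1T_1^{-1}I_1^RT$ gives the decomposition relative to $(\Phi_2,T\Phi_1)$.
\end{itemize}
Conservativity is then checked via Lemma~\ref{lem:SODcharacterizations}. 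The mutation $\sigma_1^{-1}$ is handled symmetrically with $\Phi_0:=T^{-1}(\Phi_2)$, placed as $\langle\Phi_0,\Phi_1\rangle$.

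Conjugate-compatibility of $T$ with $\langle\Phi_2,T\Phi_1\rangle$ is next. The diagonal terms are $I_2^RTI_2=T_2$ and $(I_1^RT^{-1})T(TI_1)=T_1$, both invertible. For the off-diagonal term we need $I_2^RT(TI_1)=0$. This should again follow from the cone formulas: $T\Phi_1$ is the left orthogonal of $\Phi_2$, so it suffices that $T^2\Phi_1\subset{}^{\perp}\Phi_2$ in the appropriate sense. We would derive this by applying $T$ to the orthogonality established above, together with $I_1^RTI_2=0$. Alternatively, Lemma~\ref{lem:invertibletwo} lets us verify only one diagonal invertibility and deduce the other.

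The main obstacle is the $n=2$ orthogonality and generation bookkeeping: deciding precisely which of $T\Phi_1$ or $T^{-1}\Phi_1$ appears, and extracting the needed vanishing from Lemma~\ref{lem:technical} without circularity. I would first test the signs on the decategorified quiver of Proposition~\ref{prop:decategorification1} with $n=2$. The remaining steps, namely the reduction via 2-Segal and the induction over words in $\Br_n$ using Lemma~\ref{lem:relations}, are formal.
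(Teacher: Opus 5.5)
\textbf{There is a genuine gap in the $n=2$ step: the powers of $T$ in your candidate mutations are reversed, and the vanishing you need is false.} Your outer structure is the paper's: induction on word length in $\Br_n$, and reduction to $n=2$ through the 2-Segal pullback $\sfAut(n)\simeq\sfAut(n-1)\times_{\sfAut(1)}\sfAut(2)$.

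Under the paper's conventions, $\langle\Phi_1,\Phi_2\rangle$ means $\Hom(\Phi_2,\Phi_1)=0$, and the hypothesis $I_1^RTI_2=0$ means $\Hom(\Phi_1,T\Phi_2)=0$. Semi-orthogonality of $\langle\Phi_2,T\Phi_1\rangle$ requires $\Hom(T\Phi_1,\Phi_2)=0$. That is $I_1^RT^{-1}I_2=0$, not ``$I_2^RTI_1=0$ after swapping roles''. Semi-orthogonality of your $\langle T^{-1}\Phi_2,\Phi_1\rangle$ is the same condition. It does not follow from the hypotheses.

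Here is a counterexample. Take $k$ a field, $\Phi=\QCoh(\bbP^1)$, $\Phi_1=\langle\mathcal{O}\rangle$, $\Phi_2=\langle\mathcal{O}(1)\rangle$ and $T=-\otimes\mathcal{O}(-2)[1]$.
\begin{itemize}
\item $I_1^RTI_2=0$, because $R\Gamma(\mathcal{O}(-1))=0$.
\item $T_1$ and $T_2$ are invertible, because both are computed by $R\Gamma(\mathcal{O}(-2))[1]\simeq k$.
\end{itemize}
So this is an object of $\sfAut(2)$. However, $T\Phi_1=\langle\mathcal{O}(-2)\rangle$ and $\Hom^*(\mathcal{O}(-2),\mathcal{O}(1))=R\Gamma(\mathcal{O}(3))\neq0$. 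Similarly, $T^{-1}\Phi_2=\langle\mathcal{O}(3)\rangle$ receives nonzero maps from $\mathcal{O}$. Your off-diagonal condition $I_2^RT^2I_1=0$ also fails, since $R\Gamma(\mathcal{O}(-5))\neq0$. No amount of bookkeeping with Lemma~\ref{lem:technical} can close these steps.

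Theorem~\ref{thm:alternate}, which you cite as motivation, actually points the other way. For $n=2$ it says $T(\frakS)=\sigma_1^{-2}\cdot\frakS$, and this forces $\sigma_1\cdot\frakS=(\Phi_2,T^{-1}\Phi_1)$.

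The correct mutations are
\[\sigma_1^{-1}\cdot\frakS=(T\Phi_2,\Phi_1),\qquad \sigma_1\cdot\frakS=(\Phi_2,T^{-1}\Phi_1).\]
In the example these are $\langle\mathcal{O}(-1),\mathcal{O}\rangle$ and $\langle\mathcal{O}(1),\mathcal{O}(2)\rangle$.
\begin{itemize}
\item \emph{Semi-orthogonality.} In both cases the required vanishing $I_2'^RI_1'=0$ is literally the hypothesis $I_1^RTI_2=0$.
\item \emph{Completeness.} This follows from Lemma~\ref{lem:SODcharacterizations}, because Lemma~\ref{lem:technical} identifies $\Cone(I_1I_1^R\to\id)\simeq TI_2T_2^{-1}I_2^R$, whose image lies in $T\Phi_2$. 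Your own ``generation'' bullet already produces exactly $\langle T\Phi_2,\Phi_1\rangle$, which contradicts the SOD you then wrote down for $\sigma_1^{-1}$. Applying the autoequivalence $T^{-1}$ to it (not $T$) gives $\langle\Phi_2,T^{-1}\Phi_1\rangle$.
\item \emph{Conjugate-compatibility.} This is then immediate. The off-diagonal terms are $(I_2^RT^{-1})\,T\,I_1=I_2^RI_1=0$ and $I_2^R\,T\,(T^{-1}I_1)=I_2^RI_1=0$. The diagonal terms are $T_1$ and $T_2$.
\end{itemize}
This is precisely the paper's proof of Lemma~\ref{lem:nistwo}.
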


\begin{cor}\label{cor:action}
    There is a natural action of the braid group $\Br_n$ on $\sfAut(n)$. Moreover, $\FS_{\sfAut}:\sfAut(n)\to\sfAut$ is naturally $\Br_n$-equivariant where $\sfAut$ is given the trivial action.
\end{cor}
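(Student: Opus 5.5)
The plan is to deduce the corollary from Theorem~\ref{thm:inftyadmissible} together with the 2-faithfulness machinery of Proposition~\ref{prop:functorconstruction}.

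\textbf{Step 1: each braid acts by a functor over $\sfAut$.} We work over $\sfAut$ via $\FS_{\sfAut}:\sfAut(n)\to\sfAut$. First we check that this functor is 2-faithful. Indeed, $\sfAut(n)$ is a full sub-$\two$-category of $\sfAut\times_{\sfSt_k}\sfSOD(n)$. The functor $\sfSOD(n)\to\sfSt_k$ is 2-faithful: it is a full sub-$\two$-category of $\sfSOD'(n)$, and Lemma~\ref{lem:Uisfaithful} applies. Hence the base change to $\sfAut$ is 2-faithful by Lemma~\ref{lem:faithfulpullback}.

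By Proposition~\ref{prop:functorconstruction}, a functor $\sfAut(n)\to\sfAut(n)$ over $\sfAut$ is then determined by its effect on equivalence classes of objects, and exists uniquely if at all. For each generator $\sigma_j^{\pm1}$ we take the assignment
\[(\frakS;\Phi,T)\mapsto(\sigma_j^{\pm1}\cdot\frakS;\Phi,T).\]
This lands in $\sfAut(n)$ by Theorem~\ref{thm:inftyadmissible}. It must still be shown that it extends to a functor, i.e.\ that it is compatible with morphisms. A morphism $(\frakS;\Phi,T)\to(\frakS';\Phi',T')$ is a functor $G:\Phi\to\Phi'$ intertwining $T$ and $T'$ such that $G$ sends $\Phi_i$ into $\Phi'_i$ with adjointable squares. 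We need $G$ to also send the mutated components into the mutated ones, again adjointably.

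\textbf{Step 2: morphisms are compatible with mutation.} This is the step I expect to be the main obstacle. I would handle it as follows. A mutated component of $\sigma_j\cdot\frakS$ is obtained from $\Phi_j,\Phi_{j+1}$ by an explicit formula: the image under $\Cone(I_{j+1}I_{j+1}^R\to\id)$ or its dual, restricted to $\langle\Phi_j,\Phi_{j+1}\rangle$. Adjointability gives $GI_iI_i^R\simeq I'_iI_i'^RG$, so $G$ commutes with these cone functors, and hence sends mutated components into mutated components.

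Adjointability of the new squares should follow by reducing to two-term SODs via the 2-Segal structure of $\sfAut(-)$ (Proposition~\ref{prop:simplicialaut}). In the two-term case, Lemma~\ref{lem:technical} expresses the relevant projectors in terms of $T$, $T_i^{\pm1}$ and $I_i,I_i^R$, all of which commute with $G$. Since Proposition~\ref{prop:functorconstruction} shows this is a property rather than data, no coherences need to be tracked.

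\textbf{Step 3: the braid relations and equivariance.} By Proposition~\ref{prop:functorconstruction}, equality of two functors over $\sfAut$ can be checked on objects. The braid relations and $\sigma_j\sigma_j^{-1}=\id$ therefore hold on objects by Lemma~\ref{lem:relations}, so these functors assemble into a homomorphism from $\Br_n$ to the discrete monoid of endofunctors of $\sfAut(n)$ over $\sfAut$. This gives an action of $\Br_n$ on $\sfAut(n)$, with higher coherence automatic because the mapping space is discrete.

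Because every functor in the action lies over $\sfAut$, the functor $\FS_{\sfAut}$ is equivariant for the trivial action on $\sfAut$, with canonical (indeed unique) equivariance data.
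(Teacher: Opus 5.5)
Your proposal is the paper's argument: 2-faithfulness of $\FS_{\sfAut}$ via Lemma~\ref{lem:Uisfaithful}, then Proposition~\ref{prop:functorconstruction} to define each braid's functor on objects, which makes the braid relations (Lemma~\ref{lem:relations}) and the equivariance mere properties; your Step~2 supplies the morphism check that the paper only asserts. In that step, your cone-functor argument only works for $\sigma_j^{-1}$, where the new component is $\Cone(I_jI_j^R\to\id)(\Phi_{j+1})$ (the image of $\Cone(I_{j+1}I_{j+1}^R\to\id)$ on $\langle\Phi_j,\Phi_{j+1}\rangle$ is just $\Phi_j$), and right-adjointability does not control the left projector needed for $\sigma_j$ --- what actually handles both directions is your second paragraph: in the two-term case the mutations are $(T(\Phi_2),\Phi_1)$ and $(\Phi_2,T^{-1}(\Phi_1))$ (proof of Lemma~\ref{lem:nistwo}), with projectors $TI_2I_2^RT^{-1}$ and $T^{-1}I_1I_1^RT$, which commute with $G$ because $GT\simeq T'G$.
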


\begin{proof}
    It follows from Lemma~\ref{lem:Uisfaithful} that $\FS_{\sfAut}:\sfAut(n)\to\sfAut$ is 2-faithful. So functors $\sfAut(n)\to \sfAut(n)$ over $\sfAut$ may be defined at the level of objects by Proposition~\ref{prop:functorconstruction}. Using this, we can check that for each $\beta\in\Br_n$, the assignment of Theorem~\ref{thm:inftyadmissible} extends to a functor
    \[\mathop{A}(\beta): \sfAut(n)\to\sfAut(n),\]
    and so we obtain a functor of $\one$-categories,
    \begin{align*}
        \Br_n&\to \Hom_{(\wCat_{\two})_{/\sfAut}}(\sfAut(n),\sfAut(n))\\
        \beta&\mapsto \mathop{A}(\beta).
    \end{align*}
    
    To extend this to a group action, we must lift this to a monoidal functor. It follows by Proposition~\ref{prop:functorconstruction} that the target $\one$-category is in fact discrete, so it is a property for this functor to be monoidal. This amounts to checking that the functors $\mathop{A}(\beta_1)\circ \mathop{A}(\beta_2)$ and $\mathop{A}(\beta_1\beta_2)$ agree at the level of objects, which in turn follows from Lemma~\ref{lem:relations}.
\end{proof}

We begin by studying the smallest non-trivial case of Theorem~\ref{thm:inftyadmissible}.
\begin{lem}\label{lem:nistwo}
    Theorem~\ref{thm:inftyadmissible} holds for $n=2$.
\end{lem}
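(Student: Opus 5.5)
The plan is to write down the two mutations explicitly using $T$, check directly that they are again abstract localized schobers, and then iterate. Let $(\Phi,(\Phi_1,\Phi_2),T)\in\sfAut(2)$ and write $T_i:=I_i^RTI_i$, so that $I_1^RTI_2=0$ and $T_1,T_2$ are invertible.

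\textbf{Candidate mutations.} Guided by Theorem~\ref{thm:alternate}, with $\delta^2=\sigma_1^2$ in $\Br_2$, I propose
\[\sigma_1\cdot(\Phi_1,\Phi_2)=(\Phi_2,\,T^{-1}\Phi_1),\qquad \sigma_1^{-1}\cdot(\Phi_1,\Phi_2)=(T\Phi_2,\,\Phi_1).\]
The heuristic is that two steps of the correct $\sigma_1^{-1}$-mutation must yield $(T\Phi_1,T\Phi_2)$. This forces the first term of $\sigma_1^{-1}\cdot\frakS$ to be $T\Phi_2$, and symmetrically forces the second term of $\sigma_1\cdot\frakS$ to be $T^{-1}\Phi_1$.

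\textbf{Checking these are right-admissible SODs.} I treat $\sigma_1$; the case $\sigma_1^{-1}$ is symmetric after replacing $T$ by $T^{-1}$.
\begin{enumerate}
    \item Right-admissibility. $\Phi_2$ is right-admissible by assumption. $T^{-1}\Phi_1$ is the image of a right-admissible subcategory under an autoequivalence, so its inclusion has right adjoint $T^{-1}I_1I_1^RT$ restricted appropriately.
    \item Semiorthogonality. We need $\Hom(T^{-1}\phi_1,\phi_2)=0$. This is equivalent to $\Hom(\phi_1,T\phi_2)=0$, which is exactly $I_1^RTI_2=0$.
    \item Condition (\ref{item:product}) of Lemma~\ref{lem:SODcharacterizations}. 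Here I use Lemma~\ref{lem:technical}. It gives $\Cone(I_1I_1^R\to\id)\simeq TI_2T_2^{-1}I_2^R$, equivalently $\Cone(I_1I_1^R\to\id)\simeq \Cone(T^{-1}I_1I_1^RT\to \id)$ after conjugating. Composing this with $\Cone(I_2I_2^R\to\id)$ in the required order then gives zero, because $I_2^R\Cone(I_2I_2^R\to\id)=0$.
\end{enumerate}
Alternatively, conservativity of $I_2^R\oplus (T^{-1}I_1I_1^RT)$ can be checked directly: if $I_2^R\phi=0$ then $\phi\in\Phi_1$-generated part, and $I_1^RT\phi=0$ forces $\phi=0$ via the same lemma.

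\textbf{Checking conjugate-compatibility.} Next I verify that $T$ is conjugate-compatible with $(\Phi_2,T^{-1}\Phi_1)$.
\begin{enumerate}
    \item Off-diagonal vanishing. The $(1,2)$ entry is the projection onto $\Phi_2$ of $T(T^{-1}\Phi_1)=\Phi_1$. This vanishes by the original semiorthogonality $I_2^RI_1=0$.
    \item First diagonal entry. The $(1,1)$ entry is $T_2$, which is invertible.
    \item Second diagonal entry. Invertibility of the $(2,2)$ entry follows from Lemma~\ref{lem:invertibletwo}, applied to the new SOD: the $(1,1)$ entry is invertible and the off-diagonal entry vanishes.
\end{enumerate}
Hence $\sigma_1^{\pm1}\cdot\frakS$ exist and $(\Phi,\sigma_1^{\pm1}\cdot\frakS,T)\in\sfAut(2)$. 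Since the output again lies in $\sfAut(2)$, induction on word length in $\sigma_1^{\pm1}$ shows that all repeated mutations exist, so $\frakS$ is $\infty$-admissible. Every $\beta\cdot\frakS$ is again a localized schober.

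\textbf{Main obstacle.} I expect the main obstacle to be the SOD condition in the third item of the second step, namely generation by $\Phi_2$ and $T^{-1}\Phi_1$. This is where invertibility of $T_2$ genuinely enters, through Lemma~\ref{lem:technical}. One must be careful with the ordering of the cones relative to the convention of Lemma~\ref{lem:SODcharacterizations}.
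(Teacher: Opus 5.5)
Your proof is correct and follows the paper's argument: the same explicit mutations $(\Phi_2,T^{-1}\Phi_1)$ and $(T\Phi_2,\Phi_1)$, generation via Lemma~\ref{lem:technical} together with Lemma~\ref{lem:SODcharacterizations}, a direct check of conjugate-compatibility, and induction on word length (the paper writes out $\sigma_1^{-1}$ where you write out $\sigma_1$). A few repairs are needed in the write-up: in step~3 the ``equivalently'' identity is false as stated---conjugating by $T$ actually gives $\Cone(T^{-1}I_1I_1^RT\to\id)\simeq I_2T_2^{-1}I_2^RT$, and the composite vanishes because $\Cone(I_2I_2^R\to\id)\circ I_2\simeq 0$, not for the reason you cite---and the $(2,2)$ entry is simply $I_1^RT\circ T\circ T^{-1}I_1=T_1$, so Lemma~\ref{lem:invertibletwo} is unnecessary (similarly, the clean symmetry is $\sigma_1^{-1}\cdot\frakS=T(\sigma_1\cdot\frakS)$ rather than replacing $T$ by $T^{-1}$).
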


\begin{proof}
    Let $\sigma_1\in\Br_2$ be the positive generator. By induction on the length of the braid group element, it suffices to verify the following.
    \begin{enumerate}
        \item $\frakS$ admits a mutation by $\sigma_1$, and $T$ is conjugate-compatible with $\sigma_1\cdot\frakS$.
        \item $\frakS$ admits a mutation by $\sigma_1^{-1}$, and $T$ is conjugate-compatible with $\sigma_1^{-1}\cdot\frakS$.
    \end{enumerate}

    We will prove the second statement. We write $\frakS=(\Phi_1,\Phi_2)$ and $I_i:\Phi_i\inclto\Phi$ as usual. Write also,
    \[T_i := I_iTI_i^R,\]
    which we recall is an autoequivalence of $\Phi_i$. We show that the subcategories $(T(\Phi_2),\Phi_1)$ define a right-admissible SOD of $\Phi$, thereby exhibiting the mutation $\sigma_1^{-1}\cdot\frakS$. It is clear that the inclusion functors $I_1' := TI_2, I_2' := I_1$ admit right adjoints, and that we have semi-orthogonality,
    \[I_2'^{R}I_1' = I_1^RTI_2=0.\]
    By Lemma~\ref{lem:SODcharacterizations}, it suffices to show that
    \[\Cone(I_1'I_1'^{R}\to \id)\Cone(I_2'I_2'^{R}\to \id)=0.\]
    By Lemma~\ref{lem:technical},
    \[\Cone(I_2'I_2'^{R}\to \id)\simeq I_1'T_2^{-1}I_1'^RT^{-1}.\]
    We deduce therefore that
    \[\Cone(I_1'I_1'^{R}\to \id)\Cone(I_2'I_2'^{R}\to \id) = 0.\]

    Let us now verify that $T$ is conjugate-compatible with the mutation $\sigma_1\cdot\frakS$. We see that $I_1'^RTI_2' \simeq I_2^RI_1=0$, and also that $I_1'^RTI_1' \simeq T_2$, $I_2'^RTI_2' \simeq T_1$ are invertible, as required.

    The first statement is proven similarly, starting with the claim that the list of subcategories $(\Phi_2,T^{-1}(\Phi_1))$ defines the mutation by $\sigma_1$.
\end{proof}

In order to prove Theorem~\ref{thm:inftyadmissible} for general $n$, we will take advantage of the 2-Segal property of $\sfAut(-)$.

\begin{proof}[Proof of Theorem~\ref{thm:inftyadmissible}]
    We show that for each $i=1,\dots,n-1$ and sign $\epsilon\in\{1,-1\}$, $\frakS$ admits a mutation by $\sigma_i^{\epsilon}$, and $T$ is conjugate-compatible for the mutation $\sigma_i^{\epsilon}\cdot\frakS$.

    From the 2-Segal property of $\sfAut(-)$ which was proven in Proposition~\ref{prop:simplicialaut}, we have a pullback square as follows.
    \[\begin{tikzcd}[ampersand replacement=\&]
        {\sfAut(n)}\pullback \& {\sfAut(2)} \\
        {\sfAut(n-1)} \& {\sfAut(1)}
        \arrow[from=1-1, to=1-2]
        \arrow[from=1-1, to=2-1]
        \arrow[from=1-2, to=2-2]
        \arrow[from=2-1, to=2-2]
    \end{tikzcd}\]
    Here, the top horizontal arrow restricts to the 2-term SOD comprised of $\Phi_i,\Phi_{i+1}$, and the left vertical arrow contracts these two components. The relevant mutation on $\sfAut(2)$ induces via this pullback square, the desired mutation.
\end{proof}

We conclude by proving Theorem~\ref{thm:alternate}, which provides an alternate characterization of abstract localized schobers.

\begin{proof}[Proof of Theorem~\ref{thm:alternate}]
    Suppose that $(\Phi,\frakS,T)$ is an abstract localized schober. Then by Theorem~\ref{thm:inftyadmissible}, $\frakS$ is $\infty$-admissible. We verify that $T(\frakS) = \delta^{-2}\cdot \frakS$. Generalizing the proof of Lemma~\ref{lem:nistwo}, we see that
    \[\tau^{-1}\cdot\frakS = (T(\Phi_n),\Phi_1,\Phi_2,\dots,\Phi_{n-1})\]
    where $\tau = \sigma_{n-1}\cdots\sigma_1$ is one of the positive cyclic rotations. This is illustrated below for $n=5$. Since $\tau^n = \delta^2$, we are done.
    \[\begin{tikzpicture}[scale=1.2]
        \tikzset{
            strand/.style={black, line width=1.5pt},
            mask/.style={white, line width=4.5pt}
        }

        \draw[strand] (5, 0) .. controls (5, 0.5) and (1,0.5) .. (1, 1) node[at start, below] {$\Phi_5\rangle$} node[above, black] {$\langle T(\Phi_5),$} ;

        \draw[mask] (1, 0) .. controls (1, 0.5) and (2,0.5) .. (2, 1);
        \draw[mask] (2, 0) .. controls (2, 0.5) and (3,0.5) .. (3, 1);
        \draw[mask] (3, 0) .. controls (3, 0.5) and (4,0.5) .. (4, 1);
        \draw[mask] (4, 0) .. controls (4, 0.5) and (5,0.5) .. (5, 1);
        \draw[strand] (1, 0) .. controls (1, 0.5) and (2,0.5) .. (2, 1) node[at start, below] {$\langle\Phi_1,$} node[above, black] {$\Phi_1,$};
        \draw[strand] (2, 0) .. controls (2, 0.5) and (3,0.5) .. (3, 1) node[at start, below] {$\Phi_2,$} node[above, black] {$\Phi_2,$} ;
        \draw[strand] (3, 0) .. controls (3, 0.5) and (4,0.5) .. (4, 1) node[at start, below] {$\Phi_3,$} node[above, black] {$\Phi_3,$} ;
        \draw[strand] (4, 0) .. controls (4, 0.5) and (5,0.5) .. (5, 1) node[at start, below] {$\Phi_4,$} node[above, black] {$\Phi_4\rangle$} ;
    \end{tikzpicture}\]

    We now prove the converse. For each $i$, let $\Phi_i'$ be the right-orthogonal of $\oplus_{j\neq i}\Phi_j$. These subcategories have the property that $I_i^R$ is zero on $\Phi_{j}$ for $j\neq i$ and induces an equivalence on $\Phi_i'$. These are the subcategories that form the mutation,
    \[\delta^{-1}\cdot\frakS := (\Phi_n',\dots,\Phi_1').\]
    where for example, $\Phi_1' = \Phi_1$. 
    
    Now by assumption, $T(\Phi_n) = \Phi_n'$. By the property above, $I_i^RTI_n = 0$ for all $i<n$, and is an equivalence for $i=n$.

    Next, observe that as subcategories of $\Phi$,
    \[\langle T(\Phi_{n-1}),T(\Phi_n)\rangle = \langle \Phi_n',\Phi_{n-1}'\rangle.\]
    So by the property above, $I_i^RTI_{n-1} = 0$ for all $i<n-1$, and is an equivalence for $i=n-1$. Continuing in this fashion, we deduce that $T$ is conjugate-compatible for $\frakS$, as desired.
\end{proof}

Let $R\subset\bbC$ be a finite subset. We define non-abstract localized schobers on $(\bbC,R)$ by using the braid group action and a $\two$-categorical version of the construction in \cite{kapranovPerverseSchobers2015}. 

\begin{defn}\label{defn:cuts}
    A system of cuts $K$ on $(\bbC,R)$ is a choice of arcs connecting the points of $R$ to a point $\infty\in\bbC$ far away on the positive real axis. We let $\Cuts(\bbC,R)$ denote the space of systems of cuts. 
\end{defn}

The space $\Cuts(\bbC,R)$ is a torsor under a homotopical right action by the group $\Br_n$, acting by mutations of cuts, see \cite{kapranovPerverseSchobersAlgebra2020}*{Figure 8}.

Consider now the trivial local system $\ul{\sfAut}(n)$ of $\two$-categories over $\Cuts(\bbC,R)$ where each stalk is given by $\sfAut(n)$. The action of $\Br_n$ on $\sfAut(n)$ upgrades $\ul{\sfAut}(n)$ to a $\Br_n$-equivariant local system.

\begin{defn}\label{defn:nonabstract}
    We let $\olsftwoP(\bbC,R)$ be the $\two$-category of sections of $\ul{\sfAut}(n)$ that are invariant under $\Br_n$, and refer to an object of $\sftwoP(\bbC,R)$ as a localized perverse schober on $(\bbC,R)$.
\end{defn}

\subsection{Serre functors and global monodromy}\label{subsection:interpretation}
We relate our notion of abstract localized schobers to Serre functors on the one hand, and to the global monodromy of the Fukaya--Seidel category of a Lefschetz fibration on the other.

\begin{eg}\label{eg:aut2}
    We give a more explicit description of objects in $\sfAut(2)$. Recall from \cite{dyckerhoffSphericalAdjunctionsStable2021}*{Corollary 2.5.3} that a 2-term $\infty$-admissible SOD $\langle\calA,\calB\rangle$, is equivalent to a diagram $F:\calB\to\calA$ of categories where $F$ admits all adjoints. The mutation of this SOD by the full twist is abstractly given by the diagram $F^{RR}:\calB\to\calA$. Thus a conjugate-compatible autoequivalence $T$ is equivalent to the data of autoequivalences $T_\calA,T_\calB$ of $\calA$, $\calB$ respectively, together with an equivalence, $F\circ T_\calB\simeq T_{\calA}\circ F^{RR}$.

    A source of such objects come from Serre functors. Let $\calA = \QCoh(X),\calB = \QCoh(Y)$ where $X$ and $Y$ are smooth projective varieties over $k$. Let $\omega_X := \pi_X^!k$ denote the shifted dualizing sheaf on $X$ and $\omega_Y$ likewise, so that
    \[T_\calA := -\otimes\omega_X^{-1},T_\calB := -\otimes\omega_Y^{-1}\]
    are the inverse Serre functors. Let $F = f^*$ for some smooth morphism $f:X\to Y$. Then by Grothendieck duality, this gives an object of $\sfAut(2)$.
\end{eg}

An algebraic relationship between $\delta^2$ and the Serre functor generalizing Example~\ref{eg:aut2} can be found in \cite{kapranovPerverseSchobersAlgebra2020}.

\begin{prop}\label{prop:serrefunctor}
    Suppose that $\Phi$ admits a Serre functor $S$. Then, for any right-admissible SOD $\frakS$ of $\Phi$, $(\frakS;\Phi,S^{-1})$ is an abstract localized schober.
\end{prop}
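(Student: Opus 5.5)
We verify Definition~\ref{defn:localized2} directly: for $i<j$ the composite $I_i^RS^{-1}I_j$ vanishes, and $I_i^RS^{-1}I_i$ is invertible. We work with the Serre functor in the sense $\Hom(x,y)^\vee\simeq\Hom(y,Sx)$, functorially in $x,y$.

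\textbf{Vanishing.} Fix $i<j$ and objects $\phi_j\in\Phi_j$, $\psi\in\Phi_i$. By adjunction,
\[\Hom_{\Phi_i}(\psi,I_i^RS^{-1}I_j\phi_j)\simeq\Hom_\Phi(I_i\psi,S^{-1}I_j\phi_j)\simeq\Hom_\Phi(SI_i\psi,I_j\phi_j).\]
Serre duality identifies the last group with $\Hom_\Phi(I_j\phi_j,I_i\psi)^\vee$. This vanishes by semi-orthogonality, condition (\ref{item:semi}) of Definition~\ref{defn:sod}. By Yoneda in $\Phi_i$, the object $I_i^RS^{-1}I_j\phi_j$ is zero.

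\textbf{Invertibility.} Each $\Phi_i$ is right-admissible and sits in an SOD of $\Phi$, which has a Serre functor. The standard argument (Bondal--Kapranov) then shows that $\Phi_i$ inherits a Serre functor $S_i$, with
\[S_i^{-1}\simeq I_i^RS^{-1}I_i.\]
To check this, compute for $x,y\in\Phi_i$:
\[\Hom_{\Phi_i}(x,I_i^RS^{-1}I_iy)\simeq\Hom_\Phi(SI_ix,I_iy)\simeq\Hom_\Phi(I_iy,I_ix)^\vee\simeq\Hom_{\Phi_i}(y,x)^\vee.\]
The last step uses that $I_i$ is fully faithful. Hence $I_i^RS^{-1}I_i$ satisfies the defining property of an inverse Serre functor on $\Phi_i$.

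To conclude that it is an equivalence, construct an explicit inverse using left adjoints. By Serre duality, $I_i$ also has a left adjoint $I_i^L\simeq S_i^{-1}I_i^RS$, and $I_i^LSI_i$ serves as the Serre functor $S_i$. Then
\[I_i^R S^{-1}I_i\,I_i^LSI_i\simeq\id,\]
and symmetrically in the other order. This is the same Yoneda manipulation, carried out in each direction. Alternatively, we can bypass the explicit inverse. The vanishing step gives weak conjugate-compatibility. In the $2$-term case, Lemma~\ref{lem:invertibletwo} reduces invertibility of all diagonal terms to invertibility of one, and one can induct using the $2$-Segal property, Proposition~\ref{prop:simplicialaut}.

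\textbf{Main obstacle.} The main point is precision about the ambient setting. In stable presentable $k$-linear categories, the Serre functor should be understood via the internal $k$-linear $\Hom$ and the dual in $\Mod_k$. The adjoints $I_i^R$ must be colimit-preserving, and $S^{-1}$ must preserve the relevant structure. I would state the duality at the level of mapping spectra on compact objects and extend by colimits.

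The second obstacle is showing that $S_i^{-1}$ is an actual autoequivalence, not merely representing the right functor. For this I would exhibit the inverse $I_i^LSI_i$ explicitly. Existence of $I_i^L$ follows from right-admissibility together with the Serre functor, via the formula $I_i^L\simeq S_i^{-1}I_i^RS$. The vanishing step itself is routine.
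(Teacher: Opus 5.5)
Your strategy of checking Definition~\ref{defn:compatible} directly by Serre duality is sound, but the Serre-duality step is applied in the wrong direction for the convention you declare, and this is not cosmetic. With $\Hom(x,y)^\vee\simeq\Hom(y,Sx)$, taking $x=I_j\phi_j$ and $y=I_i\psi$ gives $\Hom(I_j\phi_j,I_i\psi)^\vee\simeq\Hom(I_i\psi,SI_j\phi_j)$. It does not give $\Hom(SI_i\psi,I_j\phi_j)$. So your chain actually proves $I_i^RSI_j=0$ for $i<j$. Under your convention the claim for $S^{-1}$ is false. Take $\QCoh(\bbP^1)$ with the SOD $(\langle\mathcal{O}\rangle,\langle\mathcal{O}(1)\rangle)$ and $S=-\otimes\mathcal{O}(-2)[1]$; then $I_1^RS^{-1}I_2(k)=R\Gamma(\mathcal{O}(3))[-1]\neq 0$. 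Your diagonal computation has the same slip. Its conclusion is also mislabelled on its own terms: in your convention, a functor with $\Hom(x,Ty)\simeq\Hom(y,x)^\vee$ is the Serre functor of $\Phi_i$, not its inverse. Bondal--Kapranov give $S_i=I_i^RSI_i$ and $S_i^{-1}=I_i^LS^{-1}I_i$.

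The statement, and the paper's proof (the cited identity $\delta^2\cdot\frakS=S(\frakS)$ fed into Theorem~\ref{thm:alternate}), hold only for the opposite normalization $\Hom(x,y)^\vee\simeq\Hom(Sy,x)$, where $S^{-1}$ is the standard Serre functor. Indeed, with this paper's mutation conventions, $\sigma_1\cdot(\calA,\calB)=(\calB,{}^\perp\calB)=(\calB,S_{\mathrm{std}}^{-1}\calA)$, so $\delta^2$ acts by $S_{\mathrm{std}}^{-1}$.

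If you adopt that normalization, the following go through:
\begin{itemize}
\item Both of your displayed chains hold verbatim.
\item The left adjoint becomes $I_i^L\simeq S_iI_i^RS^{-1}$; your formula belongs to the other convention.
\item $I_i^LSI_i$ really is inverse to $I_i^RS^{-1}I_i$. For example, $\Hom(x,I_i^RS^{-1}I_iI_i^LSI_iy)\simeq\Hom(SI_ix,SI_iy)$, because $\Hom(SI_ix,-)$ kills $\ker I_i^L={}^\perp\Phi_i$.
\end{itemize}
Your fallback via Lemma~\ref{lem:invertibletwo} does not bypass this step. That lemma only transfers invertibility between the two diagonal blocks, and a weakly compatible autoequivalence may have no invertible block at all. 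Take the grading shift raising degrees by one on $\prod_{\bbZ}\Mod_k$, with $\Phi_1$ the degrees $\le 0$ and $\Phi_2$ the degrees $>0$. So one block must still come from the Serre argument.

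Once normalized, your argument is a genuinely different route from the paper's. The paper imports $\infty$-admissibility and the full-twist identity, then applies the converse direction of Theorem~\ref{thm:alternate}. You instead verify Definition~\ref{defn:compatible} directly from Serre duality and the Serre functors inherited by the components, with no mutations or braid group. Your route is more elementary and self-contained, and it exposes the normalization that the citation hides. The paper's route is shorter and explains why the result holds: $S^{-1}$ realizes $\delta^{-2}$ on SODs.
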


\begin{proof}
    By \cite{kapranovPerverseSchobersAlgebra2020}*{Proposition 3.3.5}, $\frakS$ is $\infty$-admissible. By \cite{kapranovPerverseSchobersAlgebra2020}*{Proposition 3.3.11},
    \[\delta^2\cdot\frakS = S(\frakS),\]
    so we are done by Theorem~\ref{thm:alternate}.
\end{proof}

In this way, we can think of the characterization of Theorem~\ref{thm:alternate} as saying that $(\Phi,\frakS,T)$ is an abstract localized schober whenever $T$ behaves like an inverse Serre functor with respect to the SOD $\frakS$. We can find related observations in earlier works such as \cite{addingtonNewDerivedSymmetries2016}*{Proposition 1.1} and \cite{halpern-leistnerAutoequivalencesDerivedCategories2016}*{Example 4.17}.

Examples of localized perverse schobers also arise from Lefschetz fibrations. Given a Lefschetz fibration $f:X\to \bbC$ with $n$ critical values $R\subset\bbC$, consider its Fukaya--Seidel category $\Fuk(X,f)$. After making a choice of cuts, this admits an autoequivalence $T$ by simultaneously moving Lefschetz thimbles once around the boundary of $\bbC$. The resulting mutation of our choice of cuts is precisely described by the full twist element $\delta^2\in\Br_n$, so that $(\frakS;\Fuk(X,f),T)$ defines an abstract localized schober. One expects that this is independent of the choice of cuts, lifting this to an object of $\olsftwoP(\bbC,R)$.

This is described for example by Seidel in \cite{seidelSymplecticHomologyHochschild2008}*{\S 3, 4}, where $T$ is referred to as the global monodromy. Moreover, it is proposed that $T$ should be the Serre functor of $\Fuk(X,f)$ so that it is also an instance of Proposition~\ref{prop:serrefunctor}. In \S\ref{subsection:fourier}, we study the same topology in the context of Fourier transform and Stokes structures as introduced in \cite{kapranovPerverseSchobersAlgebra2020}.

\section{Perverse schobers on a disc}\label{section:adjaut}
We recall from \cite{kapranovPerverseSchobers2015} the following definition, which we will choose to refer to as abstract perverse schober on a disc.

\begin{defn}
    An abstract perverse schober on a disc with one singularity is a pair of stable categories, $\Phi,\Psi$ together with a spherical adjunction,
    \[S:\Phi\tofrom\Psi:R.\]
    An abstract perverse schober on a disc with $n$ singularities is a list of stable categories, $\Phi_1,\dots,\Phi_n,\Psi$, together with spherical adjunctions,
    \[S_i:\Phi_i\tofrom\Psi:R_i.\]
\end{defn}

\begin{notn}
    We denote by $\sfSph(n)$ the collection of abstract perverse schobers, which we later upgrade to a $\two$-category. We also write $\sfSph = \sfSph(1)$.
\end{notn}

We recall in \S\ref{subsection:enhancedFS} an operation defined in \cites{kapranovPerverseSchobersAlgebra2020,barbacoviCompositionTwoSpherical2021,harderPerverseSheavesCategories2019} which we will refer to as the enhanced Fukaya--Seidel functor,
\[\FS_{\sfSph}:\sfSph(n)\to\sfSph.\]
Geometrically, this operation can be thought of as merging the $n$ singularities into one. In the case that our abstract schober comes from a Lefschetz fibration, it is shown in \cite{christRelativeCalabiYauStructures2026}*{Theorem 6.1} that the vanishing cycle of $\FS_{\sfSph}$ agrees with the usual notion of Fukaya--Seidel category.

In \S\ref{subsection:simplicialadj} and \S\ref{subsection:simplicialsph}, we will upgrade $\sfSph(n)$ to a $\two$-category, $\FS_{\sfSph}$ to a functor of $\two$-categories, and the assignment
\[\sfSph(-):[n]\mapsto\sfSph(n)\]
to a simplicial $\two$-category in such a way that the unique active map $[1]\to [n]$ induces $\FS_{\sfSph}$. Moreover, the inert face maps $d_{n,0},d_{n,n}:\sfSph(n)\to\sfSph(n-1)$ are given by forgetting the first and last spherical adjunctions respectively so that $\sfSph(-)$ is evidently 1-Segal. In other words, we may view
\[\FS_{\sfSph}:\sfSph(2)\simeq\sfSph(1)\times_{\sfSph(0)}\sfSph(1)\to \sfSph(1)\]
as an associative binary operation.

In parallel, we construct a map of simplicial categories $\sfSph(-)\to\sfAut(-)$, and prove in Proposition~\ref{prop:activecartesian2} that over the active arrow $[1]\to[n]$, this induces a pullback square of $\two$-categories as follows.
\begin{equation}\label{equation:importantpullbacksquare}\begin{tikzcd}
    {\sfSph(n)} \pullback & \sfSph \\
    {\sfAut(n)} & \sfAut
    \arrow["{\FS_{\sfSph}}", from=1-1, to=1-2]
    \arrow[from=1-1, to=2-1]
    \arrow[from=1-2, to=2-2]
    \arrow["{\FS_{\sfAut}}", from=2-1, to=2-2]
\end{tikzcd}\end{equation}

In \S\ref{subsection:mutationsph}, we will use this to transfer the action of $\Br_n$ on $\sfAut(n)$ to one on $\sfSph(n)$, which is compatible with $\FS_{\sfSph}$, recovering a special case of the construction in the upcoming work \cite{acj}. We show that on objects, this agrees with the $\Br_n$-action on abstract perverse schobers defined in \cite{kapranovPerverseSchobers2015}*{(2.8)}. This leads to a definition of a $\two$-category of non-abstract perverse schobers, $\sftwoP(\bbC,R)$, together with a projection functor
\[\sftwoP(\bbC,R)\to\olsftwoP(\bbC,R)\]
and an enhanced Fukaya--Seidel functor
\[\FS_{\sftwoP}:\sftwoP(\bbC,R)\to \sftwoP(\bbC,0).\]

We record the following restatement of Proposition~\ref{prop:activecartesian2} for non-abstract schobers.
\begin{thm}\label{thm:merge}
    Let $R\subset\bbC$ be a finite subset. Then, there is a natural pullback square of $\two$-categories as follows.
    \[\begin{tikzcd}
        {\sftwoP(\bbC,R)} \pullback & \sftwoP(\bbC,0) \\
        {\olsftwoP(\bbC,R)} & \olsftwoP(\bbC,0)
        \arrow["{\FS_{\sftwoP}}", from=1-1, to=1-2]
        \arrow[from=1-1, to=2-1]
        \arrow[from=1-2, to=2-2]
        \arrow["{\FS_{\olsftwoP}}", from=2-1, to=2-2]
    \end{tikzcd}\]
\end{thm}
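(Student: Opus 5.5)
Theorem~\ref{thm:merge} is the non-abstract version of the pullback square (\ref{equation:importantpullbacksquare}), i.e.\ of Proposition~\ref{prop:activecartesian2}. The plan is to deduce it from that square by taking $\Br_n$-invariant sections over $\Cuts(\bbC,R)$ and using that limits commute with limits.

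First I will record the equivariance of all four functors in (\ref{equation:importantpullbacksquare}). By Corollary~\ref{cor:action}, $\FS_{\sfAut}:\sfAut(n)\to\sfAut$ is $\Br_n$-equivariant, with trivial action on $\sfAut$. The $\Br_n$-action on $\sfSph(n)$ of \S\ref{subsection:mutationsph} is defined by transporting the action on $\sfAut(n)$ through the pullback square. Hence $\sfSph(n)\to\sfAut(n)$ and $\FS_{\sfSph}$ are equivariant by construction, and $\Br_n$ acts trivially on $\sfSph$. So (\ref{equation:importantpullbacksquare}) is a pullback square in $\Br_n$-equivariant $\two$-categories, with trivial actions on the right-hand column. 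Equivalently, it is a pullback square of $\Br_n$-equivariant (trivial) local systems of $\two$-categories on $\Cuts(\bbC,R)$.

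Next I will apply the functor of $\Br_n$-invariant sections. Since $\Cuts(\bbC,R)$ is a $\Br_n$-torsor, this functor is a limit, namely homotopy fixed points $(-)^{h\Br_n}$. Limits commute with pullbacks in $\wCat_{\two}$, so we get a pullback square
\[\sfSph(n)^{h\Br_n}\simeq \sfAut(n)^{h\Br_n}\times_{\sfAut^{h\Br_n}}\sfSph^{h\Br_n}.\]
By Definition~\ref{defn:nonabstract} and its analogue for $\sftwoP$, the left-hand terms are $\sftwoP(\bbC,R)$ and $\olsftwoP(\bbC,R)$.

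The point needing care is that the right-hand column becomes $\sfSph^{h\Br_n}=\Fun(B\Br_n,\sfSph)$, not $\sfSph=\sftwoP(\bbC,0)$. Two fixes are possible:
\begin{itemize}
\item compose with the map to the trivial-action fixed points given by evaluating at the basepoint, $\Fun(B\Br_n,\sfSph)\to\sfSph$;
\item more cleanly, note that $\FS$ is defined by the active map $[1]\to[n]$, and define $\FS_{\sftwoP}$ as the composite of invariant sections with this evaluation.
\end{itemize}
Then I need that the square
\[\begin{tikzcd}[ampersand replacement=\&]
	{\Fun(B\Br_n,\sfSph)} \& \sfSph \\
	{\Fun(B\Br_n,\sfAut)} \& \sfAut
	\arrow[from=1-1, to=1-2]
	\arrow[from=1-1, to=2-1]
	\arrow[from=1-2, to=2-2]
	\arrow[from=2-1, to=2-2]
\end{tikzcd}\]
is a pullback. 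This holds if $\sfSph\to\sfAut$ is a 2-faithful functor that is "discrete" enough that lifts of a $\Br_n$-fixed object are automatically fixed. I would prove this with Proposition~\ref{prop:faithful} in the degenerate case $\sfD=\sfAut$, $\sfC=\sfSph$ with trivial local systems. There the $\pi_1$-action on lifts is trivial, so fixed points are everything. Pasting this square with the previous one gives Theorem~\ref{thm:merge}.

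The main obstacle is exactly this last square. It requires 2-faithfulness of $\sfSph\to\sfAut$; I expect this follows from Lemma~\ref{lem:Uisfaithful}-type arguments, since a spherical adjunction is determined over its target by the monad data. If 2-faithfulness fails, I would instead argue directly that the $\Br_n$-action on the fiber $\sfSph(n)\times_{\sfAut(n)}\{x\}$ is canonically trivialized, because the pullback square identifies this fiber with a fiber of $\sfSph\to\sfAut$, on which $\Br_n$ acts trivially.
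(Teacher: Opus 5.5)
Your skeleton matches the paper's. The paper records the theorem as a restatement of Proposition~\ref{prop:activecartesian2}: the $\Br_n$-action of Corollary~\ref{cor:action2} respects that square, acting trivially on $\sfSph$ and $\sfAut$, and the invariant-sections construction of Definition~\ref{defn:nonabstract} is a limit, so it preserves pullbacks.

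The gap is your identification of that construction with $(-)^{h\Br_n}$ applied to the stalk. Invariant sections means $\Gamma(\Cuts(\bbC,R),-)^{h\Br_n}$. Since $\Cuts(\bbC,R)$ is, up to homotopy, the free transitive $\Br_n$-set, the section category is coinduced, and its homotopy fixed points are just the stalk. Equivalently, you are taking sections over $\Cuts(\bbC,R)/\!/\Br_n\simeq\pt$, not over $B\Br_n$. This is why a choice of cut $K$ gives $\sftwoP(\bbC,R)\simeq\sfSph(n)$ at the end of \S\ref{subsection:universal}.

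Your reading also contradicts your own claim about the left column. $\sfSph(n)^{h\Br_n}$ consists of objects with coherent identifications with all their mutations. An object of $\sfSph(2)$ with $\Phi_1=0\neq\Phi_2$ admits no such identification, since its $\sigma^{-1}$-mutation has components $(\Phi_2,0)$. So $\sfSph(n)^{h\Br_n}$ is not $\sftwoP(\bbC,R)$.

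With the correct identification, the trivially-acted right column becomes canonically $\sfSph\simeq\sftwoP(\bbC,0)$ and $\sfAut\simeq\olsftwoP(\bbC,0)$; here $\Cuts(\bbC,\{0\})$ is contractible and $\Br_1$ is trivial. So the square you call the main obstacle never arises, and the theorem follows at once.

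You should also drop the proposed repair, because it is false. The vertical functor $\sfSph\to\sfAut$ is not 2-faithful. Between objects with $\Phi=0$, the mapping category in $\sfSph$ is $\sfSt_k(\Psi,\Psi')$, while in $\sfAut$ it is a point. The 2-faithfulness available in the paper is horizontal: $\FS_{\sfAut}$ is 2-faithful by Lemma~\ref{lem:Uisfaithful}, and Proposition~\ref{prop:FSisfaithful} is deduced from the present theorem.

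Accordingly, the square with $\Fun(B\Br_n,\sfSph)\to\sfSph$ over $\Fun(B\Br_n,\sfAut)\to\sfAut$ is not a pullback. Over $(0,\id)\in\sfAut$, whose automorphism space is trivial, the fibre product remembers only $\Psi$. By contrast, $\Fun(B\Br_n,\sfSph)$ remembers a $\Br_n$-action on $\Psi$. Your fallback about trivializing the action on fibres becomes unnecessary once the torsor point is made.
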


Finally, in \S\ref{subsection:fourier}, we will explain how Theorem~\ref{thm:merge} can be interpreted as the Fourier transform of perverse schobers studied in \cite{kapranovPerverseSchobersAlgebra2020}.

\begin{eg}\label{eg:waldhausen2}
    We continue with notation from Example~\ref{eg:waldhausen1}, where we constructed an object $(\frakS;S_{n-1}(F),T')$ of $\sfAut(n)$ from a spherical adjunction $F\adjto G$. Moreover, $T'$ was the twist of a spherical adjunction $F'\adjto G'$, so the square (\ref{equation:importantpullbacksquare}) implies that our object in fact lifts to $\sfSph(n)$.

    In \cite{gargGITRootStacks2026}, the construction $\sfSph\to \sfSph$ sending $F\adjto G\mapsto F'\adjto G'$ is interpreted as a pushforward
    \[f_*:\sftwoP(\bbC_z,0)\to \sftwoP(\bbC_y,0)\]
    along the map $f:\bbC_z\xto{z^n} \bbC_y$.

    We may perturb $z^n$ by a linear term so that it is Morse, $f_x(z) := z^n-xz$, where $x\in\bbC^\times$. This Morse function has $n$ critical values $R_x = \{y=0, y^{n-1} = x^n\}$. We interpret the lift to $\sfSph(n)$ as a factorization,
    \[\sftwoP(\bbC_z,0)\xto{(f_x)_*} \sftwoP(\bbC_y,R_x)\xto{\FS_{\sftwoP}} \sftwoP(\bbC_y,0).\]
    
    In Example~\ref{eg:waldhausen3}, we will explain how we may consider all $x$, including $x=0$ at once, and package this into a perverse schober on $\bbC^2_{x,y}$.
\end{eg}

\subsection{The enhanced Fukaya--Seidel functor}\label{subsection:enhancedFS}
In \cite{barbacoviCompositionTwoSpherical2021} and \cite{kapranovPerverseSchobersAlgebra2020}*{\S 3.5}, it is shown that given an object $(\Phi_i,\Psi,S_i,R_i)\in\sfSph(n)$, we may produce a category $\Phi$ called its Fukaya--Seidel category, equipped with an SOD of the form $\langle\Phi_1,\dots,\Phi_n\rangle$. This category is part of a spherical adjunction
\[S:\Phi\tofrom\Psi:R,\]
giving us a mapping of objects that we denote $\FS_{\sfSph}:\sfSph(n)\to\sfSph$, and that we refer to as the enhanced Fukaya--Seidel functor. Moreover, there are identifications $S\circ I_i\simeq S_i$.

All together, we have an operation,
\[\opname{Ind}:\begin{tikzcd}[ampersand replacement=\&]
	\& \Psi \& \\
	{\Phi_1} \& \cdots \& {\Phi_n}
	\arrow["{R_1}", shift left, from=1-2, to=2-1]
	\arrow["{R_n}", shift left, from=1-2, to=2-3]
	\arrow["{S_1}", shift left, from=2-1, to=1-2]
	\arrow["{S_n}", shift left, from=2-3, to=1-2]
\end{tikzcd}
\mapsto
\begin{tikzcd}[ampersand replacement=\&]
	\& \Psi \& \\
	\& {\langle\Phi_1,\dots,\Phi_n\rangle} \\
	{\Phi_1} \& \cdots \& {\Phi_n}
	\arrow["R"', shift right, from=1-2, to=2-2]
	\arrow["S"', shift right, from=2-2, to=1-2]
	\arrow["{I_1^R}"', shift right, from=2-2, to=3-1]
	\arrow["{I_n^R}"', shift right, from=2-2, to=3-3]
	\arrow["{I_1}"', shift right, from=3-1, to=2-2]
	\arrow["{I_n}"', shift right, from=3-3, to=2-2]
\end{tikzcd}\]

We will now construct the operation $\opname{Ind}$ carefully for $n=2$. We will find it helpful to perform the construction without any sphericity/invertibility conditions so that we obtain a mapping of objects
\[\opname{Ind}:\sfAdj(2)\to \sfSOD(2)\times_{\sfSt_k}\sfAdj.\]

Suppose that $S_i:\Phi_i\tofrom\Psi:R_i$ for $i=1,2$ are two adjunctions, defining an object of $\sfAdj(2)$. Define $\Phi_{12}$ to be the category of diagrams of the form,
\[\begin{tikzcd}
    {S_2\phi_2} & \\
    {S_1R_1S_2\phi_2} & {S_1\phi_1}
    \arrow[from=2-1, to=1-1]
    \arrow[from=2-1, to=2-2]
\end{tikzcd}\]
together with a lift of the horizontal arrow to a diagram $R_1S_2\phi_2\to \phi_1$ in $\Phi_1$, and such that the vertical arrow is a counit for $S_1\adjto R_1$.

\begin{rmk}\label{rmk:laxcolimit}
    This diagram category is described in a somewhat unwieldy manner. Observe that the objects of this diagram category can also be written as a triple $(\phi_1,\phi_2,\alpha:R_1S_2\phi_2\to \phi_1)$ where $\phi_i\in\Phi_i$. Thus this is equivalent to the lax colimit of diagram $\Phi_2\xto{F_{12}}\Phi_1$ in the spirit of \cite{christLaxAdditivity2025}. In the language of \cite{kapranovPerverseSchobersAlgebra2020}, it is the category of modules for a unitriangular monad. That said, the diagram category allows for a nice description of $S_{12}$ below.
\end{rmk}

Consider the pair of maps
\[S_{12}:\Phi_{12}\tofrom\Psi:R_{12},\]
where 
\[S_{12}\left(\begin{tikzcd}
    {S_2\phi_2} & \\
    {S_1R_1S_2\phi_2} & {S_1\phi_1}
    \arrow[from=2-1, to=1-1]
    \arrow[from=2-1, to=2-2]
\end{tikzcd}\right) := \colim\left(\begin{tikzcd}
    {S_2\phi_2} & \\
    {S_1R_1S_2\phi_2} & {S_1\phi_1}
    \arrow[from=2-1, to=1-1]
    \arrow[from=2-1, to=2-2]
\end{tikzcd}\right),\]
and where
\[R_{12}(\psi) := \begin{tikzcd}
    {S_2R_2\psi} & \\
    {S_1R_1S_2R_2\psi} & {S_1R_1\psi}
    \arrow[from=2-1, to=1-1]
    \arrow[from=2-1, to=2-2]
\end{tikzcd}\]
where the horizontal arrow is the counit for $S_2\adjto R_2$.

\begin{lem}\label{lem:twocats}
    There is a natural adjunction, $S_{12}\adjto R_{12}$.
\end{lem}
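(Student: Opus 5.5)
The plan is to produce a natural equivalence of mapping spaces $\Hom_\Psi(S_{12}x,\psi)\simeq\Hom_{\Phi_{12}}(x,R_{12}\psi)$ for $x\in\Phi_{12}$ and $\psi\in\Psi$. I would use the description of Remark~\ref{rmk:laxcolimit}: an object of $\Phi_{12}$ is a triple $x=(\phi_1,\phi_2,\alpha:R_1S_2\phi_2\to\phi_1)$. Under this description $R_{12}\psi=(R_1\psi,R_2\psi,\beta)$, where $\beta$ is $R_1$ applied to the counit $S_2R_2\psi\to\psi$.

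First, I would compute mapping spaces in $\Phi_{12}$, viewing it as a lax colimit (oriented gluing) of $F_{12}=R_1S_2:\Phi_2\to\Phi_1$. A map $(\phi_1,\phi_2,\alpha)\to(\phi_1',\phi_2',\alpha')$ consists of $f_2:\phi_2\to\phi_2'$, $f_1:\phi_1\to\phi_1'$ and a homotopy $f_1\alpha\simeq\alpha'F_{12}(f_2)$. So the mapping space is the pullback
\[\Hom_{\Phi_1}(\phi_1,\phi_1')\times_{\Hom_{\Phi_1}(R_1S_2\phi_2,\phi_1')}\Hom_{\Phi_2}(\phi_2,\phi_2').\]
Substituting $R_{12}\psi$ and applying the adjunctions $S_i\adjto R_i$ to each term turns this into
\[\Hom_\Psi(S_1\phi_1,\psi)\times_{\Hom_\Psi(S_1R_1S_2\phi_2,\psi)}\Hom_\Psi(S_2\phi_2,\psi).\]
Here the two structure maps are precomposition with $S_1\alpha$ and with the counit $S_1R_1S_2\phi_2\to S_2\phi_2$. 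To justify this I need a compatibility check: the adjunction equivalence must send the map $\alpha'\circ R_1S_2(-)$, with $\alpha'=\beta$, to precomposition by the counit. This is a triangle-identity computation.

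Second, this pullback is by definition $\Hom_\Psi$ out of the pushout of $S_2\phi_2\leftarrow S_1R_1S_2\phi_2\to S_1\phi_1$. That pushout is exactly $S_{12}(x)$. This gives the equivalence objectwise.

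Third, I need naturality, which is the main obstacle: the pointwise identifications have to assemble coherently. I would avoid hand-coherence. Instead I would realise $\Phi_{12}$ as the lax limit or oriented fiber product $\Phi_1\,\overrightarrow{\times}\,\Phi_2$, a limit in $\sfSt_k$ of a diagram of adjunctions. I would then construct $R_{12}$ as the functor into this limit induced by $R_1$, $R_2$ and the natural transformation $R_1\epsilon_2$. Since $\sfSt_k$ functors preserve colimits, $S_{12}$ is colimit-preserving, so a right adjoint exists abstractly. It then suffices to identify that right adjoint with $R_{12}$ through the Yoneda computation above, which is natural in $\psi$ because every step uses only the adjunction equivalences. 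If this route fails, I would fall back on writing down the unit $x\to R_{12}S_{12}x$ componentwise, using the colimit inclusions $S_i\phi_i\to S_{12}x$, and checking the universal property with the mapping-space formula.
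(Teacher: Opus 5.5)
Your proposal is correct and is essentially the paper's proof. The paper establishes the adjunction by exactly the chain $\Hom(S_{12}\phi_{12},\psi)\simeq \Hom(S_1\phi_1,\psi)\times_{\Hom(S_1R_1S_2\phi_2,\psi)}\Hom(S_2\phi_2,\psi)\simeq\Hom(\phi_1,R_1\psi)\times_{\Hom(R_1S_2\phi_2,R_1\psi)}\Hom(\phi_2,R_2\psi)\simeq\Hom(\phi_{12},R_{12}\psi)$, and calls these identifications functorial without further comment. You additionally make explicit the mapping-space formula in $\Phi_{12}$, the matching of structure maps (this is naturality of the counit of $S_1\adjto R_1$ rather than a triangle identity), and how to handle coherence — details the paper leaves implicit.
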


\begin{proof}
    Given a diagram $\phi_{12}$ as above, we have functorial identifications
    \begin{align*}
        \Hom(S_{12}\phi_{12},\psi)&\simeq \Hom(S_1\phi_1,\psi)\times_{\Hom(S_1R_1S_2\phi_2,\psi)} \Hom(S_2\phi_2,\psi)\\
        &\simeq \Hom(\phi_1,R_1\psi)\times_{\Hom(R_1S_2\phi_2,R_1\psi)} \Hom(\phi_2,R_2\psi)\\
        &\simeq \Hom(\phi_{12},R_{12}\psi).
    \end{align*}
\end{proof}

\begin{lem}\label{lem:twocatssod}
    The category $\Phi_{12}$ admits a right-admissible SOD via the following subcategories. Here, we denote elements of $\Phi_{12}$ by triples $(\phi_1,\phi_2,\alpha:R_1S_2\phi_2\to \phi_1)$, as discussed in Remark~\ref{rmk:laxcolimit}.
    \begin{align*}
        I_1: \Phi_1&\to \Psi\\
        \phi_1&\mapsto (\phi_1,0,0)\\
        I_2: \Phi_2&\to \Psi\\
        \phi_2&\mapsto (R_1S_2\phi_2,\phi_2,\id_{R_1S_2\phi_2})
    \end{align*}
    The right-adjoints are given by projections $I_i^R:(\phi_1,\phi_2,\alpha)\mapsto \phi_i$. Moreover, there are canonical identifications $S_{12}\circ I_i\simeq S_i$.
\end{lem}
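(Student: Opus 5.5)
We check the three requirements of Definition~\ref{defn:sod} directly in the triple model $(\phi_1,\phi_2,\alpha\colon R_1S_2\phi_2\to\phi_1)$ of Remark~\ref{rmk:laxcolimit}, and then verify $S_{12}\circ I_i\simeq S_i$. (The targets of $I_1,I_2$ should read $\Phi_{12}$ rather than $\Psi$.) Throughout we view $\Phi_{12}$ as the lax limit/colimit of $F_{12}=R_1S_2\colon\Phi_2\to\Phi_1$. Mapping spaces are then computed by the pullback
\[\Hom((\phi_1,\phi_2,\alpha),(\phi_1',\phi_2',\alpha'))\simeq \Hom(\phi_1,\phi_1')\times_{\Hom(R_1S_2\phi_2,\phi_1')}\Hom(\phi_2,\phi_2'),\]
where the two maps to the base are precomposition with $\alpha$ and $\alpha'\circ R_1S_2(-)$. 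We take this formula as the basic tool; it is the same fibre-product computation as in the proof of Lemma~\ref{lem:twocats}.

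First, the adjunctions. For $I_1$, a map $(\phi_1,0,0)\to(\phi_1',\phi_2',\alpha')$ is just a map $\phi_1\to\phi_1'$, since the other factor and the base are zero. Hence $I_1\dashv\mathrm{pr}_1$, and $I_1$ is fully faithful because $\mathrm{pr}_1 I_1=\id$. For $I_2$, a map $(R_1S_2\phi_2,\phi_2,\id)\to(\phi_1',\phi_2',\alpha')$ consists of a map $f\colon\phi_2\to\phi_2'$ together with a map $g\colon R_1S_2\phi_2\to\phi_1'$ and an identification $g\simeq \alpha'\circ R_1S_2 f$. Since $\alpha$ is the identity, the pullback collapses to $\Hom(\phi_2,\phi_2')$. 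Hence $I_2\dashv \mathrm{pr}_2$, and $I_2$ is fully faithful. Both projections preserve colimits, because colimits in the lax limit are computed componentwise: $R_1S_2$ is colimit-preserving, $R_1$ being a right adjoint in $\sfSt_k$ and so continuous by the paper's convention. So both subcategories are right-admissible.

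Next, semiorthogonality. We need $\Hom(I_2\phi_2,I_1\phi_1)=0$, which is $I_2^RI_1=\mathrm{pr}_2(\phi_1,0,0)=0$. For conservativity of $I_1^R\oplus I_2^R$, suppose $(\phi_1,\phi_2,\alpha)$ has both projections zero. Then $\alpha\colon 0\to 0$, so the object is zero. Equivalently, one can exhibit the cofibre sequence $I_1I_1^R\to\id\to I_2I_2^R$ and invoke Lemma~\ref{lem:SODcharacterizations}. Concretely, every object sits in a cofibre sequence
\[(R_1S_2\phi_2,\phi_2,\id)\xrightarrow{(\alpha,\id)}(\phi_1,\phi_2,\alpha)\to(\Cone\alpha,0,0).\]

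Finally, the compatibility with $S_{12}$. Using the diagram model, $I_2\phi_2$ corresponds to the span $S_2\phi_2\leftarrow S_1R_1S_2\phi_2\xrightarrow{\id}S_1R_1S_2\phi_2$, whose pushout is canonically $S_2\phi_2$. Likewise $I_1\phi_1$ corresponds to $0\leftarrow 0\to S_1\phi_1$, with colimit $S_1\phi_1$. These identifications are natural in $\phi_i$, giving $S_{12}I_i\simeq S_i$.

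The main obstacle is not any of these computations but coherence. The mapping-space formula for the lax limit, and the claim that $(\alpha,\id)$ is a morphism, require the unwieldy diagram category (with its lift of the horizontal arrow) to be identified with the lax limit of $F_{12}$. I would justify this identification by noting that specifying the counit-shaped vertical arrow is contractible data, so the diagram category is equivalent to triples. The rest of the argument is then formal.
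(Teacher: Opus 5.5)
Your argument is correct and is exactly the explicit calculation the paper intends (its proof reads only ``Explicit calculation''), and you are right that the targets of $I_1,I_2$ should be $\Phi_{12}$ rather than $\Psi$. One slip: the concrete sequence you display is $I_2I_2^R\to\id\to I_1(\Cone\alpha)$, not $I_1I_1^R\to\id\to I_2I_2^R$ (the cofibre of $I_1I_1^Rx\to x$ is $(0,\phi_2,0)$, which is not $I_2\phi_2$ unless $R_1S_2\phi_2=0$), but your main conservativity argument, that both projections vanishing forces the object to be zero, does not depend on it.
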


\begin{proof}
    Explicit calculation.
\end{proof}

Recall that for $\calF\in\sfAdj(2)$, we may recover $\calF$ from $\opname{Ind}(\calF)$. Thus $\opname{Ind}$ is an injective mapping. The following computes the image of this mapping.

\begin{lem}\label{lem:twocatsequivalence}
    The image of $\opname{Ind}:\sfAdj(2)\to\sfSOD(2)\times_{\sfSt_k}\sfAdj$ consists of adjunctions $S:\Phi\tofrom\Psi:R$ equipped with a 2-term SOD $\frakS$ of $\Phi$ such that $\coCone(1\to RS)$ is weakly conjugate-compatible with respect to $\frakS$.
\end{lem}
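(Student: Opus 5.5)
The plan is to prove the two inclusions separately, using the explicit description of $\opname{Ind}$ from Lemmas~\ref{lem:twocats} and~\ref{lem:twocatssod}. Throughout, write $T:=\coCone(1\to RS)$. Weak conjugate-compatibility for a 2-term SOD is the single condition $I_1^RTI_2=0$. Equivalently, the map $I_1^RI_2\to I_1^RRSI_2$ induced by the unit of $S\adjto R$ is an equivalence.

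\emph{The image satisfies the condition.} Let $\calF=(S_i\adjto R_i)_{i=1,2}\in\sfAdj(2)$ and consider $\opname{Ind}(\calF)=(S_{12}\adjto R_{12})$ with its SOD from Lemma~\ref{lem:twocatssod}.
\begin{itemize}
\item Since $I_1^R$ projects onto the first component, $I_1^RI_2\phi_2=R_1S_2\phi_2$.
\item Using $S_{12}I_2\simeq S_2$ and the formula for $R_{12}$, we get $I_1^RR_{12}S_{12}I_2\phi_2=R_1S_2\phi_2$.
\end{itemize}
It then remains to identify the first component of the unit $I_2\phi_2\to R_{12}S_{12}I_2\phi_2$. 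Tracing through the fiber-product identification in the proof of Lemma~\ref{lem:twocats}, this component is the adjunct under $S_1\adjto R_1$ of the counit $S_1R_1S_2\phi_2\to S_2\phi_2$. That adjunct is the identity of $R_1S_2\phi_2$. Hence the unit is an equivalence after applying $I_1^R$, so $I_1^RTI_2=0$.

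\emph{Objects satisfying the condition lie in the image.} Start from $(S\adjto R)$ on $\Phi=\langle\Phi_1,\Phi_2\rangle$ with $I_1^RTI_2=0$, and set $S_i:=SI_i$ and $R_i:=I_i^RR$. This gives an object $\calF\in\sfAdj(2)$, and we must show $\opname{Ind}(\calF)$ recovers the original data. The hypothesis gives an equivalence $R_1S_2=I_1^RRSI_2\simeq I_1^RI_2$. So the gluing functor of $\frakS$ is identified with the gluing functor of the SOD on $\Phi_{12}$.

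Using this, define a comparison $\Phi\to\Phi_{12}$ by
\[\phi\mapsto\bigl(I_1^R\phi,\;I_2^R\phi,\;\alpha\bigr),\qquad \alpha:\ R_1S_2I_2^R\phi\simeq I_1^RI_2I_2^R\phi\to I_1^R\phi,\]
where the last arrow comes from the counit $I_2I_2^R\to\id$. This functor is the right adjoint of the functor taking a triple to the colimit in $\Phi$ of
\[I_2\phi_2\leftarrow I_1I_1^RI_2\phi_2\to I_1\phi_1,\]
exactly as in the proof of Lemma~\ref{lem:twosubcats}. It is an equivalence for two reasons. First, Lemma~\ref{lem:SODcharacterizations} shows every $\phi$ is the pushout of $I_2I_2^R\phi\leftarrow I_1I_1^RI_2I_2^R\phi\to I_1I_1^R\phi$. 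Second, the functor is fully faithful on each component, since both sides carry SODs with the same components and identified gluing functors.

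Next, applying the colimit-preserving functor $S$ to this pushout presentation gives the formula for $S_{12}$. This yields $S\simeq S_{12}$ under the equivalence, compatibly with the identifications $SI_i\simeq S_i$. By uniqueness of adjoints, $R$ corresponds to $R_{12}$.

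\emph{Expected obstacle.} The main difficulty is coherence. The identification $R_1S_2\simeq I_1^RI_2$ must be compatible with the module structures used to build $\Phi_{12}$, namely the counits entering its diagram description, and the comparison functor must be checked to be adjointable. I would avoid tracking these by hand. Instead, I would work relative to $\sfSt_k$: by Lemma~\ref{lem:Uisfaithful} the relevant forgetful functors are 2-faithful, so Proposition~\ref{prop:functorconstruction} reduces the statement to equivalence classes of objects. Then only the pointwise identifications above need to be checked.
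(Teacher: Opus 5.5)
Your proposal is correct and follows essentially the same route as the paper. The forward inclusion is the computation $I_1^R\coCone(\id\to R_{12}S_{12})I_2\simeq\coCone(I_1^RI_2\to R_1S_2)=0$; your check that the first component of the unit is the identity of $R_1S_2\phi_2$ is exactly what makes it vanish. The reverse inclusion uses $R_1S_2\simeq I_1^RI_2$ to identify $\Phi_{12}$ with the diagram category of Lemma~\ref{lem:twosubcats}, hence with $\Phi$, compatibly with $S\adjto R$ and $\frakS$.

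The coherence worry in your last paragraph is milder than you expect. The lemma is about objects, and once you have an equivalence $G:\Phi\isomto\Phi_{12}$ with $S_{12}G\simeq S$ that matches the SOD components, adjointability is automatic. So the appeal to Proposition~\ref{prop:functorconstruction} is not needed.
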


\begin{proof}
    Given an object $\calF\in\sfAdj(2)$, we show that $\coCone(\id\to R_{12}S_{12})$ is weakly conjugate-compatible with respect to the SOD $\frakS$ on $\Phi_{12}$ as constructed in Lemma~\ref{lem:twocatssod}. This is by direct calculation - see that
    \[I_1^R\coCone(\id\to R_{12}S_{12})I_2\simeq \coCone(I_1^RI_2\to R_1S_2) = 0.\]

    We now prove the other containment. We consider an object of $\sfSOD(2)\times_{\sfSt_k}\sfAdj$ given as follows.
    \[\begin{tikzcd}[ampersand replacement=\&]
        \& \Psi \& \\
        \& {\langle\Phi_1,\Phi_2\rangle} \\
        {\Phi_1} \&  \& {\Phi_n}
        \arrow["R"', shift right, from=1-2, to=2-2]
        \arrow["S"', shift right, from=2-2, to=1-2]
        \arrow["{I_1^R}"', shift right, from=2-2, to=3-1]
        \arrow["{I_2^R}"', shift right, from=2-2, to=3-3]
        \arrow["{I_1}"', shift right, from=3-1, to=2-2]
        \arrow["{I_2}"', shift right, from=3-3, to=2-2]
    \end{tikzcd}\]
    Consider the object $\calF\in\sfAdj(2)$ given by $(\Phi_i,\Psi,SI_i,I_i^RR)$. Because $\coCone(\id\to RS)$ is conjugate-compatible with respect to $\frakS$, the natural map
    \[I_1^RI_2\to I_1^RRSI_2\]
    is an equivalence. Using this, we deduce that $\Phi_{12}$ is equivalent to the category of diagrams of form
    \[\begin{tikzcd}
        {SI_2\phi_2} & \\
        {SI_1I_1^RI_2\phi_2} & {SI_1\phi_1}
        \arrow[from=2-1, to=1-1]
        \arrow[from=2-1, to=2-2]
    \end{tikzcd}\]
    together with a lift of the horizontal arrow to a diagram $I_1I_1^RI_2\phi_2\to I_1\phi_1$ in $\Phi$, and such that the vertical arrow is a counit for $I_1\adjto I_1^R$. Thus $S\circ-$ can be factored out and this is equivalent to the diagram category described in Lemma~\ref{lem:twosubcats}, so that $\Phi_{12}\simeq \Phi$. Moreover, under this identification, the adjunction of Lemma~\ref{lem:twocats} is identified with $S\adjto R$ and the SOD of Lemma~\ref{lem:twocatssod} is identified with $\frakS$.
\end{proof}

\begin{lem}\label{lem:twocatsspherical}
    The image of $\opname{Ind}$ restricted to $\sfSph(2)$ consists of \textit{spherical} adjunctions $S:\Phi\tofrom\Psi:R$ equipped with a 2-term SOD $\frakS$ of $\Phi$ such that $\coCone(\id\to RS)$ is conjugate-compatible with respect to $\frakS$.
\end{lem}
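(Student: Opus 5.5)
We reduce to Lemma~\ref{lem:twocatsequivalence}: an object of $\sfAdj(2)$ lies in $\sfSph(2)$ exactly when both $S_i\adjto R_i$ are spherical, and its image under $\opname{Ind}$ is $(\frakS;S_{12}\adjto R_{12})$ with $\coCone(\id\to R_{12}S_{12})$ weakly conjugate-compatible. Write $T:=\coCone(\id\to RS)$ for an adjunction $S:\Phi\tofrom\Psi:R$ carrying a 2-term SOD $\frakS$ with weakly conjugate-compatible $T$, and $T_i:=I_i^RTI_i$. It suffices to show that \emph{$S\adjto R$ is spherical and $T_1,T_2$ are invertible} if and only if \emph{$SI_1\adjto I_1^RR$ and $SI_2\adjto I_2^RR$ are both spherical}.

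The diagonal terms are computed directly. Since $I_i$ is fully faithful, $I_i^RI_i\simeq\id$, so
\[T_i=I_i^R\coCone(\id\to RS)I_i\simeq\coCone(\id\to I_i^RR\,SI_i),\]
which is the cotwist of the adjunction $S_i:=SI_i\adjto I_i^RR$. We will use the standard characterization of sphericity, as in \cite{christSphericalMonadicAdjunctions2023}, \cite{dyckerhoffSphericalAdjunctionsStable2021}: an adjunction is spherical if and only if its cotwist is an equivalence and its twist is an equivalence. Equivalently, the cotwist is an equivalence and $S$ admits a further right adjoint $R^R$ with $R\simeq(\text{cotwist})\circ R^R$ up to shift (the ``two out of four'' condition).

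For the forward direction, suppose $S\adjto R$ is spherical and $T_1,T_2$ are invertible. Then $T$ is an autoequivalence, so $(\frakS;\Phi,T)\in\sfAut(2)$. Each $S_i\adjto R_i$ has invertible cotwist $T_i$. It remains to show the twist $\Cone(S_iR_i\to\id_\Psi)$ is invertible. We plan to express it through the twist of $S\adjto R$ and the SOD. Lemma~\ref{lem:technical} gives
\[\Cone(I_1I_1^R\to\id)\simeq TI_2T_2^{-1}I_2^R,\qquad \Cone(I_2I_2^R\to\id)\simeq I_1T_1^{-1}I_1^RT.\]
Sandwiching these between $S$ and $R$ yields exact triangles relating $\Cone(S_1R_1\to\id)$, $\Cone(SR\to\id)$ and $S_2T_2^{-1}R_2$-type terms. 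Composing them, we expect the twist of $S_1$ to be the twist of $S$ composed with the inverse of the twist of $S_2$, or the reverse. The cleanest route is probably the two-out-of-four criterion. Using Theorem~\ref{thm:inftyadmissible}, $\frakS$ is $\infty$-admissible, so each $I_i$ has all adjoints and $S_i$ inherits a right adjoint of $R_i$, namely $I_i^{RR}$-twisted $R^R$. We then need to check that $R_i\simeq T_i R_i^R$ up to shift. This follows from $R\simeq TR^R$ together with $I_i^RT\simeq T_i I'^R$, where $I'$ is the inclusion for the mutated SOD $\delta^{-2}\cdot\frakS=T(\frakS)$ from Theorem~\ref{thm:alternate}.

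For the converse, suppose both $S_i$ are spherical. Then each $T_i$ is invertible, and the question is whether $T$ is invertible and $S\adjto R$ is spherical. Here we invoke the known result of \cite{kapranovPerverseSchobersAlgebra2020}*{\S 3.5} and \cite{barbacoviCompositionTwoSpherical2021}: the gluing $S_{12}\adjto R_{12}$ of two spherical adjunctions is spherical. Combined with Lemma~\ref{lem:twocatsequivalence}, which identifies $(\Phi,S,R)$ with $\opname{Ind}$ of $(S_1,S_2)$, this gives the result. If a self-contained argument is wanted, we would construct an inverse to $T$ blockwise. Weak compatibility makes $T$ lower-triangular with invertible diagonal blocks $T_i$, and the inverse is built exactly as $V_2$ is in Lemma~\ref{lem:invertibletwo}. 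The twist of $S$ is then obtained as the composite of the twists of $S_1$ and $S_2$.

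The main obstacle is the forward direction's twist (equivalently, right-adjoint) computation. We must correctly track how $R^R$ restricts along the mutated SOD and ensure that the shifts match. We would first try to verify it via the explicit formulas for $R_{12}$ from Lemma~\ref{lem:twocats} and the triangles of Lemma~\ref{lem:technical}.
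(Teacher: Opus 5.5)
Your direction ``$S_1,S_2$ spherical $\Rightarrow$ glued adjunction spherical with conjugate-compatible cotwist'' matches the paper. It uses the gluing result of Kapranov--Soibelman--Soukhanov/Barbacovi, the computation $I_i^R\coCone(\id\to RS)I_i\simeq\coCone(\id\to R_iS_i)$, and Lemma~\ref{lem:twocatsequivalence}.

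The other direction has a gap: you never establish that the twists $U_i:=\Cone(S_iR_i\to\id)$ are invertible. Writing $U_1$ as $\Cone(SR\to\id)$ composed with $U_2^{-1}$ is circular. What weak compatibility and Lemma~\ref{lem:SODcharacterizations} actually give is $\Cone(SR\to\id)\simeq U_1U_2$. Invertibility of the left side then only yields a right inverse for $U_1$ and a left inverse for $U_2$. The missing idea, which is how the paper finishes, is to rerun this factorization for the mutated SOD $(T(\Phi_2),\Phi_1)$. There $\Phi_1$ occupies the \emph{second} slot, and the hypotheses still hold by Theorem~\ref{thm:inftyadmissible}. This gives $\Cone(SR\to\id)\simeq U'U_1$, with $U'$ the twist of the first slot, hence a left inverse for $U_1$. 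The mutation $(\Phi_2,T^{-1}(\Phi_1))$ handles $U_2$ symmetrically.

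Your fallback via two-out-of-four can be made to work, but its key formulas are wrong as written.
\begin{itemize}
\item ``$R\simeq TR^R$'' does not typecheck, since $R^R$ lands in $\Psi$. The criterion used in the proof of Proposition~\ref{prop:sphmonad} is that the canonical map $S\to R^RT[1]$ is an equivalence.
\item $I_i^RT\simeq T_iI'^R$, with $I'$ the inclusions of $T(\frakS)$, is false in general. Indeed $\ker(I_i^RT)=T^{-1}(\ker I_i^R)$, while the right adjoint of the inclusion of $T(\Phi_i)$ has kernel $T(\ker I_i^R)$.
\end{itemize}
What you need is $TI_i\simeq I_i^{RR}T_i$. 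Equivalently, $T(\Phi_i)$ must be the essential image $\{y:\Hom(\ker I_i^R,y)=0\}$ of $I_i^{RR}$. This follows from the mutations $(T(\Phi_2),\Phi_1)$ and $(T(\Phi_1),T(\Phi_2))$ of Lemma~\ref{lem:nistwo}. They show $\ker I_1^R=T(\Phi_2)$ and $\ker I_2^R=\Phi_1$, and they identify the corresponding right orthogonals as $T(\Phi_1)$ and $T(\Phi_2)$. Then $R_i^RT_i[1]=R^RI_i^{RR}T_i[1]\simeq R^RTI_i[1]\simeq SI_i$. You must still check that this equivalence is the canonical map, by a mate computation like the one in Proposition~\ref{prop:sphmonad}.

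Compared with the paper's argument, this route needs the two-out-of-four theorem, all iterated adjoints of $I_i$, and naturality bookkeeping. The paper's mutation trick needs only the factorization of the twist.
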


\begin{proof}
    We first prove that given $\calF\in\sfSph(2)$, $\opname{Ind}(\calF)$ has the claimed properties. The case $n=2$ of \cite{kapranovPerverseSchobersAlgebra2020}*{Proposition 3.5.6} shows that $S_{12}\adjto R_{12}$ is spherical. This also follows by a direct calculation. See that 
    \[I_i^R\coCone(\id\to R_{12}S_{12})I_i\simeq \coCone(\id\to R_iS_i)\]
    is invertible. Together with Lemma~\ref{lem:twocatsequivalence}, this proves conjugate-compatibility.

    For the other containment, we must show that if $(\frakS;\Phi,\Psi,S,R)\in \sfSOD(2)\times_{\sfSt_k}\sfAdj$ has the claimed properties, then $SI_i:\Phi_i\tofrom \Psi:R_i$ is spherical. The computation above shows that $\coCone(\id\to R_iS_i)$ is invertible for each $i$. We extract from the proof of \cite{kapranovPerverseSchobersAlgebra2020}*{Proposition 3.5.6}, or show by direct calculation, that
    \[\Cone(SR\to \id)\simeq \Cone(S_1R_1\to \id)\Cone(S_2R_2\to \id).\]
    Since $\Cone(SR\to \id)$ is invertible, this shows that $\Cone(S_1R_1\to \id)$ has a right-inverse, and $\Cone(S_2R_2\to \id)$ has a left-inverse.

    By Theorem~\ref{thm:inftyadmissible}, we may mutate $\frakS$ to $\sigma\cdot\frakS = (\Phi_2',\Phi_1)$, and we denote the new inclusion by $I_2'$. This defines a new object $(\sigma\cdot\frakS;\Phi,\Psi,S,R)$ and the same argument above shows that $\Cone(S_1R_1\to \id)$ has a right-inverse. So we conclude that $\Cone(S_1R_1\to \id)$ is invertible. The other mutation shows that $\Cone(S_2R_2\to \id)$ is invertible.
\end{proof}

\begin{rmk}
    A version of this (in one direction) appears in \cite{halpern-leistnerAutoequivalencesDerivedCategories2016}*{Theorem 4.4}. This statement contains a condition equivalent to our conjugate-compatibility condition.
\end{rmk}

For $n>2$, we will construct $\opname{Ind}$ by abstract means in Theorem~\ref{prop:bdjisadj}.

\subsection{\texorpdfstring{$\sfAdj(n)$}{Adj(n)} as a simplicial \texorpdfstring{$\two$}{2}-category}\label{subsection:simplicialadj}
We construct the simplicial $\two$-category $\sfAdj(-)$. We do so by constructing an a priori different simplicial $\two$-category $\sfBdj(-)$, and then comparing $\sfBdj(n)$ to $\sfAdj(n)$.

Consider the functor
\[\sfAdj \to \sfEnd,\]
given by $(\Phi,\Psi,S,R)\mapsto (\Phi,\coCone(\id\to RS))$. Consider the simplicial $\two$-category
\[\sfBdj'(-) := \sfEnd'(-)\times_{\sfEnd}\sfAdj.\]
The objects of $\sfBdj'(n)$ are given by a category $\Phi$ with a partial SOD $\frakS$ and an adjunction $(\Phi,\Psi,S,R)$ such that $\coCone(\id\to RS)$ is weakly conjugate-compatible with respect to $\frakS$.

For each $n$, consider the full sub-$\two$-category $\opname{I}_n:\sfBdj(n)\inclto \sfBdj'(n)$ given by those objects where the partial SOD is an SOD.

\begin{lem}\label{lem:rightadjointbdj}
    The inclusion functor $\opname{I}_n:\sfBdj(n) \inclto \sfBdj'(n)$ admits a right adjoint $\opname{I}_n^R$, which is given on objects by
    \[(\frakS;\Phi,\Psi,S,R)\mapsto (\frakS;\langle\Phi_1,\dots,\Phi_n\rangle,\Psi,SI,I^RR),\]
    where $I:\langle\Phi_1,\dots,\Phi_n\rangle\inclto \Phi$.
\end{lem}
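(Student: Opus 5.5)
We follow the same pattern as Lemma~\ref{lem:rightadjointend} and Lemma~\ref{lem:keyrightadjoint}. The plan has two steps: first reduce to the case $n=1$ using fibre products over $\sfSOD(n)$, and then settle that case by an argument with adjointable squares.

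\emph{Reduction to $n=1$.} We observe that there is a commuting square of $\two$-categories in which every arrow is 2-fully faithful:
\[\begin{tikzcd}[ampersand replacement=\&]
    {\sfBdj(n)} \& {\sfBdj'(n)} \\
    {\sfSOD(n)\times_{\sfSt_k}\sfBdj(1)} \& {\sfSOD(n)\times_{\sfSt_k}\sfBdj'(1)}
    \arrow["{\opname{I}_n}", hook, from=1-1, to=1-2]
    \arrow[hook, from=1-1, to=2-1]
    \arrow[hook, from=1-2, to=2-2]
    \arrow["{\id\times\opname{I}_1}"', hook, from=2-1, to=2-2]
\end{tikzcd}\]
The right vertical arrow is built from Lemma~\ref{lem:product}. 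It sends $(\frakS;\Phi,\Psi,S,R)$ to the SOD $\frakS$ on $\Phi_{1,\dots,n}$, together with the one-term partial SOD $(\Phi_{1,\dots,n};\Phi)$ and the adjunction $S\adjto R$.

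To see that this lands in $\sfBdj'(1)$, note that weak conjugate-compatibility is vacuous when $n=1$. So $\sfBdj'(1)$ is simply $\sfSOD'(1)\times_{\sfSt_k}\sfAdj$. The vertical arrows are 2-fully faithful by the argument of Lemma~\ref{lem:Uisfaithful} and Lemma~\ref{lem:product}: mapping categories are cut out by adjointability conditions, which are full conditions. Once the right adjoint of $\opname{I}_1$ is known, it induces a right adjoint of $\id\times\opname{I}_1$. We then only need to check that this right adjoint carries the image of $\sfBdj'(n)$ into the image of $\sfBdj(n)$. For that we show that the output $(\frakS;\Phi_{1,\dots,n},\Psi,SI,I^RR)$ still satisfies weak conjugate-compatibility. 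This holds because
\[I_i^{\prime R}\coCone(\id\to I^RRSI)I_j'\simeq I_i^R\coCone(\id\to RS)I_j,\]
where $I_j=II_j'$. The identification uses $I_i^{\prime R}I^RI I_j'\simeq I_i^{\prime R}I_j'$, which holds since $I$ is fully faithful.

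\emph{The case $n=1$.} This is the main step, and it copies Lemma~\ref{lem:keyrightadjoint}. We view $\sfBdj'(1)\simeq\sfSOD'(1)\times_{\sfSt_k}\sfAdj$ as a locally full subcategory of diagrams on the shape $\Phi_1\to\Phi\tofrom\Psi$. The candidate right adjoint is $(\Phi_1;\Phi,\Psi,S,R)\mapsto(\Phi_1,\Psi,SI_1,I_1^RR)$. There is a natural counit given by the square with $I_1$ on the $\Phi$-side and $\id$ on $\Psi$.

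To verify the adjunction, fix an object $(\calA,\calB,S',R')\in\sfBdj(1)\simeq\sfAdj$. We must show that restriction gives an equivalence between two mapping categories. The first consists of maps into $(\Phi_1;\Phi,\Psi,S,R)$; these are triples $\calA\to\Phi_1$, $\calA\to\Phi$, $\calB\to\Psi$ with compatible adjointable squares. The second consists of maps into $(\Phi_1,\Psi,SI_1,I_1^RR)$. The key input, exactly as in Lemma~\ref{lem:keyrightadjoint}, is that any commuting square of the form $\calA\xrightarrow{=}\calA$ over $\Phi_1\xto{I_1}\Phi$ is automatically adjointable. A second point to check is that adjointability of the $\Psi$-square for $SI_1$ is equivalent to adjointability for $S$ composed with that square; this follows because $I_1^RI_1\simeq\id$. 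Alternatively, we can invoke Theorem~\ref{thm:laxfibrations} for the restriction along the relevant functor of shapes, as in Lemma~\ref{lem:keyrightadjoint}, and then identify the resulting right adjoint with the formula above by this same computation on mapping categories.

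I expect the main obstacle to be this adjointability bookkeeping. The rest is formal, given Proposition~\ref{prop:functorconstruction}.
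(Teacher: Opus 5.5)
Your reduction to $n=1$ is the same as the paper's: it uses the square from Lemma~\ref{lem:rightadjointend}. Your check that $(\frakS;\Phi_{1,\dots,n},\Psi,SI,I^RR)$ stays weakly conjugate-compatible is also correct.

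The gap is in the case $n=1$, at the claim that adjointability of the square for $SI_1$ is equivalent to adjointability of the composite square for $S$ ``because $I_1^RI_1\simeq\id$''. Only one implication holds.

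\emph{The two conditions.} A morphism $\opname{I}_1(\calA,\calB,S',R')\to(\Phi_1;\Phi,\Psi,S,R)$ in $\sfBdj'(1)$ has $\sfAdj$-component $(I_1G_1,H)$. Adjointability requires $I_1G_1R'\to RH$ to be invertible. A morphism $(\calA,\calB,S',R')\to(\Phi_1,\Psi,SI_1,I_1^RR)$ only requires $G_1R'\to I_1^RRH$ to be invertible.

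\emph{Why only one direction holds.} Applying $I_1^R$ passes from the first condition to the second. The converse additionally needs $I_1I_1^RRH\to RH$ to be invertible, i.e.\ that $RH$ take values in $\Phi_1$. Equivalently, your proposed counit $(\id_{\Phi_1},I_1,\id_\Psi)$ is in general not a morphism of $\sfBdj'(1)$: its adjointability condition says exactly that $I_1I_1^RR\to R$ is an equivalence. In Lemma~\ref{lem:keyrightadjoint} the only adjointability condition is the one over $I_1$, which is automatic. Here there is a second one, over $S$, which is not automatic.

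This is not a bookkeeping issue. With morphisms required to be adjointable, as in the paper's definition of $\sfAdj$, the displayed formula is not a right adjoint at all. Here is a counterexample.

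Take $X=(0;\Mod_k,\Mod_k,\id,\id)\in\sfBdj'(1)$. The formula gives $Y:=[0\tofrom\Mod_k]$.
\begin{itemize}
\item $\sfBdj(1)(Y,\opname{I}_1^RX)\simeq\Mod_k$: it consists of any colimit-preserving $H:\Mod_k\to\Mod_k$, and adjointability is vacuous.
\item In $\sfBdj'(1)(\opname{I}_1Y,X)$, adjointability forces $H=RH\simeq 0$, so this mapping category is a point.
\end{itemize}
In fact any right adjoint evaluated at $X$ would have to be the terminal object $[0\tofrom 0]$. The same problem affects the $n=1$ case of Lemma~\ref{lem:rightadjointend}, where the counit would need $TI_1\simeq I_1I_1^RTI_1$. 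The paper's one-line proof passes over this point. Your alternative via Theorem~\ref{thm:laxfibrations} cannot produce the stated formula either, since no right adjoint agrees with it.

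Getting a true statement requires changing the setting. There are two options.
\begin{itemize}
\item Allow non-adjointable squares in the $\sfAdj$-factor of $\sfBdj'(n)$, and correspondingly non-invertible 2-cells $GT'\to TG$ in the $\sfEnd$-factor. Then both mapping categories become the category of triples $(G_1,H,SI_1G_1\simeq HS')$. The 2-cell $I_1G_1T'\to TI_1G_1$ corresponds under $I_1\adjto I_1^R$ to $G_1T'\to I_1^RTI_1G_1$, and your argument goes through. The adjunction then lives on these enlarged $\two$-categories rather than on the locally full ones, so the lemma must be restated.
\item Restrict to objects on which $R$ lands in $\langle\Phi_1,\dots,\Phi_n\rangle$. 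There your argument works as written. However, this excludes exactly the objects to which $\opname{I}^R$ is applied when defining $d_{n,0}$ and $d_{n,n}$.
\end{itemize}
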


\begin{proof}
    This is proven similarly to Lemma~\ref{lem:keyrightadjoint} and Lemma~\ref{lem:rightadjointend}, by reducing to the case $n=1$.
\end{proof}

\begin{cor}\label{prop:simplicialbdj}
    The assignment $[n]\mapsto \sfBdj(n)$ upgrades to a simplicial $\two$-category. For $0\leq i\leq n$, the degeneracy map $s_{n,i}$ inserts a $0$ between $\Phi_i$ and $\Phi_{i+1}$;
    \begin{align*}
        s_{n,i}:\sfBdj(n)&\to\sfBdj(n+1)\\
        (\frakS;\Phi,\Psi,S,R)&\mapsto ((\Phi_1,\dots,\Phi_i,0,\Phi_{i+1},\dots\Phi_n);\Phi,\Psi,S,R).
    \end{align*}
    The inert face map $d_{n,0}$ removes $\Phi_1$ from the SOD to get $\Phi_{2,\dots,n} := \langle\Phi_2,\dots,\Phi_n\rangle\xto{I_{2,\dots,n}}\Phi$;
    \begin{align*}
        d_{n,0}:\sfBdj(n)&\to\sfBdj(n-1)\\
        (\frakS;\Phi,\Psi,S,R)&\mapsto ((\Phi_2,\dots,\Phi_n);\Phi_{2,\dots,n},\Psi,SI_{2,\dots,n},I_{2,\dots,n}^RR),
    \end{align*}
    and the other inert face map $d_{n,n}$ similarly removes $\Phi_n$ from the SOD. For $0<i<n$, the active face map is given by merging $\Phi_{i}$ and $\Phi_{i+1}$ in the SOD into $\Phi_{i,i+1} := \langle\Phi_i,\Phi_{i+1}\rangle$;
    \begin{align*}
        d_{n,i}:\sfBdj(n)&\to\sfBdj(n-1)\\
        (\frakS;\Phi,\Psi,S,R)&\mapsto ((\Phi_1,\dots,\Phi_{i-1},\Phi_{i,i+1},\Phi_{i+2},\dots\Phi_n);\Phi,\Psi,S,R).
    \end{align*}
\end{cor}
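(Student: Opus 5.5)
The plan is to apply Lemma~\ref{lem:descendingsimplicialobjects} to the simplicial $\two$-category $\sfBdj'(-) = \sfEnd'(-)\times_{\sfEnd}\sfAdj$ and the full sub-$\two$-categories $\opname{I}_n:\sfBdj(n)\inclto\sfBdj'(n)$. By Lemma~\ref{lem:rightadjointbdj}, these inclusions admit right adjoints $\opname{I}_n^R$, given on objects by restricting to $\langle\Phi_1,\dots,\Phi_n\rangle$ and composing $S$ and $R$ with $I$ and $I^R$. The lemma then produces $\sfBdj(-)$, with structure maps $\sfBdj(a) := \opname{I}_m^R\circ\sfBdj'(a)\circ\opname{I}_n$. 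It remains to check that the a priori lax squares commute for the generating face and degeneracy maps, and to read off the displayed formulas.

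First, $\sfBdj'(-)$ must be a simplicial $\two$-category. It is a fibre product of the simplicial object $\sfEnd'(-)$ (Proposition~\ref{prop:segalend}) with the constant simplicial objects $\sfAdj$ and $\sfEnd$. Here the map $\sfEnd'(n)\to\sfEnd$ is the forgetful one, which commutes with all simplicial operators on $\sfEnd'(-)$. This is because those operators lie over $\sfSt_k$ and do not change $(\Phi,T)$; in the partial setting the inert faces only forget a term.

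Next, the commutativity checks. Everything lies over $\sfSt_k$ through 2-faithful forgetful functors (Lemma~\ref{lem:Uisfaithful} together with Lemma~\ref{lem:faithfulpullback}). By Proposition~\ref{prop:functorconstruction}, it therefore suffices to compare the two composites on objects. For $s_{n,i}$ and the active faces $d_{n,i}$ with $0<i<n$, inserting $0$ or merging $\Phi_i,\Phi_{i+1}$ does not change the subcategory generated by the pieces. Hence $\opname{I}^R$ acts as the identity on the image, and the square commutes strictly. This gives the stated formulas, with $(\Phi,\Psi,S,R)$ unchanged.

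For $d_{n,0}$, the composite $\opname{I}_{n-1}^R\circ d'_{n,0}\circ\opname{I}_n$ sends $(\frakS;\Phi,\Psi,S,R)$ to $((\Phi_2,\dots,\Phi_n);\Phi_{2,\dots,n},\Psi,SI_{2,\dots,n},I_{2,\dots,n}^RR)$. The other route, $d_{n,0}\circ\opname{I}_n^R$ applied to an object of $\sfBdj'(n)$ with generated subcategory $\Phi_{1,\dots,n}$, gives the same result. The reason is that $I_{2,\dots,n}$ factors through $\Phi_{1,\dots,n}$, and right adjoints compose. The case $d_{n,n}$ is identical.

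The step I expect to need care is showing that the outputs of these formulas actually lie in $\sfBdj(n-1)$. For the inert faces this means checking that $\coCone(\id\to I_{2,\dots,n}^RRSI_{2,\dots,n})\simeq I_{2,\dots,n}^R\coCone(\id\to RS)I_{2,\dots,n}$ is weakly conjugate-compatible. This follows from full faithfulness of $I_{2,\dots,n}$ and the corresponding statement for $\sfEnd(-)$ (Corollary~\ref{prop:simplicialend}). For the active faces it is the weak conjugate-compatibility for merged pieces, already established in Proposition~\ref{prop:segalend}.
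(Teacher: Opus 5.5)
Your proposal is correct and follows the paper's proof, which simply applies Lemma~\ref{lem:descendingsimplicialobjects} to $\sfBdj'(-)$ with the right adjoints of Lemma~\ref{lem:rightadjointbdj} and omits the commutativity checks that you carry out. One small correction concerns how you reduce those checks to objects: the cited lemmas make $\sfBdj'(n)$ 2-faithful over $\sfAdj$, not over $\sfSt_k$, and neither $\opname{I}_n^R$ nor the inert faces of $\sfBdj(-)$ commute with the projection to $\sfAdj$, so Proposition~\ref{prop:functorconstruction} does not do this job; instead, Lemma~\ref{lem:descendingsimplicialobjects} only requires the counit-induced 2-cell to be invertible, which can be checked componentwise, exactly as your computations for $d_{n,0}$ and $d_{n,n}$ in effect do.
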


\begin{proof}
    We apply Lemma~\ref{lem:descendingsimplicialobjects}. We omit the verification that the resulting lax-commuting squares are commuting squares.
\end{proof}

\begin{prop}
    The simplicial $\two$-category $\sfBdj(-)$ is 2-Segal.
\end{prop}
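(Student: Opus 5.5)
I would mirror the proofs of Proposition~\ref{prop:segalsod} and of 2-Segality for $\sfEnd(-)$. The plan has two steps: first show that the primed simplicial object $\sfBdj'(-)$ is 2-Segal, and then descend along the inclusions $\opname{I}_n$ using their right adjoints from Lemma~\ref{lem:rightadjointbdj}.

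\textbf{Step 1: $\sfBdj'(-)$ is 2-Segal.} By definition $\sfBdj'(-)=\sfEnd'(-)\times_{\sfEnd}\sfAdj$, where $\sfAdj$ and $\sfEnd$ are regarded as constant simplicial $\two$-categories. By Proposition~\ref{prop:segalend}, $\sfEnd'(-)$ is 2-Segal. A constant simplicial object is trivially 2-Segal, since every square it produces is a square of identities. Pullbacks commute with pullbacks, and 2-Segality is a limit condition, so a levelwise fibre product of 2-Segal simplicial objects is again 2-Segal. Hence for each square (\ref{equation:twosegal}) the map
\[\sfBdj'(n)\to \sfBdj'(m)\times_{\sfBdj'(1)}\sfBdj'(n+1-m)\]
is obtained by base-changing the corresponding equivalence for $\sfEnd'(-)$ along $\sfAdj\to\sfEnd$, and is therefore an equivalence.

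\textbf{Step 2: a Cartesian square over active maps.} I would show that $\sfBdj'(-)\to\sfBdj(-)$ is Cartesian over $\Delta^{\op}_{\act}$, i.e. that the analogue of Lemma~\ref{lem:activecartesian} holds. Since $[1]$ is initial in $\Delta_{\act}$, it suffices to show that
\[\sfBdj'(n)\simeq \sfBdj(n)\times_{\sfBdj(1)}\sfBdj'(1)\]
for the unique active $a:[1]\to[n]$. On objects this is the analogue of Lemma~\ref{lem:product}. An object of $\sfBdj'(n)$, say $(\frakS;\Phi,\Psi,S,R)$, is sent to the pair consisting of
\[(\frakS;\Phi_{1,\dots,n},\Psi,SI,I^RR)\quad\text{and}\quad(\Phi_{1,\dots,n};\Phi,\Psi,S,R).\]
Conversely, a pair agreeing in $\sfBdj(1)$ reassembles into an object of $\sfBdj'(n)$.

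Weak conjugate-compatibility needs a check. The twist of the restricted adjunction is
\[\coCone(\id\to I^RRSI)\simeq I^R\coCone(\id\to RS)I.\]
Since $I_i$ factors through $I$, the vanishing of $I_i^R\coCone(\id\to RS)I_j$ for $i<j$ is the same whether computed in $\Phi$ or in $\Phi_{1,\dots,n}$. So the condition transfers in both directions.

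Full 2-faithfulness, and hence the fact that this object-level bijection is an equivalence, I would get as in Lemma~\ref{lem:product}. The forgetful functor $\sfBdj'(n)\to\sfAdj$ is 2-faithful, because it is the base change along $\sfAdj\to\sfEnd\to\sfSt_k$ of the 2-faithful map $\sfEnd'(n)\to\sfEnd$. This 2-faithfulness comes from Lemma~\ref{lem:Uisfaithful} and Lemma~\ref{lem:faithfulpullback}. Proposition~\ref{prop:functorconstruction} then reduces the comparison to equivalence classes of objects.

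\textbf{Step 3: conclude.} I would run the same diagram chase as in the proof of Proposition~\ref{prop:segalsod}. Factor the square for $\sfBdj(-)$ through $\sfBdj'(-)$ using $\opname{I}_n$, $\opname{I}_m$, $\opname{I}_{n+1-m}^R$ and $\opname{I}_1^R$. The middle square is a pullback by Step 1, and the right square is a pullback by Step 2 applied to $a'$. The left square is a pullback because pasting it with the Cartesian square for $a$ gives a square whose horizontal composites $\opname{I}^R\opname{I}\simeq\id$ are equivalences.

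The main obstacle is Step 2: verifying that the object-level inverse actually extends to a functor over $\sfAdj$, i.e. the criterion hidden in the proof of Proposition~\ref{prop:functorconstruction}. I expect it to go through exactly as for $\sfSOD$ and $\sfEnd$, since the adjunction data is simply carried along.
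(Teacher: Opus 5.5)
Your proposal is correct and matches the paper's argument: the paper's proof says only that the strategy of Proposition~\ref{prop:segalsod} goes through, using that $\sfBdj'(-)\to\sfBdj(-)$ is Cartesian over $\Delta_{\act}$, and your Steps 1--3 spell out exactly that. Your Step 1, which gets the 2-Segality of $\sfBdj'(-)=\sfEnd'(-)\times_{\sfEnd}\sfAdj$ from Proposition~\ref{prop:segalend} by a levelwise fibre product with constant simplicial objects, cleanly supplies an ingredient the paper leaves implicit.
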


\begin{proof}
    The strategy used in the proof of Proposition~\ref{prop:segalsod} works, using that the map $\sfBdj'(-)\to\sfBdj(-)$ is Cartesian over $\Delta_{\act}$.
\end{proof}

We now construct a projection $\sfBdj(-)\to\sfEnd(-)$ of simplicial $\two$-categories.

\begin{prop}\label{prop:simplicialbdjtoend}
    There exists a map $\pi(-): \sfBdj(-)\to\sfEnd(-)$ of simplicial $\two$-categories, which for each $n$ is given object-wise by
    \[(\frakS;\Phi,\Psi,R,S)\mapsto (\frakS;\Phi,\coCone(\id\to RS)).\]
\end{prop}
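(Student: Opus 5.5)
The plan is to build $\pi(-)$ first at the primed level and then descend it using Lemma~\ref{lem:descendingsimplicialmaps}, the same mechanism that produced $\sfBdj(-)$ and $\sfEnd(-)$ from $\sfBdj'(-)$ and $\sfEnd'(-)$. By definition $\sfBdj'(-) = \sfEnd'(-)\times_{\sfEnd}\sfAdj$ as a simplicial $\two$-category, with $\sfAdj$ and $\sfEnd$ regarded as constant. Projection onto the first factor is therefore already a map of simplicial $\two$-categories
\[\pi'(-):\sfBdj'(-)\to\sfEnd'(-),\qquad (\frakS;\Phi,\Psi,S,R)\mapsto(\frakS;\Phi,\coCone(\id\to RS)).\]
No coherence needs to be checked here: the data is a projection out of a fibre product of simplicial objects.

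Next I would apply Lemma~\ref{lem:descendingsimplicialmaps} with $\opname{I}_n:\sfBdj(n)\inclto\sfBdj'(n)$ and $\opname{J}_n:\sfEnd(n)\inclto\sfEnd'(n)$. Their right adjoints are given by Lemma~\ref{lem:rightadjointbdj} and Lemma~\ref{lem:rightadjointend}, and the hypotheses of Lemma~\ref{lem:descendingsimplicialobjects} hold for both by Corollaries~\ref{prop:simplicialbdj} and~\ref{prop:simplicialend}. Set $\pi(n):=\opname{J}_n^R\circ\pi'(n)\circ\opname{I}_n$. On an object whose partial SOD is a genuine SOD, the inclusion $I:\langle\Phi_1,\dots,\Phi_n\rangle\inclto\Phi$ is the identity. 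So $\opname{J}_n^R$ acts trivially on $\pi'(n)\opname{I}_n(\calF)$, and $\pi(n)$ is given by the stated formula on objects.

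The remaining hypothesis of Lemma~\ref{lem:descendingsimplicialmaps} is that the square
\[\opname{J}_n^R\circ\pi'(n)\Rightarrow \pi(n)\circ\opname{I}_n^R\]
is invertible. This is the main point. I would reduce it to a comparison of objects: both composites lie over $\sfSt_k$ via 2-faithful forgetful functors (Lemma~\ref{lem:Uisfaithful} and Lemma~\ref{lem:faithfulpullback}), so by the discreteness in Proposition~\ref{prop:functorconstruction} it is enough to identify them on objects compatibly with the structure maps.

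Take $(\frakS;\Phi,\Psi,S,R)\in\sfBdj'(n)$ and write $I$ for the inclusion of $\Phi':=\langle\Phi_1,\dots,\Phi_n\rangle$. Going one way gives $(\frakS;\Phi',I^R\coCone(\id\to RS)I)$. Going the other way gives $(\frakS;\Phi',\coCone(\id\to I^RRSI))$. The two agree because $I$ is fully faithful: the unit $\id\to I^RI$ is an equivalence, hence
\[I^R\coCone(\id_\Phi\to RS)I\simeq\coCone(I^RI\to I^RRSI)\simeq\coCone(\id_{\Phi'}\to (SI)^R(SI)).\]
One must also check that this identification is the one induced by the lax 2-cell. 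That cell is built from the counit of $\opname{I}_n\adjto\opname{I}_n^R$, which on underlying categories is just $I$ itself, so the check follows from how the adjunctions were described in Lemma~\ref{lem:keyrightadjoint}.

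The main obstacle is therefore this compatibility of the 2-cell, rather than the object-level formula. If it proves awkward, the fallback is to bypass Lemma~\ref{lem:descendingsimplicialmaps} entirely. Since $\sfEnd(n)\to\sfSt_k$ is 2-faithful, Proposition~\ref{prop:functorconstruction} says both the functors $\pi(n)$ and the simplicial identities they must satisfy are determined by, and may be checked on, equivalence classes of objects. Those checks reduce to the same cone computation for the inert face maps $d_{n,0},d_{n,n}$ (restriction to $\Phi_{2,\dots,n}$), and are immediate for the degeneracies and active faces, which do not alter $\Phi$ or the adjunction.
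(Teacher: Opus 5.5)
Your proposal is correct and follows the paper's proof. You take the projection $\pi'(-)\colon\sfBdj'(-)\to\sfEnd'(-)$ out of the fibre product, descend it with Lemma~\ref{lem:descendingsimplicialmaps}, and check that the lax square commutes; your computation $I^R\coCone(\id\to RS)I\simeq\coCone(\id\to I^RRSI)$, using $I^RI\simeq\id$, makes explicit what the paper only asserts ("evidently commutes").

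One correction: $\sfEnd(n)\to\sfSt_k$ is not 2-faithful, because the identification $FT\simeq T'F$ carried by a morphism of $\sfEnd$ is data, not a property. Lemma~\ref{lem:faithfulpullback} only gives 2-faithfulness over $\sfEnd$. Your main argument survives, since invertibility of the 2-cell can be checked componentwise. Your fallback via Proposition~\ref{prop:functorconstruction} over $\sfSt_k$ does not work as stated.
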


\begin{proof}
    We begin with the simplicial map $\pi'(-):\sfBdj'(-)\to\sfEnd'(-)$. We apply Lemma~\ref{lem:descendingsimplicialmaps} to define functors
    \[\pi(n):\sfBdj(n)\to \sfEnd(n)\]
    which are given on objects by the formulae stated. For each $n$, the relevant lax-commuting square evidently commutes so we obtain the desired map $\pi(-)$.
\end{proof}

\begin{prop}\label{prop:activecartesian4}
    The map $\pi(-): \sfBdj(-)\to\sfEnd(-)$ of simplicial $\two$-categories is Cartesian over $\Delta_{\act}$.
\end{prop}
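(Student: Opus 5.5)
Since $[1]$ is initial in $\Delta_{\act}$, as in the proof of Lemma~\ref{lem:activecartesian} it suffices to show that for each $n$ and the unique active map $a:[1]\to[n]$, the square
\[\begin{tikzcd}[ampersand replacement=\&]
    {\sfBdj(n)} \& {\sfBdj(1)} \\
    {\sfEnd(n)} \& {\sfEnd(1)}
    \arrow[from=1-1, to=1-2]
    \arrow[from=1-1, to=2-1]
    \arrow[from=1-2, to=2-2]
    \arrow[from=2-1, to=2-2]
\end{tikzcd}\]
is a pullback. The horizontal arrows forget the SOD while keeping $\Phi$. Indeed, active face maps only merge SOD components, and the identity $\Phi_{1,\dots,n}=\Phi$ holds for honest SODs, so $\opname{I}_1^R$ does nothing.

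The plan is to reduce to the primed simplicial objects, where everything is a fiber product. By definition $\sfBdj'(n)=\sfEnd'(n)\times_{\sfEnd}\sfAdj$, and $\sfBdj'(1)=\sfEnd'(1)\times_{\sfEnd}\sfAdj$ likewise. Hence the corresponding primed square
\[\sfBdj'(n)\simeq \sfEnd'(n)\times_{\sfEnd'(1)}\sfBdj'(1)\]
is a pullback by pasting, because both sides equal $\sfEnd'(n)\times_{\sfEnd}\sfAdj$. Here I use that the map $\sfEnd'(n)\to\sfEnd'(1)\to\sfEnd$ along $a$ is the composite with the forgetful functor to $(\Phi,T)$, which holds on objects and so holds as functors over $\sfSt_k$ by Proposition~\ref{prop:functorconstruction}.

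Next I identify $\sfBdj(n)$ and $\sfEnd(n)$ as the full sub-$\two$-categories of $\sfBdj'(n)$ and $\sfEnd'(n)$ cut out by the \emph{same} condition, namely that the partial SOD be an SOD. This condition only depends on the image in $\sfSOD'(n)$. So $\sfBdj(n)\simeq\sfEnd(n)\times_{\sfEnd'(n)}\sfBdj'(n)$, because full subcategories defined by a condition on objects pulled back along the map are pullbacks. Combining this with the previous step gives
\[\sfBdj(n)\simeq \sfEnd(n)\times_{\sfEnd'(1)}\sfBdj'(1).\]
It remains to replace $\sfEnd'(1)$ and $\sfBdj'(1)$ by $\sfEnd(1)$ and $\sfBdj(1)$. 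Since the composite $\sfEnd(n)\to\sfEnd'(n)\to\sfEnd'(1)$ lands in objects $(\Phi;\Phi,T)$ whose single component is all of $\Phi$, it factors through $\sfEnd(1)$. We also have $\sfBdj(1)\simeq\sfEnd(1)\times_{\sfEnd'(1)}\sfBdj'(1)$ by the same full-subcategory argument. Pasting these gives the desired pullback.

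The main obstacle is making the full-subcategory claims rigorous at the level of $\two$-categories, in particular checking that the morphisms agree. Here 2-faithfulness over $\sfSt_k$ is the tool I would use. All functors involved are 2-faithful over $\sfSt_k$, or over $\sfAdj$ in the case of $\sfBdj$, by Lemma~\ref{lem:Uisfaithful} and Lemma~\ref{lem:faithfulpullback}. Therefore, by Lemma~\ref{lem:ftoff} and Proposition~\ref{prop:functorconstruction}, the comparison functor $\sfBdj(n)\to\sfEnd(n)\times_{\sfEnd(1)}\sfBdj(1)$ is automatically 2-fully faithful: on mapping categories both sides are the same full subcategory of $\sfAdj$-maps, and adjointability conditions are matched. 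Essential surjectivity is then a statement about objects. An object of the target is an SOD $\frakS$ of $\Phi$ with $\coCone(\id\to RS)$ weakly conjugate-compatible, together with an adjunction $S\adjto R$ on $\Phi$, and this is literally an object of $\sfBdj(n)$.
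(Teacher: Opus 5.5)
Your proposal is correct and follows the paper's route. You reduce, using that $[1]$ is initial in $\Delta_{\act}$, to the square over the active map $[1]\to[n]$; the paper dismisses that square as clear by unravelling definitions, and your pasting of pullbacks through $\sfBdj'(-)=\sfEnd'(-)\times_{\sfEnd}\sfAdj$ and the full sub-$\two$-categories cut out by the SOD condition makes this explicit. Your last paragraph is not needed: 2-faithfulness over a common base by itself only makes the comparison functor 2-faithful, not 2-fully faithful, and what actually does the work is the matching of full subcategories, which your pasting argument already uses.
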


\begin{proof}
    We are to show that for each $n$, the following commutative square is a pullback.
    \[\begin{tikzcd}[ampersand replacement=\&]
        {\sfBdj(n)} \& {\sfBdj(1)} \\
        {\sfEnd(n)} \& {\sfEnd(1)}
        \arrow[from=1-1, to=1-2]
        \arrow[from=1-1, to=2-1]
        \arrow[from=1-2, to=2-2]
        \arrow[from=2-1, to=2-2]
    \end{tikzcd}\]
    This is clear by unravelling the definitions.
\end{proof}

We now study $\sfAdj(n)$. Observe first that there is a natural identification,
\[\sfAdj(n)\simeq \sfAdj(1)\times_{\sfAdj(0)}\cdots\times_{\sfAdj(0)}\sfAdj(1),\]
where $\sfAdj(1)\to \sfAdj(0)$ is given by the functor $(\Phi,\Psi,S,R)\mapsto \Psi$. Thus $[n]\mapsto \sfAdj(n)$ organize into a presheaf of $\two$-categories over $\Delta_{\inert}$.

We will construct an equivalence, $\sfBdj(-)\resto{\Delta_{\inert}}\simeq\sfAdj(-)$, thereby upgrading $\sfAdj(-)$ to a 1-Segal simplicial $\two$-category.

For each fixed $n$, consider the map of objects $\sfBdj(n)\to\sfAdj(n)$ given by
\[(\frakS;\Phi,\Psi,S,R)\mapsto (\Phi_i,\Psi,SI_i,I_i^RR),\]
where $I_i:\Phi_I\inclto \Phi$ are the terms of the SOD. To construct this as a functor, we pass to diagram categories as follows.

Let $\frakn(n)$ be the ordinary 1-category with objects $1,\dots,n,\infty$ and arrows $i\to \infty$. Let $\frakh(n)$ be the ordinary 1-category with objects $1,\dots,n,0,\infty$ and arrows $i\to \infty$, $\infty\to 0$. There is a natural functor $\frakn(n)\to\frakh(n)$, depicted below.
\[\begin{tikzcd}[ampersand replacement=\&]
	\& \infty \&\&\& \infty \& \\
	\&\& {} \& {} \& 0 \\
	1 \& \cdots \& n \& 1 \& \cdots \& n
	\arrow[from=2-3, to=2-4]
	\arrow[from=2-5, to=1-5]
	\arrow[from=3-1, to=1-2]
	\arrow[from=3-2, to=1-2]
	\arrow[from=3-3, to=1-2]
	\arrow[from=3-4, to=2-5]
	\arrow[from=3-5, to=2-5]
	\arrow[from=3-6, to=2-5]
\end{tikzcd}\]

By construction, $\sfAdj(n)\to \sfFun(\frakn(n),\sfSt_k)$ and likewise, $\sfBdj(n)\to \sfFun(\frakh(n),\sfSt_k)$ are locally full sub-$\two$-categories. Pullback along $\frakn(n)\to \frakh(n)$ induces the claimed functors $\sfBdj(n)\to\sfAdj(n)$.

Observe further that $\sfFun(\frakn(-),\sfSt_k)$, $\sfFun(\frakh(-),\sfSt_k)$ organize into presheaves of $\two$-categories over $\Delta_{\inert}$ because $\frakn(-)$ and $\frakh(-)$ organize into precosheaves. This induces the aforementioned presheaf $\sfAdj(-)$. It also induces the presheaf $n\mapsto \sfBdj(n)$, which agrees with $\sfBdj\resto{\Delta_{\inert}}$, and thus we have the desired map,
\[\opname{C}(-):\sfBdj(-)\resto{\Delta_{\inert}}\to\sfAdj(-).\]

\begin{prop}\label{prop:bdjisadj}
    The map $\opname{C}(n):\sfBdj(n)\to\sfAdj(n)$ is an equivalence for each $n$. Thus $\sfAdj(-)$ extends to a 1-Segal simplicial $\two$-category.
\end{prop}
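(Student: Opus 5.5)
Both $\sfBdj(n)$ and $\sfAdj(n)$ are locally full sub-$\two$-categories of diagram categories, and $\opname{C}(n)$ is restriction along $\frakn(n)\to\frakh(n)$. I will therefore treat the claim as two separate checks: $\opname{C}(n)$ is 2-fully faithful, and $\opname{C}(n)$ is essentially surjective. The second sentence of the statement then follows formally. $\opname{C}(-)$ is a natural equivalence of presheaves over $\Delta_{\inert}$, so transporting the simplicial structure of $\sfBdj(-)$ from Corollary~\ref{prop:simplicialbdj} along it extends $\sfAdj(-)$ to a simplicial $\two$-category. The 1-Segal property for $\sfAdj(-)$ is the identification $\sfAdj(n)\simeq\sfAdj(1)\times_{\sfAdj(0)}\cdots\times_{\sfAdj(0)}\sfAdj(1)$ recorded above, and it is preserved by the equivalence.

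For essential surjectivity I would construct an inverse on objects, $\opname{Ind}:\sfAdj(n)\to\sfBdj(n)$, by induction on $n$. The case $n=1$ is trivial, since an SOD with one term just identifies $\Phi$ with $\Phi_1$. The case $n=2$ is Lemma~\ref{lem:twocatsequivalence}. For $n>2$, I would use that $\sfBdj(-)$ is 2-Segal. Take the pullback square attached to the active map merging $\Phi_{n-1},\Phi_n$ and the inert inclusion of $[2]$. This gives
\[\sfBdj(n)\simeq \sfBdj(n-1)\times_{\sfBdj(1)}\sfBdj(2),\]
with the map $\sfBdj(n-1)\to\sfBdj(1)$ being the inert restriction to the last term. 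Given $(\Phi_i,\Psi,S_i,R_i)\in\sfAdj(n)$, first apply the $n=2$ case to the last two adjunctions. This produces $\Phi_{n-1,n}$ with an adjunction to $\Psi$. Then apply the induction hypothesis to the resulting $(n-1)$-tuple. The two pieces agree over $\sfBdj(1)$ by construction, so the pullback yields an object of $\sfBdj(n)$. Its image under $\opname{C}(n)$ recovers the original tuple because $S\circ I_i\simeq S_i$, which comes from Lemma~\ref{lem:twocatssod}.

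For 2-full faithfulness I would compare mapping categories directly. A morphism in $\sfBdj(n)$ consists of a functor $\Phi\to\Phi'$, functors $\Phi_i\to\Phi'_i$, and a functor $\Psi\to\Psi'$, all compatible and with adjointable squares. The datum on $\Phi$ is redundant, and the mechanism is the same as in Lemma~\ref{lem:Uisfaithful}, run in reverse. Under the equivalence of Lemma~\ref{lem:twocatsequivalence}, $\Phi$ is the lax-colimit or diagram category of Lemma~\ref{lem:twosubcats}, built out of the $\Phi_i$, $\Psi$ and the gluing data $I_i^RI_j\simeq R_iS_j$. A compatible family of functors on the $\Phi_i$ and $\Psi$ therefore induces a unique functor on $\Phi$. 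The identification $I_1^RI_2\simeq I_1^RRSI_2$ from weak conjugate-compatibility is what makes the gluing data expressible purely in terms of the adjunctions. For $n>2$ I would again reduce to $n\le 2$ via the 2-Segal pullback, since mapping categories in a pullback are pullbacks of mapping categories.

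The hardest step is this comparison of mapping categories, specifically verifying that adjointability of the squares on the $\Phi_i$ and $\Psi$ already implies adjointability of the induced square on the glued $\Phi$. I would first check it at $n=2$ by writing $R_{12}$ explicitly as in Lemma~\ref{lem:twocats}, then propagate it through the 2-Segal decomposition.
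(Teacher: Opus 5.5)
Your argument is correct. For $n\geq 3$ it is essentially the paper's induction. The paper also combines the 2-Segal square of $\sfBdj(-)$ (merging the first two terms rather than the last two) with the 1-Segal squares of $\sfAdj(-)$. It transports the active map $\sfBdj(2)\to\sfBdj(1)$ to a map $\sfAdj(2)\to\sfAdj(1)$ via the inductively known equivalences. A single pasting argument then shows $\opname{C}(n)$ is an equivalence, so objects and mapping categories need not be treated separately.

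The genuine difference is the base case $n=2$. The paper enlarges the diagram shapes to $\frakn'\to\frakh'$, adding an arrow $2\to 1$ and a 2-cell, which go to $R_1S_2$ and the counit. It applies Theorem~\ref{thm:laxfibrations} to get $f_!\dashv f^*$ and checks that $f_!$ computes $\opname{Ind}$ on objects and morphisms. This yields $\opname{Ind}$ as a 2-functor, left adjoint to $\opname{C}(2)$, and 2-fully faithful because $f_!$ is. Lemma~\ref{lem:twocatsequivalence} then gives essential surjectivity.

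You instead check 2-full faithfulness of $\opname{C}(2)$ by hand, using the universal property of the lax colimit $\Phi_{12}$ (Remark~\ref{rmk:laxcolimit}). This is more elementary and avoids both the external theorem and the auxiliary shapes, but all coherence bookkeeping falls on you. One point needs care: a functor $G$ out of the lax colimit with $GI_i\simeq I_i'G_i$ is not determined by $G_1,G_2$ alone. It also requires a 2-cell $G_1R_1S_2\Rightarrow R_1'S_2'G_2$, and "a compatible family induces a unique functor" is not automatic.

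What pins down this 2-cell is the requirement that $S'G\simeq G_\Psi S$ restrict to the given squares on the $\Phi_i$. Via the adjunction $S_1'\dashv R_1'$, this makes the space of such 2-cells contractible. Adjointability of the $S_1$-square then makes the 2-cell invertible, as adjointability of the $I_1$-square demands.

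Once this is in place, the step you flag as hardest is immediate: adjointability of the induced $(S,R)$-square follows by applying the jointly conservative functors $I_1'^R,I_2'^R$. The same universal property at the level of functor categories handles 2-morphisms. The paper's route gets all of these coherences at once from $f_!\dashv f^*$.
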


\begin{proof}
    The case $n=0$ and $n=1$ are trivial. We prove the result by induction on $n$, reducing to the base case $n=2$, which for the moment we assume.
    
    Let $n\geq3$ and assume the result for all smaller cases. Consider the commuting diagram in $\Delta$ of the form
    \begin{equation}\label{equation:deltadiagram}\begin{tikzcd}[ampersand replacement=\&]
        {[n]}\pullback \& {[n-1]}\pullback \& {[n-2]} \\
        {[2]} \& {[1]} \& {[0]}
        \arrow[from=1-2, to=1-1]
        \arrow["j", from=1-3, to=1-2]
        \arrow[from=2-1, to=1-1]
        \arrow["i", from=2-2, to=1-2]
        \arrow["a", from=2-2, to=2-1]
        \arrow[from=2-3, to=1-3]
        \arrow[from=2-3, to=2-2]
    \end{tikzcd}
    \end{equation}
    where both squares are pushouts, $a$ is active, $i$ is the inert map with image $\{0,1\}$, and $j$ is the inert map with image $\{1,\dots,n-1\}$. Note that the outer square is also a pushout square involving only inert maps.

    By applying $\sfAdj(-)$ to the inert arrows, and $\sfBdj(-)$ to everything, this induces the following commutative diagram.
    \[\begin{tikzcd}[ampersand replacement=\&]
        {\sfBdj(n)}\pullback \& {\sfBdj(n-1)} \& {\sfBdj(n-2)} \& \\
        {\sfBdj(2)} \& {\sfBdj(1)} \& {\sfBdj(0)} \\
        \& {\sfAdj(n)} \& {\sfAdj(n-1)}\pullback \& {\sfAdj(n-2)} \\
        \& {\sfAdj(2)} \& {\sfAdj(1)} \& {\sfAdj(0)}
        \arrow[from=1-1, to=1-2]
        \arrow[from=1-1, to=2-1]
        \arrow[from=1-1, to=3-2]
        \arrow[from=1-2, to=1-3]
        \arrow[from=1-2, to=2-2]
        \arrow[from=1-2, to=3-3]
        \arrow[from=1-3, to=2-3]
        \arrow[from=1-3, to=3-4]
        \arrow[from=2-1, to=2-2]
        \arrow[from=2-1, to=4-2]
        \arrow[from=2-2, to=2-3]
        \arrow[from=2-2, to=4-3]
        \arrow[from=2-3, to=4-4]
        \arrow["{\opname{Y}}"', dashed, from=3-2, to=3-3]
        \arrow[from=3-2, to=4-2]
        \arrow[from=3-3, to=3-4]
        \arrow[from=3-3, to=4-3]
        \arrow[from=3-4, to=4-4]
        \arrow["{\opname{X}}"', dashed, from=4-2, to=4-3]
        \arrow[from=4-3, to=4-4]
    \end{tikzcd}\]
    
    To navigate this diagram, we will interpret the diagonal arrows as coming out of the page. The back-left square is a pullback by the 2-Segal property of $\sfBdj(-)$, and front-right square is a pullback by the 1-Segal property of $\sfAdj(-)$. The diagonal maps come from $\opname{C}(-)$ and all but $\opname{C}(n)$ is an equivalence by inductive hypothesis. These equivalences induce the dotted map denoted $\opname{X}$. Finally, the dotted map $\opname{Y}$ can be completed using the pullback square on the front-right.

    The front-outer square is a pullback by the 1-Segal property of $\sfAdj(-)$ applied to the outer square of (\ref{equation:deltadiagram}). Thus the front-left square is a pullback, and we deduce that $\opname{C}(n)$ is an equivalence as desired.

    It remains to prove the base case $n=2$. We embed our two $\two$-categories into slightly larger $\two$-categories as follows. Let $\frakn'$ and $\frakh'$ be the small $\two$-categories as shown, together with a map $f:\frakn'\to\frakh'$.
    \[\begin{tikzcd}[ampersand replacement=\&]
        \& \infty \&\&\& \infty \& \\
        \&\& {} \& {} \& 0 \\
        1 \&\& 2 \& 1 \&\& 2
        \arrow["f", from=2-3, to=2-4]
        \arrow[from=2-5, to=1-5]
        \arrow[from=3-1, to=1-2]
        \arrow[""{name=0, anchor=center, inner sep=0}, from=3-3, to=1-2]
        \arrow[from=3-3, to=3-1]
        \arrow[from=3-4, to=2-5]
        \arrow[""{name=1, anchor=center, inner sep=0}, from=3-6, to=2-5]
        \arrow[from=3-6, to=3-4]
        \arrow[between={0}{0.8}, Rightarrow, from=3-1, to=0]
        \arrow[between={0}{0.8}, Rightarrow, from=3-4, to=1]
    \end{tikzcd}\]
    We may embed $\opname{J}:\sfAdj(2)\to \sfFun(\frakn',\sfSt_k)$ as a locally full sub-$\two$-category.
    \[\begin{tikzcd}[ampersand replacement=\&]
        \& \Psi \&\&\& \Psi \& \\
        \&\& {} \& {} \\
        {\Phi_1} \&\& {\Phi_2} \& {\Phi_1} \&\& {\Phi_2}
        \arrow["{R_1}", shift left, from=1-2, to=3-1]
        \arrow["{R_2}", shift left, from=1-2, to=3-3]
        \arrow[maps to, from=2-3, to=2-4]
        \arrow["{S_1}", shift left, from=3-1, to=1-2]
        \arrow["{S_2}", shift left, from=3-3, to=1-2]
        \arrow["{S_1}", from=3-4, to=1-5]
        \arrow[""{name=0, anchor=center, inner sep=0}, "{S_2}"', from=3-6, to=1-5]
        \arrow["{R_1S_2}", from=3-6, to=3-4]
        \arrow[between={0}{0.8}, Rightarrow, from=3-4, to=0]
    \end{tikzcd}\]
    Indeed, it is the sub-$\two$-category on objects $i\mapsto \calA_i$ such that the arrows $F_i:\calA_i\to \calA_\infty$ are left adjoints, and such that the lax-commuting triangle becomes a commuting triangle once we pass to the right adjoint of $F_1$. Moreover, given two objects $i\mapsto \calA_i,\calB_i$, we must restrict to those morphisms which are adjointable with over the arrows $i\to \infty$ (see \S\ref{subsection:mndadj}). Likewise, we may embed $\opname{K}:\sfBdj(2)\to \sfFun(\frakh',\sfSt_k)$ as a locally full sub-$\two$-category.

    Observe now that the pullback functor $f^*:\sfFun(\frakh',\sfSt_k)\to \sfFun(\frakn',\sfSt_k)$ restricts to these locally full sub-$\two$-categories. Indeed, given an object $\calF' := (\Phi_1',\Phi_2';\Phi',\Psi',S',R')$ with embeddings $I_i':\Phi_i'\inclto \Phi$, the image under $f^*\circ\opname{K}$ is as follows.
    \[\begin{tikzcd}[ampersand replacement=\&]
        \& {\Psi'} \& \\
        \\
        {\Phi_1'} \&\& {\Phi_2'}
        \arrow["{SI_1'}", from=3-1, to=1-2]
        \arrow[""{name=0, anchor=center, inner sep=0}, "{SI_2'}"', from=3-3, to=1-2]
        \arrow["{I_1'^RI_2'}", from=3-3, to=3-1]
        \arrow[between={0}{0.8}, Rightarrow, from=3-1, to=0]
    \end{tikzcd}\]
    For this to factor through $\opname{J}$ we require that $I_1'^RRSI_2'\to I_1'^RI_2'$ is an equivalence, which follows as its cocone is $I_1'^R\coCone(1\to R'S')I_2'=0$. The verification that morphisms of objects also factor through $\sfAdj(2)$ is immediate.
    
    The resulting functor is identified with $\opname{C}(2)$ (by considering the functors $\frakn\to\frakn',\frakh\to\frakh'$), thus we have the following diagram of $\two$-categories.
    \[\begin{tikzcd}[ampersand replacement=\&]
        {\sfBdj(2)} \& {\sfAdj(2)} \\
        {\sfFun(\frakh',\sfSt_k)} \& {\sfFun(\frakn',\sfSt_k)}
        \arrow["\opname{C}(2)", from=1-1, to=1-2]
        \arrow["\opname{K}", from=1-1, to=2-1]
        \arrow["\opname{J}", from=1-2, to=2-2]
        \arrow["f^*", from=2-1, to=2-2]
    \end{tikzcd}\]
    Importantly, the vertical maps are locally fully faithful - i.e. 
    By Theorem~\ref{thm:laxfibrations}, $f^*$ admits a left adjoint $f_!$, which moreover admits an explicit description. Given $\calF\in\sfAdj(2)$, we can compute $f_! \circ \opname{J}(\calF)$ and see that the object agrees with $\opname{K}(\opname{Ind}(\calF))$, where $\opname{Ind}(\calF)$ was constructed on the level of objects in \S\ref{subsection:enhancedFS}. Similarly, we see that given a map $P:\calF_1\to\calF_2$, the map $f_! \circ \opname{J}(P)$ also lifts along $\opname{K}$.

    Thus $\opname{Ind}$ canonically extends to a functor such that the following commutes.
    \[\begin{tikzcd}[ampersand replacement=\&]
        {\sfBdj(2)} \& {\sfAdj(2)} \\
        {\sfFun(\frakh',\sfSt_k)} \& {\sfFun(\frakn',\sfSt_k)}
        \arrow["{\opname{K}}"', from=1-1, to=2-1]
        \arrow["{\opname{Ind}}"', from=1-2, to=1-1]
        \arrow["{\opname{J}}", from=1-2, to=2-2]
        \arrow["{f_!}", from=2-2, to=2-1]
    \end{tikzcd}\]

    For notational convenience, we denote the four $\two$-categories in the diagram above by $\sfA,\sfB,\sfN',\sfH'$. Let $\calF\in\sfA=\sfAdj(2)$, $\calF'\in\sfB=\sfBdj(2)$. Then, we have a diagram of mapping categories as follows.
    \[\begin{tikzcd}[ampersand replacement=\&]
        {\sfB(\calF',\opname{Ind}(\calF))} \& {\sfA(\opname{C}(2)(\calF'),\calF)} \\
        {\sfH'(\opname{K}(\calF'),f_!\opname{J}(\calF))} \& {\sfN'(f^*\opname{K}(\calF'),\opname{J}(\calF))}
        \arrow["{j'}"', hook, from=1-1, to=2-1]
        \arrow["j", hook, from=1-2, to=2-2]
        \arrow["\sim", tail reversed, from=2-2, to=2-1]
    \end{tikzcd}\]
    The vertical maps are full subcategories because $\opname{J},\opname{K}$ are locally fully faithful. Verifying that the subcategories agree under the adjunction $f_!\adjto f^*$, we obtain an adjunction $\opname{Ind}\adjto\opname{C}(2)$. Moreover, $\opname{Ind}$ is 2-fully faithful because $f_!$ is.

    It remains to prove that $\opname{Ind}$ is essentially surjective. This is the calculation outlined in Lemma~\ref{lem:twocatsequivalence}.
\end{proof}

\subsection{\texorpdfstring{$\sfSph(n)$}{Sph(n)} as a simplicial \texorpdfstring{$\two$}{2}-category}\label{subsection:simplicialsph}

We now reinstate the sphericity conditions and construct the simplicial $\two$-category $\sfSph(-)$. We then construct an action of the braid group on $\sfSph(n)$ and show that this agrees with the braid group action defined at the level of objects in \cite{kapranovPerverseSchobers2015}. We use this to define the $\two$-category $\sftwoP(\bbC,R)$ of perverse schobers on $(\bbC,R)$.

\begin{prop}\label{prop:segalsph}
    The sub-$\two$-categories $\sfSph(n)\inclto \sfAdj(n)$ define a sub-simplicial $\two$-category which is 1-Segal.
\end{prop}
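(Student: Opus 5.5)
We need to show: the full sub-2-categories $\sfSph(n)\subseteq\sfAdj(n)$ are preserved by all face and degeneracy maps of $\sfAdj(-)$. The simplicial structure on $\sfAdj(-)$ comes through the equivalence $\opname{C}(-)$ of Proposition~\ref{prop:bdjisadj} with $\sfBdj(-)$. The 1-Segal property should then come for free, since sphericity of a tuple is checked entrywise.

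Because $\sfSph(n)$ is a full sub-$\two$-category cut out by a condition on objects, closure under a functor is a property. By Lemma~\ref{lem:descendingsimplicialobjects}-type reasoning, it therefore suffices to check closure on objects for a generating set of simplicial maps.

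\textbf{Degeneracies and inert faces.} Under $\opname{C}$, a degeneracy $s_{n,i}$ inserts a zero category $0\tofrom\Psi$ into the list of adjunctions. This adjunction is spherical: its twist is the identity of $0$, and its cotwist $\Cone(0\to\id_\Psi)=\id_\Psi$ is invertible. The inert faces $d_{n,0},d_{n,n}$ simply forget the first or last adjunction $S_i\adjto R_i$, so they visibly preserve sphericity.

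\textbf{Active faces.} This is the main step. The active face $d_{n,i}$ for $0<i<n$ replaces the pair $S_i,S_{i+1}$ by the merged adjunction $S_{i,i+1}:\Phi_{i,i+1}\tofrom\Psi:R_{i,i+1}$ and leaves the other entries unchanged. By the 1-Segal description $\sfAdj(n)\simeq\sfAdj(1)\times_{\sfAdj(0)}\cdots$, this reduces to the case $n=2$, $i=1$, where $d_{2,1}$ is $\FS:\sfAdj(2)\to\sfAdj(1)$. Via $\opname{C}(2)^{-1}=\opname{Ind}$ (Proposition~\ref{prop:bdjisadj}), this is exactly the construction $\calF\mapsto(S_{12}\adjto R_{12})$ of \S\ref{subsection:enhancedFS}.

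Lemma~\ref{lem:twocatsspherical} then says precisely that if $S_1\adjto R_1$ and $S_2\adjto R_2$ are spherical, so is $S_{12}\adjto R_{12}$. The underlying argument is:
\begin{itemize}
\item the twist $\coCone(\id\to R_{12}S_{12})$ is conjugate-compatible, hence invertible by Lemma~\ref{lem:invertibletwo} and the argument of Proposition~\ref{prop:simplicialaut};
\item the cotwist factors as $\Cone(S_1R_1\to\id)\Cone(S_2R_2\to\id)$, a composite of invertibles.
\end{itemize}
The only subtlety is identifying the abstractly defined face map with the explicit $\opname{Ind}$ on objects. This is where I expect the work to lie, but it is supplied by the proof of Proposition~\ref{prop:bdjisadj}, which shows that $\opname{Ind}$ is inverse to $\opname{C}(2)$.

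\textbf{1-Segal property.} For the square over $[n]=[m]\cup_{[0]}[n-m]$, the equivalence $\sfAdj(n)\simeq\sfAdj(m)\times_{\sfAdj(0)}\sfAdj(n-m)$ restricts to $\sfSph(n)\simeq\sfSph(m)\times_{\sfSph(0)}\sfSph(n-m)$. This holds because $\sfSph(0)=\sfAdj(0)$, every inclusion involved is full, and an $n$-tuple of adjunctions is spherical exactly when each of its two sub-tuples is.
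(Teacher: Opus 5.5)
Your proposal is correct and is essentially the paper's proof. The paper likewise uses the 1-Segal decomposition of $\sfAdj(-)$ to reduce to the ``monoidal product'' $d_{2,1}$ and the unit. It gets sphericity of the merged adjunction $S_{12}\adjto R_{12}$ from \cite{barbacoviCompositionTwoSpherical2021} or \cite{kapranovPerverseSchobersAlgebra2020}*{Proposition 3.5.6}, which is exactly the input behind the forward direction of Lemma~\ref{lem:twocatsspherical} that you cite.

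One correction to your gloss: the first bullet is not a valid argument. Conjugate-compatibility is only defined for autoequivalences, and Lemma~\ref{lem:invertibletwo} and Proposition~\ref{prop:simplicialaut} both assume $T$ is already invertible. In fact $I_1^RTI_2=0$ with invertible diagonal blocks does not force $T$ to be invertible. For instance, on $\Fun(\Delta^1,\Mod_k)=\langle\Phi_1,\Phi_2\rangle$, in the triple notation of Remark~\ref{rmk:laxcolimit}, the functor $(\phi_1,\phi_2,\alpha)\mapsto(\Cone(\alpha),\phi_2,0)$ has identity diagonal blocks and vanishing upper block, but kills $\Hom(I_1k,I_2k)\simeq k$. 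So the invertibility of $\coCone(\id\to R_{12}S_{12})$ has to come from the cited computation, not from the SOD structure.
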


\begin{proof}
    It follows from the calculation of \cite{barbacoviCompositionTwoSpherical2021} or \cite{kapranovPerverseSchobersAlgebra2020}*{Proposition 3.5.6} that the ``monoidal product''
    \[d_{2,1}:\sfAdj(2)\to\sfAdj\]
    sends abstract perverse schobers to abstract perverse schobers. The same is clearly true for the ``monoidal unit''
    \[d_{0,0}:\sfAdj(0)\to\sfAdj.\]
    Moreover, there is an identification,
    \[\sfSph(n)\simeq \sfSph(1)\times_{\sfSph}\cdots\times_{\sfSph}\sfSph(1)\]
    compatibly with the analogous identification for $\sfAdj(n)$. The result now follows.
\end{proof}

\begin{prop}\label{prop:activecartesian2}
    There is a natural map of simplicial $\two$-categories $\pi(-):\sfSph(-)\to\sfAut(-)$ which is Cartesian over $\Delta_{\act}$. In particular, for each $n$, there is a natural pullback square as follows.
    \[\begin{tikzcd}
        {\sfSph(n)} \pullback & \sfSph \\
        {\sfAut(n)} & \sfAut
        \arrow["{\FS_{\sfSph}}", from=1-1, to=1-2]
        \arrow[from=1-1, to=2-1]
        \arrow[from=1-2, to=2-2]
        \arrow["{\FS_{\sfAut}}", from=2-1, to=2-2]
    \end{tikzcd}\]
\end{prop}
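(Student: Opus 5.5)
The map $\pi(-)$ will be constructed by restricting an already-built map along the equivalence $\opname{C}(-)$, and the Cartesian property will then be deduced from the one already proved for $\sfBdj(-)\to\sfEnd(-)$.

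\textbf{Construction.} Proposition~\ref{prop:bdjisadj} gives equivalences $\opname{C}(n):\sfBdj(n)\simeq\sfAdj(n)$, and these upgrade $\sfAdj(-)$ to a simplicial $\two$-category equivalent to $\sfBdj(-)$. Composing with the map $\pi(-):\sfBdj(-)\to\sfEnd(-)$ of Proposition~\ref{prop:simplicialbdjtoend} yields a simplicial map $\sfAdj(-)\to\sfEnd(-)$. On objects it sends $(\Phi_i,\Psi,S_i,R_i)$ to $\opname{Ind}$ followed by $\coCone(\id\to RS)$ on the Fukaya--Seidel category. We then restrict this map to the sub-simplicial $\two$-category $\sfSph(-)$ of Proposition~\ref{prop:segalsph}.

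Since $\sfAut(n)\inclto\sfEnd(n)$ is a full sub-simplicial $\two$-category (Proposition~\ref{prop:simplicialaut}), it suffices to check on objects that $\sfSph(n)$ lands in $\sfAut(n)$. The first step is to show that $\opname{Ind}$ of a spherical family gives a spherical adjunction $S:\Phi\tofrom\Psi:R$. For this we use that $\sfSph(-)$ is closed under the active face maps, together with the 2-Segal decomposition. The second step is conjugate-compatibility. Weak compatibility holds by construction. Invertibility of $I_i^R\coCone(\id\to RS)I_i\simeq\coCone(\id\to R_iS_i)$ follows from sphericity of each $S_i\adjto R_i$, by the same computation as in Lemma~\ref{lem:twocatsspherical}.

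\textbf{Cartesian property.} Since $[1]$ is initial in $\Delta_{\act}$, it suffices to show that the displayed square is a pullback for each $n$. We factor it as
\[\sfSph(n)\to \sfAut(n)\times_{\sfAut}\sfSph\to \sfEnd(n)\times_{\sfEnd}\sfAdj\simeq \sfBdj(n)\simeq\sfAdj(n).\]
Here the identification $\sfEnd(n)\times_{\sfEnd}\sfAdj\simeq\sfBdj(n)$ is Proposition~\ref{prop:activecartesian4}, using $\sfBdj(1)=\sfAdj$ and $\sfEnd(1)=\sfEnd$. All the maps are 2-fully faithful inclusions of full sub-$\two$-categories, so the claim reduces to an equality of object sets inside $\sfAdj(n)$.

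Concretely, we must show that a family of adjunctions $(S_i\adjto R_i)$ is spherical for every $i$ if and only if two conditions hold: the merged adjunction $S\adjto R$ is spherical, and $\coCone(\id\to RS)$ is conjugate-compatible with the SOD. This is the $n$-term generalization of Lemma~\ref{lem:twocatsspherical}.

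\textbf{Main obstacle and induction.} The main obstacle is the reverse direction of this $n$-term statement. We prove it by induction on $n$ using the 2-Segal squares, with Lemma~\ref{lem:twocatsspherical} as the base case $n=2$. Write $\Phi=\langle\Phi_1,\Phi_{2,\dots,n}\rangle$.

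First, conjugate-compatibility on $n$ terms implies conjugate-compatibility on the merged 2-term SOD, by Proposition~\ref{prop:simplicialaut}. So Lemma~\ref{lem:twocatsspherical} makes $S I_1$ and $SI_{2,\dots,n}$ spherical. Next, conjugate-compatibility restricts to $\Phi_{2,\dots,n}$ with endofunctor $I_{2,\dots,n}^R\coCone(\id\to RS)I_{2,\dots,n}\simeq\coCone(\id\to R_{2,\dots,n}S_{2,\dots,n})$, using the 2-Segal property of $\sfAut(-)$. The inductive hypothesis then applies to $\Phi_{2,\dots,n}$ and makes each $S_i$ with $i\geq2$ spherical.

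The point needing care is the identification of the restricted endofunctor with the cotwist of the restricted adjunction. This holds because $d_{n,0}$ on $\sfBdj(-)$ replaces $(S,R)$ by $(SI_{2,\dots,n},I_{2,\dots,n}^RR)$, and $\pi(-)$ is simplicial.
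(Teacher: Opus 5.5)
Your proposal is correct and follows essentially the same route as the paper. You build $\pi(-)$ from $\sfAdj(-)\simeq\sfBdj(-)\to\sfEnd(-)$, get 2-fully faithfulness from Proposition~\ref{prop:activecartesian4}, and obtain essential surjectivity from Lemma~\ref{lem:twocatsspherical} together with an induction through the 2-Segal structure. The differences are cosmetic: you check that $\sfSph(n)$ lands in $\sfAut(n)$ directly, via $I_i^R\coCone(\id\to RS)I_i\simeq\coCone(\id\to R_iS_i)$ and sphericity of the merged adjunction (Proposition~\ref{prop:segalsph}), rather than by induction, and you run the induction on objects instead of as the paper's diagram chase between 2-Segal pullback squares.
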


\begin{proof}
    We first show that the composition $\sfSph(n)\inclto \sfAdj(n)\to \sfEnd(n)$ factors through $\sfAut(n)$. This is trivial for $n=0,1$. For $n=2$, this follows from one direction of Lemma~\ref{lem:twocatsspherical}. For $n>2$, this follows by induction using that $\sfAut(-)$ is 2-Segal. So the map $\pi:\sfAdj(-)\simeq\sfBdj(-)\to\sfEnd(-)$ of Proposition~\ref{prop:simplicialbdjtoend} induces the desired map.

    By Proposition~\ref{prop:activecartesian4}, $\pi(-):\sfAdj(-)\to\sfEnd(-)$ is Cartesian over $\Delta_{\act}$. We deduce that over $a:[1]\to[2]$, the natural map
    \[\sfSph(2)\to\sfAut(2)\times_{\sfAut}\sfSph\]
    is 2-fully faithful. The other direction of Lemma~\ref{lem:twocatsspherical} proves that this map is essentially surjective, proving the desired result for $a:[1]\to[2]$. Let $b:[n-1]\to [n]$ be any active arrow. The 2-Segal property gives us a map of pullback squares of $\two$-categories as follows, where the left vertical arrows come from $b$ and the right from $a$.
    \[\begin{tikzcd}[ampersand replacement=\&]
        {\sfSph(n)}\pullback \& {\sfSph(2)} \\
        {\sfSph(n-1)} \& {\sfSph(1)}
        \arrow[from=1-1, to=1-2]
        \arrow[from=1-1, to=2-1]
        \arrow[from=1-2, to=2-2]
        \arrow[from=2-1, to=2-2]
    \end{tikzcd}
    \to
    \begin{tikzcd}[ampersand replacement=\&]
        {\sfAut(n)}\pullback \& {\sfAut(2)} \\
        {\sfAut(n-1)} \& {\sfAut(1)}
        \arrow[from=1-1, to=1-2]
        \arrow[from=1-1, to=2-1]
        \arrow[from=1-2, to=2-2]
        \arrow[from=2-1, to=2-2]
    \end{tikzcd}\]
    Moreover, this map is Cartesian over the right vertical arrow, so by a diagram chase, it is Cartesian over the left arrow.

    This proves that $\sfSph(-)\to\sfAut(-)$ is Cartesian over $\Delta_{\act}$.
\end{proof}

\subsection{Braid group action on abstract schobers}\label{subsection:mutationsph}
We now construct a $\two$-categorical braid group action on $\sfSph(n)$. We note that a $\two$-categorical braid group action has already been announced to be in the upcoming work of Abell\'an--Christ--Jasso \cite{acj}. Their approach considers all Lagrangian skeleta of $(\bbC,R)$ which includes all systems of cuts, as well as more general skeleta where multiple arcs can be incident to one singularity. In comparison, we will only consider those skeleta in $\Cuts(\bbC,R)$ (see Definition~\ref{defn:cuts}), but the novelty is that this action will be evidently compatible with abstract localized schobers.

\begin{cor}\label{cor:action2}
    There is a natural action of the braid group $\Br_n$ on $\sfSph(n)$, which respects the pullback square of Proposition~\ref{prop:activecartesian2}. Moreover, this agrees at the level of objects with the action defined in \cite{kapranovPerverseSchobers2015}*{(2.8)}.
\end{cor}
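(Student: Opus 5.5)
The plan is to transport the action of Corollary~\ref{cor:action} along the pullback square of Proposition~\ref{prop:activecartesian2}:
\[\sfSph(n)\simeq \sfAut(n)\times_{\sfAut}\sfSph.\]
By Corollary~\ref{cor:action}, $\Br_n$ acts on $\sfAut(n)$, and $\FS_{\sfAut}$ is $\Br_n$-equivariant when $\sfAut$ carries the trivial action. Let $\Br_n$ act trivially on $\sfAut$ and on $\sfSph$. Then the cospan $\sfAut(n)\to\sfAut\from\sfSph$ is a diagram of $\two$-categories with $\Br_n$-action, that is, a diagram of functors $B\Br_n\to\wCat_{\two}$.

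Limits in $\Fun(B\Br_n,\wCat_{\two})$ are computed pointwise. Hence the fiber product inherits a $\Br_n$-action, and transporting along the equivalence gives the action on $\sfSph(n)$. Both projections of the pullback, namely to $\sfAut(n)$ and to $\sfSph$, are equivariant by construction. This is what it means for the action to respect the square. In particular $\FS_{\sfSph}$ is $\Br_n$-invariant, and $\sfSph(n)\to\sfAut(n)$ is equivariant.

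The content is the comparison with \cite{kapranovPerverseSchobers2015}*{(2.8)} on objects. It suffices to check this for the generators $\sigma_i^{\pm1}$. By the 2-Segal and Cartesian-over-$\Delta_{\act}$ arguments used in the proof of Theorem~\ref{thm:inftyadmissible}, both actions are compatible with the factorization $\sfSph(n)\simeq\sfSph(n-1)\times_{\sfSph(1)}\sfSph(2)$ through the pair $(\Phi_i,\Phi_{i+1})$. This reduces the check to $n=2$.

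For $n=2$, take $(S_1,S_2)$ and write $\Phi=\langle\Phi_1,\Phi_2\rangle$ for its Fukaya--Seidel category, with $S:\Phi\to\Psi$ and $T=\coCone(\id\to RS)$. By the proof of Lemma~\ref{lem:nistwo}, $\sigma_1^{-1}\cdot\frakS=(T(\Phi_2),\Phi_1)$. Under $\opname{C}(2)$, the new object of $\sfSph(2)$ is the pair of restrictions
\[(S\circ T\circ I_2,\; S\circ I_1).\]
Now $S T\simeq \coCone(S\to SRS)$, which by the triangle identities is identified with $\coCone(\id\to SR)[?]\circ S$, that is, a twist $T_\Psi S$ up to the shift convention. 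This gives
\[S T I_2 \simeq T_\Psi S_2,\]
where $T_\Psi$ is the twist of $S\adjto R$. By the factorization
\[\Cone(SR\to\id)\simeq\Cone(S_1R_1\to\id)\Cone(S_2R_2\to\id)\]
from the proof of Lemma~\ref{lem:twocatsspherical}, this becomes
\[T_\Psi S_2\simeq T_{S_1} T_{S_2} S_2\simeq T_{S_1}S_2\]
up to shifts. Here the last step uses $T_{S_2}S_2\simeq S_2\circ(\text{autoequivalence of }\Phi_2)$, and that autoequivalence is absorbed by reparametrizing $\Phi_2$. The resulting pair $(T_{S_1}^{\pm}S_2,\,S_1)$ is exactly the Kapranov--Schechtman formula. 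The case of $\sigma_1$ uses $(\Phi_2,T^{-1}(\Phi_1))$ symmetrically.

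The main obstacle I expect is this last identification. One has to get the twist directions and shifts to match the conventions of (2.8), and to verify that the identification $ST\simeq T_\Psi S$ is compatible with the triangle and cone manipulations. I would first check it on $K_0$, or on the spherical example of Example~\ref{eg:waldhausen1}, to pin down the signs. The argument for the 2-categorical action itself is formal.
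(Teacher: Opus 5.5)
Your proposal is essentially the paper's proof. It transports the $\Br_n$-action along $\sfSph(n)\simeq\sfAut(n)\times_{\sfAut}\sfSph$, with $\Br_n$ acting trivially on $\sfAut$ and $\sfSph$. It checks $\sigma^{-1}$ on $\sfSph(2)$ via $STI_2\simeq USI_2\simeq U_1U_2S_2\simeq U_1S_2T_2$, where $U=\Cone(SR\to\id)$. It then extends to general $n$ by the inductive scheme of Theorem~\ref{thm:inftyadmissible}.

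One small correction: there is no natural map $\id\to SR$. What you want is $\Cone(SR\to\id)\circ S$, which agrees with $ST$ up to a shift because $\epsilon S$ splits $S\eta$ by the triangle identity.
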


\begin{proof}
    We can pullback the action of $\Br_n$ on $\sfAut(n)$ via the pullback square of Proposition~\ref{prop:activecartesian2}, where we take $\Br_n$ to act trivially on $\sfAut$ and $\sfSph$.
    
    We compute the resulting braid group action on $\sfSph(2)$. Let $(\Phi_i,\Psi,S_i,R_i)\in\sfSph(2)$. The action of the negative generator $\sigma^{-1}\in\Br_2$ on the corresponding element of $\sfAut(2)\times_{\sfAut}\sfSph$ is as follows.
    \[\begin{tikzcd}[ampersand replacement=\&]
        \& \Psi \&\&\& \Psi \\
        \& \Phi \& {} \& {} \& \Phi \\
        {\Phi_1} \& {\Phi_1} \&\& {\Phi_2} \& {\Phi_1}
        \arrow["S", from=2-2, to=1-2]
        \arrow["{\sigma^{-1}\cdot-}", maps to, from=2-3, to=2-4]
        \arrow["S", from=2-5, to=1-5]
        \arrow["{I_1}", from=3-1, to=2-2]
        \arrow["{I_2}"', from=3-2, to=2-2]
        \arrow["{TI_2}", from=3-4, to=2-5]
        \arrow["{I_1}"', from=3-5, to=2-5]
    \end{tikzcd}\]
    The only non-trivial part is expressing the spherical functor $STI_2$ in terms of $S_i$ and $R_i$. Writing $U:=\Cone(SR\to\id)$, we have the identification $ST\simeq US$. We have the factorization $U\simeq U_1U_2$, where $U_i:=\Cone(S_iR_i\to\id)$, which also came up in the proof of Lemma~\ref{lem:twocatsspherical}. So
    \[STI_2\simeq USI_2 \simeq U_1U_2S_2 \simeq U_1S_2T_2.\]
    This agrees with \cite{kapranovPerverseSchobers2015}*{(2.8)} as claimed. Note that they write $T_i$ for the autoequivalence on $\Psi$ that we call $U_i$.

    The proof for general $n$ follows by the same inductive scheme as the one used to define the braid group action on $\sfAut(n)$ in Theorem~\ref{thm:inftyadmissible}.
\end{proof}

By using the same construction as Definition~\ref{defn:nonabstract}, we may define the $\two$-category $\sftwoP(\bbC,R)$ of perverse schobers.

\subsection{Fourier transforms and Stokes structures}\label{subsection:fourier}
In \cite{kapranovPerverseSchobersAlgebra2020}*{\S 4.1}, a Fourier transform for perverse schobers on $(\bbC_z,R)$ is developed (assuming that $R$ satisfies a genericity property). This is given by a map on the level of objects,
\begin{align*}
    \opname{FT}:\sftwoP(\bbC_z,R) &\isomto \sftwoP^{\opname{Stokes}}(\bbC_w,0)\\
    \calF&\mapsto\check{\calF}.
\end{align*}
The target is the collection of perverse schobers equipped with $R$-Stokes data \cite{kapranovPerverseSchobersAlgebra2020}*{\S 4.1.D}, a categorification of Stokes data which appears in the irregular Riemann-Hilbert correspondence. This Stokes data is roughly a $\zeta=\opname{Arg}(w)\in S^1$-family of SODs of $\check{\calF}\resto{\bbC^\times_w}$.

We recast this Fourier transform as an equivalence of $\two$-categories in our setting. When $R=\{0\}$, there are no Stokes structures to worry about and the Fourier transform roughly interchanges $\Phi$ and $\Psi$ (See for example \cite{gammagePerverseSchobers3d2023}*{Definition 2.3}). This identifies the projection functor $\sftwoP(\bbC_z,0)\to\ol{\sftwoP}(\bbC_z,0)$ with the restriction to the complement of $0\in\bbC_w$, as expressed by the following commuting square.
\[\begin{tikzcd}
    {\sftwoP(\bbC_z,0)}\pullback & {\sftwoP(\bbC_w,0)} \\
    {\ol{\sftwoP}(\bbC_z,0)} & {\sftwoL(\bbC_w^\times)}
    \arrow["{\opname{FT}}", from=1-1, to=1-2]
    \arrow["\pi"', from=1-1, to=2-1]
    \arrow["-|_{\bbC_w^\times}"', from=1-2, to=2-2]
    \arrow["\opname{FT}"', from=2-1, to=2-2]
\end{tikzcd}\]
Thus under Fourier transform, localized schobers on $(\bbC_z,0)$ correspond to local systems on $\bbC_w^\times$.

For general $R\subset \bbC$, we may combine this with the square from Theorem~\ref{thm:merge}.
\[\begin{tikzcd}
    {\sftwoP(\bbC_z,R)}\pullback & {\sftwoP(\bbC_z,0)}\pullback & {\sftwoP(\bbC_w,0)} \\
    {\ol{\sftwoP}(\bbC_z,R)} & {\ol{\sftwoP}(\bbC_z,0)} & {\sftwoL(\bbC_w^\times)}
    \arrow["{\FS_{\sftwoP}}", from=1-1, to=1-2]
    \arrow["\pi"', from=1-1, to=2-1]
    \arrow["{\opname{FT}}", from=1-2, to=1-3]
    \arrow["\pi"', from=1-2, to=2-2]
    \arrow["-|_{\bbC_w^\times}"', from=1-3, to=2-3]
    \arrow["{\FS_{\ol{\sftwoP}}}"', from=2-1, to=2-2]
    \arrow["\opname{FT}"', from=2-2, to=2-3]
\end{tikzcd}\]
The outer pullback square gives us an equivalence,
\[\sftwoP(\bbC_z,R)\isomto \sftwoP(\bbC_w,0)\times_{\sftwoL(\bbC_w^\times)}\ol{\sftwoP}(\bbC_z,R).\]
This fiber product can be interpreted as equipping the nearby cycles part of an object of $\sftwoP(\bbC_w,0)$ with an $R$-Stokes structure. Thus under Fourier transform, we may understand a localized schobers on $(\bbC,R)$ as none other than a local system on $\bbC_w-0$ equipped with an $R$-Stokes structure.

\section{Prelocalized schobers on a disc}\label{section:prelocalized}
In this section, we study the naive categorification of $\Perv(\bbC,R)/\Loc(\bbC)$, which we denote by
\[\olsftwoP^{\pre}(\bbC,R) := \sftwoP(\bbC,R)/\sfLoc(\bbC).\]
As usual, we will make a choice of cuts and work at the level of abstract schobers.

In \S\ref{subsection:prelocalized}, we will define this quotient $\two$-category to be the right-orthogonal to the sub-$\two$-category of abstract local systems, $\sfSph^{\const}(n)\subseteq \sfSph(n)$.

In \S\ref{subsection:quotientismonad}, we show that there is an equivalence,
\[\sfSph/\sfSph^{\const}\simeq \sfSphMnd,\]
where $\sfSphMnd$ is the $\two$-category of pairs $(\Phi,M)$ consisting of a stable category $\Phi$ and a spherical monad $M$.

\begin{defn}\label{defn:sphmonad}
    A monad $M$ on category $\Phi$ is said to be a spherical monad if the monadic adjunction $S:\Phi\tofrom\Mod(M):R$ is a spherical adjunction.
\end{defn}

Spherical monads are studied in detail in \cite{christSphericalMonadicAdjunctions2023}. Observe that if $M$ is spherical, then $\Cone(\id\xto{u} M)$ is invertible. We will prove in Appendix~\ref{appendix:sphmonad}, that the converse is true, and so we have the following strengthening of \cite{christSphericalMonadicAdjunctions2023}*{Theorem 4.1}.

\begin{thm}\label{thm:christstrengthening1}
    A monad $M$ on $\Phi$ is spherical if and only if $\Cone(\id\xto{u} M)$ is invertible.
\end{thm}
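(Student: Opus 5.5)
The forward direction is already recorded in the text: if the monadic adjunction $S:\Phi\tofrom\Mod(M):R$ is spherical, then its cotwist $\coCone(\id\to RS)\simeq\coCone(\id\xto{u}M)$ is invertible, and hence so is $\Cone(\id\xto{u}M)$, which differs from it by a shift. So the substance is the converse. Assume $C:=\Cone(\id\xto{u}M)$ is invertible. By Proposition~\ref{prop:KLisEM} we may model the monadic adjunction by the Kleisli adjunction $S:\Phi\tofrom\Mod(M):R$. By Lemma~\ref{lem:kleislirecognition}, $\Mod(M)$ is then the smallest stable presentable subcategory containing $\Image(S)$. We must show that the twist $U:=\Cone(SR\to\id_{\Mod(M)})$ is an autoequivalence. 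Sphericity is the statement that both the twist and the cotwist are invertible; the cotwist is invertible by hypothesis, so the twist is all that remains.

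The plan is to construct an explicit inverse to $U$ using $C^{-1}$, and to test it on the generating objects $S\phi$. Whiskering the defining triangle of $U$ with $S$ gives a triangle
\[SRS\to S\to US.\]
Using the triangle-identity splitting $S\to SRS\to S$, we get $US\simeq S\circ\Sigma\coCone(\id\to RS)$, that is, $US\simeq SC$ up to a shift. Define a candidate inverse $V$ on the image of $S$ by $V(S\phi):=S C^{-1}\phi$, again up to the shift. To extend $V$ to all of $\Mod(M)$, take it to be the left adjoint of $U$ and use the spherical-type identity $U^L\simeq\Cone(\id\to SR)^{\dots}$ built from the left adjoint of $R$. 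Since $R$ is colimit-preserving and conservative, it is monadic, and hence it admits a left adjoint compatible with $S$; the point is to check that this adjoint is $S$ itself up to the twist by $C$. Concretely, I would then verify that the unit $\id\to U^RU$ and the counit $UU^L\to\id$ are equivalences on objects of the form $S\phi$. On these objects, both composites reduce to expressions of the form $S(C\,C^{-1})\phi$, using the identity $RU\simeq C^{\pm1}R$, which comes from whiskering the triangle on the other side.

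The last step is to pass from generators to everything. The full subcategory of $\Mod(M)$ on which a given natural transformation between colimit-preserving functors is an equivalence is stable and closed under colimits. It contains $\Image(S)$, so by the Kleisli recognition of Lemma~\ref{lem:kleislirecognition} it is all of $\Mod(M)$. This shows that $U$ has both a left and a right inverse, hence is invertible.

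The main obstacle I expect is the second step: making the identification $RU\simeq C'R$ (with $C'$ a shift of $C$ or of its inverse) coherent as a natural transformation, rather than merely objectwise. The map $SRS\to S$ is the counit, while the splitting uses the unit, so some care is needed. My first attempt would be to write down the comparison map directly from the unit/counit triangles and check that it is an equivalence after applying $R$, since $R$ is conservative.
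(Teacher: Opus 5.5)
Your forward direction is fine. The skeleton of your converse is a reasonable alternative to the paper's route: Kleisli $=$ Eilenberg--Moore, generation of $\Mod(M)$ by $\Image(S)$, and the identifications $US\simeq SC[1]$ and $RU\simeq CR[1]$. There is, however, a genuine gap at the one step that carries the content. On generators you only produce abstract identifications of objects, such as $U^LUS\phi\simeq S\phi$ or $\Hom(US\phi,UY)\simeq\Hom(C\phi[1],CRY[1])\simeq\Hom(\phi,RY)$; this is what ``both composites reduce to $S(CC^{-1})\phi$'' amounts to. What must be shown is that the \emph{canonical maps} are equivalences. For instance, you need that the map $\Hom(S\phi,Y)\to\Hom(US\phi,UY)$ induced by $U$ is, under these identifications, the map induced by the equivalence $C[1]$. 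Unwinding, this says that $U\epsilon$ and $\epsilon U$ agree under $USR\simeq SC[1]R\simeq SRU$, which is a twist-side analogue of Christ's condition $Tu\simeq uT$.

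This is not a formality. For a general adjunction an invertible cotwist does not give an invertible twist, which is why the 2-out-of-4 property needs two conditions. For example, $-\otimes\mathcal{O}_X\colon\Mod_k\to\QCoh(X)$ for $X$ a K3 surface blown up at a point has cotwist $[-3]$, but it is not spherical since $\omega_X\not\simeq\mathcal{O}_X$. So after reducing to generators you still need a genuine coherence argument, and the proposal does not contain one. The obstacle you flag is also misplaced. The identifications $US\simeq SC[1]$ and $RU\simeq CR[1]$ are natural for free, as cones of $\epsilon S$ and $R\epsilon$, which are split by $S\eta$ and $\eta R$. The real difficulty is the compatibility of the maps.

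The paper handles exactly this coherence, on the cotwist side. It quotes Theorem~\ref{thm:christ}: sphericity holds iff $T=\Cone(u)$ is invertible and $Tu\simeq uT$ under the identification $TM\simeq MT$ of Lemma~\ref{lem:commutes}. It then proves in Proposition~\ref{prop:main} that $Tu\simeq uT$ holds for every monad, by computing the iterated cones of a commutative cube built from $u$, $Mu$, $uM$, $m$ in two ways. To salvage your route you would need a comparable argument for the twist.

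Several smaller points also need repair:
\begin{itemize}
\item The left adjoint of $R$ is $S$ by definition, so checking that it is ``$S$ up to the twist by $C$'' does not parse; presumably you mean $R^R$ or $S^L$.
\item $UU^L\to\id$ is not the counit of $U^L\dashv U$.
\item The unit $\id\to U^RU$ involves $U^R=\coCone(\id\to R^RR)$. Since $R^R$ is not known a priori to preserve colimits, your closure-under-colimits step does not apply to it.
\item Setting $V(S\phi):=SC^{-1}\phi$ does not define a functor on $\Image(S)$ until you say what $V$ does on Kleisli morphisms $\phi\to M\phi'$.
\end{itemize}

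A cleaner organization of your approach is as follows. For fixed $Y$, the class of $X$ for which $\Hom(X,Y)\to\Hom(UX,UY)$ is an equivalence is stable and closed under colimits. So full faithfulness of $U$ reduces to $X=S\phi$, and this is precisely where the missing coherence is needed. Given full faithfulness, the essential image of $U$ is stable and colimit-closed and contains $US\phi\simeq SC\phi[1]$, hence all of $\Image(S)$. Lemma~\ref{lem:kleislirecognition} then gives essential surjectivity.
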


For general $n$, we show that there is an equivalence
\[\sfSph(n)/\sfSph^{\const}(n)\simeq \sfSphMnd(n),\]
where $\sfSphMnd$ is the $\two$-category of pairs $(\frakS;\Phi,M)$ consisting of a stable category $\Phi$ and a spherical monad $M$ such that $\coCone(\id\xto{u} M)$ is conjugate-compatible with $\frakS$.

In \S\ref{subsection:twocatsphmnd}, we will upgrade the assignment
\[[n]\mapsto\sfSphMnd(n)\]
to a 2-Segal simplicial $\two$-category $\sfSphMnd(-)$, produce maps of simplicial $\two$-categories,
\[\sfSph(-)\to \sfSphMnd(-)\to \sfAut(-).\]
We then define a braid group action $\Br_n$ on $\sfSphMnd(n)$, compatibly with the braid group actions of $\sfSph(n)$ and $\sfAut(n)$.

\subsection{Orthogonal to local systems}\label{subsection:prelocalized}
We study the right-orthogonal to the sub-$\two$-category of local systems.  We begin by recalling which abstract perverse schobers are local systems.

\begin{defn}\label{defn:constant}
    An object $\calF\in\sfSph(n)$ is said to be an abstract local system if $\Phi_i=0$ for all $i$. For each $n$, we let $\sfSph^{\const}(n)\subseteq \sfSph(n)$ denote the full sub-$\two$-category consisting of abstract local systems.
\end{defn}

We study the quotient $\sfSph(n)/\sfSph^{\const}(n)$, which can be thought of as a naive categorification of abstract localized perverse sheaves. Since quotients of $\two$-categories are difficult, we will simply define the quotient to be the right orthogonal of $\sfSph^{\const}(n)$. We will later construct a projection functor as an adjoint to the embedding.

\begin{defn}
    An object $\calF\in\sfSph(n)$ is said to be an abstract prelocalized perverse schober with $n$ singularities if it is right orthogonal to $\sfSph^{\const}(n)$. We let $\sfSph^{\const\perp}(n)$ denote the full sub-$\two$-category of abstract prelocalized perverse schobers.
\end{defn}

\begin{prop}\label{prop:conservative}
    A perverse schober $(\Phi_i,\Psi,S_i,R_i)$ is an abstract prelocalized perverse schober if and only if the right adjoint
    \[\oplus_i\Phi_i\from\Psi:\oplus_iR_i\]
    is conservative.
\end{prop}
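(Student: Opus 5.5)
To prove Proposition~\ref{prop:conservative}, I would first compute the mapping categories in $\sfSph(n)$ out of an abstract local system $\calL=(0,\dots,0;\Psi',0,0)$. A morphism $\calL\to\calF$ in $\sfSph(n)\subseteq\sfAdj(n)$ consists of functors $\Phi_i$-components $0\to\Phi_i$ (which are forced to be zero) together with a functor $G:\Psi'\to\Psi$, subject to adjointability of each square. Adjointability for the $i$-th square says that the natural transformation $0\cdot 0^R\to R_iG$ is an equivalence, i.e. $R_iG\simeq 0$ for every $i$. Since the $\Phi_i$-components are terminal/initial and the adjointability condition is a property, I expect
\[\sfSph(n)(\calL,\calF)\simeq \text{full subcategory of }\sfSt_k(\Psi',\Psi)\text{ on }G\text{ with }R_iG=0\ \forall i.\]
This identification is where I would be careful: it uses that the mapping categories of $\sfFun(\frakn(n),\sfSt_k)$ are computed as limits as in the proof of Lemma~\ref{lem:Uisfaithful}, with $\sfSt_k(0,\Phi_i)\simeq\pt$.

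Next I would identify this subcategory with $\sfSt_k(\Psi',\ker)$, where $\ker:=\bigcap_i\ker R_i\subseteq\Psi$. Since each $R_i$ is colimit-preserving (our standing convention), $\ker$ is a stable presentable full subcategory closed under colimits, and a colimit-preserving $G$ satisfies $R_iG=0$ for all $i$ iff $G$ factors through $\ker$. Thus $\calF$ is right orthogonal to $\sfSph^{\const}(n)$, meaning all these mapping categories are contractible, iff $\sfSt_k(\Psi',\ker)\simeq\pt$ for every $\Psi'$ arising in a local system.

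Finally, for the two directions: if $\oplus_iR_i$ is conservative then, being exact, its kernel $\ker$ is zero, so all mapping categories are trivial. Conversely, if $\ker\neq0$, I would take the test object $\calL_{\ker}=(0,\dots,0;\ker,0,0)$, which is an abstract local system (adjunctions $0\tofrom\ker$ are trivially spherical, since the twist and cotwist are identities). Then the inclusion $\ker\inclto\Psi$ gives a nonzero object of $\sfSph(n)(\calL_{\ker},\calF)$, contradicting orthogonality. For exact functors between stable categories, conservativity is equivalent to having zero kernel, which closes the argument. The main obstacle I anticipate is the first step, namely verifying rigorously, in the $\two$-categorical sense, that the $\Phi_i$-components contribute nothing and that the adjointability condition reduces exactly to $R_iG\simeq0$. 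I would handle this by writing the morphism category as the limit used in Lemma~\ref{lem:Uisfaithful}.
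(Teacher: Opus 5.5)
Your proposal is correct and follows essentially the same route as the paper. Both arguments use adjointability to identify $\sfSph(n)(\opname{Const}(\calD),\calF)$ with the category of functors $G:\calD\to\Psi$ satisfying $R_iG\simeq 0$, and then argue both directions from that identification. The only difference is the test object in the forward direction: you use $\opname{Const}(\ker)$ with the inclusion, whereas the paper uses $\opname{Const}(\Mod_k)$ and the functor $\Mod_k\to\Psi$ classifying a single object $\psi$ with $(\oplus_iR_i)\psi=0$, which avoids having to check that the joint kernel is presentable.
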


\begin{proof}
    We may identify objectwise, $\sfSt_k\isomto\sfSph^{\const}(n)$ via
    \[\opname{Const}:\calD\mapsto (\Phi_i=0,\Psi=\calD,S_i:0\to \Psi).\]
    Let $\calF:=(\Phi_i,\Psi,S_i:\Phi_i\to\Psi)$ be an abstract perverse schober. Observe that
    \[\Hom(\opname{Const}(\calD),\calF)\]
    is equivalent to the category of functors $G:\calD\to\Psi$ such that for each $i$, $R_i\circ G=0$.
    
    Suppose that $\calF$ is prelocalized. Let $\psi$ object of $\Psi$ such that $(\oplus_iR_i)(\psi)=0$. Then this defines an object of $\Hom(\opname{Const}(\Mod_k),\calF)$. We deduce that $\psi=0$, and hence that $(\oplus_iR_i)$ is conservative.

    Conversely, suppose that $(\oplus_iR_i)$ is conservative. Let $\calD$ be a category and let $f:\opname{Const}(\calD)\to\calF$ be a morphism. Given any object $d\in\calD$, $f(d)$ is an object of $\Psi$ satisfying
    \[(\oplus_iR_i)(f(d))=0.\]
    So by conservativity, $f(d)=0$. Hence $f=0$ and we deduce that $\calF$ is prelocalized.
\end{proof}

\begin{rmk}
    We compare this calculation to an analog with perverse sheaves. Let us take $k=\bbC$, $n=1$ for simplicity, and recall that $\Perv(\bbC,0)$ is equivalent to the category of diagrams of $k$-vector spaces,
    \[s:V\tofrom W:r\]
    such that $1-rs$ and $sr-1$ are invertible. We recall \cite{geiglePerpendicularCategoriesApplications1991} that for abelian categories, the correct notion of orthogonal subcategory asks for higher ext groups to vanish. With this in mind, an object $s:V\tofrom W:r$ is orthogonal to local systems if and only if $r$ is an isomorphism (and not just injective). This subcategory is easily seen to be equivalent to the category $\Loc(S^1)$.

    This is an instance in which the lack of a categorification of ``derived structure'' is a good thing; in the perverse schobers setting, asking for $R$ to be an equivalence (and not just conservative) would imply that $\Phi=\Psi=0$ and thus is not an interesting condition.
\end{rmk}

\begin{lem}\label{lem:activecartesian3}
    Let $\calF:=(\Phi_i,\Psi,S_i,R_i)$ be an object in $\sfSph(n)$. This is an abstract prelocalized perverse schober if and only if $\FS_{\sfSph}(\calF)$ is an abstract prelocalized perverse schober. Equivalently, there is a pullback square,
    \[\begin{tikzcd}
        {\sfSph^{\const\perp}(n)} \arrow[dr, phantom, "\scalebox{1.5}{$\lrcorner$}" , very near start, color=black] & {\sfSph^{\const\perp}} \\
        {\sfSph(n)} & {\sfSph.}
        \arrow[from=1-1, to=1-2]
        \arrow[hook, from=1-1, to=2-1]
        \arrow[hook, from=1-2, to=2-2]
        \arrow["{\FS_{\sfSph}}", from=2-1, to=2-2]
    \end{tikzcd}\]
\end{lem}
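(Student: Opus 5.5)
By Proposition~\ref{prop:conservative}, the lemma reduces to a statement about objects: for $\calF=(\Phi_i,\Psi,S_i,R_i)\in\sfSph(n)$ with $\FS_{\sfSph}(\calF)=(S:\Phi\tofrom\Psi:R)$, where $\Phi=\langle\Phi_1,\dots,\Phi_n\rangle$, we must show that $\oplus_iR_i$ is conservative exactly when $R$ is. The pullback square is then automatic. Both vertical arrows are inclusions of full sub-$\two$-categories, so the fiber product $\sfSph^{\const\perp}\times_{\sfSph}\sfSph(n)$ is the full sub-$\two$-category of $\sfSph(n)$ on objects whose image under $\FS_{\sfSph}$ is prelocalized. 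Hence the square is a pullback precisely when this object-level criterion holds.

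For the object-level comparison I will use the identifications $S\circ I_i\simeq S_i$ from \S\ref{subsection:enhancedFS}. Taking right adjoints gives $R_i\simeq I_i^R\circ R$, so that
\[\oplus_iR_i\simeq(\oplus_iI_i^R)\circ R.\]
By condition (\ref{item:cons}) of Definition~\ref{defn:sod}, $\oplus_iI_i^R$ is conservative on $\Phi$. Since all functors are exact, conservativity of a functor is equivalent to having no nonzero kernel objects.

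For one direction, suppose $R$ is conservative and $\oplus_iR_i(\psi)=0$. Then $(\oplus_iI_i^R)(R\psi)=0$, so $R\psi=0$, and hence $\psi=0$. For the other direction, suppose $\oplus_iR_i$ is conservative and $R\psi=0$. Then $\oplus_iR_i(\psi)=(\oplus_iI_i^R)(R\psi)=0$, so $\psi=0$. This second direction uses only the factorization, not the SOD.

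The main step to verify is that $R_i\simeq I_i^RR$ holds naturally for general $n$, and not only for $n=2$, where Lemma~\ref{lem:twocatssod} gives it. For general $n$ I will derive it from the equivalence $\sfBdj(n)\simeq\sfAdj(n)$ of Proposition~\ref{prop:bdjisadj}. The functor $\opname{C}(n)$ sends $(\frakS;\Phi,\Psi,S,R)$ to $(\Phi_i,\Psi,SI_i,I_i^RR)$, and $\FS_{\sfSph}$ is the inverse of $\opname{C}(n)$ followed by the active face map to $[1]$. The identification therefore holds by construction, and the argument is complete.
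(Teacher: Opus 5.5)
Your proposal is correct and follows the paper's proof: both use Proposition~\ref{prop:conservative} to reduce the lemma to comparing conservativity of $\oplus_iR_i$ and $R$, and both conclude from the factorization $\oplus_iR_i\simeq(\oplus_iI_i^R)\circ R$ together with conservativity of $\oplus_iI_i^R$ from the SOD axiom. Your extra remarks, on why the square is automatically a pullback and on obtaining $R_i\simeq I_i^RR$ for general $n$ from $\opname{C}(n)$, supply details the paper leaves implicit.
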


\begin{proof}
    Denote by $R$ the right adjoint appearing in $\FS_{\sfSph}(\calF)$. By Proposition~\ref{prop:conservative}, $\calF$ is a prelocalized schober if and only if $\oplus_iR_i$ is conservative, and $\FS_{\sfSph}(\calF)$ prelocalized if and only if $R$ is conservative. Observe that the following diagram commutes.
    \[\begin{tikzcd}
        {\oplus_i\Phi_i} & {\langle\Phi_1,\dots,\Phi_n\rangle} & \Psi
        \arrow["\oplus_i I_i^R", from=1-2, to=1-1]
        \arrow["{\oplus_iR_i}"', curve={height=12pt}, from=1-3, to=1-1]
        \arrow["R", from=1-3, to=1-2]
    \end{tikzcd}\]
    Here, $\oplus_i I_i^R$ is conservative by definition of SODs. It follows that $\oplus_iR_i$ is conservative if and only if $R$ is conservative, completing the proof.
\end{proof}

\subsection{Spherical monads}\label{subsection:quotientismonad}
We demonstrate a different description of $\sfSph^{\const\perp}$ in terms of spherical monads, and thereby exhibit an adjoint $\sfSph\to\sfSph^{\const\perp}$ to the inclusion of $\sfSph^{\const\perp}$.

We first ignore invertibility conditions and take $n=1$. The full sub-$\two$-category $\sfAdj^{\const\perp}\subset \sfAdj$ consists of those adjunctions where the right adjoint is conservative. These are equivalent to monadic adjunctions, so the embedding is identified with the Eilenberg--Moore construction,
\[\opname{P}^R:\sfMnd\inclto \sfAdj.\]
By definition, this is the right adjoint to the functor $P$, which outputs the monad associated to an adjunction. Note that in \S\ref{subsection:mndadj} we denoted these by $i^*\adjto i_*$.

Under this identification, $\sfSph^{\const\perp}$ is the sub-$\two$-category of those monads for which the image under $\opname{P}^R$ is spherical. Such monads are said to be spherical, as defined in Definition~\ref{defn:sphmonad}. We write $\sfSphMnd\subset \sfMnd$ for this sub-$\two$-category.

We deduce from Theorem~\ref{thm:christstrengthening1} that this restricts to an adjunction
\[\opname{P}:\sfSph\tofrom \sfSphMnd:\opname{P}^R.\]
There is also a functor $\sfSphMnd\to\sfAut$ given by $(\Phi,M)\mapsto (\Phi,\coCone(\id\to M))$.

For general $n$, we define
\[\sfSphMnd(n) := \sfAut(n)\times_{\sfAut}\sfSphMnd.\]
We denote its objects by $(\frakS;\Phi,M)$ consisting of a category $\Phi$, monad $M$, such that $\Cone(\id\to M)$ is conjugate-compatible with SOD $\frakS$, together with functors
\[\sfSph(n)\to\sfSphMnd(n)\to\sfAut(n).\]
We will show in Proposition~\ref{prop:monadisortho} that there is an identification,
\[\sfSph^{\const\perp}(n)\simeq \sfSphMnd(n).\]

\subsection{\texorpdfstring{$\sfSphMnd(n)$}{SphMnd(n)} as a simplicial \texorpdfstring{$\two$}{2}-category}\label{subsection:twocatsphmnd}
We construct the simplicial $\two$-category $\sfSphMnd(-)$. We will also construct a braid group action, and define the $\two$-category of non-abstract prelocalized schobers $\olsftwoP^{\pre}(\bbC,R)$. Since the techniques and ideas are the same as in previous sections, we will omit most details.

\begin{prop}\label{prop:simplicialsphmnd}
    The assignment $[n]\mapsto \sfSphMnd(n)$ upgrades to a simplicial $\two$-category which is 2-Segal. Moreover, the natural functors
    \[\sfSph(n)\xto{\opname{P}_n} \sfSphMnd(n)\to \sfAut(n)\]
    upgrade to maps of simplicial $\two$-categories which are Cartesian over $\Delta_{\act}$.
\end{prop}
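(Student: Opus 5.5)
The plan is to build $\sfSphMnd(-)$ as a fiber product of simplicial $\two$-categories, so that the simplicial structure and the 2-Segal property are both inherited. Let $\const\sfSphMnd(-)$ be the constant simplicial $\two$-category at $\sfSphMnd$, and likewise $\const\sfAut(-)$. The map $\sfSphMnd\to\sfAut$, $(\Phi,M)\mapsto(\Phi,\coCone(\id\to M))$, gives a map of constant simplicial objects.

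The first obstacle is that $\FS_{\sfAut}:\sfAut(n)\to\sfAut$ is not a map to the constant simplicial object: inert face maps shrink $\Phi$. I would therefore not take a naive levelwise fiber product. Instead I would define
\[\sfSphMnd'(-) := \sfEnd'(-)\times_{\sfEnd}\sfMnd,\]
where $\sfMnd\to\sfEnd$ sends $(\Phi,M)\mapsto(\Phi,\coCone(\id\to M))$. This mirrors exactly the construction $\sfBdj'(-)=\sfEnd'(-)\times_{\sfEnd}\sfAdj$ of \S\ref{subsection:simplicialadj}, with $\sfEnd'(-)$ lying over $\sfSt_k$ compatibly.

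The steps are then as follows.
\begin{enumerate}
\item Show that $\opname{I}_n:\sfMnd(n)\inclto\sfSphMnd'(n)$, the locus where the partial SOD is an SOD, admits a right adjoint sending $(\frakS;\Phi,M)\mapsto(\frakS;\langle\Phi_1,\dots,\Phi_n\rangle,I^RMI)$. As in Lemmas~\ref{lem:keyrightadjoint} and \ref{lem:rightadjointbdj}, this reduces to $n=1$ via Theorem~\ref{thm:laxfibrations}. One must check that $I^RMI$ is again a monad; it is the monad of the composite adjunction, which is visible once one passes through $i_*$ and uses Proposition~\ref{prop:KLisEM}.
\item Apply Lemma~\ref{lem:descendingsimplicialobjects} to get a simplicial $\sfMnd(-)$, and note that it is 2-Segal by the argument of Proposition~\ref{prop:segalsod}, since the map from the primed version is Cartesian over $\Delta_{\act}$ by construction.
\item Cut out the full sub-objects where the monad is spherical, i.e.\ where $\coCone(\id\to M)$ is invertible (Theorem~\ref{thm:christstrengthening1}). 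This sits inside $\sfMnd(-)$ exactly as $\sfAut(-)$ sits inside $\sfEnd(-)$. Stability under faces and degeneracies and the 2-Segal property follow by pulling back along $\sfMnd(-)\to\sfEnd(-)$ from Proposition~\ref{prop:simplicialaut}. At level $n$ this recovers $\sfAut(n)\times_{\sfAut}\sfSphMnd$ by definition.
\end{enumerate}

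For the maps of simplicial objects, $\sfSphMnd(-)\to\sfAut(-)$ is the restriction of $\sfMnd(-)\to\sfEnd(-)$. It is Cartesian over $\Delta_{\act}$ because both sides are, levelwise, pulled back from level $1$ along the active map $[1]\to[n]$, as in Proposition~\ref{prop:activecartesian4}. For $\opname{P}(-):\sfSph(-)\to\sfSphMnd(-)$, I would start from $i^*:\sfAdj\to\sfMnd$, which lies over $\sfEnd$, giving $\sfBdj'(-)\to\sfSphMnd'(-)$. I would then descend it via Lemma~\ref{lem:descendingsimplicialmaps}, checking that the lax square commutes, i.e.\ that restricting an adjunction to $\langle\Phi_1,\dots,\Phi_n\rangle$ commutes with taking its monad. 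Cartesianness over $\Delta_{\act}$ then follows from two-out-of-three for pullback squares, since $\sfSph(-)\to\sfAut(-)$ is Cartesian by Proposition~\ref{prop:activecartesian2} and so is $\sfSphMnd(-)\to\sfAut(-)$.

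I expect the main obstacle to be step 1: showing the monad structure on $I^RMI$ is coherent, and that the lax squares for inert face maps commute at the level of monads. I would handle this through the Eilenberg--Moore and Kleisli identification of Proposition~\ref{prop:KLisEM}, working with adjunctions in $\sfBdj$ and projecting by $i^*$.
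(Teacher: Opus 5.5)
Your proposal is correct and follows essentially the same route as the paper. You form the primed object $\sfEnd'(-)\times_{\sfEnd}\sfMnd$, build the right adjoint onto the SOD locus by reducing to $n=1$, descend via Lemmas~\ref{lem:descendingsimplicialobjects} and~\ref{lem:descendingsimplicialmaps} using $i^*:\sfAdj\to\sfMnd$, and restrict to the spherical, conjugate-compatible locus. The only minor variation is that you get Cartesianness of $\opname{P}(-)$ over $\Delta_{\act}$ by pasting pullback squares against Proposition~\ref{prop:activecartesian2}, and 2-Segality of $\sfSphMnd(-)$ as the fiber product $\sfMnd(-)\times_{\sfEnd(-)}\sfAut(-)$, whereas the paper checks Cartesianness of $\sfAdj(-)\to\sfMnd(-)$ directly before restricting; both arguments are valid.
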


\begin{proof}
    These follow in the same way as \S\ref{section:adjaut}. We outline the steps and omit the details.
    \begin{itemize}
        \item First consider the simplicial $\two$-category $\sfMnd'(-) := \sfEnd'(-)\times_{\sfEnd}\sfMnd$ and see that it is 2-Segal.
        \item Verify that the inclusion $\sfMnd\inclto \sfMnd'(1)$ admits a right adjoint via $(\Phi_1;\Phi,M)\mapsto (\Phi_1,I_1^RMI_1)$, by a proof similar to Lemma~\ref{lem:keyrightadjoint}, Lemma~\ref{lem:rightadjointend} and Lemma~\ref{lem:rightadjointbdj}.
        \item Verify that the inclusion $\sfMnd(n):= \sfEnd(n)\times_{\sfEnd}\sfMnd\inclto \sfMnd'(n)$ admits a right adjoint, by reducing to the case $n=1$.
        \item Use Lemma~\ref{lem:descendingsimplicialobjects} to upgrade $\sfMnd(-)$ to a simplicial $\two$-category. Verify that this is 2-Segal.
        \item See that the projection $\opname{P}:\sfAdj\to \sfMnd$ and the natural map $\sfMnd\to \sfEnd$ induce a map of simplicial $\two$-categories, $\sfBdj'(-)\to\sfMnd'(-)\to \sfEnd'(-)$.
        \item Use Lemma~\ref{lem:descendingsimplicialmaps} to obtain maps of simplicial objects $\sfAdj(-)\simeq\sfBdj(-)\to\sfMnd(-)$ and $\sfMnd(-)\to\sfEnd(-)$. Then verify that these are Cartesian over each active arrow $[1]\to [n]$ and hence over $\Delta_{\act}$.
        \item Finally, verify that these properties and structures all restrict to respective subcategories as given in the statement.
    \end{itemize}
\end{proof}

\begin{prop}\label{prop:monadisortho}
    For each $n$, the functor $\opname{P}_n$ admits a right adjoint $\opname{P}_n^R$ which is 2-fully faithful. Moreover, the resulting full sub-$\two$-category is $\sfSph^{\const\perp}(n)$.
\end{prop}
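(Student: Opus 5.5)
Reduce to the case $n=1$ via the pullback squares already available. By Proposition~\ref{prop:activecartesian2} and Proposition~\ref{prop:simplicialsphmnd}, the maps $\sfSph(-)\to\sfSphMnd(-)\to\sfAut(-)$ are Cartesian over $\Delta_{\act}$. Hence, for the active arrow $[1]\to[n]$, we have
\[\sfSph(n)\simeq \sfAut(n)\times_{\sfAut}\sfSph,\qquad \sfSphMnd(n)\simeq\sfAut(n)\times_{\sfAut}\sfSphMnd,\]
and $\opname{P}_n$ is the base change $\id_{\sfAut(n)}\times_{\sfAut}\opname{P}$.

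\textbf{Case $n=1$.} The adjunction $\opname{P}\adjto\opname{P}^R$ between $\sfSph$ and $\sfSphMnd$ is the restriction of $i^*\adjto i_*$ on $\sfAdj$ and $\sfMnd$, which comes from Theorem~\ref{thm:laxfibrations}. It restricts because of Theorem~\ref{thm:christstrengthening1}: for a spherical adjunction the monad $RS$ has invertible $\Cone(\id\to RS)$, so it is a spherical monad, and conversely the Eilenberg--Moore adjunction of a spherical monad is spherical by Definition~\ref{defn:sphmonad}. The functor $\opname{P}^R$ is 2-fully faithful since $i^*i_*\simeq\Id$. Its essential image is the monadic adjunctions, i.e.\ those with $R$ conservative (Barr--Beck--Lurie, colimits being automatic in $\sfSt_k$). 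By Proposition~\ref{prop:conservative} this is exactly $\sfSph^{\const\perp}$.

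\textbf{General $n$: the adjunction.} Check that the unit and counit of $\opname{P}\adjto\opname{P}^R$ lie over $\sfAut$. Concretely, $\opname{P}^R$ commutes with the projections to $\sfAut$, because $i^*i_*\simeq\Id$ preserves $\coCone(\id\to M)$; and the unit $\calF\to\opname{P}^R\opname{P}\calF$ maps to an equivalence in $\sfAut$, since it is the identity on $\Phi$ and on the monad. Since both projections to $\sfAut$ are compatible with the adjunction, base change along $\FS_{\sfAut}:\sfAut(n)\to\sfAut$ yields an adjunction
\[\opname{P}_n\adjto\opname{P}_n^R:=\id\times_{\sfAut}\opname{P}^R,\]
and 2-full faithfulness is preserved under pullback, since mapping categories in the fibre product are fibre products of mapping categories. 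This base-change step is where I expect the main difficulty. Mapping categories in $\sfAut(n)\times_{\sfAut}\sfSph$ are fibre products, and one must verify that the adjunction equivalences $\sfSph(\calF,\opname{P}^R\calM)\simeq\sfSphMnd(\opname{P}\calF,\calM)$ are compatible with the maps to $\sfAut((\Phi,T),(\Phi',T'))$. I would try to handle this by showing that the triangle identities can be chosen over $\sfAut$, using the 2-faithfulness of $\sfAut(n)\to\sfAut$ (cf.\ Lemma~\ref{lem:Uisfaithful} and Proposition~\ref{prop:functorconstruction}) to reduce the coherence to a property.

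\textbf{General $n$: the essential image.} It consists of objects of $\sfSph(n)$ whose image under $\FS_{\sfSph}$ lies in the image of $\opname{P}^R$, i.e.\ in $\sfSph^{\const\perp}$. By Lemma~\ref{lem:activecartesian3}, this is precisely $\sfSph^{\const\perp}(n)$.
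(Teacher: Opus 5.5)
Your proposal is correct and follows essentially the same route as the paper. Both arguments treat the case $n=1$ via the Eilenberg--Moore adjunction, restricted using Theorem~\ref{thm:christstrengthening1} and Proposition~\ref{prop:conservative}, then base change along $\sfAut(n)\to\sfAut$, and identify the essential image by comparison with Lemma~\ref{lem:activecartesian3}; the paper simply asserts the base-change step, and your check that $\opname{P}^R$ commutes with the projections to $\sfAut$ and that the unit maps to an equivalence there supplies exactly the detail it leaves implicit.
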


\begin{proof}
    Let $n=1$. We recall from the discussion at the beginning of this subsection that $\opname{P}:\sfSph\to\sfSphMnd$ admits a 2-fully faithful right adjoint $\opname{P}^R$ given by the inclusion of $\sfSph^{\const\perp}$.

    By applying $\sfAut(n)\times_{\sfAut}-$, we learn that $\opname{P}_n$ admits a 2-fully faithful right adjoint $\opname{P}_n^R$. Since $\sfSph(-)\to \sfSphMnd(-)$ is Cartesian over the active arrow $[1]\to [n]$, we have a pullback square as follows.
    \[\begin{tikzcd}[ampersand replacement=\&]
        {\sfSphMnd(n)}\pullback \& \sfSphMnd \\
        {\sfSph(n)} \& \sfSph
        \arrow[from=1-1, to=1-2]
        \arrow["{\opname{P}_n^R}"', hook, from=1-1, to=2-1]
        \arrow["{\opname{P}^R}"', hook, from=1-2, to=2-2]
        \arrow[from=2-1, to=2-2]
    \end{tikzcd}\]
    So we are done by comparing this to the diagram of Lemma~\ref{lem:activecartesian3}.
\end{proof}

From here on, we write $\sfSphMnd(n)$ instead of $\sfSph^{\const\perp}(n)$. As such, we refer to objects in $\sfSphMnd(n)$ as abstract prelocalized schobers and view $\opname{P}_n$ as the quotient functor. In Appendix~\ref{appendix:sphmonad}, we study the objects of $\sfSphMnd(n)$ in more detail.

\begin{cor}\label{cor:action3}
    There is a natural action of the braid group $\Br_n$ on $\sfSphMnd(n)$. Moreover, the two functors in Proposition~\ref{prop:simplicialsphmnd} are naturally $\Br_n$-equivariant.
\end{cor}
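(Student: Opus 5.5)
The plan is to transfer the action by pullback, exactly as in Corollary~\ref{cor:action2}. By definition
\[\sfSphMnd(n) = \sfAut(n)\times_{\sfAut}\sfSphMnd,\]
where $\sfAut(n)\to\sfAut$ is $\FS_{\sfAut}$. By Corollary~\ref{cor:action}, $\Br_n$ acts on $\sfAut(n)$ and $\FS_{\sfAut}$ is $\Br_n$-equivariant for the trivial action on $\sfAut$. Give $\sfSphMnd$ the trivial $\Br_n$-action. Then the cospan $\sfAut(n)\to\sfAut\from\sfSphMnd$ is a diagram in $\Fun(B\Br_n,\wCat_{\two})$. Since limits in functor categories are computed pointwise, its pullback is a $\Br_n$-object whose underlying $\two$-category is $\sfSphMnd(n)$. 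This defines the action, and the projection $\sfSphMnd(n)\to\sfAut(n)$ is $\Br_n$-equivariant by construction.

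For the functor $\opname{P}_n:\sfSph(n)\to\sfSphMnd(n)$, I would use the description of $\sfSph(n)$ as the pullback $\sfAut(n)\times_{\sfAut}\sfSph$ from Proposition~\ref{prop:activecartesian2}. The action on $\sfSph(n)$ in Corollary~\ref{cor:action2} was defined through exactly this pullback, with trivial action on $\sfSph$. Under these identifications, $\opname{P}_n$ is obtained by applying $\sfAut(n)\times_{\sfAut}-$ to the functor $\opname{P}:\sfSph\to\sfSphMnd$. This is a morphism of cospans in $\Fun(B\Br_n,\wCat_{\two})$: it is the identity on $\sfAut(n)$ and $\sfAut$, and it is $\opname{P}$, trivially equivariant, on the third vertex. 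Taking pullbacks therefore gives an equivariant functor.

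The main obstacle is checking that the identification of $\opname{P}_n$ with $\id\times_{\sfAut}\opname{P}$ is the one produced in Proposition~\ref{prop:simplicialsphmnd}. Concretely, the square
\[\begin{tikzcd}[ampersand replacement=\&]
	{\sfSph(n)} \& \sfSph \\
	{\sfSphMnd(n)} \& \sfSphMnd
	\arrow[from=1-1, to=1-2]
	\arrow["{\opname{P}_n}"', from=1-1, to=2-1]
	\arrow["{\opname{P}}", from=1-2, to=2-2]
	\arrow[from=2-1, to=2-2]
\end{tikzcd}\]
must commute compatibly with the maps down to $\sfAut(n)\to\sfAut$. This is the Cartesianness over the active arrow $[1]\to[n]$ asserted in Proposition~\ref{prop:simplicialsphmnd}.

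The cleanest way to handle this is to work over $\sfAut(n)$. The functors $\sfSphMnd(n)\to\sfAut(n)$ and $\sfSph(n)\to\sfAut(n)$ are not 2-faithful, so Proposition~\ref{prop:functorconstruction} does not apply directly. However, they are pullbacks of functors to $\sfAut$, so any two candidate equivariance structures differ by data living over the trivially-acted factors $\sfSph$ and $\sfSphMnd$, and there these are strict identities. If needed, I would also check on objects that the induced mutation on $(\frakS;\Phi,M)$ sends $\frakS$ to $\beta\cdot\frakS$ and fixes $(\Phi,M)$. This is consistent with Theorem~\ref{thm:inftyadmissible}, since $\coCone(\id\to M)$ stays conjugate-compatible with $\beta\cdot\frakS$.
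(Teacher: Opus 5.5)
Your argument is correct, but it takes a different route from the paper.

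\textbf{The paper's route.} It obtains the action on $\sfSphMnd(n)$ by restriction. It observes that the action of $\Br_n$ on $\sfSph(n)$ from Corollary~\ref{cor:action2} preserves the sub-$\two$-category $\sfSph^{\const}(n)$ of abstract local systems, hence preserves its right orthogonal. Proposition~\ref{prop:monadisortho} identifies that right orthogonal with $\sfSphMnd(n)$ via $\opname{P}_n^R$. So $\opname{P}_n^R$ is equivariant, and therefore so is its left adjoint $\opname{P}_n$. The functor to $\sfAut(n)$ is then equivariant because it factors as $\sfSphMnd(n)\xto{\opname{P}_n^R}\sfSph(n)\to\sfAut(n)$.

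\textbf{Your route.} You work directly with the defining fiber product $\sfAut(n)\times_{\sfAut}\sfSphMnd$, using that pullbacks in $\Fun(B\Br_n,\wCat_{\two})$ are computed pointwise, in parallel with how Corollary~\ref{cor:action2} was proved. Equivariance of the projection to $\sfAut(n)$ is then tautological. Equivariance of $\opname{P}_n$ comes from the morphism of cospans $(\id,\id,\opname{P})$.

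\textbf{Comparison.} Your route is more formal and needs no check that local systems are preserved by mutation. The paper's route shows directly that the action is the restriction of the action on $\sfSph(n)$ to prelocalized schobers, i.e.\ that $\opname{P}_n^R$ is equivariant. That is what justifies reading it as an action on the quotient $\sfSph(n)/\sfSph^{\const}(n)$. You can recover this in your framework: $\opname{P}_n^R\simeq\id\times_{\sfAut}\opname{P}^R$, as in the proof of Proposition~\ref{prop:monadisortho}, is again induced by a morphism of trivially-acted cospans. Alternatively, it is the right adjoint of an equivariant functor whose group action is by equivalences.

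\textbf{Minor point.} To identify $\opname{P}_n$ with $\id\times_{\sfAut}\opname{P}$, you only need $\opname{P}_n$ to be compatible with the projections to $\sfAut(n)$ and to $\sfSphMnd$, i.e.\ commutativity of the simplicial squares. This holds by construction, since $\opname{P}_n$ is built from $\id\times_{\sfEnd}\opname{P}$. The Cartesian property in Proposition~\ref{prop:simplicialsphmnd} contains this but is more than you use.
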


\begin{proof}
    Observe that the sub-$\two$-category $\sfSph^{\const}(n)$ is stable under the action of $\Br_n$ on $\sfSph(n)$. So we obtain an action of $\Br_n$ on $\sfSphMnd(n)$ which respects $\opname{P}_n^R$ and hence $\opname{P}_n$. It is easy to verify that this action also naturally respects the functor to $\sfAut(n)$. For example, we can factor this functor as $\sfSphMnd(n)\xto{\opname{P}_n^R}\sfSph(n)\to \sfAut(n)$.
\end{proof}

By the same construction as Definition~\ref{defn:nonabstract}, we obtain the $\two$-categories $\olsftwoP^{\pre}(\bbC,R)$ of prelocalized schobers, together with functors
\[\sftwoP(\bbC,R)\to \olsftwoP^{\pre}(\bbC,R)\to \olsftwoP(\bbC,R).\]

\section{From prelocalized schobers to localized schobers}\label{section:pretolocalized}
In this section, we describe how we may recover localized schobers from prelocalized ones by stabilizing an operation on $\olsftwoP^{\pre}(\bbC,R)$ called symplectic suspension. Together with our study of prelocalized schobers in \S\ref{section:prelocalized}, this gives a precise sense in which we may define localized schobers from perverse schobers, as depicted below.
\begin{align*}
    \text{Schobers on }(\bbC,R)&\overset{\text{quotient}}{\rightsquigarrow}\text{Prelocalized schobers on }(\bbC,R)\\
    &\overset{\text{stabilization}}{\rightsquigarrow}\text{Localized schobers on }(\bbC,R)
\end{align*}
The need to stabilize the symplectic suspension is motivated by the following issue which arises in symplectic geometry.

\begin{obs}\label{obs:stable}
    Let $f:X\to\bbC$ be a Lefschetz (or symplectic) fibration with critical values $R$. Let $\calF(X,f)\in\sfP(\bbC,R)$ be the associated perverse schober defined in \cite{kapranovPerverseSchobersAlgebra2020}*{\S 3.6} and let $\ol{\calF(X,f)}^{\opname{pre}}\in\olsftwoP^{\opname{pre}}(\bbC,R)$ be the resulting prelocalized perverse schober.

    The operation $\overline{\calF(X,f)}^{\opname{pre}}\mapsto \overline{\calF(X\times \bbC_x,f+x^2)}^{\opname{pre}}$ of prelocalized schobers is not ``invertible''.
\end{obs}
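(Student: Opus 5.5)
We read ``not invertible'' as follows. Take $R=\{0\}$ and identify $\olsftwoP^{\opname{pre}}(\bbC,0)$ with $\sfSphMnd$ (Proposition~\ref{prop:monadisortho}). We will show that the induced operation $\calSus_{\opname{monad}}:(\Phi,M)\mapsto(\Phi,\id_\Phi\times_M\id_\Phi)$, which models $\ol{\calF(X,f)}^{\opname{pre}}\mapsto\ol{\calF(X\times\bbC_x,f+x^2)}^{\opname{pre}}$, is neither essentially injective nor an equivalence. The argument has three steps.

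\textbf{Step 1: the effect of stabilization.} Adding $x^2$ tensors each vanishing cycle with the vanishing cycle of $x^2$, which is the shift $[1]$. Thus for the spherical adjunction $S:\Phi\tofrom\Psi:R$ attached to $(X,f)$, the stabilized schober keeps $\Phi$ but replaces $T=\coCone(\id\to RS)$ by $T[-1]$, up to a choice of conventions. At the monad level the new monad $M'$ is therefore determined by $\coCone(\id\to M')\simeq T[-1]$. We will realize $M'$ as the square-zero-type extension $\id\oplus T[-1][1]=\id\times_M\id$, whose unit cofiber is $T$ shifted appropriately. Concretely, we check that $\id\times_M\id$ carries a monad structure whose unit has cocone $T[-1]$; Theorem~\ref{thm:christstrengthening1} then shows this monad is spherical. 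We take the symplectic-geometry input, namely that Lefschetz schobers behave this way under suspension, as the motivation, and prove the statement about the algebraic operation.

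\textbf{Step 2: information is lost.} The new monad only remembers $\Phi$ together with $T[-1]$ and its trivial-extension structure. It forgets the actual monad structure on $M$, i.e.\ the extension class of the triangle $\id\to M\to\Cone$. To exhibit this, we produce two non-equivalent spherical monads with the same $T$. The first is the trivial extension $M_0=\id\oplus T^{-1}[1]$-type monad. The second comes from a genuine Lefschetz example, for instance the $\bbA^1/\bbG_m$ picture $M_D=i_*i^*$ on $\QCoh(Y)$ with $D$ nontrivial. We then show that both are sent by $\calSus_{\opname{monad}}$ to the same object, while their Eilenberg--Moore categories $\Mod(M)$ differ; for example, one is split and the other is not. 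Hence $\calSus_{\opname{monad}}$ is not essentially injective, and in particular is not an equivalence of $\two$-categories.

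\textbf{Step 3: the main obstacle.} The hardest step is making Step 1 rigorous, namely identifying the monad structure on $\id\times_M\id$ coherently and showing that its unit cocone is exactly $T[-1]$ independently of the extension data of $M$. We would first handle this on objects using the lax-functor language of Remark~\ref{rmk:monadstrictness}, which is the reason the paper flags lax morphisms there. We would compute the pullback $\id\times_M\id$ as $\id\oplus\Omega\Cone(\id\to M)$ with its trivial square-zero multiplication, and check that it depends only on $\Cone(\id\to M)$. The non-invertibility then follows formally.
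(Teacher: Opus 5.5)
There is a genuine gap. The lemma in your Step 3 is false, and both Step 1 and the example in Step 2 rely on it.

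\textbf{The Step 3 lemma.} The pullback $\id\times_M\id$ does have underlying object $\id\oplus\coCone(\id\to M)$. Its multiplication, however, is neither square-zero nor determined by $\Cone(\id\to M)$. The quickest counterexample is the zero monad on $\Mod_k$, which is spherical since $\Cone(\id\to 0)\simeq\id[1]$. Because $0$ is terminal among algebras, $\calSus(0)=k\times_0 k=k\times k=C^\bullet(S^0;k)$. The kernel of either projection is spanned by an idempotent. This is exactly the algebra the paper singles out as \emph{not} a square-zero extension, and it lies in the image of $\calSus$.

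\textbf{The Step 2 example.} Your specific pair also fails. Limits of algebras are computed on underlying objects, and $\mathcal{E}\mapsto\mathcal{E}\otimes-$ from $\QCoh(Y)$ to endofunctors of $\QCoh(Y)$ is exact and monoidal. Hence $\calSus(M_D)=(\mathcal{O}_Y\times_{\mathcal{O}_D}\mathcal{O}_Y)\otimes-$. This fiber product is discrete since $\mathcal{O}_Y\to\mathcal{O}_D$ is surjective. It is the structure sheaf of $Y\sqcup_D Y$, and its ideal $\mathcal{O}(-D)$ carries the ordinary, nonzero product. For $Y=\bbA^1$ and $D=\{0\}$ this ring is $k[x,h]/(h^2-xh)$. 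By contrast, suspending the trivial extension with the same $T=\coCone(\id\to M_D)$ gives $k[x,e]/(e^2)$. Note that this trivial extension is $\id\oplus T[1]$, not $\id\oplus T^{-1}[1]$. The two rings are not isomorphic, since one is reduced and the other is not. An equivalence in $\sfSphMnd$ would induce a $k$-linear equivalence of their module categories, hence an isomorphism of centers. So $\calSus$ does not identify $M_D$ with the trivial extension, and your non-injectivity argument collapses.

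\textbf{What the paper does.} The paper never needs the multiplication on $\id\times_M\id$; it shows failure of essential surjectivity (Example~\ref{eg:suspendtwice}). For every $M$, either projection $\id\times_M\id\to\id$ splits the unit of $\calSus(M)$. The zero monad is spherical, but its unit $k\to 0$ does not split, so it has no preimage.

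\textbf{How to repair your route.} Non-injectivity is also true, but it needs examples where the suspension can actually be computed. Over a field $k$, the algebras $k[t]/t^2$ and $k\times k$ are non-isomorphic spherical monads on $\Mod_k$; both have unit cone $k\simeq\id$. Both suspensions have cohomology $k\oplus k[-1]$. On this, a strictly unital minimal $A_\infty$-structure is forced to be trivial for degree reasons: $m_n(e,\dots,e)$ would land in degree $2$. So both suspend to $C^\bullet(S^1;k)$.
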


\begin{eg}
    Consider for example $X=\bbC\xto{z^2}\bbC$. Then, $\calF(\bbC,z^2)$ is equivalent to the spherical adjunction
    \[\Mod_k\tofrom\Loc(S^0)\simeq\Mod_k\oplus\Mod_k.\]
    Thus the prelocalized schober is given by $\Mod_k$ together with the monad/algebra $C^\bullet(S^0;k)$ of cochains on $S^0$. The algebra structure is that of the cup product.

    After symplectic suspension, $\calF(\bbC^2,x^2+z^2)$ is equivalent to the spherical adjunction
    \[\Mod_k\tofrom\Loc(S^1).\]
    Thus the prelocalized schober is $\Mod_k$ with monad/algebra $C^\bullet(S^1;k)$.

    The algebra $C^\bullet(S^1;k)$ contains ``less information'' since this is isomorphic to a square-zero extension of $k$ while $C^\bullet(S^0;k)$ is not.
\end{eg}

In \S\ref{subsection:suspension}, we formulate the symplectic suspension operation algebraically, working at the level of individual abstract perverse schobers. We expect that under this operation, $\calF(X,f)\mapsto \calF(X\times \bbC_x,f+x^2)$, although we don't verify this outside examples such as the one above. We then define the symplectic suspension similarly for abstract prelocalized and localized perverse schobers.

We then upgrade these to functors of $\two$-categories and exhibit compatibility with braid group actions so that we obtain symplectic suspension functors on (prelocalized) perverse schobers,
\begin{align*}
    \calSus_{\sftwoP}:&\sftwoP(\bbC,R)\to \sftwoP(\bbC,R),\\
    \calSus_{\olsftwoP}:&\olsftwoP^{\pre}(\bbC,R)\to \olsftwoP^{\pre}(\bbC,R).
\end{align*}

We prove in \S\ref{subsection:stabilization} the main theorem of this section, which confirms that localized schobers can indeed be recovered from prelocalized schobers by stabilizing symplectic suspension.

\begin{thm}\label{thm:limit0}
    There is an equivalence of $\two$-categories,
    \[\olsftwoP(\bbC,R)\isomto \lim(\cdots\to\olsftwoP^{\pre}(\bbC,R)\xto{\calSus}\olsftwoP^{\pre}(\bbC,R)\xto{\calSus}\olsftwoP^{\pre}(\bbC,R)).\]
\end{thm}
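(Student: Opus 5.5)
We reduce to the abstract, one-singularity level and then assemble. The symplectic suspension on $\sfSphMnd$ is $\calSus_{\opname{monad}}:(\Phi,M)\mapsto(\Phi,\id_\Phi\times_M\id_\Phi)$ from the introduction. On $\sfAut$ it is the equivalence $(\Phi,T)\mapsto(\Phi,T[-1])$. For $n$ singularities we expect, via the pullback $\sfSphMnd(n)\simeq\sfAut(n)\times_{\sfAut}\sfSphMnd$, that $\calSus$ acts on the $\sfSphMnd$ factor and by $T\mapsto T[-1]$ on the $\sfAut$ factor. Since $T[-1]$ is conjugate-compatible with $\frakS$ whenever $T$ is, this functor respects the pullback square, the simplicial structure and the $\Br_n$-actions. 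The limit defining the right-hand side is therefore the $\Br_n$-invariant sections of the limit of the abstract tower.

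Limits commute with fibre products and with taking invariant sections. Also, $\sfAut(n)$ is a fixed $\two$-category on which $\calSus$ acts by an equivalence, so its limit is $\sfAut(n)$ again. Hence it suffices to prove the case $n=1$ at the abstract level:
\[\sfAut\isomto\lim(\cdots\xto{\calSus}\sfSphMnd\xto{\calSus}\sfSphMnd).\]
The comparison functor sends $(\Phi,T)$ to the compatible family $(\Phi,M_k)$ with $M_k:=\id\oplus T[k]$, the trivial square-zero extension. Compatibility requires $\id\times_{M_k}\id\simeq M_{k-1}$ as monads. The underlying functor is clearly right: $\id\times_{\id\oplus T[k]}\id\simeq\id\oplus T[k-1]$. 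The monad structure is also right, because the pullback of two copies of the augmentation is the square-zero extension by the desuspended module. The twist $\coCone(\id\to M_k)$ becomes $T[k-1]$, so the images in $\sfAut$ are compatible. All functors here lie over $\sfSt_k$, so Proposition~\ref{prop:functorconstruction}-style arguments would let us check most coherences on objects. However, $\sfSphMnd\to\sfSt_k$ is not 2-faithful, so 1-morphisms require genuine care.

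The key claim, and the main obstacle, is that the limit is computed by these trivial extensions. There are two parts.
\begin{enumerate}
\item Every object of the limit is equivalent to one of the form $(\Phi,\id\oplus T[k])_k$. The idea is that $\calSus$ kills the multiplicative structure. Any monad $M$ with $\coCone(\id\to M)\simeq T$ invertible gives a fibre sequence $T\to\id\to M$. Then $\calSus M$ is the square-zero extension classified by a derivation, and the relevant obstruction is the composite $T[-1]\to\id$ coming from the unit. Applying $\calSus$ twice should make this obstruction vanish canonically, as in the example $C^\bullet(S^0)\mapsto C^\bullet(S^1)$. I would make this precise by identifying $\calSus$ with the loop functor in augmented monads, $M\mapsto\id\times_M\id$. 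Its iterates then converge, in the sense of the limit, to the trivial square-zero extensions; this is analogous to stabilization of augmented algebras (Goodwillie derivatives) landing in modules.
\item Mapping categories in the limit agree with $\sfAut((\Phi,T),(\Phi',T'))$. Morphisms of monads here are strict, by Remark~\ref{rmk:monadstrictness}: a functor $F$ with $FM\simeq M'F$ compatibly with structure. For trivial extensions, the limit over $k$ of such structured identifications should reduce to an identification $FT\simeq T'F$. The higher-coherence data become unique as $k\to\infty$, by a connectivity argument on shifts.
\end{enumerate}

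If part 1 proves too hard via stabilization theory, the fallback is the following. Use Theorem~\ref{thm:christstrengthening1} and the explicit description of $\calSus$ on spherical adjunctions, where $\calSus$ replaces $\Psi$ by a suitable Waldhausen/doubling construction. Then show that the induced maps on mapping categories form a tower whose limit is computed degreewise.
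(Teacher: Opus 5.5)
Your reduction to the abstract one-singularity case is the one the paper uses. On $\sfSphMnd(n)\simeq\sfAut(n)\times_{\sfAut}\sfSphMnd$ the functor $\calSus$ acts factorwise, the $\sfAut(n)$-tower consists of equivalences, and limits commute with fibre products and with $\Br_n$-invariant sections.

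The gap is the $n=1$ statement $\lim(\sfSphMnd,\calSus)\simeq\sfAut$. That is the whole content of the theorem, and you leave it at the level of heuristics, some of which would not work.

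\textbf{Part 1.} Nothing guarantees that finitely many applications of $\calSus$ give a trivial square-zero extension as a monad. One application already splits the underlying functor, as in Example~\ref{eg:suspendtwice}. The higher multiplicative coherences, however, are only forced to vanish in the limit. So there is no ``obstruction killed after two steps'' to exploit.

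\textbf{Part 2.} In a stable presentable category with no $t$-structure, shifting does not raise connectivity. So no connectivity estimate makes the coherence data of monad morphisms $FM\simeq M'F$ unique as $k\to\infty$; their disappearance is a formal stabilization phenomenon. A by-hand check would also have to control 2-morphisms. It would further require building $(\Phi,T)\mapsto(\Phi,\id\oplus T[k])$ as a functor of $\two$-categories, which you yourself flag as delicate.

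\textbf{The missing idea} is to work fibrewise over $\sfSt_k$ so that one $(\infty,1)$-categorical stabilization theorem handles objects and all morphisms at once.
\begin{enumerate}
\item Work with all of $\sfMnd$ and $\sfEnd$. Use the comparison $\opname{C}:\lim(\sfMnd,\calSus)\to\lim(\sfEnd,[-1])\simeq\sfEnd$ induced by $M\mapsto\coCone(\id\to M)$. This intertwines $\calSus$ with $[-1]$, so no inverse needs to be constructed.
\item By Yoneda it suffices to test against every $\frakt\in\wCat_{\two}$. Over a point $\opname{X}\in\Map(\frakt,\sfSt_k)$, the fibres of $\Map(\frakt,\sfMnd)$ and $\Map(\frakt,\sfEnd)$ are $\Alg(\sfT(\opname{X},\opname{X}))^{\simeq}$ and $\sfT(\opname{X},\opname{X})^{\simeq}$, where $\sfT=\sfFun(\frakt,\sfSt_k)$. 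This is Haugseng's identification of monads with algebras in endomorphism categories.
\item Taking $\frakt$ to be the walking arrow, these algebras are exactly the strict monad morphisms of Remark~\ref{rmk:monadstrictness}. So your Part 2 is subsumed.
\item It remains to show that, for a stable monoidal $\calC$, the limit of $c\mapsto 1\times_c 1$ on $\Alg(\calC)$ is $\calC$. A coinitiality argument identifies this limit with $\Sp(\Alg(\calC)_{/1})$; this is the precise form of your ``loop functor on augmented monads''. Lurie's theorem $\Sp(\Alg(\calC)_{/1})\simeq\calC$ \cite{lurieHigherAlgebra2017}*{7.3.4.7} then finishes.
\item Finally, pull back along $\sfAut\subset\sfEnd$. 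Theorem~\ref{thm:christstrengthening1} identifies the preimage with the $\sfSphMnd$-tower. You also need this theorem to know that $\id\oplus T[k]$ is spherical at all.
\end{enumerate}
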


\begin{proof}
    This follows from Theorem~\ref{thm:limit1}.
\end{proof}

\subsection{Symplectic suspension}\label{subsection:suspension}
We will begin by defining the symplectic suspension operation on $\sfAdj$ and show that it induces operations on $\sfSph$, $\sfMnd$ and $\sfSphMnd$. We will later upgrade these to functors of $\two$-categories.

On $\sfAdj$, consider the following operation defined on objects.
\begin{align*}
    \calSus_{\sfAdj}: \sfAdj&\to \sfAdj\\
    [S:\Phi\tofrom\Psi:R]&\mapsto [S':\Phi\tofrom\Psi':=\lim(\Phi\oplus\Phi\to\Psi):R'].
\end{align*}
Here, the limit is understood to be strict. So an object is given by a triple $(\phi_1,\phi_2,\alpha)$, where $\alpha:S\phi_1\isomto S\phi_2$ is an isomorphism. In this notation, the adjoint functors are given by
\begin{align*}
    S':\phi&\mapsto (\phi,\phi,1),\\
    R':(\phi_1,\phi_2,\alpha) &\mapsto \coCone(\phi_1\oplus\phi_2\to RS\phi_1).
\end{align*}
We calculate in Lemma~\ref{lem:sussph} that there is an identification $\coCone(\id\to R'S')\simeq \coCone(\id\to RS)[-1]$.

\begin{eg}\label{eg:localsystem}
    Let $X$ be a space and consider the adjunction
    \[\calA(X):=[\pi^*:\Mod_k\tofrom\Loc(X):\pi_*],\]
    where $\Loc(X)$ denotes the category of $\infty$-local systems. Then,
    \[\calSus_{\sfAdj}(\calA(X))\simeq\calA(\Sigma X).\]
    In this way, suspension of spaces is compatible with suspension of adjoint pairs. 
\end{eg}

\begin{lem}\label{lem:sussph}
    Suppose that an adjunction $\calA$ is spherical. Then its suspension $\calSus_{\sfAdj}(\calA)$ is also spherical. Thus the operation descends to $\calSus_{\sfSph}:\sfSph\to\sfSph$.
\end{lem}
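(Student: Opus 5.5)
We need to show that $S':\Phi\tofrom\Psi':R'$ is spherical whenever $S:\Phi\tofrom\Psi:R$ is. Recall that an adjunction is spherical as soon as two of the standard conditions hold. The twist $T=\coCone(\id\to RS)$ must be an autoequivalence, and the cotwist $U=\Cone(SR\to\id)$ must be an autoequivalence. The other conditions (the two twisted adjoint identities) then follow automatically by the two-out-of-four property. So the plan is to verify that $T'$ and $U'$ are invertible for the suspended adjunction.

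\textbf{Step 1: the twist $T'$.} We compute $R'S'\phi = \coCone(\phi\oplus\phi\to RS\phi)$, where both components are the unit $u$. The unit of the new adjunction is the diagonal $\phi\to\phi\oplus\phi$, which factors through this cocone. Taking cocones and using the octahedral axiom gives
\[
\coCone(\id\to R'S') \simeq \coCone(\id\to RS)[-1] = T[-1].
\]
The mechanism is that the cofiber of the diagonal is the antidiagonal copy of $\phi$, mapping to $RS\phi$ by $u$. This is the identification announced just before the lemma. It must be made functorial, which can be done by writing everything as cocones of natural transformations in the stable category $\Fun(\Phi,\Phi)$. Since $T$ is invertible, so is $T'$.

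\textbf{Step 2: the cotwist $U'$.} I would first describe $\Psi'$ as the fiber product $\Phi\times_\Psi\Phi$, a pullback of stable presentable categories. Then I would compute $S'R'$ on an object $(\phi_1,\phi_2,\alpha)$. It is cleaner to use the equivalent description
\[
\Psi' \simeq \{(\phi_1,\phi_2,\alpha)\} \simeq \{(\phi_1,\ \phi_2,\ \beta:\phi_1\to\phi_2 \text{ with } S\beta \text{ invertible})\}.
\]
The guess is that $U'$ acts by a ``swap and twist'': it sends $(\phi_1,\phi_2,\alpha)$ to $(\phi_2, T^{\pm1}\phi_1\text{-modified}, \dots)$. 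This is modelled on Example~\ref{eg:localsystem}, where for $X=S^0$, $\Sigma X=S^1$ the cotwist is the monodromy of $\Loc(S^1)$. The explicit inverse should be built from $T^{-1}$ and the inverse of $\alpha$.

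The main obstacle is Step 2: producing an explicit candidate inverse for $U'$ and checking the composites. To avoid this, I would instead verify sphericity through the criterion that $T'$ is invertible and $R'\simeq T'S'^{L}$ up to shift, or dually that $S'^R\simeq S'^L T'$ up to the appropriate twist. This requires computing the left adjoint $S'^L$. Computing $S'^L$ should be easy: it is the codiagonal $(\phi_1,\phi_2,\alpha)\mapsto \Cone(LS\phi_1\to\phi_1\oplus\phi_2)$, using the left adjoint $L$ of $S$, which exists since $S$ is spherical. Comparing it with $R'$ uses the relation $R\simeq TL$ for the original adjunction, which holds because $S$ is spherical. Given this relation, the comparison reduces to the computation in Step 1.

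With $T'$ invertible and the twisted adjoint identity established, the two-out-of-four property gives sphericity of $S'\adjto R'$. Hence $\calSus_{\sfAdj}$ restricts to an operation $\calSus_{\sfSph}:\sfSph\to\sfSph$.
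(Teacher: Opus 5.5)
Your Step 1 is exactly the paper's first computation, and the ``swap and twist'' you guessed in Step 2 is exactly the paper's formula: the twist on $\Psi'$ is $(\phi_1,\phi_2,\alpha)\mapsto(T[1]\phi_2,T[1]\phi_1,T_\Psi\alpha^{-1})$. The paper concludes simply because both functors are visibly invertible. Where you diverge is in replacing that second computation by a twisted adjoint identity plus two-out-of-four.

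Your route is valid, but the obstacle you were avoiding is mild. Cones in $\Psi'=\Phi\times_\Psi\Phi$ are computed componentwise, and $S'R'(\phi_1,\phi_2,\alpha)=(P,P,1)$ with $P=\phi_1\times_{RS\phi_1}\phi_2$. Since that square is bicartesian, $\Cone(P\to\phi_1)\simeq\Cone(\phi_2\to RS\phi_1)\simeq T\phi_2[1]$, and symmetrically for the other component.

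Your route instead needs three inputs:
\begin{itemize}
\item the left adjoint $S'^L$; your formula $\phi_1\sqcup_{LS\phi_1}\phi_2$ is correct;
\item the identity $R\simeq TL[1]$ for $S$; note the shift;
\item a check that the \emph{canonical} map $R'\to R'S'S'^L\to T'S'^L[1]=TS'^L$ is an equivalence, since that specific map is what two-out-of-four concerns.
\end{itemize}
The last check is not literally the Step 1 computation. Both $R'(\phi_1,\phi_2,\alpha)$ and $TS'^L(\phi_1,\phi_2,\alpha)\simeq T\phi_1\sqcup_{RS\phi_1[-1]}T\phi_2$ are cofibers of maps $RS\phi_1[-1]\to T\phi_1\oplus T\phi_2$. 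You must match these two maps using naturality of the boundary map $RS[-1]\to T$.

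The paper's approach uses only the definition of sphericity (twist and cotwist invertible). It also yields the explicit monodromy-swap formula for the new twist, compatible with Example~\ref{eg:localsystem}. Yours avoids tracking the gluing isomorphism $T_\Psi\alpha^{-1}$, at the price of invoking two-out-of-four twice: once to get $R\simeq TL[1]$ for $S$, and once to conclude for $S'$.

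Separately, your ``equivalent description'' of $\Psi'$ via maps $\beta:\phi_1\to\phi_2$ with $S\beta$ invertible is not equivalent to the fibre product. It already fails for $\Psi=0$, where $\Psi'=\Phi\oplus\Phi$. You do not end up using it.
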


\begin{proof}
    We may use the formulae above to compute the twist functors explicitly. We have
    \[T'_{\Phi}:\phi\mapsto \Tot(\phi\xto{(1,-1)}\phi\oplus\phi\to RS\phi)\simeq T_{\Phi}[-1]\phi,\]
    and
    \[T'_{\Psi'}:(\phi_1,\phi_2,\alpha)\mapsto (T_{\Phi}[1]\phi_2,T_{\Phi}[1]\phi_1,T_{\Psi}\alpha^{-1}).\]
    These are clearly autoequivalences.
\end{proof}

On $\sfMnd$, consider the following operation defined on objects.
\begin{align*}
    \calSus_{\sfMnd}: \sfMnd&\to \sfMnd\\
    (\Phi,M)&\mapsto (\Phi,M':=\lim(\id\oplus \id\to M))
\end{align*}
Here, $\id$ is the trivial monad on $\Phi$, $\id\to M$ is the canonical map of monads and the limit is taken in the category of monads on $\Phi$.

\begin{lem}\label{lem:sussphmnd}
    Suppose that a monad $M$ on $\Phi$ is spherical. Then its suspension $\calSus_{\sfMnd}(M)$ is also spherical. Thus the operation descends to $\calSus_{\sfSphMnd}:\sfSphMnd\to \sfSphMnd$.
\end{lem}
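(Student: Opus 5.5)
The most economical route goes through Theorem~\ref{thm:christstrengthening1}: a monad $N$ on $\Phi$ is spherical if and only if $\Cone(\id\xto{u}N)$ is invertible. It therefore suffices to show that for $M' := \lim(\id\oplus\id\to M)$, the cone $\Cone(\id\to M')$ is an autoequivalence whenever $\Cone(\id\to M)$ is. No separate sphericity check on the Eilenberg--Moore adjunction of $M'$ is needed.

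The first step is to compute the underlying endofunctor of $M'$. The limit is taken in monads on $\Phi$. A finite limit of monads should be computed on underlying endofunctors, since the forgetful functor from algebras in $\Fun^L(\Phi,\Phi)$, i.e.\ from monads, to endofunctors preserves limits. Here $\Fun^L$ is stable, so pullbacks there are computed pointwise. This gives
\[M' \simeq \id\oplus\id\times_{M\oplus M}M \simeq \coCone(\id\oplus\id\xto{(u,-u)}M)\]
as endofunctors, where I read the diagram as the pullback of the two unit maps $\id\to M$. The unit of $M'$ is the diagonal $\id\to\id\oplus\id$, lifted through the limit.

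The second step is the cone computation. From the fibre sequence $M'\to\id\oplus\id\to M$, the cone of the unit $\id\to M'$ is
\[\Cone(\id\to M')\simeq \coCone\big(\Cone(\id\xto{\Delta}\id\oplus\id)\to M\big)\simeq\coCone(\id\xto{u}M)\simeq \Cone(\id\xto{u}M)[-1],\]
using that $\Cone(\Delta)\simeq\id$ via the antidiagonal and that the induced map to $M$ is $u$. This is the monadic analogue of the identification $\coCone(\id\to R'S')\simeq\coCone(\id\to RS)[-1]$ from Lemma~\ref{lem:sussph}. A shift of an autoequivalence is an autoequivalence, so Theorem~\ref{thm:christstrengthening1} shows that $M'$ is spherical.

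I expect the main obstacle to be the first step: making the identification of the limit of monads with the limit of underlying endofunctors rigorous, including its compatibility with the unit, and not just plausible. I would justify it by viewing monads as $\mathbb{E}_1$-algebras in the monoidal category $\Fun^L(\Phi,\Phi)$ and citing that the forgetful functor from algebras creates limits. I would also check that the resulting unit agrees with the diagonal, so that the cone computation above is legitimate. As a cross-check, there is an alternative route: I expect $\calSus_{\sfMnd}$ to be compatible with $\calSus_{\sfAdj}$ under $\opname{P}$, i.e.\ $R'S'$ from the formula preceding Lemma~\ref{lem:sussph} computes to the same $\coCone(\id\oplus\id\to RS)$. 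Lemma~\ref{lem:sussph} applied to the Eilenberg--Moore adjunction would then give sphericity of $\opname{P}(\calSus_{\sfAdj}(\opname{P}^R(\Phi,M)))\simeq(\Phi,M')$ directly.
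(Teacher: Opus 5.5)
Your proposal is correct and matches the paper's proof, which simply observes that $\coCone(\id\to M')\simeq\coCone(\id\to M)[-1]$ and applies Theorem~\ref{thm:christstrengthening1}. Your identity $\Cone(\id\to M')\simeq\Cone(\id\to M)[-1]$ is the same statement up to a shift, and your argument that the limit of monads is computed on underlying endofunctors fills in a step the paper leaves implicit.
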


\begin{proof}
    This follows by observing that $\coCone(\id\to M')\simeq \coCone(\id\to M)[-1]$, and applying Theorem~\ref{thm:christstrengthening1}.
\end{proof}

Finally, on $\sfEnd$ and $\sfAut$, we have the operations $\calSus_{\sfEnd},\calSus_{\sfAut}$, each given by the formula $(\Phi,T)\mapsto (\Phi,T[-1])$.

For general $n$, we define
\[\calSus_{\sfAut}:\sfAut(n)\to \sfAut(n)\]
by $(\frakS;\Phi,T)\mapsto (\frakS;\Phi,T[-1])$. We can also define
\begin{align*}
    \calSus_{\sfSph}:&\sfSph(n)\to \sfSph(n),\\
    \calSus_{\sfSphMnd}:&\sfSphMnd(n)\to \sfSphMnd(n)
\end{align*}
by applying $\sfAut(n)\times_{\sfAut}-$ to $\calSus_{\sfSph},\calSus_{\sfSphMnd}$ respectively.

\begin{rmk}[Thom--Sebastiani for schobers]
    We may generalize this construction to a binary operation,
    \begin{align*}
        -\ast-: \sfSph\times \sfSph&\to \sfSph\\
        [S_1:\Phi_1\tofrom\Psi_1:R_1],[S_2:\Phi_2\tofrom\Psi_2:R_2]&\mapsto [S_{12}:\Phi_{12}\tofrom\Psi_{12}:S_{12}^R],
    \end{align*}
    so that $\calSus_{\sfSph} = \calF(\bbC_z,z^2)\ast-$. In future work, we study this binary operation. We expect that on Lefschetz schobers (with critical value $R=\{0\}$), there is an identification,
    \[\calF(X,f)\ast\calF(Y,g)\simeq \calF(X\times Y,f\boxplus g),\]
    which can be thought of as a categorification of the Thom--Sebastiani formula. Moreover, we expect that under the 3d mirror symmetry of Gammage--Hilburn--Mazel-Gee \cite{gammagePerverseSchobers3d2023}, this operation corresponds to convolution monoidal product $\opname{2IndCoh}_{\mathbb{L}}(\bbA^1/\bbG_m)$ coming from the product monoidal structure on $\bbA^1/\bbG_m$.
\end{rmk}

We now upgrade the symplectic suspension to endofunctors of $\two$-categories. To do this, it is natural to work in certain enlarged $\two$-categories, where we allow for more 1-morphisms given by oplax natural transformations of diagrams. This is already apparent from our definitions at the level of objects. For example, to define the symplectic suspension on monads, we wrote down a diagram $\id\to M\from \id$ of monads on $\Phi$. The arrow $(\Phi,\id)\to (\Phi,M)$ doesn't appear in $\sfMnd$, as this is given by an oplax natural transformation of presheaves on $\frakmnd$.

We recall from \cite{haugsengLaxTransformationsAdjunctions2021} that $\sfAdj$ is equivalent to the $\two$-category $\sfFun(\frakadj,\sfSt_k)$ of diagrams on the walking adjunction $\frakadj$. Consider the functors of $\two$-categories,
\[\frakadj\from \frakmnd\from \frakend\from \mathfrak{pt},\]
where $\mathfrak{pt}$ denotes the trivial $\two$-category. We denote the maps from the trivial $\two$-category by $\mathfrak{pt}\xto{a} \frakadj,\mathfrak{pt}\xto{m}\frakmnd,\mathfrak{pt}\xto{e}\frakend$. It follows by Theorem~\ref{thm:laxfibrations} that there are left adjoints to $a^*,m^*,e^*$, denoted $a_!,m_!,e_!$. These send $\Phi\in\sfSt_k$ to the objects
\[[\Phi\tofrom \Phi]\in\sfAdj,\; (\Phi,\id)\in\sfMnd,\; (\Phi,\id)\in\sfEnd\]
respectively. It follows that the units of these adjunctions, $\Id\to a^*a_!, m^*m_!, e^*e_!$ are each equivalences, and moreover that the resulting lift of $a_!\Phi$ to $\sfAdj_{\oplax}\resto{\Phi}$ is an initial object. Similar statements hold for $\sfMnd_{\oplax}$ and $\sfEnd_{\oplax}$.

We define functors
\begin{align*}
    \calSus_{\sfAdj_{\oplax}}&:=a_!a^*\times_{\id}a_!a^*:\sfAdj_{\oplax}\to\sfAdj_{\oplax}\\
    \calSus_{\sfMnd_{\oplax}}&:=m_!m^*\times_{\id}m_!m^*:\sfMnd_{\oplax}\to\sfMnd_{\oplax}\\
    \calSus_{\sfEnd_{\oplax}}&:=e_!e^*\times_{\id}e_!e^*:\sfEnd_{\oplax}\to\sfEnd_{\oplax}.
\end{align*}

These are compatible with the pullback functors
\[\sfAdj_{\oplax}\to \sfMnd_{\oplax}\to \sfEnd_{\oplax},\]
because pullback, being a right adjoint, commutes with limits.

These functors restrict to the locally full subcategories $\sfAdj\subset\sfAdj_{\oplax}$, $\sfMnd$ and $\sfEnd$ respectively. By construction, these agree on the level of objects with the mappings described in the previous subsection. It follows that symplectic suspension restricts to endofunctors of $\sfSph,\sfSphMnd,\sfAut$ respectively.

For general $n$, define $\calSus_{\sfAut}$ on $\sfAut(n)$ by recalling that it is a full sub-$\two$-category,
\[\sfAut(n)\subset\sfSOD(n)\times_{\sfSt_k}\sfAut,\]
and seeing that $\Id\times\calSus_{\sfAut}$ preserves this sub-$\two$-category. We can define the symplectic suspension functor on $\sfSph(n)$ by recalling from Proposition~\ref{prop:activecartesian2} that
\[\sfSph(n)\simeq \sfAut(n)\times_{\sfAut}\sfSph,\]
and applying symplectic suspension to each of these three terms separately. We can do similarly for $\sfSphMnd(n)$. It easily follows that symplectic suspension naturally commutes with the braid group action $\Br_n$, and so we obtain symplectic suspension operations
\[\calSus_{\sftwoP},\calSus_{\olsftwoP^{\pre}},\calSus_{\olsftwoP}\]
on the respective (non-abstract) perverse schober categories.

\subsection{Stabilization}\label{subsection:stabilization}
We may now translate Observation~\ref{obs:stable} into algebraic terms on the level of objects.

\begin{obs}\label{obs:key}
    When $n\geq1$, the functor $\calSus_{\sfSphMnd}:\sfSphMnd(n)\to \sfSphMnd(n)$ is not an equivalence.
\end{obs}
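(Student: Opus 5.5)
The plan is to reduce to $n=1$ and then exhibit two spherical monads on the same category which are not equivalent as monads but have equivalent suspensions. This shows that $\calSus_{\sfSphMnd}$ fails to be injective on equivalence classes of objects, and so it cannot be an equivalence.

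\textbf{Reduction to $n=1$.} By construction $\sfSphMnd(n)\simeq\sfAut(n)\times_{\sfAut}\sfSphMnd$, and $\calSus_{\sfSphMnd}$ on $\sfSphMnd(n)$ is obtained by suspending each factor. I take $n\geq1$ and the SOD $\frakS=(\Phi,0,\dots,0)$, i.e.\ the image of an object of $\sfSphMnd(1)$ under the degeneracy maps. If two objects $(\Phi,M_1),(\Phi,M_2)$ of $\sfSphMnd$ satisfy the following:
\begin{itemize}
\item they are inequivalent, but have equivalent suspensions;
\item they have equal $\sfAut$-images $(\Phi,T)$;
\end{itemize}
then the corresponding objects of $\sfSphMnd(n)$ are likewise inequivalent with equivalent suspensions. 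Conjugate-compatibility is automatic here, since only the $(1,1)$ block $T$ is nonzero. So the statement reduces to $\sfSphMnd$.

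\textbf{The example, following the Example above.} Take $\Phi=\Mod_k$ and regard monads as $E_1$-algebras $A$ over $k$ with unit $k\to A$. Let $M_1=C^\bullet(S^0;k)=k\times k$, coming from $\calF(\bbC,z^2)$. For $M_2$ I want an algebra with the same cofiber of the unit, namely $k$, but with a different algebra structure: the trivial square-zero extension $k\oplus k$ (with $k$ in degree $0$). Both are spherical by Theorem~\ref{thm:christstrengthening1}, since $\Cone(k\to A)\simeq k$ is invertible. Both have $T=\coCone(\id\to M)=\id[-1]$ up to equivalence, so their $\sfAut$-images agree.

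They are inequivalent as monads. If $2$ is invertible in $k$, then $k\times k$ is a product and has a nontrivial idempotent. The square-zero extension is local: it has no such idempotent, and its $\pi_0$ has nontrivial nilpotents. To avoid hypotheses on $k$, I would instead take $k$ a field, or compare $\pi_0$ directly: $\pi_0(k\times k)$ is reduced while $\pi_0(k\oplus k\epsilon)$ is not.

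\textbf{Suspensions agree.} I compute $\calSus(A)=k\times_A k$ as a pullback of $E_1$-algebras. For $A=k\times k$ this gives $k\times_{k\times k}k\simeq C^\bullet(\Sigma S^0)=C^\bullet(S^1)=k\oplus k[-1]$. This is the trivial square-zero extension, since $C^\bullet(S^1)$ is formal and coconnective and admits no nontrivial $E_1$ square-zero structure with a single degree $-1$ generator; this is the formality remark in the Example. For $A=k\oplus k$ trivial square-zero, the pullback $k\times_{k\oplus k}k$ is the square-zero extension of $k$ by $\Omega k=k[-1]$. This is again trivial, because $k\to k\oplus k$ splits through the augmentation, and the loop object of a split square-zero extension is a split square-zero extension.

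\textbf{Main obstacle.} The main obstacle is making this last step rigorous: identifying both pullbacks with the trivial square-zero extension $k\oplus k[-1]$ as $E_1$-algebras. I would first try to observe that both $A$'s are augmented, via a map $A\to k$ compatible with the unit. For $k\times k$ use the first projection; for $k\oplus k$ use the augmentation. The pullback of augmented algebras $k\times_A k$ is then the loop object $\Omega_A k$ in augmented $E_1$-algebras, which corresponds under Koszul duality to the suspension. The pullback is therefore determined by the augmentation ideal $\coCone(A\to k)$ together with its nonunital structure. In both cases this ideal is a copy of $k$ with the square-zero or idempotent product, and after looping any product becomes nullhomotopic since the product on $\Omega$ is trivial. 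Hence both pullbacks are the trivial square-zero extension $k\oplus k[-1]$.
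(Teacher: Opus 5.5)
Your proof is correct, but it runs in the opposite direction from the paper's. The paper (Example~\ref{eg:suspendtwice}) shows that $\calSus_{\sfSphMnd}$ is not essentially surjective. For any algebra $A$, the unit of $k\times_A k$ is split by either projection to $k$. The zero algebra, by contrast, is a spherical monad on $\Mod_k$ (the cone of its unit is $k[1]$), and its unit $k\to 0$ has no splitting.

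You show instead that $\calSus_{\sfSphMnd}$ is not injective on equivalence classes, using the augmented algebras $k\times k$ and $k\oplus k\epsilon$, which have the same image $k\oplus k[-1]$. The paper's argument is a one-line observation and never has to identify an $E_1$-structure on a pullback. Yours needs exactly that identification. In exchange, it shows that $\calSus$ destroys information even among algebras whose unit already splits, which is the phenomenon behind the $C^\bullet(S^0)$ versus $C^\bullet(S^1)$ example and Remark~\ref{rmk:splitting}. Your reduction from $\sfSphMnd(n)$ to $\sfSphMnd$ via the SOD $(\Phi,0,\dots,0)$ is sound, and it is a step the paper leaves implicit.

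A few points need tightening.

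The sentence ``after looping any product becomes nullhomotopic'' does not by itself give a trivial square-zero extension, because a null binary product says nothing about higher coherences. What actually closes the step are your two specific arguments:
the degree argument killing all $A_\infty$-operations on $H^\bullet(S^1;k)$ over a field; and
the fact that trivial square-zero extensions are $\Omega^\infty$ of spectrum objects, hence preserved by limits.
For discrete $k$ there is also a uniform argument: $0\times_I 0$ is modelled by $I\otimes C^\bullet(\Delta^1,\partial\Delta^1)$, whose product is strictly zero.

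Your inequivalence test also needs adjusting. $\pi_0(k\times k)$ is reduced only when $\pi_0 k$ is, and $k\times k$ has the idempotent $(1,0)$ regardless of whether $2$ is invertible. A hypothesis-free distinction is that $\pi_0(k)\times\pi_0(k)$ contains an idempotent forming a $\pi_0(k)$-basis together with $1$. Every idempotent of $\pi_0(k)[\epsilon]/\epsilon^2$, by contrast, already lies in $\pi_0(k)$.

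Finally, equivalences in $\sfSphMnd$ may involve an autoequivalence of $\Mod_k$. Over a field these are shifts, and conjugating by a shift leaves $\pi_0$ unchanged, so your invariant still applies.
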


For geometric reasons explained in \S\ref{subsection:radonissuspension}, this is problematic. The following example demonstrates that it is not an equivalence on $\sfSphMnd$.

\begin{eg}\label{eg:suspendtwice}
    Assume that $k$ is a discrete commutative ring. Monads on $\Mod_k$ are given by $k$-algebras. We claim that if $A$ is a $k$-algebra and $B=k\times_Ak$ is its suspension, then the unit map $k\to B$, viewed as a morphism in $\Mod_k$, admits a splitting $B\to k$. Indeed, we can simply choose one of the two projections to $k$.

    So to see that $\calSus$ is not an equivalence on $\sfSphMnd$, it suffices to exhibit any $k$-algebra $B$ which doesn't admit such a splitting (and gives a spherical monad). We can simply take $B=0$.
\end{eg}

We fix this by inverting $\calSus$ on $\sfSphMnd$ and more generally on $\sfSphMnd(n)$ in a universal manner. The following theorem says that this procedure recovers the $\two$-category of abstract \textit{localized} perverse schobers.

\begin{thm}\label{thm:limit1}
    There is an equivalence of $\two$-categories,
    \[\sfAut(n)\isomto \lim(\cdots\to\sfSphMnd(n)\xto{\calSus}\sfSphMnd(n)\xto{\calSus}\sfSphMnd(n)).\]
\end{thm}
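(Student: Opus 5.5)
We first reduce to the case $n=1$. By definition $\sfSphMnd(n)=\sfAut(n)\times_{\sfAut}\sfSphMnd$, and $\calSus$ on $\sfSphMnd(n)$ is obtained by applying $\sfAut(n)\times_{\sfAut}-$ to $\calSus_{\sfAut}$ and $\calSus_{\sfSphMnd}$, where $\calSus_{\sfAut}$ acts as $T\mapsto T[-1]$. Since $[-1]$ is invertible, $\calSus_{\sfAut}$ is an equivalence, and the diagram $\cdots\to\sfAut(n)\to\sfAut(n)$ has limit $\sfAut(n)$. The same holds for $\sfAut$. Limits commute with fibre products, so
\[\lim \sfSphMnd(n)\simeq \sfAut(n)\times_{\lim\sfAut}\lim\sfSphMnd\simeq \sfAut(n)\times_{\sfAut}\lim\sfSphMnd.\]
It therefore suffices to show that the functor $\sfSphMnd\to\sfAut$, $(\Phi,M)\mapsto(\Phi,\coCone(\id\to M))$, exhibits $\sfAut$ as the limit of the $\calSus$-tower, compatibly with the shift. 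By Theorem~\ref{thm:christstrengthening1}, $\sfSphMnd$ consists of monads $M$ with $\coCone(\id\to M)$ invertible.

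Next we construct an inverse functor $\sfAut\to\sfSphMnd$. Given $(\Phi,T)$, the natural candidate is the square-zero extension monad $M_T:=\id\oplus T[1]$, with trivial multiplication on the $T[1]$ part. Then $\coCone(\id\to M_T)\simeq T$ is invertible, so $M_T$ is spherical. We check that $\calSus(M_T)=\id\times_{M_T}\id\simeq M_{T[-1]}$ as monads; the pullback of two copies of the unit over a split square-zero extension is again a split square-zero extension on the loop of the ideal. This gives a cone $\sfAut\to\lim\sfSphMnd$, sending $(\Phi,T)$ to the sequence $(M_{T[j]})_j$. Its composite with the projection to $\sfAut$ recovers $(\Phi,T)$ up to the shift bookkeeping. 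For functoriality, we use the oplax description: $M_T=\calSus_{\sfMnd_{\oplax}}$ applied to a free/trivial object built from $e_!$. Alternatively, we work over $\sfSt_k$ and use Proposition~\ref{prop:functorconstruction}-style arguments together with 2-fully faithfulness on mapping categories.

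The key step is showing that the other composite is the identity: a compatible sequence $(M_j)$ with $\calSus(M_{j+1})\simeq M_j$ must be equivalent to the sequence of square-zero extensions. The monad structure of $\calSus(M)=\id\times_M\id$ factors through the ``suspension'' of the augmentation ideal. Heuristically, as in Example~\ref{eg:suspendtwice}, once $M$ is a suspension it acquires a splitting $M\to\id$ of the unit. Being a double suspension, it becomes the trivial square-zero extension as an $E_1$-monad, in the manner that iterated loops of a square-zero extension are formal. I would prove this via the adjunction picture. Using $\calSus_{\sfMnd_{\oplax}}=m_!m^*\times_{\id}m_!m^*$, any suspension $M=\calSus(N)$ receives and maps to $\id$ compatibly. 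The limit over the tower then identifies with augmented monads whose augmentation ideal is determined by $\coCone(\id\to M)$ with all higher multiplications killed by infinite delooping; this is the stable range, where $E_1$-augmented algebras with infinitely deloopable structure become trivial.

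On mapping categories, $\sfSphMnd(\calX,\calY)$ consists of functors $F$ with $FM\simeq M'F$. In the limit, this reduces to $FT\simeq T'F$, since the data of the multiplication becomes contractible.

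This formality statement, that the limit of iterated monadic suspensions forgets everything but $\coCone(\id\to M)$, including on 2-morphisms and coherently, is the main obstacle. I would first try to prove it at the level of the $\sfEnd_{\oplax}$ and $\sfMnd_{\oplax}$ adjunctions. The aim is to show that the comparison map $\calSus^2(M)\to M_{T[-2]}$ exists naturally by universal properties of $m_!$ and $e_!$, and is an equivalence by checking underlying endofunctors.
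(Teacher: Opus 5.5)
Your reduction to $n=1$ (limits commute with $\sfAut(n)\times_{\sfAut}-$, and $\calSus_{\sfAut}$ is invertible) and your use of Theorem~\ref{thm:christstrengthening1} match the paper. The gap is the step you flag yourself. Identifying the limit of the $\calSus$-tower with $\sfAut$ is the entire content of the theorem, and the route you sketch for it does not work as stated.

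\textbf{Why the proposed route fails.}
\begin{itemize}
\item The universal properties of $m_!$ and $e_!$ only produce maps \emph{out of} the trivial monad $(\Phi,\id)$. A monad map \emph{into} a trivial square-zero extension $\id\oplus V$ is governed by the cotangent fibre (indecomposables) of the source, not by its augmentation ideal. So the comparison you get for free lands in the trivial square-zero extension on the indecomposables. On underlying functors it is the map from the augmentation ideal to the indecomposables, which is not an equivalence in general. There is no natural monad map $\calSus^2(M)\to M_{T[-2]}$ to check on underlying endofunctors unless formality is already known.
\item Even objectwise formality would not suffice. A trivial square-zero monad $\id\oplus V$ has a larger space of self-equivalences than $V$: there are extra components built from $V^{\circ n}\to V$ with shifts. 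These only disappear in the limit. So you must control the whole tower coherently, including the identifications $\calSus(M_{j+1})\simeq M_j$ (a $\lim^1$-type issue already for essential surjectivity), the intertwiners $FM\simeq M'F$, and the 2-morphisms.
\item Your fallback to Proposition~\ref{prop:functorconstruction} is unavailable. The forgetful functor $\sfSphMnd\to\sfSt_k$ is not 2-faithful, since the intertwining $FM\simeq M'F$ is structure rather than a property.
\end{itemize}

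\textbf{The missing idea.} The paper (Proposition~\ref{prop:limit2}) makes your ``stable range'' heuristic precise with a theorem instead of a formality computation.
\begin{itemize}
\item By Yoneda, test against $\frakt\in\wCat_{\two}$ and work in the fibre over $\opname{X}\in\Map(\frakt,\sfSt_k)$.
\item By Haugseng's description of monads (Lemma~\ref{lem:haugsengvariant}), that fibre of $\Map(\frakt,\sfMnd)$ is $\Alg(\sfT(\opname{X},\opname{X}))^{\simeq}$ with $\sfT=\sfFun(\frakt,\sfSt_k)$. Under this identification $\calSus$ becomes the loop functor $c\mapsto \id\times_c\id$.
\item Choosing one projection makes this loop functor factor through the slice over the unit. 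Its tower is therefore mutually cofinal with the loop tower of augmented algebras, whose limit is $\Sp(\Alg(\sfT(\opname{X},\opname{X}))_{/\id})$ (Lemma~\ref{lem:lurievariant}).
\item Lurie's theorem (\emph{Higher Algebra}, 7.3.4.7) identifies this stabilization with the stable monoidal category $\sfT(\opname{X},\opname{X})$ itself.
\end{itemize}
This gives $\sfMnd[\calSus^{-1}]\simeq\sfEnd$ with all coherences and morphisms at once; one then pulls back along $\sfAut\subset\sfEnd$. Your square-zero monad $\id\oplus T[1]$ is what $\Omega^\infty$ of this equivalence outputs. However, it is the stabilization theorem, not an explicit inverse plus a formality claim, that proves the statement.
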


\begin{rmk}\label{rmk:splitting}
    We saw in Example~\ref{eg:suspendtwice} that any $k$-algebra $B$ which doesn't split as a $k$-module doesn't have a preimage along $\calSus$. We may interpret Theorem~\ref{thm:limit1} as saying that the $k$-algebras which admit all repeated preimages along $\calSus$ are those that split as a square-zero extension, $B=k\oplus T$, where $T$ is a $k$-module for which $T\otimes_k-$ is invertible.

    The construction appearing in \cite{segalAllAutoequivalencesAre2020} uses that from any autoequivalence $T$, we may produce a square-zero monad $\id\oplus T[-1]$. We expect that this is exactly what $\sfAut\to \sfSphMnd$ does.
\end{rmk}

\comment{
    Motivated by this, we make the following definition.    

    \begin{defn}
        Let $(\Phi,F)\in \sfSphMnd$ be an object. A splitting of $(\Phi,F)$ is an autoequivalence $T$ of $\Phi$ together with an identification of monads
        \[F\simeq \id\oplus T[1],\]
        where $\id\oplus T[1]$ is given the structure of a square-zero monad.

        Likewise, a splitting of $(\frakS;\Phi,F)\in\sfSphMnd(n)$ is an autoequivalence of $T$ of $\Phi$ together with an identification of monads
        \[F\simeq \id\oplus T[1],\]
        where $\id\oplus T[1]$ is given the structure of a square-zero monad.
    \end{defn}

    Observe that if $(\frakS;\Phi,\id\oplus T[1])\in\sfSphMnd(n)$ is an object with a splitting, then $T$ is necessarily conjugate-compatible with $\frakS$, and every split object arises in this way. As such, we may think of $\sfAut(n)$ as the $\two$-category of split monads. The following makes this relation precise.

    \begin{thm}
        For any $(\frakS;\Phi,F)\in\sfSphMnd(n)$, the image under the operation $\calSus_{\sfSphMnd}^2(\calF)$ admits a splitting
        \[\calSus_{\sfSphMnd}^2(\frakS;\Phi,F)\simeq (\frakS;\Phi,\id\oplus T[-1]).\]
        In particular, the image of $\calSus_{\sfSphMnd}^2$ consists of the subset of equivalence classes of objects that admit a splitting, and moreover $\calSus_{\sfSphMnd}^2$ is an a bijection on this subset.
    \end{thm}

    \begin{proof}
        We prove an analogous statement for $\calSus_{\sfMnd}^2:\sfMnd\to \sfMnd$.

        We begin by stating a formula for $\calSus_{\sfAdj}^2$.
        \begin{align*}
            \calSus^2: \sfAdj&\to \sfAdj\\
            [S:\Phi\tofrom\Psi:R]&\mapsto [S'':\Phi\tofrom\Psi'':=\lim(\Phi\otimes (\Sigma S^0)\oplus\Psi\to\Psi\otimes (\Sigma S^0)):R''].
        \end{align*}
        Geometrically, $\Psi''$ parametrizes a $\Sigma S^0\simeq S^1$-family of objects in $\Phi$ together with a trivialization of this after the application of $S$. We choose say, the North pole of $\Sigma S^0\simeq S^1$ so that we may write
        \[\Psi''\simeq \{(\phi,u,\beta:\id_{S\phi}\simeq Su)\}.\]

        Let $F := 1\oplus T[-1]$ denote the desired square-free monad. Our strategy is to produce a functor $A:\Psi''\to\Mod_{\Phi}(F)$ together with a factorization as follows.
        \begin{equation}\label{equation:factor}
            \begin{tikzcd}
                {\Phi} & \Psi'' \\
                {} & \Mod_{\Phi}(F)
                \arrow["A", dashed, from=1-2, to=2-2]
                \arrow["R''"', from=1-2, to=1-1]
                \arrow["\opname{Forget}", from=2-2, to=1-1]
            \end{tikzcd}
        \end{equation}

        We construct $A$ explicitly as follows. Given object $\psi'' := (\phi,u,\beta)\in \Psi''$, the underlying object of our $F$-module is $\coCone(\phi\xto{v} T\phi)$. Here, the map denoted $v$ comes from the following diagram in $\Phi$ where the bottom row is an exact triangle, and the commuting triangle comes from $\beta$.
        \[\begin{tikzcd}
            & \phi & & \\
            T\phi & \phi & RS\phi & {}
            \arrow["v"{description}, dashed, from=1-2, to=2-1]
            \arrow["1-u"{description}, from=1-2, to=2-2]
            \arrow["0"{description}, from=1-2, to=2-3]
            \arrow[from=2-1, to=2-2]
            \arrow[from=2-2, to=2-3]
            \arrow["+1", from=2-3, to=2-4]
        \end{tikzcd}\]
        With some care, this assignment upgrades to a functor $\Psi''\to \Phi$.

        To write down the action of $F$ on $A\psi''$, we first write down a map $a:T[-1](A\psi'')\to A\psi''$. We define this via the composition,
        \[T[-1]\coCone(\phi\xto{v} T\phi)\to T[-1]\phi \to \coCone(\phi\xto{v} T\phi).\]
        Now the associativity constraints and higher constraints can be easily constructed. For example, we must exhibit an identification of
        \[(T[-1])^2(A\psi'')\xto{T[-1]a} (T[-1])(A\psi'')\xto{a} A\psi''\]
        with zero. There is a canonical one by observing that when we expand this into the composition of four maps, the middle two are
        \[(T[-1])^2\phi \to T[-1]\coCone(\phi\xto{v} T\phi)\to T[-1]\phi.\]
        Higher associativity constraints are similarly canonically constructed.

        We next show that the map of functors $R''S''\to F$ is an equivalence. We take left adjoints in (\ref{equation:factor}) to get the following lax-commuting diagram.
        \begin{equation}\label{equation:factor2}
            \begin{tikzcd}[ampersand replacement=\&]
                {\Psi''} \& \Phi \\
                {\Mod_{\Phi}(F)}
                \arrow["A"', from=1-1, to=2-1]
                \arrow["{S''}"', from=1-2, to=1-1]
                \arrow[""{name=0, anchor=center, inner sep=0}, "{\opname{Free}}", from=1-2, to=2-1]
                \arrow[between={0.2}{1}, Rightarrow, from=0, to=1-1]
            \end{tikzcd}
        \end{equation}
        Now, $S''\phi = (\phi,\id_{\phi},\id_{\id_{\phi}})$ produces under $A$, the object
        \[\coCone(\phi\xto{0}T\phi) = \phi \oplus T[-1]\phi\]
        and the lax-commutativity
        \[\opname{Free}(\phi) \simeq \phi\oplus T[-1]\phi \to A(S''\phi).\]
        is in fact strict. This shows that indeed, $R''S''\isomto F$ is an equivalence. It follows that the monad induced by $S''\adjto R''$ is identified with $F$, completing the proof.
    \end{proof}
}

\begin{prop}\label{prop:limit2}
    Consider the following diagram in which each row is a limit diagram, and where we have omitted the subscript of $\calSus$ for readability.
    \[\begin{tikzcd}[ampersand replacement=\&]
        {\sfMnd[\calSus^{-1}]} \& \cdots \& \sfMnd \& \sfMnd \& \sfMnd \\
        \sfEnd \& \cdots \& \sfEnd \& \sfEnd \& \sfEnd
        \arrow[from=1-1, to=1-2]
        \arrow["\opname{C}"', from=1-1, to=2-1]
        \arrow[from=1-2, to=1-3]
        \arrow["\calSus", from=1-3, to=1-4]
        \arrow[from=1-3, to=2-3]
        \arrow["\calSus", from=1-4, to=1-5]
        \arrow[from=1-4, to=2-4]
        \arrow[from=1-5, to=2-5]
        \arrow[from=2-1, to=2-2]
        \arrow[from=2-2, to=2-3]
        \arrow["\calSus"', from=2-3, to=2-4]
        \arrow["\calSus"', from=2-4, to=2-5]
    \end{tikzcd}\]
    The resulting functor $\opname{C}:\sfMnd[\calSus^{-1}]\to \sfEnd$ is an equivalence of $\two$-categories. 
\end{prop}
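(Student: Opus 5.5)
The plan is to argue fiberwise over $\sfSt_k$. Both $\sfMnd$ and $\sfEnd$ lie over $\sfSt_k$, the suspensions preserve the underlying category $\Phi$, and the vertical maps $(\Phi,M)\mapsto(\Phi,\coCone(\id\to M))$ are over $\sfSt_k$. Limits of towers commute with fibers, so the comparison reduces to a statement about monads on a fixed $\Phi$, at least on objects. I will treat mapping categories separately afterwards, using the explicit formula for morphisms in Remark~\ref{rmk:monadstrictness}.

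Since $\calSus_{\sfEnd}$ is the equivalence $T\mapsto T[-1]$, the bottom limit is just $\sfEnd$. So $\opname{C}$ sends a compatible sequence $(M_k)_{k\ge0}$ with $\calSus(M_{k+1})\simeq M_k$ to $T_0:=\coCone(\id\to M_0)$, where $T_k$ is recovered as $T_0[k]$. The fiber sequence $M_{k}\to M_{k+1}\oplus\dots$ reads
\[M'=\id\times_M\id,\qquad \coCone(\id\to M')\simeq\coCone(\id\to M)[-1].\]

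For the inverse I will construct, for each endofunctor $T$, the square-zero monad $Z(T):=\id\oplus T$. This is the trivial square-zero extension of the identity monad by the $\id$-bimodule $T$, and is functorial in $(\Phi,T)\in\sfEnd$. The key lemma is that
\[\calSus_{\sfMnd}(Z(T[1]))\simeq Z(T)\]
naturally, because $\id\times_{\id\oplus T[1]}\id$ is the loop object of a square-zero extension, which is the square-zero extension by the loops $T$. Setting $M_k:=Z(T[k])$ then gives a functor $\sfEnd\to\sfMnd[\calSus^{-1}]$, and $\opname{C}$ composed with this functor is the identity. The converse composite needs the key observation, generalizing Example~\ref{eg:suspendtwice}. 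For any monad $N$, the monad $\calSus(N)=\id\times_N\id$ admits canonical augmentations (the two projections) to $\id$. A monad in the image of $\calSus^2$ is canonically a square-zero extension: $\calSus(\calSus N)$ is a loop object in augmented monads over $\id$, and loop objects in augmented algebras are square-zero. This is the monadic analogue of the fact that $\Omega$ of an augmented $E_1$-algebra is trivial square-zero, via $\mathbb{E}_1$ Koszul duality or the identification of the stabilization of augmented algebras with bimodules. Then each $M_k\simeq\calSus^2(M_{k+2})\simeq Z(\coCone(\id\to M_k)[1])$ compatibly, which gives the natural equivalence with the identity.

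The main obstacle is making these square-zero identifications coherent as functors of $\two$-categories, including on the (strict, not lax) mapping categories. My approach is to identify the limit $\sfMnd[\calSus^{-1}]$ fiberwise with the stabilization (spectrum objects) of the pointed category of augmented monads on $\Phi$ under the loop functor. The limit of a tower $\dots\xrightarrow{\Omega}\calC\xrightarrow{\Omega}\calC$ is exactly $\operatorname{Sp}(\calC)$ up to the first-step shift. I will then invoke the standard result, due to Lurie (Higher Algebra, §7.3.4), that the stabilization of augmented associative algebras in an endomorphism monoidal category is equivalent to bimodules over the unit. Here that means endofunctors of $\Phi$, and the equivalence is realized by the cotangent fiber $\coCone(\id\to M)$, which matches $\opname{C}$. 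Mapping categories are then handled by the same argument applied to $\Fun([1],-)$-diagrams, using that morphisms in $\sfMnd$ and $\sfEnd$ are strict commuting squares over functors $\Phi_1\to\Phi_2$.
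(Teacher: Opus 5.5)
Your final argument is essentially the paper's proof. You reduce fiberwise over $\sfSt_k$, identify the $\calSus$-tower of monads on a fixed object with the loop tower (interleaved with the augmented tower via a choice of projection), and invoke Lurie's identification of $\Sp(\Alg(\calC)_{/1})$ with $\calC$. Two points need tightening.

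\textbf{The fiberwise reduction.} The paper makes it rigorous by Yoneda. For every test $(\infty,2)$-category $\frakt$, the map $\Map(\frakt,\sfMnd[\calSus^{-1}])\to\Map(\frakt,\sfEnd)$ is a map of spaces over $\Map(\frakt,\sfSt_k)$. By Haugseng's theorem, its fiber over $\opname{X}$ is the tower on $\Alg(\sfT(\opname{X},\opname{X}))^{\simeq}$, where $\sfT=\sfFun(\frakt,\sfSt_k)$. This handles objects, $1$-morphisms, $2$-morphisms and all coherences at once. Your separate treatment of mapping categories via $\Fun([1],-)$-diagrams is only the case $\frakt=[1]$. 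It controls the underlying $(\infty,1)$-category but not $2$-morphisms, so you need arbitrary $\frakt$, or at least the walking $2$-cell as well.

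\textbf{The square-zero construction.} It is unnecessary and should be dropped rather than relied on. The claim that a (double) loop object of augmented monads is canonically a trivial square-zero extension is not what the stabilization theorem provides. That theorem identifies $\Omega^\infty$ of spectrum objects, i.e.\ infinite loop objects, with trivial square-zero extensions; it says nothing about finite loop objects. Koszul duality gives the claim only under reflexivity hypotheses. The paper's argument never needs an explicit splitting: the equivalence comes directly from $\Sp(\Alg(\sfT(\opname{X},\opname{X}))_{/\id})\simeq\sfT(\opname{X},\opname{X})$.
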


\begin{proof}
    By Yoneda lemma, it suffices to show that for any test object $\frakt\in\wCat_{\two}$, the following map of mapping spaces is an equivalence,
    \begin{equation}\label{equation:wantequiv}
        \Map_{\wCat_{\two}}(\frakt,\sfMnd[\calSus^{-1}])\to \Map_{\wCat_{\two}}(\frakt,\sfEnd).
    \end{equation}
    In fact, this lives over the $\zero$-category $\Map(\frakt,\sfSt_k)$. It suffices to check that for each point $\opname{X}\in \Map(\frakt,\sfSt_k)$, the map (\ref{equation:wantequiv}) restricts to an equivalence.

    We apply $\Map_{\wCat_{\two}}(\frakt,-)$ and then the restriction to the fiber over $\opname{X}$ to the whole diagram, noting that these operations preserve limits. By Lemma~\ref{lem:haugsengvariant}, we have a functorial identification,
    \[\Map(\frakt,\sfMnd)\resto{\opname{X}}\simeq \Map(\frakmnd,\sfFun(\frakt,\sfSt_k))\resto{\opname{X}}\simeq \Alg(\sfT(\opname{X},\opname{X}))^{\simeq},\]
    where $\sfT := \sfFun(\frakt,\sfSt_k)$. Similarly,
    \[\Map(\frakt,\sfEnd)\resto{\opname{X}}\simeq \Map(\frakmnd,\sfFun(\frakt,\sfSt_k))\resto{\opname{X}}\simeq \sfT(\opname{X},\opname{X})^{\simeq}.\]
    This results in the following diagram.
    \[\begin{tikzcd}[ampersand replacement=\&]
        {\lim(\Alg(\sfT(\opname{X},\opname{X}))^{\simeq})} \& \cdots \& {\Alg(\sfT(\opname{X},\opname{X}))^{\simeq}} \& {\Alg(\sfT(\opname{X},\opname{X}))^{\simeq}} \\
        {\sfT(\opname{X},\opname{X})^{\simeq}} \& \cdots \& {\sfT(\opname{X},\opname{X})^{\simeq}} \& {\sfT(\opname{X},\opname{X})^{\simeq}}
        \arrow[from=1-1, to=1-2]
        \arrow[from=1-1, to=2-1]
        \arrow[from=1-2, to=1-3]
        \arrow["\calSus", from=1-3, to=1-4]
        \arrow[from=1-3, to=2-3]
        \arrow[from=1-4, to=2-4]
        \arrow[from=2-1, to=2-2]
        \arrow[from=2-2, to=2-3]
        \arrow["{[-1]}"', from=2-3, to=2-4]
    \end{tikzcd}\]
    The left vertical arrow is an equivalence by applying the limit-preserving functor $(-)^{\simeq}$ to Lemma~\ref{lem:lurievariant} and identifying the map with the equivalence
    \[\Sp(\Alg(\sfT(\opname{X},\opname{X})))\simeq \sfT(\opname{X},\opname{X}),\]
    of \cite{lurieHigherAlgebra2017}*{7.3.4.7}.
\end{proof}

\begin{lem}\label{lem:haugsengvariant}
    Let $\sfT\in\wCat_{\two}$ and $\opname{X}\in\sfT$. Then, the fiber over $\opname{X}$ of the map of $\zero$-categories, $\Map(\frakmnd,\sfT)\to \sfT^{\simeq}$, is the $\zero$-category $\Alg(\sfT(\opname{X},\opname{X}))^{\simeq}$.  An analogous statement holds with $\frakmnd$ replaced by $\frakend$.
\end{lem}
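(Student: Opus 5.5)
The plan is to reduce the statement to the universal property of the walking monad, applied inside the mapping $\one$-category of $\sfT$ at $\opname{X}$. The underlying principle is that a functor out of $\frakmnd$, which has a single object, is the same as a choice of object together with a monoid structure on its endomorphisms.

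First, recall from \cite{riehlHomotopyCoherentAdjunctions2015} that $\frakmnd$ has a single object $\ast$, and its endomorphism $\one$-category is the monoidal category $\Delta_+$ (augmented simplex category, monoidal under ordinal sum). Dually, $\frakend$ has a single object whose endomorphism monoidal category is freely generated by one object. A functor $\frakmnd\to\sfT$ sending $\ast\mapsto\opname{X}$ is then a monoidal functor $\Delta_+\to\sfT(\opname{X},\opname{X})$.

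Second, identify the fiber. The map $\Map(\frakmnd,\sfT)\to\sfT^{\simeq}$ is restriction along $\mathfrak{pt}\to\frakmnd$. Since $\mathfrak{pt}\to\frakmnd$ is essentially surjective, the fiber over $\opname{X}$ is the space of functors that are the identity on objects into the full sub-$\two$-category on $\opname{X}$. To compute it, model $\two$-categories with one object as $\mathbb{E}_1$-monoidal $\one$-categories, i.e.\ by delooping. Fix $\opname{X}$ and pass to the full sub-$\two$-category $B\sfT(\opname{X},\opname{X})$. Its inclusion into $\sfT$ is 2-fully faithful, so functors from the one-object $\frakmnd$ landing on $\opname{X}$ factor uniquely through it. Delooping is fully faithful from monoidal $\one$-categories to pointed $\two$-categories with one object up to equivalence. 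Hence the fiber is $\Map_{\Alg_{\mathbb{E}_1}(\wCat)}(\Delta_+,\sfT(\opname{X},\opname{X}))$, taken with strong monoidal functors.

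Third, apply the universal property of $\Delta_+$ as the free monoidal category on an algebra object: strong monoidal functors $\Delta_+\to\calC$ are equivalent to $\Alg(\calC)$ (see \cite{lurieHigherAlgebra2017}, e.g.\ 4.1.2 / 4.7.1 on $\Delta_+$-objects and associative algebras). Passing to cores gives $\Alg(\sfT(\opname{X},\opname{X}))^{\simeq}$. For $\frakend$, the free monoidal category on one object has the property that monoidal functors out of it are evaluation at the generator, giving $\sfT(\opname{X},\opname{X})^{\simeq}$. These identifications are natural in $\sfT$, which yields the functoriality used in Proposition~\ref{prop:limit2}.

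The main obstacle is the second step: rigorously justifying that the fiber of functors from a one-object $\two$-category equals monoidal functors between the endomorphism monoidal categories, in whatever model of $\two$-category is fixed. I would first try to cite the equivalence between pointed $\two$-categories with connected underlying groupoid-of-objects and $\mathbb{E}_1$-monoidal categories, e.g.\ via Gepner--Haugseng enriched categories, where a $\two$-category is a $\wCat$-enriched category. In that setting, functors out of a one-object enriched category with object fixed are exactly algebra maps of the endomorphism objects. Failing that, I would argue directly via the fibration $\Map(\frakmnd,\sfT)\to\sfT^{\simeq}$ in the enriched model.
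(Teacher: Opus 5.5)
Your argument is correct, but it takes a different route from the paper. The paper does no computation of its own. It cites \cite{haugsengLaxTransformationsAdjunctions2021}*{Theorem 1.4}, which identifies the fiber over $\opname{X}$ of $\sfFun(\frakmnd,\sfT)^{\one}_{\oplax}\to\sfT^{\one}$ with the entire $\one$-category $\Alg(\sfT(\opname{X},\opname{X}))$. It then passes to cores, observing that an invertible oplax transformation is an honest natural transformation, so the strict and oplax functor categories have the same core.

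You instead work entirely at the level of spaces, in three steps. First, $\frakmnd\simeq B\Delta_+$. Second, the fiber of $\Map(\frakmnd,\sfT)\to\sfT^{\simeq}$ is the space of strong monoidal functors $\Delta_+\to\sfT(\opname{X},\opname{X})$, by full faithfulness of delooping. The Gepner--Haugseng enriched model gives this cleanly: the fiber over a one-point space of objects is just algebras in $\wCat$, and completeness of $\sfT$ lets you ignore the completion of $B\Delta_+$. Third, you use the universal property of $\Delta_+$, which is most precisely cited as the fact that $\Delta_+$ under ordinal sum is the non-symmetric monoidal envelope of the associative operad.

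Your route is more self-contained and avoids lax transformations altogether. The paper's route buys the full fiber category, including non-invertible (oplax) morphisms of monads. That is what Proposition~\ref{prop:limit2} actually needs in order to see that $\calSus_{\sfMnd}$, defined as a limit $m_!m^*\times_{\id}m_!m^*$ in $\sfMnd_{\oplax}$, acts on the fiber as $M\mapsto\id\times_M\id$ computed in $\Alg$. So your closing claim, that naturality in $\sfT$ supplies the functoriality used there, is too quick. $\calSus$ is not induced by a $2$-functor between targets, and a core-level identification does not by itself determine how it acts.
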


\begin{proof}
    It follows from \cite{haugsengLaxTransformationsAdjunctions2021}*{Theorem 1.4} that the fiber of
    \[\sfFun(\frakmnd,\sfT)_{\oplax}^{\one}\to \sfT^{\one}\]
    over $\opname{X}$ is identified with $\Alg(\sfT(\opname{X},\opname{X}))$. The inclusion
    \[\sfFun(\frakmnd,\sfT)^{\one}\to \sfFun(\frakmnd,\sfT)_{\oplax}^{\one}\]
    induces an equivalence of $\zero$-categories after applying $(-)^{\simeq}$. This is because any oplax natural transformation of diagrams that yields an invertible 1-morphism in $\sfFun(\frakmnd,\sfT)_{\oplax}^{\one}$ is in fact an honest natural transformation of diagrams. This completes the proof. The statement for $\frakend$ is similar.
\end{proof}

\begin{lem}\label{lem:lurievariant}
    Let $\calC$ be a large, cocomplete $\one$-category with an initial object denoted $k\in\calC$. Let $\Omega_{\calC}$ be the operation, $c\mapsto k\times_c k$. Then the following limit exists, and there is an equivalence of categories,
    \[\opname{Sp}(\calC_{/k})\isomto\lim(\cdots\to \calC\xto{\Omega_{\calC}}\calC\xto{\Omega_{\calC}}\calC).\]
    Here, $\opname{Sp}(\calC_{/k})$ is the $\one$-category of spectrum objects of the pointed category $\calC_{/k}$ as defined in \cite{lurieHigherAlgebra2017}.
\end{lem}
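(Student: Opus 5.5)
The plan is to reduce to Lurie's description of spectrum objects as a sequential limit, the pointed analogue stated in \cite{lurieHigherAlgebra2017}*{1.4.2.24}. First I fix what $\Omega_{\calC}$ means. Since $k$ is initial, every $c$ has a canonical map $k\to c$, so $\Omega_{\calC}(c)=k\times_c k$ is formed from the cospan $k\to c\from k$. This fiber product does not automatically exist in a merely cocomplete $\calC$. In the intended case $\calC=\Alg(\sfT(\opname{X},\opname{X}))$ it does, so I will assume enough finite limits are present (or that the relevant pullbacks exist), as in the application.

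The key observation is that $\Omega_{\calC}(c)$ comes with a canonical map to $k$, namely either projection, and we may fix the first one. So $\Omega_{\calC}$ factors as
\[\calC\xto{\widetilde\Omega}\calC_{/k}\xto{\opname{forget}}\calC.\]
Since $k$ is initial, $\calC_{/k}$ is pointed with zero object $\id_k$. On $\calC_{/k}$ the loop functor is $\Omega_{/k}(c\to k)=k\times_c k$, computed over $k$, and it agrees with $\widetilde\Omega\circ\opname{forget}$. This agreement is the main step to verify, because of the cone point: the pullback in $\calC_{/k}$ of $\id_k\to c\from\id_k$ is the same as the pullback in $\calC$, since limits in a slice category over nonempty diagrams are computed in the underlying category.

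The rest is a formal interleaving argument. Write $\calD=\calC_{/k}$ and $U=\opname{forget}$. The tower $\cdots\to\calC\xto{U\widetilde\Omega}\calC$ is refined by the zigzag tower
\[\cdots\to\calD\xto{U}\calC\xto{\widetilde\Omega}\calD\xto{U}\calC.\]
The subtower consisting only of the $\calD$ terms, with transition maps $\widetilde\Omega U=\Omega_{/k}$, is cofinal in this zigzag, and so is the subtower consisting only of the $\calC$ terms. Limits of sequential towers are invariant under passing to cofinal subsequences. Hence
\[\lim(\calC,\Omega_{\calC})\simeq\lim(\calD,\Omega_{\calD}),\]
and the right-hand side is $\Sp(\calD)$ by \cite{lurieHigherAlgebra2017}*{Proposition 1.4.2.24}, which holds for pointed categories with finite limits. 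The same argument shows the limit exists, as a limit of large categories in $\wCat$.

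The main obstacle is bookkeeping the choice of projection. The functor $\widetilde\Omega$ must be natural and compatible with how $\Omega_{\calC}$ is used in Proposition~\ref{prop:limit2}. In particular the identification $\Sp\simeq\sfT(\opname{X},\opname{X})$ should send the tower maps to $[-1]$. This is consistent with $\coCone(\id\to M')\simeq\coCone(\id\to M)[-1]$, so I would check that the two projections yield equivalent towers, differing at most by the sign automorphism, which does not affect the limit.
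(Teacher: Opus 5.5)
Your proposal is correct and follows essentially the same route as the paper: you factor $\Omega_{\calC}$ through $\calC_{/k}$ by choosing one projection, interleave the two towers into a single zigzag in which each subtower is cofinal, and identify the limit of the $\calC_{/k}$-tower with $\opname{Sp}(\calC_{/k})$ via Lurie. One small correction: limits in $\calC_{/k}$ are computed in $\calC$ for weakly contractible diagrams, not merely nonempty ones (products in the slice are fiber products over $k$), but a cospan is weakly contractible, so your step stands; your remark that finite limits must be assumed, and not just cocompleteness, is a fair sharpening of the stated hypotheses.
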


\begin{rmk}
    This notation is confusing as our symplectic \textit{suspension} corresponds to a ``\textit{loop space} functor'' here. We also see this in the negative shift $[-1]$ in $\calSus_{\sfAut}$. This comes down to the covariance of the functor
    \[C^*(-;k):\opname{Spaces}\to \Alg_k.\]
\end{rmk}

\begin{proof}[Proof of Lemma~\ref{lem:lurievariant}]
    We observe that $\Omega_{\calC}$ factors through the slice category $\calC_{/k}$ once we choose one of the two projections, $\Omega_{\calC}c\to k$.
    \[\begin{tikzcd}
        \calC & \calC \\
        & {\calC_{/k}}
        \arrow["{\Omega_{\calC}}", from=1-1, to=1-2]
        \arrow[from=1-1, to=2-2]
        \arrow["{\opname{forget}}"', from=2-2, to=1-2]
    \end{tikzcd}\]
    We may extend this to the following naturally commuting diagram, where $\Omega_{\calC_{/k}}$ denotes the operation given by the same formula on the pointed category $\calC_{/k}$.
    \[\begin{tikzcd}[ampersand replacement=\&]
        \cdots \& \calC \& \calC \& \calC \& \calC \\
        \& \cdots \& {\calC_{/k}} \& {\calC_{/k}} \& {\calC_{/k}}
        \arrow[from=1-1, to=1-2]
        \arrow["{\Omega_{\calC}}", from=1-2, to=1-3]
        \arrow[from=1-2, to=2-3]
        \arrow["{\Omega_{\calC}}", from=1-3, to=1-4]
        \arrow[from=1-3, to=2-4]
        \arrow["{\Omega_{\calC}}", from=1-4, to=1-5]
        \arrow[from=1-4, to=2-5]
        \arrow[from=2-2, to=2-3]
        \arrow[from=2-3, to=1-3]
        \arrow["{\Omega_{\calC_{/k}}}"', from=2-3, to=2-4]
        \arrow[from=2-4, to=1-4]
        \arrow["{\Omega_{\calC_{/k}}}"', from=2-4, to=2-5]
        \arrow[from=2-5, to=1-5]
    \end{tikzcd}\]
    The top and bottom rows are coinitial in each other. Thus the existence of the limit, and the value of the limit of these rows are identified. The limit of the bottom row exists, and is the definition of the stabilization $\opname{Sp}(\calC_{/k})$.
\end{proof}

\begin{proof}[Proof of Theorem~\ref{thm:limit1}]
    We first prove the statement when $n=1$. We can pullback the diagram of Proposition~\ref{prop:limit2} along $\sfAut\inclto \sfEnd$. Using Theorem~\ref{thm:christstrengthening1}, the top row of the resulting diagram, which is still a limit, is identified with
    \[\sfAut\to\cdots\to\sfSphMnd\xto{\calSus}\sfSphMnd\xto{\calSus}\sfSphMnd,\]
    exhibiting the desired limit. The result for general $n$ follows by pulling back along $\sfAut(n)\to\sfAut$.
\end{proof}

\section{Schobers on a non-characteristic family of discs}\label{section:families}
In this section, we will define schobers and localized schobers on non-characteristic families of discs over a base $B$. We will work complex analytically.

Let $B\subseteq \bbC^{N-1}$ an open subset with respect to the Euclidean topology, we would like to consider perverse schobers on $\bbC\times B$. It is important that $B$ is a subset of $\bbC^{N-1}$ for the following reason. Recall that given a locally constant perverse sheaf on $B$, the natural operation at $b\in B$ which outputs a vector space is not the stalk, but a microstalk, which requires an additional choice of a quadratic form $q_b$ on $T_bB$. Different choices of $q_b$ are related by a Maslov class. For perverse sheaves, we can avoid this choice by taking the ordinary stalk and shifting by $\dim(B)$.

This shifted stalk operation is not available to us in the categorified setting, so we are forced to choose quadratic forms on tangent spaces of $B$. We believe that the work of Nadler--Shende \cite{nadlerSheafQuantizationWeinstein2022}*{\S 10.4} provides a suitable categorification of the Maslov class, organizing how different choices of $q_b$ are related.

We will take the following simpler but less canonical approach. We fix once and for all, compatible choices of quadratic form at each point $b\in B$, which is possible since $\bbC^{N-1}$ admits such a choice. This lets us ignore subtleties involving the Maslov class for the rest of the paper.

\begin{defn}\label{defn:nonchar}
    A non-characteristic family of discs over an open subset $B\subseteq \bbC^{N-1}$ is a pair $(\bbC\times B,H)$ where $H\subset \bbC\times B$ is a hypersurface such that
    \begin{enumerate}
        \item $H\inclto \bbC\times B\xto{\pi} B$ is finite,
        \item there exists a complement of divisors $B^{\opname{sm}}\subseteq B$ such that $H^{\sm}:=H\cap\pi^{-1}(B^{\opname{sm}})\to B^{\opname{sm}}$ is \'etale, and
        \item the closure $\Lambda_H$ of $T^*_{H^{\sm}}$ is non-characteristic for the projection $\pi:\bbC\times B\to B$.
    \end{enumerate}
    Moreover, if we can take $B^{\sm}=B$, then we say that $(\bbC\times B,H)$ is a locally constant family of discs.
\end{defn}

\begin{eg}
    Let $B=\bbC_x$ and denote by $y$ the coordinate of the fibers so that a family of discs is described by some curve $C\subset \bbC^2_{x,y}$. Then, the non-charactericity condition is equivalent to asking that no tangent lines to $C$, or limits thereof, are vertical. For example, $C=\{y^2=x^3\}$ defines a non-characteristic family of discs. A non-example is $\{y^2=x\}$ which has a vertical tangent at $(0,0)$.
\end{eg}

\begin{eg}
    For any $B$, we have the locally constant family of discs, $(\bbC\times B,0\times B)$.
\end{eg}

We informally state the main definitions of this section.

\begin{defn}\label{defn:family1}
    Let $(\bbC\times B,H)$ be a locally constant family of discs. A perverse schober $\calF$ on $(\bbC\times B,H)$ is the data of a locally constant choice of object $\calF\resto{b}$ for each $b\in B$. Denote the collection of such objects by $\sftwoP(\bbC\times B,H)$.
\end{defn}

By applying $\FS_{\sftwoP}$ on each fiber, we obtain a mapping,
\[\FS_{\sftwoP}:\sftwoP(\bbC\times B,H)\to \sftwoP(\bbC\times B,0\times B).\]

\begin{defn}\label{defn:family2}
    Let $(\bbC\times B,H)$ be a non-characteristic family of discs and choose $B^{\opname{sm}}\subseteq B$ as in Definition~\ref{defn:nonchar}. A perverse schober on $(\bbC\times B,H)$ consists of the following data.
    \begin{enumerate}
        \item A perverse schober $\calF\resto{B^{\opname{sm}}}\in \sftwoP(\bbC\times B^{\opname{sm}},H^{\sm})$.
        \item An extension of
        \[\FS_{\sftwoP}(\calF\resto{B^{\opname{sm}}})\in \sftwoP(\bbC\times B^{\opname{sm}},0\times B^{\opname{sm}})\]
        to an object
        \[\FS_{\sftwoP}(\calF)\in \sftwoP(\bbC\times B,0\times B).\]
    \end{enumerate}
\end{defn}

\begin{rmk}\label{rmk:comparison2}
    Definition~\ref{defn:family2} is motivated by, and can be considered a categorification of \cite{gelfandMicrolocalPerverseSheaves2005}*{Proposition 3.3}. This statement is a consequence of an existing definition of perverse sheaves, while for us it is the definition.
\end{rmk}

In \S\ref{subsection:universal} and \S\ref{subsection:schobersonfamily}, we will upgrade $\sftwoP(\bbC\times B,H)$ to a $\two$-category, where $(\bbC\times B,H)$ is a locally constant or non-characteristic family of discs. A key input to our theory is the following consequence of Theorem~\ref{thm:merge}.

\begin{prop}\label{prop:FSisfaithful}
    The functor $\FS_{\sftwoP}:\sftwoP(\bbC,R)\to \sftwoP(\bbC,0)$ is 2-faithful.
\end{prop}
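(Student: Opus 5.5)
By Theorem~\ref{thm:merge}, $\sftwoP(\bbC,R)$ is the pullback of $\sftwoP(\bbC,0)\to\olsftwoP(\bbC,0)$ along $\FS_{\olsftwoP}:\olsftwoP(\bbC,R)\to\olsftwoP(\bbC,0)$, and $\FS_{\sftwoP}$ is the base change of $\FS_{\olsftwoP}$. Lemma~\ref{lem:faithfulpullback} says that 2-faithfulness is stable under pullback. So the plan is to reduce the statement to showing that $\FS_{\olsftwoP}:\olsftwoP(\bbC,R)\to\olsftwoP(\bbC,0)$ is 2-faithful.

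For this, I would first work at the abstract level. By Lemma~\ref{lem:Uisfaithful}, the forgetful functor $\opname{U}:\sfSOD'(n)\to\sfSt_k$ is 2-faithful, and hence so is its restriction to the full sub-$\two$-category $\sfSOD(n)$. Consider the pullback square
\[\sfAut(n)\subseteq\sfSOD(n)\times_{\sfSt_k}\sfAut\to\sfAut.\]
Applying Lemma~\ref{lem:faithfulpullback} to it shows that $\sfSOD(n)\times_{\sfSt_k}\sfAut\to\sfAut$ is 2-faithful. Restricting to the full sub-$\two$-category $\sfAut(n)$ then gives that $\FS_{\sfAut}:\sfAut(n)\to\sfAut$ is 2-faithful. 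This is exactly the observation already used in the proof of Corollary~\ref{cor:action}.

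The remaining step, and the main obstacle, is to pass from abstract schobers to non-abstract ones. Here $\olsftwoP(\bbC,R)$ is defined as the $\Br_n$-invariant sections of the $\Br_n$-equivariant local system $\ul{\sfAut}(n)$ over $\Cuts(\bbC,R)$. Since $\Cuts(\bbC,R)$ is a $\Br_n$-torsor, this is equivalently the homotopy fixed points $\sfAut(n)^{h\Br_n}$. By Corollary~\ref{cor:action}, $\FS_{\sfAut}$ is $\Br_n$-equivariant with trivial action on the target, so $\FS_{\olsftwoP}$ is identified with the composite
\[\sfAut(n)^{h\Br_n}\to\sfAut^{h\Br_n}\to\sfAut.\]
The final map is evaluation at the basepoint, which matches the fact that $\olsftwoP(\bbC,0)=\sfAut$.

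I would then argue exactly as in the proof of Proposition~\ref{prop:faithful}. Mapping categories in homotopy fixed points are homotopy fixed points of mapping categories, so the induced map on mapping categories is
\[\sfAut(n)(\calC_1,\calC_2)^{h\Br_n}\to\sfAut(\FS\calC_1,\FS\calC_2).\]
This map factors through the fully faithful functor $\sfAut(n)(\calC_1,\calC_2)\to\sfAut(\FS\calC_1,\FS\calC_2)$, which is compatible with the action. Since $\sfAut(n)(\calC_1,\calC_2)$ is a full subcategory of a category with trivial $\Br_n$-action, the group acts on it by a full subcategory (a property, not structure). Hence its fixed points are again a full subcategory, namely the objects fixed by the action.

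The point needing care is the identification of the homotopy fixed points of the trivial action on $\sfAut$ with the evaluation map. Composing with the forgetful map $\sfAut^{h\Br_n}\to\sfAut$ (evaluation at the basepoint) is a faithful restriction on the relevant mapping categories, so the full composite is still fully faithful on mapping categories. Finally, I would check that Theorem~\ref{thm:merge} was indeed proved as a pullback after taking invariants; it was, since Corollary~\ref{cor:action2} makes the square of Proposition~\ref{prop:activecartesian2} $\Br_n$-equivariant and fixed points commute with pullbacks. This concludes the proof.
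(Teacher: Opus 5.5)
Your reduction via Theorem~\ref{thm:merge} and Lemma~\ref{lem:faithfulpullback} is right. So is your proof that $\FS_{\sfAut}:\sfAut(n)\to\sfAut$ is 2-faithful via Lemma~\ref{lem:Uisfaithful}. This is the route the paper has in mind, since it presents the statement as a consequence of Theorem~\ref{thm:merge}.

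The gap is in the passage to non-abstract schobers. $\Cuts(\bbC,R)$ is a $\Br_n$-torsor, so a $\Br_n$-equivariant section of $\ul{\sfAut}(n)$ is determined by its value at a single system of cuts. Equivariant sections over a free transitive $\Br_n$-space form a coinduced object. Hence choosing $K\in\Cuts(\bbC,R)$ gives $\olsftwoP(\bbC,R)\simeq\sfAut(n)$, which is the identification the paper uses in \S\ref{subsection:universal} and Example~\ref{eg:ykxn}. It is not $\sfAut(n)^{h\Br_n}$. Homotopy fixed points are equivariant sections over a point. Their objects are abstract localized schobers together with coherent equivalences $\beta\cdot x\simeq x$ for all braids $\beta$. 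That is a different and much more restrictive $\two$-category; compare Proposition~\ref{prop:restrict}, where such periodicity appears only as an extra condition in families.

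Even granting your identification, the argument would not go through. The forgetful functor from the homotopy fixed points of a mapping category to the mapping category itself is not fully faithful. Already for a trivial action, $\mathcal{M}^{h\Br_n}\simeq\Fun(B\Br_n,\mathcal{M})$. There the mapping space between two objects with trivial action is $\Map(B\Br_n,\Map(m,m'))$, not $\Map(m,m')$.

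The ``property, not structure'' reasoning of Proposition~\ref{prop:faithful} does not transfer either. It applies to the poset of lifts of an object whose fixed-point structure is already given. It yields full faithfulness into the fiber product $\sfC_x\times_{\sfD_x}\Gamma(X,\ul{\sfD})$, not into $\sfD_x$.

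The repair is short. With a choice of cuts, $\olsftwoP(\bbC,R)\simeq\sfAut(n)$ and $\olsftwoP(\bbC,0)=\sfAut$. The $\Br_n$-equivariance of $\FS_{\sfAut}$ with trivial action on the target (Corollary~\ref{cor:action}) identifies $\FS_{\olsftwoP}$ with $\FS_{\sfAut}$, which you have already shown is 2-faithful. Your pullback argument then finishes the proof.
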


Using this, we will show that Definition~\ref{defn:family2} is independent of the choice of $B^{\sm}$, and in particular agrees with Definition~\ref{defn:family1} for locally constant families. We will also show that whenever $B$ is contractible and $b\in B^{\sm}$, then the restriction functor
\[\sftwoP(\bbC\times B,H)\to \sftwoP(\bbC,H_b)\]
is 2-fully faithful, and identify the image in terms of certain periodicity properties. This is closely related to the periodic SODs as studied in \cite{dyckerhoffNsphericalFunctorsCategorification2023}. We use this to give explicit descriptions of schobers on locally constant families of discs in several examples.

The statements and definitions of \S\ref{subsection:schobersonfamily} admit analogues for both localized and prelocalized schobers.

\begin{rmk}\label{rmk:comparison}
    It is instructive to consider the real analog, where Definition~\ref{defn:family2} fails to define the correct category. Namely, let $L$ be a singular support condition on $\bbR^2_{x,y}$ which is non-characteristic over $y$, and locally constant on $\bbR_x^{\opname{sm}}$. There is a functor,
    \[\sh(\bbR^2_{x,y},L)\to \sh(\bbR_{y}\times \bbR_x^{\opname{sm}},L\resto{\bbR_x^{\opname{sm}}})\times_{\sh(\bbR_{y}\times \bbR_x^{\opname{sm}},0\times \bbR_x^{\opname{sm}})}\sh(\bbR_{y}\times \bbR_x,0\times \bbR_x).\]
    This fails to be an equivalence. For example, compare the following two singular supports. Let $L_1$ be the union of the conormal of the lines $y=x$ and $y=-x$. Let $L_2$ be the union of the conormals to $y=0$ and $y=x^2$. These singular supports define non-equivalent categories, but are indistinguishable on the right hand side of the above expression.

    This comes down to the failure of Proposition~\ref{prop:FSisfaithful} in the real setting. Concretely, the functor which maps a filtered complex of vector spaces to its total space is not faithful.

    Said more conceptually, in the real setting, Morse groups in different Morse indices can admit interesting connecting maps, while in the complex setting, all ``Morse categories'' live in middle index and don't admit such maps between them.
\end{rmk}

In \S\ref{subsection:y2x3}, we study in detail schobers supported on the curve $y^2=x^3$. We show that our definition decategorifies correctly to perverse sheaves as studied by MacPherson--Vilonen \cite{macphersonPerverseSheavesSingularities1988}. We also show that our definition agrees with a microlocalization of $\bbA_2$-schobers as defined by Dyckerhoff--Wedrich \cite{dyckerhoffPerverseSchobersCoxeter2025}, and deduce a relation to $A_2$-configurations of spherical objects of Seidel--Thomas \cite{seidelBraidGroupActions2000}.

\subsection{Schobers on the universal family of discs}\label{subsection:universal}
In this section, we will define a local system of $\two$-categories over the configuration space of $n$ distinct points on $\bbC$, which will be important in what follows.

Let $\Conf(n)$ denote the configuration space parametrizing $n$ distinct points on $\bbC$. Then, the assignment
\[R\in\Conf(n)\mapsto\sftwoP(\bbC,R)\]
upgrades to a local system of $\two$-categories on $\Conf(n)$, which we denote $\ul{\sftwoP}(\bbC,-)$. Moreover, $\FS_{\sftwoP}:\sftwoP(\bbC,R)\to\sftwoP(\bbC,0)$ upgrades to a map of local systems,
\[\ul{\sftwoP}(\bbC,-)\to \ul{\sftwoP}(\bbC,0),\]
where $\ul{\sftwoP}(\bbC,0)$ denotes the constant local system with fibers $\sftwoP(\bbC,0)$.

Fixing $R\in\Conf(n)$, it is well known that $\pi_1(\Conf(n),R)\simeq \Br_R$ is a braid group. So by transport, we obtain a group action $\Br_R\actson \sftwoP(\bbC,R)$.
\begin{prop}\label{prop:intertwines}
    The functor $\FS_{\sftwoP}:\sftwoP(\bbC,R)\to \sftwoP(\bbC,0)$ is naturally $\Br_R$-equivariant, where $\sftwoP(\bbC,0)$ is given the trivial action.
\end{prop}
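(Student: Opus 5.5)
The plan is to obtain the equivariance formally from the way the local system $\ul{\sftwoP}(\bbC,-)$ is built. The braid action itself should play no separate role: once $\FS_{\sftwoP}$ is a map of local systems into a constant local system, equivariance for a trivial action on the target is automatic.

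First I would make the construction of $\ul{\sftwoP}(\bbC,-)$ over $\Conf(n)$ explicit. Let $\calX$ be the space of pairs $(R,K)$ with $R\in\Conf(n)$ and $K\in\Cuts(\bbC,R)$. The projection $\calX\to\Conf(n)$ has fibre $\Cuts(\bbC,R)$, which is a $\Br_n$-torsor. Over $\calX$ I would take the trivial local system with fibre $\sfSph(n)$, made $\Br_n$-equivariant by the action of Corollary~\ref{cor:action2}. Pushing forward along $\calX\to\Conf(n)$ and taking $\Br_n$-invariants recovers $\sftwoP(\bbC,R)$ fibrewise, as in Definition~\ref{defn:nonabstract}, and assembles these fibres into $\ul{\sftwoP}(\bbC,-)$. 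For $n=1$ the same recipe, applied with the trivial action on $\sfSph$, gives the constant local system $\ul{\sftwoP}(\bbC,0)$.

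Next I would check that $\FS_{\sfSph}:\sfSph(n)\to\sfSph$ is $\Br_n$-equivariant as a functor of $\two$-categories, with coherences and not only on objects, where $\sfSph$ carries the trivial action. By Corollary~\ref{cor:action2} the action on $\sfSph(n)$ is pulled back through the square of Proposition~\ref{prop:activecartesian2}, $\sfSph(n)\simeq\sfAut(n)\times_{\sfAut}\sfSph$. Under this identification $\FS_{\sfSph}$ is the projection to $\sfSph$, so its equivariance reduces to that of $\FS_{\sfAut}$. Corollary~\ref{cor:action} already gives $\FS_{\sfAut}$ as $\Br_n$-equivariant. Moreover, since $\FS_{\sfAut}$ is 2-faithful, Proposition~\ref{prop:functorconstruction} makes all the relevant coherence data a property rather than extra structure. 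Functoriality of the invariants and pushforward construction then yields a map of local systems $\ul{\sftwoP}(\bbC,-)\to\ul{\sftwoP}(\bbC,0)$ whose fibre at $R$ is $\FS_{\sftwoP}$.

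Finally I would pass to monodromy. A local system on the connected space $\Conf(n)$ is equivalent to its fibre at $R$ together with an action of $\Omega_R\Conf(n)$, and maps of local systems correspond to equivariant functors. The target is constant, so its action is trivial, and therefore $\FS_{\sftwoP}$ is $\Omega_R\Conf(n)$-equivariant. Since $\pi_1(\Conf(n),R)\simeq\Br_R$, this gives the $\Br_R$-equivariance claimed in Proposition~\ref{prop:intertwines}.

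I expect the main obstacle to be the first step: checking that transport in $\ul{\sftwoP}(\bbC,-)$ along loops in $\Conf(n)$ really is the cut-mutation action, so that the local system is well defined and the identification $\pi_1\simeq\Br_R$ matches the $\Br_n$ used on cuts. My approach would be to use that $\calX\to\Conf(n)$ is a fibration with discrete $\Br_n$-torsor fibres. The monodromy of this fibration is the standard identification of loops in $\Conf(n)$ with mutations of cuts, as in \cite{kapranovPerverseSchobersAlgebra2020}*{Figure 8}. Taking invariants under this torsor converts that monodromy directly into the transport action.
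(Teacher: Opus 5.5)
Your argument is correct and is essentially the paper's. The paper gives no separate proof: it deduces the proposition directly from the assertion that $\FS_{\sftwoP}$ upgrades to a map of local systems $\ul{\sftwoP}(\bbC,-)\to\ul{\sftwoP}(\bbC,0)$ over $\Conf(n)$ with constant target, and your construction via pairs $(R,K)$, together with the equivariance of $\FS_{\sfSph}$ from Corollaries~\ref{cor:action} and~\ref{cor:action2}, supplies the details behind that assertion (the step you flag as the main obstacle, matching transport with cut mutation, is not needed here since $\Br_R\actson\sftwoP(\bbC,R)$ is defined by transport, and only matters for the later comparison with $\Br_n\actson\sfSph(n)$).
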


\begin{cor}\label{cor:discrete}
    The fibers of $\FS_{\sftwoP}:\sftwoP(\bbC,R)\to \sftwoP(\bbC,0)$ are discrete.
\end{cor}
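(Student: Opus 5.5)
The plan is to deduce Corollary~\ref{cor:discrete} from Proposition~\ref{prop:FSisfaithful}, which says that $\FS_{\sftwoP}$ is 2-faithful. The fibers in question are those of the induced map on underlying $\zero$-categories (spaces of objects), $\sftwoP(\bbC,R)^{\simeq}\to\sftwoP(\bbC,0)^{\simeq}$. The claim is therefore that this map is $0$-truncated, so that each homotopy fiber is a discrete space, i.e.\ a set.

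First, I would apply Lemma~\ref{lem:discrete1} with $\frakt=\mathfrak{pt}$, $\sfD=\sftwoP(\bbC,R)$, $\sfS=\sftwoP(\bbC,0)$ and $\opname{G}=\FS_{\sftwoP}$. This gives that
\[\Map(\mathfrak{pt},\sftwoP(\bbC,R))\to\Map(\mathfrak{pt},\sftwoP(\bbC,0))\]
is $0$-truncated. Since $\Map(\mathfrak{pt},\sfA)\simeq\sfA^{\simeq}$, this is exactly the statement that the fibers over each object $\calG\in\sftwoP(\bbC,0)$ are discrete.

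To make this concrete, unwind Lemma~\ref{lem:ftoff}. The diagonal $\sftwoP(\bbC,R)\to\sftwoP(\bbC,R)\times_{\sftwoP(\bbC,0)}\sftwoP(\bbC,R)$ is 2-fully faithful. Hence on cores the diagonal is $(-1)$-truncated, since it is an inclusion of components: a functor out of $\mathfrak{pt}$ factors through a 2-fully faithful sub-$\two$-category in at most one way. So the map on cores is $0$-truncated.

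Put differently, two lifts of $\calG$ have as mapping space in the fiber the fiber of a fully faithful functor between mapping categories, taken over an equivalence. That fiber is empty or contractible after taking cores.

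Optionally, I would remark that combining this with Proposition~\ref{prop:intertwines} shows that $\Br_R$ acts on each fiber through its action on a set. This is what will be used with Proposition~\ref{prop:faithful}.

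The only real input is Proposition~\ref{prop:FSisfaithful}, which is itself deduced from Theorem~\ref{thm:merge} together with the 2-faithfulness of $\FS_{\sfAut}$ (via Lemma~\ref{lem:Uisfaithful} and Lemma~\ref{lem:faithfulpullback}). I expect no obstacle beyond being careful that ``fiber'' means the homotopy fiber of the core, rather than a fiber $\two$-category. If the fiber $\two$-category is meant instead, the same argument shows that its mapping categories are full subcategories of the fibers of fully faithful functors over identities. Those are contractible or empty, so the fiber is equivalent to a poset; combined with the first step it is in fact a discrete set.
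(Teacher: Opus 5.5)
Your first step is correct: Lemma~\ref{lem:discrete1} with $\frakt=\mathfrak{pt}$ shows that $\sftwoP(\bbC,R)^{\simeq}\to\sftwoP(\bbC,0)^{\simeq}$ is $0$-truncated, using only Proposition~\ref{prop:FSisfaithful}. However, the paper asserts something stronger: the fiber $\two$-category is discrete. That is why its one-line proof invokes, besides 2-faithfulness, the conservativity of $\FS_{\sftwoP}$. Compare the proof of Proposition~\ref{prop:faithful}, where faithfulness alone is said only to make the fiber a partially ordered set.

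For that reading your last sentence is a non sequitur. By 2-faithfulness the mapping categories of the fiber are empty or contractible, so the fiber is a poset, as you say. But the core of any poset is already discrete, since its only invertible morphisms are identities. So your first step gives no extra information. It cannot rule out a strict relation $p<p'$, namely a non-invertible 1-morphism $f:\calG_1\to\calG_2$ in $\sftwoP(\bbC,R)$ together with an identification of $\FS_{\sftwoP}(f)$ with $\id_{\calG}$.

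The missing idea is conservativity: if $\FS_{\sftwoP}(f)$ is an equivalence, then so is $f$. Then every morphism of the poset is invertible, and the poset is a set. This is easy to check on abstract schobers. Suppose the induced functor $F:\Phi\to\Phi'$ on Fukaya--Seidel categories is an equivalence. The components satisfy $F_i\simeq I_i'^{R}FI_i:\Phi_i\to\Phi_i'$. They are equivalences with inverse $I_i^{R}F^{-1}I_i'$, using that the squares $FI_i\simeq I_i'F_i$ are adjointable, i.e.\ $F_iI_i^R\simeq I_i'^{R}F$. The component $\Psi\to\Psi'$ is itself a component of $\FS_{\sfSph}(f)$, so it is already an equivalence. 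Without this input your argument establishes only the weaker, core-level statement.
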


\begin{proof}
    This is immediate from Proposition~\ref{prop:FSisfaithful} together with the fact that $\FS_{\sftwoP}$ is also conservative.
\end{proof}

By combining Proposition~\ref{prop:intertwines} and Corollary~\ref{cor:discrete}, the following definition makes sense.

\begin{defn}\label{defn:periodic2}
    Let $\calG\in\sftwoP(\bbC,R)$. Given $\sigma\in\Br_R$, say that $\calG$ is $\sigma$-periodic if $\calG$ is a fixed point for the $\Br_R$-action acting on the fiber of $\FS_{\sftwoP}$ over $\FS_{\sftwoP}(\calG)$. For a subgroup $H\subseteq \Br_R$, say that $\calG$ is periodic for $H$ if for all $\sigma\in H$, $\calG$ is $\sigma$-periodic.
\end{defn}

We relate the group actions $\Br_R\actson \sftwoP(\bbC,R)$ and $\Br_n\actson \sfSph(n)$, and thus relate Definition~\ref{defn:periodic2} to the notion of periodic SODs given in Definition~\ref{defn:periodic}. We first study the geometry. We claim that $\Cuts(\bbC,R)$ admits in addition to the right torsor action of $\Br_n$, a commuting action of $\Br_R$ by the transport along $\Conf(n)$. A choice $K\in\Cuts(\bbC,R)$ determines
\begin{itemize}
    \item an identification $\sftwoP(\bbC,R)\simeq \sfSph(n)$,
    \item and a group isomorphism $\Br_R\xto{u} \Br_n$ defined by $\sigma\cdot K\simeq K\cdot u(\sigma)$.
\end{itemize}
The actions $\Br_R\actson \sftwoP(\bbC,R)$ and $\Br_n\actson \sfSph(n)$ are identified via these two identifications.

\begin{rmk}
    This is formally similar to the following elementary scenario. Consider a set $S$ of cardinality $n$, and let $\opname{Bij}$ be the set of bijections $f:\{1,\dots,n\}\to S$. Then, $\opname{Bij}$ has a right action by $\Sym_n$ and a left action by $\Sym_S$, by pre- and post-composition respectively. A choice $f\in \opname{Bij}$ determines an isomorphism of groups, $\Sym_S\xto{u} \Sym_n$ defined by the property that for all $\sigma\in\Sym_S$, $\sigma\circ f = f\circ u(\sigma)$.
\end{rmk}

\subsection{The \texorpdfstring{$\two$}{2}-category of schobers on a family of discs}\label{subsection:schobersonfamily}
We upgrade Definition~\ref{defn:family1} and Definition~\ref{defn:family2} to $\two$-categories.

Suppose that $(\bbC\times B,H)$ is a locally constant family of discs, with $B$ connected. Let $f:B\to \Conf(n)$ be the map $b\mapsto H_b$. We define
\[\sftwoP(\bbC\times B,H) := \Gamma(B,f^*\ul{\sftwoP}(\bbC,-)).\]
We can take the enhanced Fukaya--Seidel functor in families, to obtain a functor,
\[\FS_{\sftwoP}:\sftwoP(\bbC\times B,H)\to \sftwoP(\bbC\times B,0\times B).\]

Suppose now that $(\bbC\times B,H)$ is a non-characteristic family of discs. We define the $\two$-category of perverse schobers on $(\bbC\times B,H)$ as the following fibered product of $\two$-categories.
\[\begin{tikzcd}
    {\sftwoP(\bbC\times B,H)}\pullback & {\sftwoP(\bbC\times B,0\times B)} \\
    {\sftwoP(\bbC\times B^{\opname{sm}},H^{\sm})} & {\sftwoP(\bbC\times B^{\opname{sm}},0\times B^{\opname{sm}})}
    \arrow[from=1-1, to=1-2]
    \arrow[from=1-1, to=2-1]
    \arrow["{-\mid_{B^{\opname{sm}}}}", from=1-2, to=2-2]
    \arrow["{\FS_{\sftwoP}}"', from=2-1, to=2-2]
\end{tikzcd}\]

The following statements both use Proposition~\ref{prop:FSisfaithful}.

\begin{prop}\label{prop:restrict}
    Suppose $B$ is contractible and let $(\bbC\times B,H)$ be a non-characteristic family of discs. Fix $b\in B^{\opname{sm}}$. Then, the restriction functor
    \[-\resto{b}:\sftwoP(\bbC\times B,H)\to \sftwoP(\bbC,H_b)\]
    is 2-fully faithful. Moreover, the image is the full sub-$\two$-category
    \[\sftwoP(\bbC,H_b)^{\Image(\sigma)\opname{-periodic}}\]
    consisting of perverse schobers $\calG\in \sftwoP(\bbC,H_b)$ which are periodic for the image of the natural group homomorphism $\sigma:\pi_1(B^{\opname{sm}},b)\to\Br_{H_b}$, as defined in Definition~\ref{defn:periodic2}.
\end{prop}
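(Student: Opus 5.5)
The plan is to reduce everything to Proposition~\ref{prop:faithful} and the pullback definition of $\sftwoP(\bbC\times B,H)$. Since $B$ is contractible, $\sftwoP(\bbC\times B,0\times B)\simeq\Gamma(B,\ul{\sftwoP}(\bbC,0))\simeq\sftwoP(\bbC,0)$, and restriction to $b$ realizes this equivalence. Moreover $\sftwoP(\bbC\times B^{\sm},0\times B^{\sm})$ is the $\two$-category of $\pi_1(B^{\sm},b)$-equivariant objects of $\sftwoP(\bbC,0)$, that is, sections over $B^{\sm}$ of a constant local system, and restriction from $B$ is the trivial-action inclusion. We assume $B^{\sm}$ is connected, which holds since it is the complement of divisors in a connected open set.

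First, apply Proposition~\ref{prop:faithful} on $X=B^{\sm}$ with $\ul{\sfC}=f^*\ul{\sftwoP}(\bbC,-)$, $\ul{\sfD}$ the constant local system $\sftwoP(\bbC,0)$, and $\opname{F}=\FS_{\sftwoP}$. This is 2-faithful at $b$ by Proposition~\ref{prop:FSisfaithful}. The proposition gives a 2-fully faithful functor
\[\sftwoP(\bbC\times B^{\sm},H^{\sm})\to \sftwoP(\bbC,H_b)\times_{\sftwoP(\bbC,0)}\Gamma(B^{\sm},\ul{\sftwoP}(\bbC,0)).\]
Its essential image consists of those lifts that are fixed by the $\pi_1(B^{\sm},b)$-action. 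By Proposition~\ref{prop:intertwines}, this action is the $\Br_{H_b}$-action on the fibers of $\FS_{\sftwoP}$, pulled back along $\sigma$.

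Second, form the base change of this along $\sftwoP(\bbC,0)\simeq\Gamma(B,\ul{\sftwoP}(\bbC,0))\to\Gamma(B^{\sm},\ul{\sftwoP}(\bbC,0))$. Then $\sftwoP(\bbC\times B,H)$ embeds 2-fully faithfully into
\[\sftwoP(\bbC,H_b)\times_{\sftwoP(\bbC,0)}\Gamma(B^{\sm},\ul{\sftwoP}(\bbC,0))\times_{\Gamma(B^{\sm},\ul{\sftwoP}(\bbC,0))}\sftwoP(\bbC,0)\simeq\sftwoP(\bbC,H_b).\]
The base change of a 2-fully faithful functor stays 2-fully faithful, and the identification holds because the map $\sftwoP(\bbC,H_b)\to\sftwoP(\bbC,0)$ factors through $\FS_{\sftwoP}$. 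This composite is exactly $-\resto{b}$. Its image is the set of $\calG$ whose lift is $\pi_1(B^{\sm},b)$-fixed, where $\pi_1(B^{\sm},b)$ acts on $\FS_{\sftwoP}(\calG)$ trivially because that object extends over the contractible $B$. By Corollary~\ref{cor:discrete} and Definition~\ref{defn:periodic2}, this is precisely $\Image(\sigma)$-periodicity.

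The main obstacle is the bookkeeping in the first step. We must make sure that the $\pi_1$-action on the set of lifts produced by Proposition~\ref{prop:faithful} coincides with the transported $\Br_{H_b}$-action on the fiber of $\FS_{\sftwoP}$. In particular, the equivariance data of $\FS_{\sftwoP}(\calG)$ over $B^{\sm}$, which comes from $B$, must act trivially on the fiber, rather than by some nontrivial automorphism of $\FS_{\sftwoP}(\calG)$. We would handle this using Proposition~\ref{prop:intertwines}, together with the fact that the fiber is a set, so only $\pi_1$ matters and the trivialization over the contractible $B$ kills any residual twist.
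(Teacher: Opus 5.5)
Your proposal is correct and follows essentially the same route as the paper. In both, you apply Proposition~\ref{prop:faithful} over $B^{\sm}$ to embed $\sftwoP(\bbC\times B^{\sm},H^{\sm})$ 2-fully faithfully into $\sftwoP(\bbC,H_b)\times_{\sftwoP(\bbC,0)}\sftwoP(\bbC\times B^{\sm},0\times B^{\sm})$, base change along restriction from the contractible $B$ to identify the target with $\sftwoP(\bbC,H_b)$, and read off the essential image as the $\Image(\sigma)$-fixed lifts; the obstacle you flag is dispatched in the paper by the observation that the object over $B^{\sm}$ attached to $\calG$ is the constant local system with fiber $\FS_{\sftwoP}(\calG)$.
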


\begin{proof}
    Consider the following naturally commuting diagram of $\two$-categories where each square is a pullback square.
    \[\begin{tikzcd}
        {\sftwoP(\bbC\times B,H)}\pullback & \sfQ\pullback & {\sftwoP(\bbC\times B,0\times B)} \\
        {\sftwoP(\bbC\times B^{\opname{sm}},H^{\sm})} & {\sfQ^{\opname{sm}}}\pullback & {\sftwoP(\bbC\times B^{\opname{sm}},0\times B^{\opname{sm}})} \\
        & {\sftwoP(\bbC,H_b)} & {\sftwoP(\bbC,0)}
        \arrow["{\opname{I}'}", from=1-1, to=1-2]
        \arrow[from=1-1, to=2-1]
        \arrow[from=1-2, to=1-3]
        \arrow["{\opname{K}}", from=1-2, to=2-2]
        \arrow["{-\mid_{B^{\opname{sm}}}}", from=1-3, to=2-3]
        \arrow["{\opname{J}}", curve={height=-80pt}, from=1-3, to=3-3]
        \arrow["{\opname{I}}", from=2-1, to=2-2]
        \arrow["{-\mid_b}"', curve={height=12pt}, from=2-1, to=3-2]
        \arrow[from=2-2, to=2-3]
        \arrow["{\opname{L}}", from=2-2, to=3-2]
        \arrow["{-\mid_b}", from=2-3, to=3-3]
        \arrow["{\FS_{\sftwoP}}", from=3-2, to=3-3]
    \end{tikzcd}\]
    Here, $\sfQ$ and $\sfQ^{\opname{sm}}$ are defined by the pullback squares. 
    
    
    
    From Proposition~\ref{prop:FSisfaithful}, Proposition~\ref{prop:faithful} and Lemma~\ref{lem:faithfulpullback}, we learn that $\opname{I}$ is 2-fully faithful. 
    By pullback square, $\opname{I}':\sftwoP(\bbC\times B,H)\to \sfQ$ is also fully faithful. Since $B$ is contractible, the functor $\opname{J}$ is an equivalence. By base change, the composition $\opname{L}\circ\opname{K}:\sfQ\to \sftwoP(\bbC,H_b)$ is also an equivalence. Composing, we learn that $\opname{L}\circ\opname{K}\circ \opname{I}'$ is fully faithful. Moreover this is identified with
    \[-\resto{b}:\sftwoP(\bbC\times B,H)\to \sftwoP(\bbC,H_b).\]
    
    
    
    By Proposition~\ref{prop:faithful}, the essential image of $\opname{I}$ consists of pairs $(\calC,\calG_b)$, where $\calC\in\sftwoP(\bbC\times B^{\opname{sm}},0\times B^{\opname{sm}})$, $\calG\in \sftwoP(\bbC,H_b)$ is a fixed point under the action of $\pi_1(B^{\opname{sm}},b)$ on the space of lifts of $\calC\resto{b}$. The action factors through
    \[\rho:\pi_1(B^{\opname{sm}},b)\to \Br_{H_b},\]
    with $\Br_{H_b}$ acting as described in Definition~\ref{defn:periodic2}. So being a fixed point is equivalent to asking that $\calG_b$ is periodic for the image of $\rho$.
    
    The essential image of functor $\opname{I}'$ consists of objects $\calG\in \sfQ\simeq \sftwoP(\bbC,H_b)$ such that $\opname{K}(\calG)$ lies in the essential image of $\opname{I}$. But
    \[\opname{K}(\calG) = (\calC,\calG),\]
    (where $\calC$ is the trivial local system with fibers $\FS_{\sftwoP}(\calG)$) so we are done.
\end{proof}

\begin{prop}\label{prop:independent}
    Definition~\ref{defn:family2} is independent of the choice of $B^{\sm}$.
\end{prop}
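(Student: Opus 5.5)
We have to show that the fibered product defining $\sftwoP(\bbC\times B,H)$ does not depend on which complement of divisors $B^{\sm}$ is used. Given two admissible choices $B^{\sm}_1,B^{\sm}_2$, their intersection is again a complement of divisors over which $H$ is \'etale, so it suffices to compare $B^{\sm}$ with a smaller admissible $B' \subseteq B^{\sm}$.

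Write $\sfP := \sftwoP(\bbC\times B,0\times B)$, $\sfP^{\sm}$ and $\sfP'$ for the analogous $\two$-categories over $B^{\sm}$ and $B'$, and $\sfH^{\sm},\sfH'$ for $\sftwoP(\bbC\times B^{\sm},H^{\sm})$ and $\sftwoP(\bbC\times B',H')$. The claim is that the natural functor
\[\sfH^{\sm}\times_{\sfP^{\sm}}\sfP \to \sfH'\times_{\sfP'}\sfP\]
is an equivalence. By pasting of pullbacks this reduces to showing that the square
\[\begin{tikzcd}
    {\sfH^{\sm}} & {\sfP^{\sm}} \\
    {\sfH'} & {\sfP'}
    \arrow["{\FS_{\sftwoP}}", from=1-1, to=1-2]
    \arrow[from=1-1, to=2-1]
    \arrow[from=1-2, to=2-2]
    \arrow["{\FS_{\sftwoP}}"', from=2-1, to=2-2]
\end{tikzcd}\]
is a pullback.

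Both rows are sections of local systems over $B^{\sm}$ and $B'$. The top row is $\Gamma(B^{\sm},f^*\ul{\sftwoP}(\bbC,-))\to\Gamma(B^{\sm},\ul{\sftwoP}(\bbC,0))$. The map of local systems is fiberwise 2-faithful by Proposition~\ref{prop:FSisfaithful}. We may work one connected component of $B^{\sm}$ at a time. Since $B^{\sm}\setminus B'$ lies in a proper analytic subset of complex codimension $\geq1$, the complement $B'$ is connected on each component, and $\pi_1(B',b)\to\pi_1(B^{\sm},b)$ is surjective for $b\in B'$. This surjectivity is the standard fact that removing a complex hypersurface from a connected complex manifold gives a $\pi_1$-surjection. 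So Corollary~\ref{cor:surjectivepi1} applies with $U=B'$, $X=B^{\sm}$, and it gives exactly that the square above is a pullback.

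The main obstacle is this topological input. Two things need care. First, connectedness of $B'$ inside each component of $B^{\sm}$, and the $\pi_1$-surjectivity, must be stated for the analytic setting; both follow from real codimension $2$ of the removed locus. Second, Corollary~\ref{cor:surjectivepi1} is formulated for connected spaces, so it is applied component by component and the resulting pullbacks are assembled, using that $\Gamma$ takes disjoint unions to products.
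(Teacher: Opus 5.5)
Your proposal is correct and follows the paper's proof. Both reduce to nested choices $B'\subseteq B^{\sm}$, use surjectivity of $\pi_1(B',b)\to\pi_1(B^{\sm},b)$, and apply Corollary~\ref{cor:surjectivepi1} (with Proposition~\ref{prop:FSisfaithful} supplying the 2-faithfulness) to obtain the pullback square, which you then paste; your explicit handling of the intersection reduction and of connected components fills in details the paper leaves implicit.
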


\begin{proof}
    Suppose that $B_1^{\sm}, B_2^{\sm}\subset B$ are two different open subsets which both realize $(\bbC\times B,H)$ as a non-characteristic family of discs. Without loss of generality, we may assume $B_1^{\sm}\subseteq B_2^{\sm}$ and fix a common basepoint $b\in B_1^{\sm}\subseteq B_2^{\sm}$.

    See that the natural map
    \[\pi_1(B_1^{\opname{sm}},b)\to \pi_1(B_2^{\opname{sm}},b)\]
    is surjective. A similar proof to that of Proposition~\ref{prop:restrict} proves the desired result, using Corollary~\ref{cor:surjectivepi1} in the place of Proposition~\ref{prop:faithful}.
\end{proof}

Everything carries over to localized and prelocalized perverse schobers.

\begin{eg}\label{eg:ykxn}
    Let $0<m<n$ be integers. Consider the pair $(\bbC^2_{x,y},C_{m,n})$ where $C_{m,n}$ is the curve $y^m=x^n$. This defines a non-characteristic family of discs over $B=\bbC_x$. We apply Proposition~\ref{prop:restrict} and make a choice of cuts $K$ on the fiber of a fixed basepoint $x_0\in \bbC_x$ to learn that $\sftwoP(\bbC^2_{x,y},C_{m,n})$ is equivalent to the sub-$\two$-category
    \[\sfSph(m)^{n\opname{-periodic}}\subset \sfSph(m),\]
    consisting of objects such that the SOD (on the vanishing cycle of $\FS_{\sfSph}(-)$) is $\tau^n$-periodic where $\tau = \sigma_{m-1}\sigma_{m-2}\cdots \sigma_{1}\in\Br_m$. Here, we recall from the end of \S\ref{subsection:universal} that the choice of cuts $K$ identifies $\Br_{C_{m,n}|_{x_0}}$ with $\Br_m$.
\end{eg}

\begin{eg}\label{eg:waldhausen3}
    We continue Example~\ref{eg:waldhausen1} and Example~\ref{eg:waldhausen2}, carrying over notation. We recall that from a spherical adjunction $F$, we constructed an object of $\sfSph(n)$ via the spherical adjunction $S_{n-1}(F)$.
    
    We then gave a non-abstract interpretation of this and proposed that $S_{n-1}(F)$ comes from a pushforward along $f_0(z)=z^n$, and that lifts to $\sfSph(n)$ come from a factorization
    \[\sftwoP(\bbC_z,0)\xto{(f_x)_*} \sftwoP(\bbC_y,R_x)\xto{\FS_{\sftwoP}} \sftwoP(\bbC_y,0).\]
    We expect that these organize into an object of $\sftwoP(\bbC^2_{x,y},y(y^{n-1}-x^n))$. Applying Proposition~\ref{prop:restrict}, this expectation can concretely be interpreted as a specific periodicity property on the SOD of the vanishing cycle $\calA'$ of $S_{n-1}(F)$. When $n=2$, this is the 4-periodicity of the SOD $\langle\calA,\calB\rangle$ which is well known, see for example \cite{dyckerhoffSphericalAdjunctionsStable2021}*{Proposition 2.5.12}. For general $n$, this is periodicity for $\sigma^n$, where for example if $n=5$, then $\sigma$ is the braid illustrated below.
    \[\begin{tikzpicture}[scale=1.2]
        \tikzset{
            strand/.style={black, line width=1.5pt},
            mask/.style={white, line width=4.5pt}
        }

        \draw[strand] (5, 2) .. controls (5, 1.35) and (1, 1.65) .. (1, 1);

        \draw[mask] (4, 2) .. controls (4, 1.5) and (5, 1.5) .. (5, 1);
        \draw[strand] (4, 2) .. controls (4, 1.5) and (5, 1.5) .. (5, 1);

        \draw[mask] (3, 2) .. controls (3, 1.5) and (4, 1.5) .. (4, 1);
        \draw[strand] (3, 2) .. controls (3, 1.5) and (4, 1.5) .. (4, 1);

        \draw[mask] (2, 2) .. controls (2, 1.5) and (3, 1.5) .. (3, 1);
        \draw[strand] (2, 2) .. controls (2, 1.5) and (3, 1.5) .. (3, 1);

        \draw[mask] (1, 2) .. controls (1, 1.5) and (2, 1.5) .. (2, 1);
        \draw[strand] (1, 2) .. controls (1, 1.5) and (2, 1.5) .. (2, 1);

        \draw[strand] (3, 1) -- (3, 0);
        \draw[strand] (4, 1) -- (4, 0);
        \draw[strand] (5, 1) -- (5, 0);

        \draw[strand] (2, 1) .. controls (2, 0.5) and (1, 0.5) .. (1, 0);

        \draw[mask] (1, 1) .. controls (1, 0.5) and (2, 0.5) .. (2, 0);
        \draw[strand] (1, 1) .. controls (1, 0.5) and (2, 0.5) .. (2, 0);

    \end{tikzpicture}\]
    This is more fundamentally described as the braid which rotates the $n$ roots of $y(y^{n-1}-1)=0$ by $\frac{2\pi i}{n-1}$.

    A proof of this expectation can be deduced from \cite{dyckerhoffNsphericalFunctorsCategorification2023}*{Theorem 5.4.2}. One first shows that $\sigma\cdot\frakS=\tau_{n-1}(\frakS)$, where $\tau_{n-1}$ is the autoequivalence coming from the paracyclic structure constructed in \cite{dyckerhoffSphericalAdjunctionsStable2021}. Now, $\tau_{n-1}^n=\Sigma^2$ (\cite{dyckerhoffSphericalAdjunctionsStable2021}*{Discussion following Proposition 3.1.3}) is the double suspension, so that $\sigma^n\cdot\frakS = \frakS$.

\end{eg}

\begin{eg}\label{eg:y2x2localized}
    Consider the pair $(\bbC_{x,y}^2,C_{2,2})$ where $C_{2,2}$ is defined in Example~\ref{eg:ykxn}. This is simply the union of two lines, $y=\pm x$. We compute $\olsftwoP(\bbC^2_{x,y},C_{2,2})$. By Proposition~\ref{prop:restrict} (with a fixed pointed choice of cuts), $\olsftwoP(\bbC_{x,y}^2,C_{2,2})$ is equivalent to the sub-$\two$-category of $\sfAut(2)^{\opname{2-periodic}}\subset \sfAut(2)$, given by objects such that the SOD $\Phi=\langle\Phi_1,\Phi_2\rangle$ is $2$-periodic.

    Being $2$-periodic is equivalent to asking that $\Phi_1$ is both left and right orthogonal to $\Phi_2$. In other words, $\Phi=\Phi_1\oplus\Phi_2$. The autoequivalence $T$ also splits $T=T_1\oplus T_2$, where $T_i$ is an autoequivalence of $\Phi_i$.

    Thus there is an equivalence,
    \[\sfAut(2)^{\opname{2-periodic}}\simeq \sfAut \oplus \sfAut.\]
    We note that in the setting of \cite{dyckerhoffNsphericalFunctorsCategorification2023}, an SOD being 2-periodic is equivalent to the gluing functor being zero, which yields the same conclusion.

    Geometrically, we interpret this as the observation that the Legendrian $\Lambda_{C_{2,2}}^\infty$ is a disjoint union of two smooth Legendrians.
\end{eg}

\begin{eg}\label{eg:y2x3localized}
    Consider the pair $(\bbC_{x,y}^2,C_{2,3})$ where $C_{2,3}$ is defined in Example~\ref{eg:ykxn}. We compute $\olsftwoP(\bbC^2_{x,y},C_{2,3})$. We apply Proposition~\ref{prop:restrict} to learn that $\olsftwoP(\bbC_{x,y}^2,C_{2,3})$ is equivalent to the sub-$\two$-category of $\sfAut(2)^{\opname{3-periodic}}\subset \sfAut(2)$, given by objects such that the SOD $\Phi=\langle\Phi_1,\Phi_2\rangle$ is $3$-periodic.

    By Example~\ref{eg:aut2}, an object in the image is given by the data of a 3-periodic functor $F:\Phi_1\to\Phi_2$ admitting all adjoints, autoequivalences $T_1,T_2$ of $\Phi_1$, $\Phi_2$ respectively, together with an equivalence, $F\circ T_1\simeq T_2\circ F^{RR}$.

    It is shown in \cite{dyckerhoffNsphericalFunctorsCategorification2023} that a functor $F$ is 3-periodic if and only if it is an equivalence. In particular, $F^{RR}\simeq F$, and so $T_1$ and $T_2$ are identified via $F$. So the data is equivalent to just giving a single pair $(\Phi_1,T_1)$, defining an equivalence,
    \[\sfAut(2)^{\opname{3-periodic}}\simeq \sfAut.\]

    Geometrically, we interpret this as the observation that $\Lambda_{C_{2,3}}^\infty$ is a smooth Legendrian. We will further interpret this as an instance of Radon transform in Example~\ref{eg:yx3}.
\end{eg}

\begin{rmk}\label{rmk:cerf}
    There is a fruitful analogy between perverse schobers on $(\bbC_y,R)$ and Picard-Lefschetz theory, with sheaves on $(\bbR,L)$ and Morse theory, where $L$ is a singular support condition containing the zero section and containing only covectors in non-negative codirections.

    We extend this analogy to schobers on a family of discs, $(\bbC^2_{x,y},C)$, non-characteristic over $B=\bbC_x$. This is analogous to Cerf theory, which concerns how generic families of Morse functions may degenerate. In the classical theory, there are two types of Cerf singularities, called exchange singularities and birth-death singularities.

    The families of (localized) perverse schobers appearing in Example~\ref{eg:y2x2localized} and Example~\ref{eg:y2x3localized} can be thought of as analogues of these two types of singularities.
\end{rmk}

\subsection{Schobers supported on a cuspidal cubic}\label{subsection:y2x3}
We make a detailed study of the category $\sftwoP(\bbC^2_{x,y},C_{2,3})$, where the $C_{2,3}$ is the curve $y^2=x^3$.

The category of perverse sheaves microsupported along the associated Lagrangian $\Lambda_{C_{2,3}}$ is described by MacPherson--Vilonen \cite{macphersonPerverseSheavesSingularities1988}*{Proposition 3.1}, which we recall below. We aim to show that our definition categorifies this.

\begin{prop}\label{prop:mv}
    Let $A=\bbC\langle a_1,a_2\rangle/(a_1a_2a_1 -a_1^2 +a_1, a_2a_1a_2 -a_2^2 +a_2)$. Then, the category $\Perv(\bbC^2_{x,y},C_{2,3})$ is equivalent to the abelian category of $A$-modules such that $1-a_1$ and $1-a_2$ are invertible.
\end{prop}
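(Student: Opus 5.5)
This is a result recalled from MacPherson--Vilonen, so the plan is to reproduce their argument using the local GMV description set up in this section, in its decategorified form. I would first view $(\bbC^2_{x,y},C_{2,3})$ as a non-characteristic family of discs over $B=\bbC_x$ with $B^{\sm}=\bbC_x^\times$. The decategorified analogue of Definition~\ref{defn:family2} then presents a perverse sheaf as three pieces of data:
\begin{enumerate}
    \item a perverse sheaf on a generic fiber $(\bbC_y,\{\pm1\})$, monodromic along $\bbC_x^\times$;
    \item its vanishing-cycle (Fukaya--Seidel) total space, extended over the whole $x$-line;
    \item a compatibility between the two.
\end{enumerate}
This is exactly \cite{gelfandMicrolocalPerverseSheaves2005}*{Proposition 3.3}. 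Because $B=\bbC_x$ is contractible, the decategorified version of Proposition~\ref{prop:restrict} reduces everything to the single fiber over $x_0=1$, subject to a periodicity condition.

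Next I would use Proposition~\ref{prop:gmvquiver}, together with the unlocalized GMV quiver description of $\Perv(\bbC,\{\pm1\})$, to write an object of the fiber category. It consists of a space $\Psi$ (the nearby/total stalk), vanishing cycles $\Phi_1,\Phi_2$, and maps $s_i:\Phi_i\tofrom\Psi:r_i$ with $1-s_ir_i$ invertible. The periodicity condition says that the monodromy of the two points $y=\pm x^{3/2}$ around $x=0$ fixes the object. That monodromy is the braid $\sigma_1^3\in\Br_2$, so the condition is invariance under $\sigma_1^3$, in the sense of Definition~\ref{defn:periodic2}. In the decategorified setting this invariance becomes an isomorphism of the object with its image under the Picard--Lefschetz action, and that isomorphism is compatible with the identity on the total space.

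Then I would translate this into an algebra. Following MacPherson--Vilonen, the whole object is recorded on the single space $\Psi$ through the idempotent-like operators $a_i:=s_ir_i$ (up to sign conventions), so $u_i=1-a_i$ is the local monodromy of each point. The braid relation expressing $\sigma_1^3$-invariance, which is the cuspidal analogue of 3-periodicity, should reduce to the relations $a_1a_2a_1-a_1^2+a_1=0$ and $a_2a_1a_2-a_2^2+a_2=0$. I would then use the conservativity of $\Psi$ to check that $\Phi_i$ is recovered as the image of $a_i$.

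The main obstacle is this last step: identifying invariance under $\sigma_1^3$ precisely with the two cubic relations. The route I would try is to compute the Picard--Lefschetz action of $\sigma_1$ on the quiver data explicitly, as $(u_1,u_2)\mapsto(u_1u_2u_1^{-1},u_1)$, iterate it three times, and impose that the result equals the original data. This should produce $u_1u_2u_1=u_2u_1u_2$, which, rewritten in terms of $a_i=1-u_i$ and using the invertibility of the $u_i$, gives the stated relations.
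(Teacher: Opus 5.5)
Your reduction to the fiber over $x_0=1$ with a $\sigma_1^3$-periodicity condition is the right framework, but the decisive last step is false. With $u_i=1-a_i$ one computes
\[u_1u_2u_1-u_2u_1u_2=X_2-X_1,\qquad X_1:=a_1a_2a_1-a_1^2+a_1,\quad X_2:=a_2a_1a_2-a_2^2+a_2.\]
So the braid relation is equivalent to the single relation $X_1=X_2$, not to $X_1=X_2=0$. For a concrete counterexample take $\Psi=\bbC$, $\Phi_i=\bbC$, $s_i=1$, $r_i=1-\lambda$. This gives $u_1=u_2=\lambda$, which satisfies the braid relation for every $\lambda\neq 0$. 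On the other hand $X_1=(1-\lambda)(\lambda^2-\lambda+1)$, which vanishes only when $\lambda=1$ or $\lambda$ is a primitive sixth root of unity (the roots of the trefoil's Alexander polynomial). In this example, an isomorphism of the fiber object with its $\sigma_1^3$-transform that is the identity on $\Psi$ exists for every $\lambda$. So imposing $\sigma_1^3$-invariance on $(u_1,u_2)$ alone, equivalently compatibility with the identity on $\Psi$ only, just recovers the condition that $(\Psi,u_1,u_2)$ is a representation of $\pi_1(\bbC^2\setminus C_{2,3})\cong\Br_3$, i.e.\ a local system on the complement. The extra constraint has to come from somewhere else.

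The missing ingredient is that ``identity on $\FS$'' also fixes the total vanishing cycles $\Phi$ together with its decomposition into $\Phi_1,\Phi_2$ (the decategorified SOD). It is the periodicity of that decomposition that carries the content. Use the model of Lemma~\ref{lem:twocatssod} and pass to classes in the coordinates $(I_1^R,I_2^R)$. Then $I_1=(1,0)^{\intercal}$, $I_2=(r_1s_2,1)^{\intercal}$, $S=(s_1,u_1s_2)$, $R=(r_1,r_2)^{\intercal}$. With $t_i=1-r_is_i$, the map $T=1-RS$ satisfies $TI_1=(t_1,-r_2s_1)^{\intercal}$ and $TI_2=(0,t_2)^{\intercal}$. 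Since $\sigma_1^{-1}$ sends $(\Phi_1,\Phi_2)$ to $(T\Phi_2,\Phi_1)$, periodicity means $T\Phi_1=\Phi_2$ and $T^2\Phi_2=\Phi_1$. These unwind to
\[r_1s_2r_2s_1=r_1s_1-1,\qquad r_2s_1r_1s_2=r_2s_2-1.\]
These two relations force $r_1s_2$ and $r_2s_1$ to be invertible, so $s_i$ is injective and $r_i$ is surjective. That, not any conservativity, is what lets you recover $\Phi_i$ as $\Image(a_i)$. Composing with $s_i$ on the left and $r_i$ on the right then yields exactly $X_1=0$ and $X_2=0$. Conversely, an $A$-module gives fiber data by setting $\Phi_i:=\Image(a_i)$.

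These two relations are the decategorification of the two Cartesian squares in Proposition~\ref{prop:y2x3description}, i.e.\ of invertibility of the unit and counit for $S_1^RS_2$. For comparison, the paper does not prove the statement itself: it cites MacPherson--Vilonen and supplies only that categorified counterpart.
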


We show that our definition of $\sftwoP(\bbC^2_{x,y},C_{2,3})$ decategorifies to Proposition~\ref{prop:mv}. Note however that we must be in the setting of large categories in order to use Proposition~\ref{prop:KLisEM}, although it is not clear if this is necessary.

\begin{prop}\label{prop:y2x3description}
    The $\two$-category $\sfSph(2)^{\opname{3-periodic}}$ is equivalent to the $\two$-category whose objects are $(\Psi,F_1,F_2)$, where
    \begin{itemize}
        \item $\Psi$ is a stable category and
        \item $F_1,F_2$ are spherical monads on $\Psi$ with units $u_i:\id\to F_i$,
    \end{itemize}
    such that the following squares are Cartesian.
    \[\begin{tikzcd}
        {F_1F_1} & {F_1F_2F_1} \\
        {F_1} & 0
        \arrow["{1\circ u_2\circ 1}"', from=1-1, to=1-2]
        \arrow["{m_1}", from=1-1, to=2-1]
        \arrow[from=1-2, to=2-2]
        \arrow[from=2-1, to=2-2]
    \end{tikzcd}
    \begin{tikzcd}
        {F_2F_2} & {F_2F_1F_2} \\
        {F_2} & 0
        \arrow["{1\circ u_1\circ 1}"', from=1-1, to=1-2]
        \arrow["{m_2}", from=1-1, to=2-1]
        \arrow[from=1-2, to=2-2]
        \arrow[from=2-1, to=2-2]
    \end{tikzcd}
    \]
    In particular, taking $K$-theory yields a perverse sheaf microsupported on $C_{2,3}$ via Proposition~\ref{prop:mv}.
\end{prop}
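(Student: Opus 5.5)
Objects of $\sfSph(2)$ are pairs of spherical adjunctions $S_i:\Phi_i\tofrom\Psi:R_i$. The plan is to replace each adjunction by its monad on $\Psi$ and then show that 3-periodicity becomes the two Cartesian squares. Concretely, for each $i$ we form the adjunction $R_i\adjto S_i$ from $\Psi$ to $\Phi_i$ and take its monad $F_i:=S_iR_i$; this is a comonad's mate, so I will use the right adjoint of $R_i$, which exists because spherical adjunctions admit all adjoints. The first step is to show that, on the 3-periodic locus, $R_i$ is monadic. Then Proposition~\ref{prop:KLisEM} (this is where largeness enters) identifies $\Phi_i$ with $\Mod(F_i)$, and the pair of adjunctions is recovered from $(\Psi,F_1,F_2)$. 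Sphericity of $F_i$ follows from Theorem~\ref{thm:christstrengthening1}, since $\Cone(\id\to F_i)$ is, up to shift, the spherical twist on $\Psi$, which is invertible.

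To show that periodicity forces this monadicity, I would use Example~\ref{eg:y2x3localized} together with the pullback square (\ref{equation:importantpullbacksquare}). By \cite{dyckerhoffNsphericalFunctorsCategorification2023}, 3-periodicity of $\langle\Phi_1,\Phi_2\rangle$ means that the gluing functor $I_1^RI_2=R_1S_2$ is an equivalence. The 2-faithfulness statements (Proposition~\ref{prop:functorconstruction} and Proposition~\ref{prop:FSisfaithful}) then reduce the construction of the functor to objects and to checking conditions. I expect that $R_1S_2$ being an equivalence, with inverse related to $R_2S_1$ up to twist, gives conservativity of $R_i$ on the relevant images, and so monadicity via Barr--Beck in the presentable stable setting.

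The second step is to translate "$R_1S_2$ is an equivalence" into a condition on the monads. Using $S_iR_i=F_i$, the composite $F_1F_2F_1=S_1(R_1S_2)(R_2S_1)R_1$. The square for $F_1$ expresses that the unit $u_2$, sandwiched between copies of $F_1$, differs from the multiplication $m_1$ by an exact triangle $F_1\to F_1F_1\to F_1F_2F_1$. I would verify this by applying $R_1$ on the left and $S_1$ on the right, which is conservative in the monadic setting, reducing to the statement that $R_1S_2R_2S_1\simeq \Cone(\id\to R_1S_1)$, up to the appropriate twist. In turn, this is exactly the inverse relation between the gluing functors coming from 3-periodicity, namely the relation $F^{RR}\simeq F$ when $F$ is an equivalence. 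The converse direction runs the same computation backwards.

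The hard part is this translation. Keeping track of which unit or counit appears, and of the shifts, and checking that the Cartesian condition is equivalent to, not merely implied by, the gluing equivalence, is the main obstacle. I would first settle the case $\Psi=\Mod_k$ with algebras to fix the conventions. The final step, decategorification, takes $K_0$: the monads become idempotent-type elements $a_i=[F_i]-1$ or similar, and the Cartesian squares give the relations $a_1a_2a_1-a_1^2+a_1=0$ of Proposition~\ref{prop:mv}, with $1-a_i$ invertible by sphericity.
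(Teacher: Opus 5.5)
Your outline is the paper's: periodicity is equivalent to invertibility of the gluing functor $R_1S_2$ (Lemma~\ref{lem:y2x3easy}), each $\Phi_i$ is then described monadically over $\Psi$, and that invertibility is converted into the two squares. However, the monadic set-up is attached to the wrong functors, and as written that step fails.

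With $S_i\adjto R_i$, the composite $S_iR_i$ is a comonad on $\Psi$. It has a counit $S_iR_i\to\id$ and a comultiplication, but no unit $u_i:\id\to F_i$ and no multiplication $m_i$. So the squares in the statement do not make sense for it, and your later computation of $F_1F_2F_1$ keeps using it despite the remark about mates.

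Likewise, ``$R_i$ is monadic'' would describe $\Psi$ over $\Phi_i$, which is the prelocalized picture, not the one wanted here. Moreover $R_i$ is not conservative in general. Replace $\Psi$ by $\Psi\oplus\Psi''$, with both $S_i$ landing in the first summand. The twists stay invertible (identity on $\Psi''$) and $R_1S_2$ is unchanged, so the object is still spherical and 3-periodic, yet $R_1,R_2$ kill $\Psi''$.

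What you need is that $S_i:\Phi_i\to\Psi$, viewed as the \emph{right} adjoint of $S_i^L$, is monadic. Then $\Phi_i\simeq\Mod_\Psi(F_i)$ with $F_i=S_iS_i^L$, by Barr--Beck. The required conservativity follows directly from the gluing equivalence; Example~\ref{eg:y2x3localized}, the pullback square and 2-faithfulness are not needed. If $S_1\phi=0$, then $\Hom(\phi,R_1S_2\phi_2)\simeq\Hom(S_1\phi,S_2\phi_2)=0$ for all $\phi_2$. Since $R_1S_2$ is essentially surjective, $\phi=0$; the same argument works for $S_2$.

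The translation step, which you leave as an expectation, also has a specific mechanism you have not identified. The paper splits invertibility of $R_1S_2$ into two conditions, with the inverse being the left adjoint $S_2^LS_1$:
\begin{itemize}
\item the unit $\id\to R_1S_2S_2^LS_1$ is an equivalence;
\item the counit $S_2^LS_1R_1S_2\to\id$ is an equivalence.
\end{itemize}
Each condition is equivalent to one square, which is how ``equivalent to'' rather than ``implied by'' is obtained.

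For the unit, use the sphericity identity $T_1R_1[-1]\simeq S_1^L$, where $T_1=\Cone(S_1^LS_1\to\id)$. The unit condition becomes: $T_1[-1]\to S_1^LS_2S_2^LS_1$ is an equivalence. Conjugating by $S_1(-)S_1^L$ turns this into $\coCone(m_1)\to F_1F_2F_1$ being an equivalence, which is exactly the first square.

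This conjugation reflects equivalences for two separate reasons:
\begin{itemize}
\item post-composition with $S_1$ is detected because $S_1$ is conservative;
\item pre-composition with $S_1^L$ is detected because $S_1^L$ is stably essentially surjective.
\end{itemize}
The second point is the actual role of Proposition~\ref{prop:KLisEM}, via Lemma~\ref{lem:kleislirecognition}. It is not the identification $\Phi_i\simeq\Mod(F_i)$, which is plain Barr--Beck.

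Your proposal to ``apply $R_1$ on the left and $S_1$ on the right'' wraps the outer factors rather than stripping them off. The conservativity you assert for that operation is exactly what requires the two inputs above.

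Finally, in the decategorification take $a_i=[F_i]$, not $[F_i]-1$. The first square gives $[F_1]^2=[F_1]+[F_1F_2F_1]$, which is the relation of Proposition~\ref{prop:mv}. Also $1-a_i=-[\Cone(u_i)]$, which is invertible.
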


We begin with a straightforward description of $\sftwoP(\bbC^2_{x,y},C_{2,3})$.

\begin{lem}\label{lem:y2x3easy}
    The $\two$-category $\sftwoP(\bbC^2_{x,y},C_{2,3})$ is equivalent to the $\two$-category whose objects are $(\Phi_1,\Phi_2,\Psi,S_1,S_2)$, consisting of
    \begin{itemize}
        \item stable categories $\Phi_1,\Phi_2,\Psi$, and
        \item spherical functors $S_i:\Phi_i\to \Psi$ for $i=1,2$,
    \end{itemize}
    such that $S_1^RS_2$ is an equivalence.
\end{lem}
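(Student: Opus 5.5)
We reduce the statement to a periodicity computation on a single fiber, using the machinery of \S\ref{subsection:schobersonfamily}. By Example~\ref{eg:ykxn} with $(m,n)=(2,3)$, Proposition~\ref{prop:restrict} applied at a basepoint $x_0\neq 0$ with a fixed choice of cuts identifies $\sftwoP(\bbC^2_{x,y},C_{2,3})$ 2-fully faithfully with the full sub-$\two$-category $\sfSph(2)^{\opname{3-periodic}}\subset\sfSph(2)$. Its objects are the $(\Phi_1,\Phi_2,\Psi,S_1,S_2)$ with $S_i$ spherical whose induced SOD $\langle\Phi_1,\Phi_2\rangle$ on the vanishing cycle $\Phi$ of $\FS_{\sfSph}$ is $\sigma_1^3$-periodic. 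Since this is a full sub-$\two$-category, the morphisms are the same as in $\sfSph(2)$. It therefore suffices to show that, for an object of $\sfSph(2)$, $\sigma_1^3$-periodicity of this SOD is equivalent to $S_1^RS_2$ being an equivalence.

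We first identify the gluing functor. By Lemma~\ref{lem:twocatssod} and the proof of Lemma~\ref{lem:twocatsequivalence}, the SOD on $\Phi=\Phi_{12}$ is the lax colimit of $F:=R_1S_2=S_1^RS_2:\Phi_2\to\Phi_1$. In other words, $F$ is the gluing functor $I_1^RI_2$. By Theorem~\ref{thm:inftyadmissible} together with the pullback square of Proposition~\ref{prop:activecartesian2}, this SOD is $\infty$-admissible.

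Next, by \cite{dyckerhoffSphericalAdjunctionsStable2021}*{Corollary 2.5.3} (recalled in Example~\ref{eg:aut2}), an $\infty$-admissible 2-term SOD corresponds to a gluing functor admitting all iterated adjoints. Mutation by $\sigma_1^{\pm1}$ replaces the gluing functor by an adjoint of itself, up to swapping the roles of the two components. By the criterion of \cite{dyckerhoffNsphericalFunctorsCategorification2023} already used in Example~\ref{eg:introx3y2} and Example~\ref{eg:y2x3localized}, 3-periodicity of a 2-term SOD holds exactly when the gluing functor is an equivalence. Applied to $F=S_1^RS_2$, this gives the stated characterization.

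The main point to check is that the periodicity condition of Definition~\ref{defn:periodic2} on the fiber of $\FS_{\sftwoP}$ agrees with $\sigma_1^3$-periodicity of the SOD in the sense of Definition~\ref{defn:periodic}. This is the identification of the $\Br_R$- and $\Br_n$-actions at the end of \S\ref{subsection:universal}, combined with Corollary~\ref{cor:action2}. Under the choice of cuts at $x_0$, the loop around $x=0$ maps to $\tau^3=\sigma_1^3$, as in Example~\ref{eg:ykxn}.

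A subtlety is that the periodicity criterion in \cite{dyckerhoffNsphericalFunctorsCategorification2023} is stated for the gluing functor of the SOD. One must confirm that the sign and direction conventions match, namely that our $\sigma_1$ corresponds to their mutation rather than its inverse. This does not affect the result, because $\sigma_1^3$- and $\sigma_1^{-3}$-periodicity coincide by Lemma~\ref{lem:relations}.
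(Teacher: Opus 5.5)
Your proposal is correct and follows the same route as the paper. You reduce via Example~\ref{eg:ykxn} to $\sfSph(2)^{\opname{3-periodic}}$, identify the Cartesian gluing functor $I_1^RI_2$ as $S_1^RS_2$, and invoke the criterion of \cite{dyckerhoffNsphericalFunctorsCategorification2023} that a 2-term SOD is 3-periodic exactly when its gluing functor is an equivalence; your extra checks ($\infty$-admissibility, matching the two notions of periodicity, and the $\sigma_1^{\pm 3}$ convention) only make explicit what the paper's three-line proof leaves implicit.
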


\begin{proof}
    By Example~\ref{eg:ykxn}, we have an equivalence,
    \[\sftwoP(\bbC^2_{x,y},C_{2,3})\simeq \sfSph(2)^{\opname{3-periodic}}.\]
    We recall from \cite{dyckerhoffNsphericalFunctorsCategorification2023} that a 2-term SOD is 3-periodic if and only if its gluing functor is an equivalence. The (Cartesian) gluing functor here is $S_1^RS_2$.
\end{proof}

\begin{proof}[Proof of Proposition~\ref{prop:y2x3description}]
    We begin with the description of Lemma~\ref{lem:y2x3easy}. We deduce from $S_1^RS_2$ being invertible that $S_1$ is conservative so we can describe $\Phi_1$ monadically in terms of $\Psi$, as discussed in \S\ref{subsection:prelocalized} (although the role of $\Phi,\Psi$ are reversed). We have likewise for $\Phi_2$. Thus, an object of $\sftwoP(\bbC^2_{x,y},C_{2,3})$ is equivalent to the data of
    \begin{itemize}
        \item a stable category $\Psi$,
        \item spherical monads $F_1,F_2$ on $\Psi$,
        \item such that $S_1^RS_2$ is an equivalence,
    \end{itemize}
    where $S_i:\Mod_{\Psi}(F_{i})\to\Psi$ is the forgetful functor, so that $F_{i} = S_iS_i^L$.

    We use the notation
    \[S_i^LS_i\to 1\to T_i\xto{+1}\]
    for relevant twist functors, so that $T_iS_i^R[-1]\isomto S_i^L$.

    To ask for $S_1^RS_2$ to be an equivalence is equivalent to asking that
    \begin{enumerate}
        \item\label{item:unit} the unit $1\to S_1^RS_2S_2^LS_1$ is an equivalence, and
        \item\label{item:counit} the counit $S_2^LS_1S_1^RS_2\to 1$ is an equivalence.
    \end{enumerate}
    We begin with (\ref{item:unit}). We post-compose with $T_i[-1]$ and ask instead that
    \[T_1[-1]\to S_1^LS_2S_2^LS_1\]
    is an equivalence.

    We use a strategy similar to the proof of Proposition~\ref{prop:sphmonad} in order to rewrite the last condition in terms of the monads. By Proposition~\ref{prop:KLisEM}, $S_i^L$ is (stably) essentially surjective. Also, $S_1$ is conservative, so we may post-compose and pre-compose by these respectively. So we ask that
    \[S_1T_1S_1^L[-1]\to S_1S_1^LS_2S_2^LS_1S_1^L\]
    is an equivalence. Rewriting this in terms of monads, condition (\ref{item:unit}) is equivalent to asking that the first of the squares in the statement is Cartesian. Similarly, condition (\ref{item:counit}) is equivalent to asking that the second square is Cartesian.
\end{proof}

We next relate our description to the framed $\bbA_2$-schobers defined by Dyckerhoff--Wedrich \cite{dyckerhoffPerverseSchobersCoxeter2025}. We recall the definition here.

\begin{defn}\label{defn:dw}
    A framed $\bbA_2$-schober is given by a commuting square of stable categories,
    \[\begin{tikzcd}
        {\calA_{3}} & {\calA_{1,2}} \\
        {\calA_{2,1}} & {\calA_{1,1,1}}
        \arrow["I", from=1-1, to=1-2]
        \arrow["H"', from=1-1, to=2-1]
        \arrow["F"', from=1-2, to=2-2]
        \arrow["G", from=2-1, to=2-2]
    \end{tikzcd}\]
    such that
    \begin{enumerate}
        \item all functors admit right adjoints,
        \item $F,G$ are spherical,
        \item the cones of the Beck--Chevalley maps $HI^R \to G^RF, IH^R\to F^RG$ are equivalences,
        \item certain Beck--Chevalley squares (\cite{dyckerhoffPerverseSchobersCoxeter2025}*{(2.3.3),(2.3.4)}) are Cartesian, and
        \item the cotwist $T_3:\calA_3\to\calA_3$ (\cite{dyckerhoffPerverseSchobersCoxeter2025}*{Example 3.26}) is an equivalence.
    \end{enumerate}
    We denote by $\sftwoP^{\opname{DW}}(\bbA_2)$ the $\two$-category of $\bbA_2$-schobers.
\end{defn}

\begin{prop}\label{prop:agreeswithdw}
    The $\two$-category $\sftwoP(\bbC^2_{x,y},C_{2,3})$ is equivalent to the full sub-$\two$-category of $\sftwoP^{\opname{DW}}(\bbA_2)$ consisting of $\bbA_2$-schobers such that $\calA_{3}=0$.
\end{prop}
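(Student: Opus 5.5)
The plan is to start from Lemma~\ref{lem:y2x3easy}, which identifies $\sftwoP(\bbC^2_{x,y},C_{2,3})$ with the $\two$-category of data $(\Phi_1,\Phi_2,\Psi,S_1,S_2)$ with $S_i$ spherical and $S_1^RS_2$ an equivalence. From such data I will build a framed $\bbA_2$-schober. Set $\calA_{1,1,1}:=\Psi$ and $\calA_{1,2}:=\Phi_1$, $\calA_{2,1}:=\Phi_2$, with $F:=S_1$, $G:=S_2$. Put $\calA_3:=0$, so that $I$ and $H$ are the zero functors. The square then commutes trivially.

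Conversely, given a framed $\bbA_2$-schober with $\calA_3=0$, I will forget to $(\calA_{1,2},\calA_{2,1},\calA_{1,1,1},F,G)$. Both assignments should extend to functors via Proposition~\ref{prop:functorconstruction}, working over $\sfAdj(2)$ or $\sfSt_k^{\times 3}$. The forgetful functor from $\sftwoP^{\opname{DW}}(\bbA_2)$ with $\calA_3=0$ is 2-faithful, and in fact 2-fully faithful, because $\calA_3=0$ forces all data involving $\calA_3$ to be trivial. So the statement reduces to matching conditions on objects.

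The conditions of Definition~\ref{defn:dw} are checked one by one with $\calA_3=0$.
\begin{enumerate}
\item Adjoints exist automatically.
\item Sphericity of $F$ and $G$ matches sphericity of $S_1$ and $S_2$.
\item The Beck--Chevalley maps $HI^R\to G^RF$ and $IH^R\to F^RG$ have source $0$. Their cones are therefore $S_2^RS_1$ and $S_1^RS_2$, and the condition says exactly that these are equivalences.
\item The cotwist $T_3$ of the zero category is trivially an equivalence.
\end{enumerate}
Condition (3) needs $S_2^RS_1$ invertible as well as $S_1^RS_2$. This follows from the 3-periodicity discussion in \cite{dyckerhoffNsphericalFunctorsCategorification2023}. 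If $S_1^RS_2$ is an equivalence, the SOD is 3-periodic and the adjoint gluing functors are equivalences too. Concretely, $(S_1^RS_2)^{L}=S_2^LS_1$ is invertible, and $S_2^R=T_2'S_2^L$ up to shift for an autoequivalence $T_2'$, so $S_2^RS_1$ is invertible. The converse direction is immediate.

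The main obstacle is the remaining condition, the Cartesianness of the Beck--Chevalley squares \cite{dyckerhoffPerverseSchobersCoxeter2025}*{(2.3.3),(2.3.4)}. With $\calA_3=0$, I expect these squares to reduce to statements relating the unit and counit of $S_1^RS_2$ to composites such as $F^RGG^RF$ and $F^RF$. Plausibly they are equivalent to the unit and counit conditions (\ref{item:unit}) and (\ref{item:counit}) from the proof of Proposition~\ref{prop:y2x3description}, and hence redundant given condition (3). I would write these squares out explicitly and check their Cartesianness against the invertibility of $S_1^RS_2$, using the twist identities $T_iS_i^R[-1]\simeq S_i^L$. If instead they encode something stronger, I would derive it from 3-periodicity via the mutations of Lemma~\ref{lem:nistwo}.

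Finally, I will check that morphisms agree. In both $\two$-categories these are adjointable natural transformations between the three-term diagrams, since maps involving the zero corner carry no data. This gives 2-full faithfulness, and essential surjectivity follows from the matching of object conditions above.
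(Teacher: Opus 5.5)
Your proposal follows the paper's proof. You reduce via Lemma~\ref{lem:y2x3easy} and set $\calA_3=0$, so the Beck--Chevalley cones become $G^RF$ and $F^RG$ and the cotwist condition becomes vacuous. You then treat the Cartesian Beck--Chevalley squares as redundant; the paper also only asserts this ("one checks"), and your own treatment of that step is a plan rather than a verification. Your explicit deduction that $S_2^RS_1$ is invertible from the invertibility of $S_1^RS_2$, via $S_2^LS_1\simeq(S_1^RS_2)^L$ and the fact that sphericity makes $S_2^R$ differ from $S_2^L$ by an autoequivalence and a shift, supplies a step the paper leaves implicit.
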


\begin{proof}
    An $\bbA_2$-schober with $\calA_{3}=0$ is given by a diagram of stable categories,
    \[\begin{tikzcd}
        {} & {\calA_{1,2}} \\
        {\calA_{2,1}} & {\calA_{1,1,1}}
        \arrow["F"', from=1-2, to=2-2]
        \arrow["G", from=2-1, to=2-2]
    \end{tikzcd}\]
    such that
    \begin{enumerate}
        \item all functors admit all repeated adjoints,
        \item $F,G$ are spherical,
        \item the compositions $G^RF$ and $F^RG$ are equivalences,
        \item certain Beck--Chevalley squares (\cite{dyckerhoffPerverseSchobersCoxeter2025}*{(2.3.3),(2.3.4)}) are Cartesian, and
        \item the cotwist $T_3:\calA_3\to\calA_3$ is an equivalence.
    \end{enumerate}
    One checks that the condition on Beck--Chevalley squares follows from the remaining ones. The cotwist invertibility is trivially satisfied. So we recover the description given in Lemma~\ref{lem:y2x3easy}.
\end{proof}

We can interpret Proposition~\ref{prop:agreeswithdw} as follows. We think of $\bbA_2$-schobers as a candidate for perverse schobers on $(\bbC^2_{x,y},C_{2,3}\cup T^*_{0}\bbC^2)$. Their singular support condition differs from $\Lambda_{C_{2,3}}$ by the conormal at zero. The ``microstalk'' at a point here is $\calA_{3}$, so we may ``microlocalize'' to $\Lambda_{C_{2,3}}$ by setting this to zero, providing a candidate for perverse schobers on $(\bbC^2,C_{2,3})$. Proposition~\ref{prop:agreeswithdw} verifies that this candidate agrees with our definition. In fact, this sub-$\two$-category is enough to encode the $A_2$-configurations as discussed in \cite{dyckerhoffPerverseSchobersCoxeter2025}*{\S 3.6}. 

\begin{eg}\label{eg:y2x3geometry}
    Let $k=\bbC$. Consider for each $x\neq0$, the perverse schober coming from a symplectic fibration,
    \[\calF_x := \calF(\bbC_u,\frac12(u^3-3xu))\in\sftwoP(\bbC_y,R_x).\]
    Here, $R_x=\Crit(\frac12(u^3-3xu))=\{\pm x^{3/2}\}$. This defines an object
    \[\calF\resto{\bbC_x-0}\in\sftwoP((\bbC_x-0)\times\bbC_y,y^2=x^3).\]
    In order to extend over $x=0$, we must exhibit a trivialization of $\FS_{\sftwoP}(\calF_x)$.

    The category underlying $\FS_{\sftwoP}(\calF_x)$ is exactly the Fukaya--Seidel category (in the original sense) of the pair $(\bbC_u,\frac12(u^3-3xu))$, where we have chosen a stop in a fixed direction $\theta$. This is independent of $x$ since the leading term $u^3$ is independent of $x$. By varying $\theta$, this exhibits the desired trivialization.

    We can also study this perverse schober by restriction to the fiber $x=1$ via Proposition~\ref{prop:restrict}. After making a pointed choice of cuts from the critical values to a far away point $y_\infty$, we obtain the object of $\sfSph(2)$ given by
    \[S_1,S_2:\Vect\to \Fuk(u^3-3u=y_\infty),\]
    where $S_1,S_2$ are two vanishing cycles. These vanishing cycles famously intersect once, so that $S_2^RS_1:\Vect\to \Vect$ is an equivalence. By Lemma~\ref{lem:y2x3easy}, this defines a perverse schober on $(\bbC^2,C_{2,3})$.
\end{eg}

We expect that a similar story continues to hold for $\bbA_N$-schobers, by observing that the discriminant locus $\Delta\subset \bbC^{N-1}\sslash \Sym_N$ defines a non-characteristic family of discs by treating the $N$\ts{th} elementary symmetric polynomial as the fiber coordinate. More generally, we expect that from a perverse schober on $(\bbC^{N+1},H)$, we may produce a (perverse) local system of categories on $\bbC^{N+1}-H$, so that we obtain an action of $\pi_1(\bbC^{N+1}-H,p_0)$ on the nearby cycles category at $p_0\in\bbC^{N+1}-H$.

\section{Radon transforms}\label{section:radon}
In this section we formulate a conjectural Radon transform for localized perverse schobers between dual pairs of non-characteristic families of discs.

We begin by recalling the Radon transform for perverse sheaves on dual projective spaces $\bbP^n$ and $\check{\bbP}^n$ so that points on one parametrize hyperplanes in the other. The geometry underlying the Radon transform is the isomorphism of cosphere bundles,
\[T^\infty\bbP^N\simeq T^\infty\check{\bbP}^N.\]
Given a Legendrian $\Lambda^\infty\subseteq T^\infty\bbP^N$, let $\Lambda\subseteq T^*\bbP^N$ and $\check{
\Lambda}\subseteq T^*\check{\bbP}^N$ be the respective conic Lagrangians (with the zero section added). Radon transform of localized perverse sheaves is an equivalence of categories,
\[\Perv_\Lambda(\bbP^N)/\Loc(\bbP^N)\simeq\Perv_{\check{
\Lambda}}(\check{\bbP}^N)/\Loc(\check{\bbP}^N),\]
see for example \cite{kiehlLefschetzTheoryBrylinskiRadon2001}*{Chapter IV}.

More relevant to us is the following. Consider $\bbC^N_{\ul{x},y}$ with coordinates $x_1,\dots,x_{N-1},y$ and dually, $\bbC^N_{\ul{a},b}$ with coordinates $a_1,\dots,a_{N-1},b$. The point $(\ul{a},b)$ parametrizes the line $y=\ul{a}\cdot\ul{x}-b$. There is an isomorphism of contact manifolds,
\begin{equation}\label{equation:cosphere}
T^\infty_{\opname{nc}}\bbC^N_{\ul{x},y}\simeq T^\infty_{\opname{nc}}\bbC^N_{\ul{a},b},
\end{equation}
where the left-hand side is the open subset of $T^\infty\bbC^N_{\ul{x},y}$ consisting of those covectors non-characteristic for the projection to $\bbC^{N-1}_{\ul{x}}$, and the right-hand side is defined similarly. Whenever $\Lambda^\infty$ is a closed Legendrian inside these open subsets, there is an equivalence of categories,
\[\Perv_\Lambda(\bbC^N_{\ul{x},y})/\Loc(\bbC^N_{\ul{x},y})\simeq\Perv_{\check{
\Lambda}}(\bbC^N_{\ul{a},b})/\Loc(\bbC^N_{\ul{a},b}).\]

\begin{rmk}
    Under Radon transform for projective spaces, passage to an affine open subset $\bbC^N\subset \bbP^N$ by removing a hyperplane $h\subset \bbP^N$ on one side corresponds to restricting to those covectors which are non-characteristic for the projection from $\check{h}\in\check{\bbP}^N$. So to obtain a Radon transform where both sides are affine opens, we must also have non-charactericity on both sides as we have seen.
\end{rmk}

\begin{rmk}
    The contact manifold $T^\infty_{\opname{nc}}\bbC^N_{\ul{x},y}$ can also be described as the first jet bundle, $J^1\bbC^N_{\ul{x}}$. As such, (\ref{equation:cosphere}) says that the first jet bundles of dual vector spaces are canonically identified.
\end{rmk}

Suppose that $(\bbC^{N-1}_{\ul{x}}\times \bbC_y,H)$ is a non-characteristic family of discs. Then the Legendrian $\Lambda_H^\infty$ lives inside $T^\infty_{\opname{nc}}\bbC^N_{\ul{x},y}$. Assuming $H$ contains no hyperplane components, we have under the identification above,
\[\Lambda_H^\infty \simeq \Lambda_{\check{H}}^\infty,\]
where $\check{H}\subset \bbC^N_{\ul{a},b}$ parametrizes tangent hyperplanes of $H$, or limits thereof. We may now state the main conjecture of this paper.

\begin{conj}\label{conj:radon}
    Suppose that $(\bbC_y\times\bbC^{N-1}_{\ul{x}},H)$ and $(\bbC_b\times \bbC^{N-1}_{\ul{a}},\check{H})$ are non-characteristic families of discs which are dual in the manner described above. Then, there is an equivalence of $\two$-categories,
    \[\olsftwoP(\bbC^N_{\ul{x},y},H)\simeq \olsftwoP(\bbC^N_{\ul{a},b},\check{H}).\]
\end{conj}

We will refer to this conjectural equivalence as a Radon transform for perverse schobers.

\begin{eg}\label{eg:yx3}
    Let $C=\{y=x^3\}$. This defines a locally constant family of discs, and so
    \[\olsftwoP(\bbC^2_{x,y},\{y=x^3\})\simeq\sfAut.\]
    The dual curve is $\check{C}_{1,3} = \{b^2=\frac{4}{27}a^3\}$. Ignoring multiplicative constants, we computed in Example~\ref{eg:y2x3localized} that there is an equivalence,
    \[\olsftwoP(\bbC^2_{a,b},\check{C}_{1,3})\simeq\sfAut.\]
    These $\two$-categories are evidently equivalent.
\end{eg}

Generalizing this example, we have the following.

\begin{thm}\label{thm:ykxnschober}
    Let $0<m<n$ be integers and let $C_{m,n} = \{y^m = x^n\}$. Then, there is a bijection between equivalence classes of objects of the $\two$-categories,
    \[\olsftwoP(\bbC^2_{x,y},C_{m,n})\overset{\opname{bijection}}{\simeq} \olsftwoP(\bbC^2_{a,b},\check{C}_{m,n}).\]
\end{thm}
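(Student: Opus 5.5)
The plan is to reduce both sides to explicit combinatorial--categorical data via the localized analogue of Proposition~\ref{prop:restrict}, and then to construct the bijection directly on that data, mimicking the positroid duality $\Pi(m,n)\simeq\Pi(n-m,n)$ from the introduction.

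\textbf{Reduction to periodic data.} First, as in Example~\ref{eg:ykxn} (which carries over to localized schobers), fix a basepoint $x_0\neq 0$ and a choice of cuts. This identifies $\olsftwoP(\bbC^2_{x,y},C_{m,n})$ with $\sfAut(m)^{n\opname{-periodic}}$, the full sub-$\two$-category of $(\frakS;\Phi,T)\in\sfAut(m)$ whose SOD is $\tau^n$-periodic, where $\tau=\sigma_{m-1}\cdots\sigma_1$.

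Ignoring constants, the dual curve $\check{C}_{m,n}$ is $\{b^{n-m}=a^n\}$. It is again a non-characteristic family of discs over $\bbC_a$, with fibre $n-m$ points rotating by $2\pi n/(n-m)$. Hence $\olsftwoP(\bbC^2_{a,b},\check{C}_{m,n})\simeq\sfAut(n-m)^{n\opname{-periodic}}$.

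It therefore suffices to give mutually inverse maps on equivalence classes
\[\sfAut(m)^{n\opname{-periodic}}\tofrom\sfAut(n-m)^{n\opname{-periodic}}.\]
The case $(m,n)=(1,3)$ is Example~\ref{eg:yx3}, and I would use it as a sanity check throughout.

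\textbf{Unrolling to a periodic sequence.} Given $(\frakS;\Phi,T)$, repeatedly mutate $\frakS$ by $\tau^{-1}$. By the proof of Theorem~\ref{thm:alternate}, $\tau^{-1}\cdot\frakS=(T\Phi_m,\Phi_1,\dots,\Phi_{m-1})$. This produces a $\bbZ$-indexed sequence of subcategories $(\Phi_i)_{i\in\bbZ}$ with the following properties:
\begin{itemize}
\item every $m$ consecutive terms form an SOD of $\Phi$;
\item $T(\Phi_i)=\Phi_{i-m}$;
\item by $\tau^n$-periodicity, $\Phi_{i+n}=\Phi_i$.
\end{itemize}
This is the categorical analogue of the vectors $e_i$ in the proof for $P(m,n)$, and I will treat such sequences as the primary data. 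Conversely, Theorem~\ref{thm:alternate} recovers $(\frakS;\Phi,T)$ from the sequence, $T$ being determined up to the conjugate-compatible data it induces.

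\textbf{Constructing the dual object.} Next, build the ``ambient'' category $E$: the $n$-term glued (lax) category with components $\Phi_1,\dots,\Phi_n$ and gluing functors $I_i^RI_j$ computed inside $\Phi$. This is the analogue of $k^n$; compare Lemma~\ref{lem:twosubcats} and the relative Waldhausen construction of Example~\ref{eg:waldhausen1}. There is a canonical colimit functor $E\to\Phi$, analogous to $k^n\to k^m$.

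The dual category is $\Phi^\vee:=\ker(E\to\Phi)$, the analogue of the orthogonal subspace. I would show that $\Phi^\vee$ carries an $(n-m)$-term SOD whose components are the kernels restricted to consecutive windows, namely
\[\Psi_j:=\ker\big(\langle\Phi_{j},\dots,\Phi_{j+m}\rangle_E\to\Phi\big)\simeq\Phi_{j}\quad\text{(shifted)},\]
and that mutating along the periodic sequence gives an $n$-periodic $(n-m)$-term sequence. The shift automorphism $i\mapsto i+1$ of the cyclic data, together with $T$, induces an autoequivalence $T^\vee$ of $\Phi^\vee$ satisfying $T^\vee(\Psi_j)=\Psi_{j-(n-m)}$. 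Theorem~\ref{thm:alternate} then shows that $T^\vee$ is conjugate-compatible.

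\textbf{Expected obstacle.} The main obstacle is this middle step. One must prove that $E\to\Phi$ is a localization whose kernel really decomposes into $n-m$ semiorthogonal pieces with the stated periodicity. The periodicity is what forces $\Phi_{i+n}=\Phi_i$, and this is what makes the window kernels line up cyclically.

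I would first try the minimal case $n=m+1$, where $\Phi^\vee$ should have a single component and the claim is that the gluing relation is an equivalence, as in \cite{dyckerhoffNsphericalFunctorsCategorification2023}. I would then induct using the 2-Segal property of $\sfAut(-)$ (Proposition~\ref{prop:simplicialaut}) to merge adjacent components.

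Finally, check that applying the construction twice returns the original sequence up to equivalence, mirroring $(V^\perp)^\perp=V$. This gives the bijection on equivalence classes. Only a bijection on objects is claimed, so the compatibility with $1$- and $2$-morphisms can be left aside.
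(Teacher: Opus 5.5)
Your construction is the paper's, but its central step is missing. By Lemma~\ref{lem:bicomplex}, $E\simeq\Complex(\Phi_1,\dots,\Phi_n)$ with $E\to\Phi$ the totalization. So $\Phi^\vee$ is exactly the paper's category $\calA\subset\Cycle_n(\Phi)$ of acyclic $n$-cycles $(\phi^i)_{i\in\bbZ/n}$ with $\phi^i\in\Phi_i$. Your window kernels $\Psi_j$ (acyclic cycles supported on $\{j,\dots,j+m\}$) are the paper's $\calA_{j+m}=P_{j+m}^R(\Phi_{j+m})$.

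\textbf{The gap.} The step carrying all the content is left as an ``expected obstacle'': that any $n-m$ cyclically consecutive $\Psi$'s form an SOD of $\Phi^\vee$, for every starting index. The route you sketch for it (base case $n=m+1$, then induction via the 2-Segal property of $\sfAut(-)$, Proposition~\ref{prop:simplicialaut}) does not connect. 2-Segal relates SODs of one fixed category under merging of adjacent components. Here, changing $n$ changes both the periodicity hypothesis and the category $\Phi^\vee$. Also, the linearly ordered model $E$ does not define the windows that wrap past $n$. You need the rotation (paracyclic) symmetry of acyclic cycles, which is why the paper works in $\Cycle_n(\Phi)$ from the start.

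\textbf{The missing idea.} The restriction $W_i:\calA\to\Complex(\Phi_i,\dots,\Phi_{i+n-m-1})$ is an equivalence. The complementary terms $\Phi_{i+n-m},\dots,\Phi_{i+n-1}$ are $m$ consecutive members of the unrolled sequence, hence an SOD of $\Phi$. So the rest of the cycle is forced to be the SOD filtration of the totalization. Your computation $\Psi_j\simeq\Phi_j$ is the special case of windows of length $m+1$. Lemma~\ref{lem:complexSOD} then gives $\calA=\langle\calA_{i+n-m-1},\dots,\calA_i\rangle$ via the right adjoints of $P_{i+j}\simeq Q_{j,i}\circ W_i$. Since this holds for every $i$ at once, it yields the $\infty$-admissibility and the $n$-periodicity together.

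\textbf{Further points.}
\begin{itemize}
\item The dual autoequivalence is rotation by $m$ twisted by $T$, namely $(\phi^i)\mapsto(T\phi^{i+m})$.
\item The dual SOD runs in the reverse cyclic order. So the direction of the index shift in your $T^\vee(\Psi_j)=\Psi_{j-(n-m)}$ must be checked against that ordering, e.g.\ via Theorem~\ref{thm:alternate}.
\item The sequence $(\Phi_i)$ does not determine $T$ (for $m=1$ it is constant), so $T$ must be carried as separate data.
\item The round trip does not follow from the analogy $(V^\perp)^\perp=V$. The paper restricts the resulting acyclic bi-cycles to the box $1\le i\le n-m$, $1\le j\le m$ and recognizes the bicomplexes of Lemma~\ref{lem:bicomplex}. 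This identifies the double dual with $(\delta^{-2}\cdot\frakS;\Phi,T)$, which is equivalent to $(\frakS;\Phi,T)$ via $T$ itself, since $T(\frakS)=\delta^{-2}\cdot\frakS$.
\end{itemize}
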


We note that in Example~\ref{eg:yx3} and Theorem~\ref{thm:ykxnschober}, the construction of the equivalence depends on the choice of quadratic forms along the base as discussed at the start of \S\ref{section:families}, the choice of a point on the base to apply Proposition~\ref{prop:restrict}, and the choice of cuts along the resulting fiber. Ultimately, the Radon transform should be independent of such choices. In practice, we will not attempt to show that our constructions are canonical, postponing this to future work. 

In \S\ref{subsection:radonissuspension}, we explain what goes wrong with this Radon transform if we had worked with prelocalized schobers instead of localized schobers, and a posteriori explain the significance of Observation~\ref{obs:stable}.

In \S\ref{subsection:ykxn}, we prove Theorem~\ref{thm:ykxnschober} by explicitly exhibiting an equivalence in a combinatorial manner, generalizing Example~\ref{eg:yx3}.

\subsection{Radon transforms and suspension}\label{subsection:radonissuspension}
We consider the easiest interesting instance of Radon transform given by
\[C=\{y=x^2\}\subset\bbC^2_{x,y},\]
so that our input schober is supported along a parabola. The dual curve $\check{C} = \{b=\frac14 a^2\}$ is also a parabola. We informally attempt to illustrate what goes wrong if we had worked with the prelocalized perverse schobers, and thus motivate the stabilization construction in \S\ref{subsection:stabilization}.

It turns out that no issues arise with prelocalized schobers during \S\ref{section:families}, so that we may define
\[\olsftwoP^{\pre}(\bbC^2_{x,y},\{y=x^2\}).\]
In this case, our family is locally constant, and we have an equivalence,
\[-\resto{x=0}:\olsftwoP^{\pre}(\bbC^2_{x,y},\{y=x^2\})\to\olsftwoP^{\pre}(\bbC_{y},\{0\})\simeq\sfSphMnd.\]
On the other side, we have similarly,
\[-\resto{a=0}:\olsftwoP^{\pre}(\bbC^2_{a,b},\{b=\frac14 a^2\})\to\olsftwoP^{\pre}(\bbC_{b},\{0\})\simeq\sfSphMnd.\]

While the two sides are obviously equivalent, this equivalence should not be called the Radon transform. Geometrically, the composition,
\[\olsftwoP^{\pre}(\bbC^2_{x,y},\{y=x^2\})\xto{\opname{Radon}}\olsftwoP^{\pre}(\bbC^2_{a,b},\{b=\frac14 a^2\})\xto{-\mid_{a=a_0}}\olsftwoP^{\pre}(\bbC_{b},\{\frac14 a_0^2\})\]
is given by ``pushforward along the map $-y+a_0x:\bbC^2_{x,y}\to \bbC_b$'', as illustrated in Figure~\ref{figure:radonparabola}. Setting $a_0=0$, we obtain pushforward along $y$.

\begin{figure}[ht]
    \caption{Projection on $\bbC^2_{x,y}$ is dual to restriction on $\bbC^2_{a,b}$}
    \label{figure:radonparabola}

    \begin{tikzpicture}[scale=3]
    \clip (-1, -0.5) rectangle (1.2, 0.7);

    \draw[->, gray, line width=0.6pt] (-1, -0.4) -- (1.2, -0.4) node[above left, gray] {$x$};
    \draw[->, gray, line width=0.6pt] (-0.9, -0.5) -- (-0.9, 0.7) node[below right, gray] {$y$};


    \draw[black, thick, domain=-1:1, samples=100] plot (\x, {0.5*\x*\x}) node[below, black] {$C$};

    \draw[<-, red, thick, domain=-0.1:0.8, samples=100] plot (\x, {0.5*\x+0.5*0.5*0.5});
    \draw[<-, red, thick, domain=-0.05:0.85, samples=100] plot (\x, {0.5*\x});
    \draw[<-, blue, thick, domain=0:0.9, samples=100] plot (\x, {0.5*\x-0.5*0.5*0.5});
    \draw[<-, red, thick, domain=0.05:0.95, samples=100] plot (\x, {0.5*\x-0.5*0.5});
    \draw[<-, red, thick, domain=0.1:1, samples=100] plot (\x, {0.5*\x-1.5*0.5*0.5});

    \end{tikzpicture}
    \begin{tikzpicture}[scale=3]
    \clip (-1, -0.5) rectangle (1.2, 0.7);

    \draw[->, gray, line width=0.6pt] (-1, -0.4) -- (1.2, -0.4) node[above left, gray] {$a$};
    \draw[->, gray, line width=0.6pt] (-0.9, -0.5) -- (-0.9, 0.7) node[below right, gray] {$b$};


    \draw[black, thick, domain=-1:1, samples=100] plot (\x, {0.25*\x*\x}) node[below, black] {$C^{\vee}$};

    \draw[red, thick] (0.5, -0.5) -- (0.5, 0.5) node[right, red] {$a=a_0$};
    \fill[blue] (0.5,0.25*0.5*0.5) circle (0.5pt);
    \end{tikzpicture}
\end{figure}

So the functor out of $\olsftwoP^{\pre}(\bbC_{y},\{0\})\simeq\sfSphMnd$ takes a prelocalized perverse schober on $(\bbC_y,0)$, spreads it out over $(\bbC_{x,y},y=x^2)$, and then pushes this forward along $y$. Geometrically, this amounts to the operation discussed in Observation~\ref{obs:stable} and abstracted to the operation $\calSus_{\sfSphMnd}:\sfSphMnd\to\sfSphMnd$.

We can organize this discussion in the following diagram.
\[\begin{tikzcd}
    {\olsftwoP^{\pre}(\bbC^2_{x,y},\{y=x^2\})} & {\olsftwoP^{\pre}(\bbC^2_{a,b},\{b=\frac14 a^2\})} \\
    {\olsftwoP^{\pre}(\bbC_{y},\{0\})} & {\olsftwoP^{\pre}(\bbC_{b},\{0\})} \\
    \sfSphMnd & \sfSphMnd
    \arrow["{\opname{Radon}}", from=1-1, to=1-2]
    \arrow["{-\mid_{x=0}}"', from=1-1, to=2-1]
    \arrow["{\text{``}y_*\text{''}}"{description}, from=1-1, to=2-2]
    \arrow["{-\mid_{a=0}}"', from=1-2, to=2-2]
    \arrow["\text{Observation}~\ref{obs:stable}", from=2-1, to=2-2]
    \arrow["\simeq"', from=2-1, to=3-1]
    \arrow["\simeq"', from=2-2, to=3-2]
    \arrow["{\calSus_{\sfSphMnd}}", from=3-1, to=3-2]
\end{tikzcd}\]

We may summarize the conclusion as follows.
\begin{obs}\label{obs:radonissuspension}
    Radon transform for prelocalized schobers on the curve $\{y=x^2\}$ should be given by suspension $\calSus_{\sfSphMnd}$.
\end{obs}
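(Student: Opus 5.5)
Since Observation~\ref{obs:radonissuspension} is heuristic (no Radon transform for schobers has yet been defined), I would justify it by pinning down what any reasonable Radon transform must do on objects, and checking that this forces $\calSus_{\sfSphMnd}$. First I reduce to one fiber. The family $(\bbC^2_{x,y},\{y=x^2\})$ is locally constant, and so is its dual family. By Proposition~\ref{prop:restrict} (or simply contractibility of the base), the restrictions $-\resto{x=0}$ and $-\resto{a=0}$ identify both sides with $\olsftwoP^{\pre}(\bbC,0)\simeq\sfSphMnd$. So the Radon transform is determined by the composite $-\resto{a=0}\circ\opname{Radon}$.

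The second step is to identify this composite. I would use the defining geometric property that restriction to the vertical line $a=a_0$ on the dual side is Radon-dual to pushforward along the pencil of lines $y=a_0x-b$, that is, along $-y+a_0x:\bbC^2_{x,y}\to\bbC_b$ (Figure~\ref{figure:radonparabola}). At $a_0=0$ this is pushforward along $y$ of the schober spread out over the parabola. To evaluate it, I test on Lefschetz schobers $\calF(M,f)$ with critical value $0$. Spreading out along $y=x^2$ amounts to translating by $x^2$, giving the family $f(m)+x^2$ over $M\times\bbC_x$. Pushing forward along $y$ then yields $\calF(M\times\bbC_x,f+x^2)$, which is exactly the operation of Observation~\ref{obs:stable}. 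For general objects I would instead use the algebraic model: pushing forward along the fold $x\mapsto x^2$ replaces the nearby category $\Psi$ by $\lim(\Phi\oplus\Phi\to\Psi)$, with two sheets glued over the branch locus. This is precisely $\calSus_{\sfAdj}$, and passing to monads gives $\calSus_{\sfMnd}$. As sanity checks, Example~\ref{eg:localsystem} ($X\mapsto\Sigma X$) and the computation $\coCone(\id\to R'S')\simeq\coCone(\id\to RS)[-1]$ from Lemma~\ref{lem:sussph} show that on the localized level the operation is $T\mapsto T[-1]$. I would also verify that it decategorifies to the known Radon transform of $\Perv(\bbC,0)/\Loc(\bbC)$, namely $t\mapsto -t$.

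The main obstacle is the middle step: making rigorous that ``restriction on the dual side equals pushforward along the pencil'' without a prior definition of Radon, and that pushforward along $y$ of a spread-out prelocalized schober is computed by the fold formula. I would first try to do this sheaf-theoretically, where both statements are classical, and then categorify through the Fukaya--Seidel/Thom--Sebastiani description of Lefschetz schobers. Finally, I would note the consequence that motivates the observation: by Observation~\ref{obs:key}, $\calSus_{\sfSphMnd}$ is not an equivalence, so the prelocalized theory cannot be Radon-invariant. By contrast, $\calSus_{\sfAut}$ is an equivalence, and Theorem~\ref{thm:limit1} identifies $\sfAut$ as the universal way to invert $\calSus_{\sfSphMnd}$.
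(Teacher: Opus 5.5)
Your argument follows the same route as the paper's justification. You restrict both locally constant families to a fiber to identify each side with $\sfSphMnd$, use that restriction to $a=a_0$ on the dual side corresponds to pushforward along $-y+a_0x$, and at $a_0=0$ recognize the ``spread over the parabola, then push forward along $y$'' operation of Observation~\ref{obs:stable}, abstracted as $\calSus_{\sfSphMnd}$. Your fold/fiber-product heuristic and the check $T\mapsto T[-1]$ (decategorifying to $t\mapsto -t$) add some detail to the paper's one-line ``abstracted to'' step, and, like the paper, you correctly treat the middle identification as heuristic rather than proved.
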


As pointed out in Observation~\ref{obs:key}, $\calSus_{\sfSphMnd}$ is not an equivalence, so the prelocalized analog of Conjecture~\ref{conj:radon} doesn't work, in turn motivating the necessity to stabilize $\calSus_{\sfSphMnd}$. A similar line of reasoning motivates stabilization on $\olsftwoP^{\pre}(\bbC_{y},R)$ for general $R$.

\subsection{Radon transform for torus singularities}\label{subsection:ykxn}
In this subsection, we will prove Theorem~\ref{thm:ykxnschober}. Recall that $0<m<n$ and $C_{m,n}$ is the curve
\[y^m=x^n.\]
We recall from Example~\ref{eg:ykxn} that there is an equivalence,
\[\olsftwoP(\bbC^2_{x,y},C_{m,n})\simeq \sfAut(m)^{n\opname{-periodic}},\]
where $\tau = \sigma_m\sigma_{m-1}\cdots \sigma_{1}\in\Br_m$ and where $\sfAut(m)^{n\opname{-periodic}}$ is the full sub-$\two$-category of $\sfAut(m)$ for which the SOD is $\tau^n$-periodic.
Similarly, since $\check{C}_{m,n}$ is $C_{n-m,n}$ up to some constants, there is an equivalence,
\[\olsftwoP(\bbC^2_{a,b},\check{C}_{m,n})\simeq \sfAut(n-m)^{n\opname{-periodic}}.\]
So Theorem~\ref{thm:ykxnschober} admits the following abstract restatement.

\begin{thm}\label{thm:ykxn}
    Let $0<m<n$ be integers. There is a bijection between equivalence classes of objects of $\two$-categories,
    \[\sfAut(m)^{n\opname{-periodic}}\overset{\opname{bijection}}{\simeq} \sfAut(n-m)^{n\opname{-periodic}}.\]
\end{thm}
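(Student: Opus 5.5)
Throughout, $\tau$ denotes the positive cyclic rotation $\sigma_{m-1}\cdots\sigma_1\in\Br_m$, as in Example~\ref{eg:ykxn}.

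The plan is to pass through a single "cyclic" object that both sides describe: an $n$-periodic sequence of subcategories $(\Phi_i)_{i\in\bbZ}$ of one category $\Phi$, together with an autoequivalence $T$. This mirrors the decategorified argument via the positroid locus $\Pi(m,n)\simeq\Pi(n-m,n)$, where consecutive $m$-windows give bases and the complementary $(n-m)$-windows encode the orthogonal complement.

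\textbf{Step 1: unfold an object of $\sfAut(m)^{n\opname{-periodic}}$.} Start from $(\frakS;\Phi,T)$ with $\frakS=(\Phi_1,\dots,\Phi_m)$. Iterate the rotation $\tau^{-1}$, whose effect is computed in the proof of Theorem~\ref{thm:alternate}, to produce a bi-infinite sequence $(\Phi_i)_{i\in\bbZ}$. Every $m$ consecutive terms form an SOD $\langle\Phi_{i+1},\dots,\Phi_{i+m}\rangle$ of $\Phi$. The rotation step takes the form $\Phi_{i+m}=$ the mutation of $\Phi_i$, and $T$ shifts indices by $m$ via $T(\frakS)=\delta^{-2}\cdot\frakS=\tau^{-m}\cdot\frakS$. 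With these conventions, $\tau^n$-periodicity says exactly that $\Phi_{i+n}=\Phi_i$ for all $i$. Combining this with the action of $T$, we get $T(\Phi_i)=\Phi_{i-m}$ up to the appropriate relabelling, so $T$ acts on the $n$-cycle by rotation by $-m$.

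\textbf{Step 2: define the dual object.} Take the $n-m$ subcategories complementary to an $m$-window, namely $\Phi_{m+1},\dots,\Phi_n$ (modulo $n$). These do not sit inside $\Phi$ as an SOD of $\Phi$. Instead, define $\check\Phi:=\langle \Phi_{m+1},\dots,\Phi_n\rangle$, the subcategory they generate inside $\Phi$. I would use Lemma~\ref{lem:twosubcats} repeatedly to show it is an SOD; semiorthogonality should follow because these terms lie in a single window $\langle\Phi_{m+1},\dots,\Phi_{2m}\rangle$ when $n-m\le m$.

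In general I would instead realise $\check\Phi$ as a quotient/orthogonal of the form $\Phi_{j}^{\perp}$-type. The most natural candidate is the Verdier quotient $\Phi/\langle\Phi_1,\dots,\Phi_m\rangle$ computed in a larger "unrolled" category. Concretely, I would build the unrolled category $\widetilde\Phi$, glued from all windows, analogous to the Waldhausen $S_\bullet$ constructions of Example~\ref{eg:waldhausen1}. Its $n$-periodic structure gives a category with an $n$-periodic sequence in which windows of length $n-m$ are SODs.

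The dual autoequivalence is then $\check T$, which rotates the cycle by $-(n-m)$. I would build it as the composite of $T^{-1}$ with the "periodicity" autoequivalence that shifts indices by $n$. This matches the paracyclic relation $\tau_{n-1}^n=\Sigma^2$ recalled in Example~\ref{eg:waldhausen3}, possibly up to a shift $[\pm2]$.

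\textbf{Step 3: check the conditions and the inverse.} Verify that $(\check\frakS;\check\Phi,\check T)$ satisfies the criterion of Theorem~\ref{thm:alternate}, namely $\infty$-admissibility together with $\check T(\check\frakS)=\delta^{-2}\cdot\check\frakS$. Periodicity follows formally from $\Phi_{i+n}=\Phi_i$. Finally, check that applying the construction twice returns the original object up to equivalence. Since the construction is symmetric in $m\leftrightarrow n-m$, this should reduce to checking that the unrolled cyclic data is recovered. Since only equivalence classes of objects are claimed, we need not address functoriality.

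\textbf{Main obstacle.} I expect Step 2 to be the main obstacle: giving a canonical construction of the complementary category $\check\Phi$ and proving that its $(n-m)$-windows are SODs. The decategorified analogue uses orthogonal complements in $k^n$, which have no naive categorical counterpart. I would first test the small cases $m=1$ and $(m,n)=(2,3)$ against Examples~\ref{eg:yx3} and~\ref{eg:y2x3localized}, where a 3-periodic 2-term SOD has an equivalence as its gluing functor. Guided by these cases, I would aim to identify $\check\Phi$ with the category of "$n$-periodic twisted complexes", i.e.\ $T$-equivariant objects along the cycle. Its complementary windows should then be SODs by the same 2-Segal merging arguments used for $\sfAut(-)$.
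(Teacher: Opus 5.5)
Your Step 1 agrees with the paper: the $\tau$-mutations unroll $\frakS$ into subcategories $\Phi_i$, $i\in\bbZ/n$, any $m$ consecutive of which form an SOD of $\Phi$, with $T(\Phi_{i+m})=\Phi_i$. The gap is Step 2, which is the whole content of the theorem, and none of your candidates for $\check\Phi$ works.

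The subcategory $\langle\Phi_{m+1},\dots,\Phi_n\rangle\subset\Phi$ can only possibly work when $n-m\le m$. For $m=1$, $n=3$, every $\Phi_i$ is $\Phi$, so it is just $\Phi$. The dual must instead be the arrow category of $\Phi$, with a $2$-term SOD whose gluing functor is an equivalence (cf.\ Example~\ref{eg:yx3}). Even when $n-m\le m$ it gives you no way to define $\check T$. $T^{-1}$ carries it to the subcategory generated by a different window, and nothing in your setup identifies the categories attached to different $(n-m)$-windows.

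The other candidates also fail. The Verdier quotient $\Phi/\langle\Phi_1,\dots,\Phi_m\rangle$ is zero as written, and the ``unrolled category'' is never specified. Your final guess of $T$-equivariant objects along the cycle puts $T$ in the wrong place: $T$ plays no role in defining the dual category, only in defining its autoequivalence. Step 3 likewise contains no mechanism for recovering $(\frakS;\Phi,T)$.

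The missing idea is the category $\calA\subset\Cycle_n(\Phi)$ of acyclic $n$-cycles $(\phi^i)_{i\in\bbZ/n}$ with $\phi^i\in\Phi_i$. Restricting to any $n-m$ consecutive positions gives an equivalence $W_i\colon\calA\isomto\Complex(\Phi_i,\dots,\Phi_{i+n-m-1})$. This holds because the complementary $m$ positions form an SOD of $\Phi$, so the rest of the cycle is uniquely determined; this is what ties all the windows together.

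Lemma~\ref{lem:complexSOD} equips each $\Complex(\Phi_i,\dots,\Phi_{i+n-m-1})$ with an $\infty$-admissible SOD via right adjoints of the projections. These projections are just the term functors $P_{i+j}$ on $\calA$, so the components $\calA_j:=\Image(P_j^R)$ do not depend on the window. Hence every $n-m$ consecutive $\calA_j$, in reversed order, form an SOD of $\calA$, giving the $\tau^n$-periodic SOD $(\calA_{n-m},\dots,\calA_1)$.

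Since rotation is built into $\calA$, the functor $U\colon(\phi^i)\mapsto(T\phi^{i+m})$ is an autoequivalence with $U(\calA_{i+m})=\calA_i$, and $U^{-1}$ is conjugate-compatible. Involutivity is then an explicit computation. Applying the construction twice produces acyclic bicycles, which restrict to the bicomplexes of Lemma~\ref{lem:bicomplex}. This identifies the result with $(\delta^{-2}\cdot\frakS;\Phi,T)=(T(\frakS);\Phi,T)$, which is equivalent to $(\frakS;\Phi,T)$ via $T$ by Theorem~\ref{thm:alternate}.
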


Let $\Phi$ be a stable category, $q\geq1$, and suppose $S_i:\Phi_i\to \Phi$, $i=1,\dots,q$ are right-admissible subcategories. The slightly unusual notation is because we view this as an object $\calF\in \sfAdj(q)$. We give an explicit description of $\FS_{\sfAdj}(\calF)$ in terms of complexes in $\Phi$, generalizing the discussion in \S\ref{subsection:enhancedFS} in the special case that each $S_i$ is fully faithful.

\begin{rmk}
    We note that inclusions of non-zero subcategories are never spherical, and so we will really need to work outside of this setting.
\end{rmk}

\begin{lem}\label{lem:bicomplex}
    Let $S_i:\Phi_i\to \Phi$, $i=1,\dots,q$ be right-admissible subcategories, with right adjoints $R_i$. Let $\calF\in \sfAdj(q)$ be the resulting object. Define $\Phi_{1,\dots,q}$ to be the full subcategory of bicomplexes in $\Phi$ of the following form,
    \[\begin{tikzcd}[ampersand replacement=\&]
        {\phi^q_q} \& 0 \& 0 \& 0 \\
        \vdots \& \ddots \& 0 \& 0 \\
        {\phi^2_q} \& \dots \& {\phi^2_2} \& 0 \\
        {\phi^1_q} \& \cdots \& {\phi^1_2} \& {\phi^1_1}
        \arrow[from=1-1, to=1-2]
        \arrow[from=1-2, to=1-3]
        \arrow[from=1-3, to=1-4]
        \arrow[from=2-1, to=1-1]
        \arrow[from=2-1, to=2-2]
        \arrow[from=2-2, to=1-2]
        \arrow[from=2-2, to=2-3]
        \arrow[from=2-3, to=1-3]
        \arrow[from=2-3, to=2-4]
        \arrow[from=2-4, to=1-4]
        \arrow[from=3-1, to=2-1]
        \arrow[from=3-1, to=3-2]
        \arrow[from=3-2, to=2-2]
        \arrow[from=3-2, to=3-3]
        \arrow[from=3-3, to=2-3]
        \arrow[from=3-3, to=3-4]
        \arrow[from=3-4, to=2-4]
        \arrow[from=4-1, to=3-1]
        \arrow[from=4-1, to=4-2]
        \arrow[from=4-2, to=3-2]
        \arrow[from=4-2, to=4-3]
        \arrow[from=4-3, to=3-3]
        \arrow[from=4-3, to=4-4]
        \arrow[from=4-4, to=3-4]
    \end{tikzcd}\]
    such that for each $1\leq i\leq j\leq q$, 
    \[\phi^i_j\in \Phi_i,\]
    and such that for each $1\leq i< j\leq q$, the complex
    \[\phi^i_j\to R_i\phi^{i+1}_j\to R_i\phi^{i+2}_j\to\cdots\to R_i\phi^j_j\]
    is acyclic. In other words, its totalization is zero.
    
    Define functors
    \begin{align*}
        I_l^R: \Phi_{1,\dots,q}&\to \Phi_l\\
        (\phi^i_j)&\mapsto \phi^l_l.
    \end{align*}
    Then, these admit fully faithful left adjoints, $I_i:\Phi_i\inclto \Phi_{1,\dots,q}$, and these induce a $q$-term right-admissible SOD
    \[\Phi_{1,\dots,q} = \langle\Phi_1,\dots,\Phi_q\rangle.\]
    Moreover, there is an equivalence $\FS_{\sfAdj}(\calF)\simeq \Phi_{1,\dots,q}$ and the natural SOD on $\FS_{\sfAdj}(\calF)$ agrees with this one.

    Furthermore, let $\Complex(\Phi_1,\dots,\Phi_q)$ denote the full subcategory of complexes in $\Phi$ of the form,
    \[\phi^1\to\phi^2\to\cdots\to \phi^q,\]
    such that for each $1\leq i\leq q$, $\phi_i\in\Phi_i$. Then, there is an equivalence of categories
    \[\Phi_{1,\dots,q}\isomto \Complex(\Phi_1,\dots,\Phi_q)\]
    given by totalizing in the horizontal direction.
\end{lem}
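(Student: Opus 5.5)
The plan is to prove everything by induction on $q$, using the 2-Segal/1-Segal structure already built for $\sfAdj(-)\simeq\sfBdj(-)$. Concretely, $\FS_{\sfAdj}$ is obtained by iterating the binary operation $\opname{Ind}$ of \S\ref{subsection:enhancedFS}, which is the active face map $d_{2,1}$ in Proposition~\ref{prop:bdjisadj}. So I will identify $\FS_{\sfAdj}(S_1,\dots,S_q)$ with $\opname{Ind}\big(\FS_{\sfAdj}(S_1,\dots,S_{q-1}),S_q\big)$, taking the first argument together with its induced functor to $\Phi$. For $q=2$ the bicomplex description is a rewriting of Remark~\ref{rmk:laxcolimit} and Lemma~\ref{lem:twocatssod}.

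An object of $\Phi_{12}$ is a triple $(\phi_1,\phi_2,\alpha:R_1S_2\phi_2\to\phi_1)$. I will match it with the bicomplex having $\phi^2_2=\phi_2$, $\phi^1_1=\phi_1$ and $\phi^1_2=R_1S_2\phi_2$. Since $S_i$ is fully faithful, the acyclicity condition $\phi^1_2\xrightarrow{\sim}R_1\phi^2_2$ forces $\phi^1_2$ up to contractible choice, so the two categories agree.

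For the inductive step, I assume the lemma for the first $q-1$ subcategories. This gives $\Phi_{1,\dots,q-1}$ together with the functor $S_{1,\dots,q-1}$ to $\Phi$, which is the colimit/totalization of the bicomplex. Its right adjoint sends $\psi$ to the bicomplex $\big(S_iR_i\cdots$-type iterated counits$\big)$, following the pattern of $R_{12}$ in Lemma~\ref{lem:twocats}. Applying the $q=2$ case to $(S_{1,\dots,q-1},S_q)$, an object consists of an object of $\Phi_{1,\dots,q-1}$, an object $\phi^q_q\in\Phi_q$, and a map $R_{1,\dots,q-1}S_q\phi^q_q\to(\phi^i_j)_{j<q}$. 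I will unwind $R_{1,\dots,q-1}S_q\phi^q_q$ as a bicomplex and write the map as its cone. This adds the new column $\phi^1_q\to\cdots\to\phi^q_q$, and the acyclicity of $\phi^i_q\to R_i\phi^{i+1}_q\to\cdots$ encodes that the column entries are the correct iterated right adjoints. The step I expect to be the main obstacle is making this bookkeeping precise: checking that the cone of a map out of the right adjoint bicomplex is exactly the new column together with the acyclicity conditions. I would first verify it for $q=3$ by hand and then organise the general case along the same lines.

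The SOD statement follows from this identification, since under it $I_l^R$ is projection to the diagonal entry, matching Lemma~\ref{lem:twocatssod} at each inductive stage. The left adjoints $I_l$ send $\phi$ to the bicomplex supported on column $l$ with entries $\phi^i_l=R_i\cdots$ determined by acyclicity, and this functor is visibly fully faithful. Conservativity of $\oplus I_l^R$ follows from the acyclicity conditions by downward induction along the rows.

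For the final claim, horizontal totalization produces objects $\Tot(\phi^i_q\to\cdots\to\phi^i_i)\in\Phi_i$, and the vertical arrows induce the maps of a complex $\phi^1\to\cdots\to\phi^q$. I will construct the inverse by taking $\phi^i_i:=\phi^i$ and $\phi^i_j:=\Tot(R_i\phi^{i+1}\to\cdots)$-style shifted truncations, defined by the requirement that the acyclicity conditions hold. I then check that the two composites are the identity using the right admissibility of each $\Phi_i$.
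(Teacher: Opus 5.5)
Your skeleton is the paper's own (very terse) proof: the case $q=2$ is read off from \S\ref{subsection:enhancedFS} using full faithfulness of the $S_i$, and then you induct via the Segal decomposition $\FS_{\sfAdj}(S_1,\dots,S_q)\simeq\opname{Ind}(\FS_{\sfAdj}(S_1,\dots,S_{q-1}),S_q)$. One caution on the inductive step. $S_{1,\dots,q-1}$ is not fully faithful in general; for $\Phi_1=\Phi_2=\Phi$, $S_{12}$ is evaluation at the target of an arrow. So you cannot apply the lemma's own $q=2$ case there; you must use the general triple description of Remark~\ref{rmk:laxcolimit}, which is what you actually write down. The genuine problems are two explicit constructions. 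Both conflate the diagonal projections $I_l^R$, whose left adjoints generate $\frakS$, with the complex-term projections $P_l$ of Lemma~\ref{lem:complexSOD}, whose right adjoints are the column-supported bicomplexes.

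\emph{First error: $I_l$ is not column-supported.} $I_l\phi$ is not the bicomplex supported on column $l$ once $l\geq 2$. For $q=2$, Lemma~\ref{lem:twocatssod} gives $I_2\phi=(R_1S_2\phi,\phi,\id)$, whose column-$1$ entry is $\phi^1_1=R_1S_2\phi$; equivalently $I_1^RI_2\simeq R_1S_2$, which is nonzero in general. The column-supported object $(0,\phi,0)$ is $P_2^R\phi$. A map from it to $(x_1,x_2,\alpha)$ is a map $f:\phi\to x_2$ together with a null-homotopy of $\alpha\circ R_1S_2f$, so it is not left adjoint to $I_2^R$. Only $P_1^R\simeq I_1$ holds.

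\emph{Second error: the inverse to horizontal totalization.} Setting $\phi^i_i:=\phi^i$ cannot work. Once the diagonal is fixed, acyclicity determines every column. The remaining data then include horizontal arrows $R_{j-1}\phi^j\simeq\phi^{j-1}_j\to\phi^{j-1}_{j-1}=\phi^{j-1}$, whereas a complex supplies $\phi^{j-1}\to R_{j-1}\phi^j$, the opposite direction. Already for $q=2$, totalization sends $(\phi_1,\phi_2,\alpha)$ to $\Cone(\alpha)\to\phi_2[1]$. So the inverse sends $g:\phi^1\to R_1\phi^2$ to $\phi^2_2=\phi^2[-1]$ and $\phi^1_1=\coCone(g)$, with $\alpha$ the canonical map $R_1\phi^2[-1]\to\coCone(g)$ (up to shift conventions). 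For example, the complex $0\to\phi^2$ has $\phi^1_1=R_1\phi^2[-1]$, which is nonzero whenever $R_1\phi^2\neq 0$.

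To repair both points:
\begin{itemize}
\item Take the $I_l$ from Lemma~\ref{lem:twocatssod} at each inductive stage, e.g.\ $I_q\phi=(R_{1,\dots,q-1}S_q\phi,\phi,\id)$ in the triple description.
\item Prove the totalization equivalence by this fibre/cofibre exchange for $q=2$, and propagate it through the same Segal induction.
\end{itemize}
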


\begin{proof}
    The case $n=2$ follows from the discussion of \S\ref{subsection:enhancedFS}, by noting that any morphism $S_ia\to S_ib$ admits a unique lifting along the fully faithful functor $S_i$. The result follows from this one by induction on $n$ and by using the 2-Segal structure of $\sfAdj(-)$ proven in Proposition~\ref{prop:bdjisadj}.
\end{proof}

In addition, we show that $\Phi_{1,\dots,q}$ admits an opposite \textit{left} admissible SOD, defined analogously to Definition~\ref{defn:sod}.

\begin{lem}\label{lem:complexSOD}
    In the situation of Lemma~\ref{lem:bicomplex}, let $P_l:\Complex(\Phi_1,\dots\Phi_q)\to \Phi_l$ be the map
    \[(\phi^i)\mapsto \phi^l.\]
    Then, $P_l$ admits a fully faithful right adjoint $P_l^R$, and the resulting subcategories form a \textit{left} admissible SOD,
    \[\Phi = \langle\Phi_q,\dots,\Phi_1\rangle.\]

    If moreover the right-admissible SOD $\frakS$ of $\Complex(\Phi_1,\dots\Phi_q)$ is in fact $\infty$-admissible, then this left-admissible SOD is also $\infty$-admissible and is equal to the mutation $\delta^{-1}\cdot\frakS$.
\end{lem}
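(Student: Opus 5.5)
I would first build the left adjoints $P_l^R$ explicitly and then check the SOD axioms directly. Given $\phi\in\Phi_l$, the candidate is the complex concentrated in degree $l$, namely $(0,\dots,0,\phi,0,\dots,0)$. A morphism from a complex $(\psi^i)$ to this complex is a map $\psi^l\to\phi$. It must be compatible with the differentials, which forces the composite $\psi^{l-1}\to\psi^l\to\phi$ to be nullhomotopic, so the naive formula is wrong. I would therefore adjust it. For the inclusion of $\Phi_l$ as complexes concentrated in degree $l$, the natural adjoint is the totalization-type functor, not evaluation. To handle this I would reverse the roles: take $P_l$ as the projection and construct $P_l^R(\phi)$ as the complex
\[
R_1\phi\to R_2\phi\to\cdots\to R_{l-1}\phi\to\phi\to0\to\cdots\to0,
\]
suitably resolved so that it is the right Kan extension along the evaluation. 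Here I would use that each $S_i$ is fully faithful with right adjoint $R_i$, so maps into $\Phi_i$-components factor through $R_i$. Concretely, I would identify $\Complex(\Phi_1,\dots,\Phi_q)$ with a lax limit (iterated oplax gluing in the spirit of Remark~\ref{rmk:laxcolimit}) and apply Theorem~\ref{thm:laxfibrations} to obtain $P_l^R$ as $f_*$ for the inclusion of the vertex $l$. Full faithfulness follows because $P_lP_l^R\simeq\id$, by the explicit formula for $f_*$ evaluated at $l$.

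Next I would verify the two conditions of Definition~\ref{defn:sod}, in left-admissible form. For semi-orthogonality with the order $\langle\Phi_q,\dots,\Phi_1\rangle$, I need $\Hom(P_j^R\phi_j,P_i^R\phi_i)=0$ for $j<i$. By adjunction this equals $\Hom(P_iP_j^R\phi_j,\phi_i)$. Since $P_j^R\phi_j$ vanishes in degrees above $j$, we get $P_iP_j^R=0$ for $i>j$. For conservativity, I would use $\oplus_lP_l$, which is evidently conservative because a complex with all terms zero is zero. Lemma~\ref{lem:SODcharacterizations} then transfers verbatim to the left-admissible setting.

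For the second statement I would argue via the universal characterization of $\delta^{-1}\cdot\frakS$ used in the proof of Theorem~\ref{thm:alternate}. There, $\delta^{-1}\cdot\frakS=(\Phi_q',\dots,\Phi_1')$, where $\Phi_l'$ is the right orthogonal of $\bigoplus_{j\neq l}I_j(\Phi_j)$. So it suffices to show that the essential image of $P_l^R$ equals $\Phi_l'$. Under the totalization equivalence of Lemma~\ref{lem:bicomplex}, $I_j$ corresponds to the complex built from the $j$th column. The condition $\Hom(I_j\phi,P_l^R\psi)=\Hom(P_lI_j\phi,\psi)=0$ for $j\neq l$ amounts to checking that $P_lI_j=0$ for $j\neq l$. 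This holds because the $j$th column has $\Phi_l$-entry $\phi^l_j$, which vanishes after totalization unless $l=j$, by the acyclicity condition.

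Conversely, both collections form SODs of the same length: $\frakS$ being $\infty$-admissible guarantees that $\delta^{-1}\cdot\frakS$ exists and is an SOD. So the containment $\Image(P_l^R)\subseteq\Phi_l'$, together with the fact that both families generate, forces equality. I would prove this by induction down from $l=q$, comparing semi-orthogonal complements. Finally, $\infty$-admissibility of the left SOD follows because it lies in the braid orbit of $\frakS$.

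I expect the main obstacle to be the explicit identification of $P_lI_j$. This requires care with the totalization equivalence and the horizontal acyclicity conditions.
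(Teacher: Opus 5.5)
Your scaffolding is sound: semi-orthogonality via $\Hom(P_j^R\phi_j,P_i^R\phi_i)\simeq\Hom(P_iP_j^R\phi_j,\phi_i)$, conservativity of $\oplus_l P_l$, and for the mutation a comparison with the right-orthogonal description of $\delta^{-1}\cdot\frakS$. That last step is a legitimate alternative to the paper's induction from $P_1^R\simeq I_1$. However, both concrete inputs it rests on are wrong.

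First, your formula $R_1\phi\to\cdots\to R_{l-1}\phi\to\phi\to 0$ for $P_l^R\phi$ is only right for $l\le 2$. For $l\ge 3$ it is not a complex: there is no canonical map $R_i\phi\to R_{i+1}\phi$, and when all $\Phi_i=\Phi$ it would be a string of identities. The correct object, which the paper uses, is the bicomplex of Lemma~\ref{lem:bicomplex} supported in column $l$. It has $\phi^l_l=\phi$, and the entries $\phi^i_l$ for $i<l$ are forced by column acyclicity: $\phi^{l-1}_l\simeq R_{l-1}\phi$, and the earlier ones are iterated fibres. Totalized, it is $(\phi^1_l,\dots,\phi^l_l,0,\dots,0)$; when all $\Phi_i=\Phi$ and $l\ge 2$ it is the two-term complex $\phi\xrightarrow{\id}\phi$ in degrees $l-1,l$. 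Proving that this object represents $\Hom(P_l(-),\phi)$ is the real content of the lemma; the paper does $q=2$ directly and then inducts. Your fallback does not supply this. Theorem~\ref{thm:laxfibrations} gives adjoints to restriction $2$-functors between $2$-categories of diagrams in $\sfSt_k$, not a right adjoint to evaluation on a single stable category. The adjoint functor theorem would give existence of $P_l^R$, but not $P_lP_l^R\simeq\id$ or $P_iP_j^R=0$ for $i>j$, which are exactly what your next steps use.

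Second, in the mutation step you have $I_j$ and $P_j^R$ interchanged. The single-column bicomplex is $P_j^R\phi$, and its totalization does not vanish in degrees $l<j$. For $q=2$, column $2$ totalizes (up to shift) to $R_1\phi\to\phi$, whose first term is $R_1\phi$. The acyclicity conditions constrain columns and do nothing to kill rows. What is true is that $I_j\phi$ totalizes to the complex concentrated in degree $j$. In other words, the ``naive candidate'' you discarded at the outset is $I_j$, not $P_j^R$. For $q=2$, $I_2\phi$ has bottom row $R_1\phi\xrightarrow{\id}R_1\phi$, which totalizes to zero; for $j=1$ this is the paper's observation $P_1^R\simeq I_1$. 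Once you establish that identification, $P_lI_j=0$ for $j\neq l$ is immediate and your comparison with $(\Phi'_q,\dots,\Phi'_1)$ closes the argument. You could prove it by checking the adjunction against $I_j^R\colon(\phi^i_k)\mapsto\phi^j_j$, or inductively along the 2-Segal structure. As written, the argument rests on a false description of $I_j$.
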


\begin{proof}
    The case $n=2$ can be proven directly, see for example \cite{dyckerhoffSphericalAdjunctionsStable2021}*{Proposition 2.3.2}. For general $n$, the result follows by induction. Explicitly, $P_l^R\phi^l$ is given by the bicomplex $(\phi^i_j)$ where
    \[\phi^i_j=0\]
    unless $j=l$ and $\phi^l_l = \phi^l$. The remaining terms are uniquely determined.

    To see the mutation, we observe that $P_1^R \simeq I_1$ and proceed by induction.

    \[\begin{tikzpicture}[scale=1.2]
        \tikzset{
            strand/.style={black, line width=1.5pt},
            mask/.style={white, line width=3.5pt}
        }

        
        \draw[mask] (5, 0) .. controls (5, 0.8) and (1,0.8) .. (1, 1);
        \draw[strand] (5, 0) .. controls (5, 0.8) and (1,0.8) .. (1, 1) node[at start, below] {$\Phi_5\rangle$} node[above, black] {$\langle P_5^R\Phi_5,$};
        
        \draw[mask] (4, 0) .. controls (4, 0.6) and (2,0.6) .. (2, 1);
        \draw[strand] (4, 0) .. controls (4, 0.6) and (2,0.6) .. (2, 1) node[at start, below] {$\Phi_4,$} node[above, black] {$P_4^R\Phi_4,$};

        \draw[mask] (3, 0) -- (3, 1);
        \draw[strand] (3, 0) -- (3, 1) node[at start, below] {$\Phi_3,$} node[above, black] {$P_3^R\Phi_3,$};

        \draw[mask] (2, 0) .. controls (2, 0.4) and (4,0.4) .. (4, 1);
        \draw[strand] (2, 0) .. controls (2, 0.4) and (4,0.4) .. (4, 1) node[at start, below] {$\Phi_2,$} node[above, black] {$P_2^R\Phi_2,$};
        
        \draw[mask] (1, 0) .. controls (1, 0.2) and (5,0.2) .. (5, 1);
        \draw[strand] (1, 0) .. controls (1, 0.2) and (5,0.2) .. (5, 1) node[at start, below] {$\langle\Phi_1,$} node[above, black] {$P_1^R\Phi_1\rangle$};
    \end{tikzpicture}\]
\end{proof}

Given a stable category $\Phi$, we define
\[\opname{\Cycle}_n(\Phi) := \Complex(\Phi,\Phi,\dots,\Phi)\]
to be the category of length $(n-1)$ complexes in $\Phi$. We will think of the objects as length $n$ acyclic cycles. There are functors
\[P_1,\dots,P_{n-1}:\opname{\Cycle}_n(\Phi)\to \Phi\]
where $P_i$ gives the $i$\ts{th} term of the complex. Completing the cycle, we also have
\[P_n:\opname{\Cycle}_n(\Phi)\to \Phi\]
given by assigning to each complex, its totalization. We should be careful about specifying the cohomological shift in this totalization but we will omit this detail. There is an action of $\bbZ$ on $\opname{\Cycle}_n(\Phi)$ by rotation of the acyclic cycle. As such, we will denote objects of $\opname{\Cycle}_n(\Phi)$ by $(\phi^i)_{i\in\bbZ/n}$ and for $i\in\bbZ/n$, we refer to $\phi^i$ as the $i$\ts{th} term of the acyclic cycle.

\begin{eg}
    A length 3 acyclic cycle is nothing more than an exact triangle. The $\bbZ$-action rotates the exact triangle.
\end{eg}

\begin{rmk}
    We may equivalently view $\opname{\Cycle}_n(\Phi)$ as the $(n-1)$\ts{th} term in the Waldhausen construction. The $\bbZ$-action is the additional paracyclic symmetry (see for example \cite{lurierotationinvariance}), and $n\in\bbZ$ acts by the double suspension $\Sigma^2$.
\end{rmk}

\begin{proof}[Proof of Theorem~\ref{thm:ykxn}]
    We take as our input, the datum $(\frakS;\Phi,T)$, where $\frakS = (\Phi_1,\dots,\Phi_{k})$ is our $\tau^n$-periodic SOD. Define inductively, the subcategory $\Phi_{i+m}$ for $i\geq1$ via $\tau$-mutation so that
    \begin{align*}
        \langle\Phi_{i},\Phi_{i+1},\dots,\Phi_{i+m-1}\rangle = \langle\Phi_{i+1},\Phi_{i+2},\dots,\Phi_{i+m}\rangle.
    \end{align*}
    Then, the periodicity asserts that $\Phi_{i} = \Phi_{i+n}$ for all $i\geq1$. So from here on, we will take this subscript $i\in\bbZ/n$.

    We will now define the output category $\calA$, and then construct on this a $(n-m)$-term, $\tau^n$-periodic SOD. We define
    \[\calA\subset \opname{Cycle}_n(\Phi)\]
    to be the full subcategory of those acyclic cycles $(\phi^i)_{i\in\bbZ/n}$ such that $\phi^i\in\Phi_i$.

    For each $i$, there is a natural functor by restriction,
    \begin{align*}
        W_i: \calA&\to \Complex(\Phi_{i},\Phi_{i+1},\dots,\Phi_{i+n-m-1})\\
        (\phi^i)_{i\in\bbZ/n}&\mapsto [\phi^i\to\dots\to \phi^{i+n-m-1}].
    \end{align*}
    This functor is an equivalence because the remaining categories $\Phi_{i+n-m},\dots,\Phi_{i+n-1}$ form an SOD of $\Phi$.
    
    By Lemma~\ref{lem:complexSOD}, we have an $\infty$-admissible SOD
    \[\Complex(\Phi_{i},\Phi_{i+1},\dots,\Phi_{i+n-m-1}) = \langle\Phi_{i+n-m-1},\dots,\Phi_{i+1},\Phi_{i}\rangle\]
    where the inclusions are given by the right adjoints to the projection functors
    \[Q_{j,i}: \Complex(\Phi_{i},\Phi_{i+1},\dots,\Phi_{i+n-m-1})\to \Phi_{i+j},\]
    for $j=0,\dots,n-m-1$. So via the equivalence $W_i$, we have an SOD of $\calA$ by the right adjoints to projection functors $Q_{j,i}\circ W_i$ where $j=0,\dots,n-m-1$.
    
    Observe now that
    \[Q_{j,i}\circ W_i \simeq P_{i+j},\]
    where $P_j:\calA\to \Phi_j$ outputs the $j$\ts{th} term of the acyclic cycle. So writing $\calA_j$ for the subcategory defined by $P_j^R$, we have for each $i$,
    \[\calA = \langle\calA_{i+n-m-1},\dots,\calA_{i+1},\calA_i\rangle.\]
    Hence $\calA$ admits a $\tau^n$-periodic SOD $(\calA_{n-m},\dots,\calA_1)$ which we denote $\mathfrak{T}$.

    We now exhibit on $\calA$ an autoequivalence which is conjugate-compatible with $\mathfrak{T}$. By the conjugate-compatibility of the autoequivalence $T$ on $\Phi$, we know that $T$ induces equivalences,
    \[T:\Phi_{i+m} \isomto \Phi_{i}.\]
    Indeed, $T$ is conjugate-compatible with $(\Phi_{i+1},\dots,\Phi_{i+m})$ so that $T(\Phi_{i+m})$ lies inside the joint right-orthogonal of $\Phi_{i+1},\dots,\Phi_{i+m-1}$, which is $\Phi_i$. Since $T$ is an autoequivalence, this is also an autoequivalence. See also the braid diagram appearing in Proof of Theorem~\ref{thm:alternate}.

    Thus the following is a well defined autoequivalence of $\calA$.
    \begin{align*}
        U:\calA&\to \calA\\
        (\phi^i)_{i\in\bbZ/n}&\mapsto (T\phi^{i+m})_{i\in\bbZ/n}
    \end{align*}
    This satisfies $P_{i}\circ U\simeq T\circ P_{i+m}$, so
    \[P_{i}^R\circ T\simeq U\circ P_{i+m}^R.\]
    Thus we have equivalences
    \[U:\calA_{i+m}\isomto \calA_i.\]
    We learn that $U^{-1}$ is conjugate-compatible with $\mathfrak{T}$, and so
    \[(\mathfrak{T};\calA,U^{-1})\in\sfAut(n-m)^{n\opname{-periodic}}.\]

    We now compute the composition
    \[\sfAut(m)^{n\opname{-periodic}}\to \sfAut(n-m)^{n\opname{-periodic}}\to \sfAut(m)^{n\opname{-periodic}}\]
    by applying the procedure twice.

    Let $\calB\subset \opname{Cycle}_n(\calA)$ be the subcategory of acyclic cycles $(a_j)_{j\in\bbZ/n}$ so that $a_j\in\calA_j$ (where the differential under the $j$-indexing is backwards). We write
    \[a_j = (\phi^i_j)_{i\in\bbZ/n}\]
    and think of $\phi^i_j$ as forming an ``acyclic bi-cycle''. As before, there are equivalences via restriction,
    \[\calB\isomto  \Complex(\calA_m,\dots,\calA_1).\]
    Combining this with the restriction
    \[\calA\isomto \Complex(\Phi_1,\dots,\Phi_{n-m}),\]
    we learn that we may restrict our ``acyclic bi-cycle'' $(\phi^i_j)$ to a bicomplex inside the box $1\leq i\leq n-m, 1\leq j \leq m$. Unravelling the condition to be in $\calA_i$, the resulting bicomplexes are of the following form, with conditions equivalent to those of Lemma~\ref{lem:bicomplex}.
    \[\begin{tikzcd}[ampersand replacement=\&]
        \vdots \& \vdots \& \vdots \& \vdots \\
        0 \& 0 \& 0 \& 0 \\
        {\phi^m_m} \& 0 \& 0 \& 0 \\
        \vdots \& \ddots \& 0 \& 0 \\
        {\phi^2_m} \& \dots \& {\phi^2_2} \& 0 \\
        {\phi^1_m} \& \cdots \& {\phi^1_2} \& {\phi^1_1}
        \arrow[from=2-1, to=1-1]
        \arrow[from=2-1, to=2-2]
        \arrow[from=2-2, to=1-2]
        \arrow[from=2-2, to=2-3]
        \arrow[from=2-3, to=1-3]
        \arrow[from=2-3, to=2-4]
        \arrow[from=2-4, to=1-4]
        \arrow[from=3-1, to=2-1]
        \arrow[from=3-1, to=3-2]
        \arrow[from=3-2, to=2-2]
        \arrow[from=3-2, to=3-3]
        \arrow[from=3-3, to=2-3]
        \arrow[from=3-3, to=3-4]
        \arrow[from=3-4, to=2-4]
        \arrow[from=4-1, to=3-1]
        \arrow[from=4-1, to=4-2]
        \arrow[from=4-2, to=3-2]
        \arrow[from=4-2, to=4-3]
        \arrow[from=4-3, to=3-3]
        \arrow[from=4-3, to=4-4]
        \arrow[from=4-4, to=3-4]
        \arrow[from=5-1, to=4-1]
        \arrow[from=5-1, to=5-2]
        \arrow[from=5-2, to=4-2]
        \arrow[from=5-2, to=5-3]
        \arrow[from=5-3, to=4-3]
        \arrow[from=5-3, to=5-4]
        \arrow[from=5-4, to=4-4]
        \arrow[from=6-1, to=5-1]
        \arrow[from=6-1, to=6-2]
        \arrow[from=6-2, to=5-2]
        \arrow[from=6-2, to=6-3]
        \arrow[from=6-3, to=5-3]
        \arrow[from=6-3, to=6-4]
        \arrow[from=6-4, to=5-4]
    \end{tikzcd}\]
    Thus we learn that $\calB\simeq \Phi$. Under this identification, the SOD on $\calB$ is given by $\delta^{-2}\cdot\frakS$, and the autoequivalence by $T$. So the composition is
    \[(\frakS;\Phi,T)\mapsto (\mathfrak{T};\calA,U^{-1})\mapsto (\delta^{-2}\cdot\frakS;\Phi,T).\]
    This proves the desired bijection at the level of objects.
\end{proof}

\appendix

\section{Spherical monads}\label{appendix:sphmonad}

\subsection{Strengthening Christ's theorem}\label{subsection:christ}
We recall the following theorem of Christ.

\begin{thm}[Christ, \cite{christSphericalMonadicAdjunctions2023}*{Theorem 4.1}]\label{thm:christ}
    Let $\Phi$ be a stable $\infty$-category and $M$ a monad with unit $u:\id\to M$ and let $T:=\Cone(u)$. The following are equivalent.
    \begin{enumerate}
        \item \label{item:1} The endofunctor $T$ is an equivalence, and the unit $u$ satisfies $T u\simeq uT$.
        \item \label{item:2} The monadic adjunction $S:\Phi\tofrom \Mod_\Phi(M):R$ is spherical.
    \end{enumerate}
\end{thm}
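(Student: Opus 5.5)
We write $S:\Phi\tofrom\Mod_\Phi(M):R$ for the monadic adjunction, so that $RS\simeq M$ with unit $u$, and $T=\Cone(u)$. Recall that an adjunction is spherical as soon as two of the four conditions hold. Two of these are that the twist $\Cone(SR\to\id)$ on $\Mod_\Phi(M)$ is an equivalence and that the cotwist $\Cone(\id\to RS)=T$ (or its shift) on $\Phi$ is an equivalence. We verify the theorem directly from these.

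\textbf{(\ref{item:2})$\Rightarrow$(\ref{item:1}).} Sphericity gives that $T$ is invertible. It also gives that the canonical comparison $R\simeq T'^{-1}S^{L}$-type identification holds, i.e.\ $R$ is, up to the cotwist, the left adjoint of $S$. We would extract $Tu\simeq uT$ from naturality. The unit $u:\id\to RS$ is natural, so whiskering by $T$ on either side compares $u\circ T$ and $T\circ u$. The cotwist $T$ intertwines $RS$ with itself, because the twist $U$ on $\Mod_\Phi(M)$ satisfies $ST\simeq US$ and $TR\simeq RU$ up to shift. Combining $RST\simeq RUS\simeq TRS$ identifies $M T\simeq TM$ compatibly with the units, which gives $Tu\simeq uT$. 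This direction is essentially bookkeeping with the standard identities for spherical adjunctions.

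\textbf{(\ref{item:1})$\Rightarrow$(\ref{item:2}).} Since $T$ is an equivalence, it suffices to show that the twist $U=\Cone(SR\to\id)$ on $\Mod_\Phi(M)$ is an equivalence. We first check this on free modules. On a free module $S\phi$ there is a cofiber sequence $SRS\phi\to S\phi\to US\phi$. Applying $R$ gives $MM\phi\to M\phi\to RUS\phi$, where the first map is the multiplication $m$. Because $m$ is split by $Mu$, we get $RUS\phi\simeq \Cone(m)\simeq M\,\Cone(u)[\ldots]$, which amounts to $RUS\simeq MT[1]$-type formulas after the condition $Tu\simeq uT$ is used. We would construct a candidate inverse $V$ with $VS\simeq ST^{-1}[-1]$ and verify $UV\simeq\id\simeq VU$ on free modules.

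The main obstacle is extending from free modules to all of $\Mod_\Phi(M)$. For this we use Proposition~\ref{prop:KLisEM}: $\Mod_\Phi(M)$ is the smallest stable presentable subcategory containing the image of $S$, by Lemma~\ref{lem:kleislirecognition}. Therefore a colimit-preserving natural transformation that is an equivalence on free modules is an equivalence everywhere. The delicate point is that $V$ must be defined as an honest functor on $\Mod_\Phi(M)$, not merely on objects. We would define it as $U$ conjugated through $T$: the hypothesis $Tu\simeq uT$ is exactly what lets $T$ lift to a monad automorphism compatible with $M$, and hence to an autoequivalence $\tilde T$ of $\Mod_\Phi(M)$ with $S T\simeq \tilde T S$. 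We would then show $U\simeq \tilde T[1]$ by comparing both on the generating free modules, using the displayed cofiber computation. Since $\tilde T$ is invertible, $U$ is invertible, and the two-out-of-four property concludes the proof.
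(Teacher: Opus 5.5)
The paper does not prove this statement. It quotes it from \cite{christSphericalMonadicAdjunctions2023}, and its own contribution nearby is Proposition~\ref{prop:main}: the unit condition $Tu\simeq uT$, taken under the identification $\kappa\colon TM\simeq MT$ of Lemma~\ref{lem:commutes}, holds for \emph{every} monad. So your argument has to stand on its own.

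The genuine gap is in (1)$\Rightarrow$(2). It sits in your claim that $Tu\simeq uT$ ``lets $T$ lift to a monad automorphism'', and that $U\simeq\tilde T[1]$ then follows ``by comparing both on the generating free modules''. There are two problems.

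\begin{itemize}
\item A lift of $T$ along $R$ amounts to an equivalence $\lambda\colon MT\simeq TM$ compatible with the unit \emph{and} the multiplication of $M$, with all higher coherences. A single unit homotopy supplies none of the multiplicative data.
\item Even granting some lift $\tilde T$, an objectwise identification $US\simeq ST[1]\simeq\tilde TS[1]$ does not identify $U$ with $\tilde T[1]$. Colimit-preserving functors out of $\Mod_\Phi(M)$ are not determined by their restriction along $S$; you must match their action on all module maps $S\phi\to S\phi'$, i.e.\ match $\lambda$.
\end{itemize}

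The paper's own $S^0$ example shows the second problem. Take $M=C^\bullet(S^0;k)=k\times k$ on $\Mod_k$, so $T\simeq\id$ and $\Mod_\Phi(M)\simeq\Mod_k\oplus\Mod_k$. Both the identity and the swap of the two summands are lifts of $T$ through monad automorphisms, and both satisfy $\tilde TS\simeq ST$. A comparison on free modules cannot tell them apart, yet $U$ is the swap shifted by $[1]$. Note also that $US\simeq ST[1]$ holds for any adjunction, from the splitting $\epsilon S\circ S\eta\simeq\id_S$; the unit condition plays no role there, contrary to your remark.

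The missing idea is to take $\tilde T:=U[-1]$ itself.
\begin{itemize}
\item The other triangle identity $R\epsilon\circ\eta R\simeq\id_R$ gives $RU\simeq TR[1]$ for any adjunction. So $U[-1]$ is automatically a lift of $T$ along $R$, and all coherences come for free.
\item What remains is to show that the mate $\beta\colon ST\to U[-1]S$ is invertible, so that $(T,U[-1])$ is a morphism in $\sfAdj$. Compose $\beta$ with the splitting equivalence $\gamma\colon U[-1]S\isomto ST$ and use naturality of $\epsilon$ twice. You find that $\gamma\circ\beta$ is the adjunct of $\kappa\circ Tu\colon T\to MT$. Since the adjunct of $uT$ is $\id_{ST}$, $\gamma\circ\beta$ is the identity (up to sign) exactly when $\kappa\circ Tu\simeq uT$. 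This is where the unit condition genuinely enters.
\item Now $(T,U[-1])$ is an endomorphism of $i_*(\Phi,M)$ in $\sfAdj$ whose image under $i^*$ is invertible, because $T$ is. Full faithfulness of $i_*$ then makes $U$ an equivalence.
\end{itemize}

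Finally, in (2)$\Rightarrow$(1), the phrase ``identifies $MT\simeq TM$ compatibly with the units'' is asserted, not proved. That compatibility is the whole content of the unit condition, and it is exactly what the cube argument of Proposition~\ref{prop:main} establishes.
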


Let us split statement (\ref{item:1}) into two parts.
\begin{enumerate}[label=(\roman*)]
    \item \label{item:1a} The endofunctor $T$ is an equivalence.
    \item \label{item:1b} The unit $u$ satisfies $T u\simeq uT$.
\end{enumerate}

Theorem~\ref{thm:christstrengthening1} strengthens this by showing that statement \ref{item:1b} in fact always holds. We begin by recalling the meaning of $Tu\simeq uT$. As written here, this is ambiguous as these have different targets, $TM$ and $MT$ respectively. To make sense of this, we recall a version of \cite{christSphericalMonadicAdjunctions2023}*{Lemma 2.2}, stated in terms of monads instead of adjunctions.

\begin{lem}\label{lem:commutes}
    Let $M$ be a monad with unit $u$ and let $T:=\Cone(u:\id\to M)$. Then, there is a canonical identification $TM\simeq MT$.
\end{lem}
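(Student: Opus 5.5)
The plan is to compute both $TM$ and $MT$ as cones of maps out of $M$, and then to identify those two maps with each other using the monad structure.

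Since $M$ is exact, postcomposition and precomposition by $M$ preserve cofiber sequences. Applying them to the cofiber sequence $\id\xto{u}M\to T$ gives
\[M\xto{uM}MM\to TM,\qquad M\xto{Mu}MM\to MT,\]
so that $TM\simeq\Cone(uM)$ and $MT\simeq\Cone(Mu)$.

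The two maps $uM,Mu:M\to MM$ are in general not equivalent, so the cones cannot simply be identified term by term. Instead, I will use the multiplication $\mu:MM\to M$. The unit axioms say $\mu\circ uM\simeq\id_M\simeq\mu\circ Mu$, so both $uM$ and $Mu$ are sections of $\mu$. In a stable $\infty$-category, a map $s:M\to MM$ with a retraction $\mu$ has cone canonically equivalent to $\opname{fib}(\mu)$. Concretely, the composite $\opname{fib}(\mu)\to MM\to\Cone(s)$ is an equivalence, because $MM$ splits as $M\oplus\opname{fib}(\mu)$ via $(\mu,\text{projection})$, and under this splitting $s$ is the inclusion of the first summand.

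Applying this to $s=uM$ and to $s=Mu$ gives
\[TM\simeq\Cone(uM)\isomfrom\opname{fib}(\mu)\isomto\Cone(Mu)\simeq MT.\]
Both equivalences are canonical: they are induced by the natural maps $\opname{fib}(\mu)\to MM\to\Cone(s)$, and these require no choices, since the null-homotopy of $\opname{fib}(\mu)\to MM\to M$ is part of the data of the fiber.

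The main point to check is that this works coherently at the level of functors, that is, in the stable $\infty$-category $\Fun(\Phi,\Phi)$ (or $\sfSt_k(\Phi,\Phi)$), rather than only objectwise. This holds because $\Fun(\Phi,\Phi)$ is itself stable and all the constructions above (fibers, cones, and the splitting lemma) take place inside it, with the unit axiom homotopies supplied by the $\mathbb{E}_1$-algebra structure on $M$. One might also want the identification to be compatible with additional structure, such as the $M$-bimodule structure. The statement does not require this, so I would only remark that $\opname{fib}(\mu)$ is naturally an $M$-bimodule, which makes the identification natural in $M$.
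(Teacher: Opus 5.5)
Your proposal is correct and is essentially the paper's argument. The paper also identifies both $TM$ and $MT$ with $\coCone(m)$ (your $\opname{fib}(\mu)$): it takes the total (co)fiber, in two orders, of the commuting square with top edge $uM$ (resp.\ $Mu$), right edge $m$ and identities elsewhere, whose commutativity is exactly the unit axiom you use; your splitting-of-a-section argument is a more explicit form of that same comparison map $\coCone(m)\to MM\to\Cone(uM)$.
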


\begin{proof}
    Consider the naturally commuting square
    \[\begin{tikzcd}
        M & MM \\
        M & M.
        \arrow["uM", from=1-1, to=1-2]
        \arrow["{1_M}", from=1-1, to=2-1]
        \arrow["m", from=1-2, to=2-2]
        \arrow["{1_M}", from=2-1, to=2-2]
    \end{tikzcd}\]
    We may compute the total complex in two ways. Taking the fiber vertically first and then cone horizontally yields $\coCone(m)$. Taking cone horizontally and then fiber vertically yields $TM$. So we have the identification,
    \[\coCone(m)\simeq TM.\]
    By symmetry, we also have
    \[\coCone(m)\simeq MT.\]
    Composing these two provides the desired identification, $TM\simeq MT$.
\end{proof}

We may now state the key assertion.

\begin{prop}\label{prop:main}
    Let $\Phi$ be a stable $\infty$-category and $M$ a monad with unit $u:\id\to M$ and let $T:=\Cone(u)$. Then, the unit $u$ satisfies $Tu\simeq uT$ under the equivalence $TM\simeq MT$ of Lemma~\ref{lem:commutes}.
\end{prop}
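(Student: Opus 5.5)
The identification $TM\simeq MT$ of Lemma~\ref{lem:commutes} passes through $\coCone(m)$, so I will phrase the claim in those terms. Let $\alpha:TM\isomto\coCone(m)$ and $\beta:MT\isomto\coCone(m)$ be the two identifications. The statement to prove is that the two composites
\[T\xto{Tu}TM\xto{\alpha}\coCone(m),\qquad T\xto{uT}MT\xto{\beta}\coCone(m)\]
agree as maps $T\to\coCone(m)$. Equivalently, I will show that both composites equal one canonical map $T\to\coCone(m)$ built from the unit laws.

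First I will make $\alpha$ explicit as a map of total complexes. Consider the map of squares from
\[\begin{tikzcd}[ampersand replacement=\&] \id \& M \\ \id \& \id \arrow["u",from=1-1,to=1-2]\arrow["1",from=1-1,to=2-1]\arrow["0"',from=1-2,to=2-2]\arrow[from=2-1,to=2-2]\end{tikzcd}\]
to the square in the proof of Lemma~\ref{lem:commutes}. The map of squares is given by $u$ in every corner, composed appropriately: after precomposing that square with $u$, it becomes $\id\to M$, $M\to M$, and so on. Concretely, I will use naturality of the ``total complex computed two ways'' equivalence in the square. The key point is that the square $S_u:=(u\to uM\to m)$, obtained by precomposing the square $S:=(M\xto{uM}MM)$ with $u$ (the whiskering $S\circ u$), is a map of squares
\[S\circ u:\ (\id\xto{u}M\ \text{over}\ \id\xto{u}M)\ \longrightarrow\ S.\]
The total complex of the source is computed in two ways: it is $\coCone(\id\xto{1}\id)$ followed by cone, which is zero in one order and $T\cdot\id=T$ in the other. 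This does not work directly, so instead I will use the square $S_0=(\id\xto{u}M;\ \id\xto{1}\id\ \text{left},\ u\ \text{right})$. The unit law $m\circ Mu=1$ and $m\circ uM=1$ gives maps of squares $S_0M\to S$ and $MS_0\to S$. Their whiskered versions $S_0\circ u$ and $u\circ S_0$ map from $S_0$ itself.

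With this setup, naturality gives the following. Under the ``horizontal-then-vertical'' total complex, the map induced on totalizations from $S_0$ to $S$ via $S_0\xto{S_0u}S_0M\to S$ is $\alpha\circ Tu$. Under the ``vertical-then-horizontal'' total complex, the map induced via $S_0\xto{uS_0}MS_0\to S$ is $\beta\circ uT$. Since the total complex of a square is independent of the order, and both totalizations of $S_0$ are canonically $T$, it suffices to show that the two maps of squares $S_0\to S$ agree: $S_0\to S_0M\to S$ versus $S_0\to MS_0\to S$. On each corner, these maps are $m\circ uM\circ u$ and $m\circ Mu\circ u$, both canonically equal to $u$ by the unit laws, and more fundamentally both equal the interchange $uu$ followed by $m$, since $uM\circ u\simeq Mu\circ u$ by the interchange law for whiskering.

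The main obstacle is coherence: I must check that these corner-wise identifications assemble into an equivalence of maps of squares, including the homotopies witnessing commutativity of the squares. I will handle this by viewing everything as induced by a single map of $\Delta^1\times\Delta^1$-diagrams of the form $u\star u:\id\to MM\xto{m}M$ in the monoidal category $\Fun(\Phi,\Phi)$, using that the unit coherences of the algebra $M$ in $\Fun(\Phi,\Phi)$ supply exactly the needed higher cells. I will also need to track that the symmetry used in Lemma~\ref{lem:commutes} is the transpose of the square, under which $S_0$ is self-dual.
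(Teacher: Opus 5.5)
Your reformulation (compare $\alpha\circ Tu$ and $\beta\circ uT$ as maps $T\to\coCone(m)$) is right, and so is your general mechanism: naturality of the ``total cofiber computed in two orders'' comparison along maps of squares out of a square whose two totalizations are both $T$. For this, $S_0$ must be the square $B$ with top edge $\id\xto{u}M$, left edge $u$, and right and bottom edges $1_M$. That is also the only reading under which it is transpose-symmetric; your literal description, with $1_{\id}$ on the left and $u$ on the right, is not such a square.

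The middle of the argument, however, fails. Whiskering on the left turns the top edge into $Mu$. So $MB$ maps, via the unit law $m\circ Mu\simeq 1$, not to $S$ but to the mirror square $S'$ with top edge $M\xto{Mu}MM$, left edge $1_M$ and right edge $m$. Exchanging $uM$ and $Mu$ in this way is the symmetry used in Lemma~\ref{lem:commutes}; transposing $S$ only reproduces the identification $TM\simeq\coCone(m)$. Since neither totalization of $S$ is $MT$, no map into $S$ can induce $\beta\circ uT$.

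With the targets corrected to $S$ and $S'$, the claim that ``the two maps of squares agree'' is no longer meaningful. A corner-wise comparison would be empty in any case. After passing to vertical fibers, the components of either map are $T[-1]\to 0$ and $0\to\coCone(m)$. So the two maps $g,g'\colon T\to\coCone(m)$ that you actually have to compare are carried entirely by the higher cells, which is exactly the part you postpone as ``coherence''.

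The missing step is to produce both maps from one cube and compare them through it, which is what the paper does. A map of squares $B\to S$ is itself a cube, and the right one is the commutative cube $X$ described as follows:
\begin{itemize}
\item its top face is the interchange square $u\star u$ (the two composites $\id\to M\to MM$);
\item its bottom face is constant on $M$;
\item its vertical edges are $u,1,1,m$.
\end{itemize}
Its four vertical faces are $B$, $B$, $S$ and $S'$. The compatibility of $m\circ uM\circ u\simeq u$ with $m\circ Mu\circ u\simeq u$ that you allude to is precisely its $3$-cell, supplied by the algebra structure of $M$. Read along the diagonal direction, $X$ is your map $B\to S$; read along the horizontal direction, it is your map $B\to S'$.

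Applying $\coCone_{\ver}$ gives a single square with corners $T[-1],0,0,\coCone(m)$. Commuting $\coCone_{\ver}$ past $\Cone_{\hor}$ and $\Cone_{\dia}$ (your naturality step) identifies $\alpha\circ Tu$ and $\beta\circ uT$ with the horizontal and the diagonal cone of this square. Since its off-diagonal corners vanish, both cones are maps $T\to\coCone(m)$ determined by the same $2$-cell, so they agree up to a sign that has to be tracked. Your last paragraph points toward $X$, but without this comparison through $\coCone_{\ver}(X)$ the proposal does not yet prove the proposition.
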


\begin{proof}
    The idea of the proof is the same as, and will be clearly compatible with the proof of Lemma~\ref{lem:commutes}. Consider the following commutative cube which we call $X$.
    \[\begin{tikzcd}
        1 && M & \\
        & M && MM \\
        M && M & {} \\
        & M && M
        \arrow["u", from=1-1, to=1-3]
        \arrow["u", from=1-1, to=2-2]
        \arrow["u", from=1-1, to=3-1]
        \arrow["Mu", from=1-3, to=2-4]
        \arrow["{{ }}"{description}, from=1-3, to=3-3]
        \arrow["uM", from=2-2, to=2-4]
        \arrow[from=2-2, to=4-2]
        \arrow["m", from=2-4, to=4-4]
        \arrow[from=3-1, to=3-3]
        \arrow[from=3-1, to=4-2]
        \arrow[from=3-3, to=4-4]
        \arrow[from=4-2, to=4-4]
    \end{tikzcd}\]
    For each direction $*\in\{\hor,\ver,\diag\}$, we denote by $\Cone_*(-)$ the result of applying $\Cone$ in the direction indicated by $*$. We write $\coCone_*(-)$ similarly.

    We have comparisons,
    \begin{equation}\label{eqn:comparison1}
        \coCone_{\ver}(\Cone_{\hor}(X))\simeq\Cone_{\hor}(\coCone_{\ver}(X)),
    \end{equation}
    and
    \begin{equation}\label{eqn:comparison2}
        \coCone_{\ver}(\Cone_{\dia}(X))\simeq\Cone_{\dia}(\coCone_{\ver}(X)).
    \end{equation}
    It is easy to calculate that
    \[\coCone_{\ver}(\Cone_{\hor}(X))\simeq\begin{tikzcd}
        T & \\
        & TM.
        \arrow["Tu", from=1-1, to=2-2]
    \end{tikzcd}\]
    and similarly that
    \[\coCone_{\ver}(\Cone_{\dia}(X))\simeq
    \begin{tikzcd}
        T && MT.
        \arrow["uT", from=1-1, to=1-3]
    \end{tikzcd}\]
    We also calculate that
    \[\coCone_{\ver}(X)\simeq\begin{tikzcd}
        {T[-1]} && 0 & \\
        & 0 && {\coCone(m)}.
        \arrow[from=1-1, to=1-3]
        \arrow[from=1-1, to=2-2]
        \arrow[from=1-3, to=2-4]
        \arrow[from=2-2, to=2-4]
    \end{tikzcd}\]
    So we have the comparison,
    \begin{equation}\label{eqn:comparison3}
        \Cone_{\hor}(\coCone_{\ver}(X))\simeq\Cone_{\dia}(\coCone_{\ver}(X)).
    \end{equation}
    Composing comparisons (\ref{eqn:comparison1}),(\ref{eqn:comparison2}) and (\ref{eqn:comparison3}), and taking care of signs, we deduce the desired result.
\end{proof}

\begin{proof}[Proof of Theorem~\ref{thm:christstrengthening1}]
    This follows from the original version, Theorem~\ref{thm:christ}, together with Proposition~\ref{prop:main}.
\end{proof}

\subsection{A description of objects of \texorpdfstring{$\sfSphMnd(n)$}{SphMnd(n)}}\label{subsection:sphmonad}
We recall that currently, $\sfSphMnd(n)$ is defined as
\[\sfSphMnd(n) := \sfAut(n)\times_{\sfAut}\sfSphMnd.\]
In this subsection, we provide a direct characterization of abstract prelocalized perverse schobers, analogously to and compatibly with the equivalence $\sfAdj(n)\simeq\sfBdj(n)$ of Proposition~\ref{prop:bdjisadj}.

The following corollary of Proposition~\ref{prop:conservative} says that we are allowed to forget about $\Psi$.

\begin{cor}\label{cor:sphmonad}
    A prelocalized schober is equivalent to the data of categories $\Phi_i$ together with a monad $M$ on $\oplus_i\Phi_i$ such that for each $j$, the composition
    \[\Phi_j \to\oplus_i\Phi_i \to \Mod_M(\oplus_i\Phi_i)\]
    is spherical.
\end{cor}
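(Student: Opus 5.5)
The plan is to unwind the definition $\sfSphMnd(n) := \sfAut(n)\times_{\sfAut}\sfSphMnd$ into the data of a monad on $\oplus_i\Phi_i$. Two earlier results drive the argument. Proposition~\ref{prop:monadisortho} identifies $\sfSphMnd(n)$ with $\sfSph^{\const\perp}(n)$. Proposition~\ref{prop:conservative} says an abstract schober $(\Phi_i,\Psi,S_i,R_i)$ is prelocalized exactly when $\oplus_iR_i:\Psi\to\oplus_i\Phi_i$ is conservative.

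\textbf{Forward direction.} Start from a prelocalized schober $(\Phi_i,\Psi,S_i,R_i)$.
\begin{enumerate}
\item Assemble the adjunctions into a single adjunction $\oplus_iS_i:\oplus_i\Phi_i\tofrom\Psi:\oplus_iR_i$.
\item The right adjoint $\oplus_iR_i$ is conservative by Proposition~\ref{prop:conservative}. It is also colimit-preserving, since we work in $\sfSt_k$. By Barr--Beck--Lurie it is therefore monadic, so $\Psi\simeq\Mod_M(\oplus_i\Phi_i)$ with $M:=(\oplus_iR_i)(\oplus_iS_i)$.
\item Under this equivalence, $S_j$ is identified with the composite $\Phi_j\to\oplus_i\Phi_i\xto{\opname{Free}}\Mod_M(\oplus_i\Phi_i)$, because the free functor restricted to the $j$-th summand is $S_j$.
\item Each $S_j$ is spherical by hypothesis, so these composites are spherical.
\end{enumerate}

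\textbf{Backward direction.} Start from categories $\Phi_i$ and a monad $M$ on $\oplus_i\Phi_i$ with each composite spherical.
\begin{enumerate}
\item Set $\Psi:=\Mod_M(\oplus_i\Phi_i)$ and let $S_j$ be the composites. This gives an object of $\sfSph(n)$.
\item The right adjoints $R_j$ assemble into the forgetful functor $\Mod_M\to\oplus_i\Phi_i$. The forgetful functor is conservative, so by Proposition~\ref{prop:conservative} the object lies in $\sfSph^{\const\perp}(n)$.
\end{enumerate}

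\textbf{Mutual inverses.} One composite is the identity up to the Eilenberg--Moore equivalence of the forward step. The other returns the same monad, since the monad associated to the monadic adjunction $\opname{Free}\dashv\opname{Forget}$ is $M$ itself.

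\textbf{Main obstacle.} The step I expect to need care is upgrading this from a bijection on objects to an equivalence of $\two$-categories. Morphisms in $\sfSph(n)$ are required to be adjointable squares, and these must match morphisms of monads on $\oplus_i\Phi_i$ that are compatible with the direct-sum decomposition. I would handle this exactly as in the $n=1$ discussion of \S\ref{subsection:quotientismonad}. The monadic embedding $\opname{P}^R=i_*$ is $2$-fully faithful. Moreover, Proposition~\ref{prop:KLisEM} gives $i_!\simeq i_*$, so the full sub-$\two$-category of monadic adjunctions is identified with $\sfMnd$ over $\oplus_i\Phi_i$. Restricting to diagrams indexed by $\frakn(n)$ summand by summand then produces the functor, and $2$-full faithfulness follows.
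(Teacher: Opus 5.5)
Your proposal is correct and follows the paper's proof. The paper likewise applies Barr--Beck to the assembled adjunction $\oplus_i\Phi_i\rightleftarrows\Psi$, getting conservativity of $\oplus_iR_i$ from Proposition~\ref{prop:conservative} and colimit-preservation of the right adjoints, to identify $\Psi\simeq\Mod_M(\oplus_i\Phi_i)$. It stops at that object-level reconstruction, so your converse direction and your sketch of the $\two$-categorical upgrade via the Eilenberg--Moore embedding are extra detail rather than a different route.
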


\begin{proof}
    We apply Barr-Beck to the adjunction,
    \[\oplus_i\Phi_i \tofrom \Psi.\]
    The right adjoint is conservative by Proposition~\ref{prop:conservative} and spherical functors automatically preserve colimits. Moreover, all categories are assumed cocomplete. We deduce that there is an identification
    \[\Psi\isomto\Mod_M(\oplus_i\Phi_i)\]
    thus we can reconstruct the perverse schober from just the data of the monad.
\end{proof}

\begin{defn}
    A monad $M=\oplus_{i,j}M_{ij}$ acting on $\oplus_i\Phi_i$ is said to be a spherical matrix monad if the condition in Corollary~\ref{cor:sphmonad} is satisfied.
\end{defn}

It is interesting to characterize which monads on $\oplus_i\Phi_i$ are spherical matrix monads without reference to the Eilenberg--Moore adjunction, as we studied for $n=1$ in the previous section. The following is a generalization of Theorem~\ref{thm:christstrengthening1}. We may also understand this as a categorification of the quiver description in \cite{gelfandPerverseSheavesQuivers1996}.

\begin{prop}\label{prop:sphmonad}
    Consider categories $\Phi_i$, together with a monad $M=\oplus_{i,j}M_{ij}$ acting on $\oplus_i\Phi_i$. This monad is a spherical matrix monad if and only if the following two conditions are satisfied.
    \begin{enumerate}
        \item \label{enum:sphmonad1} For each $i$, $T_i:=\coCone(\id_{\Phi_i}\to M_{ii})$ is an equivalence of categories.
        \item \label{enum:sphmonad2} For each $i,j$, the map $M_{ji}\to M_{ij}^RT_i[1]$ is an equivalence of functors, where this map is obtained via adjunction from the composition $M_{ij}M_{ji}\to M_{ii}\to T_i[1]$.
    \end{enumerate}
    In fact, we may replace Condition~(\ref{enum:sphmonad2}) with the following weaker condition.
    \begin{enumerate}[resume]
        \item \label{enum:sphmonad3} For each $i\neq j$, the map $M_{ji}\to M_{ij}^RT_i[1]$ is an equivalence of functors, where this map is obtained via adjunction from the composition $M_{ij}M_{ji}\to M_{ii}\to T_i[1]$.
    \end{enumerate}
\end{prop}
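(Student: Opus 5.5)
We work with the Eilenberg--Moore adjunction $S:\oplus_i\Phi_i\tofrom\Psi:=\Mod_M(\oplus_i\Phi_i):R$. Write $\iota_i,\pi_i$ for the inclusion and projection of the summand $\Phi_i$, and set $S_i:=S\iota_i$, $R_i:=\pi_iR$. Then $R_jS_i\simeq M_{ji}$ compatibly with units and multiplication, and each $S_i$ has left adjoint $S_i^L$. By Proposition~\ref{prop:KLisEM} (via Lemma~\ref{lem:kleislirecognition}), $\Psi$ is the smallest stable presentable subcategory containing the images of the $S_i$. Moreover $R=\oplus_jR_j$ is conservative.

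The criterion for sphericity we use is the ``2 out of 4'' characterization of spherical adjunctions. The adjunction $S_i\adjto R_i$ is spherical if and only if the cotwist $\coCone(\id\to R_iS_i)$ is an equivalence and the canonical comparison $S_i\to R_i^RT_i[1]$ is an equivalence. Here $T_i=\coCone(\id_{\Phi_i}\to M_{ii})$, and $R_i^R$ is the possibly non-continuous right adjoint of $R_i$. The comparison map is adjoint to the counit-type map $R_iS_i=M_{ii}\to T_i[1]$, with signs fixed by the rotation $\id\to M_{ii}\to T_i[1]$. The first half of this criterion is exactly condition (\ref{enum:sphmonad1}).

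For the second half, we test $S_i\to R_i^RT_i[1]$ after applying each $R_j$, since $\oplus_jR_j$ is conservative. Taking right adjoints where convenient, $R_j$ applied to this map becomes the map $M_{ji}\to (R_iS_j)^RT_i[1]=M_{ij}^RT_i[1]$. We will check by unwinding the adjunctions that this is the map obtained from $M_{ij}M_{ji}\to M_{ii}\to T_i[1]$. It suffices to verify this on the generating objects in the image of $S_j$, using that a natural transformation of functors out of $\Psi$ is detected after restricting along the $S_j$. Hence $S_i$ is spherical if and only if $T_i$ is invertible and condition (\ref{enum:sphmonad2}) holds for that fixed $i$ and all $j$. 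Running over all $i$ proves the equivalence with (\ref{enum:sphmonad1}) and (\ref{enum:sphmonad2}).

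It remains to show that (\ref{enum:sphmonad1}) and (\ref{enum:sphmonad3}) imply the diagonal case $j=i$ of (\ref{enum:sphmonad2}). This is the step I expect to be the main obstacle. The natural approach is to reduce to the single-singularity case, Theorem~\ref{thm:christstrengthening1}, applied to the monad $M_{ii}$ on $\Phi_i$. That reduction is not immediate: the relevant category is $\Psi$ rather than $\Mod_{M_{ii}}(\Phi_i)$, and $M_{ii}^R$ is computed there. I would first try the following argument.
\begin{itemize}
\item Restrict the monad to $\Phi_i$ and consider the comparison functor $\Mod_{M_{ii}}(\Phi_i)\to\Psi$ induced by $S_i$. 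It intertwines the adjunction $S_i\adjto R_i$ with the Eilenberg--Moore adjunction of $M_{ii}$.
\item By Theorem~\ref{thm:christstrengthening1} and condition (\ref{enum:sphmonad1}), the latter adjunction is spherical, so $M_{ii}\to M_{ii}^R T_i[1]$ is an equivalence when $M_{ii}^R$ is taken in $\Mod_{M_{ii}}(\Phi_i)$.
\item The $\Psi$-version of this map differs from the $\Mod_{M_{ii}}(\Phi_i)$-version only through the contributions of the other summands. These contributions are controlled by the off-diagonal equivalences of condition (\ref{enum:sphmonad3}), together with the matrix decomposition of $M$.
\end{itemize}
If this comparison proves awkward, the fallback is a cube argument in the style of Proposition~\ref{prop:main}. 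One would build the commutative cube from $u$, $M_{ij}u$, $uM_{ji}$ and $m$, and compute the diagonal map as a total cofiber. The off-diagonal equivalences of condition (\ref{enum:sphmonad3}) would then force the diagonal comparison to be invertible.
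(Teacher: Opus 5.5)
Your proof of the equivalence with (1) and (2) is the paper's proof: the Eilenberg--Moore adjunction, the 2-out-of-4 criterion, conservativity of $\oplus_j R_j$, and $R_jR_i^R\simeq (R_iS_j)^R=M_{ij}^R$. You need $R_j$ applied to $S_i\to R_i^RT_i[1]$ to be the mate of $M_{ij}M_{ji}\to M_{ii}\to T_i[1]$. This is pure mate calculus, which the paper's string diagrams encode. Naturality of the counit $S_jR_j\to\id$ turns the map into $R_iS_jR_jS_i\to R_iS_i\to T_i[1]$, and the first arrow is a component of the multiplication. The appeal to generation of $\Psi$ by the images of the $S_j$ is unnecessary, and in any case the transformation in question is between functors $\Phi_i\to\Phi_j$, not functors out of $\Psi$.

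The step you single out as the main obstacle is not one. Your first two bullets already settle it, exactly as the paper does.

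\begin{itemize}
\item The functor $M_{ii}^R$ is simply the right adjoint of the endofunctor $M_{ii}$ of $\Phi_i$. Whether you compute it as $R_iR_i^R$ in $\Psi$ or as $R'R'^R$ for the Eilenberg--Moore adjunction $S'\dashv R'$ of $M_{ii}$, you get canonically the same functor.
\item The diagonal comparison map is the mate of $M_{ii}M_{ii}\to M_{ii}\to T_i[1]$. Here $M_{ii}M_{ii}\to M_{ii}$ is the $k=i$ summand of $\oplus_k M_{ik}M_{ki}\to M_{ii}$, i.e.\ the multiplication of the restricted monad $M_{ii}=R_iS_i$, whose unit is the $(i,i)$ component of $u$. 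So the map depends only on the monad $M_{ii}$ on $\Phi_i$.
\item By Theorem~\ref{thm:christstrengthening1}, condition (1) makes $S'$ spherical. Your ``only if'' argument in the one-summand case (apply $R'$ to $S'\to R'^RT_i[1]$) then gives the $j=i$ case of (2).
\end{itemize}

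There are no contributions from the other summands, and condition (3) plays no role in the diagonal case. Drop the third bullet and the cube fallback.
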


\begin{proof}
    The stronger statement follows from the weaker statement. Indeed, by Theorem~\ref{thm:christstrengthening1}, Condition~(\ref{enum:sphmonad1}) implies that for each $i$, $M_{ii}$ is a spherical monad. This then implies Condition~(\ref{enum:sphmonad2}) whenever $j=i$.
    
    We prove the weaker statement. We begin by noting that the adjunctions
    \[S_i:\Phi_i\tofrom\opname{Mod}_{\oplus_i\Phi}(M_{ij}):R_i\] 
    recover the monad via $R_iS_j\isomto M_{ij}$. In particular, $T_i \simeq \coCone(\id\to R_iS_i)$.

    Suppose that $M$ is a spherical matrix monad, or equivalently that each $S_i$ is spherical. Then by the above, Condition~(\ref{enum:sphmonad1}) is clear. We prove Condition~(\ref{enum:sphmonad2}). Recall that $R_i$ admits a right adjoint $R_i^R$ and the natural map
    \[S_i\to R^R_iT_i[1]\]
    is an equivalence. Composing with $R_j$ on the left, we see that $M_{ji}\to M_{ij}^RT_i[1]$ is an equivalence.
    
    We must be careful here and verify that this is identified with the natural transformation coming from the composition $M_{ij}M_{ji}\to M_{ii}\to T_i[1]$ by adjunction. We observe that $M_{ii}\to T_i[1]$ can be post-composed after the fact, so it suffices to compare the natural transformations
    \begin{itemize}
        \item $R_j(S_i\to (R_i^RR_i)S_i)$, and
        \item $M_{ji}\to M_{ij}^RM_{ii}$.
    \end{itemize}
    This can be seen by comparing the following diagrams representing these natural transformations, where the functors are composed horizontally as in writing order, and natural transformations go downwards.

\tikzstyle{catc}=[red!30]
\tikzstyle{catd}=[blue!30]

\[
\begin{tikzpicture}[scale=0.75]
\path coordinate[] (epsilon)
+(-1.5,-1.5) coordinate[label=below:$R_i^R$] (tl)
+(1.5,-1.5) coordinate[label=below:$R_i$] (tr)
+(-2.5,1.5) coordinate[label=above:$R_j$] (ul)
+(2.5,1.5) coordinate[label=above:$S_i$] (ur)
+(-2.5,-1.5) coordinate[label=below:$R_j$] (dl)
+(2.5,-1.5) coordinate[label=below:$S_i$] (dr);
\draw (tl) to[out=90, in=180] (epsilon.center) to[out=0, in=90] (tr);
\draw (ul) to (dl);
\draw (ur) to (dr);
\begin{scope}[on background layer]
\fill[catd] +(-3.5,-1.5) rectangle +(3.5,1.5);
\fill[catc] (tl) to[out=90, in=180] (epsilon.center) to[out=0, in=90] (tr) --
cycle;
\fill[catc] +(-3.5,-1.5) rectangle +(-2.5,1.5);
\fill[catc] +(2.5,-1.5) rectangle +(3.5,1.5);
\end{scope}
\end{tikzpicture}
\quad
\begin{tikzpicture}[scale=0.75]
\path coordinate[] (tr)
++(0,-3) coordinate[] (br)
++(-1,0) coordinate[label=below:$S_i$] (brl)
++(0,3) coordinate[label=above:$S_i$] (trl)
++(-1,0) coordinate[label=above:$R_j$] (a)
++(0,-2) coordinate[] (b)
++(-0.5,-0.5) coordinate[] (c)
++(-0.5,0.5) coordinate[] (d)
++(-1.5,1.5) coordinate[] (e)
++(-1.5,-1.5) coordinate[] (f)
++(0,-1) coordinate[label=below:$R_j$] (g)
++(-1,0) coordinate[] (bl)
++(0,3) coordinate[] (tl)
++(2,-3) coordinate[label=below:$R_i^R$] (h)
++(0,1) coordinate[] (i)
++(0.5,0.5) coordinate[] (j)
++(0.5,-0.5) coordinate[] (k)
++(0,-1) coordinate[label=below:$R_i$] (l);
\draw (trl) -- (brl);
\draw (a) -- (b) to [out=-90, in=0] (c.center) to [out=180, in=-90] (d) to [out=90, in=0] (e.center) to [out=180, in=90] (f) -- (g);
\draw (h) -- (i) to [out=90, in=180] (j.center) to [out=0, in=90] (k) -- (l);
\begin{scope}[on background layer]
\fill[catd] (bl) rectangle (tr);
\fill[catc] (h) -- (i) to [out=90, in=180] (j.center) to[out=0, in=90] (k) -- (l) -- cycle;
\fill[catc] (a) -- (b) to [out=-90, in=0] (c.center) to [out=180, in=-90] (d) to [out=90, in=0] (e.center) to [out=180, in=90] (f) -- (g) -- (bl) -- (tl) -- cycle;
\fill[catc] (brl) rectangle (tr);
\end{scope}
\end{tikzpicture}
\]
    Using this diagram as a mnemonic, we may produce the desired identification.

    Let us now prove that if $M$ satisfies Condition~(\ref{enum:sphmonad1}) and Condition~(\ref{enum:sphmonad2}), then for each $i$, $S_i$ is spherical. We will apply the 2-out-of-4 property, \cite{christSphericalMonadicAdjunctions2023}*{Theorem 2.15}. We must first show that $R_i$ admits a right adjoint. It is clear that $M_{ii}$ admits a right adjoint $M_{ii}^R$, since $M$ is an extension of two autoequivalences which each automatically admit right adjoints. Observing that $\opname{Mod}_{\Phi}(M_{ii})\isomto\opname{coMod}_{\Phi}(M_{ii}^R)$ we get the right adjoint from the comonadic adjunction,
    \[R_i: \opname{coMod}_{\Phi}(M_{ii}^R)\tofrom \Phi: R_i^R.\]
    
    We can now apply the 2-out-of-4 property. It is immediate that the cotwist functor of $S_i\adjto R_i$ is invertible, and so it suffices to verify that the natural map $S_i\to R_i^RT_i[1]$ is an equivalence. This becomes an equivalence after we post-compose with $\oplus_jR_j$, by Condition~(\ref{enum:sphmonad2}) and the comparison made above. Since $\oplus_jR_j$ is conservative, it follows that $S_i\to R_i^RT_i[1]$ is indeed an equivalence.
\end{proof}

We may also construct the projection functor directly.

\begin{cor}
    Given an object $(\Phi_i,\Psi,S_i,R_i)$ of $\sfSph(n)$, the monad $M=\oplus_{i,j}R_iS_j$ is a spherical matrix monad. In particular, there is a projection functor 
    \[\sfSph(n)\to \sfSphMnd(n).\]
    This agrees with the projection functor constructed in \S\ref{subsection:quotientismonad}.
\end{cor}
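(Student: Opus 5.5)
The statement asks for three things. First, for $(\Phi_i,\Psi,S_i,R_i)\in\sfSph(n)$, the matrix monad $M=\oplus_{i,j}R_iS_j$ on $\oplus_i\Phi_i$ should be a spherical matrix monad. Second, this should give a functor $\sfSph(n)\to\sfSphMnd(n)$. Third, that functor should agree with $\opname{P}_n$ from \S\ref{subsection:quotientismonad}. I will verify the monad condition using Proposition~\ref{prop:sphmonad}, and then identify the functor by factoring through the orthogonal projection.

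\textbf{Step 1: the monad is a spherical matrix monad.} The monad $M$ is the monad of the adjunction $\oplus_iS_i:\oplus_i\Phi_i\tofrom\Psi:\oplus_iR_i$, so $M_{ij}=R_iS_j$. I check the two conditions of Proposition~\ref{prop:sphmonad}.
\begin{itemize}
\item Condition~(\ref{enum:sphmonad1}): $T_i=\coCone(\id\to R_iS_i)$ is the cotwist of the spherical adjunction $S_i\adjto R_i$, hence an equivalence.
\item Condition~(\ref{enum:sphmonad2}): since $S_i$ is spherical, $R_i$ has a right adjoint and the canonical map $S_i\to R_i^RT_i[1]$ is an equivalence. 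Post-composing with $R_j$ gives an equivalence $M_{ji}\to (R_iS_j)^R\,T_i[1]=M_{ij}^RT_i[1]$.
\end{itemize}
The point needing care is that this map coincides with the adjoint of $M_{ij}M_{ji}\to M_{ii}\to T_i[1]$. This is the string-diagram comparison already made in the proof of Proposition~\ref{prop:sphmonad}, and I reuse it verbatim.

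Alternatively, one can pass to the Eilenberg--Moore adjunction $\oplus_i\Phi_i\tofrom\Mod_M(\oplus_i\Phi_i)$. This adjunction receives the comparison functor from $\Psi$, and the composites $\Phi_j\to\Mod_M$ are the Eilenberg--Moore versions of the $S_j$. I use Proposition~\ref{prop:sphmonad} rather than this route because it avoids arguing sphericity across the comparison functor.

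\textbf{Step 2: constructing the functor.} I build the functor on $2$-categories using the identification
\[\sfSph(n)\simeq\sfAut(n)\times_{\sfAut}\sfSph\]
from Proposition~\ref{prop:activecartesian2}, and apply $\sfAut(n)\times_{\sfAut}\opname{P}$. Here $\opname{P}:\sfSph\to\sfSphMnd$ restricts $i^*:\sfAdj\to\sfMnd$ from \S\ref{subsection:mndadj}; it lands in spherical monads by the $n=1$ case of Step~1, or by Theorem~\ref{thm:christstrengthening1}.

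\textbf{Step 3: agreement with $\opname{P}_n$.} By Proposition~\ref{prop:monadisortho}, $\opname{P}_n$ is left adjoint to the inclusion $\sfSph^{\const\perp}(n)\inclto\sfSph(n)$. Using Corollary~\ref{cor:sphmonad}, I identify objects of $\sfSph^{\const\perp}(n)$ with spherical matrix monads via
\[\Psi\simeq\Mod_M(\oplus_i\Phi_i).\]
It then suffices to show two things.
\begin{itemize}
\item For $\calF\in\sfSph(n)$, the comparison $\Psi\to\Mod_M(\oplus_i\Phi_i)$ gives a morphism $\calF\to\opname{P}_n^R(M)$ that is universal among maps to prelocalized schobers.
\item This morphism is the unit of the adjunction.
\end{itemize}
For the first point I reduce to $n=1$ through the pullback square of Lemma~\ref{lem:activecartesian3}, where the claim is the universal property of the Eilenberg--Moore adjunction, namely $i^*\adjto i_*$ (Proposition~\ref{prop:KLisEM}).

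The main obstacle I expect is matching the matrix description of $M$ on $\oplus_i\Phi_i$ with the description obtained by merging through $\FS_{\sfSph}$, which yields a monad on $\langle\Phi_1,\dots,\Phi_n\rangle$ rather than on $\oplus_i\Phi_i$. To bridge them I would use monadicity of $\oplus_iI_i^R$: this presents both as monads whose algebras recover $\Psi$, and I would then compare them using the $2$-faithfulness of the forgetful functor to $\sfSt_k$, so that the comparison reduces to a statement about objects (Proposition~\ref{prop:functorconstruction}).
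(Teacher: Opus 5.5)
Your Step 1 is the paper's proof. The paper observes that the ``only if'' half of Proposition~\ref{prop:sphmonad} never uses conservativity of $\oplus_iR_i$, so it applies verbatim to $S_i:\Phi_i\to\Psi$ and yields Conditions~(\ref{enum:sphmonad1}) and~(\ref{enum:sphmonad2}); the ``if'' half then gives sphericity of the Eilenberg--Moore functors.

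The paper asserts the agreement with $\opname{P}_n$ without argument, so your Steps 2--3 go beyond it, and they work with one correction. The bridge you need is $\Mod_{RS}(\langle\Phi_1,\dots,\Phi_n\rangle)\simeq\Mod_M(\oplus_i\Phi_i)$, which holds because the composite right adjoint $(\oplus_iI_i^R)\circ R_{\opname{EM}}$ is conservative; this category recovers $\Psi$ only when $\calF$ is already prelocalized, not in general as your last paragraph suggests.
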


\begin{proof}
    The two conditions of Proposition~\ref{prop:sphmonad} follow by the same argument as the proof of the ``only if'' direction, noting that this direction didn't use the conservativity of $\oplus_i R_i$.
\end{proof}

\begin{cor}\label{prop:decategorification2}
    Recall notation from Proposition~\ref{prop:gmvquiver}. Given an abstract prelocalized perverse schober given as a spherical matrix monad $\oplus_{i,j} M_{ij}$ acting on $\oplus_i\Phi_i$, we may form an object of $\opname{Q}(n)$ by taking $M_i := K_0(\Phi_i)$ and $m_{ij} = K_0(M_{ij})$.
\end{cor}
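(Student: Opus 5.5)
The statement only asks that the assignment $M_i := K_0(\Phi_i)$, $m_{ij} := K_0(M_{ij})$ lands in $\opname{Q}(n)$. By Proposition~\ref{prop:gmvquiver}, an object of $\opname{Q}(n)$ is a representation of the complete quiver $K_n$ whose diagonal entries satisfy one condition: $1-m_{ii}$ is invertible for each $i$. So the plan is first to check that the assignment is a well-defined representation of $K_n$, and then to verify that single invertibility condition.

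\textbf{Step 1: the assignment is a representation of $K_n$.} Each component $M_{ij}:\Phi_j\to\Phi_i$ of the matrix monad is an exact functor, so it induces a group homomorphism $K_0(M_{ij}):K_0(\Phi_j)\to K_0(\Phi_i)$. This gives one arrow $j\to i$ for every pair $(i,j)$, which is exactly the shape of $K_n$.

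There is a caveat because the paper works with large presentable categories. There $K_0$ is degenerate, since the Eilenberg swindle kills it. I would therefore read $K_0$ as $K_0$ of compact objects, and assume the $\Phi_i$ are compactly generated and that the $M_{ij}$ preserve compact objects (for instance, because their right adjoints preserve colimits). Alternatively one can work with the small analog of the theory mentioned in the conventions. I would state this hypothesis explicitly and not try to remove it.

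\textbf{Step 2: $1-m_{ii}$ is invertible.} By the definition of a spherical matrix monad, Proposition~\ref{prop:sphmonad}(\ref{enum:sphmonad1}) applies: the functor $T_i := \coCone(\id_{\Phi_i}\to M_{ii})$ is an autoequivalence of $\Phi_i$. The defining cofiber sequence is
\[T_i\to \id_{\Phi_i}\xto{u} M_{ii}.\]
Evaluating it on any object gives an exact triangle in $\Phi_i$, naturally in that object. Additivity of $K_0$ for cofiber sequences of exact functors then gives
\[K_0(T_i) = \id - K_0(M_{ii}) = 1-m_{ii}.\]
Since $T_i$ has an inverse $T_i^{-1}$, functoriality of $K_0$ gives $K_0(T_i)K_0(T_i^{-1}) = 1$ and $K_0(T_i^{-1})K_0(T_i) = 1$. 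Hence $1-m_{ii}$ is invertible, and $(M_i,m_{ij})\in\opname{Q}(n)$.

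\textbf{Main obstacle and a consistency check.} The only step with real content is justifying the additivity identity $K_0(T_i) = 1-m_{ii}$ in the chosen setting: $T_i$ must preserve compact objects whenever $M_{ii}$ does. This holds because compact objects are closed under cofibers. Beyond that, I would note that condition (\ref{enum:sphmonad3}) is not needed for membership in $\opname{Q}(n)$. It only decategorifies to a relation between $m_{ji}$ and the adjoint of $m_{ij}$, which $\opname{Q}(n)$ does not impose. As a sanity check, I would compare with Proposition~\ref{prop:decategorification1}. Via the functor $\sfSphMnd(n)\to\sfAut(n)$, the two constructions should agree up to the identification $K_0(M_{ij}) = K_0(F_{ij}\oplus T_{ij})$, coming from the triangle $T\to\id\to M$.
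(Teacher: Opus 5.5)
Your argument is correct and is essentially the intended one (the paper gives no separate proof). The only condition defining $\opname{Q}(n)$ is invertibility of $1-m_{ii}$, which is $K_0$ of the autoequivalence $T_i=\coCone(\id\to M_{ii})$ from Proposition~\ref{prop:sphmonad}(\ref{enum:sphmonad1}); your caveat that $K_0$ must be taken of compact objects (or in the small setting) is a legitimate point the paper leaves implicit. One small slip, confined to your side remark: the triangle $T\to\id\to M$ gives $K_0(M_{ij})=K_0(F_{ij})-K_0(T_{ij})$, so the comparison with Proposition~\ref{prop:decategorification1} holds only after replacing $T$ by $T[1]$, not on the nose.
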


\bibliography{microlocal}

@article{acj,
  title = {On the $(\infty,2)$-category of perverse schobers on a surface},
  author = {Abell{\'a}n, Fernando and Christ, Merlin and Jasso, Gustavo},
  note = {In preparation}
}

@article{abellanFreeFibrationsLax2026,
  title = {Free Fibrations, Lax Colimits and {{Kan}} Extensions for $(\infty,2)$-Categories},
  author = {Abell{\'a}n, Fernando and Haugseng, Rune and Martini, Louis},
  year = 2026,
  eprint = {2602.07604},
  primaryclass = {math.CT},
  publisher = {arXiv},
  doi = {10.48550/arXiv.2602.07604},
  url = {http://arxiv.org/abs/2602.07604},
  urldate = {2026-07-31},
  archiveprefix = {arXiv}
}

@article{addingtonNewDerivedSymmetries2016,
  title = {New Derived Symmetries of Some Hyperk\"ahler Varieties},
  author = {Addington, Nicolas},
  year = 2016,
  journal = {Algebraic Geometry},
  primaryclass = {math.AG},
  pages = {223--260},
  issn = {22142584},
  doi = {10.14231/AG-2016-011},
  urldate = {2026-09-11},
  archiveprefix = {arXiv}
}

@article{andronikofMicrolocalVersionRiemannHilbert1994,
  title = {A Microlocal Version of the {{Riemann-Hilbert}} Correspondence},
  author = {Andronikof, Emmanuel},
  year = 1994,
  journal = {Topological Methods in Nonlinear Analysis},
  volume = {4},
  pages = {417--425},
  publisher = {Nicolaus Copernicus University in Toru\'n, Juliusz Schauder Center for Nonlinear Studies},
  issn = {1230-3429},
  url = {https://projecteuclid.org/journals/topological-methods-in-nonlinear-analysis/volume-4/issue-2/A-microlocal-version-of-the-Riemann-Hilbert-correspondence/tmna/1479287054.full},
  urldate = {2026-08-06},
  langid = {english}
}

@article{barbacoviCompositionTwoSpherical2021,
  title = {On the Composition of Two Spherical Twists},
  author = {Barbacovi, Federico},
  year = 2021,
  eprint = {2006.06016},
  primaryclass = {math.AG},
  publisher = {arXiv},
  doi = {10.48550/arXiv.2006.06016},
  url = {http://arxiv.org/abs/2006.06016},
  urldate = {2026-09-08},
  archiveprefix = {arXiv}
}

@article{beilinson2016constructible,
  title={Constructible sheaves are holonomic},
  author={Beilinson, Alexander},
  journal={Selecta Mathematica},
  volume={22},
  number={4},
  pages={1797--1819},
  year={2016},
  publisher={Springer}
}

@book{bernstein2006jacobi,
  title={The Jacobi-Perron algorithm: its theory and application},
  author={Bernstein, Leon},
  year={2006},
  publisher={Springer}
}

@article{bondalPerverseSchobersBirational2018,
  title={Perverse schobers and birational geometry},
  author={Bondal, Alexey and Kapranov, Mikhail and Schechtman, Vadim},
  journal={Selecta Mathematica},
  volume={24},
  number={1},
  pages={85--143},
  year={2018},
  publisher={Springer}
}

@article{christGinzburgAlgebrasTriangulated2022,
  title = {Ginzburg Algebras of Triangulated Surfaces and Perverse Schobers},
  author = {Christ, Merlin},
  year = 2022,
  journal = {Forum of Mathematics, Sigma},
  volume = {10},
  primaryclass = {math.AT},
  pages = {e8},
  issn = {2050-5094},
  doi = {10.1017/fms.2022.1},
  url = {http://arxiv.org/abs/2101.01939},
  urldate = {2026-08-14},
  archiveprefix = {arXiv}
}

@article{christLaxAdditivity2025,
  title={Lax additivity},
  author={Christ, Merlin and Dyckerhoff, Tobias and Walde, Tashi},
  journal={Documenta Mathematica},
  volume={31},
  number={4},
  pages={787},
  year={2026},
  publisher={European Mathematical Society (EMS)}
}

@article{christRelativeCalabiYauStructures2026,
  title={Relative {{Calabi--Yau}} structures and perverse schobers on surfaces},
  author={Christ, Merlin},
  journal={Journal of Noncommutative Geometry},
  volume={20},
  number={3},
  pages={871--951},
  year={2026}
}

@article{christSphericalMonadicAdjunctions2023,
  title = {Spherical Monadic Adjunctions of Stable Infinity Categories},
  author = {Christ, Merlin},
  year = 2023,
  journal = {International Mathematics Research Notices},
  volume = {2023},
  primaryclass = {math},
  pages = {13153--13213},
  issn = {1073-7928, 1687-0247},
  doi = {10.1093/imrn/rnac187},
  url = {http://arxiv.org/abs/2010.05294},
  urldate = {2025-09-27},
  archiveprefix = {arXiv}
}

@article{cotePerverseMicrosheaves2025,
  title = {Perverse {{Microsheaves}}},
  author = {C{\^o}t{\'e}, Laurent and Kuo, Christopher and Nadler, David and Shende, Vivek},
  year = 2025,
  eprint = {2209.12998},
  primaryclass = {math.SG},
  publisher = {arXiv},
  doi = {10.48550/arXiv.2209.12998},
  url = {http://arxiv.org/abs/2209.12998},
  urldate = {2026-07-22},
  archiveprefix = {arXiv}
}

@book{dyckerhoffHigherSegalSpaces2019,
  title = {Higher {{Segal}} Spaces {{I}}},
  author={Dyckerhoff, Tobias and Kapranov, Mikhail},
  series={Lecture Notes in Mathematics},
  volume={2244},
  year={2019},
  publisher={Springer},
  doi={10.1007/978-3-030-27124-4},
  isbn={978-3-030-27122-0},
  url={https://link.springer.com/book/10.1007/978-3-030-27124-4}
}

@article{dyckerhoffNsphericalFunctorsCategorification2023,
  title={N-spherical functors and categorification of {{Euler}}’s continuants: T. Dyckerhoff et al.},
  author={Dyckerhoff, Tobias and Kapranov, Mikhail and Schechtman, Vadim},
  journal={Mathematische Zeitschrift},
  volume={312},
  number={3},
  pages={75},
  year={2026},
  publisher={Springer}
}

@article{dyckerhoffPerverseSchobersCoxeter2025,
  title = {Perverse Schobers of {{Coxeter}} Type $\mathbb{A}$},
  author = {Dyckerhoff, Tobias and Wedrich, Paul},
  year = 2025,
  eprint = {2504.08496},
  primaryclass = {math},
  publisher = {arXiv},
  doi = {10.48550/arXiv.2504.08496},
  url = {http://arxiv.org/abs/2504.08496},
  urldate = {2026-04-14},
  archiveprefix = {arXiv}
}

@article{dyckerhoffSphericalAdjunctionsStable2021,
  title = {Spherical Adjunctions of Stable $\infty$-Categories and the Relative {{S-construction}}},
  author={Dyckerhoff, Tobias and Kapranov, Mikhail and Schechtman, Vadim and Soibelman, Yan},
  journal={Mathematische Zeitschrift},
  volume={307},
  number={4},
  pages={73},
  year={2024},
  publisher={Springer}
}

@article{faergemanNonvanishingGeometricWhittaker2022,
  title = {Non-Vanishing of Geometric {{Whittaker}} Coefficients for Reductive Groups},
  author={Faergeman, Joakim and Raskin, Sam},
  journal={J. Amer. Math. Soc},
  volume={38},
  number={4},
  pages={919--995},
  year={2025}
}

@article{gaitsgoryIndcoherentSheaves2012,
  title = {Ind-Coherent Sheaves},
  author = {Gaitsgory, Dennis},
  year = 2012,
  eprint = {1105.4857},
  primaryclass = {math},
  publisher = {arXiv},
  doi = {10.48550/arXiv.1105.4857},
  url = {http://arxiv.org/abs/1105.4857},
  urldate = {2025-08-09},
  archiveprefix = {arXiv}
}

@article{galvez-carrilloDecompositionSpacesIncidence2018,
  title = {Decomposition Spaces, Incidence Algebras and {{M\"obius}} Inversion {{I}}: Basic Theory},
  author={G{\'a}lvez-Carrillo, Imma and Kock, Joachim and Tonks, Andrew},
  journal={Advances in Mathematics},
  volume={331},
  pages={952--1015},
  year={2018},
  publisher={Elsevier}
}

@article{gammageBettiTatesThesis2025,
  title = {Betti {{Tate}}'s Thesis and the Trace of Perverse Schobers},
  author = {Gammage, Benjamin and Hilburn, Justin},
  year = 2025,
  journal = {Comptes Rendus. Math\'ematique},
  volume = {363},
  primaryclass = {math},
  pages = {169--181},
  issn = {1631-073X, 1778-3569},
  doi = {10.5802/crmath.703},
  url = {http://arxiv.org/abs/2210.06548},
  urldate = {2025-10-20},
  archiveprefix = {arXiv}
}

@article{gammageHypertoric2categoriesSymplectic2025,
  title = {Hypertoric 2-Categories {{O}} and Symplectic Duality},
  author={Gammage, Benjamin and Hilburn, Justin},
  journal={Communications in Mathematical Physics},
  volume={407},
  number={3},
  pages={55},
  year={2026},
  publisher={Springer}
}

@article{gammagePerverseSchobers3d2023,
  title = {Perverse Schobers and 3d Mirror Symmetry},
  author = {Gammage, Benjamin and Hilburn, Justin and {Mazel-Gee}, Aaron},
  year = 2023,
  eprint = {2202.06833},
  primaryclass = {math},
  publisher = {arXiv},
  doi = {10.48550/arXiv.2202.06833},
  url = {http://arxiv.org/abs/2202.06833},
  urldate = {2025-10-14},
  archiveprefix = {arXiv}
}

@article{gargGITRootStacks2026,
  title = {{{GIT}} for Root Stacks and 3d Mirror Symmetry},
  author = {Garg, Swapnil and Li, Ruoxi and Okitani, Yuji},
  year = 2026,
  eprint = {2608.30350},
  primaryclass = {math.RT},
  publisher = {arXiv},
  doi = {10.48550/arXiv.2608.30350},
  url = {http://arxiv.org/abs/2608.30350},
  urldate = {2026-09-01},
  archiveprefix = {arXiv}
}

@article{geiglePerpendicularCategoriesApplications1991,
  title = {Perpendicular Categories with Applications to Representations and Sheaves},
  author = {Geigle, Werner and Lenzing, Helmut},
  year = 1991,
  journal = {Journal of Algebra},
  volume = {144},
  pages = {273--343},
  issn = {00218693},
  doi = {10.1016/0021-8693(91)90107-J},
  url = {https://linkinghub.elsevier.com/retrieve/pii/002186939190107J},
  urldate = {2025-10-22},
  copyright = {https://www.elsevier.com/tdm/userlicense/1.0/},
  langid = {english}
}

@article{gelfandMicrolocalPerverseSheaves2005,
  title = {Microlocal {{Perverse Sheaves}}},
  author = {Gelfand, S. and MacPherson, R. and Vilonen, K.},
  year = 2005,
  eprint = {math/0509440},
  publisher = {arXiv},
  doi = {10.48550/arXiv.math/0509440},
  url = {http://arxiv.org/abs/math/0509440},
  urldate = {2025-07-10},
  archiveprefix = {arXiv}
}

@article{gelfandPerverseSheavesQuivers1996,
  title = {Perverse Sheaves and Quivers},
  author = {Gelfand, Sergei and MacPherson, Robert and Vilonen, Kari},
  year = 1996,
  journal = {Duke Mathematical Journal},
  volume = {83},
  issn = {0012-7094},
  doi = {10.1215/S0012-7094-96-08319-2},
  url = {https://projecteuclid.org/journals/duke-mathematical-journal/volume-83/issue-3/Perverse-sheaves-and-quivers/10.1215/S0012-7094-96-08319-2.full},
  urldate = {2025-08-14},
  langid = {english}
}

@article{halpern-leistnerAutoequivalencesDerivedCategories2016,
  title = {Autoequivalences of Derived Categories via Geometric Invariant Theory},
  author = {{Halpern-Leistner}, Daniel and Shipman, Ian},
  year = 2016,
  journal = {Advances in Mathematics},
  volume = {303},
  pages = {1264--1299},
  issn = {00018708},
  doi = {10.1016/j.aim.2016.06.017},
  urldate = {2026-09-12},
  langid = {english}
}

@article{harderPerverseSheavesCategories2019,
  title = {Perverse Sheaves of Categories and Some Applications},
  author = {Harder, Andrew and Katzarkov, Ludmil},
  year = 2019,
  journal = {Advances in Mathematics},
  volume = {352},
  pages = {1155--1205},
  issn = {00018708},
  doi = {10.1016/j.aim.2019.05.024},
  urldate = {2026-09-12},
  langid = {english}
}

@article{haugsengLaxTransformationsAdjunctions2021,
  title = {On Lax Transformations, Adjunctions, and Monads in $(\infty,2)$-Categories},
  author = {Haugseng, Rune},
  year = 2021,
  eprint = {2002.01037},
  primaryclass = {math.CT},
  publisher = {arXiv},
  doi = {10.48550/arXiv.2002.01037},
  url = {http://arxiv.org/abs/2002.01037},
  urldate = {2026-07-19},
  archiveprefix = {arXiv}
}

@article{kapranovPerverseSchobers2015,
  title = {Perverse {{Schobers}}},
  author = {Kapranov, Mikhail and Schechtman, Vadim},
  year = 2015,
  eprint = {1411.2772},
  primaryclass = {math},
  publisher = {arXiv},
  doi = {10.48550/arXiv.1411.2772},
  url = {http://arxiv.org/abs/1411.2772},
  urldate = {2025-09-27},
  archiveprefix = {arXiv}
}

@article{kapranovPerverseSchobersAlgebra2020,
  title = {Perverse Schobers and the {{Algebra}} of the {{Infrared}}},
  author = {Kapranov, Mikhail and Soibelman, Yan and Soukhanov, Lev},
  year = 2020,
  eprint = {2011.00845},
  primaryclass = {math},
  publisher = {arXiv},
  doi = {10.48550/arXiv.2011.00845},
  url = {http://arxiv.org/abs/2011.00845},
  urldate = {2025-09-16},
  archiveprefix = {arXiv}
}

@book{kashiwaraSheavesManifolds1990,
  title = {Sheaves on {{Manifolds}}},
  author = {Kashiwara, Masaki and Schapira, Pierre},
  editor = {Artin, M. and Chern, S. S. and Coates, J. and Fr{\"o}hlich, J. M. and Hironaka, H. and Hirzebruch, F. and H{\"o}rmander, L. and MacLane, S. and Moore, C. C. and Moser, J. K. and Nagata, M. and Schmidt, W. and Scott, D. S. and Sinai, {\relax Ya}. G. and Tits, J. and Waldschmidt, M. and Watanabe, S. and Berger, M. and Eckmann, B. and Varadhan, S. R. S.},
  year = 1990,
  series = {Grundlehren Der Mathematischen {{Wissenschaften}}},
  volume = {292},
  publisher = {Springer Berlin Heidelberg},
  address = {Berlin, Heidelberg},
  doi = {10.1007/978-3-662-02661-8},
  url = {http://link.springer.com/10.1007/978-3-662-02661-8},
  urldate = {2025-09-09},
  copyright = {http://www.springer.com/tdm},
  isbn = {978-3-642-08082-1 978-3-662-02661-8},
  langid = {english}
}

@incollection{kiehlLefschetzTheoryBrylinskiRadon2001,
  title = {Lefschetz {{Theory}} and the {{Brylinski-Radon Transform}}},
  booktitle = {Weil {{Conjectures}}, {{Perverse Sheaves}} and l'adic {{Fourier Transform}}},
  author = {Kiehl, Reinhardt and Weissauer, Rainer},
  editor = {Kiehl, Reinhardt and Weissauer, Rainer},
  year = 2001,
  pages = {203--224},
  publisher = {Springer},
  address = {Berlin, Heidelberg},
  doi = {10.1007/978-3-662-04576-3_5},
  url = {https://doi.org/10.1007/978-3-662-04576-3_5},
  urldate = {2026-08-06},
  isbn = {978-3-662-04576-3},
  langid = {english}
}

@article{kuwagakiCategorificationLegendrianKnots2019,
  title = {Categorification of {{Legendrian}} Knots},
  author={Kuwagaki, Tatsuki},
  journal={Pure and Applied Mathematics Quarterly},
  volume={16},
  number={3},
  pages={421--437},
  year={2020},
  publisher={International Press of Boston, Inc. Somerville, MA 02143, USA}
}

@article{lehmerJacobisExtensionContinued1918,
  title = {On {{Jacobi}}'s {{Extension}} of the {{Continued Fraction Algorithm}}},
  author = {Lehmer, D. N.},
  year = 1918,
  journal = {Proceedings of the National Academy of Sciences},
  volume = {4},
  pages = {360--364},
  publisher = {Proceedings of the National Academy of Sciences},
  doi = {10.1073/pnas.4.12.360},
  url = {https://www.pnas.org/doi/10.1073/pnas.4.12.360},
  urldate = {2026-09-05}
}

@article{lurieHigherAlgebra2017,
  title = {Higher {{Algebra}}},
  author = {Lurie, Jacob},
  year = 2017,
  note = {Available at \url{https://www.math.ias.edu/~lurie/}}
}

@article{lurierotationinvariance,
  title = {Rotation Invariance in Algebraic {{K-theory}}},
  author = {Lurie, Jacob},
  note = {Available at \url{https://www.math.ias.edu/~lurie/}}
}

@article{macphersonPerverseSheavesSingularities1988,
  title = {{Perverse sheaves with singularities along the curve Yn = Xm.}},
  author = {MacPherson, R. and Vilonen, K.},
  year = 1988,
  journal = {Commentarii mathematici Helvetici},
  volume = {63},
  pages = {89--102},
  issn = {0010-2571; 1420-8946/e},
  url = {https://eudml.org/doc/140110},
  urldate = {2026-07-06},
  langid = {und}
}

@article{nadlerSheafQuantizationWeinstein2022,
  title = {Sheaf Quantization in {{Weinstein}} Symplectic Manifolds},
  author = {Nadler, David and Shende, Vivek},
  year = 2022,
  eprint = {2007.10154},
  primaryclass = {math},
  publisher = {arXiv},
  doi = {10.48550/arXiv.2007.10154},
  url = {http://arxiv.org/abs/2007.10154},
  urldate = {2026-04-12},
  archiveprefix = {arXiv}
}

@article{oblomkovHilbertSchemePlane2012,
  title = {The {{Hilbert}} Scheme of a Plane Curve Singularity and the {{HOMFLY}} Polynomial of Its Link},
  author = {Oblomkov, Alexei and Shende, Vivek},
  year = 2012,
  journal = {Duke Mathematical Journal},
  volume = {161},
  primaryclass = {math.AG},
  issn = {0012-7094},
  doi = {10.1215/00127094-1593281},
  url = {http://arxiv.org/abs/1003.1568},
  urldate = {2026-08-10},
  archiveprefix = {arXiv}
}

@article{riehlHomotopyCoherentAdjunctions2015,
  title = {Homotopy Coherent Adjunctions and the Formal Theory of Monads},
  author={Riehl, Emily and Verity, Dominic},
  journal={Advances in Mathematics},
  volume={286},
  pages={802--888},
  year={2016},
  publisher={Elsevier}
}

@article{saitoSingularSupportsMixed2025,
  title = {On Singular Supports in Mixed Characteristic},
  author = {Saito, Takeshi},
  year = 2025,
  eprint = {2501.07965},
  primaryclass = {math.AG},
  publisher = {arXiv},
  doi = {10.48550/arXiv.2501.07965},
  url = {http://arxiv.org/abs/2501.07965},
  urldate = {2026-08-06},
  archiveprefix = {arXiv}
}

@article{segalAllAutoequivalencesAre2020,
  title = {All Autoequivalences Are Spherical Twists},
  author={Segal, Ed},
  journal={International Mathematics Research Notices},
  volume={2018},
  number={10},
  pages={3137--3154},
  year={2018},
  publisher={Oxford University Press}
}

@article{seidelBraidGroupActions2000,
  author = {Paul Seidel and Richard Thomas},
  title = {{Braid group actions on derived categories of coherent sheaves}},
  volume = {108},
  journal = {Duke Mathematical Journal},
  number = {1},
  publisher = {Duke University Press},
  pages = {37 -- 108},
  year = {2001},
  doi = {10.1215/S0012-7094-01-10812-0},
  URL = {https://doi.org/10.1215/S0012-7094-01-10812-0}
}

@article{seidelSymplecticHomologyHochschild2008,
  title = {Symplectic Homology as {{Hochschild}} Homology},
  author={Seidel, Paul},
  journal={Algebraic geometry—Seattle 2005},
  pages={415--434},
  year={2009}
}

@article{shendeClusterVarietiesLegendrian2019,
  title = {Cluster Varieties from {{Legendrian}} Knots},
  author = {Shende, Vivek and Treumann, David and Williams, Harold and Zaslow, Eric},
  year = 2019,
  journal = {Duke Mathematical Journal},
  volume = {168},
  primaryclass = {math.SG},
  issn = {0012-7094},
  doi = {10.1215/00127094-2019-0027},
  urldate = {2026-09-05},
  archiveprefix = {arXiv}
}

@article{stefanichPresentable$inftyN$categories2020,
  title = {Presentable $(\infty, n)$-Categories},
  author = {Stefanich, Germ{\'a}n},
  year = 2020,
  eprint = {2011.03035},
  primaryclass = {math.AT},
  publisher = {arXiv},
  doi = {10.48550/arXiv.2011.03035},
  url = {http://arxiv.org/abs/2011.03035},
  urldate = {2026-07-29},
  archiveprefix = {arXiv}
}

@article{waschkiesMicrolocalPerverseSheaves2002,
  title = {Microlocal Perverse Sheaves},
  author = {Waschkies, Ingo},
  year = 2002,
  eprint = {math/0209341},
  publisher = {arXiv},
  doi = {10.48550/arXiv.math/0209341},
  url = {http://arxiv.org/abs/math/0209341},
  urldate = {2026-07-11},
  archiveprefix = {arXiv}
}

@article{waschkiesStackMicrolocalPerverse2004,
  title = {The Stack of Microlocal Perverse Sheaves},
  author = {Waschkies, Ingo},
  year = 2004,
  journal = {Bulletin de la soci{\'e}t{\'e} math{\'e}matique de France},
  volume = {132},
  pages = {397--462},
  issn = {0037-9484, 2102-622X},
  doi = {10.24033/bsmf.2469},
  url = {http://www.numdam.org/item?id=BSMF_2004__132_3_397_0},
  urldate = {2026-08-10},
  langid = {english}
}

\end{document}